\theoremstyle{plain}
\newtheorem{theorem}{Theorem}[section]
\newtheorem{corollary}[theorem]{Corollary}
\newtheorem{lemma}[theorem]{Lemma}
\newtheorem{proposition}[theorem]{Proposition}
\newtheorem*{lemma*}{Lemma}
\newtheorem*{theorem*}{Theorem}
\newtheorem*{corollary*}{Corollary}
\theoremstyle{definition}
\newtheorem{definition}[theorem]{Definition}
\newtheorem*{example*}{Example}
\newtheorem*{notation*}{Notation}
\newtheorem*{definition*}{Definition}
\theoremstyle{remark}
\newtheorem{remark}[theorem]{Remark}
\numberwithin{equation}{section}
\newcommand{\R}{\mathbb{R}} 
\newcommand{\RP}{\mathbb{RP}} 
\newcommand{\N}{\mathbb{N}} 
\newcommand{\SU}{\mathrm{SU}}
\newcommand{\SL}{\mathrm{SL}}
\newcommand{\A}{\mathbb{A}}
\newcommand{\B}{\mathbb{B}}
\newcommand{\C}{\mathbb{C}}
\newcommand{\D}{\mathbb{D}}
\newcommand{\E}{\mathbb{E}}
\newcommand{\F}{\mathbb{F}}
\newcommand{\G}{\mathbb{G}}
\renewcommand{\H}{\mathbb{H}}
\newcommand{\I}{\mathbb{I}}
\renewcommand{\L}{\mathbb{L}}
\renewcommand{\P}{\mathbb{P}}
\newcommand{\Q}{\mathbb{Q}}
\renewcommand{\S}{\mathbb{S}}
\DeclareMathOperator{\rank}{\mathrm{rank}}
\newcommand{\col}{\mathrm{col}}
\newcommand{\size}{\mathrm{size}}
\newcommand{\distr}{\mathscr{D}}
\newcommand{\ver}{\mathscr{V}}
\newcommand{\hor}{\mathscr{D}} 
\newcommand{\Cut}{\mathrm{Cut}}
\newcommand{\DOM}{{\rm{DOM}}}
\DeclareMathOperator{\End}{\mathrm{End}}
\DeclareMathOperator{\supp}{\mathrm{supp}}
\newcommand{\ve}{\varepsilon}
\newcommand{\CD}{\mathsf{CD}}
\newcommand{\Geo}{\mathrm{Geo}}
\newcommand{\MCP}{\mathsf{MCP}}
\newcommand{\QCD}{\mathsf{QCD}}
\newcommand{\OptGeo}{{\rm OptGeo}}
\newcommand{\Opt}{{\rm Opt}}
\newcommand{\Ent}{\mathrm{Ent}}
\newcommand{\U}{\mathrm{U}}
\newcommand{\Dom}{\mathrm{Dom}}
\newcommand{\cK}{\mathcal{K}}
\newcommand{\cD}{\mathcal{D}}
\newcommand{\sfs}{\mathsf{s}}
\newcommand{\rv}{\mathrm{v}}
\newcommand{\rs}{\mathrm{s}}
\newcommand{\Cpl}{\mathrm{Cpl}}
\newcommand{\loc}{\mathrm{loc}}
\newcommand{\mui}{\mu_\infty}
\newcommand{\Lip}{\mathrm{Lip}}
\newcommand{\Prob}{\mathcal{P}}
\newcommand{\sM}{\mathscr{M}}
\DeclareMathOperator{\spn}{\mathrm{span}}
\DeclareMathOperator{\tr}{\mathrm{tr}}
\DeclareMathOperator{\diam}{\mathrm{diam}}
\DeclareMathOperator*{\esssup}{ess\,sup}
\DeclareMathOperator*{\essinf}{ess\,inf}
\newcommand{\Ric}{\mathrm{Ric}}
\newcommand{\eps}{\varepsilon}
\newcommand{\di}{\mathrm{d}}
\newcommand{\mm}{\mathfrak m}
\newcommand{\ee}{{\rm e}}
\newcommand{\sfd}{\mathsf d}
\newcommand{\sfD}{\mathsf D}
\newcommand{\sfG}{\mathsf G}
\newcommand{\NN}{\dim_{\mathrm{geo}}}
\newcommand{\NH}{\dim_{\mathrm{Haus}}}
\DeclareMathSymbol{\shortminus}{\mathbin}{AMSa}{"39}
\title{Unified synthetic Ricci curvature lower bounds for Riemannian and sub-Riemannian structures}
\author{
Davide Barilari\thanks{Barilari: Dipartimento di Matematica ``Tullio Levi-Civita'', Universit\`a di
Padova, Via Trieste 63, Padova, Italy. E-mail: \href{mailto:barilari@math.unipd.it}{barilari@math.unipd.it}} 
\and
Andrea Mondino\thanks{Mondino: Mathematical Institute, University of Oxford (UK). E-mail:  \href{mailto:andrea.mondino@maths.oxford.ac.uk}{mondino@maths.oxford.ac.uk}}
\and
Luca Rizzi\thanks{Rizzi: SISSA, via Bonomea 265, 34136 Trieste, Italy and Univ. Grenoble Alpes, CNRS, Institut Fourier, F-38000 Grenoble, France. 
E-mail: \href{mailto:luca.rizzi@sissa.it}{luca.rizzi@sissa.it}}
}
\begin{document}

\maketitle

\begin{abstract}
Recent advances in the theory of metric measures spaces on the one hand, and of sub-Riemannian ones on the other hand, suggest the possibility of a ``great unification'' of Riemannian  and sub-Riemannian geometries in a comprehensive framework of synthetic Ricci curvature lower bounds, as put forth in \cite[Sec.\@ 9]{CVNB}. With the aim of achieving such a unification program, in this paper we initiate the study of gauge metric measure spaces.
\end{abstract}

\tableofcontents

\section{Introduction}

The subject of this work is the synthetic treatment of curvature lower bounds. To illustrate our contribution, let us start by recalling the celebrated theory of Alexandrov spaces. These are metric spaces $(X,\sfd)$ where curvature bounds are defined via comparison of geodesic triangles in $X$  with corresponding ones in model surfaces of constant curvature. The key observation is that Toponogov triangle comparison theorem gives a purely metric characterization of lower bounds for the sectional curvature on smooth Riemannian manifolds, which yields a synthetic definition for metric spaces. 

A synthetic characterization of \emph{Ricci} curvature lower bounds needs an additional structure, namely  a  reference measure $\mm$: following the seminal works by Gromov   \cite{Gr}, by Fukaya on spectral convergence \cite{Fukaya}, by Cheeger-Colding  on Ricci limit spaces \cite{CheegerColdingJDG1}, it emerged that a natural framework for Ricci curvature lower bounds  is  the one of metric measure spaces.

A key fact is that, on a smooth Riemannian manifold, Ricci curvature lower bounds can be characterized in terms of suitable convexity inequalities of entropy functionals in the Wasserstein space \cite{CEMS,vRSt}, after \cite{McC97, OttoVillani}.  
This line of research culminated in the seminal work by Lott-Sturm-Villani in the 2000's \cite{lottvillani:metric,sturm:I,sturm:II}, introducing the synthetic $\CD(K,N)$ conditions for metric measure spaces. The latter conditions depend on two parameters: $K\in \R$ playing the role of a lower bound on the Ricci curvature, and $N\geq 1$ playing the role of an upper bound on the dimension. Concretely, $K$ and $N$ enter into the theory via model  \emph{distortion coefficients} $\beta_{K,N}$, encoding the effect of the Ricci curvature on the distortion of the measure along geodesics. These coefficients have the form:
\begin{equation}\label{eq:defbetaKNK>0}
 (\beta_{K,N})_t(\theta) :=
 t  \, \frac{\sin\left(t\theta \sqrt{K/(N-1)}\right)^{N-1}}{\sin\left(\theta \sqrt{K/(N-1)}\right)^{N-1}}, \qquad  \forall\,t\in [0,1],
\end{equation}
for  $0<K\theta^2<(N-1)\pi^2$ and with standard interpretation otherwise, see Section \ref{sec:howtorecover}.

A fundamental aspect of the $\CD(K,N)$ condition is that it enjoys compactness and stability properties under pmGH convergence. We recall that the class of $\CD(K,N)$ spaces includes Riemannian manifolds with Ricci curvature bounded from below, as well as Finsler ones \cite{Ohta2009}. 

However, there is an important class of metric measure spaces which does not fit into this framework: sub-Riemannian structures. These are smooth manifolds endowed with a length metric obtained by considering only curves that are tangent to a bracket-generating vector distribution (see Appendix \ref{a:SR} for a self-contained account).

The simplest examples are the Heisenberg groups. In \cite{NJ09}, Juillet proved that the $\CD(K,N)$ condition fails for all values of $K\in \R$ and $N\geq 1$ on the Heisenberg groups, and indeed this is the case also all sub-Riemannian manifolds \cite{AmbStef, NJ21, MagnaRossi22, RS-Failure} that are not Riemannian.  Moreover, in \cite{NJ09}, it was also proved that a weaker synthetic condition, known as measure contraction property $\MCP(K,N)$, holds in Heisenberg groups for suitable values of $K$ and $N$. This property has been proved to hold in more general Carnot groups \cite{BR-realanalMCP, RiffordCarnot}.

Recently, in their seminal work  \cite{BKS}, Balogh-Krist\'aly-Sipos  showed that, despite the failure of the classical $\CD(K,N)$ condition,  weaker entropy inequalities hold  in the Heisenberg groups $\H^d$, with the following distortion coefficients:
\begin{equation}\label{eq:betaHeisIntro}
\beta^{\H^d}_t(\theta) :=t \, \dfrac{ \sin\left(\frac{t\theta}{2}\right)^{2d-1}\left[\sin\left(\frac{t\theta}{2}\right)-\frac{t\theta}{2} \cos\left(\frac{t\theta}{2}\right)\right]}{ \sin\left(\frac{\theta}{2}\right)^{2d-1}\left[\sin\left(\frac{\theta}{2}\right)-\frac{\theta}{2} \cos\left(\frac{\theta}{2}\right)\right]},  \qquad \forall\,t\in [0,1],
 \end{equation}
for $0<\theta<2\pi$, and standard interpretation otherwise. See also \cite{BKS2} for similar inequalities for general co-rank $1$ Carnot groups.

Two remarkable differences between the Riemannian  distortion coefficients \eqref{eq:defbetaKNK>0}  and the Heisenberg ones \eqref{eq:betaHeisIntro} are:
\begin{itemize}
\item as expected, the functional expressions  \eqref{eq:defbetaKNK>0} and \eqref{eq:betaHeisIntro} do not match;
\item more strikingly, when used to encode measure distortion, while in the Riemannian coefficients \eqref{eq:defbetaKNK>0}, the parameter $\theta$ takes as value the \emph{length}, in the Heisenberg  ones  \eqref{eq:betaHeisIntro}, $\theta$ takes as value the \emph{curvature}  of geodesics (as curves in $\R^{2d+1}$).
\end{itemize}

A different approach to Ricci curvature lower bounds in sub-Riemannian manifolds was put forward by Baudoin-Garofalo \cite{BG17}. Inspired by the Bakry-\'Emery semi-group techniques, they generalized curvature-dimension conditions  for sub-Riemannian structures with symmetries, introducing a suitable generalization of Bochner's identity and $\Gamma$-calculus. See the lecture notes \cite{BaudoinBook} and references therein for an account of this theory. We also mention the recent work by Stefani \cite{Stefani-Heat} which, in the setting of Carnot groups, establishes a first link between the Lagrangian approach to Ricci curvature lower bounds (dealing with convexity-type properties of entropy along Wasserstein geodesics) and the Eulerian one (focused instead on the properties of the heat flow).

Motivated by the aforementioned contributions, in his 2017 Bourbaki Seminar, Villani envisioned the possibility of a ``great unification'' of Riemannian  and sub-Riemannian geometries in a comprehensive theory of synthetic Ricci curvature lower bounds. See \cite[Sec.\@ 9, conclusions et perspectives]{CVNB}, and \cite[Rmk.\@ 14.23]{Vil}. Developing such a theory is the ambition of the present paper.

\subsection*{Gauge metric measure spaces}
Let  $(X,\sfd,\mm)$ be a metric measure space, m.m.s.\@ for short. We add a supplementary structure to the metric measure one, namely a non-negative Borel function $\sfG: X\times X\to [0,+\infty]$, that we call   \emph{gauge function}.
The quadruple  $(X,\sfd,\mm,\sfG)$ will be called \emph{gauge metric measure space}. 

The following analogy explains the role of the gauge function. A Riemannian manifold has Ricci curvature bounded from below by $K \in \R$ if for any pair of points $x,y$ and any geodesic $\gamma$ between $x$ and $y$ it holds
\begin{equation}
\Ric(\dot\gamma,\dot\gamma) \geq K \|\dot\gamma\|^2 = K  \sfd(\gamma_0,\gamma_1)^2.
\end{equation}
The distance function $\sfd$ is used in the right hand side as a \emph{gauge} to measure the \emph{extent} of the lower Ricci curvature bound, quantified by the constant $K$. The idea is to replace the distance $\sfd$ with a general gauge function $\sfG$.  

Gauge functions will be a key object in our extension of the synthetic theory of Ricci curvature bounds to the sub-Riemannian setting, where it is now well-understood that the effect of geometry on transport inequalities may not depend uniquely on the distance, but rather on other intrinsic sub-Riemannian quantities  (see \cite{BKS}, \cite[Sec.\@ 8.1]{BRInv}).

\subsection*{Distortion coefficients}
 Motivated by the comparison theory in sub-Riemannian geometry  \cite{AG1, AG2, ZeLi,  ZeLi2,  HughenPhD, AAPL-Ricci, LeeLiZel-Sasakian, BR-comparison, BRMathAnn}  (from the Hamiltonian viewpoint) and the discussion above, we now introduce general distortion coefficients.

Let $\sfs:[0,+\infty)\to \R$ be a continuous function and $N\in [1,+\infty)$ such that
\begin{equation}\label{eq:fftcN-Intro}
\sfs(\theta)=c \, \theta^{N}+o(\theta^{N}) \qquad \text{ as $\theta\to 0$},
\end{equation}
for some $c >0$. The parameter $N$ will be a sharp upper bound for a new notion of dimension, which is in general different from the Hausdorff one, see Section \ref{Sec:Dim}. 
Denote:
\begin{equation}\label{eq:defcD-Intro}
\cD:=\inf\{\theta>0\mid \sfs(\theta)=0\}.
\end{equation}  
It is clear that $\cD>0$. The latter will be a sharp upper bound on the gauge function, see Section \ref{sec:diam}.
 Define the \emph{distortion coefficient} $\beta_{(\cdot)}(\cdot):[0,1]\times [0,+\infty]\to [0,+\infty]$ as
\begin{align}\label{eq:defbeta-Intro}
 (t,\theta)\in [0,1]\times [0,+\infty]\mapsto \beta_{t}(\theta):=
 \begin{cases}
 t^{N} &\theta =0, \\
\dfrac{\sfs(t\theta )}{\sfs(\theta)}& 0<\theta <\cD,\\
 \displaystyle  \liminf_{\phi \to \cD^-} \dfrac{\sfs(t\phi )}{\sfs(\phi)} & \theta \geq \cD.\end{cases}
\end{align}
Notice that, when properly understood,  both the Riemannian  \eqref{eq:defbetaKNK>0} and  the Heisenberg  \eqref{eq:betaHeisIntro}  distortion coefficients  are obtained as in \eqref{eq:defbeta-Intro}, for a suitable $\sfs$.

\medskip
In applications to sub-Riemannian geometry, the function  $\sfs$ in \eqref{eq:fftcN-Intro} is chosen in a class of models characterized as solutions to suitable ODEs. We illustrate this characterization in Section \ref{sec:comparison}, and in particular we refer to Proposition \ref{p:basic} and Remark \ref{r:link}, which provides a bridge between the synthetic viewpoint and sub-Riemannian geometry. Here we develop the theory in full generality, without further constraints on the function $\sfs$.

\subsection*{Entropy functionals}
We consider the metric space  $(\mathcal{P}_{2}(X), W_2)$ of probability measures with finite second moment endowed with the  Kantorovich-Rubinstein-Wasserstein  quadratic transportation distance, see Section \ref{Sec:OptTransp} for the definitions.
A $W_2$-geodesic $(\mu_t)_{t\in [0,1]}$ can be equivalently represented by a probability measure  $\nu$ on the space of geodesics $\Geo(X)$, with $\mu_t=(\ee_t)_{\sharp} \nu$, where $\ee_t:\Geo(X)\to X$ is the evaluation map at time $t$. Such a $\nu$ is called \emph{optimal dynamical plan} from $\mu_0$ to $\mu_1$, and the set of such measures is denoted by $\OptGeo(\mu_0, \mu_1)$.  Let also $\Prob_{ac}(X,\mm)$ be the space of probability measures that are absolutely continuous with respect to $\mm$.

Recall that, for $\mu \in \mathcal{P}_{2}(X)$, its relative (Boltzmann-Shannon) entropy  is defined by
\begin{equation}\label{def:Ent-Intro}
\Ent(\mu|\mm): = \int_{X} \rho \log \rho \, \mm,\qquad  \text{ if $\mu = \rho \, \mm\in \Prob_{2}(X)\cap \Prob_{ac}(X,\mm)$},
\end{equation}
in case  $\rho\log \rho\in L^{1}(X,\mm)$,  otherwise we set $\Ent(\mu|\mm) := +\infty$.

Let $\Dom(\Ent(\cdot|\mm))$ be the finiteness domain of the entropy and 
\begin{align}
\Prob_{bs}(X,\sfd,\mm)&:=  \{\mu\in \Prob(X,\sfd)\mid \text{ $\supp \, \mu$ is bounded and }  \supp \, \mu\subseteq \supp \, \mm \},\\
\Prob_{bs}^*(X,\sfd,\mm)&:= \Dom(\Ent(\cdot|\mm)) \cap \Prob_{bs}(X,\sfd,\mm).
\end{align}
The subspaces $\Prob_{bs}(X,\sfd,\mm)$ and $\Prob_{bs}^*(X,\sfd,\mm)$ will play a key role throughout the paper.

In order to formulate ``dimensional''  Ricci curvature lower bounds, it is convenient  to introduce also  the following dimensional entropy (cf.\@ \cite{EKS}):
\begin{equation}\label{eq:defUn-Intro}
\U_{n}(\mu|\mm) : = \exp\left(-\frac{\Ent(\mu|\mm)}{n} \right), \qquad n\in [1,+\infty),
\end{equation}
with the understanding that $\U_{n}(\mu|\mm):=0$ if $\mu\notin \Dom(\Ent(\cdot|\mm))$.

\subsection*{$\CD(\beta,n)$ spaces and $\MCP(\beta)$ spaces}

We introduce synthetic Ricci curvature lower bounds on gauge m.m.s.: the Curvature-Dimension condition $\CD(\beta,n)$ and the Measure Contraction Property $\MCP(\beta)$.

\begin{definition*}[Definition \ref{def:CDMCPbetan}]
Let $n\in [1,+\infty)$, and $\beta$ as in \eqref{eq:defbeta-Intro}.  A gauge  metric measure space $(X,\sfd,\mm, \sfG)$ satisfies:
\begin{itemize}
\item $\CD(\beta, n)$ if for all $\mu_0\in \Prob_{bs}(X,\sfd,\mm)$, $\mu_1\in \Prob_{bs}^{*}(X,\sfd,\mm)$ with $\supp \mu_0\cap \supp \mu_1 =\emptyset$,  there exists a $W_2$-geodesic $(\mu_t)_{t\in[0,1]} \subset \Prob_2(X,\sfd)$ connecting them, induced by $\nu\in \OptGeo(\mu_0,\mu_1)$, such that it holds
\begin{multline}\label{eq:defCDbetan-Intro}
\U_{n}(\mu_{t}|\mm)\geq  \exp\left( \frac{1}{n}\int_{\Geo(X)} \log \beta_{1-t}\big(\sfG(\gamma_1,\gamma_0))\, \nu(\di\gamma) \right) \U_{n}(\mu_{0}|\mm)  \\
+\exp\left( \frac{1}{n}\int_{\Geo(X)} \log \beta_{t}\big(\sfG(\gamma_0,\gamma_1)\big) \, \nu(\di\gamma) \right)   \U_{n}(\mu_{1}|\mm), \qquad \forall\, t\in (0,1),
\end{multline}
with the convention that $\infty \cdot 0 = 0$. 
\item $\MCP(\beta)$ if for any $\bar{x} \in \supp\mm$ and $\mu_1\in \Prob_{bs}^{*}(X,\sfd,\mm)$   with $\bar{x}\notin \supp \mu_1$ there exists a $W_2$-geodesic $(\mu_t)_{t\in[0,1]} \subset \Prob_2(X,\sfd)$ from $\mu_0=\delta_{\bar{x}}$ to $\mu_1$ such that
\begin{equation}\label{eq:defMCPbeta-Intro}
\U_{n}(\mu_{t}|\mm)\geq \exp\left( \frac{1}{n}\int_{X}  \log \beta_{t}\big(\sfG(\bar{x},x)\big)  \, \mu_1(\di x) \right) \U_{n}(\mu_{1}|\mm) , \qquad \forall\, t\in (0,1),
\end{equation}
for some (and then every) $n\geq 1$.
\end{itemize}
\end{definition*}

\noindent
We anticipate here a few remarks.
\begin{enumerate}
\item The $\MCP(\beta)$ condition does not depend on the value of $n$. See Remark \ref{rem:eqMCPbeta}.
\item Let us stress that non-absolutely continuous $\mu_0$ are allowed, which by construction gives the implication $\CD(\beta, n) \Rightarrow  \MCP(\beta)$. See Remarks \ref{rem:CDimpliesMCP} and \ref{rem:mu0Pbs}.
\item Conversely, assuming that $(X,\sfd,\mm)$ supports interpolation inequalities for densities  with dimensional parameter $n$ (in the sense of Definition \ref{def:mcptocd}), we show that $\MCP(\beta) \Rightarrow  \CD(\beta,n)$. See Theorem \ref{thm:mcptocd}.
\item The $\CD / \MCP$ conditions  imply that  $\Dom(\Ent(\cdot | \mm))\subset \Prob_{2}(X,\sfd)$  (with metric $W_{2}$) and $\supp \mm$ (with metric $\sfd$) are length spaces.  See Remark \ref{rem:LengthSpace}.
\end{enumerate}

\subsection*{Compatibility with classical synthetic theories} 
The $\CD(\beta,n)$ and $\MCP(\beta)$ conditions satisfy the following compatibility properties:
\begin{itemize}
\item \emph{Compatibility  with Lott-Sturm-Villani's $\CD$}: by choosing as distortion coefficients the Riemannian ones, i.e.\@ $\beta=\beta_{K,N}$ as in \eqref{eq:defbetaKNK>0},  and as gauge function the distance, i.e.\@ $\sfG=\sfd$,  we show that, for essentially non-branching m.m.s., the Lott-Sturm-Villani's $\CD(K,N)$ conditions are equivalent to the corresponding $\CD(\beta_{K,N}, N)$. See Section \ref{sec:howtorecover}.
\item \emph{Compatibility with Ohta-Sturm's $\MCP$}: as above, we show that the Ohta-Sturm   $\MCP(K,N)$  are equivalent to the corresponding $\MCP(\beta_{K,N})$. See Section \ref{sec:howtorecover}.
\item  \emph{Compatibility  with Balogh-Krist\'aly-Sipos}:  by choosing as distortion coefficients the Heisenberg ones, i.e.\@ $\beta=\beta^{\H^d}$ as in \eqref{eq:betaHeisIntro},  and as gauge function $\sfG(x,y)=\theta^{x,y}$ where $\theta^{x,y}$ is the curvature of the geodesic from $x$ to $y$ (as a curve in $\R^{2d+1}$), we show that the Heisenberg group $\H^d$ (endowed with the Carnot-Carath\'eodory distance, and the $2d+1$ Lebesgue measure) satisfies the $\CD(\beta^{\H^d}, 2d+1)$ condition. See Section \ref{sec:howtorecover2}.
\item \emph{Compatibility with E.\,Milman's $\mathsf{CGTD}$}:  the Curvature-Geodesic-Topological-Di\-men\-sion conditions $\mathsf{CGTD}(K,N,n)$ introduced by E.\@ Milman in \cite[Sec.\@ 7]{MilmanSR}, for $K\in \R$, $n\geq 1$ and $N\geq n$ is equivalent to the $\CD(\beta_{K,N},n)$ condition, for gauge function $\sfG=\sfd$. See Remark \ref{rem:Milman}.
\end{itemize}

\subsection*{Geometric consequences} 
One of the novel features of the $\CD(\beta,n)$ and $\MCP(\beta)$ conditions above is the interplay of the gauge function $\sfG$ (measuring the extent of the Ricci curvature lower bounds) and the distance $\sfd$ (governing the geodesics and thus optimal transport). This leads to a decoupling between metric vs distortion aspects in the classical geometric consequences of the curvature-dimension conditions. In Section \ref{sec:geometricconsequences} we will obtain the following results:

\begin{itemize}
\item \emph{Generalized Brunn-Minkowski inequality}: given two sets $A_{0}, A_{1}$, we estimate from below the volume of the set $A_t$ of $t$-intermediate points of geodesics from $A_{0}$ to $A_{1}$,  with a distortion which is quantified \emph{purely in terms of the gauge function $\sfG$ and the general distortion coefficients $\beta$}. See Section \ref{sec:Brunn-Mink}. 
\item \emph{$\MCP \Rightarrow \CD$} for spaces supporting interpolation inequalities. See Section \ref{s:MCPtoCD}.
\item \emph{Gauge-diameter estimates}: the parameter $\cD$ in \eqref{eq:defcD-Intro} yields an upper bound on the essential supremum of the gauge function,  called gauge diameter. A remarkable difference with respect to classical Bonnet-Myers Theorem is that the gauge diameter can be bounded also for non-compact metric measure spaces (e.g.\@ the Heisenberg group, where the estimate we obtain is sharp) and, conversely, unbounded for compact ones. See Section \ref{sec:diam}.
\item \emph{Local doubling inequalities for metric balls}: in contrast with the classical case when ``locality'' is measured in terms of the distance, here it is expressed in terms of the gauge function which also determines the doubling constant. See Section \ref{Sec:Doubling}.
\item \emph{Geodesic dimension estimates}:  the parameter $N$ occurring in \eqref{eq:fftcN-Intro} yields an upper bound for the Hausdorff dimension and for a  notion of dimension for m.m.s.\@ recently introduced in \cite{ABR-curvature, R-MCP}, the so-called geodesic dimension. See Section \ref{Sec:Dim}. The latter bound is sharp while the former is not, see Remark \ref{rmk:sharpnessgeodim}.  
\item \emph{Generalized Bishop-Gromov inequalities}: we obtain volume estimates on the (truncated)  sub-level sets of the gauge function. Such sets are reminiscent of the ``butterfly-shaped'' sets appearing in \cite[Fig.\@ 2]{NJ09}. For the validity of the Bishop-Gromov inequality, we need an additional compatibility condition between the distance and the gauge function, which we name \emph{meek property}. See Section \ref{s:GBGI}.
\end{itemize}

\subsection*{Stability and compactness}
 Key features of the Lott-Sturm-Villani's theory are the stability and compactness properties with respect to the measured-Gromov-Hausdorff convergence (mGH for short).

For  a sequence of gauge m.m.s.\@ it is natural to consider the mGH convergence of the metric measure structures, coupled with  an additional notion of convergence for the gauge functions.
A naive generalisation of  the mGH convergence to gauge functions,  see \eqref{eq:iiforgauge}, would fail for  natural sequences  (e.g.\@ convergence to the tangent cone for sub-Riemannian structures, cf. Section \ref{sec:ex:tangent}). This is due to the low regularity of the gauge functions which in general are not even continuous, in sharp contrast with the Lott-Sturm-Villani's theory where $\sfG=\sfd$ is clearly $1$-Lipschitz. 

For this reason, in Section \ref{sec:stabilityandcompactness}, we introduce a significantly weaker condition which provides a good balance yielding both \emph{stability} and \emph{compactness} properties for the $\CD(\beta,n)$ condition and the $\MCP(\beta)$.  We ask for a sort of  $L^1_{\loc}$ convergence of gauge functions in varying spaces, and a regularity condition on the limit gauge  function outside of the diagonal. Compared to the classical Lott-Sturm-Villani's theory, the low regularity of the gauge functions and their weaker convergence introduce new challenges in the proof. 

\subsection*{The vector-valued case}
 In Section \ref{sec:vectorial} we extend the above theory to vector-valued gauge functions, namely 
\begin{equation} 
\sfG: X\times X\to \RP^{m}_{+},
\end{equation}
where  $\RP^{m}_{+}$ is a projective semi-space. This generalization is necessary in order to obtain sharp estimates for important classes of sub-Riemannian examples, see Section \ref{sec:ex:vectorial}.

\subsection*{Natural gauge functions} 
In the Heisenberg group a natural gauge function is the quantity $\theta$ mentioned earlier in the introduction ($\theta^{x,y}$ is the curvature of the geodesic from $x$ to $y$ as a curve of $\R^{2d+1}$). A key observation is that $\theta$ can be written in terms of the Carnot-Carath\'eodory distance and the norm of its gradient, computed with respect to a Riemannian extension. 

This observation suggests a synthetic way to define natural gauge functions for any metric measure space $(X,\sfd,\mm)$ equipped with a reference metric $\sfd_R$ (the subscript stands for \emph{Reference}, or \emph{Riemannian}). The construction is well-suited to sub-Riemannian geometry where, in several cases, one has natural Riemannian extensions serving as reference.  

Natural gauge functions will be constructed out of two main building blocks: the distance function and the \emph{$\sfD$ function} defined as follows 
\begin{equation}\label{eq:defD-Intro}
\sfD :X\times X \to [0,+\infty], \qquad \sfD(x,y) :=  \limsup_{z,w\to x} \frac{|\sfd^2(z,y)-\sfd^2(w,y)|} {2 \sfd_R(z,w)}  ,
\end{equation}
with the convention that the limit is $0$ if $x$ is an isolated point.

Let us show how $\theta$ is obtained via the above procedure, in the Heisenberg group $(\H^d,\sfd_{cc},\mathscr{L}^{2d+1})$. As extension $\sfd_R$, we choose the left-invariant Riemannian metric obtained declaring the Reeb field $\partial_z$ to be of unit length and orthogonal to the Heisenberg distribution.  For $(x,y)$ out of the cut locus, we prove in Proposition \ref{prop:naturalobject} that
\begin{equation}
\theta^{x,y}=\sqrt{\sfD^2(x,y)-\sfd_{cc}^2(x,y)}.
\end{equation}
We note that $\sfD \geq \sfd$, see Proposition \ref{prop:propertiesD}\ref{i:propertiesD1}. Natural gauge functions are studied in  Section \ref{sec:nonsmooth}, in particular:
\begin{itemize}
\item in Section \ref{sec:D} we define the $\sfD$ function and we establish its basic properties;
\item in Section \ref{sec:ngf} we define \emph{natural gauge functions} on metric measure spaces equipped with a reference metric, of which $\sfD$ and $\sfd$ are particular cases;
\item in Section \ref{sec:naturalinSR} we establish further properties of natural gauge functions, namely:
\begin{itemize}
\item the meek condition used for the generalized Bishop-Gromov Theorem; see Section \ref{subsec:meek};
\item the regularity and  boundedness properties required for the stability and compactness results; see Sections \ref{sec:regularityGExamples} and  \ref{sec:boundedness} respectively.
\end{itemize}
We prove that all these properties follow from natural hypotheses in sub\--\-Rie\-man\-nian geometry, including: step $\leq$ 2, minimizing Sard property, $*$-minimizing Sard property, ideal, absence of non-trivial Goh geodesics, real-analyticity. For a glimpse of all the implications see Figure \ref{fig:implications};
\item in Section \ref{sec:Doutofcut} we show how the natural gauge functions are related to the sub-Riemannian distance out of the cut locus.
\end{itemize}

\subsection*{Compatibility with the Hamiltonian theory of curvature}

In Section \ref{sec:comparison} we establish the compatibility of the synthetic $\CD(\beta,n)$ condition with the Hamiltonian theory of Ricci curvature lower bounds for sub-Riemannian manifolds. The latter has its roots in the pioneering works by Agrachev and Gamkrelidze on the geometry of curves in Lagrange Grassmannians \cite{AG1,AG2}, and was subsequently developed by Zelenko and Li \cite{ZeLi} with the introduction of the so-called \emph{canonical frame}. This technical tool, generalizing in a very broad sense the notion of parallel transport, has been pivotal in the development of the comparison theory for sub-Riemannian structures from the Hamiltonian point of view, started in \cite{ZeLi2,AAPL-Ricci,LeeLiZel-Sasakian}. Sub-Riemannian Ricci curvatures were finally introduced in \cite{BR-comparison} in full generality, as \emph{partial traces} of the canonical curvatures. The corresponding comparison theory for distortion coefficients was finally set in \cite{BRMathAnn}.

We illustrate such a theory in Section \ref{sec:comparison}, in a form adapted to our purposes. For the sake of self-consistency, in Appendix \ref{a:canonicalframe}, we include an account of the Agrachev-Zelenko-Li theory, that is used extensively in Sections \ref{sec:comparison}-\ref{sec:fat}.

We depict here the main ideas. Let $(M,\sfd,\mm)$ be a sub-Riemannian metric measure space, i.e.\@ $M$ is a smooth manifold, $\sfd$ is a sub-Riemannian distance, and $\mm$ is a smooth measure, see Definition \ref{def:srmms}. The paradigm of sub-Riemannian comparison is that, for the generic geodesic $\gamma$, one can decompose the tangent space along $\gamma$ in a family of subspaces, depending on the Lie bracket structure. These are labeled by the boxes $\alpha$ of a Young diagram. For each subspace we define a \emph{canonical Ricci curvature} $\mathfrak{Ric}^\alpha_\gamma$ as the corresponding partial trace of the canonical curvature. We remark that in the Riemannian case there is only one such a box $\alpha$, and
\begin{equation}
\mathfrak{Ric}^\alpha_{\gamma} = \Ric(\dot\gamma,\dot\gamma),
\end{equation}
recovering the classical Ricci tensor along that geodesic.

For a general sub-Riemannian metric measure space $(M,\sfd,\mm)$, consider the ``true'' distortion coefficient of the m.m.s.:
\begin{equation}
\beta_t^{(M,\sfd,\mm)}(x,y):=\limsup_{r\to 0^+}\frac{\mm(Z_t(\{x\},B_r(y)))}{\mm(B_r(y))}, \qquad \forall\, x,y\in M,\quad t\in [0,1],
\end{equation}
see Definition \ref{def:truedistcoeff}. For fixed $(x,y)$ out of the cut locus, there is a unique geodesic $\gamma$ joining $x$ with $y$, and when \emph{every} canonical Ricci curvature along $\gamma$ is bounded from below, $\mathfrak{Ric}^\alpha_\gamma\geq \kappa_{\alpha}$, for some $\kappa_\alpha \in \R$, and all $\alpha$, then the following comparison holds:
\begin{equation}\label{eq:thissense-intro}
\beta_t^{(M,\sfd,\mm)}(x,y)\geq \beta_t^{\mathrm{mod}}, \qquad \forall\, t\in [0,1].
\end{equation}
Here, $\beta^{\mathrm{mod}}$ is a \emph{model} distortion coefficient. We illustrate its properties.
\begin{itemize}
\item $\beta^{\mathrm{mod}}$ is associated with a variational problem on $\R^n$, of Linear-Quadratic type, coming from optimal control theory. This is a change of paradigm with respect to classical Riemannian comparison theory, where model distortion coefficients are the ones of Riemannian space forms, while the models in the present theory are not even metric spaces. See Section \ref{sec:LQ}.
\item $\beta^{\mathrm{mod}}$ can be computed explicitly by solving a Hamiltonian ODE,  which is identified by the values of the Ricci lower bounds $\kappa_\alpha$, and the structure of the Young diagram associated with the geodesic $\gamma$. See Section \ref{sec:constcurvmodels}.
\item $\beta^{\mathrm{mod}}$ has the form \eqref{eq:defbeta-Intro}, in other words it is a model distortion coefficient in the sense of the synthetic theory. This is not by chance, and in fact Proposition \ref{p:basic}, in particular item \ref{i:basic-6}, is the bridge between the curvature-dimension theory of Section \ref{sec:CDbeta} and the comparison theory of Section \ref{sec:comparison}.
\item When the Young diagram of $\gamma$ is of Riemannian type (i.e.\@ it consists of only one box), $\beta^{\mathrm{mod}}$ recovers the familiar Riemannian coefficients. See Section \ref{ss:Riemanniancase}.
\item The general class of $\beta^{\mathrm{mod}}$ contains \emph{all} distortion coefficients observed so far in sub-Riemannian geometry: Heisenberg groups \cite{BKS}, corank $1$ Carnot groups \cite{BKS2}, Sasakian \cite{AAPL-Ricci,LeeLiZel-Sasakian} and $3$-Sasakian manifolds \cite{RS-3Sas}. See Sections \ref{ss:Sasakiancase} and \ref{ss:twocolumnscase}.
\end{itemize}

The comparison result in the sense of \eqref{eq:thissense-intro} is Theorem \ref{thm:maincomparison}, describing the distortion along a single geodesic. If the Ricci curvature bounds hold uniformly, in the precise technical sense of Definition \ref{def:srbddbelow}, then one obtains Theorem \ref{thm:RiccbddbelowareCD}.

\begin{theorem*}[Ideal sub-Riemannian structures with Ricci bounded below are $\CD$]
Let $(M,\sfd,\mm)$ be an ideal sub-Riemannian metric measure space, with $n=\dim M$, equipped with a finite gauge function $\sfG : M \times M \to \R^m_+$, $m\in \N$, with Ricci curvatures bounded from below in the sense of Definition \ref{def:srbddbelow}, and let $\beta$ be the corresponding distortion coefficient. Then $(M,\sfd,\mm,\sfG)$ satisfies the $\CD(\beta,n)$ condition.
\end{theorem*}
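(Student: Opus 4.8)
\emph{Proof plan.} The cleanest route splits the claim into a \emph{curvature} part and an \emph{optimal-transport-regularity} part and concludes via the abstract implication of Theorem \ref{thm:mcptocd} (equivalently one may run the Cordero-Erausquin--McCann--Schmuckenschl\"ager argument directly, which is what underlies that theorem). \emph{Step 1: $\MCP(\beta)$.} I would first show that an ideal sub-Riemannian metric measure space with uniform Ricci lower bounds satisfies $\MCP(\beta)$. Given $\bar x\in\supp\mm$ and $\mu_1=\rho_1\mm\in\Prob_{bs}^*(M,\sfd,\mm)$ with $\bar x\notin\supp\mu_1$, the transport from $\delta_{\bar x}$ is carried by the geodesics $t\mapsto\gamma_{\bar x,y}(t)$, $y\in\supp\mu_1$; since the space is ideal, the cut locus from $\bar x$ is $\mm$-negligible, so for $\mu_1$-a.e.\@ $y$ the minimizing geodesic is unique and non-abnormal, the map $y\mapsto\gamma_{\bar x,y}(t)$ is a local diffeomorphism for $t\in(0,1)$ whose Jacobian is exactly the true distortion coefficient $\beta_t^{(M,\sfd,\mm)}(\bar x,y)$ --- a genuine limit here, not a mere $\limsup$ --- and $\mu_t\ll\mm$. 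The Monge--Amp\`ere change of variables gives $\rho_t(\gamma_{\bar x,y}(t))^{-1/n}=\beta_t^{(M,\sfd,\mm)}(\bar x,y)^{1/n}\rho_1(y)^{-1/n}$; bounding $\beta_t^{(M,\sfd,\mm)}(\bar x,y)\geq\beta_t(\sfG(\bar x,y))$ via Theorem \ref{thm:maincomparison} --- where the uniformity of Definition \ref{def:srbddbelow} is used, so that a single $\beta$ serves all $y$ --- and integrating $\log$ against $\mu_1$ yields \eqref{eq:defMCPbeta-Intro}.

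\emph{Step 2: interpolation inequalities.} Next I would check that $(M,\sfd,\mm)$ supports interpolation inequalities for densities with parameter $n$, in the sense of Definition \ref{def:mcptocd} --- a \emph{curvature-independent} statement. For $\mu_0=\rho_0\mm,\,\mu_1=\rho_1\mm\in\Prob_{bs}^*(M,\sfd,\mm)$ with disjoint supports, the optimal-transport regularity of ideal structures provides a unique $W_2$-geodesic $(\mu_t=\rho_t\mm)$ represented by a unique $\nu\in\OptGeo(\mu_0,\mu_1)$ concentrated on geodesics whose endpoints avoid the cut locus, with absolutely continuous interpolants for $t\in(0,1)$ and with the space essentially non-branching; the Monge--Amp\`ere change of variables along $\nu$, combined with the Minkowski determinant inequality applied to the Jacobian matrix of the interpolation flow (whose boundary data incorporate the Hessian of the Kantorovich potential), then produces the required density inequality expressed through the space's own true distortion coefficients $\beta^{(M,\sfd,\mm)}_\bullet$.

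\emph{Step 3: conclusion.} With Steps 1 and 2 in hand, Theorem \ref{thm:mcptocd} upgrades $\MCP(\beta)$ to $\CD(\beta,n)$, as desired. Unwinding this, \eqref{eq:defCDbetan-Intro} follows by combining the pointwise density inequality of Step 2,
\begin{equation*}
\rho_t(\gamma_t)^{-1/n}\ \geq\ \beta_{1-t}^{(M,\sfd,\mm)}(\gamma_1,\gamma_0)^{1/n}\,\rho_0(\gamma_0)^{-1/n}\ +\ \beta_t^{(M,\sfd,\mm)}(\gamma_0,\gamma_1)^{1/n}\,\rho_1(\gamma_1)^{-1/n},
\end{equation*}
holding for $\nu$-a.e.\@ $\gamma$ (the first summand being absent when $\mu_0\not\ll\mm$, which is why a non-absolutely-continuous $\mu_0$ costs nothing, cf.\@ Remark \ref{rem:mu0Pbs}), with the comparison $\beta^{(M,\sfd,\mm)}_\bullet\geq\beta_\bullet(\sfG(\cdot,\cdot))$ of Theorem \ref{thm:maincomparison}, and then integrating against $\nu$ by concavity of $\log$, exactly as in the classical Lott--Sturm--Villani computation. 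The vector-valued case ($\sfG$ valued in $\R^m_+$) is identical, with distortion coefficients and logarithms read componentwise as in Section \ref{sec:vectorial}.

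\emph{Main obstacle.} The crux is Step 2: the optimal-transport regularity for ideal sub-Riemannian structures and its interplay with the comparison machinery --- absolute continuity of interpolants, the transport being induced by a map, essential non-branching, and, most delicately, $\nu$-a.e.\@ avoidance of the cut locus (so that the canonical frame of Section \ref{sec:comparison}, hence Theorem \ref{thm:maincomparison}, is available along $\nu$-a.e.\@ geodesic), together with the measurable selection of the geodesic family and the passage from the matrix Jacobi/Riccati description along each geodesic to the stated pointwise inequality. By contrast Step 1 is comparatively soft once Theorem \ref{thm:maincomparison} is granted, and Step 3 is the standard entropy computation.
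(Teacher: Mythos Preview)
Your ``unwinding'' in Step~3 is exactly the paper's proof: combine the interpolation inequality for densities on ideal structures (the paper cites \cite[Thm.~4]{BRInv}) with the cut-locus avoidance of optimal plans (the paper cites \cite[Cor.~3.8]{BRInv}), plug in the comparison $\beta_t^{(M,\sfd,\mm)}\ge\beta_t(\sfG)$ from Theorem~\ref{thm:maincomparison} at $\nu$-a.e.\ endpoint pair, and integrate via Jensen. Your identification of Step~2 as the crux, and of the ideal hypothesis as what guarantees the required regularity of the transport, is also on the mark.

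One small caveat on the packaging: invoking Theorem~\ref{thm:mcptocd} as a black box requires that $\sfG$ be continuous at the endpoints of $\nu$-a.e.\ geodesic, which is \emph{not} part of the hypotheses here (only finiteness of $\sfG$ is assumed). In the proof of Theorem~\ref{thm:mcptocd}, that continuity is used precisely to derive $\beta_t^{(M,\sfd,\mm)}(x,y)\ge\beta_t(\sfG(x,y))$ via the half-Brunn--Minkowski inequality; in the present setting this bound is available directly from Theorem~\ref{thm:maincomparison} at all $(x,y)\notin\Cut(M)$, with no continuity needed. So the detour through $\MCP(\beta)$ and Theorem~\ref{thm:mcptocd} is unnecessary, and the direct route --- which you also spell out --- is both shorter and avoids this unverified hypothesis. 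The paper takes the direct route.
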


This result establishes the compatibility of the $\CD(\beta,n)$ theory with the one of sub-Riemannian Ricci bounds.

\subsection*{Compact fat structures satisfy the curvature-dimension condition}

Clearly, any compact Riemannian manifold has Ricci curvature bounded below, and thus it verifies the $\CD(\beta,n)$ condition for suitable $\beta$ (of course, in this case, $\beta = \beta_{\kappa,n}$ where $\kappa$ is a lower bound for the Ricci curvature and $n$ is the topological dimension). In Section \ref{sec:fat} we study the sub-Riemannian counterpart of such a statement, for the case of fat distributions (also called strong bracket generating, see \cite{montgomerybook,Strichartz}). We refer to \eqref{eq:fat-definition} for the definition.

In order to apply Theorem \ref{thm:RiccbddbelowareCD}, one must prove uniform lower bounds on the Ricci curvatures of fat structures. In contrast with the Riemannian case, this requires a considerable effort and delicate estimates, which are the object of Section \ref{sec:fat}. There, we prove that, when equipped with the natural gauge function $\sfD$ of \eqref{eq:defD-Intro}, compact fat sub-Riemannian metric measure spaces satisfy the $\CD(\beta,n)$ condition, for an explicit $\beta$. See Theorem \ref{thm:fatcptareCD} for a precise statement, which we anticipate here in a simplified form.

\begin{theorem*}[Compact fat structures are $\CD$]
Let $(M,\sfd,\mm)$ be a compact, $n$-dimensional, fat sub-Riemannian metric measure space. Then for the natural gauge function $\sfD: M\times M \to [0,+\infty)$ of \eqref{eq:defD-Intro}, there exists an explicit distortion coefficient $\beta$ such that $(M,\sfd,\mm,\sfD)$ satisfies the $\CD(\beta, n)$ condition.
\end{theorem*}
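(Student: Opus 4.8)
The plan is to deduce the statement from Theorem~\ref{thm:RiccbddbelowareCD} (``ideal sub-Riemannian structures with Ricci bounded below are $\CD$''), so that the whole argument reduces to verifying, for a compact fat sub-Riemannian metric measure space equipped with the gauge function $\sfD$ of \eqref{eq:defD-Intro}, its three hypotheses: that the structure is \emph{ideal}, that $\sfD$ is \emph{finite}, and that the canonical Ricci curvatures are \emph{uniformly bounded from below} in the sense of Definition~\ref{def:srbddbelow}. The first two points are relatively soft. Fatness (strong bracket-generating) forces the growth vector $(\rank\distr, n)$, hence step $2$, it makes every non-trivial geodesic ample and equiregular, and it precludes non-trivial abnormal extremals; since a compact manifold is complete, the structure is ideal. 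Finiteness of $\sfD$ is one of the boundedness properties of natural gauge functions: I would invoke the results of Sections~\ref{sec:D} and~\ref{sec:boundedness}, which show that under the fat hypothesis $\sfD$ is finite --- indeed bounded --- on $M\times M$, the key inputs being the local Lipschitz and semiconcavity regularity of $\sfd^2$ off the diagonal for fat structures together with the compactness of $M$.

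The core of the proof is the uniform Ricci lower bound, and its subtlety is that, although $M$ is compact, the unit cotangent bundle $\{H=1/2\}$ of a sub-Riemannian structure is not: in each cotangent fiber the initial covector carries an unconstrained ``vertical'' part $\lambda_v$ annihilating $\distr$, and the canonical curvatures $\mathfrak{Ric}^\alpha_\gamma$ must be bounded from below uniformly also as $|\lambda_v|\to\infty$. I would proceed in three steps. First, pin down the flag and the Young diagram of a non-trivial geodesic of a fat structure: by the rigidity of the fat condition these are the same for every such geodesic, the diagram having boxes of only a few types (the single-box rows transverse to the velocity inside $\distr$, the row carrying the velocity, and the rows generated by the first brackets). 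Second, express each $\mathfrak{Ric}^\alpha_\gamma$ intrinsically --- in terms of the curvature and torsion of a connection adapted to the distribution and the structure constants of the flag --- via the canonical frame of Appendix~\ref{a:canonicalframe} and Section~\ref{sec:comparison}. Third, estimate these expressions: by homogeneity it suffices to work on $\{H=1/2\}$; where $|\lambda_v|$ stays bounded one has a compact set, and $\mathfrak{Ric}^\alpha_\gamma$ depends continuously on the initial covector (the canonical frame is well defined throughout, fat structures having no abnormals), hence attains a finite minimum there, while the genuinely new work is to control the regime $|\lambda_v|\to\infty$ and show the curvatures remain bounded from below --- some components are trivially bounded, others acquire a term in $|\lambda_v|$ whose sign must be shown to be favourable. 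Denoting by $\kappa_\alpha$ the resulting uniform lower bounds and packaging them as in Definition~\ref{def:srbddbelow}, Theorem~\ref{thm:RiccbddbelowareCD} yields $\CD(\beta,n)$ with $\beta$ the model distortion coefficient constructed in Section~\ref{sec:constcurvmodels} out of the fat Young diagram and the constants $\kappa_\alpha$; this is the ``explicit $\beta$'' of the statement.

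The main obstacle is precisely this last point: controlling the canonical curvatures $\mathfrak{Ric}^\alpha_\gamma$ of a fat structure uniformly, including along the non-compact vertical directions of the cotangent fibers. Unlike the Riemannian case, where a lower Ricci bound is the pointwise tensorial inequality $\Ric(\dot\gamma,\dot\gamma)\geq K\|\dot\gamma\|^2$, here the $\mathfrak{Ric}^\alpha_\gamma$ are non-local objects built from the Zelenko--Li canonical frame, and making their dependence on the initial covector explicit enough to extract uniform bounds requires the delicate estimates of Section~\ref{sec:fat}. Once the constants $\kappa_\alpha$ are secured, the remainder is the assembly of the standard facts about fat distributions recalled above together with an application of Theorem~\ref{thm:RiccbddbelowareCD}, drawing on the machinery of Sections~\ref{sec:CDbeta} and~\ref{sec:comparison}.
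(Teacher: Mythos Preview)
Your overall architecture is the paper's: check that fat implies ideal, that $\sfD$ is finite (indeed bounded, step $\leq 2$), and that the Ricci curvatures are bounded below in the sense of Definition~\ref{def:srbddbelow}, then invoke Theorem~\ref{thm:RiccbddbelowareCD}. But your description of the curvature estimates contains a genuine misconception. On $\{H=1/2\}$, as $|\lambda_v|\to\infty$ the canonical Ricci curvatures do \emph{not} remain bounded below by a constant, and there is no ``favourable sign'' to exploit. What Theorem~\ref{t:fattheorem} actually proves is the two-sided control $|\mathfrak{Ric}^{\boxdot}_\lambda|\leq C\,H_R(\lambda)^{\col(\boxdot)}$ (Theorem~\ref{t:fatcurvbound}), whence the lower bounds $\mathfrak{Ric}^a_\lambda\geq -\kappa\,\sfD^4$, $\mathfrak{Ric}^b_\lambda\geq -\kappa\,\sfD^2$, $\mathfrak{Ric}^c_\lambda\geq -\kappa\,\sfD^2$, using $\sfD^2=2H_R$. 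The curvatures may diverge to $-\infty$; the divergence is absorbed by powers of the gauge $\sfD$, which is precisely why the gauged framework is needed and why a classical $\CD(K,N)$ fails. You also omit the bound $|\rho_{\mm,\lambda}|\leq C\,\sfD$ on the geodesic volume derivative, which Definition~\ref{def:srbddbelow} equally requires.

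The paper's route to these estimates is also different from your sketch. Rather than express the curvatures ``intrinsically'' via a connection and analyze asymptotics in $|\lambda_v|$, it works on the compact Riemannian unit bundle $\{2H_R=1\}$; under the dilation $\lambda\mapsto c\lambda$ your non-compactness as $|\lambda_v|\to\infty$ becomes a \emph{singularity} of the canonical frame as the horizontal part $u\to 0$. Section~\ref{sec:fat} builds a ``$u$-pseudo-homogeneous'' calculus (Definition~\ref{def:pseudohom}, Lemma~\ref{l:property}) and constructs the canonical frame explicitly in coordinates (Lemmas~\ref{l:lemmaEa}--\ref{l:lemmaFa}), showing that although individual frame elements blow up like $H^{-1/2}$, all singularities cancel in the curvature matrices and in $\rho_{\mm}$. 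The Young diagram, incidentally, has exactly three superboxes $a,b,c$ of sizes $n-k$, $n-k$, $2k-n$ (two columns, two levels), not the configuration you describe.
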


As a direct consequence of this result, we obtain the following, see Corollary \ref{cor:fatcptareMCP}.

\begin{corollary*}[Classical $\MCP$ for compact fat structures]
Let $(M,\sfd,\mm)$ be a compact $n$-dimensional sub-Riemannian metric measure space, with fat distribution of rank $k<n$. Then there exists $N'\geq 3n-2k$ such that $(M,\sfd,\mm)$ satisfies the classical $\MCP(0,N')$.
\end{corollary*}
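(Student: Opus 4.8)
The plan is to deduce the Corollary from the Theorem on compact fat structures in three steps: pass from $\CD$ to $\MCP$, replace the gauge $\sfD$ by the distance $\sfd$, and replace the explicit coefficient $\beta$ by the Riemannian model coefficient $\beta_{0,N'}$ for a sufficiently large $N'$. By Theorem~\ref{thm:fatcptareCD}, $(M,\sfd,\mm,\sfD)$ satisfies $\CD(\beta,n)$, where $n=\dim M$ and $\beta$ is the explicit distortion coefficient constructed there; in particular $\beta$ has the form \eqref{eq:defbeta-Intro} for a model $\sfs$ obeying \eqref{eq:fftcN-Intro} with exponent $N$, which for the (regular, step-$2$) flag of a rank-$k$ fat distribution in dimension $n$ is the geodesic dimension $N=3n-2k$ (cf.\ Section~\ref{Sec:Dim}). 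By Remark~\ref{rem:CDimpliesMCP}, $\CD(\beta,n)\Rightarrow\MCP(\beta)$, hence $(M,\sfd,\mm,\sfD)$ satisfies $\MCP(\beta)$.

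Next I would control the gauge. Since $M$ is compact, $\sfD$ is bounded (Section~\ref{sec:boundedness}); moreover the gauge-diameter estimate of Section~\ref{sec:diam} applied to $\MCP(\beta)$ gives $\Theta:=\esssup_{M\times M}\sfD\le\cD$, with $\cD$ as in \eqref{eq:defcD-Intro}. The crux is the (elementary but quantitative) claim that there exists $N'\in[N,+\infty)$ with $\beta_t(\theta)\ge t^{N'}$ for all $t\in[0,1]$ and all $\theta\in[0,\Theta]$, i.e.\ that
\[
 N':=\sup\Big\{\tfrac{\log\beta_t(\theta)}{\log t}\ :\ t\in(0,1),\ \theta\in[0,\Theta]\Big\}
\]
is finite (the equivalence uses $\log t\le 0$, and the case $\beta_t(\theta)=+\infty$ is trivial). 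For finiteness one uses the regularity of $\beta$: on $[0,\cD)$ we have $\beta_t(\theta)=\sfs(t\theta)/\sfs(\theta)>0$ since $\sfs$ has no zero before $\cD$, and for every $\Theta'<\cD$ the function $(t,\theta)\mapsto\log\beta_t(\theta)/\log t$ extends continuously to $[0,\Theta']\times[0,1]$ — its value on $\{t=0\}$ being $N$ by \eqref{eq:fftcN-Intro} and on $\{t=1\}$ being $\theta\,\sfs'(\theta)/\sfs(\theta)$ by de l'H\^opital (the model $\sfs$ being smooth) — hence bounded there; on the remaining range $\theta\to\cD^-$ one has $\beta_t(\theta)\to+\infty$ for each fixed $t<1$ (numerator bounded below, denominator $\to 0^+$), so the ratio is bounded above, and at $\theta=\cD$ (relevant only if $\Theta=\cD$) the $\liminf$ in \eqref{eq:defbeta-Intro} is $+\infty$ for $t\in(0,1)$ and $1$ for $t=1$, so the inequality is immediate. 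Thus $N'<+\infty$, and evaluating the supremum at $\theta=0$ gives $\log(t^{N})/\log t=N$, whence $N'\ge N=3n-2k$.

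Finally I would assemble. Fix $\bar x\in\supp\mm$ and $\mu_1\in\Prob^{*}_{bs}(X,\sfd,\mm)$ with $\bar x\notin\supp\mu_1$, and let $(\mu_t)_{t\in[0,1]}$ be the $W_2$-geodesic from $\delta_{\bar x}$ to $\mu_1$ produced by $\MCP(\beta)$, so that \eqref{eq:defMCPbeta-Intro} holds with $\sfG=\sfD$. By the claim, $\log\beta_t(\sfD(\bar x,x))\ge N'\log t$ for $\mu_1$-a.e.\ $x$, so the right-hand side of \eqref{eq:defMCPbeta-Intro} is at least
\[
\exp\Big(\tfrac1n\int_X N'\log t\;\mu_1(\di x)\Big)\,\U_{n}(\mu_1|\mm)=t^{N'/n}\,\U_{n}(\mu_1|\mm),
\]
which is precisely \eqref{eq:defMCPbeta-Intro} for the coefficient $(\beta_{0,N'})_t(\theta)=t^{N'}$ and gauge $\sfG=\sfd$; thus the same geodesic witnesses $\MCP(\beta_{0,N'})$ for $(M,\sfd,\mm,\sfd)$. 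Since a compact fat sub-Riemannian structure is ideal, hence essentially non-branching, the compatibility with Ohta--Sturm's $\MCP$ of Section~\ref{sec:howtorecover} identifies $\MCP(\beta_{0,N'})$ with the classical $\MCP(0,N')$, and $N'\ge 3n-2k$ by the second step. This proves the Corollary. The main obstacle is exactly that quantitative coefficient comparison: showing that $N'$ is finite uniformly over the full range of the gauge (which requires understanding the explicit model $\beta$ all the way up to the gauge diameter $\cD$) and pinning down its small-$\theta$ exponent as the geodesic dimension $3n-2k$.
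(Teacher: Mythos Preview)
Your overall strategy matches the paper's: pass to $\MCP(\beta)$, establish $\beta_t(\theta)\ge t^{N'}$, observe that this makes the gauge irrelevant, and invoke the equivalence with Ohta--Sturm $\MCP(0,N')$ for essentially non-branching spaces. The identification $N=3n-2k$ (the order of $\sfs$ at zero, computed at the end of the proof of Theorem~\ref{thm:fatcptareCD}) and the final step via ideality are correct.

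There is, however, a genuine gap in your argument that $N'<+\infty$. Your continuous extension of $(t,\theta)\mapsto\log\beta_t(\theta)/\log t$ to $[0,1]\times[0,\Theta']$ is fine for any $\Theta'<\cD$, but if $\Theta=\cD$ your treatment of the corner $(t,\theta)\to(1,\cD^-)$ is only pointwise in $t$. Pointwise divergence $\beta_t(\theta)\to+\infty$ for each fixed $t<1$ does not yield a uniform upper bound on the ratio: a merely smooth $\sfs$ oscillating as $\theta\to\cD^-$ (for instance $\sfs(\theta)=(\cD-\theta)^2\bigl(2+\sin(1/(\cD-\theta))\bigr)$ near $\cD$) produces sequences $(t_k,\theta_k)\to(1,\cD)$ along which $\beta_{t_k}(\theta_k)$ stays bounded below $1$ while $\log t_k\to 0$, so $\log\beta_{t_k}(\theta_k)/\log t_k\to+\infty$. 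Smoothness of $\sfs$ alone, which is all you invoke, is not enough.

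What closes this is \emph{real-analyticity} of $\sfs$, and that is exactly what the paper's one-line proof uses (Proposition~\ref{prop:propertiesbeta}\ref{i:propertiesbeta5}): if $\cD<+\infty$ and $\sfs\in C^\omega([0,\cD])$, the zero at $\cD$ has finite order $m$, so $\sfs(\theta)\sim c(\cD-\theta)^m$ and hence $\beta_t(\theta)\ge 1$ in a full corner neighborhood of $(1,\cD)$, contributing nothing to the supremum. Your direct route via compactness of $M$ and boundedness of $\sfD$ is a useful complement --- notably it handles the case $\cD=+\infty$ (where $\Theta<\cD$ automatically and your compactness argument applies cleanly) --- but when $\cD<+\infty$ you must add the real-analytic input to control the boundary.
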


The above result removes the real-analytic assumption of the analogous statement by Badreddine and Rifford in \cite[Thm.\@ 1.3]{BR-realanalMCP}, in the case of fat distributions, obtaining it with a completely different strategy.

\subsection*{Examples and applications}

Finally, in Section \ref{sec:applications} we present examples and applications of the synthetic theory to sub-Riemannian spaces.
\begin{itemize}
\item In Section \ref{sec:heisenbergpuro} we illustrate in detail our constructions in the case of the three-dimensional Heisenberg group $\H^1$, including the computation of the model distortion coefficients via the comparison theory of Section \ref{sec:comparison}.
\item In Section \ref{sec:Grushin} we show how the Grushin plane enters within the same $\CD(\beta,n)$ class of the Heisenberg group. This is natural, considering the fact that the Grushin plane is a quotient of the Heisenberg group (even though it is not a homogeneous space in the classical sense). This can be seen as a generalization of the by-now known fact that the Grushin plane satisfies the classical $\MCP(0,5)$.
\item In Section \ref{sec:ex:canvar} we study a one-parameter family of Riemannian metric measure spaces, namely the canonical variations of the first Heisenberg group $(\H^{1},\sfd_\varepsilon,\mathscr{L}^{3})$. We discuss in particular the sub-Riemannian limit ($\varepsilon \to 0$) and the adiabatic limit ($\varepsilon \to +\infty$), both occurring within a single $\CD(\beta,n)$ class.
\item In Section \ref{sec:ex:tangent} we study the convergence to the tangent cone of sub-Riemannian structures with natural gauge functions, and how the $\CD(\beta,n)$ condition behaves under the blow-up process.
\item In Section \ref{sec:ex:vectorial} we illustrate the vector-valued theory of Section \ref{sec:vectorial}, in the setting of the three-dimensional left-invariant structures on Lie groups.
\end{itemize}

\paragraph{Acknowledgments.} Davide Barilari acknowledges support by the STARS Consolidator Grant 2021 ``NewSRG'' of the University of Padova. Andrea Mondino and Luca Rizzi have received funding from the European Research Council (ERC) under the European Union's Horizon 2020 research and innovation programme (grant agreements No. 802689 ``CURVATURE'' and No. 945655 ``GEOSUB'').  For the purpose of Open Access, the authors have applied a CC BY public copyright licence to any Author Accepted Manuscript (AAM) version arising from this submission.	
\newpage
\section{Table of notations}
\begin{xltabular}{\textwidth}{p{0.22\textwidth} X}
\underline{Metric notation} & \\
& \\
$(X,\sfd,\mm)$\dotfill &  metric measure space\\
$\bar{\mm}$ \dotfill & completion of $\mm$ \\
$\Geo(X)$ \dotfill & set of length-minimizing, constant-speed curves $\gamma:[0,1]\to X$ of a metric space $(X,\sfd)$ \\
$B_r(x)$ \dotfill & open metric ball of radius $r$ and center $x$ \\
$\NH$ \dotfill & Hausdorff dimension \\
$\NN$ \dotfill & geodesic dimension, Definition \ref{def:geodim} \\
$W_2(\mu,\nu)$\dotfill &$2$-Wasserstein distance, Section \ref{Sec:OptTransp} \\
$\ee_t$\dotfill & evaluation maps $\gamma\mapsto\gamma_t$ at time $t$, Section \ref{Sec:OptTransp}\\
$\mathrm{Opt}(\mu,\nu)$ \dotfill & optimal couplings from $\mu$ to $\nu$, Section \ref{Sec:OptTransp}  \\
$\mathrm{OptGeo}(\mu,\nu)$\dotfill & optimal dynamical plans from $\mu$ to $\nu$, Section \ref{Sec:OptTransp} \\
$\Ent(\mu|\mm)$ \dotfill & Boltzmann-Shannon entropy of $\mu$ w.r.t.\@ $\mm$, Section \ref{sec:entropies} \\
$\Dom(\Ent(\cdot|\mm))$ \dotfill & finiteness domain of $\Ent(\cdot|\mm)$, Section \ref{sec:entropies}\\
$\U_{n}(\mu|\mm)$ \dotfill & $n$-dimensional entropy of $\mu$ w.r.t.\@ $\mm$, Section \ref{sec:entropies} \\
$\Prob_{bs}(X,\sfd,\mm)$ \dotfill &  probability measures with bounded support in $\supp\mm$, Section \ref{sec:entropies}\\
$\Prob_{bs}^*(X,\sfd,\mm)$\dotfill &  $\Dom(\Ent(\cdot|\mm)) \cap \Prob_{bs}(X,\sfd,\mm)$, Section \ref{sec:entropies}\\
$o(\cdot)$ \dotfill & function $o(\cdot):[0,+\infty)\to \R$ satisfying $\displaystyle \limsup_{\theta\downarrow 0} \frac{|o(\theta)|}{\theta}=0$ \\
$\sfG$ \dotfill & gauge function, Eq.\@ \eqref{eq:defG} scalar case, Eq.\@ \eqref{eq:defGVec} vector case\\
$\sfs$ \dotfill & defining function for distortion coefficients, Eq.\@ \eqref{eq:fftcN} scalar case, Eq.\@ \eqref{eq:fftcNVec} vector case \\
$N$ \dotfill & order of $\sfs$ at 0, Eq.\@ \eqref{eq:fftcN} scalar case, Eq.\@ \eqref{eq:fftcNVec} vector case \\
$\cD$ \dotfill &  first zero of $\sfs$ in the scalar case, Eq.\@ \eqref{eq:defcD} \\
$\RP^m_+$ \dotfill & real projective semi-space, Section \ref{sec:vectorial} \\
$\DOM$ \dotfill & positivity domain of $\sfs$ in the vector case, Section \ref{sec:vectorial} \\
$\cD_\theta$ \dotfill & boundary of $\DOM$ along $\theta$, in the vector case, Section \ref{sec:vectorial} \\
$\beta$ \dotfill & distortion coefficient,  scalar case Eq.\@ \eqref{eq:defbeta}, vector case Eq.\@ \eqref{eq:defbetaVec} \\
$\beta^{\tau}_{K,N}$, $\beta^\sigma_{K,N}$ \dotfill & distortion coefficients for the Lott-Sturm-Villani's theory, Section \ref{sec:howtorecover} \\
$\beta^{\H^d}$ \dotfill & distortion coefficient for $\mathbb{H}^d$, Section \ref{sec:howtorecover2} \\
$\beta^{(X,\sfd,\mm)}$ \dotfill & distortion coefficient of a m.m.s., Definition \ref{def:truedistcoeff} \\
$\MCP(\beta)$\dotfill & measure contraction property with distortion coefficient $\beta$, Definition \ref{def:CDMCPbetan} \\
$\CD(\beta,n)$\dotfill & curvature-dimension condition with distortion coefficient $\beta$ and dimensional parameter $n$, Definition \ref{def:CDMCPbetan} \\
$A_t, \, Z_t$\dotfill & set of $t$-intermediate points, Eq.\@ \eqref{eq:defAt} \\
$\beta(A_0, A_1)$\dotfill & distortion coefficient between the sets $A_0, A_1$, Eq.\@ \eqref{eq:betainf} \\
$\diam$ \dotfill & metric diameter \\
$\diam_\sfG$ \dotfill & $\sfG$-diameter, Definition \ref{def:Gdiam} scalar case, Definition \ref{def:GdiamVec} vector case \\
$\rv_{\sfG}$, $\rs_{\sfG}$ \dotfill & measures of ``gauge balls'' and ``spheres'', Definition \ref{def:gaugesets} scalar case, Definition \ref{def:gaugesetsVec} vector case \\
$\Lip^\rho_a$ \dotfill & asymptotic Lipschitz number w.r.t.\@ a metric $\rho$, Definition \ref{def:asLipnumb} \\
$\sfD$ \dotfill & $\sfD$ function,  Definition \ref{def:D} \\
& \\
\underline{Sub-Riemannian notation} & \\
& \\
$\langle\cdot,\cdot\rangle$ \dotfill & pairing of covectors with vectors \\
$\distr$ \dotfill & distribution (possibly rank-varying), Appendix \ref{a:SR}\\
$\End$ \dotfill & end-point map, Appendix \ref{a:SR} \\
$\ver$ \dotfill & complement of a constant rank distribution, Section \ref{sec:nonsmooth}--\ref{sec:fat} \\
$\Cut$ \dotfill & sub-Riemannian cut locus, Definition \ref{def:cut}\\
$\beta^{A,B,Q}$ \dotfill & distortion coefficient of a general LQ problem, Definition \ref{d:LQdist} \\
$\beta^\kappa$ or $\beta^{\bar\kappa}$ \dotfill & distortion coefficient of a model LQ problem, $\kappa \in \R^\ell$ Proposition \ref{p:basic}\ref{i:basic-4}, or $\bar\kappa: \R^m_+\to \R^\ell$ Proposition \ref{p:basic}\ref{i:basic-6}. See Remark \ref{rmk:identify} \\
$t_\kappa$ \dotfill & first conjugate time for model LQ problems, Proposition \ref{p:basic} \\
$\DOM_{\bar{\kappa}}$ \dotfill & positivity domain for model functions in the vector case, Proposition \ref{p:basic} \\
$V^0$ \dotfill & annihilator of a vector space/bundle $V$, Section \ref{sec:fat} \\
$\Lambda_{\neq 0}$ \dotfill & for $\Lambda$, the set of $\lambda\in\Lambda \subseteq T^*M$ s.t.\@ $H(\lambda)\neq 0$, Section \ref{sec:preliminary} \\
$\vec{H}$ \dotfill & Hamiltonian vector field, Appendix \ref{a:canonicalframe}\\
$Y$ \dotfill & reduced Young diagram of a Jacobi curve, Appendix \ref{a:canonicalframe}\\
$\Upsilon$ \dotfill & set of levels of a reduced Young diagram, Appendix \ref{a:canonicalframe}\\
$\size(\cdot)$ \dotfill & size of a superbox of a reduced Young diagram, Appendix \ref{a:canonicalframe} \\
$\{E_a,F_b\}_{a,b\in Y}$ \dotfill & canonical frame, Appendix \ref{a:canonicalframe} \\
$\mathfrak{R}$ \dotfill & canonical curvature, Appendix \ref{a:canonicalframe} \\
$\mathfrak{Ric}$ \dotfill & canonical Ricci curvature, Appendix \ref{a:canonicalframe} \\
$\rho_{\mm}$ \dotfill & geodesic volume derivative, Appendix \ref{a:canonicalframe}\\
$\boxtimes,\boxplus,\boxdot,\boxminus$ \dotfill & superbox of a Young diagram (notation used in Section \ref{sec:fat}) \\
\end{xltabular}				
\section{Synthetic Ricci curvature lower bounds for gauge spaces}\label{sec:CDbeta} 

\subsection{Preliminaries and notation}

\subsubsection{Convergence of measures}
Throughout all the paper, $(X,\sfd)$ is a complete and separable metric space. Denote with $\sM_{\loc}(X)$ the set of Borel measures with values in $[0,+\infty]$ which are finite on every bounded subset and with  $\sM(X)$ the collection of  all finite Borel measures.  Notice that every measure in $\sM_{\loc}(X)$ is $\sigma$-finite, by the exhaustion $X=\cup_{j\in \N} B_{j}(x)$, for some $x\in X$. 

We endow  $\sM_{\loc}(X)$ with the (weak) topology induced by the duality with the space $C_{bs}(X)$ of bounded and continuous functions on $X$ with bounded support: a sequence $(\mu_{j})_{j\in\N}\subset \sM_{\loc}(X)$ \emph{converges weakly}\footnote{Such weak convergence of measures is also known in the literature as ``convergence in vague topology''.} to $\mu_{\infty}\in \sM_{\loc}(X)$ if
\begin{equation}\label{eq:DefWeakConv}
\lim_{j\to \infty} \int_{X} f\,  \mu_{j} = \int_{X} f\, \mu_{\infty}, \qquad \forall\, f\in C_{bs}(X). 
\end{equation}

Let  $\Prob(X)$ denote the set of Borel probability measures and $\Prob_{bs}(X,\sfd)$ the set of probability measures with bounded support.

When $(\mu_{j})_{j\in \N\cup \{\infty\}}\subset \Prob(X)$, the weak convergence \eqref{eq:DefWeakConv} is equivalent to the \emph{narrow} convergence, i.e.\@ convergence in $\Prob(X)$ in duality with the space $C_{b}(X)$ of  bounded and continuous functions on $X$:
\begin{equation*}
\lim_{j\to \infty} \int_{X} f\,  \mu_{j} = \int_{X} f\, \mu_{\infty}, \qquad \forall\, f\in C_{b}(X). 
\end{equation*}
Let us mention that \cite{Vil} adopts the convention ``weak convergence''  for what we denote as ``narrow convergence'' in $\Prob(X)$; however, since our convention of  ``weak'' and ``narrow'' give equivalent notions of convergence on $\Prob(X)$, there is no ambiguity.

Relative narrow compactness in $\Prob(X)$ can be characterized by Prokhorov's Theorem. In order to state it, recall that a subset $\cK \subset \Prob(X)$ is said to be \emph{tight} if, for every $\ve>0$, there exists a compact subset $K_{\ve}\subset X$ such that
\begin{equation}
\mu(X\setminus K_{\ve})\leq \ve, \qquad \forall\, \mu\in \cK.
\end{equation}
 
\begin{theorem}[Prokhorov]\label{thm:Prok}
 Let $(X,\sfd)$ be complete and separable, and let $\cK\subset \Prob(X)$. Then $\cK$ is pre-compact in the narrow topology if and only if $\cK$ is tight.
 \end{theorem}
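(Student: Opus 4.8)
The statement is the classical theorem of Prokhorov, and the plan is to prove the two implications separately: the direct one, tightness $\Rightarrow$ precompactness, is elementary and uses only separability, while the converse genuinely requires completeness. Since $(X,\sfd)$ is separable, the narrow topology on $\Prob(X)$ is metrizable (say by the L\'evy--Prokhorov distance), so throughout it suffices to argue with sequences, identifying precompactness with the property that every sequence in $\cK$ admits a narrowly convergent subsequence with limit in $\Prob(X)$.

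\emph{Tightness $\Rightarrow$ precompactness.} Given a sequence $(\mu_j)_{j\in\N}\subset\cK$, use tightness to pick an increasing family of compacts $K_m\subset X$ with $\sup_j\mu_j(X\setminus K_m)\le 1/m$. Each $K_m$ is compact metric, so $C(K_m)$ is a separable Banach space and the set of positive Borel measures on $K_m$ of total mass $\le 1$ is compact and metrizable for the weak-$*$ topology, by Banach--Alaoglu together with the weak-$*$ closedness of the positive cone and of the mass constraint. A diagonal extraction over $m$ then produces a subsequence, not relabelled, such that the restrictions $\mu_j|_{K_m}$ converge weakly in $\sM(K_m)$ to some $\alpha_m$, for every $m$. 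Since restriction to a closed set is not weakly continuous, the limit cannot be read off directly; instead, testing against nonnegative $f\in C_b(X)$ and using $K_m\subset K_{m+1}$ one gets $\alpha_m\le\alpha_{m+1}$ as Borel measures on $X$, hence $\mu:=\sup_m\alpha_m$ is a well-defined Borel measure, and the bounds $1-\tfrac1m\le\alpha_m(X)=\lim_j\mu_j(K_m)\le 1$ force $\mu(X)=1$. Finally, for $f\in C_b(X)$ one writes $\int f\,\di\mu_j-\int f\,\di\mu$ as the difference over $K_m$ (which tends to $0$ as $j\to\infty$ for fixed $m$), plus a tail on $X\setminus K_m$ bounded by $\|f\|_\infty/m$, plus a defect $\|f\|_\infty(1-\alpha_m(X))\le\|f\|_\infty/m$, and lets $m\to\infty$; this yields $\mu_j\to\mu$ narrowly.

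\emph{Precompactness $\Rightarrow$ tightness.} Fix $\ve>0$. The crux is the claim that for every $\delta>0$ there are finitely many open $\delta$-balls whose union has $\mu$-mass $>1-\ve$, uniformly over $\mu\in\cK$. I would prove this by contradiction: fixing a countable dense set $\{x_i\}_{i\in\N}$ and putting $G_N:=\bigcup_{i\le N}B_\delta(x_i)$, failure of the claim produces $\mu_N\in\cK$ with $\mu_N(G_N)\le 1-\ve$; extracting a narrowly convergent subsequence $\mu_{N_k}\to\mu$ and using the Portmanteau inequality for the open sets $G_m$ together with $G_m\subseteq G_{N_k}$ for $N_k\ge m$, one obtains $\mu(G_m)\le\liminf_k\mu_{N_k}(G_m)\le 1-\ve$ for all $m$, whence $\mu(X)=\lim_m\mu(G_m)\le 1-\ve<1$, contradicting $\mu\in\Prob(X)$. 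Applying the claim with $\delta=1/k$ and $\ve$ replaced by $\ve 2^{-k}$ gives finite unions of closed $1/k$-balls $F_k$ with $\sup_{\mu\in\cK}\mu(X\setminus F_k)<\ve 2^{-k}$; then $K:=\bigcap_k F_k$ is closed and totally bounded, hence compact since $X$ is complete, and $\mu(X\setminus K)\le\sum_k\ve 2^{-k}=\ve$ for every $\mu\in\cK$.

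\emph{Where the difficulty lies.} This being a textbook result there is no deep obstacle, but the delicate part is entirely in the first implication: because restricting a weakly convergent sequence of measures to a closed set is not weakly continuous, the limiting measure must be reconstructed from the increasing family $(\alpha_m)$, and one must verify both that a single diagonal subsequence serves all $m$ simultaneously and that no mass escapes in the limit — the three error terms above being the bookkeeping that makes this precise. In the converse implication the sole, and indispensable, use of completeness is to promote ``closed and totally bounded'' to ``compact''.
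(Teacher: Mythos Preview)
The paper does not give a proof of this theorem: it is quoted as the classical Prokhorov theorem and left unproved, serving only as background for later arguments. Your proposal is a correct rendition of the standard textbook proof (diagonal extraction over an exhausting family of compacts for one direction, the Portmanteau-plus-totally-bounded construction for the other), so there is nothing to compare it against in the paper itself.
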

 
\subsubsection{Optimal transport and \texorpdfstring{$W_{2}$}{W2}-distance in metric spaces}\label{Sec:OptTransp}
Recall that if $f:X\to Y$ is a Borel map between the separable metric spaces $(X,\sfd_{X})$, $(Y,\sfd_{Y})$, then any Borel (resp. probability) measure $\mu$ can be \emph{pushed forward} to a Borel (resp. probability) measure $f_{\sharp} \mu$  defined as $(f_{\sharp} \mu)(B):=\mu(f^{-1}(B))$ for every Borel subset $B\subset Y$. Call $P_{i}:X\times X\to X$, $i=1,2$, the projection on the $i^{th}$ factor. Given $\mu_{i}\in \Prob(X)$, $i=1,2$, denote
\[
\Cpl(\mu_{1}, \mu_{2}):=\{\pi\in \Prob(X\times X) \mid (P_{i})_{\sharp} \pi=\mu_{i}, \, i=1,2 \}
\]
the set of \emph{admissible couplings} (also called ``transference plans'') from $\mu_{1}$ to $\mu_{2}$.  Let $\Prob_{2}(X,\sfd)$ the subspace of probability measures with finite second moment, i.e. 
\[
\Prob_{2}(X,\sfd):=\left\{\mu\in \Prob(X) \mid \int_{X} \sfd(x,\bar x)^{2}\, \mu(\di x)<\infty \text{ for some (and thus for all) } \bar{x}\in X \right\}.
\]
 We endow the space $\Prob_{2}(X)$ with the quadratic (Kantorovich-Rubinstein-Wasserstein) transportation distance $W_{2}$ defined as
\[
W_{2}(\mu,\nu)^{2}:= \inf_{\pi\in \Cpl(\mu,\nu)} \int_{X\times X} \sfd^{2}(x,y)\,  \pi( \di x \di y). 
\]
A coupling $\pi\in \Cpl(\mu,\nu)$ achieving the infimum in the right hand side is called \emph{optimal coupling} from $\mu$ to $\nu$, and the set of such optimal couplings is denoted by $\Opt(\mu,\nu)$. Since $\Cpl(\mu,\nu)$ is non-empty and compact in the weak topology, it is easily checked that  $\Opt(\mu,\nu)\neq \emptyset$.

It is well-known that $(\Prob_{2}(X,\sfd), W_{2})$ is a complete and separable metric space (see e.g.\@ \cite{Vil}). 
In order to discuss the relation between the narrow and $W_{2}$ convergence, recall that a subset $\cK\subset \Prob_{2}(X)$ is \emph{$2$-uniformly integrable} provided that
\[
\lim_{R\to \infty }\sup_{\mu\in \cK} \int_{X\setminus B_{R}(\bar{x})} \sfd^{2}(x,\bar{x}) \, \mu(\di x)=0 , \qquad \text{for some (and thus for every) } \bar{x}\in X.
\]
For a proof of the following result see for instance  \cite[Thm. 6.8]{Vil}.
\begin{proposition}[Characterization of $W_{2}$ convergence]\label{prop:W2convNarrow}
Let $(X,\sfd)$ be a complete and separable metric space and $(\mu_{j})_{j\in \N\cup\{\infty\}}\subset \Prob_{2}(X,\sfd)$.  Then the following are equivalent:
\begin{itemize}
\item $(\mu_{j})_{j\in \N}$ is $2$-uniformly integrable and converges narrowly to $\mui$;
\item $W_{2}(\mu_{j}, \mui)\to 0$ as $j\to \infty$.
\end{itemize}
\end{proposition}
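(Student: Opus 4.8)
The plan is to prove the two implications separately, in both cases routing through the auxiliary fact that the second moments $m_j:=\int_X\sfd(x,\bar x)^2\,\mu_j(\di x)$ converge to $m_\infty$, where $\bar x\in X$ is a fixed base point.

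Assume first that $W_2(\mu_j,\mu_\infty)\to 0$. Since $\int\sfd\,\di\pi\le(\int\sfd^2\,\di\pi)^{1/2}$ for every coupling $\pi$ by Cauchy--Schwarz, we have $W_1\le W_2$; combined with the Kantorovich--Rubinstein duality for $W_1$, this gives $\bigl|\int f\,\di\mu_j-\int f\,\di\mu_\infty\bigr|\le L\,W_2(\mu_j,\mu_\infty)\to 0$ for every bounded $L$-Lipschitz $f$, hence narrow convergence, as bounded Lipschitz functions are convergence-determining on metric spaces. Next, since the only element of $\Cpl(\mu_j,\delta_{\bar x})$ is the product, $W_2(\mu_j,\delta_{\bar x})^2=m_j$; the triangle inequality in $(\Prob_2(X,\sfd),W_2)$ then yields $|m_j^{1/2}-m_\infty^{1/2}|\le W_2(\mu_j,\mu_\infty)\to 0$, so $m_j\to m_\infty$. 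Finally, narrow convergence together with $m_j\to m_\infty$ upgrades to $2$-uniform integrability by a truncation argument: for each fixed $M$ the integrand $(\sfd(\cdot,\bar x)^2-M)^+$ is bounded and continuous, so $\int(\sfd(\cdot,\bar x)^2-M)^+\,\di\mu_j\to\int(\sfd(\cdot,\bar x)^2-M)^+\,\di\mu_\infty$ as $j\to\infty$, while this limit tends to $0$ as $M\to\infty$ by dominated convergence; the finitely many remaining indices are controlled individually since each $\mu_j$ has finite second moment, and a standard three-$\varepsilon$ bookkeeping yields $\lim_{R\to\infty}\sup_j\int_{X\setminus B_R(\bar x)}\sfd^2\,\di\mu_j=0$.

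For the converse, assume $(\mu_j)$ is $2$-uniformly integrable and converges narrowly to $\mu_\infty$; it suffices to show that every subsequence of $(W_2(\mu_j,\mu_\infty))_j$ has a further subsequence tending to $0$. Fix such a subsequence and pick $\pi_j\in\Opt(\mu_j,\mu_\infty)$. By Prokhorov's theorem (Theorem \ref{thm:Prok}) the narrowly convergent family $\{\mu_j\}\cup\{\mu_\infty\}$ is tight, hence so is $\{\pi_j\}$ (a coupling charges the complement of a product of compact sets by at most the sum of the masses its marginals give to the complements of the factors), and a further subsequence converges narrowly, $\pi_{j_k}\rightharpoonup\pi$, with $\pi\in\Cpl(\mu_\infty,\mu_\infty)$ by narrow continuity of the marginal projections. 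The key step is to pass to the limit in the cost. As in the first part, $2$-uniform integrability together with narrow convergence give $m_j\to m_\infty$, hence $\int\bigl(\sfd(x,\bar x)^2+\sfd(y,\bar x)^2\bigr)\,\di\pi_j=m_j+m_\infty\to 2m_\infty=\int\bigl(\sfd(x,\bar x)^2+\sfd(y,\bar x)^2\bigr)\,\di\pi$; since $0\le\sfd(x,y)^2\le 2\bigl(\sfd(x,\bar x)^2+\sfd(y,\bar x)^2\bigr)$, a generalized dominated convergence argument for narrowly converging measures gives $\int\sfd^2\,\di\pi_{j_k}\to\int\sfd^2\,\di\pi$. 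By stability of optimal plans under narrow convergence — valid here precisely because the transport cost converges — the limit $\pi$ is optimal for $(\mu_\infty,\mu_\infty)$; since $W_2(\mu_\infty,\mu_\infty)=0$, the unique such plan is the diagonal coupling $(\mathrm{id},\mathrm{id})_\sharp\mu_\infty$, so $\int\sfd^2\,\di\pi=0$, and therefore $W_2(\mu_{j_k},\mu_\infty)^2=\int\sfd^2\,\di\pi_{j_k}\to 0$, as required.

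The main obstacle is the passage to the limit in the transport cost along the optimal plans, i.e.\ the uniform integrability of $\sfd^2$ with respect to the $\pi_{j_k}$: the diagonal term $\sfd(x,\bar x)^2$ is controlled directly by the hypothesis on the $\mu_j$, but the contribution of the second variable forces one to exploit the finiteness of the second moment of the \emph{fixed} measure $\mu_\infty$ via the domination $\sfd(x,y)^2\le 2\sfd(x,\bar x)^2+2\sfd(y,\bar x)^2$ and the convergence $m_j\to m_\infty$. Once the costs are known to converge, invoking stability of optimal plans and the uniqueness of the diagonal as the optimal self-coupling closes the argument. Alternatively, one may bypass stability altogether by truncating the $\mu_j$ to uniformly bounded sets — where $W_2$ metrizes narrow convergence, the cost being bounded and continuous there — at the price of some extra bookkeeping with the renormalized measures and the choice of truncation radii avoiding atoms of $\mu_\infty$.
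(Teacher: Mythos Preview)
The paper does not prove this proposition at all; it simply cites \cite[Thm.\ 6.8]{Vil} and moves on. Your argument is essentially the standard one found there, and is correct in outline, with one slip worth flagging: in the first direction you claim that $(\sfd(\cdot,\bar x)^2-M)^+$ is \emph{bounded} and continuous, but it is of course unbounded. The fix is immediate and uses exactly the ingredients you already have at hand: test instead against the bounded continuous function $\min(\sfd(\cdot,\bar x)^2,M)$, and recover $\int(\sfd^2-M)^+\,\di\mu_j\to\int(\sfd^2-M)^+\,\di\mu_\infty$ from the identity $\int(\sfd^2-M)^+\,\di\mu_j=m_j-\int\min(\sfd^2,M)\,\di\mu_j$ together with the convergence $m_j\to m_\infty$ you had just established. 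The rest of your proof --- the tightness of the optimal plans, the generalized dominated-convergence step for the cost via the majorant $2\sfd(x,\bar x)^2+2\sfd(y,\bar x)^2$ whose integral converges, and the appeal to stability of optimal plans to identify the limit as the diagonal coupling --- is sound.
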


We next recall some basics about the geodesic structure of  $(\Prob_{2}(X,\sfd), W_{2})$, cf.\@ \cite{Vil}. A \emph{geodesic} is a curve $\gamma:[0,1]\to X$ satisfying 
\begin{equation}
\sfd(\gamma_{s}, \gamma_{t})= |s-t| \,  \sfd(\gamma_{0}, \gamma_{1}), \qquad \forall\, s,t\in [0,1].
\end{equation}
In particular, geodesics are length-minimizing and parametrized with constant speed on the interval $[0,1]$.

The space of all geodesics on $(X,\sfd)$ is denoted by $\Geo(X)$, which is endowed with the complete and separable distance 
\begin{equation}
\sfd_{\Geo(X)}(\gamma, \eta):=\sup_{t\in [0,1]}\sfd(\gamma_{t}, \eta_{t}).
\end{equation}
Recall that $(X,\sfd)$ is said to be a \emph{geodesic space} if every two points in $X$ can be joined by a geodesic;  $(X,\sfd)$ is a geodesic space if and only if $(\Prob_{2}(X,\sfd), W_{2})$ is so.  

A useful procedure is to represent a geodesic in $(\Prob_{2}(X,\sfd), W_{2})$ by a single probability measure defined on $\Geo(X)$. More precisely: for every $W_{2}$-geodesic $(\mu_{t})_{t\in[0,1]}\subset \Prob_{2}(X,\sfd)$, there exists  $\nu\in \Prob(\Geo(X))$ such that $\mu_{t}=(\ee_{t})_{\sharp}(\nu)$ for all $t\in [0,1]$, where 
\begin{equation}
\ee_{t}:\Geo(X)\to X, \qquad \ee_{t}(\gamma):=\gamma_{t},
\end{equation}
is the evaluation map. Such a measure $\nu$ is called an \emph{optimal dynamical plan} from $\mu_{0}$ to $\mu_{1}$, the set of which is denoted by $\OptGeo(\mu_{0}, \mu_{1})$.
Given $\nu\in \Prob(\Geo(X))$, it holds that $\nu\in \OptGeo(\mu_{0},\mu_{1})$ if and only if $(\ee_{0}, \ee_{1})_{\sharp}\nu\in \Opt(\mu_{0}, \mu_{1})$.

A set $\Gamma \subset \Geo(X)$ is  \emph{a set of non-branching geodesics} if and only if for any $\gamma^{1},\gamma^{2} \in \Gamma$, it holds:
\begin{equation}
\exists \;  \bar t\in (0,1) \text{ such that } \; \forall\, t \in [0, \bar t\,] \quad  \gamma_{t}^{1} = \gamma_{t}^{2}   
\quad 
\Longrightarrow 
\quad 
\gamma^{1}_{s} = \gamma^{2}_{s}, \quad \forall\, s \in [0,1].
\end{equation}
It is clear that if $(X,\sfd)$ is a smooth Riemannian manifold, then any subset $\Gamma \subset \Geo(X)$ is a set of non branching geodesics. This is true, more generally, if $(X,\sfd)$ is an ideal sub-Riemannian manifold, i.e.\@ admitting no non-trivial abnormal geodesics: in fact, a sub-Riemannian geodesic can branch only if it contains an abnormal segment, cf.\@ \cite{MR-branching}.
\medskip

We endow the metric space $(X,\sfd)$ with a  measure $\mm\in \sM_{\loc}(X)$, i.e.\@ non-negative and finite on bounded sets.  The triple $(X,\sfd,\mm)$ is called \emph{metric measure space} (m.m.s.\@ for short). We denote with 
 $\Prob_{ac}(X,\mm)\subset \Prob(X)$ the subspace of probability measures  which are absolutely continuous  with respect to the reference measure $\mm$. 

A metric measure space $(X,\sfd, \mm)$ is \emph{essentially non-branching} if and only if for any $\mu_{0},\mu_{1} \in \Prob_{2}(X,\sfd)\cap \Prob_{ac}(X,\mm)$, any  $\nu\in \OptGeo(\mu_{0},\mu_{1})$ is concentrated on a set $\Gamma\subset \Geo(X)$ of non-branching geodesics.

\subsubsection{Relative entropies}\label{sec:entropies}

For $\mu \in \mathcal{P}_{2}(X)$, we define its relative (Boltzmann-Shannon) entropy  by
\begin{equation}\label{def:Ent}
\Ent(\mu|\mm): = \int_{X} \rho \log(\rho) \, \mm,\qquad  \text{ if $\mu = \rho \, \mm\in \Prob_{2}(X)\cap \Prob_{ac}(X,\mm)$},
\end{equation}
in case  $\rho\log(\rho)\in L^{1}(X,\mm)$,  otherwise we set $\Ent(\mu|\mm) := +\infty$.

Since $(0,+\infty)\ni x\mapsto x\log x$ is convex, Jensen's inequality gives
\begin{equation}\label{eq:jensenEnt}
\Ent(\mu|\mm)\geq -\log \mm(\supp \mu), \qquad  \forall\, \mu\in  \mathcal{P}(X) \text{ with } \mm(\supp \mu)<\infty.
\end{equation}

We set  $\Dom(\Ent(\cdot|\mm)):=\{\mu\in \mathcal{P}_{2}(X)\mid \Ent(\mu|\mm)\in \R\}$ to be the finiteness domain of the entropy and 
\begin{align*}
\Prob_{bs}(X,\sfd,\mm)&:=  \{\mu\in \Prob_{bs}(X,\sfd)\mid \supp \, \mu\subseteq \supp \, \mm\},\\
\Prob_{bs}^*(X,\sfd,\mm)&:= \Dom(\Ent(\cdot|\mm)) \cap \Prob_{bs}(X,\sfd,\mm).
\end{align*}

In order to formulate ``dimensional''  Ricci curvature lower bounds, it is convenient  to introduce also  the following dimensional entropy (cf.\@ \cite{EKS}):
\begin{equation}\label{eq:defUn}
\U_{n}(\mu|\mm) : = \exp\left(-\frac{\Ent(\mu|\mm)}{n}\right), \qquad n\in [1,+\infty),
\end{equation}
with the understanding that $\U_{n}(\mu|\mm):=0$ if $\mu\notin \Dom(\Ent(\cdot|\mm))$.

\subsection{Gauge functions on metric spaces}

In order to obtain a unified framework
\begin{itemize}
\item embracing both sub-Riemannian structures and Lott-Sturm-Villani's $\CD$ m.m.s., 
\item yielding sharp geometric and functional inequalities, 
\end{itemize}
we add an additional structure to a metric measure space $(X,\sfd,\mm)$, that is a non-negative Borel function, called \emph{gauge function}:
\begin{equation}\label{eq:defG}
\sfG: X\times X\to [0,+\infty].
\end{equation}

The following analogy explains the role of the gauge function. A Riemannian manifold $(M,g)$ has Ricci curvature bounded from below if there exists $K \in \R$ such that, for all $x,y\in M$ and any geodesic $\gamma$ between $x$ and $y$ it holds
\begin{equation}
\Ric(\dot\gamma,\dot\gamma) \geq K \|\dot\gamma\|^2 = K \,  \sfd(\gamma_0,\gamma_1)^2.
\end{equation}
The distance function $\sfd$ is used in the right hand side as a \emph{gauge} to measure the extent of the lower Ricci curvature bound, quantified by the constant $K$. The idea is to replace the distance $\sfd$ with a general gauge function $\sfG$ for curvature bounds.

Gauge functions will be key in our extension of the synthetic theory of curvature bounds to the sub-Riemannian setting, where it is well-known that the effect of the curvature on transport inequalities is not expressed via the distance but rather via other intrinsically sub-Riemannian functions (a phenomenon observed in \cite{BKS}, \cite[Sec.\@ 8.1]{BRInv}).

\subsection{Curvature-dimension conditions for gauge spaces}

Let $\sfs:[0,+\infty)\to \R$ be a continuous function, with $N\in [1,+\infty)$ such that
\begin{equation}\label{eq:fftcN}
\sfs(\theta)=c \, \theta^{N}+o(\theta^{N}) \qquad \text{ for some } c >0.
\end{equation}
The parameter $N$ will be the sharp upper bound for a new notion of dimension, which is in general different from the Hausdorff one, see Section \ref{Sec:Dim}. 
Denote:
\begin{equation}\label{eq:defcD}
\cD:=\inf\{\theta>0\mid \sfs(\theta)=0\}.
\end{equation}  
From the assumptions on $\sfs$ it is clear that $\cD>0$.
The parameter $\cD$ will give a sharp upper bound on the gauge function, see Section \ref{sec:diam}.
 Define the \emph{distortion coefficient} $\beta_{(\cdot)}(\cdot):[0,1]\times [0,+\infty]\to [0,+\infty]$ as
\begin{align}\label{eq:defbeta}
 (t,\theta)\in [0,1]\times [0,+\infty]\mapsto \beta_{t}(\theta):=
 \begin{cases}
 t^{N} &\theta =0, \\
\dfrac{\sfs(t\theta )}{\sfs(\theta)}& 0<\theta <\cD,\\
 \displaystyle  \liminf_{\phi \to \cD^-} \dfrac{\sfs(t\phi )}{\sfs(\phi)} & \theta \geq \cD.\end{cases}
\end{align}

\begin{remark}\label{rem:sODE}
In applications to sub-Riemannian geometry,  the function  $\sfs$ is chosen in a class of models which are  characterized as solutions to suitable ODEs, see Proposition \ref{p:basic} and Remark \ref{r:link}. However, here we develop the theory in full generality. 
\end{remark}

We collect some elementary properties following from the definition.
\begin{proposition}\label{prop:propertiesbeta}
Any distortion coefficient satisfies the following:
\begin{enumerate}[label=(\roman*)]
\item \label{i:propertiesbeta-1}  $\beta_0(\theta)=0$ and $\beta_1(\theta) = 1$ for all $\theta \in [0,+\infty]$;
\item \label{i:propertiesbeta0}  if $\cD<+\infty$, then $\beta_t(\theta) = +\infty$ for all $\theta \geq \cD$ and $t\in (0,1)$;
\item \label{i:propertiesbeta1} $\beta_{t}(\theta)=0$ for some $\theta < +\infty$ if and only if $t=0$;
\item \label{i:propertiesbeta2} for every $t\in [0,1]$,  $\theta_{j}, \theta\in  [0,+\infty)$, if $\theta_{j}\to \theta$ then $\beta_{t}(\theta_{j})\to \beta_{t}(\theta)$ in $[0,+\infty]$;
\item \label{i:propertiesbeta3} $\beta$ is  continuous and finite when restricted to $[0,1]\times [0,\cD)$;
\item \label{i:propertiesbeta4} for all fixed $t\in (0,1]$, the distortion coefficient $\beta_t(\cdot)$ is  bounded from below away from zero on any bounded set of $[0,+\infty)$; 
\item \label{i:propertiesbeta5} assume that $\cD<+\infty$, and that $\sfs \in C^\omega([0,\cD])$. Then there exists $N'\geq N$ such that $\beta_t(\theta) \geq t^{N'}$ for all $t\in [0,1]$ and $\theta \in[0,+\infty]$.
\end{enumerate}
\end{proposition}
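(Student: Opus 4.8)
The plan is to first extract three elementary facts about $\sfs$, package them into a clean formula for $\beta$ on $[0,1]\times[0,\cD)$, and then read off \ref{i:propertiesbeta-1}--\ref{i:propertiesbeta4} quickly; the only item needing genuine work is \ref{i:propertiesbeta5}.

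\emph{Preliminaries, and items \ref{i:propertiesbeta-1}, \ref{i:propertiesbeta1}, \ref{i:propertiesbeta3}.} Since $\sfs$ is continuous and $\sfs(\theta)=c\theta^{N}+o(\theta^{N})$ with $c>0$, one has $\sfs(0)=0$ and $\sfs>0$ on a right-neighbourhood of $0$; as $\cD$ is the first positive zero, the intermediate value theorem upgrades this to $\sfs>0$ on all of $(0,\cD)$, and $\sfs(\cD)=0$ by continuity when $\cD<+\infty$. Hence $g(\phi):=\sfs(\phi)\phi^{-N}$ on $(0,\cD)$, extended by $g(0):=c$, is continuous and strictly positive on $[0,\cD)$, and a direct check from \eqref{eq:defbeta} gives $\beta_t(\theta)=t^{N}g(t\theta)/g(\theta)$ for all $(t,\theta)\in[0,1]\times[0,\cD)$. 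From this: \ref{i:propertiesbeta-1} follows by inserting $t\in\{0,1\}$ here and in the third branch of \eqref{eq:defbeta} (where $\sfs(0)=0$); \ref{i:propertiesbeta3} is immediate from continuity and positivity of $g$; and \ref{i:propertiesbeta1} holds because $\beta_t(\theta)=0$ with $\theta<+\infty$ forces $t^{N}=0$, or $\sfs(t\theta)=0$ with $t\theta\in[0,\cD)$ (hence $t\theta=0$), or $\theta\ge\cD$ with $t>0$, the last being impossible since then $\beta_t(\theta)$ is $1$ or $+\infty$ by \ref{i:propertiesbeta-1} and \ref{i:propertiesbeta0}.

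\emph{Items \ref{i:propertiesbeta0}, \ref{i:propertiesbeta2}, \ref{i:propertiesbeta4}.} Assume $\cD<+\infty$ and fix $t\in(0,1)$. As $\phi\to\cD^-$ we have $\sfs(\phi)\downarrow0$ with $\sfs(\phi)>0$, while $\sfs(t\phi)\to\sfs(t\cD)>0$ (since $0<t\cD<\cD$), so $\sfs(t\phi)/\sfs(\phi)\to+\infty$; this gives \ref{i:propertiesbeta0} and simultaneously $\beta_t(\theta)\to+\infty$ as $\theta\to\cD^-$. For \ref{i:propertiesbeta2}, fix $t$: if $t\in\{0,1\}$ then $\beta_t$ is constant; if $t\in(0,1)$ it is continuous on $(0,\cD)$ by the representation, at $0$ by the asymptotics applied to numerator and denominator (the limit being $t^{N}$), at $\cD$ by the two coinciding one-sided limits $+\infty$, and on $(\cD,+\infty)$ because it is constant there. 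For \ref{i:propertiesbeta4}: $t=1$ is trivial; for $t\in(0,1)$, $\beta_t$ is continuous and positive on $[0,\cD)$ by \ref{i:propertiesbeta3}, hence has a positive infimum on any compact subinterval — settling $\cD=+\infty$ — and if $\cD<+\infty$ the blow-up at $\cD^-$ together with $\beta_t\equiv+\infty$ on $[\cD,+\infty)$ gives $\inf_{[0,+\infty)}\beta_t>0$.

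\emph{Item \ref{i:propertiesbeta5}.} The cases $\theta=0$ (where $\beta_t(0)=t^{N}\ge t^{N'}$, since $t\le1$ and $N'\ge N$) and $\theta\ge\cD$ (where $\beta_t(\theta)$ is $0$, $1$, or $+\infty$, each $\ge t^{N'}$) hold for every $N'\ge N$, so I reduce to producing $N'\ge N$ with $\sfs(t\theta)\ge t^{N'}\sfs(\theta)$ for $t\in(0,1]$, $\theta\in(0,\cD)$. Setting $F(\phi):=\sfs(\phi)\phi^{-N'}$, this inequality is exactly $F(t\theta)\ge F(\theta)$, so it suffices to choose $N'$ making $F$ non-increasing on $(0,\cD)$; as $F'(\phi)=\sfs(\phi)\phi^{-N'-1}\big(L(\phi)-N'\big)$ with $L(\phi):=\phi\,\sfs'(\phi)/\sfs(\phi)$ and $\sfs>0$ on $(0,\cD)$, this holds once $L\le N'$ on $(0,\cD)$. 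Now $L$ is continuous on $(0,\cD)$, and term-by-term differentiation of the analytic expansion gives $\sfs'(\phi)\sim cN\phi^{N-1}$, hence $L(\phi)\to N$ as $\phi\to0^+$, so $L$ extends continuously to $[0,\cD)$ with $L(0)=N$ and is bounded on every $[0,\cD-\varepsilon]$. The crux — and the one place where the real-analytic hypothesis is genuinely used — is the behaviour of $L$ near $\cD$: since $\sfs$ is analytic and $\not\equiv0$, $\sfs'$ has isolated zeros, so $\sfs$ is strictly monotone on some $(\cD-\varepsilon,\cD)$; being positive there and vanishing at $\cD$, it is decreasing, so $\sfs'<0$ on $(\cD-\varepsilon,\cD)$, whence $L<0<N$ there. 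Combining, $L$ is bounded above on $(0,\cD)$, and $N':=\max\{N,\sup_{(0,\cD)}L\}<+\infty$ does the job, with the boundary cases handled above. I expect this last step to be the only real obstacle: for merely $C^{1}$ functions $\sfs$ one can arrange $L$ to oscillate near $\cD$ with amplitude tending to $+\infty$, so without analyticity no such $N'$ exists in general.
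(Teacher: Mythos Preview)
Your proof is correct. The paper does not supply a proof of this proposition in the scalar case at all --- it is introduced as ``elementary properties following from the definition'' and the statement is left unproved --- so for items \ref{i:propertiesbeta-1}--\ref{i:propertiesbeta4} there is nothing to compare; your write-up via the auxiliary function $g(\phi)=\sfs(\phi)\phi^{-N}$ is a clean way to organise these.

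For item \ref{i:propertiesbeta5} the paper only gives an argument in the vector-valued analogue (Proposition~\ref{prop:propertiesbetaVec}\ref{i:propertiesbetaVec5}), and that argument is different in spirit: it is a compactness argument showing that the order of vanishing of $\sfs$ at points of $\partial\DOM$ is uniformly bounded (otherwise a limit boundary point would have all radial derivatives zero, forcing $\sfs\equiv 0$ along a ray). In the scalar setting that reduces to the trivial statement that an analytic function has finite vanishing order at the single point $\cD$, and one still has to pass from this to the inequality $\beta_t\ge t^{N'}$, a step the paper leaves implicit. Your route --- bounding the logarithmic derivative $L(\phi)=\phi\,\sfs'(\phi)/\sfs(\phi)$ from above and observing that this makes $\phi\mapsto\sfs(\phi)\phi^{-N'}$ non-increasing --- is more direct and actually completes the argument. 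The two approaches use analyticity at the same place (behaviour near $\cD$): the paper via finite vanishing order, you via isolated zeros of $\sfs'$ forcing monotonicity; your version has the advantage of giving the inequality immediately.
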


\begin{definition}\label{def:CDMCPbetan}
Let $n\in [1,+\infty)$, and $\beta$ as in \eqref{eq:defbeta}. We say that a  metric measure space $(X,\sfd,\mm)$ with gauge $\sfG$ satisfies:
\begin{itemize}
\item  $\CD(\beta, n)$ if for all $\mu_0\in \Prob_{bs}(X,\sfd,\mm)$, $\mu_1\in \Prob_{bs}^{*}(X,\sfd,\mm)$ with $\supp \mu_0\cap \supp \mu_1 =\emptyset$,  there exists a $W_2$-geodesic $(\mu_t)_{t\in[0,1]} \subset \Prob_2(X,\sfd)$ connecting them, induced by $\nu\in \OptGeo(\mu_0,\mu_1)$, such that it holds
\begin{multline}\label{eq:defCDbetan}
\U_{n}(\mu_{t}|\mm)\geq  \exp\left( \frac{1}{n}\int_{\Geo(X)} \log \beta_{1-t}\big(\sfG(\gamma_1,\gamma_0))\, \nu(\di\gamma) \right) \U_{n}(\mu_{0}|\mm)  \\
+\exp\left( \frac{1}{n}\int_{\Geo(X)} \log \beta_{t}\big(\sfG(\gamma_0,\gamma_1)\big) \, \nu(\di\gamma) \right)   \U_{n}(\mu_{1}|\mm), \qquad \forall\, t\in (0,1),
\end{multline}
with the convention that $\infty \cdot 0 = 0$. 
We say that $\CD(\beta, n)$ is satisfied \emph{in the strong sense} if \eqref{eq:defCDbetan} is satisfied for all $W_2$-geodesics connecting $\mu_0$ and $\mu_1$.
\item  $\MCP(\beta)$ if for any $\bar{x} \in \supp\mm$ and $\mu_1\in \Prob_{bs}^{*}(X,\sfd,\mm)$   with $\bar{x}\notin \supp \mu_1$ there exists a $W_2$-geodesic $(\mu_t)_{t\in[0,1]} \subset \Prob_2(X,\sfd)$ from $\mu_0=\delta_{\bar{x}}$ to $\mu_1$ such that
\begin{equation}\label{eq:defMCPbeta}
\U_{n}(\mu_{t}|\mm)\geq \exp\left( \frac{1}{n}\int_{X}  \log \beta_{t}\big(\sfG(\bar{x},x)\big)  \, \mu_1(\di x) \right) \U_{n}(\mu_{1}|\mm) , \qquad \forall\, t\in (0,1),
\end{equation}
for some (and then every) $n\geq 1$.
\end{itemize}
\end{definition}
\begin{remark}\label{rem:eqMCPbeta}
Notice that \eqref{eq:defMCPbeta} is equivalent to
\begin{equation}\label{eq:defMCPbeta2}
\Ent(\mu_t|\mm) \leq \Ent(\mu_1|\mm) - \int_X\log\beta_t\big(\sfG(\bar{x},x)\big)  \, \mu_1(\di x) , \qquad \forall\, t\in (0,1),
\end{equation}
so that \eqref{eq:defMCPbeta} does not depend on the value of $n$.
\end{remark}

\begin{remark}[$\CD$ implies $\MCP$]\label{rem:CDimpliesMCP}
It is clear that, with our definitions, $\CD(\beta,n)$ for some $n \in [1,+\infty)$ implies $\MCP(\beta)$, since one can choose in \eqref{eq:defCDbetan} $\mu_0$ equal to a Dirac mass.
\end{remark}

\begin{remark}[About the absolute continuity of $\mu_0$]\label{rem:mu0Pbs}
Our definition of $\CD(\beta,n)$ is not symmetric, in the sense that $\mu_0 \in \Prob_{bs}(X,\sfd,\mm)$ while $\mu_1 \in \Prob_{bs}^*(X,\sfd,\mm)$, and in particular $\mu_1\ll \mm$. When $\sfG$ is not continuous, the flexibility  to take a non-absolutely continuous $\mu_0$ turns out to be technically convenient; for instance it makes neat the implication $\CD(\beta,n)\Rightarrow \MCP(\beta)$, implication which would not be clear otherwise.

On the other hand, if the gauge function $\sfG:X\times X \to [0,+\infty)$ is \emph{continuous}, via a standard approximation argument it is possible to see that the $\CD(\beta,n)$ condition of Definition \ref{def:CDMCPbetan} is equivalent to the following, symmetric, requirement: 
 for all  $\mu_0, \mu_1\in \Prob_{bs}^{*}(X,\sfd,\mm)$  with $\supp \mu_0\cap \supp \mu_1 =\emptyset$,  there exists a $W_2$-geodesic $(\mu_t)_{t\in[0,1]} \subset \Prob_2(X,\sfd)$ connecting them, induced by $\nu\in \OptGeo(\mu_0,\mu_1)$, such that \eqref{eq:defCDbetan} holds.
 The latter formulation is closer in spirit to   Lott-Sturm-Villani's  curvature-dimension conditions for metric measure spaces; note that in this case $\sfG=\sfd$ is of course continuous.
\end{remark}

\begin{remark}[Geodesic property for $\Dom(\Ent(\cdot | \mm))\subset \Prob_{2}(X,\sfd)$ and length property for  $\supp \mm$]\label{rem:LengthSpace}
The fact that $(X,\sfd,\mm)$ satisfies $\CD(\beta, n)$ implies directly from the definition that $\Dom(\Ent(\cdot | \mm))\subset \Prob_{2}(X,\sfd)$ is a geodesic space (with metric $W_{2}$). Moreover if $(X,\sfd,\mm)$ satisfies $\CD(\beta, n)$ or $\MCP(\beta)$, then $\supp \mm$ is a length space (with metric $\sfd$); the proof follows verbatim  \cite[Remark 4.6, (iii)]{sturm:I} and thus is omitted.   We stress that these facts do not use the inequalities \eqref{eq:defCDbetan} or \eqref{eq:defMCPbeta}, but only the existence of $W_2$-geodesics between suitable pairs of measures. Moreover, as a consequence of \cite[Thm.\@ 1.1]{CavaMondCCM} it follows that: if $(X,\sfd, \mm)$ is an essentially non-branching $\MCP(K,N)$ space, then  $\Dom(\Ent(\cdot | \mm))\subset \Prob_{2}(X,\sfd)$ is a geodesic space (with metric $W_{2}$). Along the same lines, one can prove that $\Dom(\Ent(\cdot | \mm))\subset \Prob_{2}(X,\sfd)$ is a geodesic  space under the assumption that  $(X,\sfd, \mm)$ is an essentially non-branching $\MCP(\beta)$ space.
\end{remark}

Fix a metric measure space $(X,\sfd,\mm)$ with gauge function $\sfG$. The next proposition contains interpolation and monotonicity properties of the $\CD$ classes.

\begin{proposition}\label{prop:hierarchy}
Let $\alpha,\beta,\beta_0,\beta_1$ distortion coefficients as in \eqref{eq:defbeta}, let $n,n_0,n_1 \geq 1$, and $\theta \in [0,1]$. Then the following properties hold
\begin{enumerate}[label = (\roman*)]
\item \label{i:hierarchy1} $\mathrm{CD}(\beta_{0},n_{0})\cap \mathrm{CD}(\beta_{1},n_{1})$ with at least one of the two being satisfied in the strong sense $\Longrightarrow$ $\mathrm{CD}(\beta_{\theta},n_{\theta})$ where we have set
\[
\beta_{\theta}:=\beta_{0}^{1-\theta}\beta_{1}^{\theta},\qquad n_{\theta}:=(1-\theta)n_{0}+\theta n_{1};
\]
\item \label{i:hierarchy2} if $m\geq 0$. Then $\mathrm{CD}(\beta,n)$ $\Longrightarrow$ $\mathrm{CD}(t^{m} \beta,n+m)$;
\item \label{i:hierarchy3} if $\beta\geq \alpha$, then $\mathrm{CD}(\beta,n)$ $\Longrightarrow$ $\mathrm{CD}(\alpha,n)$.
\end{enumerate}
\end{proposition}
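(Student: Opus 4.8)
The plan is to verify all three items directly from Definition~\ref{def:CDMCPbetan}, by manipulating the defining inequality \eqref{eq:defCDbetan}. Throughout, I fix $\mu_0\in\Prob_{bs}(X,\sfd,\mm)$ and $\mu_1\in\Prob_{bs}^*(X,\sfd,\mm)$ with disjoint supports; the point of each item is that a $W_2$-geodesic witnessing one $\CD$ condition (or a common one) also witnesses the target condition, with the inequality following by an elementary convexity/monotonicity argument applied pointwise in $t$.

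For item~\ref{i:hierarchy3}, this is immediate: if $\beta\geq\alpha$ pointwise, then since $\log$ is increasing and $t\mapsto\beta_t$ only enters \eqref{eq:defCDbetan} through $\log\beta_{1-t}$ and $\log\beta_t$ with nonnegative prefactors $\U_n(\mu_i|\mm)\geq 0$, the same geodesic $\nu\in\OptGeo(\mu_0,\mu_1)$ that certifies $\CD(\beta,n)$ certifies $\CD(\alpha,n)$ (using $\infty\cdot 0=0$ for the boundary cases, and Proposition~\ref{prop:propertiesbeta}\ref{i:propertiesbeta4} to ensure the integrands are bounded below so the integrals are well-defined in $[-\infty,+\infty)$). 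For item~\ref{i:hierarchy2}, the same geodesic works again: one has $\log(t^m\beta_t(\theta))=m\log t+\log\beta_t(\theta)$, so the right-hand side of \eqref{eq:defCDbetan} for $(t^m\beta, n+m)$ factors as a product of $\exp(\frac{1}{n+m}(\cdot))$-terms; the estimate reduces to checking the scalar inequality that if $A\geq a\,A_0+b\,A_1$ with $A=\U_n(\mu_t|\mm)$, and one replaces exponents $1/n$ by $1/(n+m)$ while multiplying the coefficients by $((1-t)^m)^{1/(n+m)}$ resp.\ $(t^m)^{1/(n+m)}$, the inequality is preserved. Concretely this is a consequence of the elementary fact that $\U_{n+m}(\mu_t|\mm)=\U_n(\mu_t|\mm)^{n/(n+m)}$ is a concave-type transformation, combined with the weighted arithmetic–geometric mean inequality; I would phrase it as: raising \eqref{eq:defCDbetan} to the power $n/(n+m)$ and using concavity of $x\mapsto x^{n/(n+m)}$ together with $((1-t)+t)=1$-type bookkeeping yields exactly the $(t^m\beta,n+m)$ inequality.

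The substantive item is \ref{i:hierarchy1}, and the hypothesis that at least one of $\CD(\beta_0,n_0)$, $\CD(\beta_1,n_1)$ holds in the strong sense is exactly what makes it go through: say $\CD(\beta_1,n_1)$ holds strongly. Then, given $\mu_0,\mu_1$ as above, $\CD(\beta_0,n_0)$ produces \emph{some} optimal dynamical plan $\nu$ satisfying its inequality, and by the strong form $\CD(\beta_1,n_1)$ forces \emph{that same} $\nu$ to satisfy its inequality too. Now both inequalities hold along one common geodesic, and I combine them: writing $f_i(t):=\exp(\frac{1}{n_i}\int\log(\beta_i)_{1-t}\,d\nu)$ and $g_i(t):=\exp(\frac{1}{n_i}\int\log(\beta_i)_t\,d\nu)$, I have $\U_{n_i}(\mu_t|\mm)\geq f_i(t)\U_{n_i}(\mu_0|\mm)+g_i(t)\U_{n_i}(\mu_1|\mm)$ for $i=0,1$. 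Taking the weighted geometric mean with weights $1-\theta,\theta$ of the two inequalities and using Hölder's inequality on the right-hand side — $(a_0+b_0)^{1-\theta}(a_1+b_1)^\theta\geq a_0^{1-\theta}a_1^\theta+b_0^{1-\theta}b_1^\theta$ for nonnegative reals — yields
\[
\U_{n_0}(\mu_t|\mm)^{1-\theta}\U_{n_1}(\mu_t|\mm)^{\theta}\geq (f_0^{1-\theta}f_1^{\theta})(t)\,\U_{n_0}(\mu_0|\mm)^{1-\theta}\U_{n_1}(\mu_0|\mm)^{\theta}+(g_0^{1-\theta}g_1^{\theta})(t)\,\U_{n_0}(\mu_1|\mm)^{1-\theta}\U_{n_1}(\mu_1|\mm)^{\theta}.
\]
The final step is the identification of terms: $\U_{n_0}(\mu|\mm)^{1-\theta}\U_{n_1}(\mu|\mm)^{\theta}=\exp(-(1-\theta)\Ent/n_0-\theta\Ent/n_1)$, which in general is \emph{not} $\U_{n_\theta}(\mu|\mm)=\exp(-\Ent/n_\theta)$ unless $\Ent\in\{0,\pm\infty\}$ — so here I must instead use the inequality $(1-\theta)/n_0+\theta/n_1\geq 1/n_\theta$ (convexity of $x\mapsto 1/x$), which, \emph{because} $x\mapsto e^{-x}$ is decreasing and the relevant $\Ent$-values have a definite sign only for $\mu_0$ (a possibly-Dirac mass, so $\Ent=+\infty$ or handled by convention) — this is the delicate point. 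The clean route, which I expect is the intended one, is: reduce to the $\Ent$-formulation analogous to \eqref{eq:defMCPbeta2}, where the $\CD$ inequality becomes a concavity statement for $t\mapsto\U_n(\mu_t|\mm)$ that is \emph{homogeneous} enough that the $n$-dependence separates cleanly; then $\beta_\theta=\beta_0^{1-\theta}\beta_1^\theta$ and $n_\theta=(1-\theta)n_0+\theta n_1$ drop out of a single application of Hölder. The main obstacle is precisely this bookkeeping with the dimensional parameter $n$ and the sign conventions for $\U_n$ on measures outside $\Dom(\Ent)$; once that is set up correctly, items \ref{i:hierarchy1}–\ref{i:hierarchy3} are one-line consequences of Hölder, AM–GM, and monotonicity of $\log$, and I would not expect any further difficulty.
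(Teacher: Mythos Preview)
Your treatment of \ref{i:hierarchy3} is correct and matches the paper. Your sketch for \ref{i:hierarchy2} is imprecise --- concavity of $x\mapsto x^{n/(n+m)}$ gives an \emph{upper} bound on a convex combination, not the lower bound you need --- but the right inequality, $(a+b)^{n/(n+m)}\geq (1-t)^{m/(n+m)}a^{n/(n+m)}+t^{m/(n+m)}b^{n/(n+m)}$ for $a,b\geq 0$, does hold, and is a one-line H\"older estimate (or, as in the paper, the convexity of $(x,y)\mapsto\log(e^x+e^y)$). So that part is recoverable.

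The genuine gap is in \ref{i:hierarchy1}: you take the geometric mean of the two $\CD$ inequalities with weights $(1-\theta,\theta)$, obtaining $\U_{n_0}(\mu)^{1-\theta}\U_{n_1}(\mu)^{\theta}$ on both sides. As you correctly observe, this equals $\exp\bigl(-\bigl(\tfrac{1-\theta}{n_0}+\tfrac{\theta}{n_1}\bigr)\Ent(\mu|\mm)\bigr)$, which is $\U_{n'}(\mu)$ for the \emph{harmonic} interpolation $n'$, not for $n_\theta=(1-\theta)n_0+\theta n_1$. Your proposed repair via the sign of $\Ent$ and the inequality $\tfrac{1-\theta}{n_0}+\tfrac{\theta}{n_1}\geq \tfrac{1}{n_\theta}$ cannot work, precisely because the sign of $\Ent(\mu_t|\mm)$ is uncontrolled, so you cannot pass from $\U_{n'}$ to $\U_{n_\theta}$ on the left-hand side without spoiling the right-hand side.

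The fix is to change the weights: use $p:=\tfrac{(1-\theta)n_0}{n_\theta}$ and $q:=\tfrac{\theta n_1}{n_\theta}$, which still satisfy $p+q=1$. Then
\[
\U_{n_0}(\mu)^{p}\,\U_{n_1}(\mu)^{q}=\exp\Bigl(-\tfrac{(1-\theta)n_0}{n_\theta}\cdot\tfrac{\Ent}{n_0}-\tfrac{\theta n_1}{n_\theta}\cdot\tfrac{\Ent}{n_1}\Bigr)=\exp\Bigl(-\tfrac{\Ent}{n_\theta}\Bigr)=\U_{n_\theta}(\mu)
\]
\emph{exactly}, with no sign hypothesis. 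Applying H\"older with these exponents to the two $\CD$ inequalities (along the common geodesic, as you arranged) gives coefficients $a_{0,t}^{p}a_{1,t}^{q}=\exp\bigl(\tfrac{1}{n_\theta}\int[(1-\theta)\log(\beta_0)_{1-t}+\theta\log(\beta_1)_{1-t}]\,d\nu\bigr)$, which is precisely the $\beta_\theta=\beta_0^{1-\theta}\beta_1^{\theta}$ coefficient required. This is the paper's argument; once the weights are right, there is no ``delicate point'' left.
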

\begin{proof}
\textbf{Proof of \ref{i:hierarchy1}.} Observe that $\mathrm \U_{n}= (\U_m)^{\frac{m}{n}}$ for all $n,m\geq 1$. Hence
\begin{equation}
\U_{n_{\theta}}=\U_{n_{\theta}}^{(1-\theta)} \U_{n_{\theta}}^{\theta}=\mathrm U_{n_{0}}^{(1-\theta)\frac{n_{0}}{n_{\theta}}}\mathrm U_{n_{1}}^{\theta\frac{n_{1}}{n_{\theta}}}.
\end{equation}
Since at least one of the two $\CD(\beta_i,n_i)$ conditions is satisfied in the strong sense, we may assume that for any given $\mu_0\in\Prob_{bs}(X,\sfd,\mm)$, $\mu_1\in \Prob_{bs}^*(X,\sfd,\mm)$,  $\supp \mu_0\cap \supp \mu_1 =\emptyset$ there exists a $W_2$-geodesic $(\mu_t)_{t\in [0,1]}$ such that
\begin{multline}
\U_{n_i}(\mu_t|\mm) \geq \exp\left(\frac{1}{n_i} \int_{\Geo(X)} \log\beta_{i,1-t}\big(\sfG(\gamma_1,\gamma_0)\big)\,\nu(\di\gamma) \right) \U_{n_i}(\mu_0|\mm) \\
+ \exp\left(\frac{1}{n_i} \int_{\Geo(X)} \log\beta_{i,t}\big(\sfG(\gamma_0,\gamma_1)\big)\,\nu(\di\gamma) \right) \U_{n_i}(\mu_1|\mm),\qquad \forall\, t\in(0,1),\; i=0,1.
\end{multline}
For $i=0,1$ and $t\in (0,1)$, set 
\begin{align*}
a_{i,t}&:= \exp\left(\frac{1}{n_i} \int_{\Geo(X)} \log\beta_{i,1-t}\big(\sfG(\gamma_1,\gamma_0)\big)\,\nu(\di\gamma) \right),\\
b_{i,t}&:= \exp\left(\frac{1}{n_i} \int_{\Geo(X)} \log\beta_{i,t}\big(\sfG(\gamma_0,\gamma_1)\big)\,\nu(\di\gamma) \right).
\end{align*}
Omitting $\mm$ from the notation, we have
\begin{align}
\U_{n_\theta}(\mu_t) & = \U_{n_0}(\mu_t)^{(1-\theta)\tfrac{n_0}{n_\theta}}\U_{n_1}(\mu_t)^{\theta\tfrac{n_1}{n_\theta}} \\
& \geq \left[  a_{0,t} \U_{n_0}(\mu_0) + b_{0,t} \U_{n_0}(\mu_1)\right]^{(1-\theta)\tfrac{n_0}{n_\theta}}\left[  a_{1,t} \U_{n_1}(\mu_0) + b_{1,t} \U_{n_1}(\mu_1)\right]^{\theta\tfrac{n_1}{n_\theta}}.
\end{align}
Now we use the H\"older inequality: for $\frac{1}{p}+\frac{1}{q}=1$ we have for positive $x_j,y_j$
\begin{equation}
\left(\sum_{j=1}^{m} x_{j}\right)^{1/p}\left(\sum_{j=1}^{m} y_{j}\right)^{1/q}\geq \sum_{j=1}^{m} x_{j}^{1/p}y_{j}^{1/q}.
\end{equation}
Hence we obtain
\begin{equation}
\U_{n_\theta}(\mu_t) \geq a_{0,t}^{(1-\theta)\tfrac{n_0}{n_\theta}} a_{1,t}^{\theta\tfrac{n_1}{n_\theta}} \U_{n_\theta}(\mu_0)+b_{0,t}^{(1-\theta)\tfrac{n_0}{n_\theta}} b_{1,t}^{\theta\tfrac{n_1}{n_\theta}} \U_{n_\theta}(\mu_1).
\end{equation}
This corresponds to the desired inequality for the condition $\CD(\beta_\theta,n_\theta)$. 

\textbf{Proof of \ref{i:hierarchy2}.} We observe first that the map $\R^2\ni (x,y)\mapsto \log(e^x+e^y)$ is convex, and thus for $n\geq 1$, $m\geq 0$, all $t\in (0,1)$ and $x,y\in \R$ it holds
\begin{align}
\frac{n}{n+m}\log(e^x+e^y) & = \frac{n}{n+m}\log(e^x+e^y) + \frac{m}{n+m}\log\left(e^{\log(1-t)}+e^{\log t}\right) \\ 
& \geq \log\left(e^{\tfrac{n}{n+m}x + \tfrac{m}{m+n}\log(1-t)} + e^{\tfrac{n}{n+m}y + \tfrac{m}{n+m}\log t} \right).\label{eq:exeyconvex}
\end{align}
Fix a $W_2$-geodesic $(\mu_t)_{t\in [0,1]}$ for which the $\CD(\beta,n)$ inequality holds true. Observe that:
\begin{align}
\log\U_{n+m}(\mu_t) & = \frac{n}{n+m} \log\U_{n}(\mu_t), \qquad \forall\, t \in [0,1].
\end{align}
Using the $\CD(\beta,n)$ inequality in the previous identity and applying \eqref{eq:exeyconvex}, we immediately obtain the validity of the $\CD(t^{m}\beta,n+m)$ condition.

\textbf{Proof of \ref{i:hierarchy3}.} Obvious.
\end{proof}

\begin{proposition}\label{prop:mcptoclassicalmcp} 
Let $(X,\sfd, \mm)$ be essentially non-branching. Assume that  $(X,\sfd, \mm)$ with gauge $\sfG=\sfd$ satisfies the $\MCP(\beta)$, for $\beta$ defined as in \eqref{eq:defbeta} in terms of a function $\sfs$ as in \eqref{eq:fftcN}, with order $N$ at zero. If $\cD$ is finite and $\sfs\in C^\omega([0,\cD])$, then there exists $N'\geq N$ such that $(X,\sfd,\mm)$ satisfies the $\MCP(t^{N'})$ with gauge $\sfG=\sfd$ and thus $(X,\sfd,\mm)$  satisfies the classical measure contraction property $\MCP(0,N')$.
\end{proposition}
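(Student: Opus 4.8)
The plan is to reduce the statement to the elementary lower bound $\beta_t(\theta)\ge t^{N'}$ already recorded in Proposition~\ref{prop:propertiesbeta}\ref{i:propertiesbeta5}, to exploit that the $\MCP$ inequality is monotone in the distortion coefficient, and finally to recognise the resulting condition as the classical one. First, since $\cD<+\infty$ and $\sfs\in C^\omega([0,\cD])$ by hypothesis, Proposition~\ref{prop:propertiesbeta}\ref{i:propertiesbeta5} provides $N'\ge N$ with $\beta_t(\theta)\ge t^{N'}$ for all $t\in[0,1]$ and $\theta\in[0,+\infty]$. I would also observe that the constant-in-$\theta$ coefficient $(t,\theta)\mapsto t^{N'}$ is itself a distortion coefficient of the form \eqref{eq:defbeta}: it is produced by the defining function $\theta\mapsto\theta^{N'}$, which has order $N'\ge 1$ at the origin (so the first branch of \eqref{eq:defbeta} gives $t^{N'}$) and no positive zero (so $\cD=+\infty$ and $(t\theta)^{N'}/\theta^{N'}=t^{N'}$); thus $\MCP(t^{N'})$ is a meaningful condition within the present framework.

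Next I would prove $\MCP(\beta)\Rightarrow\MCP(t^{N'})$, keeping the gauge $\sfG=\sfd$. Fix $\bar x\in\supp\mm$ and $\mu_1\in\Prob_{bs}^*(X,\sfd,\mm)$ with $\bar x\notin\supp\mu_1$, and take the $W_2$-geodesic $(\mu_t)_{t\in[0,1]}$ from $\delta_{\bar x}$ to $\mu_1$ witnessing $\MCP(\beta)$. By the equivalent entropic formulation \eqref{eq:defMCPbeta2} of Remark~\ref{rem:eqMCPbeta} together with the pointwise bound $\log\beta_t(\sfd(\bar x,x))\ge N'\log t$, valid for $\mu_1$-a.e.\ $x$ and $t\in(0,1)$ since $\beta_t(\sfd(\bar x,x))\ge t^{N'}>0$, one obtains $\Ent(\mu_t|\mm)\le\Ent(\mu_1|\mm)-N'\log t$, which is exactly \eqref{eq:defMCPbeta2} for the coefficient $t^{N'}$. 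Hence the same geodesic witnesses $\MCP(t^{N'})$. (The integral $\int_X\log\beta_t(\sfd(\bar x,x))\,\mu_1(\di x)$ is finite — equivalently $\sfd(\bar x,\cdot)<\cD$ holds $\mu_1$-a.e.\ on $\supp\mu_1$ — as forced by $\MCP(\beta)$ combined with \eqref{eq:jensenEnt}; alternatively one argues directly with \eqref{eq:defMCPbeta} and the convention $\infty\cdot 0=0$.) Since $\MCP$ does not depend on $n$ (Remark~\ref{rem:eqMCPbeta}), nothing else is needed at this step.

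Finally, since $(X,\sfd,\mm)$ is essentially non-branching, the compatibility result of Section~\ref{sec:howtorecover} applies: the Lott--Sturm--Villani coefficient $\beta_{K,N}$ of \eqref{eq:defbetaKNK>0} with $K=0$ and $N=N'\ge 1$ is identically $t^{N'}$, so that $\MCP(t^{N'})$ with gauge $\sfd$ coincides with the classical Ohta--Sturm $\MCP(0,N')$, which is the assertion. I do not expect a genuine obstacle internal to this argument: the whole \emph{content} is concentrated in Proposition~\ref{prop:propertiesbeta}\ref{i:propertiesbeta5}, where the real-analyticity of $\sfs$ on $[0,\cD]$ is what prevents $\beta_t(\theta)$ from decaying faster than any power of $t$ as $t\to 0^+$, and in the Section~\ref{sec:howtorecover} equivalence with Ohta--Sturm's $\MCP$, where essential non-branching enters; the intermediate monotonicity step is immediate from \eqref{eq:defMCPbeta2}.
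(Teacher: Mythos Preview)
Your proof is correct and follows essentially the same approach as the paper: invoke Proposition~\ref{prop:propertiesbeta}\ref{i:propertiesbeta5} to obtain $\beta_t\ge t^{N'}$, deduce $\MCP(t^{N'})$ by monotonicity, and then apply Theorem~\ref{thm:CDbetaVsLSV}\ref{CDbetaVsLSV-iii} (the compatibility with Ohta--Sturm's $\MCP$) to conclude. The paper's proof is simply the two-line version of your argument, leaving the monotonicity step and the identification $(\beta^{\tau}_{0,N'})_t=t^{N'}$ implicit.
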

\begin{proof}
By Proposition \ref{prop:propertiesbeta}\ref{i:propertiesbeta5}, there exists $N'\geq N$ such that $\beta_t \geq t^{N'}$ for all $t\in [0,1]$. Hence $\MCP(t^{N'})$ holds and we conclude by Theorem \ref{thm:CDbetaVsLSV}\ref{CDbetaVsLSV-iii}.
\end{proof}

\subsubsection{Recovering the Lott-Sturm-Villani's theory}\label{sec:howtorecover} 

In this section we explain how our general theory of curvature bounds for gauge spaces contains the classical curvature-dimension theory for m.m.s.\@ \`a la Lott-Sturm-Villani. In this section we consider $K\in \R$ and $N\in [1,+\infty)$. Set
\begin{equation}
[0,+\infty]\ni \theta \mapsto \sfs_{K,N}(\theta) = \begin{cases}   \sin\left(\theta \sqrt{K/N}\right)^{N} & K >0, \\
\theta^N & K =0, \\
   \sinh\left(\theta \sqrt{-K/N}\right)^{N} & K <0.
 \end{cases}
\end{equation}
Notice that $\sfs_{K,N}(\theta)$ has order $N$ as $\theta \to 0$ for all $K \in \R$. Moreover,
\begin{equation}
\cD_{K,N}=\inf\{\theta>0\mid \sfs_{K,N}(\theta)=0\}=\begin{cases}
\frac{\pi}{\sqrt{K/N}} & K >0, \\
+\infty & K \leq 0.
\end{cases}
\end{equation}
The corresponding distortion coefficient, following the recipe of \eqref{eq:defbeta}, is then
\begin{equation}
[0,1]\times [0,+\infty]\ni (t,\theta) \mapsto (\beta_{K,N}^{\sigma})_t(\theta) =
 \begin{cases}
+ \infty & K \theta^2 \geq N \pi^2, \\
 \frac{\sin\left(t\theta \sqrt{K/N}\right)^{N}}{\sin\left(\theta \sqrt{K/N}\right)^{N}} & 0<K\theta^2<N\pi^2, \\
t^N & K \theta^2 =0, \\
 \frac{\sinh\left(t\theta \sqrt{|K|/N}\right)^{N}}{\sinh\left(\theta \sqrt{|K|/N}\right)^{N}} & K\theta^2<0.
\end{cases}
\end{equation}
The notation is motivated by the fact that
\begin{equation}\label{eq:betaVssigma}
(\beta_{K,N}^{\sigma})_t(\theta) = \left(\sigma_{K,N}^{(t)}(\theta) \right)^N, \qquad \forall\, t\in [0,1],\quad \forall\, \theta \in [0,+\infty),
\end{equation}
where $\sigma_{K,N}^{(\cdot)}(\cdot)$ are the functions considered in \cite{BS10, EKS} (note that \cite{EKS} used the notation $\sigma_{K/N}^{(\cdot)}(\cdot)$). Consider also 
\begin{equation}
[0,1]\times [0,+\infty]\ni (t,\theta) \mapsto (\beta_{K,N}^{\tau})_t(\theta) =
 \begin{cases}
+ \infty & K\theta^2 \geq (N-1)\pi^2, \\
t \frac{\sin\left(t\theta \sqrt{K/(N-1)}\right)^{N-1}}{\sin\left(\theta \sqrt{K/(N-1)}\right)^{N-1}} & 0<K\theta^2<(N-1)\pi^2, \\
t^N & K\theta^2 =0, \\
t \frac{\sinh\left(t\theta \sqrt{|K|/(N-1)}\right)^{N-1}}{\sinh\left(\theta \sqrt{|K|/(N-1)}\right)^{N-1}} & K\theta^2<0.
\end{cases}
\end{equation}
Again, the notation is motivated by the fact that
\begin{equation}\label{eq:barbetatau}
(\beta_{K,N}^{\tau})_t(\theta) = \big(\tau_{K,N}^{(t)}(\theta)\big)^N, \qquad \forall\, t\in [0,1],\quad \forall\, \theta \in [0,+\infty),
\end{equation}
where $ \tau_{K,N}^{(t)}(\theta):= t^{1/N} \sigma^{(t)}_{K, N-1}(\theta)^{1-1/N}$ are the functions considered in \cite{sturm:II}.

\begin{theorem}\label{thm:CDbetaVsLSV}
Let $(X,\sfd,\mm)$ be an essentially non-branching m.m.s.\@ equipped with the gauge function $\sfG=\sfd$. The following hold:
\begin{enumerate}[label=(\roman*)]
\item \label{CDbetaVsLSV-i-ii} $(X,\sfd,\mm)$ satisfies the entropic curvature-dimension condition $\CD^{e}(K,N)$ of Er\-bar-Kuwada-Sturm \cite{EKS} if and only if $(X,\sfd,\mm, \sfG)$ satisfies $\CD(\beta_{K,N}^{\sigma}, N)$ in the sense of Definition \ref{def:CDMCPbetan};
\item \label{CDbetaVsLSV-iii} $(X,\sfd,\mm)$ satisfies the measure contraction property $\MCP(K,N)$ in the formulation of Ohta \cite{OhtaMCP} if and only if $(X,\sfd,\mm, \sfG)$ satisfies $\MCP(\beta_{K,N}^{\tau})$ in the sense of Definition \ref{def:CDMCPbetan}.
\end{enumerate}
\end{theorem}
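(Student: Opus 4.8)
The plan is to reduce both equivalences to the ``pointwise–density'' reformulations of the classical conditions that are available on essentially non‑branching spaces, and then to bridge between those pointwise bounds and the logarithmically–averaged distortion inequalities of Definition \ref{def:CDMCPbetan}. Since $\sfG=\sfd$, identities \eqref{eq:betaVssigma} and \eqref{eq:barbetatau} give $\tfrac1N\log(\beta_{K,N}^{\sigma})_t(\theta)=\log\sigma_{K,N}^{(t)}(\theta)$ and $\tfrac1N\log(\beta_{K,N}^{\tau})_t(\theta)=\log\tau_{K,N}^{(t)}(\theta)$; moreover $\sfG=\sfd$ is continuous, so by Remark \ref{rem:mu0Pbs} the condition $\CD(\beta_{K,N}^{\sigma},N)$ is equivalent to its restriction to pairs $\mu_0,\mu_1\in\Prob_{bs}^*(X,\sfd,\mm)$, and by Remark \ref{rem:eqMCPbeta} the condition $\MCP(\beta_{K,N}^{\tau})$ is the inequality $\Ent(\mu_t|\mm)\le\Ent(\mu_1|\mm)-\int_X\log(\beta_{K,N}^{\tau})_t(\sfd(\bar x,x))\,\mu_1(\di x)$. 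On the classical side I will use the standard facts, valid on essentially non‑branching m.m.s.\ (see \cite{EKS,CavaMondCCM} and references therein): \emph{(a)} the entropic curvature–dimension condition $\CD^{e}(K,N)$ of \cite{EKS} is equivalent to $\CD^{*}(K,N)$, and the latter to the existence, for all $\mu_0,\mu_1\in\Prob_{bs}^*(X,\sfd,\mm)$, of a unique optimal dynamical plan $\nu$, induced by an (a.e.\ injective) map, with $(\ee_t)_\sharp\nu\ll\mm$ and the \emph{pointwise} inequality $\rho_t(\gamma_t)^{-1/N}\ge\sigma_{K,N}^{(1-t)}(\sfd(\gamma_0,\gamma_1))\rho_0(\gamma_0)^{-1/N}+\sigma_{K,N}^{(t)}(\sfd(\gamma_0,\gamma_1))\rho_1(\gamma_1)^{-1/N}$ for $\nu$–a.e.\ $\gamma$; \emph{(b)} Ohta's $\MCP(K,N)$ of \cite{OhtaMCP} is equivalent to the existence, for every $\bar x\in\supp\mm$ and $\mu_1\in\Prob_{bs}^*(X,\sfd,\mm)$ with $\bar x\notin\supp\mu_1$, of a $W_2$–geodesic from $\delta_{\bar x}$ to $\mu_1$ induced by $\nu$ with $(\ee_t)_\sharp\nu\ll\mm$ on $(0,1)$ and $\rho_t(\gamma_t)\le(\beta_{K,N}^{\tau})_t(\sfd(\bar x,\gamma_1))^{-1}\rho_1(\gamma_1)$ for $\nu$–a.e.\ $\gamma$. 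Granting these reformulations, the theorem becomes a comparison between pointwise and integrated inequalities.

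\textbf{Classical $\Rightarrow$ gauge.} For \ref{CDbetaVsLSV-iii}, take logarithms in the pointwise bound of (b) (the logarithm is increasing), integrate against $\nu$, and push forward by $\ee_1$; using $\int_{\Geo(X)}\log\rho_s(\gamma_s)\,\nu(\di\gamma)=\Ent(\mu_s|\mm)$ for $s\in\{t,1\}$ and $(\ee_1)_\sharp\nu=\mu_1$, one obtains \emph{exactly} $\Ent(\mu_t|\mm)\le\Ent(\mu_1|\mm)-\int_X\log(\beta_{K,N}^{\tau})_t(\sfd(\bar x,x))\,\mu_1(\di x)$, i.e.\ $\MCP(\beta_{K,N}^{\tau})$; no inequality is lost. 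For \ref{CDbetaVsLSV-i-ii}, since $\log\U_N(\mu_t|\mm)=-\tfrac1N\int\log\rho_t(\gamma_t)\,\nu(\di\gamma)$, taking logarithms in the pointwise bound of (a) and integrating gives $\log\U_N(\mu_t|\mm)\ge\int\log(f+g)\,\nu(\di\gamma)$ with $f=\sigma_{K,N}^{(1-t)}(\sfd(\gamma_0,\gamma_1))\rho_0(\gamma_0)^{-1/N}$, $g=\sigma_{K,N}^{(t)}(\sfd(\gamma_0,\gamma_1))\rho_1(\gamma_1)^{-1/N}$. By concavity of the logarithm one has the elementary inequality
\[
\exp\Big(\int_{\Geo(X)}\log(f+g)\,\nu(\di\gamma)\Big)\ \ge\ \exp\Big(\int_{\Geo(X)}\log f\,\nu(\di\gamma)\Big)+\exp\Big(\int_{\Geo(X)}\log g\,\nu(\di\gamma)\Big),
\]
which, together with $\exp\big(\int\log\rho_j(\gamma_j)^{-1/N}\,\nu(\di\gamma)\big)=\U_N(\mu_j|\mm)$ and the symmetry of $\sfd$, yields precisely \eqref{eq:defCDbetan} with $\beta=\beta_{K,N}^{\sigma}$, $n=N$; by Remark \ref{rem:mu0Pbs} this is $\CD(\beta_{K,N}^{\sigma},N)$.

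\textbf{Gauge $\Rightarrow$ classical, by localization.} Conversely, assume $\CD(\beta_{K,N}^{\sigma},N)$ and fix $\mu_0,\mu_1\in\Prob_{bs}^*(X,\sfd,\mm)$; we aim at the pointwise bound of (a) for $\nu$–a.e.\ $\gamma$, where $\nu$ is the unique optimal dynamical plan between $\mu_0$ and $\mu_1$ (existence/uniqueness and $\mu_t\ll\mm$ on $(0,1)$ hold on essentially non‑branching spaces since $\mu_0\ll\mm$, cf.\ \cite{CavaMondCCM}). We may assume $\mu_0\neq\mu_1$, so $\nu$–a.e.\ $\gamma$ has $\gamma_0\neq\gamma_1$. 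Given disjoint open balls $B,B'$ with $\nu(E)>0$ for $E:=\{\gamma:\gamma_0\in B,\ \gamma_1\in B'\}$, the normalised restriction $\nu^E$ is again the unique element of $\OptGeo((\ee_0)_\sharp\nu^E,(\ee_1)_\sharp\nu^E)$, two absolutely continuous measures with disjoint supports; hence $\CD(\beta_{K,N}^{\sigma},N)$ applies to this restricted pair, and by uniqueness the good geodesic it provides is $(\ee_t)_\sharp\nu^E$. Using injectivity of $\ee_t$ on $\supp\nu$ for a.e.\ $t$ (from non‑branching), the density of $(\ee_t)_\sharp\nu^E$ equals $\nu(E)^{-1}\rho_t$ on $\ee_t(E)$; substituting this into \eqref{eq:defCDbetan} for the restricted pair, the factors $\nu(E)^{\pm1/N}$ cancel and the inequality reduces to the one of the previous display with each integral against $\nu$ replaced by its $\nu^E$–average over $E$. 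Shrinking $E$ around a $\nu$–density point $\bar\gamma$ of the relevant $L^1(\nu)$ functions, these averages converge to the values at $\bar\gamma$, and one recovers the pointwise density inequality of (a) for $\nu$–a.e.\ $\gamma$; this is $\CD^{*}(K,N)$, hence $\CD^{e}(K,N)$. The converse in \ref{CDbetaVsLSV-iii} is entirely analogous: one localises the entropy inequality from Remark \ref{rem:eqMCPbeta} using sets $E=\{\gamma:\gamma_1\in B'\}$ (all geodesics already emanate from $\bar x$, and any dynamical plan issued from $\delta_{\bar x}$ is automatically optimal, so the restricted geodesic is automatically admissible for $\MCP(\beta_{K,N}^{\tau})$), obtaining the pointwise bound of (b), i.e.\ Ohta's $\MCP(K,N)$.

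\textbf{Main obstacle.} The genuine content — and the only place where essential non‑branching is indispensable — is the reconciliation of the two kinds of distortion coefficients: those in $\CD^{e}(K,N)$, resp.\ $\MCP(K,N)$, are built from the \emph{global} quantity $W_2(\mu_0,\mu_1)$, resp.\ from a single density profile, whereas $\beta_{K,N}^{\sigma}$ and $\beta_{K,N}^{\tau}$ enter \eqref{eq:defCDbetan}, \eqref{eq:defMCPbeta} through the \emph{pointwise} distances $\sfd(\gamma_0,\gamma_1)$ logarithmically averaged against $\nu$; for a generic optimal plan (carrying geodesics of varying length) these differ, and it is exactly the non‑branching structure — uniqueness of optimal dynamical plans from absolutely continuous (resp.\ Dirac) data, injectivity of the interpolation maps $\ee_t$, and the resulting transformation rule for densities under restriction of the plan — that makes the localization of the previous paragraph go through and turns the integrated inequalities into the pointwise ones underlying the classical conditions. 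Carrying out this restriction/localization step carefully (checking that the restricted plan is the unique, hence the \emph{good}, one, and handling the measurability and density bookkeeping, including integrability of the various $\log\rho_s$) is the technical heart of the proof; the remaining passages are the elementary Jensen/concavity manipulations above and the cited structural results for essentially non‑branching spaces.
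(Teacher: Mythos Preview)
Your proposal is essentially correct and, for three of the four implications, coincides with the paper's proof: the passage from the pointwise density inequalities (your (a) and (b)) to \eqref{eq:defCDbetan} and \eqref{eq:defMCPbeta} via Jensen for the convex function $(x,y)\mapsto\log(e^x+e^y)$ is exactly what the paper does (your phrase ``by concavity of the logarithm'' is a slight misattribution---it is precisely this convexity that is used), and for $\MCP(\beta_{K,N}^\tau)\Rightarrow\MCP(K,N)$ the paper also localises the integral estimate to a pointwise one by invoking the argument of \cite[Thm.~3.12, (iii)$\Rightarrow$(ii)]{EKS}, just as you do.

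The one implication where the routes genuinely differ is $\CD(\beta_{K,N}^\sigma,N)\Rightarrow\CD^e(K,N)$. You localise the gauge inequality \eqref{eq:defCDbetan} directly to density points of the optimal plan, which forces you to invoke uniqueness of optimal dynamical plans and a.e.\ injectivity of $\ee_t$ (your appeal to \cite{CavaMondCCM}) \emph{before} any classical curvature bound has been established; making this non-circular requires some care you only sketch. The paper instead exploits a special feature of the Riemannian coefficients: the map $\theta\mapsto\log(\beta_{K,N}^\sigma)_t(\sqrt\theta)$ is convex, so one application of Jensen replaces $\int\log(\beta_{K,N}^\sigma)_t(\sfd(\gamma_0,\gamma_1))\,\nu(\di\gamma)$ by $\log(\beta_{K,N}^\sigma)_t(W_2(\mu_0,\mu_1))$, landing immediately on the entropic $\CD^e$ inequality for measures with disjoint (then $\mm$-negligibly intersecting) supports; only \emph{then} does the paper invoke \cite[Thm.~3.12]{EKS}. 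This buys a cleaner argument that uses the structural theory of essentially non-branching spaces only through the already-established $\CD^e$ framework, whereas your route is more general in spirit (it does not use the specific shape of $\beta_{K,N}^\sigma$) but leans more heavily on the uniqueness/injectivity machinery at an earlier stage.
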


\begin{proof}
\textbf{Proof of \ref{CDbetaVsLSV-i-ii},  $\CD(\beta_{K,N}^{\sigma}, N) \, \Rightarrow\, \CD^{e}(K,N)$.}
The map $\theta\mapsto \log\left[ (\beta_{K,N}^{\sigma})_t(\sqrt{\theta})\right]$ is convex, as one can check by a direct computation.  Using Jensen's inequality, it follows that for any $\nu\in\OptGeo(\mu_0,\mu_1)$
\begin{align}
\frac{1}{N}\int_{\Geo(X)} \log (\beta_{K,N}^{\sigma})_t(\sfd(\gamma_{1}, \gamma_{0})) \nu(\di\gamma) & \geq \frac{1}{N} \log (\beta_{K,N}^{\sigma})_t   \left(\sqrt{ \int_{\Geo(X)} \sfd(\gamma_{1}, \gamma_{0})^{2} \nu(\di\gamma)}\right)\\
& = \frac{1}{N}   \log \left[ (\beta_{K,N}^{\sigma})_t (W_2(\mu_0,\mu_1))\right]\\
& =  \log  \sigma_{K/N}^{(t)}(W_2(\mu_0,\mu_1))  \, . \label{eq:logbetasigma}
\end{align}
Thus, if $(X,\sfd,\mm)$ is a metric measure space satisfying the $\CD(\beta_{K,N}^{\sigma}, N)$ condition for the Gauge function $\sfG=\sfd$, then the entropic  $\CD^{e}(K,N)$ convexity inequality of \cite{EKS} is satisfied for measures $\mu_0,\mu_1\in \Prob_{bs}^{*}(X,\sfd,\mm)$ with $\supp \mu_{0}\cap \supp \mu_{1}=\emptyset$. A straightforward approximation argument, yields the validity of the  $\CD^{e}(K,N)$ convexity inequality for measures $\mu_0,\mu_1\in \Prob_{bs}^{*}(X,\sfd,\mm)$ with $\mm(\supp \mu_{0}\cap \supp \mu_{1})=0$. At this point, one can follow verbatim the proof of  \cite[Thm.\@ 3.12, (iii) $\Rightarrow$ (ii)]{EKS} (noticing that there the  $\CD^{e}(K,N)$ convexity inequality is used only for measures whose supports have $\mm$-negligible intersection)  and infer that for any $\mu_0,\mu_{1} \in \Prob_{bs}^{*}(X,\sfd,\mm)$ (without any condition on the intersection of the supports) there exists  $\nu\in \OptGeo(\mu_{0}, \mu_{1})$ such that $(\ee_{t})_{\sharp} \nu\ll \mm$ for all $t\in (0,1)$ and 
\begin{equation}\label{eq:CDeKNgeo}
\rho_{t}(\gamma_{t})^{-\frac 1 N}\geq \sigma_{K/N}^{(1-t)} (\sfd(\gamma_{0}, \gamma_{1})) \rho_{0}(\gamma_{0})^{-\frac  1 N}+ \sigma_{K/N}^{(t)} (\sfd(\gamma_{0}, \gamma_{1})) \rho_{1}(\gamma_{1})^{-\frac  1 N} ,
\end{equation}
for $\nu$-a.e.\@ $\gamma\in \Geo(X)$, where $\rho_{t}$ is the density of $(\ee_{t})_{\sharp}\nu$ w.r.t.\@ $\mm$. Using now  \cite[Thm.\@ 3.12, (ii) $\Rightarrow$ (iii)]{EKS} we infer that  $(X,\sfd,\mm)$ satisfies the entropic curvature-dimension condition $\CD^{e}(K,N)$.

\textbf{Proof of \ref{CDbetaVsLSV-i-ii}, $\CD^{e}(K,N) \, \Rightarrow\, \CD (\beta_{K,N}^{\sigma}, N)$.}
Conversely, if $(X,\sfd,\mm)$ is essentially non-branching and it satisfies $\CD^{e}(K,N)$, then from \cite[Thm.\@ 3.12, (iii) $\Rightarrow$ (ii)]{EKS} we know that for any $\mu_0,\mu_{1} \in \Prob_{bs}^{*}(X,\sfd,\mm)$ there exists  $\nu\in \OptGeo(\mu_{0}, \mu_{1})$ such that $(\ee_{t})_{\sharp} \nu\ll \mm$ for all $t\in (0,1)$ and  \eqref{eq:CDeKNgeo} hold for $\nu$-a.e.\@ $\gamma\in \Geo(X)$, where $\rho_{t}$ is the density of $(\ee_{t})_{\sharp}\nu$ w.r.t. $\mm$. With the same argument one can prove that \eqref{eq:CDeKNgeo} holds more generally when $\mu_0\in \mathcal{P}_{bs}(X,\sfd,\mm)$, up to removing the first term in the right hand side (we will assume this convention throughout the proof).
We rewrite \eqref{eq:CDeKNgeo} as
\begin{multline}
-\frac{1}{N}\log \rho_{t}(\gamma_{t})  \geq \log\left[ \exp\left(-\frac{1}{N} \log \rho_{0}(\gamma_{0}) + \log a_{1-t}(\gamma_{0}, \gamma_{1}) \right) \right. 
\\
\left. + \exp\left(-\frac{1}{N} \log \rho_{1}(\gamma_{1}) + \log a_{t}(\gamma_{0}, \gamma_{1})\right)  \right],
   \label{eq:CDeKNgeo2}
\end{multline}
where we set $a_{t}(\gamma_{0}, \gamma_{1})= \sigma_{K/N}^{(t)} (\sfd(\gamma_{0}, \gamma_{1}))$. Recalling that the map $(x,y)\mapsto \log(e^{x}+e^{y})$ is convex, integrating \eqref{eq:CDeKNgeo2} w.r.t.\@ $\nu$ and using Jensen's inequality, we obtain
\begin{multline}\label{eq:CDeKNInt}
-\frac{1}{N}\Ent(\mu_{t}|\mm) \geq \log\left[\exp\left(-\frac{1}{N} \Ent(\mu_{0}|\mm) +\int_{\Geo(X)} \log a_{1-t}(\gamma_{0}, \gamma_{1}) \nu(\di\gamma) \right) \right.
\\
\left. + \exp\left(-\frac{1}{N} \Ent(\mu_{1}|\mm) +\int_{\Geo(X)} \log a_{t}(\gamma_{0}, \gamma_{1})  \nu(\di\gamma)  \right) \right].
\end{multline}
Taking the exponential of \eqref{eq:CDeKNInt} and recalling \eqref{eq:betaVssigma}, we obtain that $(X,\sfd,\mm)$ endowed with the gauge function $\sfG=\sfd$ satisfies the $\CD(\beta_{K,N}^{\sigma}, N)$ condition \eqref{eq:defCDbetan}.

\textbf{Proof of \ref{CDbetaVsLSV-iii}, $\MCP(\beta_{K,N}^{\tau})\Rightarrow \MCP(K,N)$.} Using \eqref{eq:barbetatau} and arguing along the lines of the proof of \cite[Thm.\@ 3.12, (iii) $\Rightarrow$ (ii)]{EKS}, one uses the essential non-branching assumption to localize the $\MCP(\beta_{K,N}^{\tau})$ integral estimate to a pointwise estimate along geodesics. More precisely, one can show that for every $\mu_1\in \Prob_{bs}^{*}(X,\sfd,\mm)$ and every $x_{0}\in \supp \mm \setminus \supp \mu_1$  there exists  $\nu\in \OptGeo(\delta_{x_{0}}, \mu_{1})$ such that $(\ee_{t})_{\sharp} \nu\ll \mm$ for all $t\in (0,1)$ and
\begin{equation}\label{eq:MCPKNgeo}
\rho_{t}(\gamma_{t})\leq \tau_{K, N}^{(t)} (\sfd(\gamma_{0}, \gamma_{1}))^{-N} \rho_{1}(\gamma_{1}), \qquad \nu\text{-a.e. } \gamma \in \Geo(X),
\end{equation}
where $\rho_{t}$ denotes the density of $(\ee_{t})_{\sharp}\nu$ w.r.t.\@ $\mm$. Let $A$ be a bounded Borel set with positive measure and let $x_{0} \in \supp\mm \setminus  \bar{A}$, where $\bar{A}$ is the closure of $A$. Letting $\mu_1:=\mm(A)^{-1} \, \mm\llcorner_{A}$ and applying \eqref{eq:MCPKNgeo},  one gets
\begin{equation}
\frac{1}{\rho_{t}(\gamma_{t})} \geq  \tau_{K, N}^{(t)} (\sfd(\gamma_{0}, \gamma_{1}))^{N}\; \mm(A)  , \qquad \nu\text{-a.e. } \gamma \in \Geo(X).
\end{equation}
Let now $E\subset X$ be an arbitrary Borel set. Integrating the last inequality over $\ee_t^{-1}(E)$ and recalling that $(\ee_{t})_{\sharp}\nu=\rho_t \, \mm$,  yields
\begin{align*}
\mm(E)&= \int_{\ee_t^{-1}(E)} \frac{1}{\rho_{t}(\gamma_{t})} \nu(\di \gamma)  \\
&\geq   \int_{\ee_t^{-1}(E)} \tau_{K, N}^{(t)} (\sfd(\gamma_{0}, \gamma_{1}))^{N}\; \mm(A)  \, \nu(\di \gamma).
\end{align*}
One thus obtains Ohta's \cite{OhtaMCP} formulation of $\MCP(K,N)$
\begin{equation}\label{eq:OhtaMCP}
\mm \geq  (\ee_{t})_{\sharp} \left(\tau_{K,N}^{(t)}\big(\sfd(\gamma_{0},\gamma_{1})\big)^{N} \mm(A) \, \nu \right),
\end{equation}
 with the caveat  $x_{0}\in \supp \mm \setminus \bar{A}$ and $A$ bounded. The general case follows by a standard approximation argument, that we sketch below.
 
Observing that the distance is a \emph{meek} gauge function in the sense of Definition \ref{def:meek}, the generalized Bishop-Gromov Theorem \ref{thm:BG1} holds. We infer that $\mm(\{x\})=0$ for every $x\in X$ and that $(\supp\mm, \sfd)$ is a proper metric space (see Corollary \ref{cor:totallyboundedclassic}).
 
 Let $A$ be a bounded Borel set with positive measure and let $x_{0} \in  \bar{A}\, \cap\, \supp\mm$.  For $\ve>0$ small enough we have that $\mm(A\setminus B_{\ve}(x_{0}))>0$. Set 
\begin{equation}
\mu_{1,\ve}:= (\mm(A\setminus B_{\ve}(x_{0})))^{-1} \, \mm \llcorner_{A \setminus B_{\ve}(x_{0})}.
\end{equation} 
From the discussion above, we know that there exists  $\nu_\ve\in \OptGeo(\delta_{x_{0}}, \mu_{1,\ve})$ such that
\begin{equation}\label{eq:OhtaMCP2}
\mm   \geq  (\ee_{t})_{\sharp} \left(\tau_{K,N}^{(t)}\big(\sfd(\gamma_{0},\gamma_{1})\big)^{N} \mm\big(A \setminus B_{\ve}(x_{0})\big) \nu_\ve \right).\end{equation}
By letting  $\ve \downarrow 0$, by stability of optimal transport \cite[Thm.\@ 28.9]{Vil} and using that $(\supp\mm, \sfd)$ is a proper metric space, one obtains that there exists  $\nu\in \OptGeo(\delta_{x_{0}}, \mu_{1})$ such that \eqref{eq:OhtaMCP} holds.  For a general Borel set $A$ of finite positive measure: intersect $A$ with the metric ball $B_R(x_0)$, obtain \eqref{eq:OhtaMCP} and take the limit as $R\uparrow +\infty$ to conclude.

\textbf{Proof of \ref{CDbetaVsLSV-iii}, $ \MCP(K,N) \Rightarrow \MCP(\beta_{K,N}^{\tau})$.} Conversely,  it is standard  to check that \eqref{eq:OhtaMCP} implies \eqref{eq:MCPKNgeo}
 (see for instance \cite[App.\@ A]{CavaMondCCM}). Arguing along the lines of  $\CD^{e}(K,N) \, \Rightarrow\, \CD (\beta_{K,N}^{\sigma}, N)$, we obtain that $(X,\sfd,\mm)$ endowed with the gauge function $\sfG=\sfd$ satisfies the $\MCP(\beta_{K,N}^{\tau})$ condition \eqref{eq:defMCPbeta}.
\end{proof}

\begin{remark}[Connection with other synthetic formulations]\label{rem:CDbetaVsLSV}
Let $K\in \R, N\in [1,+\infty)$. In case $(X,\sfd,\mm)$ is essentially non-branching:
\begin{itemize}
\item It was proved in \cite[Thm.\@ 3.12]{EKS} that the reduced $\CD^{*}(K,N)$ condition (see \cite{BS10}) is equivalent to the entropic $\CD^{e}(K,N)$. 
\item The $\CD(K,N)$ condition (as in \cite{sturm:II}) is equivalent to $\CD^{*}(K,N)$. This is a deep result establishing the local-to-global property for $\CD(K,N)$; it was proved in \cite{CavaMil} in case $\mm(X)<\infty$.
Their approach is based on the \emph{needle decomposition}  of \cite{Klartag} and the \emph{localization technique} developed in \cite{CavaMond}.

\item Ohta's \cite{OhtaMCP} and Sturm's \cite{sturm:II} formulations of $\MCP(K,N)$ are equivalent, see for instance \cite[App.\@ A]{CavaMondCCM}.
\item Our theory is modeled on the convexity inequalities for the entropic curvature-dimension condition à la Erbar-Kuwada-Sturm \cite{EKS}, that involves the functional $\U_N$. One could have developed the theory studying instead the Renyi entropy functional, as in \cite{sturm:II,BS10}, or general functionals in the McCann class  $\mathcal{DC}(N)$ as in \cite{lottvillani:metric,LVJFA}, with analogous outcomes. For a discussion of connections between the approach in  \cite{lottvillani:metric,LVJFA} and the $\CD^{*}(K,N)$ condition see for instance  \cite[Sec.\@ 9]{AMSmemo}.
\end{itemize}
\end{remark}

\begin{remark}[Connection with  E.\,Milman's theory]\label{rem:Milman}
In \cite{MilmanSR}, E.\,Milman introduced a quasi-convex relaxation of the Lott–Sturm–Villani's $\CD(K,N)$, called the Quasi Cur\-va\-ture-Di\-men\-sion condition $\QCD(Q,K,N)$, see \cite[Def.\@ 2.3]{MilmanSR}. $Q\geq 1$ is an auxiliary parameter so that for $Q=1$ the $\QCD(1,K,N)$ is equivalent to the $\CD(K,N)$, and it is strictly weaker than the latter for $Q>1$. With the same arguments used in the proof of Theorem \ref{thm:CDbetaVsLSV}, for essentially non-branching m.m.s., the $\QCD(Q,K,N)$ for $N=n$ corresponds to the $\CD(\beta,n)$ inequality \eqref{eq:defMCPbeta} with gauge function $\sfG=\sfd$, and
\begin{equation}\label{eq:betaQCD}
\beta_t(\theta) = \frac{1}{Q}(\beta^{\tau}_{K,n})_t(\theta), \qquad \forall\, t\in [0,1],\quad \forall\, \theta \in [0,+\infty),
\end{equation}
where $(\beta^{\tau}_{K,n})_t$ is the same coefficient \eqref{eq:barbetatau} of the Lott-Sturm-Villani's theory. Formally the coefficient \eqref{eq:betaQCD} is not of the form \eqref{eq:defbeta} due to the extra factor $1/Q$. However, all results of Section \ref{sec:geometricconsequences} hold with the same proofs for this more general class of coefficients, up to keeping track of this extra factor in the statements.

On the other hand the finer Curvature-Geodesic-Topological-Dimension condition $\mathsf{CGTD}(K,N,n)$ introduced by E.\, Milman in \cite[Sec.\@ 7]{MilmanSR}, for $K\in \R$, $n\geq 1$ and $N\geq n$ corresponds to the $\CD(\beta_{K,N}^\tau,n)$ for gauge function $\sfG=\sfd$. 

Notice that for $Q=1$ and $N=n$, $\mathsf{CGTD}(K,N,n)$ and $\QCD(Q,K,N)$ are equivalent, and in turn equivalent to the classical $\CD(K,N)$.
\end{remark}

\subsubsection{Recovering the Balogh-Kristály-Sipos' theory}\label{sec:howtorecover2}

In this section we show how the general theory introduced above also contains the one developed in \cite{BKS} for Heisenberg groups.

Let $d\geq 1$. The Heisenberg group $\mathbb{H}^{d}$ is the non-commutative group structure on $\R^{2d+1}$ given, in coordinates $(x_{1},\ldots,x_{d},y_{1},\ldots,y_{d},z)$, by the law
\begin{equation}
(x,y,z) \star (x',y',z') = \left(x+x',y+y', z+z'+\sum_{i=1}^{d}\frac{1}{2}(x_{i}y_{i}
'-y_{i}x_{i}')\right).
\end{equation}
Consider the $2d$ left-invariant vector fields 
\begin{equation}
X_{i} = \frac{\partial}{\partial x_{i}} -\frac{y_{i}}{2}\frac{\partial}{\partial z}, \qquad Y_{i} =\frac{\partial}{\partial y_{i}} + \frac{x_{i}}{2}\frac{\partial}{\partial z},\qquad i=1,\ldots,d,
\end{equation}
and the left-invariant metric making $\{X_{i},Y_{i}\}_{i=1}^d$ a global orthonormal frame. We equip $\mathbb{H}^{d}$ with the associated sub-Riemannian (Carnot-Carath\'eodory) distance, denoted by $\sfd_{cc}$ (see Appendix \ref{a:SR}), and the Lebesgue measure $\mathscr{L}^{2d+1}$, which is a Haar measure.

We set
\begin{equation}
\sfs(\theta) =\theta \sin\left(\frac{\theta}{2}\right)^{2d-1}\left[\sin\left(\frac{\theta}{2}\right)-\frac{\theta}{2} \cos\left(\frac{\theta}{2}\right)\right], \qquad \theta\in [0,+\infty).
\end{equation} 
Notice that $\sfs(\theta)$ has order $N=2d+3$ as $\theta \to 0$. Moreover, it holds
\begin{equation}
\cD=\inf\{\theta>0\mid \sfs_{d}(\theta)=0\}=2\pi.
\end{equation}
The corresponding distortion coefficient, following the recipe of \eqref{eq:defbeta}, is then
\begin{multline}
[0,1]\times [0,+\infty]\ni (t,\theta) \mapsto \\ \beta^{\H^d}_t(\theta) =
\begin{cases}
 t^{2d+3} &\theta =0, \\
t\dfrac{ \sin\left(\frac{t\theta}{2}\right)^{2d-1}\left[\sin\left(\frac{t\theta}{2}\right)-\frac{t\theta}{2} \cos\left(\frac{t\theta}{2}\right)\right]}{ \sin\left(\frac{\theta}{2}\right)^{2d-1}\left[\sin\left(\frac{\theta}{2}\right)-\frac{\theta}{2} \cos\left(\frac{\theta}{2}\right)\right]}& 0<\theta <2\pi,\\
 +\infty & \theta \geq 2\pi, t\neq 0, \\
0 & \theta \geq 2\pi, t= 0. \\
 \end{cases}
\end{multline}
We highlight in particular the following identity
\begin{equation}
\beta^{\H^d}_t(\theta)=(\tau_{t}^{d}(\theta))^{2d+1}, \qquad \forall\, t\in [0,1],\quad \forall\, \theta \in [0,2\pi],
\end{equation}
where $\tau_{s}^{d}(\theta)$ is the function considered in \cite[Eq.\@ (1.6)]{BKS}.

The main result \cite[Thm.\@ 1.1]{BKS} is a Jacobian determinant inequality which permits by standard manipulations to obtain an interpolation inequality for optimal transport, see \cite[Eq.\@ (3.17), p.\@ 61]{BKS}. More precisely, for all  $\mu_0,\mu_1\in \Prob_{bs}^{*}(\mathbb{H}^{d},\sfd_{cc},\mathscr{L}^{2d+1})$, there exists $\nu\in\OptGeo(\mu_0,\mu_1)$ associated with a $W_2$-geodesic $(\mu_t)_{t\in [0,1]}$ such that $\mu_t \ll \mathscr{L}^{2d+1}$ for all $t\in (0,1]$, for $\nu$-a.e.\@ $\gamma$ it holds $(\gamma_0, \gamma_t)\notin\Cut(\H^d)$ for all $t\in (0,1]$, and
\begin{equation}\label{eq:interpolationineq4}
\frac{1}{\rho_t(\gamma_t)^{1/(2d+1)}}\geq \frac{\beta^{\H^d}_{1-t}(\theta^{\gamma_1,\gamma_0})^{1/(2d+1)}}{\rho_0(\gamma_0)^{1/(2d+1)}} + \frac{\beta^{\H^d}_t(\theta^{\gamma_0,\gamma_1})^{1/(2d+1)}}{\rho_1(\gamma_1)^{1/(2d+1)}}, \qquad \forall\, t\in [0,1],
\end{equation}
where $\rho_t = \frac{\di \mu_t}{\di \mathscr{L}^{2d+1}}$, and $\theta^{x,y}$ is the vertical norm of the covector associated with the geodesic joining $x$ and $y$, for $(x,y)\notin \mathrm{Cut}(\mathbb{H}^d)$.  In \eqref{eq:interpolationineq4} we use the convention that if $\mu_0$ is not absolutely continuous then the first term in the right hand side is omitted.

Using the definition of $\U_n$, the convexity of the map $\R^{2}\ni(x,y)\mapsto \log(e^{x}+e^{y})$ and Jensen's inequality, one establishes the validity of the $\CD(\beta,2d+1)$, with $\beta=\beta^{\H^d}$.

We stress that the argument in the r.h.s.\@ of \eqref{eq:interpolationineq4} does not depend on the distance. It is then natural to set as a gauge function any map $\sfG:\mathbb{H}^d\times \mathbb{H}^d\to [0,+\infty]$ such that
\begin{equation}
\sfG(x,y):=\theta^{x,y},\qquad \forall\, (x,y)\notin \mathrm{Cut}(\mathbb{H}^d).
\end{equation} 
Then the results of \cite{BKS} can be restated within our framework as follows.
\begin{theorem} \label{t:HeisisCD}
The gauge m.m.s.\@ $(\mathbb{H}^{d},\sfd_{cc},\mathscr{L}^{2d+1},\sfG)$ satisfies the $\CD(\beta^{\H^d},2d+1)$.
\end{theorem}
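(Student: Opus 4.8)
The plan is to import the sharp analytic input of Balogh--Krist\'aly--Sipos and repackage it in the gauge formalism, closely following the scheme used in the proof of Theorem~\ref{thm:CDbetaVsLSV}.

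First I would recall the output of \cite[Thm.\@ 1.1]{BKS} in the form already displayed above: for every $\mu_0\in\Prob_{bs}(\H^d,\sfd_{cc},\mathscr{L}^{2d+1})$ and $\mu_1\in\Prob_{bs}^*(\H^d,\sfd_{cc},\mathscr{L}^{2d+1})$ with $\supp\mu_0\cap\supp\mu_1=\emptyset$, the change-of-variables manipulations of \cite[Sec.\@ 3]{BKS} produce $\nu\in\OptGeo(\mu_0,\mu_1)$ inducing a $W_2$-geodesic $(\mu_t)_{t\in[0,1]}$ with $\mu_t=(\ee_t)_\sharp\nu\ll\mathscr{L}^{2d+1}$ for $t\in(0,1]$, such that the density-level interpolation inequality \eqref{eq:interpolationineq4} holds for $\nu$-a.e.\@ $\gamma$, with the first summand on the right omitted when $\mu_0$ is not absolutely continuous. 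Since the gauge is $\sfG(x,y)=\theta^{x,y}$, and since $\tau_t^d(\theta)^{2d+1}=\beta^{\H^d}_t(\theta)$ for the function $\sfs$ fixed in this section, \eqref{eq:interpolationineq4} is exactly the pointwise-along-geodesics form of the $\CD(\beta^{\H^d},2d+1)$ inequality.

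Next I would pass from this pointwise inequality to the integrated one by the argument of the implication $\CD^{e}(K,N)\Rightarrow\CD(\beta_{K,N}^\sigma,N)$ in Theorem~\ref{thm:CDbetaVsLSV}. Writing $\rho_s=\di\mu_s/\di\mathscr{L}^{2d+1}$, one rewrites \eqref{eq:interpolationineq4} as $-\tfrac{1}{2d+1}\log\rho_t(\gamma_t)\geq\log(e^{x(\gamma)}+e^{y(\gamma)})$, where $x(\gamma)=-\tfrac{1}{2d+1}\log\rho_0(\gamma_0)+\tfrac{1}{2d+1}\log\beta^{\H^d}_{1-t}(\sfG(\gamma_1,\gamma_0))$ and symmetrically for $y(\gamma)$; then integrates against $\nu$ and invokes Jensen's inequality for the convex function $(x,y)\mapsto\log(e^x+e^y)$. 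Using $\int_{\Geo(\H^d)}\log\rho_s(\gamma_s)\,\nu(\di\gamma)=\Ent(\mu_s|\mathscr{L}^{2d+1})$ for $s\in\{0,1,t\}$, with the $s=0$ term understood as $+\infty$ (equivalently $\U_{2d+1}(\mu_0|\mathscr{L}^{2d+1})=0$, consistent with the convention $\infty\cdot 0=0$) when $\mu_0\not\ll\mathscr{L}^{2d+1}$, and exponentiating, one arrives at exactly \eqref{eq:defCDbetan} for $n=2d+1$ and $\beta=\beta^{\H^d}$, i.e.\@ the claimed $\CD(\beta^{\H^d},2d+1)$ condition.

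The geometric substance is entirely contained in \cite{BKS}; the only genuine work is the bookkeeping needed to fit the statement into Definition~\ref{def:CDMCPbetan}, and I expect the delicate points to be the following. (i) One must check that $\sfG(x,y)=\theta^{x,y}$ is a bona fide Borel gauge function valued in $[0,+\infty]$; it is discontinuous at the cut locus, in sharp contrast with $\sfG=\sfd$, which is precisely why the formulation of Definition~\ref{def:CDMCPbetan} allowing $\mu_0\in\Prob_{bs}$ (rather than only $\mu_0\in\Prob_{bs}^*$) is needed, and why one must use the variant of \eqref{eq:interpolationineq4} with the first summand dropped for non-absolutely-continuous $\mu_0$. (ii) To apply Jensen's inequality one needs measurability of $\gamma\mapsto\theta^{\gamma_0,\gamma_1}$ and integrability of $\gamma\mapsto\log\beta^{\H^d}_t(\sfG(\gamma_0,\gamma_1))$; here $\supp\mu_0\cap\supp\mu_1=\emptyset$ together with boundedness of the supports confines $\theta^{\gamma_0,\gamma_1}$ to a compact subinterval of $[0,2\pi)$ on which $\beta^{\H^d}_t(\cdot)$ is continuous and bounded away from $0$ and $\infty$ (Proposition~\ref{prop:propertiesbeta}\ref{i:propertiesbeta3}--\ref{i:propertiesbeta4}), so the relevant logarithm is bounded. (iii) If one prefers not to rely on the non-a.c.\@ case of \eqref{eq:interpolationineq4} already being available in \cite{BKS}, one can instead obtain it by approximating a general $\mu_0\in\Prob_{bs}$ by absolutely continuous measures and passing to the limit, using lower semicontinuity of $\Ent(\cdot|\mathscr{L}^{2d+1})$ and continuity of $\beta^{\H^d}$ on $[0,2\pi)$.
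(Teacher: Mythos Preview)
Your proposal is correct and follows exactly the route sketched in the paper: import the density-level interpolation inequality \eqref{eq:interpolationineq4} from \cite{BKS}, then integrate against $\nu$ using Jensen's inequality for the convex map $(x,y)\mapsto\log(e^x+e^y)$ to reach \eqref{eq:defCDbetan}, precisely as in the $\CD^e(K,N)\Rightarrow\CD(\beta_{K,N}^\sigma,N)$ implication of Theorem~\ref{thm:CDbetaVsLSV}.

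One small imprecision: in your point (ii), the claim that disjoint bounded supports confine $\theta^{\gamma_0,\gamma_1}$ to a compact subinterval of $[0,2\pi)$ is not quite right as stated---one can arrange $\supp\mu_0$ and $\supp\mu_1$ disjoint and bounded yet containing pairs arbitrarily close to each other's vertical cut-axis, so $\theta$ need not be bounded away from $2\pi$ a priori. This does not affect the argument, however: $\log\beta^{\H^d}_t$ is bounded below on $[0,2\pi)$ (indeed $\beta^{\H^d}_t\geq t^{2d+3}$), so the integral $\int\log\beta^{\H^d}_t(\sfG)\,\nu$ is well-defined in $(-\infty,+\infty]$, and the pointwise inequality \eqref{eq:interpolationineq4} itself, integrated, shows it is finite (it is bounded above by $\Ent(\mu_1|\mathscr{L}^{2d+1})-\Ent(\mu_t|\mathscr{L}^{2d+1})<\infty$). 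The paper does not spell out this point either.
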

It is not hard to check that $\beta^{\H^d}_t(\theta) \geq t^{2d+3}$ for all $t\in [0,1]$ and $\theta \in [0,2\pi]$. Therefore, by Proposition~\ref{prop:hierarchy}\ref{i:hierarchy3}, the $\CD(\beta^{\H^d},2d+1)$ implies the $\CD(t^{2d+3},2d+1)$. In turn, the latter implies the classical $\MCP(0,2d+3)$, originally established for $\H^d$ by Juillet in \cite{NJ09}.

\section{Geometric consequences}\label{sec:geometricconsequences}

In this section we establish some geometric properties implied by the $\CD(\beta, n)$ condition (compare with \cite{EKS, sturm:II, lottvillani:metric} for the Lott-Sturm-Villani's $\CD(K,N)$ spaces).

\subsection{Generalized Brunn-Minkowski inequality}\label{sec:Brunn-Mink}

Given two Borel sets $A_{0}, A_{1}\subset X$, denote with $A_{t}$ the set of $t$-intermediate points, i.e.
\begin{equation}\label{eq:defAt}
A_{t}:=Z_{t}(A_{0},A_{1})=\{\gamma_{t} \mid \gamma\in \Geo(X),\, \gamma_{0}\in A_{0},\, \gamma_{1}\in A_{1} \}.
\end{equation}
It is well-known that $A_{t}$ is a Suslin set. For non-empty Borel sets $A_0,A_1\subset X$, denote
\begin{equation}\label{eq:betainf}
\beta_{t}(A_{0}, A_{1}):= \sup_{\tilde{A}_i\subseteq A_i} \inf\{\beta_{t}(\sfG(x_0,x_1)) \mid x_i\in \tilde{A}_i \cap \supp\mm \},
\end{equation}
where the sup is taken over all full-measure non-empty subsets $\tilde{A}_i\subseteq A_i$, for $i=0,1$.
It is clear from the definition \eqref{eq:betainf} that 
\begin{equation}\label{eq:Eqbetatriangle}
\mm(A_{0}\triangle \tilde{A}_{0})=\mm(A_{1}\triangle \tilde{A}_{1})=0 \quad \Longrightarrow \quad \beta_{t}(A_{0}, A_{1})=\beta_{t}(\tilde{A}_{0},  \tilde{A}_{1})\,,
\end{equation}
where $A_{0}, \tilde{A}_{0}, A_{1}, \tilde{A}_{1}\subset X$ are Borel sets and $\triangle$ denotes the symmetric difference of sets.

\begin{theorem}[Generalized Brunn-Minkowski inequality]\label{thm:Brunn-Mink}
Let $(X,\sfd,\mm)$ be a m.m.s.\@ with gauge function $\sfG$, satisfying the $\CD(\beta, n)$ condition, with $n\in [1,+\infty)$ and $\beta$ as in \eqref{eq:defbeta}. Let $A_{0}, A_{1}\subset X$ be Borel sets with $\mm(A_{0}), \mm(A_{1})>0$  and $\mm(A_{0}\cap A_{1})=0$. Then
\begin{equation}\label{eq:BrunMink}
\bar{\mm}(A_{t})^{1/n} \geq \beta_{1-t}(A_{1}, A_{0})^{1/n} \cdot \mm(A_{0})^{1/n}+ \beta_{t}(A_{0}, A_{1})^{1/n}\cdot \mm(A_{1})^{1/n}, \qquad \forall\, t\in (0,1),
\end{equation}
where $\bar \mm$ is the completion of $\mm$, $A_{t}$ is the set of $t$-intermediate points \eqref{eq:defAt}, and with the convention that $0\cdot \infty = 0$.
\end{theorem}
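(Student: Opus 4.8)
The plan is to derive the Brunn--Minkowski inequality from the $\CD(\beta,n)$ convexity inequality for the $n$-dimensional entropy $\U_n$, by the standard trick of applying it to the uniform (renormalized Lebesgue-type) measures supported on $A_0$ and $A_1$. Concretely, I would first dispose of trivial cases: if $\mm(A_t)=+\infty$ there is nothing to prove, and if $\beta_t(A_0,A_1)=0$ or $\beta_{1-t}(A_1,A_0)=0$ the corresponding term vanishes by the convention $0\cdot\infty=0$, so we may assume both are positive; also, since $\bar\mm(A_t)\geq \mm(A_t)$ and $A_t$ is Suslin hence $\bar\mm$-measurable, replacing $A_0,A_1$ by full-measure subsets (allowed by \eqref{eq:Eqbetatriangle}) we may assume $A_0,A_1\subseteq\supp\mm$ are Borel with finite positive measure and $\mm(A_0\cap A_1)=0$.

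Next I would set $\mu_0:=\mm(A_0)^{-1}\,\mm\llcorner_{A_0}$ and $\mu_1:=\mm(A_1)^{-1}\,\mm\llcorner_{A_1}$. These lie in $\Prob_{bs}^*(X,\sfd,\mm)$ (their entropies are $-\log\mm(A_i)\in\R$) and have disjoint supports in the $\mm$-essential sense; after intersecting with a large ball and passing to the limit one reduces to the bounded-support case so that Definition \ref{def:CDMCPbetan} applies (this approximation is exactly the kind of manipulation already used in the proof of Theorem \ref{thm:CDbetaVsLSV}). Let $\nu\in\OptGeo(\mu_0,\mu_1)$ be the optimal dynamical plan given by $\CD(\beta,n)$, with intermediate measures $\mu_t=(\ee_t)_\sharp\nu$. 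Since $\nu$ is concentrated on geodesics $\gamma$ with $\gamma_0\in A_0$, $\gamma_1\in A_1$, we have $\supp\mu_t\subseteq \overline{A_t}$, hence $\mu_t$ is concentrated on $\bar A_t$ (up to $\bar\mm$-negligible sets, using that $\mu_t\in\Prob_2$); in any case $\U_n(\mu_t|\mm)\geq \exp(-\tfrac1n\Ent(\mu_t|\mm))$, and by Jensen's inequality \eqref{eq:jensenEnt} applied with the completed measure, $\Ent(\mu_t|\mm)\geq -\log\bar\mm(\supp\mu_t)\geq -\log\bar\mm(A_t)$, so $\U_n(\mu_t|\mm)\leq \bar\mm(A_t)^{1/n}$.

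Now I would estimate the right-hand side of \eqref{eq:defCDbetan} from below. We have $\U_n(\mu_0|\mm)=\mm(A_0)^{1/n}$ and $\U_n(\mu_1|\mm)=\mm(A_1)^{1/n}$. For the exponential factors, since $\gamma_0\in A_0$ and $\gamma_1\in A_1$ for $\nu$-a.e.\ $\gamma$, we can bound $\beta_t(\sfG(\gamma_0,\gamma_1))\geq \inf\{\beta_t(\sfG(x_0,x_1))\mid x_i\in \tilde A_i\cap\supp\mm\}$ for appropriate full-measure subsets $\tilde A_i$, and taking the supremum over such subsets gives $\log\beta_t(\sfG(\gamma_0,\gamma_1))\geq \log\beta_t(A_0,A_1)$ for $\nu$-a.e.\ $\gamma$ (handling the value $+\infty$ with the stated conventions); integrating against $\nu$ and exponentiating yields $\exp(\tfrac1n\int\log\beta_t(\sfG(\gamma_0,\gamma_1))\,\nu(\di\gamma))\geq \beta_t(A_0,A_1)^{1/n}$, and symmetrically with $1-t$, $A_1,A_0$. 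Chaining these inequalities with the $\CD(\beta,n)$ inequality produces exactly \eqref{eq:BrunMink}.

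The main obstacle I anticipate is the measurability and set-theoretic bookkeeping around $A_t$: it is only Suslin, not Borel, which is why the statement invokes the completion $\bar\mm$, and one must be careful that $\mu_t$ is genuinely carried by (a full $\bar\mm$-measure subset of) $\bar A_t$ and that Jensen's inequality \eqref{eq:jensenEnt} can be applied with $\bar\mm(\bar A_t)=\bar\mm(A_t)<\infty$. A secondary technical point is the reduction to the setting where Definition \ref{def:CDMCPbetan} literally applies --- disjoint (not merely essentially disjoint) supports and bounded supports --- which requires the truncation-and-limit argument; one must check that the distortion coefficients $\beta_t(A_0\cap B_R, A_1\cap B_R)$ behave monotonically/controllably as $R\to\infty$, but this follows from the definition \eqref{eq:betainf} since shrinking the sets only increases the infimum, so the estimate obtained on truncations is stronger and passes to the limit. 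Everything else is a routine application of Jensen's inequality and the convexity of $(x,y)\mapsto\log(e^x+e^y)$, exactly as in the proofs already given above.
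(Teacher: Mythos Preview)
Your approach is the same as the paper's: take the uniform probabilities on $A_0,A_1$, apply the $\CD(\beta,n)$ inequality, bound $\U_n(\mu_t)$ above by $\bar\mm(A_t)^{1/n}$, and bound the distortion factors below via the definition of $\beta_t(A_0,A_1)$.

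There is one genuine slip. You route the upper bound $\U_n(\mu_t|\mm)\leq \bar\mm(A_t)^{1/n}$ through \eqref{eq:jensenEnt}, which gives only $\Ent(\mu_t|\mm)\geq -\log\mm(\supp\mu_t)$; you then assert $-\log\bar\mm(\supp\mu_t)\geq -\log\bar\mm(A_t)$, i.e.\ $\bar\mm(\supp\mu_t)\leq\bar\mm(A_t)$. This need not hold: $\mu_t$ is concentrated on the Suslin set $A_t$, but $\supp\mu_t$ is its closure, which can have strictly larger measure. The paper bypasses this by a direct computation using Jensen twice on the density:
\[
\U_n(\mu_t)=\exp\Big(-\tfrac1n\int\log\rho_t\,\di\mu_t\Big)\leq \int \rho_t^{-1/n}\,\di\mu_t=\int_{A_t}\rho_t^{1-1/n}\,\di\bar\mm\leq \bar\mm(A_t)^{1/n},
\]
which uses only that $\mu_t$ is concentrated on $A_t$, not the support.

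A secondary point: the $\CD(\beta,n)$ definition requires $\supp\mu_0\cap\supp\mu_1=\emptyset$ as a \emph{topological} condition, so ``disjoint in the $\mm$-essential sense'' is not enough. The paper handles this (and unbounded $A_i$) by inner-approximating each $A_i$ by closed bounded subsets $A_{i,\varepsilon}$ with $A_{0,\varepsilon}\cap A_{1,\varepsilon}=\emptyset$, noting that $\beta_t(A_0^\varepsilon,A_1^\varepsilon)\geq\beta_t(A_0,A_1)$ and $A_t^\varepsilon\subset A_t$, and then letting $\varepsilon\to 0$. Your truncation sketch is in the right spirit but should be made to produce genuinely disjoint closed sets.
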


\begin{proof} Assume that $\mm(A_i)<+\infty$ for $i=0,1$. We first reduce the proof to the bounded and disjoint case. Since $\mm(A_i)<+\infty$ and $\mm(A_{0}\cap A_{1})=0$, for any $\varepsilon>0$ there exists $A_{i,\ve} \subset A_{i}$ which is closed and bounded, $A_{0,\ve}\cap A_{1,\ve}=\emptyset$,   and $\mm(A_{i,\ve})\to \mm(A_i)$ as $\ve \to 0$. Furthermore, setting $A_{t,\ve}:=Z_{t}(A_{0,\ve}, A_{1,\ve})$ the set of $t$-intermediate points from $A_{0,\ve}$ to $A_{1,\ve}$, it holds
\begin{equation}
\beta_t(A_{0,\varepsilon},A_{1,\varepsilon}) \geq \beta_t(A_0,A_1), \qquad \beta_t(A_{1,\varepsilon},A_{0,\varepsilon}) \geq \beta_t(A_1,A_0), \qquad A_t \supset A_{t,\varepsilon}.
\end{equation}
Thus it is sufficient to prove \eqref{eq:BrunMink} for  closed and bounded sets $A_0,A_1$ with $A_{0}\cap A_{1}=\emptyset$.
Let $\mu_{i}:=\tfrac{1}{\mm(A_{i})}\mm\llcorner_{A_{i}}\in \Prob_{bs}^{*}(X,\sfd,\mm)$ for $i=0,1$ and note that $\supp\mu_{0}\cap \supp \mu_{1}=\emptyset$. Observe that $\U_{n}(\mu_{i})=\mm(A_{i})^{1/n}$.  By assumption, there exists a $W_2$-geodesic $(\mu_t)_{t\in [0,1]}$ joining them, and satisfying the inequality \eqref{eq:defCDbetan}. We remark that $\mu_t$ is concentrated on $A_t$, which is a Suslin set. Using Jensen's inequality twice, we obtain
\begin{equation}
\U_{n}(\mu_{t})=\exp\left(- \frac{1}{n} \int_{X} \log \, \rho_{t} \, \di\mu_{t} \right) \leq \int_{X} \rho_{t}^{-1/n}\,  \di \mu_{t}=\int_{A_{t}} \rho_{t}^{1-\frac{1}{n}} \di \bar{\mm} \leq \bar{\mm}(A_{t})^{1/n}.
\end{equation}
Then \eqref{eq:BrunMink} for sets of finite measure follows directly from the $\CD(\beta,n)$ inequality \eqref{eq:defCDbetan}, and from optimizing with respect to full-measure subsets of $A_i$.

Since $\mm$ is $\sigma$-finite, we can prove the result for sets of possibly infinite measure by approximating them with a monotone sequence of sets with finite measure.
\end{proof}

With analogous arguments one obtains the next generalized half-Brunn-Minkowski inequality for $\MCP(\beta)$, which will play a pivotal role in geometric applications.

\begin{theorem}[Generalized half-Brunn-Minkowski inequality]\label{thm:half-Brunn-Mink}
Let $(X,\sfd,\mm)$ be a m.m.s.\@ with gauge function $\sfG$, satisfying the $\MCP(\beta)$ condition, with $\beta$ as in \eqref{eq:defbeta}. Let $A$ be a Borel set with $\mm(A)>0$, and $\bar x\in \supp \mm$  with $\mm(\{\bar{x}\}\cap A)=0$. Then
\begin{equation}\label{eq:halfBrunMink}
\bar{\mm}(A_{t}) \geq \beta_{t}(\bar{x}, A) \cdot \mm(A), \qquad \forall\, t\in (0,1),
\end{equation}
where $\bar \mm$ is the completion of $\mm$, $A_{t}$ denotes the set of $t$-intermediate points  \eqref{eq:defAt} between $A_0 = \{\bar{x}\}$ and $A_1 = A$, and with the convention that $0\cdot \infty = 0$.
\end{theorem}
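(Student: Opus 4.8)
Here is how I would approach Theorem~\ref{thm:half-Brunn-Mink}. The plan is to mirror the proof of Theorem~\ref{thm:Brunn-Mink}, specialised to the degenerate configuration $A_0=\{\bar x\}$, $A_1=A$, the only difference being that the source measure is a Dirac mass rather than a normalised restriction of $\mm$.

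First I would reduce to a convenient base case in which $A$ is bounded, $\mm(A)\in(0,+\infty)$, and $\bar x\notin\overline{A\cap\supp\mm}$. The truncation to bounded sets of finite measure is handled, exactly as in Theorem~\ref{thm:Brunn-Mink}, by intersecting $A$ with metric balls $B_R(\bar x)$ and invoking the $\sigma$-finiteness of $\mm$, together with the monotonicity $\beta_t(\bar x,A')\geq\beta_t(\bar x,A)$ and the inclusion $Z_t(\{\bar x\},A')\subseteq Z_t(\{\bar x\},A)$ valid for $A'\subseteq A$, both immediate from \eqref{eq:defAt} and \eqref{eq:betainf}. To guarantee that $\bar x$ stays outside the closed support of the target, I would further replace $A$ by $A\setminus B_\varepsilon(\bar x)$: its closure is contained in $\{x : \sfd(x,\bar x)\geq\varepsilon\}$, so $\bar x$ lies outside the support of the corresponding measure, while $\mm(A\setminus B_\varepsilon(\bar x))\uparrow\mm(A)$ as $\varepsilon\downarrow 0$ because $\mm(\{\bar x\}\cap A)=0$; letting $\varepsilon\downarrow 0$ (and $R\uparrow+\infty$) at the end then recovers the general statement, using once more the monotonicities above.

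In the base case, set $\mu_0:=\delta_{\bar x}$ and $\mu_1:=\mm(A)^{-1}\,\mm\llcorner_A$, so that $\mu_1\in\Prob_{bs}^*(X,\sfd,\mm)$, $\bar x\notin\supp\mu_1$, and $\U_n(\mu_1|\mm)=\mm(A)^{1/n}$ (since $\Ent(\mu_1|\mm)=-\log\mm(A)$). The $\MCP(\beta)$ condition provides a $W_2$-geodesic $(\mu_t)_{t\in[0,1]}$ from $\mu_0$ to $\mu_1$, induced by some $\nu\in\OptGeo(\mu_0,\mu_1)$, for which \eqref{eq:defMCPbeta} holds. Since $\nu$-a.e.\ $\gamma$ satisfies $\gamma_0=\bar x$ and $\gamma_1\in A$, the measure $\mu_t=(\ee_t)_\sharp\nu$ is concentrated on $A_t=Z_t(\{\bar x\},A)$; moreover, just as in the proof of Theorem~\ref{thm:Brunn-Mink} — trivially if $\mu_t\not\ll\mm$, and by a double application of Jensen's inequality (to $\exp$ and to the concave map $x\mapsto x^{1-1/n}$) otherwise — one obtains $\U_n(\mu_t|\mm)\leq\bar\mm(A_t)^{1/n}$. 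Combining this bound with \eqref{eq:defMCPbeta} gives
\[
\bar\mm(A_t) \;\geq\; \exp\!\left(\int_X\log\beta_t\big(\sfG(\bar x,x)\big)\,\mu_1(\di x)\right)\mm(A),\qquad\forall\,t\in(0,1).
\]

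It then remains to bound the exponential factor from below by $\beta_t(\bar x,A)$. If $\beta_t(\bar x,A)=0$ there is nothing to prove; otherwise, by definition \eqref{eq:betainf} there are full-measure non-empty subsets $\tilde A\subseteq A$ for which $m_{\tilde A}:=\inf\{\beta_t(\sfG(\bar x,x)) : x\in\tilde A\cap\supp\mm\}$ is positive and arbitrarily close to $\beta_t(\bar x,A)$. Since $\mu_1$ is concentrated on $A\cap\supp\mm$ and $\mm(A\setminus\tilde A)=0$, we have $\log\beta_t(\sfG(\bar x,x))\geq\log m_{\tilde A}$ for $\mu_1$-a.e.\ $x$, whence $\exp\big(\int_X\log\beta_t(\sfG(\bar x,x))\,\mu_1(\di x)\big)\geq m_{\tilde A}$; taking the supremum over such $\tilde A$ yields the claim. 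The only point requiring a bit of care — and hence the main (if modest) obstacle — is precisely this matching of the integral inequality furnished by $\MCP(\beta)$ with the combinatorial quantity $\beta_t(\bar x,A)$ of \eqref{eq:betainf}, together with the bookkeeping of the $0\cdot\infty$ and $\log 0=-\infty$ conventions in the degenerate cases (e.g.\ $\sfG$ unbounded, or $\cD<+\infty$ so that $\beta_t(\theta)=+\infty$ for $\theta\geq\cD$); the rest is a routine transcription of the Brunn--Minkowski argument.
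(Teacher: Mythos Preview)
Your proposal is correct and follows exactly the approach the paper indicates (``analogous arguments'' to Theorem~\ref{thm:Brunn-Mink}): reduce to bounded $A$ of finite measure with $\bar x\notin\supp\mu_1$, apply the $\MCP(\beta)$ inequality to $\mu_0=\delta_{\bar x}$, $\mu_1=\mm(A)^{-1}\mm\llcorner_A$, bound $\U_n(\mu_t|\mm)$ from above by $\bar\mm(A_t)^{1/n}$ via Jensen, and then optimize over full-measure subsets to match the definition of $\beta_t(\bar x,A)$. Your handling of the reductions (the monotonicity $\beta_t(\bar x,A')\geq\beta_t(\bar x,A)$ for $A'\subseteq A$, the excision of $B_\varepsilon(\bar x)$ to ensure $\bar x\notin\supp\mu_1$) and of the final essential-infimum step is entirely in line with what the paper does in Theorem~\ref{thm:Brunn-Mink}.
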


\begin{remark}
In case  $\supp \mm$ is not a singleton, the condition  $\mm(\{x\})=0$ for every $x\in X$ is very natural: indeed it is satisfied for all smooth measures typically employed in sub-Riemannian geometry, and it is implied by the generalized Bishop-Gromov Theorem \ref{thm:BG1}\ref{i:BG2} under the assumption that $\sfG$ is meek (see Definition \ref{def:meek}) and that the natural assumption \eqref{eq:Gfinite} holds (see also Remark \ref{rem:onCond3.15}). Recall that $\MCP(K,N)$ spaces (for $N\in [1,+\infty)$, $K\in \R$), thus including all $\CD(K,N)$ ones, satisfy this condition. 
\end{remark}

\begin{remark}\label{rmk:concentratedimprove}
Let $(\mu_t)_{t\in [0,1]}$ be a $W_2$-geodesic between the uniform distributions on $A_0$ and $A_1$, for which the $\CD(\beta,n)$ inequality \eqref{eq:defCDbetan} holds. Then in the l.h.s.\@ of \eqref{eq:BrunMink} one can replace $A_t$ with any (possibly smaller) measurable set on which $\mu_t$ is concentrated. An analogous remark applies for the l.h.s.\@ of \eqref{eq:halfBrunMink}.
\end{remark}

If the distortion coefficients are non-increasing or non-decreasing, then the above inequalities can be expressed in terms of the minimal/maximal value of the gauge function $\sfG$ on an appropriate set. More precisely, if $\theta\mapsto\beta_t(\theta)$ is non-decreasing or non-increasing for all $t\in (0,1)$, then letting
\begin{equation}\label{eq:defTheta}
\Theta:=
\begin{cases}
\inf\{\sfG(x_{0},x_{1}) \mid x_i \in A_i\cap\supp\mm,\; i=1,2 \} & \text{if  $\theta\mapsto \beta_{t}(\theta)$ is non-decreasing}, \\
\sup\{ \sfG(x_{0},x_{1})\mid x_i \in A_i\cap\supp\mm,\; i=1,2 \} & \text{if  $\theta\mapsto \beta_{t}(\theta)$ is non-increasing},
\end{cases}
\end{equation}
it holds 
\begin{equation}
\beta_t(A_0,A_1) \geq \beta_t(\Theta).
\end{equation}
This observation, together with the fact that the classical distortion coefficients 
\begin{equation}
(\beta_{K,N}^\tau)_t(\theta) = \tau_{K,N}^{(t)}(\theta)^N, \qquad \text{resp.} \qquad (\beta_{K,N}^\sigma)_t(\theta) = \sigma_{K/N}^{(t)}(\theta)^N,
\end{equation}
are non-increasing or non-decreasing (according to the sign of $K$), yields the familiar forms of the generalized Brunn-Minkowski inequalities for classical $\CD(K,N)$  spaces \cite{sturm:II} (resp.\@ for classical $\CD^{*}(K,N)$ or $\CD^{e}(K,N)$ spaces \cite{BS10, EKS}; see also Theorem \ref{thm:CDbetaVsLSV} and Remark \ref{rem:CDbetaVsLSV}).

\subsection{MCP implies CD for spaces supporting interpolation inequalities}\label{s:MCPtoCD}

The implication $\CD \Rightarrow\MCP$ is trivial with our definitions. Here we discuss the converse implication.  The result of this section can be applied to ideal sub-Riemannian manifolds of topological dimension $n$, that support interpolation inequalities for densities as in \cite{BRInv}. This includes, of course, all $n$-dimensional Riemannian manifolds \cite{CEMS}.

\begin{definition}\label{def:mcptocd}
We say that a m.m.s.\@ $(X,\sfd, \mm)$ \emph{supports interpolation inequalities for densities, with  dimensional parameter $n\in [1,+\infty)$}, if for any $\mu_0\in \Prob_{bs}(X,\sfd,\mm)$, $\mu_1 \in \Prob_{bs}^{*}(X,\sfd,\mm)$ with $\supp \mu_{0}\cap \supp \mu_{1}=\emptyset$ there exists a $W_2$-geodesic $(\mu_t)_{t\in [0,1]}$, induced by $\nu\in\OptGeo(\mu_0,\mu_1)$ such that $\mu_t\ll \mm$ for all $t\in (0,1]$, and letting $\rho_t:=\tfrac{\di \mu_t}{\di \mm}$ be the corresponding density, it holds for all $t\in (0,1)$:
\begin{equation}\label{eq:interpolationineq}
\frac{1}{\rho_t(\gamma_t)^{1/n}}\geq \frac{\beta_{1-t}^{(X,\sfd,\mm)}(\gamma_1,\gamma_0)^{1/n}}{\rho_0(\gamma_0)^{1/n}} + \frac{\beta_t^{(X,\sfd,\mm)}(\gamma_0,\gamma_1)^{1/n}}{\rho_1(\gamma_1)^{1/n}},\qquad \nu\text{-a.e.}\ \gamma \in \Geo(X),
\end{equation}
with the understanding that the first term in the right hand side of \eqref{eq:interpolationineq} is omitted if $\mu_0\notin \mathcal{P}_{ac}(X,\mm)$.  
\end{definition}

In \eqref{eq:interpolationineq}, $\beta_t^{(X,\sfd,\mm)}$ denotes the ``true'' distortion coefficient of the metric measure space $(X,\sfd,\mm)$, of which we recall the definition.
\begin{definition}\label{def:truedistcoeff}
The \emph{distortion coefficient} of $(X,\sfd,\mm)$ is the map $\beta_{(\cdot)}^{(X,\sfd,\mm)}(\cdot,\cdot): [0,1]\times X \times X \to [0,+\infty]$ defined by
\begin{equation}\label{eq:true}
\beta_t^{(X,\sfd,\mm)}(x,y):=\limsup_{r\to 0^+}\frac{\bar{\mm}(Z_t(\{x\},B_r(y)))}{\mm(B_r(y))}, \qquad \forall\, x,y\in X,\quad t\in [0,1],
\end{equation}
where $Z_t(A,B)$ denotes the set of $t$-midpoints between two Borel sets as in \eqref{eq:defAt}, and $\bar{\mm}$ denotes the completion of $\mm$.
\end{definition}

\begin{theorem}[$\MCP$ to $\CD$]\label{thm:mcptocd}
Let $(X,\sfd, \mm)$ be a m.m.s.\@ that supports interpolation inequalities for densities, with dimensional parameter $n\in [1,+\infty)$. Let $\sfG:X\times X \to [0,+\infty]$ be a gauge function. Assume that:
\begin{itemize}
\item  each $\nu\in\OptGeo(\mu_0,\mu_1)$ occurring in Definition \ref{def:mcptocd} is concentrated on a set of geodesics whose endpoints $(\gamma_0,\gamma_1)$ and $(\gamma_1,\gamma_0)$ are continuity points for $\sfG$;
\item $(X,\sfd, \mm,\sfG)$ satisfies the $\MCP(\beta)$, with $\beta$ as in \eqref{eq:defbeta}, and with $N\geq n$.
\end{itemize}
Then $(X,\sfd, \mm,\sfG)$ also satisfies the $\CD(\beta,n)$.
\end{theorem}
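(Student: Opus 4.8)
The plan is to derive the $\CD(\beta,n)$ inequality \eqref{eq:defCDbetan} directly from the interpolation inequality \eqref{eq:interpolationineq}, after replacing the intrinsic distortion coefficient $\beta^{(X,\sfd,\mm)}$ by the model coefficient $\beta\circ\sfG$ appearing in the definition of $\CD(\beta,n)$, by means of the $\MCP(\beta)$ hypothesis. First I would fix $\mu_0\in\Prob_{bs}(X,\sfd,\mm)$ and $\mu_1\in\Prob_{bs}^{*}(X,\sfd,\mm)$ with $\supp\mu_0\cap\supp\mu_1=\emptyset$. Since $(X,\sfd,\mm)$ supports interpolation inequalities for densities with dimensional parameter $n$ (Definition \ref{def:mcptocd}), there is a $W_2$-geodesic $(\mu_t)_{t\in[0,1]}$ induced by $\nu\in\OptGeo(\mu_0,\mu_1)$ with $\mu_t\ll\mm$ for $t\in(0,1]$, density $\rho_t:=\di\mu_t/\di\mm$, and \eqref{eq:interpolationineq} holding for $\nu$-a.e.\ $\gamma$; by the first of the two assumptions, this same $\nu$ is concentrated on geodesics whose endpoint pairs $(\gamma_0,\gamma_1)$ and $(\gamma_1,\gamma_0)$ are continuity points of $\sfG$. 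Note that $\nu$-a.e.\ one has $\gamma_0\in\supp\mu_0\subseteq\supp\mm$, $\gamma_1\in\supp\mu_1\subseteq\supp\mm$, $\gamma_0\neq\gamma_1$, $\rho_1(\gamma_1)>0$, and $\rho_0(\gamma_0)>0$ whenever $\mu_0\ll\mm$.

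The crucial step is the pointwise comparison
\begin{equation}
\beta^{(X,\sfd,\mm)}_{t}(\gamma_0,\gamma_1)\ \ge\ \beta_{t}\big(\sfG(\gamma_0,\gamma_1)\big),\qquad \beta^{(X,\sfd,\mm)}_{1-t}(\gamma_1,\gamma_0)\ \ge\ \beta_{1-t}\big(\sfG(\gamma_1,\gamma_0)\big),
\end{equation}
valid for $\nu$-a.e.\ $\gamma$ and all $t\in(0,1)$. For the first inequality, fix $0<r<\sfd(\gamma_0,\gamma_1)$ and apply the generalized half-Brunn--Minkowski inequality (Theorem \ref{thm:half-Brunn-Mink}) with $\bar x=\gamma_0\in\supp\mm$ and $A=B_r(\gamma_1)$ (so that $\mm(A)>0$ and $\mm(\{\gamma_0\}\cap A)=0$), getting $\bar{\mm}\big(Z_t(\{\gamma_0\},B_r(\gamma_1))\big)\ge\beta_t\big(\{\gamma_0\},B_r(\gamma_1)\big)\,\mm(B_r(\gamma_1))$. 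Dividing by $\mm(B_r(\gamma_1))$, taking $\limsup_{r\to0^+}$, recalling \eqref{eq:true}, and bounding $\beta_t(\{\gamma_0\},B_r(\gamma_1))$ from below via \eqref{eq:betainf} with the admissible choice $\tilde A=B_r(\gamma_1)$, one obtains
\begin{equation}
\beta^{(X,\sfd,\mm)}_{t}(\gamma_0,\gamma_1)\ \ge\ \limsup_{r\to0^+}\ \inf\big\{\beta_t\big(\sfG(\gamma_0,w)\big)\ :\ w\in B_r(\gamma_1)\cap\supp\mm\big\}.
\end{equation}
As $r\downarrow0$ the infimum on the right is nondecreasing, and since $(\gamma_0,\gamma_1)$ is a continuity point of $\sfG$ while $\theta\mapsto\beta_t(\theta)$ is continuous from $[0,+\infty)$ into $[0,+\infty]$ (Proposition \ref{prop:propertiesbeta}\ref{i:propertiesbeta2}), its limit equals $\beta_t(\sfG(\gamma_0,\gamma_1))$. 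The second inequality follows identically, exchanging the roles of $\gamma_0$ and $\gamma_1$ and of $t$ and $1-t$ and using continuity of $\sfG$ at $(\gamma_1,\gamma_0)$. As a byproduct, since $\rho_1(\gamma_1)>0$, \eqref{eq:interpolationineq} prevents $\beta^{(X,\sfd,\mm)}$ from being $+\infty$ on a set of positive $\nu$-measure, so in fact $\sfG(\gamma_0,\gamma_1)<\cD$ and $\sfG(\gamma_1,\gamma_0)<\cD$ for $\nu$-a.e.\ $\gamma$, and all the coefficients above are finite.

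Feeding these comparisons into \eqref{eq:interpolationineq}, and using that $s\mapsto s^{1/n}$ is nondecreasing, gives, for $\nu$-a.e.\ $\gamma$ and every $t\in(0,1)$,
\begin{equation}
\rho_t(\gamma_t)^{-1/n}\ \ge\ \beta_{1-t}\big(\sfG(\gamma_1,\gamma_0)\big)^{1/n}\rho_0(\gamma_0)^{-1/n}+\beta_{t}\big(\sfG(\gamma_0,\gamma_1)\big)^{1/n}\rho_1(\gamma_1)^{-1/n},
\end{equation}
the first summand being omitted when $\mu_0\notin\Prob_{ac}(X,\mm)$. Writing each factor as $\exp(-\tfrac1n\log(\cdot))$, taking logarithms, integrating against $\nu$, and using the convexity of $(x,y)\mapsto\log(e^x+e^y)$ together with Jensen's inequality — the very manipulation already performed in the proof of Theorem \ref{thm:CDbetaVsLSV} and in Section \ref{sec:howtorecover2} — yields
\begin{multline}
-\tfrac1n\Ent(\mu_t|\mm)\ \ge\ \log\Big[\exp\Big(-\tfrac1n\Ent(\mu_0|\mm)+\tfrac1n\!\int_{\Geo(X)}\!\log\beta_{1-t}\big(\sfG(\gamma_1,\gamma_0)\big)\,\nu(\di\gamma)\Big)\\ +\exp\Big(-\tfrac1n\Ent(\mu_1|\mm)+\tfrac1n\!\int_{\Geo(X)}\!\log\beta_{t}\big(\sfG(\gamma_0,\gamma_1)\big)\,\nu(\di\gamma)\Big)\Big],
\end{multline}
and exponentiating gives exactly \eqref{eq:defCDbetan}. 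The finiteness of $\Ent(\mu_t|\mm)$ needed along the way (so that $\U_n(\mu_t|\mm)\neq0$) is obtained as usual: the lower bound from \eqref{eq:jensenEnt} and boundedness of $\supp\mu_t$, and the upper bound from $\rho_t(\gamma_t)\le c\,\rho_1(\gamma_1)$, which follows from \eqref{eq:interpolationineq} and the fact that $\beta_t$ is bounded below away from $0$ on bounded sets (Proposition \ref{prop:propertiesbeta}\ref{i:propertiesbeta4}).

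I expect the main obstacle to be the crucial comparison step, specifically the passage $r\to0^+$ in the half-Brunn--Minkowski estimate: it is exactly there that the continuity-point hypothesis on $\sfG$ must be used (it cannot be dispensed with, $\sfG$ being in general discontinuous), together with the continuity of $\beta_t$ up to and across the threshold $\cD$, distinguishing the cases $\sfG(\gamma_0,\gamma_1)=0$, $\sfG(\gamma_0,\gamma_1)\in(0,\cD)$, and $\sfG(\gamma_0,\gamma_1)\ge\cD$. The role of the assumption $N\ge n$ is to keep the dimensional parameters of $\MCP(\beta)$ and of the interpolation inequality compatible, so that the half-Brunn--Minkowski comparison produces coefficients consistent with \eqref{eq:interpolationineq}; the remaining steps are the routine convexity/Jensen computations that already appear verbatim elsewhere in the paper.
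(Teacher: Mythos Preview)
Your proposal is correct and follows essentially the same approach as the paper: establish the pointwise comparison $\beta_t^{(X,\sfd,\mm)}(x,y)\ge\beta_t(\sfG(x,y))$ at continuity points of $\sfG$ via the half-Brunn--Minkowski inequality and a $r\to0^+$ limit, then substitute into the interpolation inequality \eqref{eq:interpolationineq} and pass to the entropic form by the convexity/Jensen argument. The paper's proof is slightly terser and handles the limit via Proposition~\ref{prop:propertiesbeta} (noting that only lower semi-continuity of $\beta_t$ is needed when $\sfG(x,y)=\cD=+\infty$), but the substance is identical.
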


\begin{proof}
Let $x\neq y\in \supp \mm$  with $(x,y)$ a continuity point of $\sfG$, and let $r<\sfd(x,y)$.
We use the half-Brunn-Minkowski inequality for $\MCP(\beta)$ spaces of Proposition \ref{thm:half-Brunn-Mink}, for $\bar{x}=x$, $A=B_r(y)$. It holds, for all $t\in [0,1]$,
\begin{equation}
\frac{\bar{\mm}(Z_t(\{x\},B_r(y)))}{\mm(B_r(y))}\geq \beta_t(x,B_r(y)) \geq \inf \{\beta_t(\sfG(x,z))\mid z \in B_r(y)\}.
\end{equation}
Taking the $\limsup$ for $r\to 0^+$, and using the continuity of $\sfG$ at $(x,y)$, we find a sequence $\theta_j\to \sfG(x,y) \in [0,+\infty]$ such that
\begin{equation}\label{eq:mcptocd-bound1}
\beta_t^{(X,\sfd,\mm)}(x,y) \geq \lim_j \beta_t(\theta_j) = \beta_t(\sfG(x,y)), \qquad \forall\, t\in[0,1].
\end{equation}

Let $\nu\in \OptGeo(\mu_0,\mu_1)$ be as in Definition \ref{def:mcptocd}. Assume for simplicity that $\mu_0 \in \mathcal{P}_{ac}(X,\mm)$ (the argument works verbatim if $\mu_0\notin\mathcal{P}_{ac}(X,\mm)$ by omitting the corresponding term in all inequalities). By our assumption, $\nu$ is concentrated on a set of geodesics $\gamma$ such that $\sfG$ is continuous at $(\gamma_0,\gamma_1)$ and $(\gamma_1,\gamma_0)$.

For $\nu$-a.e.\@ $\gamma$ we can plug \eqref{eq:mcptocd-bound1} in \eqref{eq:interpolationineq}, finding that for all $t\in (0,1)$ it holds
\begin{equation}
\frac{1}{\rho_t(\gamma_t)^{1/n}}\geq \frac{\beta_{1-t}(\sfG(\gamma_1,\gamma_0))^{1/n}}{\rho_0(\gamma_0)^{1/n}} + \frac{\beta_t(\sfG(\gamma_0,\gamma_1))^{1/n}}{\rho_1(\gamma_1)^{1/n}},\qquad \nu\text{-a.e.}\ \gamma.
\end{equation}

Using the definition of $\U_n$,  the convexity of the map $\R^{2}\ni(x,y)\mapsto \log(e^{x}+e^{y})$ and Jensen's inequality, we obtain the $\CD(\beta,n)$ inequality.
\end{proof}

\begin{remark}
For Riemannian manifolds, the classical $\CD(K,n)$ (resp.\;$\MCP(K,n)$) corresponds to the $\CD(\beta^\tau_{K,n},n)$ (resp. $\MCP(\beta^\tau_{K,n})$), see Remark \ref{rem:CDbetaVsLSV}. Thus Theorem \ref{thm:mcptocd} is consistent with the well-known fact that, on $n$-dimensional Riemannian manifolds, $\MCP(K,N)$ implies $\CD(K,N)$ if $N=n$.
\end{remark}

\subsection{Gauge diameter estimate}\label{sec:diam}

\begin{definition}[$\sfG$-diameter]\label{def:Gdiam}
The \emph{$\sfG$-diameter} of a non-empty set $S$ is defined by:
\begin{equation}
\diam_\sfG(S) = \sup_{x\in S} \left(\mm\shortminus\esssup_{y\in S, \,  y\neq x} \sfG(x,y)\right).
\end{equation}
We say that $S\subset X$ is \emph{$\sfG$-bounded} if it is non-empty and it has finite $\sfG$-diameter.
\end{definition}

Recall that the parameter $\cD$ in \eqref{eq:defcD}, when finite, characterizes the blow-up of $\theta\mapsto \beta_t(\theta)$, for $t\in (0,1)$.

\begin{proposition}[$\sfG$-diameter estimate]\label{prop:Gdiamestimate}
Let $(X,\sfd,\mm)$ be a m.m.s.\@ with gauge function $\sfG$. Let $\beta$ be as in \eqref{eq:defbeta} and assume that $\beta_t(+\infty) = +\infty$ for all $t\in (0,1)$. Note that this is in particular the case if $\cD<+\infty$.

\begin{itemize}
\item If $(X,\sfd,\mm,\sfG)$ satisfies the $\MCP(\beta)$, then for all $x\in\supp\mm$ it holds
\begin{equation}
\sfG(x,y)<\cD, \qquad \mm\text{-a.e. } y \in X,  \quad y\neq x.
\end{equation}
In particular, $\diam_{\sfG}(\supp\mm) \leq \cD$.
\item If $(X,\sfd,\mm,\sfG)$ satisfies the $\CD(\beta,n)$ for some $n\in [1,+\infty)$, then 
\begin{equation}
\sfG(x,y) < \cD, \qquad \pi\text{-a.e. }(x,y)\in X\times X.
\end{equation}
for all $\pi = (\ee_0, \ee_1)_\sharp\nu$, where $\nu \in \OptGeo(\mu_0,\mu_1)$ is such that inequality  \eqref{eq:defCDbetan} holds.
\end{itemize}
\end{proposition}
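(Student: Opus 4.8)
The plan is to argue by contradiction in both cases, the mechanism being the same: whenever $t\in(0,1)$ one has $\beta_t(\theta)=+\infty$ for every $\theta\geq\cD$ — for finite such $\theta$ this is Proposition~\ref{prop:propertiesbeta}\ref{i:propertiesbeta0}, and for $\theta=+\infty$ it is exactly the standing assumption $\beta_t(+\infty)=+\infty$ — whereas $\U_n(\cdot|\mm)$ on the left of \eqref{eq:defCDbetan} and \eqref{eq:defMCPbeta} is always finite, being by \eqref{eq:defUn} either $0$ or $\exp(-\Ent(\cdot|\mm)/n)$ with a finite value of $\Ent$. So if a gauge value $\geq\cD$ were attained on a set of positive measure, the corresponding distortion factor would be $+\infty$ and would force the left-hand side to be $+\infty$ as well, a contradiction.

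The one point needing care is to make the integrals in \eqref{eq:defCDbetan}--\eqref{eq:defMCPbeta} meaningful, so I would first record that, for each fixed $t\in(0,1)$, the map $\theta\mapsto\log\beta_t(\theta)$ is bounded below on $[0,+\infty]$. On $[0,\cD)$ the coefficient $\beta_t$ is continuous, finite, and bounded away from $0$ on bounded subsets (Proposition~\ref{prop:propertiesbeta}\ref{i:propertiesbeta3}--\ref{i:propertiesbeta4}), and it diverges as $\theta\to\cD^-$: indeed $\beta_t(\theta)=\sfs(t\theta)/\sfs(\theta)$ with $\sfs>0$ on $(0,\cD)$, $\sfs(\theta)\to 0$, while $t\theta\to t\cD\in(0,\cD)$ keeps $\sfs(t\theta)$ bounded below — and for $\cD=+\infty$ this divergence is again the standing assumption. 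Together with $\beta_t\equiv+\infty$ on $[\cD,+\infty]$ this gives $\inf_{[0,+\infty]}\beta_t>0$. Hence any integral of $\log\beta_t(\sfG(\cdot,\cdot))$ against a probability measure is well defined in $(-\infty,+\infty]$, and it equals $+\infty$ precisely when the set on which the gauge reaches $\cD$ has positive measure.

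For the $\MCP(\beta)$ statement, fix $x\in\supp\mm$ and suppose, for contradiction, that the Borel set $B:=\{y\neq x:\sfG(x,y)\geq\cD\}$ has $\mm(B)>0$. Since $x\notin B$, there is $\varepsilon>0$ with $\mm(B\setminus B_\varepsilon(x))>0$; intersecting also with a large ball I get a bounded Borel $A\subseteq B$ with $\mm(A)\in(0,+\infty)$ and $x\notin\overline A$. Put $\mu_1:=\mm(A)^{-1}\,\mm\llcorner_A$: then $\mu_1\in\Prob_{bs}^*(X,\sfd,\mm)$ with $\Ent(\mu_1|\mm)=-\log\mm(A)\in\R$, $\U_n(\mu_1|\mm)=\mm(A)^{1/n}>0$, and $\bar x:=x\notin\supp\mu_1$. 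By $\MCP(\beta)$ there is a $W_2$-geodesic $(\mu_t)_{t\in[0,1]}$ from $\delta_x$ to $\mu_1$ satisfying \eqref{eq:defMCPbeta}. Since $\mu_1$ is concentrated on $A\subseteq B$, the previous paragraph gives $\int_X\log\beta_t(\sfG(x,y))\,\mu_1(\di y)=+\infty$, so the right-hand side of \eqref{eq:defMCPbeta} is $+\infty$, forcing $\U_n(\mu_t|\mm)=+\infty$ for $t\in(0,1)$ — impossible. Hence $\mm(B)=0$, i.e.\ $\sfG(x,y)<\cD$ for $\mm$-a.e.\ $y\neq x$; taking the essential supremum over $y\in\supp\mm$ and then the supremum over $x\in\supp\mm$ gives $\diam_\sfG(\supp\mm)\leq\cD$.

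For the $\CD(\beta,n)$ statement, let $\mu_0,\mu_1,\nu$ and the $W_2$-geodesic $(\mu_t)$ be as in the hypothesis, so that \eqref{eq:defCDbetan} holds, and fix $t\in(0,1)$. The left-hand side $\U_n(\mu_t|\mm)$ is finite, hence so are the two nonnegative summands on the right; since $\mu_1\in\Prob_{bs}^*(X,\sfd,\mm)$ yields $\U_n(\mu_1|\mm)>0$, the factor $\exp\big(\tfrac{1}{n}\int_{\Geo(X)}\log\beta_t(\sfG(\gamma_0,\gamma_1))\,\nu(\di\gamma)\big)$ is finite, so $\int_{\Geo(X)}\log\beta_t(\sfG(\gamma_0,\gamma_1))\,\nu(\di\gamma)<+\infty$. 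By the second paragraph this forces the integrand to be finite $\nu$-a.e., i.e.\ $\nu(\{\gamma:\sfG(\gamma_0,\gamma_1)\geq\cD\})=0$, which is exactly $\sfG(x,y)<\cD$ for $\pi$-a.e.\ $(x,y)$, $\pi=(\ee_0,\ee_1)_\sharp\nu$. The main obstacle is the lower bound on $\log\beta_t$ of the second paragraph: it is the only step where the hypothesis $\beta_t(+\infty)=+\infty$ is genuinely used — without it $\beta_t$ could decay to $0$ at infinity, making the relevant integral possibly $-\infty$ and destroying the blow-up argument — and it rests on combining the good behaviour of $\beta_t$ on $[0,\cD)$ with its divergence at the boundary. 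The remaining work is routine bookkeeping: building the competitor $\mu_1\in\Prob_{bs}^*$ that avoids $x$, and tracking the $\infty\cdot 0$ conventions so that the strict positivity of $\U_n(\mu_1|\mm)$ may be invoked.
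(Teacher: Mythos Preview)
Your proof is correct and follows essentially the same contradiction strategy as the paper: the paper packages the $\MCP$ case through the half-Brunn-Minkowski inequality (Theorem~\ref{thm:half-Brunn-Mink}) to reach $\mm(B_{tR}(x))=+\infty$ rather than invoking \eqref{eq:defMCPbeta} directly, but the underlying mechanism is identical. Your explicit check that $\log\beta_t$ is bounded below on $[0,+\infty]$ --- so the relevant integrals are well defined in $(-\infty,+\infty]$ --- makes rigorous a point the paper leaves implicit, and correctly isolates where the hypothesis $\beta_t(+\infty)=+\infty$ is used.
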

\begin{remark}\label{rmk:MCPimpliesGNonTriv}
We remark that the case $\cD = +\infty$ is included in the statement of Proposition \ref{prop:Gdiamestimate}. In this case, $\mm(\{y\in X\mid y\neq x,\,\sfG(x,y)=+\infty\})=0$ for all $x\in \supp\mm$. This implies the non-triviality condition \eqref{eq:GNonTriv} considered below. 
\end{remark}
\begin{proof}
Fix $x\in\supp\mm$. We show that the set $A=\{y\in X \mid  y\neq x, \, \sfG(x,y)\geq \cD\}\cap B_R(x)$ has zero measure for all $R>0$. By contradiction, assume that $\mm(A)>0$. Then we apply the half-Brunn-Minkowski inequality of Theorem \ref{thm:half-Brunn-Mink} to the set $A$. Notice that $\beta_t(x,A) = +\infty$ for all $t\in (0,1)$, and that $A_t \subset (B_R(x))_t \subset B_{tR}(x)$. We obtain hence
\begin{equation}
\mm(B_{tR}(x)) \geq \bar{\mm}(A_t) = +\infty,\qquad \forall\, t\in(0,1),
\end{equation}
contradicting the local finiteness of $\mm$.

To prove the second part of the proposition, we argue in a similar way: let $\nu\in \OptGeo(\mu_0,\mu_1)$ as in the statement (in particular $\mu_0,\mu_1$ have bounded support) such that $\pi(\{(x,y)\in X\times X \mid \sfG(x,y) \geq \cD\})>0$, and using \eqref{eq:defCDbetan} we obtain that, for the corresponding $W_2$-geodesic $\mu_t = (\ee_t)_\sharp \nu$ it holds
\begin{equation}
\U_n(\mu_t|\mm) = +\infty,\qquad \forall\, t\in(0,1).
\end{equation}
This implies,  by \eqref{eq:jensenEnt},  that $\mm(\supp\mu_t)=+\infty$. However, since $\supp\mu_0$ and $\supp\mu_1$ are bounded, then also $\supp\mu_t$ is bounded, yielding a contradiction.
\end{proof}

For some interesting cases (cf.\@ Section \ref{sec:nonsmooth}), it holds $\sfG \geq \sfd$. In this case, Proposition \ref{prop:Gdiamestimate} admits the following Bonnet-Myers type result.

\begin{corollary}[Bonnet-Myers]\label{cor:BonMy}
Let $(X,\sfd,\mm)$ be a m.m.s.\@ with gauge function $\sfG$, satisfying the $\MCP(\beta)$ with $\beta$ as in \eqref{eq:defbeta}. Assume that $\sfG \geq \sfd$. Then $\diam(\supp\mm) \leq \cD$, and if $\cD<+\infty$ then $\supp\mm$ is compact.
\end{corollary}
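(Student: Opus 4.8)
The plan is to obtain the diameter bound directly from Proposition~\ref{prop:Gdiamestimate} and then deduce compactness from the generalized half-Brunn--Minkowski inequality, Theorem~\ref{thm:half-Brunn-Mink}. If $\cD=+\infty$ the bound $\diam(\supp\mm)\le\cD$ is vacuous, so I may assume $\cD<+\infty$; then Proposition~\ref{prop:propertiesbeta}\ref{i:propertiesbeta0} gives $\beta_t(+\infty)=+\infty$ for all $t\in(0,1)$, so the first item of Proposition~\ref{prop:Gdiamestimate} applies and yields, for each fixed $x\in\supp\mm$, that $\sfG(x,y)<\cD$ for $\mm$-a.e.\ $y$; since $\sfG\ge\sfd$ this gives $\sfd(x,y)<\cD$ for $\mm$-a.e.\ $y$. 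To upgrade this to every $y\in\supp\mm$ I would argue by density: any ball $B_\eta(y)$ centered at a point of $\supp\mm$ has positive $\mm$-measure, hence meets the full-measure set $\{z:\sfd(x,z)<\cD\}$, so $\sfd(x,y)<\cD+\eta$; letting $\eta\downarrow 0$ and taking the supremum over $x,y\in\supp\mm$ gives $\diam(\supp\mm)\le\cD$.

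For the compactness statement (so now $\cD<+\infty$), since $\supp\mm$ is a closed subset of the complete space $X$ it suffices to prove that $(\supp\mm,\sfd)$ is totally bounded. Being bounded, $\supp\mm$ has finite mass $M:=\mm(\supp\mm)=\mm(X)$, and I may assume $M>0$. Fix $\varepsilon>0$; the heart of the argument is a lower bound $\mm(B_\varepsilon(x))\ge\delta_0>0$ that is \emph{uniform in the center} $x\in\supp\mm$. I would get it as follows: pick $t_0\in(0,1)$ with $t_0\cD<\varepsilon$ and set $c_{t_0}:=\inf_{[0,\cD)}\beta_{t_0}(\cdot)$, which is strictly positive by Proposition~\ref{prop:propertiesbeta}\ref{i:propertiesbeta4} because $[0,\cD)$ is bounded. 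Given $x\in\supp\mm$: if $\mm(\{x\})\ge c_{t_0}M/2$ then $\mm(B_\varepsilon(x))\ge\mm(\{x\})\ge c_{t_0}M/2$; otherwise I apply Theorem~\ref{thm:half-Brunn-Mink} with $\bar x=x$ and $A=\supp\mm\setminus\{x\}$ (first excising, if necessary, a small ball around $x$ to meet the separation hypothesis of $\MCP(\beta)$, which only lowers $\mm(A)$ while keeping it $>M/2$). Two observations make this work: first, $\beta_{t_0}(x,A)\ge c_{t_0}$, since the set $\{y\in A:\sfG(x,y)<\cD\}$ is a non-empty full-measure subset of $A$ (by Proposition~\ref{prop:Gdiamestimate}) admissible in \eqref{eq:betainf}, on which $\beta_{t_0}\ge c_{t_0}$; second, $A_{t_0}=Z_{t_0}(\{x\},A)\subseteq B_\varepsilon(x)$, because every $\gamma_{t_0}\in A_{t_0}$ satisfies $\sfd(x,\gamma_{t_0})=t_0\,\sfd(x,\gamma_1)\le t_0\diam(\supp\mm)\le t_0\cD<\varepsilon$. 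Theorem~\ref{thm:half-Brunn-Mink} then gives $\mm(B_\varepsilon(x))\ge\bar\mm(A_{t_0})\ge\beta_{t_0}(x,A)\,\mm(A)>c_{t_0}M/2$, so in all cases $\mm(B_\varepsilon(x))\ge\delta_0:=c_{t_0}M/2$.

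Finally, total boundedness follows by a packing argument: any pairwise disjoint family $\{B_\varepsilon(x_i)\}_{i\in I}$ with $x_i\in\supp\mm$ is contained, once one center $x_1$ is fixed, in $B_{\cD+\varepsilon}(x_1)$ by the diameter bound, so $|I|\,\delta_0\le\sum_{i}\mm(B_\varepsilon(x_i))\le\mm(B_{\cD+\varepsilon}(x_1))<+\infty$; hence a maximal such family is finite, and the corresponding balls $B_{2\varepsilon}(x_i)$ cover $\supp\mm$. Since $\varepsilon>0$ was arbitrary, $\supp\mm$ is totally bounded, and being complete it is compact.

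The diameter bound is essentially a corollary of Proposition~\ref{prop:Gdiamestimate} combined with $\sfG\ge\sfd$ and a routine density step. The only substantive point — and the one I expect to require care — is the uniformity in the center of the lower bound on $\mm(B_\varepsilon(x))$: the resolution is to run the half-Brunn--Minkowski inequality from the point $x$ at a time $t_0$ calibrated only to the ratio $\varepsilon/\cD$ (independent of $x$), which forces a fixed fraction $\sim c_{t_0}$ of the total mass $M$ into $B_\varepsilon(x)$. The remaining technicalities — atoms of $\mm$, and arranging the separation needed to invoke $\MCP(\beta)$ through Theorem~\ref{thm:half-Brunn-Mink} — are handled, respectively, by the case split above and by removing a tiny ball around $x$.
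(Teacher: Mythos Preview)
Your proof is correct. The diameter bound is exactly the paper's argument (Proposition~\ref{prop:Gdiamestimate} plus $\sfG\ge\sfd$ plus continuity/density of $\sfd$). For compactness, the paper simply observes that $\supp\mm$ is bounded (by the diameter bound) and $\sfG$-bounded (by Proposition~\ref{prop:Gdiamestimate}, $\diam_\sfG(\supp\mm)\le\cD$), and then invokes Corollary~\ref{cor:totallyboundedclassic} to conclude properness, hence compactness. You instead run the packing argument by hand: a uniform lower bound $\mm(B_\varepsilon(x))\ge\delta_0$ from the half-Brunn--Minkowski inequality at a time $t_0$ calibrated to $\varepsilon/\cD$, followed by the standard covering step. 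This is essentially the content of Corollary~\ref{cor:totallyboundedclassic} (and the underlying Proposition~\ref{prop:doublingsimple}) specialized to $S=\supp\mm$, so the two routes share the same engine; yours is self-contained, the paper's is more modular. Your handling of the separation hypothesis (excising a small ball, with $\eta$ possibly depending on $x$ but the resulting bound $c_{t_0}M/2$ not) and of possible atoms via the case split is clean and correct.
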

\begin{proof}
The estimate on the diameter is immediate by continuity of $\sfd$. If $\cD<+\infty$, then $\supp\mm$ is bounded, and by Proposition \ref{prop:Gdiamestimate} it is also $\sfG$-bounded. By Corollary \ref{cor:totallyboundedclassic}, $\supp\mm$ is totally bounded, complete, and thus compact.
\end{proof}

\subsection{Doubling inequalities for balls}\label{Sec:Doubling}

We first prove a local doubling inequality for metric balls under minimal assumptions. We stress that ``locality'' in Proposition \ref{prop:doublingsimple} is measured with respect to the gauge function, which also determines the doubling constant. For this reason, and without additional relations between $\sfG$ and $\sfd$, the general form of the statement is subtly different with respect to the classical one.

\begin{proposition}[Local doubling]\label{prop:doublingsimple}
Let $(X,\sfd,\mm)$ be a m.m.s.\@ with gauge function $\sfG$, satisfying the $\MCP(\beta)$ with $\beta$ as in \eqref{eq:defbeta}.
Then any $\sfG$-bounded Borel subset $S\subseteq \supp\mm$ satisfies the following inequality: for all $t\in (0,1)$ there exists $C_{S,t}>0$ such that 
\begin{equation}\label{eq:doublingsimple}
\mm(B_{r}(x_0)\cap S)\leq C_{S,t}\cdot \mm(B_{tr}(x_0)),\qquad \forall\, r\geq 0,\; \forall\, x_0\in S.
\end{equation}
The constant $C_{S,t}$ can be estimated in terms of $\beta$ and $\diam_{\sfG}(S)$:
\begin{equation}\label{eq:crR}
\frac{1}{C_{S,t}} =\inf\{\beta_{t}(\theta)\mid \theta \in [0,\diam_{\sfG}(S)]\}  \in (0,t^N].
\end{equation}
\end{proposition}

\begin{proof}
Fix $S\subseteq \supp\mm$ and let $\sfG_0:=\diam_{\sfG}(S) <\infty$. We let
\begin{equation}
\frac{1}{C_{S,t}}:=\inf\{\beta_{t}(\theta)\mid \theta \in [0,\sfG_0]\} \in (0,t^N].
\end{equation}
Here, the upper bound is a consequence of the value of $\beta_t(0)=t^N$ in \eqref{eq:defbeta}, while the lower bound follows from Proposition \ref{prop:propertiesbeta}\ref{i:propertiesbeta4}. The case $r=0$ of \eqref{eq:doublingsimple} is trivial, so let $r>0$. 

Let  $x_0\in  S$, and let $\check{B}_r(x_0) := B_r(x_0)\setminus \{x_0\}$. Let $A=\check{B}_{r}(x_0)\cap S$. If $\mm(A)=0$ there is nothing to prove. Assume then that $\mm(A)>0$, and note that $\mm(A \cap \{x_0\})=0$. Thus we can apply the half-Brunn-Minkowski estimate of Theorem \ref{thm:half-Brunn-Mink} with $\bar{x}=x_0$ and $A=\check{B}_{r}(x_0)\cap S$. Since $A_t\subseteq \check{B}_{tr}(x_0)$, we have for all $t\in(0,1)$:
\begin{equation}
\mm(\check{B}_{tr}(x_0)) \geq \beta_t(x_0,\check{B}_{r}(x_0)\cap S  ) \cdot \mm(\check{B}_{r}(x_0)\cap S) 
\geq \tfrac{1}{C_{S,t}} \cdot \mm(\check{B}_{r}(x_0)\cap S).
\end{equation}
We justify the bound $\beta_t(x_0,\check{B}_{r}(x_0)\cap S )\geq 1/C_{S,t}$. To do it, for any $x\in S$ and any non-empty Borel set $U\subseteq S$, recall the definition \eqref{eq:betainf}, which for our case reduces to
\begin{equation}
\beta_t(x,U) = \sup_{\tilde{U}\subseteq U} \inf \{\beta_t(\sfG(x,y))\mid y\in \tilde{U}\},
\end{equation}
where $\tilde{U}$ ranges over all full-measure and non-empty  subsets of $U$. Assume that $x\in S\setminus U$. Since $U\subseteq S$ and the latter has bounded $\sfG$-diameter (see Definition \ref{def:Gdiam}), there exists a full-measure set $U'\subseteq U$ such that $\sfG(x,y) \leq \sfG_0$ for all $y \in U'$, and in particular it holds
\begin{equation}
\beta_t(x,U) \geq \inf \{\beta_t(\theta)\mid \theta \in [0,\sfG_0]\}  = \frac{1}{C_{S,t}}.
\end{equation}
In particular the above inequality holds for $x=x_0$ and $U=\check{B}_r(x_0) \cap S$  as required.

Thus we have proved \eqref{eq:doublingsimple} for $\check{B}_r(x_0)$ in place of $B_r(x_0)$, that is
\begin{equation}
\mm(\check{B}_{r}(x_0)\cap S)\leq C_{S,t}\cdot \mm(\check{B}_{tr}(x_0)),\quad \forall\, r\geq 0,\; \forall\, x_0\in S.
\end{equation}
Since $C_{S,t}\geq 1/t^N \geq 1$, we add $\mm(\{x_0\}\cap S)$ to both sides to obtain \eqref{eq:doublingsimple}.
\end{proof}

In some natural applications, bounded sets are $\sfG$-bounded, cf.\@ Section~\ref{sec:nonsmooth}. In this case Proposition \ref{prop:doublingsimple} can be restated as a classical doubling inequality. 

\begin{corollary}[Classical local doubling]\label{cor:localdoubling}
Let $(X,\sfd,\mm)$ be a m.m.s.\@ with gauge function $\sfG$, satisfying the $\MCP(\beta)$ with $\beta$ as in \eqref{eq:defbeta}. Assume that bounded subsets of $\supp\mm$ are $\sfG$-bounded. Then for any bounded set $S\subseteq \supp\mm$ and $R>0$ there exists $C=C(S,R)>0$ such that
\begin{equation}
\mm(B_{2r}(x_0))\leq C \cdot \mm(B_{r}(x_0)),\qquad \forall\, r \in [0,R],\; \forall\, x_0\in S.
\end{equation}
\end{corollary}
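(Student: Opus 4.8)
The plan is to deduce this from the local doubling estimate of Proposition~\ref{prop:doublingsimple}, whose only subtlety is that ``locality'' there is measured through the gauge function rather than through the distance. The key idea is to fatten $S$ to a bounded set that contains all the balls we care about, and then invoke the standing hypothesis that bounded subsets of $\supp\mm$ are $\sfG$-bounded. We may clearly assume $S\neq\emptyset$, otherwise the statement is vacuous.

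First I would fix $S$ and $R$ and introduce the closed $2R$-neighbourhood
\[
S':=\{y\in\supp\mm\mid \sfd(y,S)\leq 2R\}.
\]
Since $y\mapsto\sfd(y,S)$ is $1$-Lipschitz, $S'$ is a closed (hence Borel) subset of $\supp\mm$; since $S$ is bounded, so is $S'$, hence $S'$ is $\sfG$-bounded by hypothesis (and non-empty, as it contains $S$). Then I would apply Proposition~\ref{prop:doublingsimple} to the set $S'$ with the choice $t=1/2$, obtaining a constant $C:=C_{S',1/2}>0$ such that
\[
\mm(B_\rho(x_0)\cap S')\leq C\cdot\mm(B_{\rho/2}(x_0)),\qquad\forall\,\rho\geq 0,\ \forall\,x_0\in\supp\mm.
\]

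Finally, for $x_0\in S$ and $r\in[0,R]$, I would take $\rho=2r$ and observe that $B_{2r}(x_0)\cap\supp\mm\subseteq S'$: indeed any $y$ in this set satisfies $\sfd(y,S)\leq\sfd(y,x_0)<2r\leq 2R$. Since $\mm$ is concentrated on $\supp\mm$, this yields $\mm(B_{2r}(x_0))=\mm(B_{2r}(x_0)\cap S')\leq C\cdot\mm(B_r(x_0))$, which is the claim, with $C=C(S,R)$ since it depends only on $S'$; the case $r=0$ is trivial as $B_0(x_0)=\emptyset$. I do not expect a genuine obstacle here: the only points requiring care are ensuring that the enlargement $S'$ is both Borel and $\sfG$-bounded so that Proposition~\ref{prop:doublingsimple} applies, and that $S'$ contains $B_{2r}(x_0)\cap\supp\mm$ uniformly over $x_0\in S$ and $r\leq R$ --- which is exactly why the $2R$- (rather than $R$-) neighbourhood is used.
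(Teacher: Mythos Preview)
Your proof is correct and follows essentially the same approach as the paper: enlarge $S$ to a bounded set $S'$ containing all balls $B_{2r}(x_0)$ with $x_0\in S$, $r\leq R$, use the hypothesis to conclude $S'$ is $\sfG$-bounded, and apply Proposition~\ref{prop:doublingsimple} with $t=1/2$. The only cosmetic difference is that the paper takes $S'$ to be a single ball $B_{2R+R_0}(\bar{x})$ (with $R_0=\diam S$) rather than your closed $2R$-neighbourhood of $S$; your choice is slightly cleaner in that it lies in $\supp\mm$ by construction.
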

\begin{proof}
Let $R_0$ be the diameter of $S$, and let $S':=B_{2R+R_0}(\bar{x})$ be a ball centered in any point $\bar{x}\in S$. Of course any ball $B_{2r}(x_0)$ with $r\in [0,R]$ and $x_0\in S$ is contained in $S'$. By our additional hypothesis, $S'$ is also $\sfG$-bounded. We apply then Proposition \ref{prop:doublingsimple} to the set $S'$  with $t=1/2$ and get the claim with $C=C_{S',1/2}$.
\end{proof}

Finally, if $\diam_\sfG(\supp\mm)$ is finite, or if $\theta\mapsto\beta_t(\theta)$ is non-decreasing, then we have a classical global doubling inequality.

\begin{corollary}[Classical global doubling]\label{cor:classicaldoubling}
Let $(X,\sfd,\mm)$ be a m.m.s.\@ with gauge function $\sfG$, satisfying the $\MCP(\beta)$ with $\beta$ as in \eqref{eq:defbeta}. Assume that $\diam_\sfG(\supp\mm)$ is finite, or that $\theta\mapsto\beta_t(\theta)$ is non-decreasing. Then there exists a constant $C>0$ such that
\begin{equation}
\mm(B_{2r}(x_0))\leq C \cdot \mm(B_{r}(x_0)),\qquad \forall\, r \in [0,+\infty),\; \forall\, x_0\in \supp\mm.
\end{equation}
\end{corollary}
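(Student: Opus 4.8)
The plan is to deduce both cases from the local doubling estimate of Proposition \ref{prop:doublingsimple}, always applied with $t = 1/2$.

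\emph{Case 1: $\diam_\sfG(\supp\mm) < +\infty$.} Here I would simply take $S := \supp\mm$, which is then a $\sfG$-bounded Borel subset of itself, and invoke Proposition \ref{prop:doublingsimple} directly: there exists $C_{S,1/2} > 0$, with $1/C_{S,1/2} = \inf\{\beta_{1/2}(\theta)\mid\theta\in[0,\diam_\sfG(\supp\mm)]\} > 0$, such that $\mm(B_\rho(x_0)\cap\supp\mm)\leq C_{S,1/2}\,\mm(B_{\rho/2}(x_0))$ for every $\rho\geq 0$ and $x_0\in\supp\mm$. Since $\mm$ is concentrated on $\supp\mm$ the left-hand side equals $\mm(B_\rho(x_0))$, and substituting $\rho = 2r$ gives the assertion with $C := C_{S,1/2}$.

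\emph{Case 2: $\theta\mapsto\beta_t(\theta)$ non-decreasing for all $t\in(0,1)$.} Now $\supp\mm$ need not be $\sfG$-bounded, so Proposition \ref{prop:doublingsimple} cannot be quoted verbatim; instead I would re-run its short argument, noting that the only role of $\sfG$-boundedness there is to produce a uniform positive lower bound on $\beta_t(x_0,\check{B}_r(x_0)\cap S)$. When $\beta_t(\cdot)$ is non-decreasing, $\beta_t(\sfG(x_0,y))\geq\beta_t(0) = t^N$ for every $y$ (the degenerate value $\sfG(x_0,y) = +\infty$ being harmless, since $\beta_t(+\infty)\geq\beta_t(0)$ by monotonicity), so from \eqref{eq:betainf} one gets $\beta_t(x_0,\check{B}_r(x_0))\geq t^N$ with no restriction on the set. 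Feeding this bound into the half-Brunn-Minkowski inequality of Theorem \ref{thm:half-Brunn-Mink} with $\bar x = x_0$, $A = \check{B}_{2r}(x_0) := B_{2r}(x_0)\setminus\{x_0\}$, and using $A_{1/2}\subseteq\check{B}_r(x_0)$, yields $\mm(\check{B}_r(x_0))\geq (1/2)^N\,\mm(\check{B}_{2r}(x_0))$; adding $\mm(\{x_0\})$ to both sides (and using $2^N\geq 1$) gives $\mm(B_{2r}(x_0))\leq 2^N\,\mm(B_r(x_0))$, so one may take $C := 2^N$.

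There is no real obstacle here: all the content is already in Proposition \ref{prop:doublingsimple} and Theorem \ref{thm:half-Brunn-Mink}. The only point requiring a moment's care is that in Case 2 one should not apply Proposition \ref{prop:doublingsimple} as a black box — its standing hypothesis that $S$ be $\sfG$-bounded may fail — but rather observe that under monotonicity the infimum $\inf\{\beta_t(\theta)\mid\theta\in[0,+\infty]\}$ already equals $t^N > 0$, which is exactly the lower bound that proof uses.
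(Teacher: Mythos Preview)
Your proof is correct and follows essentially the same approach as the paper's own proof. In both cases the paper also takes $S=\supp\mm$ and re-runs the argument of Proposition \ref{prop:doublingsimple} with $t=1/2$, observing exactly as you do that the only role of $\sfG$-boundedness is to secure a uniform positive lower bound on $\beta_t(x_0,B_r(x_0))$, and that in the monotone case this bound is simply $\beta_t(0)=t^N$.
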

\begin{proof}
In both cases, using the same argument of the proof of Proposition \ref{prop:doublingsimple}, taking $S=\supp\mm$, we have that
\begin{equation}\label{eq:thisfact}
\beta_t(x_0,B_r(x_0)) \geq \frac{1}{C(t)} >0,
\end{equation}
for a constant $C(t)>0$ that does not depend on $x_0\in\supp\mm$ or $r\in [0,+\infty)$: if $\diam_\sfG(\supp\mm)$ is finite \eqref{eq:thisfact} follows from the definition of $\beta_t(x_0,B_r(x_0))$ and Proposition \ref{prop:propertiesbeta}\ref{i:propertiesbeta4}, while if $\theta\mapsto\beta_t(\theta)$ is non-decreasing the lower bound is given by $\beta_t(0)=t^N$.
\end{proof}

We also record a standard consequence that will be useful in the following.

\begin{corollary}[Proper geodesic space]\label{cor:totallyboundedclassic}
Let $(X,\sfd,\mm)$ be a m.m.s.\@ with gauge function $\sfG$, satisfying the $\MCP(\beta)$ with $\beta$ as in \eqref{eq:defbeta}. Assume that bounded subsets of $\supp\mm$ are $\sfG$-bounded, or $\theta\mapsto\beta_t(\theta)$ is non-decreasing. Then $(\supp\mm,\sfd)$ is a proper, geodesic space.
\end{corollary}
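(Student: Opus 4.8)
The plan is to derive \emph{properness} of $(\supp\mm,\sfd)$ from a doubling property of $\mm$ together with completeness, and then to upgrade to the \emph{geodesic} property using the length-space statement recorded in Remark~\ref{rem:LengthSpace}. First I would record the elementary facts that $(\supp\mm,\sfd)$ is complete, being a closed subset of the complete space $(X,\sfd)$, and that $0<\mm(B_r(x))<+\infty$ for every $x\in\supp\mm$ and every $r>0$, the positivity being the definition of the support and the finiteness coming from $\mm\in\sM_{\loc}(X)$.

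Next, the core step is to show that closed bounded subsets of $\supp\mm$ are totally bounded, hence compact. Under the hypothesis that bounded subsets of $\supp\mm$ are $\sfG$-bounded, Corollary~\ref{cor:localdoubling} provides, for each $x_0\in\supp\mm$ and each $R>0$, a constant $C$ with $\mm(B_{2r}(y))\le C\,\mm(B_r(y))$ for all $y$ in a fixed bounded region and all $r$ below a fixed scale; under the hypothesis that $\theta\mapsto\beta_t(\theta)$ is non-decreasing, Corollary~\ref{cor:classicaldoubling} gives the same inequality globally, with a uniform $C$. In either case I would argue by contradiction: if $\bar B_R(x_0)\cap\supp\mm$ admitted an infinite $\eps$-separated family $\{x_i\}_{i\in\N}$ (and we may assume $\eps\le R$, which suffices for total boundedness), the balls $B_{\eps/2}(x_i)$ would be pairwise disjoint and all contained in $B_{2R}(x_0)$, which has finite measure. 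Iterating the doubling inequality a fixed number $k=k(\eps,R)$ of times along the chain of radii $\eps/2,\eps,2\eps,\dots$ (all staying below the admissible scale, since we may take $k$ minimal with $2^{k-1}\eps\ge 3R$) yields $\mm(B_{2R}(x_0))\le\mm(B_{3R}(x_i))\le C^{k}\,\mm(B_{\eps/2}(x_i))$ for each $i$, so every $B_{\eps/2}(x_i)$ has measure at least $C^{-k}\mm(B_{2R}(x_0))>0$; disjointness inside $B_{2R}(x_0)$ then forces the family to have at most $C^{k}$ elements, a contradiction. Hence $\bar B_R(x_0)\cap\supp\mm$ is totally bounded and complete, thus compact; as $x_0$ and $R$ were arbitrary, $(\supp\mm,\sfd)$ is proper.

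Finally, Remark~\ref{rem:LengthSpace} tells us that $\MCP(\beta)$ forces $(\supp\mm,\sfd)$ to be a length space, and a complete, proper length space is geodesic: between two points one chooses curves parametrized on $[0,1]$ whose lengths converge to the distance, observes that they are equi-Lipschitz and contained in a fixed compact ball, and extracts by Arzel\`a--Ascoli a uniformly convergent subsequence whose limit is a minimizing geodesic (the classical Hopf--Rinow--Cohn-Vossen argument). This yields that $(\supp\mm,\sfd)$ is a proper geodesic space. The only mildly technical point is the bookkeeping in the packing estimate above: one must keep all radii entering the iterated doubling within the scale where the (possibly only local) doubling constant of Corollary~\ref{cor:localdoubling} is valid, and check that the small disjoint balls are genuinely nested in a single ball of finite measure sharing a common positive lower bound on their measures. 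This is routine once the two hypotheses are invoked exactly where Corollaries~\ref{cor:localdoubling} and \ref{cor:classicaldoubling} respectively apply.
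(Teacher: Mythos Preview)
Your proposal is correct and follows essentially the same approach as the paper: establish total boundedness of balls in $\supp\mm$ via a packing argument based on the doubling property, conclude properness from completeness, and then upgrade from length (Remark~\ref{rem:LengthSpace}) to geodesic. The only cosmetic difference is that the paper invokes Proposition~\ref{prop:doublingsimple} directly with $S=B_{3L}(x_0)$ and $t=\varepsilon/2L$ to pass from radius $\varepsilon$ to $2L$ in a single step, whereas you iterate the doubling inequality of Corollaries~\ref{cor:localdoubling} and \ref{cor:classicaldoubling}; both routes yield the same packing bound.
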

\begin{proof}
The proof follows the classical strategy. Assume first that bounded subsets of $\supp\mm$ are $\sfG$-bounded. Let $x_0\in\supp\mm$ and $L>0$. Let $x_1,\dots,x_k$ be a collection of $k\in \N$ points in $B_L(x_0)$, and $\varepsilon>0$. If $B_\varepsilon(x_i)\subseteq B_{L}(x_0)$ for all $i=1,\dots,k$  are pairwise disjoint, it holds
\begin{equation}
\mm(B_{L}(x_0)) \geq \sum_{i=1}^k \mm(B_{\varepsilon}(x_i))\geq \frac{1}{C_{S,t}} \sum_{i=1}^k \mm(B_{2L}(x_i)\cap S)\geq \frac{k}{C_{S,t}} \mm(B_{L}(x_0)),
\end{equation}
where we applied Proposition \ref{prop:doublingsimple} with $S=B_{3L}(x_0)$ and $t=\varepsilon/2L$. Thus, the maximal number of pairwise disjoint balls of radius $\varepsilon>0$ contained in $B_{L}(x_0)$ is bounded by
\begin{equation}
k \leq C_{S,t}<+\infty, \qquad\text{with}\quad S= B_{3L}(x_0),\quad t=\varepsilon/2L,
\end{equation}
where $C_{S,t}$ is the constant of \eqref{eq:crR}. As a consequence, the ball $B_{L}(x_0)$ can be covered by $C_{S,t}$ metric balls of radius $2\varepsilon$. Since $x_0\in\supp\mm$ and $L>0$ were arbitrary, we obtain that  balls in $\supp\mm$ are totally bounded, hence their closure is compact. In other words the metric space $(\supp\mm,\sfd)$ is proper. Since $(\supp\mm,\sfd)$ is a length space by the $\MCP(\beta)$ condition (cf. Remark \ref{rem:LengthSpace}), it is also geodesic.

If $\theta\mapsto\beta_t(\theta)$ is non-decreasing, the result is obtained by the same argument but using, instead, the classical global doubling inequality of Corollary \ref{cor:classicaldoubling}.
\end{proof}

\begin{remark}\label{rmk:totallybdduniform}
We notice that as a by-product of the proof of Corollary \ref{cor:totallyboundedclassic}, any $\sfG$-bounded metric ball $B_L(x_0)$ with $x_0\in\supp\mm$ can be covered by a finite number of balls of radius $\varepsilon/L$, and such a number depends only on $\varepsilon$ and the $\sfG$-diameter of $B_{3L}(x_0)$.
\end{remark}

\subsection{Geodesic dimension estimates}\label{Sec:Dim}

The parameter $N\in [1,+\infty)$ occurring in the definition of $\beta$ as in \eqref{eq:fftcN} turns out to be a sharp upper bound for a different notion of dimension, called \emph{geodesic dimension}. The latter was introduced in \cite{ABR-curvature} for sub-Riemannian manifolds, and extended to metric measure spaces in \cite{R-MCP}. We recall its definition.

\begin{definition}[Geodesic dimension]\label{def:geodim}
Let $(X,\sfd,\mm)$ be a metric measure space. For any $x\in X$ and $s>0$, define
\begin{equation}\label{eq:defCsx}
C_s(x):=\sup\left\{ \limsup_{t\to 0^+} \frac{1}{t^s} \frac{\bar{\mm}(A_t)}{\mm(A_1)} \mid \text{$A_1$ Borel,\, bounded, $x\notin A_{1}$,\,  $\mm(A_1)\in (0,+\infty)$} \right\},
\end{equation}
where $A_t$ is the set of $t$-intermediate points between $A_0 = \{x\}$ and $A_1$. We define the \emph{geodesic dimension} at $x\in X$ of $(X,\sfd,\mm)$ as the non-negative real number
\begin{equation}\label{eq:defNx}
\NN(x):=\inf \{s>0 \mid C_s(x) = +\infty \} = \sup\{s>0\mid C_s(x) =0 \}\, ,
\end{equation}
with the convention $\inf\emptyset = +\infty$ and $\sup\emptyset =0$. Finally, for any subset $S\subset X$, we denote
\begin{equation}
\NN(S):=\sup\{\NN(x)\mid x \in S\}.
\end{equation}
\end{definition}

We remark that if $x\notin \supp\mm$ we have $\NN(x) = +\infty$ so it makes sense to consider only the geodesic dimension at points in the support of $\mm$.

\begin{lemma}\label{l:geodgedhaus}
Let $(X,\sfd,\mm)$ be a m.m.s. Then for any Borel set $S\subset X$ it holds
\begin{equation}
\NN(S) \geq \NH(S).
\end{equation}
\end{lemma}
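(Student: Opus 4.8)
The plan is to reduce the inequality to a pointwise comparison between the geodesic dimension and the \emph{lower local dimension} of the reference measure,
\[
\underline d_\mm(x):=\liminf_{r\to 0^+}\frac{\log\mm(B_r(x))}{\log r}\in[0,+\infty],
\]
with the conventions $\underline d_\mm(x)=0$ if $\mm(\{x\})>0$ and $\underline d_\mm(x)=+\infty$ if $\mm(B_r(x))=0$ for all small $r>0$ (in particular if $x\notin\supp\mm$, in which case moreover $\NN(x)=+\infty$, so such points may be disregarded). The argument splits into two independent parts: (a) $\NN(x)\ge\underline d_\mm(x)$ for every $x\in X$; and (b) $\NH(S)\le\sup_{x\in S}\underline d_\mm(x)$ for every Borel set $S\subseteq X$. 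Combining them gives at once $\NN(S)=\sup_{x\in S}\NN(x)\ge\sup_{x\in S}\underline d_\mm(x)\ge\NH(S)$.

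For part (a), fix $x$ with $\underline d_\mm(x)>0$ (otherwise there is nothing to prove) and pick $0<s<\underline d_\mm(x)$; by definition of $\underline d_\mm(x)$ one has $\mm(B_r(x))=o(r^s)$ as $r\to 0^+$. Let $A_1\subseteq X$ be any bounded Borel set with $x\notin A_1$ and $\mm(A_1)\in(0,+\infty)$, and set $R:=\sup_{z\in A_1}\sfd(x,z)\in(0,+\infty)$. Every geodesic $\gamma$ with $\gamma_0=x$ and $\gamma_1\in A_1$ satisfies $\sfd(x,\gamma_t)=t\,\sfd(x,\gamma_1)\le tR$, so $A_t=Z_t(\{x\},A_1)\subseteq B_{2tR}(x)$ and hence $\bar{\mm}(A_t)\le\mm(B_{2tR}(x))=o(t^s)$ as $t\to 0^+$. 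Thus $\limsup_{t\to 0^+}t^{-s}\,\bar{\mm}(A_t)/\mm(A_1)=0$, and since $A_1$ was arbitrary this yields $C_s(x)=0$; letting $s\uparrow\underline d_\mm(x)$ gives $\NN(x)=\sup\{s>0\mid C_s(x)=0\}\ge\underline d_\mm(x)$. This step is essentially the trivial inclusion $Z_t(\{x\},A_1)\subseteq B_{2tR}(x)$: dilating a bounded set towards $x$ by a factor $t$ cannot make the measure decay faster than the measure of a ball of radius $\sim t$ does.

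For part (b), we may assume $d:=\sup_{x\in S}\underline d_\mm(x)<+\infty$, and --- splitting $S=\bigcup_j(S\cap B_j(\bar x))$ and using the countable stability of $\NH$ --- that $S$ is bounded, say $S\subseteq B_1(\bar x)$, so that $M:=\mm(B_2(\bar x))<+\infty$. Fix $d'>d$ and $\varepsilon>0$ with $d+\varepsilon<d'$. For each $x\in S$, since $\underline d_\mm(x)\le d<d+\varepsilon$, there is a sequence $r_k(x)\downarrow 0$ with $r_k(x)<1$ and $\mm\big(B_{r_k(x)}(x)\big)\ge r_k(x)^{\,d+\varepsilon}$. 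For fixed $\delta\in(0,1)$, the family $\{B_{r_k(x)}(x)\mid x\in S,\ r_k(x)<\delta\}$ covers $S$ by balls of radius $<\delta$; by the $5r$-covering lemma (and since a disjoint family of balls in the separable space $X$ is countable) extract a countable disjoint subfamily $\{B_{\rho_i}(x_i)\}_{i}$, $\rho_i<\delta$, with $\mm(B_{\rho_i}(x_i))\ge\rho_i^{\,d+\varepsilon}$ and $S\subseteq\bigcup_i B_{5\rho_i}(x_i)$. By disjointness and local finiteness, $\sum_i\rho_i^{\,d+\varepsilon}\le\sum_i\mm(B_{\rho_i}(x_i))\le M$, whence the cover of $S$ by the sets $B_{5\rho_i}(x_i)$, each of diameter $\le 10\delta$, satisfies
\[
\sum_i\big(\diam B_{5\rho_i}(x_i)\big)^{d'}\le 10^{d'}\Big(\sup_i\rho_i\Big)^{d'-d-\varepsilon}\sum_i\rho_i^{\,d+\varepsilon}\le 10^{d'}\delta^{\,d'-d-\varepsilon}M.
\]
Since $\delta\in(0,1)$ is arbitrary and the scale $10\delta$ can be made as small as desired, this forces $\mathcal{H}^{d'}(S)=0$, so $\NH(S)\le d'$; letting $d'\downarrow d$ gives $\NH(S)\le d$, which is part (b).

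I expect the technical heart to be part (b): the pointwise lower bounds $\mm(B_{r_k(x)}(x))\gtrsim r_k(x)^{\,d+\varepsilon}$ hold only along a point-dependent sequence of radii, so a cover of $S$ realizing the Hausdorff content cannot be produced naively; the Vitali-type $5r$-covering lemma together with the local finiteness of $\mm$ is exactly what turns these scattered estimates into a uniform bound on $\mathcal{H}^{d'}(S)$ (this is the metric-space form of the classical statement that a measure whose lower density is everywhere $\le d$ assigns Hausdorff dimension $\le d$ to its support). Part (a), by contrast, is elementary, and the reductions to $S\subseteq\supp\mm$ and to bounded $S$ are routine consequences of countable stability.
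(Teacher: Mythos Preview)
Your proof is correct, and the overall strategy — reduce to a pointwise ball–volume estimate and then compare with Hausdorff dimension — is the same as in the paper. The implementation differs, however, in the following way.

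The paper does not pass through the lower local dimension $\underline d_\mm$. Instead, for each Borel $S_0\subseteq S$ with $\mm(S_0)<\infty$ and each $k<\NH(S_0)$, it invokes a density theorem (Ambrosio--Tilli, \emph{Topics on Analysis in Metric Spaces}, Thm.~2.4.3) to produce a single point $x\in S_0$ with $\limsup_{r\to 0}\mm(B_r(x))/r^k<\infty$; the elementary inclusion $A_t\subseteq B_{tR}(x)$ (your part (a)) then gives $\NN(x)\ge k$, hence $\NN(S)\ge\NH(S_0)$, and $\sigma$-finiteness of $\mm$ plus countable stability of $\NH$ finish the job. So the paper needs one good point per scale $k$, whereas you establish the pointwise bound $\NN(x)\ge\underline d_\mm(x)$ for \emph{every} $x$ and then prove $\NH(S)\le\sup_{x\in S}\underline d_\mm(x)$ from scratch via the $5r$-covering lemma.

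The content is essentially the same: your part~(b) is precisely (a form of) the density/mass-distribution principle that the paper cites. Your version is more self-contained and makes the intermediate quantity $\underline d_\mm$ explicit; the paper's version is shorter by outsourcing that step. One minor remark: in part~(a) you take $A_t\subseteq B_{2tR}(x)$, but in fact $A_t\subseteq B_{tR}(x)$ already (since $\sfd(x,\gamma_t)=t\,\sfd(x,\gamma_1)\le tR$), which is what the paper uses; this makes no difference for the argument.
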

\begin{proof}

We first observe that if there exists $z\in S$ with $z\notin\supp\mm$, we have $\NN(z) = +\infty$, so that there is nothing to prove. Thus, we assume $S\subseteq \supp\mm$. 

Let $S_0\subseteq S$ be any Borel subset of $S$ with $\mm(S_0)<+\infty$. We claim that there exists $x\in S_0$ such that $\NN(x)\geq \NH(S_0)$. If $\NH(S_0)=0$ the claim is true, so assume that $\NH(S_0)>0$. If $k < \NH(S_0)$, then for the $k$-dimensional Hausdorff measure it holds $\mathcal{H}^k(S_0) = +\infty$, so that by using \cite[Thm.\@ 2.4.3]{AT-measure} there exists $x\in S_0$ such that
\begin{equation}
\limsup_{r\to 0^+} \frac{\mm(B_r(x))}{r^k}<+\infty.
\end{equation}

Let $A_1$ be a bounded Borel set  not containing $x$ with $\mm(A_1)\subset (0,+\infty)$, and consider the corresponding set of $t$-midpoints between $A_1$ and $A_0 = \{x\}$. We have $A_1\subseteq B_R(x)$ for some large $R>0$, so that $A_t \subseteq B_{tR}(x)$. Hence for all $\varepsilon>0$ it holds
\begin{equation}
\limsup_{t\to 0^+} \frac{1}{t^{k-\varepsilon}}\frac{\bar{\mm}(A_t)}{\mm(A_1)} \leq \limsup_{t\to 0^+} \frac{1}{t^{k-\varepsilon}}\frac{\mm(B_{tR}(x))}{\mm(A_1)} =0, \qquad \forall\,\varepsilon>0.
\end{equation}
We deduce that $C_{k-\varepsilon}(x) =0$ for all $\varepsilon>0$ (cf.\@ Definition \ref{def:geodim}), and hence $\NN(x) \geq \NH(S_0)$. In particular, for any Borel subset $S_0\subseteq S$ with $\mm(S_0)<+\infty$ it holds
\begin{equation}
\NN(S) \geq \NH(S_0).
\end{equation}

Next, by $\sigma$-finiteness, $X=\cup_{j\in \N} B_j$ for a countable family of Borel sets $(B_j)_{j\in \N}$ with $\mm(B_j)<+\infty$. Hence, setting $S_j:=S\cap B_j$, we have $\NN(S)\geq \NH(S_j)$ for all $j\in \N$. Finally, we obtain
\begin{equation}
\NH(S) = \NH\left(\bigcup_{j\in \N}S_j\right)  = \sup_{j\in\N} \NH(S_j) \leq \NN(S),
\end{equation}
concluding the proof.
\end{proof}

The next result establishes a (sharp) geodesic dimension estimate. We will assume the following \emph{non-triviality condition}:
\begin{equation}\label{eq:GNonTriv}
\mm(\{z\in X\mid z\neq x, \,  \sfG(x, z)<+\infty\})>0, \qquad \forall\, x\in   \supp\mm.
\end{equation}
In other words, $\supp\mm$ is not a singleton and $\sfG(x,\cdot)$ does not take the value $+\infty$ almost everywhere, for all $x\in \supp\mm$. Notice that \eqref{eq:GNonTriv} is implied by the $\MCP(\beta)$ provided that $\beta_t(+\infty) = +\infty$, see Remark \ref{rmk:MCPimpliesGNonTriv}.

\begin{theorem}[Geodesic dimension estimate]\label{thm:GeodDimEst}
Let $(X,\sfd,\mm)$ be a m.m.s.\@ with gauge function $\sfG$ satisfying the non-triviality condition \eqref{eq:GNonTriv}. Assume that $(X,\sfd,\mm, \sfG)$ satisfies the $\MCP(\beta)$ condition, with $\beta$ as in \eqref{eq:defbeta}. Recall that $N \in [1,+\infty)$ is the parameter given in \eqref{eq:fftcN}. Then the geodesic dimension of $(X,\sfd,\mm)$ satisfies
\begin{equation}\label{eq:estimate-geod}
\NN(x) \leq N, \qquad \forall\, x\in \supp\mm.
\end{equation}
In particular it holds for the Hausdorff dimension:
\begin{equation}\label{eq:estimate-haus}
\NH(\supp \mm) \leq  \NN(\supp\mm) \leq N.
\end{equation}
\end{theorem}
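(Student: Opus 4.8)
The strategy is, for each fixed $x\in\supp\mm$, to feed a single carefully chosen competitor set $A_1$ into the definition \eqref{eq:defCsx} of $C_s(x)$, and to bound $\bar\mm\big(Z_t(\{x\},A_1)\big)$ from below by means of the half-Brunn--Minkowski inequality (Theorem \ref{thm:half-Brunn-Mink}), obtaining an estimate of the shape $\bar\mm\big(Z_t(\{x\},A_1)\big)\ge c\,t^N\mm(A_1)$ with $c>0$ independent of $t\in(0,1)$. Granting this, \eqref{eq:defCsx} gives $C_s(x)\ge c\,\limsup_{t\to 0^+}t^{N-s}=+\infty$ for every $s>N$, so $\NN(x)=\inf\{s>0: C_s(x)=+\infty\}\le N$; then \eqref{eq:estimate-geod} follows by taking the supremum over $x\in\supp\mm$, and the Hausdorff bound \eqref{eq:estimate-haus} is immediate from Lemma \ref{l:geodgedhaus} applied to $S=\supp\mm$.

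To choose $A_1$: by the non-triviality condition \eqref{eq:GNonTriv}, the set $\{z\in\supp\mm: z\ne x,\ \sfG(x,z)<+\infty\}$ has positive measure. When $\cD<+\infty$ one has $\beta_t(+\infty)=+\infty$ by Proposition \ref{prop:propertiesbeta}\ref{i:propertiesbeta0}, so Proposition \ref{prop:Gdiamestimate} applies and yields $\sfG(x,\cdot)<\cD$ $\mm$-a.e. (recall also that \eqref{eq:GNonTriv} forces $\supp\mm$ not to be a singleton); hence in every case the set $\{z\in\supp\mm: z\ne x,\ \sfG(x,z)<\cD\}$, which is the countable union over rational $M\in[0,\cD)$ of the sets $E_M:=\{z\in\supp\mm: z\ne x,\ \sfG(x,z)\le M\}$, has positive measure. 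We may thus fix $M\in[0,\cD)$ with $\mm(E_M)>0$. Intersecting $E_M$ with a large metric ball and using $\sigma$-finiteness of $\mm$, we extract a bounded Borel set $A_1\subseteq E_M\subseteq\supp\mm$ with $x\notin A_1$ and $\mm(A_1)\in(0,+\infty)$, which is an admissible competitor in \eqref{eq:defCsx}.

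The analytic heart is the uniform lower bound $\inf_{\theta\in[0,M]}\beta_t(\theta)\ge c_M\,t^N$ for all $t\in(0,1)$, for some $c_M>0$. Since $M<\cD$, the function $g(\phi):=\sfs(\phi)/\phi^N$ is continuous and strictly positive on $(0,M]$ and extends continuously to $\phi=0$ with $g(0)=c>0$ by \eqref{eq:fftcN}; by compactness of $[0,M]$ we get $0<c_0:=\min_{[0,M]}g\le\max_{[0,M]}g=:c_1<+\infty$. For $\theta\in(0,M]$ and $t\in(0,1)$ one has $t\theta\in(0,M]$, whence $\beta_t(\theta)=\sfs(t\theta)/\sfs(\theta)=\big(g(t\theta)/g(\theta)\big)t^N\ge (c_0/c_1)\,t^N$ by \eqref{eq:defbeta}, while $\beta_t(0)=t^N\ge(c_0/c_1)\,t^N$ since $c_0\le c_1$; so $c_M:=c_0/c_1$ works. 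Now Theorem \ref{thm:half-Brunn-Mink} with $\bar x=x$ and $A=A_1$ is applicable ($x\notin A_1$, $\mm(A_1)>0$): using the definition \eqref{eq:betainf} with the full-measure subset of $A_1$ taken to be $A_1$ itself (note $A_1\subseteq\supp\mm$), together with $\sfG(x,z)\le M$ for $z\in A_1$, we obtain $\beta_t(x,A_1)\ge\inf_{z\in A_1}\beta_t(\sfG(x,z))\ge\inf_{\theta\in[0,M]}\beta_t(\theta)\ge c_M\,t^N$, and therefore $\bar\mm\big(Z_t(\{x\},A_1)\big)\ge\beta_t(x,A_1)\,\mm(A_1)\ge c_M\,t^N\mm(A_1)$ for all $t\in(0,1)$, as required.

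I expect the two delicate points to be exactly these: (i) guaranteeing that $M$ can be taken \emph{strictly} below $\cD$ on a set of positive measure --- in the compact case $\cD<+\infty$ this is where Proposition \ref{prop:Gdiamestimate} is essential, since $\beta_t$ blows up at $\cD$ and a naive choice $M=\cD$ would be useless --- and (ii) the uniformity in $\theta$ of the estimate $\beta_t(\theta)\gtrsim t^N$, which degenerates as $\theta\to\cD$ and is recovered on $[0,M]$ only through the compactness/continuity argument for $g(\phi)=\sfs(\phi)/\phi^N$. Everything else (Borel measurability of the slices of $\sfG$, extraction of a bounded positive-and-finite-measure subset, and passing to the $\limsup$) is routine.
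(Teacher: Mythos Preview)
Your proof is correct and follows the same overall route as the paper: choose a suitable competitor set $A_1$ with $\sfG(x,\cdot)$ bounded on it, apply the half-Brunn--Minkowski inequality (Theorem \ref{thm:half-Brunn-Mink}) to bound $\bar\mm(A_t)$ from below, and read off $C_s(x)=+\infty$ for $s>N$.

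The execution differs in a way worth noting. You work a bit harder upfront, securing a threshold $M$ strictly below $\cD$ (invoking Proposition \ref{prop:Gdiamestimate} when $\cD<+\infty$), and this lets you run a clean compactness argument: $g(\phi)=\sfs(\phi)/\phi^N$ is continuous and positive on $[0,M]$, hence uniformly bounded above and below, giving $\beta_t(\theta)\ge c_M\,t^N$ on $[0,M]$ in one stroke. The paper instead only arranges $\sup_{A_1}\sfG(x,\cdot)<+\infty$, takes a minimising sequence $\theta_n$ for $\beta_t(\sfG(x,\cdot))$, extracts a limit $\theta_\infty$, and then splits into cases according to whether $\theta_\infty\in(0,\cD)$, $\theta_\infty\in[\cD,+\infty)$, or $\theta_n\to 0$. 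Your approach trades an extra preparatory step for the elimination of that case analysis; the paper's version is slightly more direct in its setup but pays for it later.
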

\begin{proof}
Thanks to Lemma \ref{l:geodgedhaus}, estimate \eqref{eq:estimate-geod} implies \eqref{eq:estimate-haus}, so it is sufficient to prove the former. We use the half Brunn-Minkowski inequality (Theorem \ref{thm:half-Brunn-Mink}) implied by the $\MCP(\beta)$. 
Fix $x\in \supp\mm$. Using that $\mm$ is locally finite and the non triviality \eqref{eq:GNonTriv} of the gauge function, it is immediate to see that there exists a bounded Borel set $A_1\subset X$,  with $\mm(A_1) \in (0,+\infty)$, and disjoint from $x$, satisfying
\begin{equation}\label{eq:supGfinite}
\sup_{y\in A_1} \sfG(x,y)<\infty.
\end{equation}
By the half Brunn-Minkowski inequality (Theorem \ref{thm:half-Brunn-Mink}), it holds
\begin{equation}
\frac{\bar{\mm}(A_t)}{\mm(A_1)}\geq  \mm\shortminus\essinf_{y\in A_1} \,  \beta_t(\sfG(x,y)), \qquad \forall\, t \in (0,1).
\end{equation}
Let $(\theta_j)_{j\in \N}$ with $\theta_j \in [0,\sup_{y\in A_1}\sfG(x,y)]$ be such that
\begin{equation}
\lim_{j\to\infty} \beta_t(\theta_j) =  \mm\shortminus\essinf_{y\in A_1} \,\beta_t(\sfG(x,y)).
\end{equation}
Assume first that $\theta_j$ is definitely bounded away from zero, so that we can extract a subsequence such that $\lim_{j\to\infty}\theta_j = \theta_\infty \in  (0,+\infty)$, where the finiteness of $\theta_\infty$ follows from \eqref{eq:supGfinite}. Since $\beta_t$ is continuous in this range (cf.\@ Proposition \ref{prop:propertiesbeta}), the essential infimum above is achieved at $\beta_t(\theta_\infty)$. If $\theta_\infty \in (0,\cD)$ then, by the properties \eqref{eq:fftcN} and \eqref{eq:defbeta}, we infer that
\begin{equation}
\limsup_{t\to 0^+} \frac{1}{t^{N+\ve}} \frac{\bar{\mm}(A_t)}{\mm(A_1)} \geq  \limsup_{t\to 0^+} \frac{1}{t^{N+\ve}} \beta_t(\theta_\infty) = \limsup_{t\to 0^+} \frac{1}{t^{N+\ve}} \frac{\sfs(t\theta_\infty)}{\sfs(\theta_\infty)}=+\infty.
\end{equation}
If $\theta_{\infty} \in [\cD,+\infty)$, then  $\beta_t(\theta_\infty) =+\infty$  by \eqref{eq:defbeta}, so we have a similar conclusion.

The remaining case is when $(\theta_j)_{j\in\N}$ admits a subsequence converging to zero. So we assume that $\lim_{j\to \infty} \theta_j = 0$. In this case, by the properties \eqref{eq:fftcN} and  \eqref{eq:defbeta}, we have
\begin{equation}
\frac{1}{t^{N+\ve}} \frac{\bar{\mm}(A_t)}{\mm(A_1)} \geq  \lim_{j\to\infty} \frac{1}{t^{N+\ve}} \beta_t(\theta_j) = \lim_{j\to\infty} \frac{1}{t^{N+\ve}} \frac{\sfs(t\theta_j)}{\sfs(\theta_j)} =\lim_{j\to \infty}\frac{(t\theta_j)^N+o(t\theta_j)^N}{t^{N+\ve}\left(\theta_j^N+ o(\theta_j)^N\right)}= \frac{1}{t^{\ve}}.
\end{equation}
Taking the $\limsup$ for $t\to 0^+$ gives $+\infty$. Combining all the cases, we deduce that $C_{N+\varepsilon}(x) = +\infty$ for all $\varepsilon>0$ (cf.\@ Definition \ref{def:geodim}), that implies \eqref{eq:estimate-geod}.
\end{proof}

\begin{remark}\label{rmk:sharpnessgeodim}
The Heisenberg group $\mathbb{H}^{d}$, equipped with its Carnot-Carath\'eodory distance and a Haar measure, is an example where the geodesic dimension is strictly larger than the Hausdorff dimension (the former being equal to $2d+3$, while the latter is equal to $2d+2$, while the topological dimension is $2d+1$). This is true, more in general, for all sub-Riemannian structures that are not Riemannian. Furthermore, when equipped with a suitable gauge function (see Section \ref{sec:howtorecover2}), the estimate provided by Theorem \ref{thm:GeodDimEst} is attained.
\end{remark}

\subsection{Generalized Bishop-Gromov inequality}\label{s:GBGI}

Let $(X,\sfd,\mm)$ be a  metric measure space with gauge function $\sfG$.  Observe that the sublevel sets of a gauge function $\sfG(x_0,\cdot) \leq r$ may be unbounded. This occurs with  the natural gauge in the Heisenberg group (here, even $\{\sfG(x_0,\cdot) = 0\}$ is unbounded). For this reason we work with the intersection with standard balls. 

\begin{definition}\label{def:gaugesets}
For a point $x_{0}\in \supp \mm$ and for $r,\rho\geq 0$, we set:
\begin{align}
\rv_{\sfG}(x_0,r,\rho)& :=\mm\left(\{x\in X \mid \sfG(x_0,x)\leq r,\;  \sfd(x_0,x)\leq \rho\}\right), \\
\rs_{\sfG}(x_0,r,\rho)& :=\limsup_{\delta\downarrow 0}\frac{1}{\delta} \mm\left(\{x\in X\mid \sfG(x_0,x)\in (r-\delta,r],\; \sfd(x_0,x)\leq \rho\}\right), \quad r\neq 0.
\end{align}
Consider also the following measure of ``gauge balls'' and ``gauge spheres'':
\begin{align}
\rv_{\sfG}(x_0,r) & :=  \mm\left( \{ x\in X \mid \sfG(x_0,x)\leq r,\; \sfd(x_0,x)\leq r\}\right), \label{eq:defvGr} \\
\rs_{\sfG}(x_0,r) & := \limsup_{\delta\downarrow 0}\frac{1}{\delta}\left[\rv_{\sfG}(x_0,r)-\rv_{\sfG}(x_0,r-\delta)\right], \quad r\neq 0. \label{eq:defsGr}
\end{align}
Notice also that $\rv_{\sfG}(x_0,r) = \rv_{\sfG}(x_0,r,r)$ but $\rs_{\sfG}(x_0,r) \neq \rs_{\sfG}(x_0,r,r)$. 
\end{definition}
In Figure \ref{fig:gaugevsball} we sketch these sets for a natural gauge function in the Heisenberg group. 

\begin{figure}
\centering
\includegraphics[width=.95\textwidth]{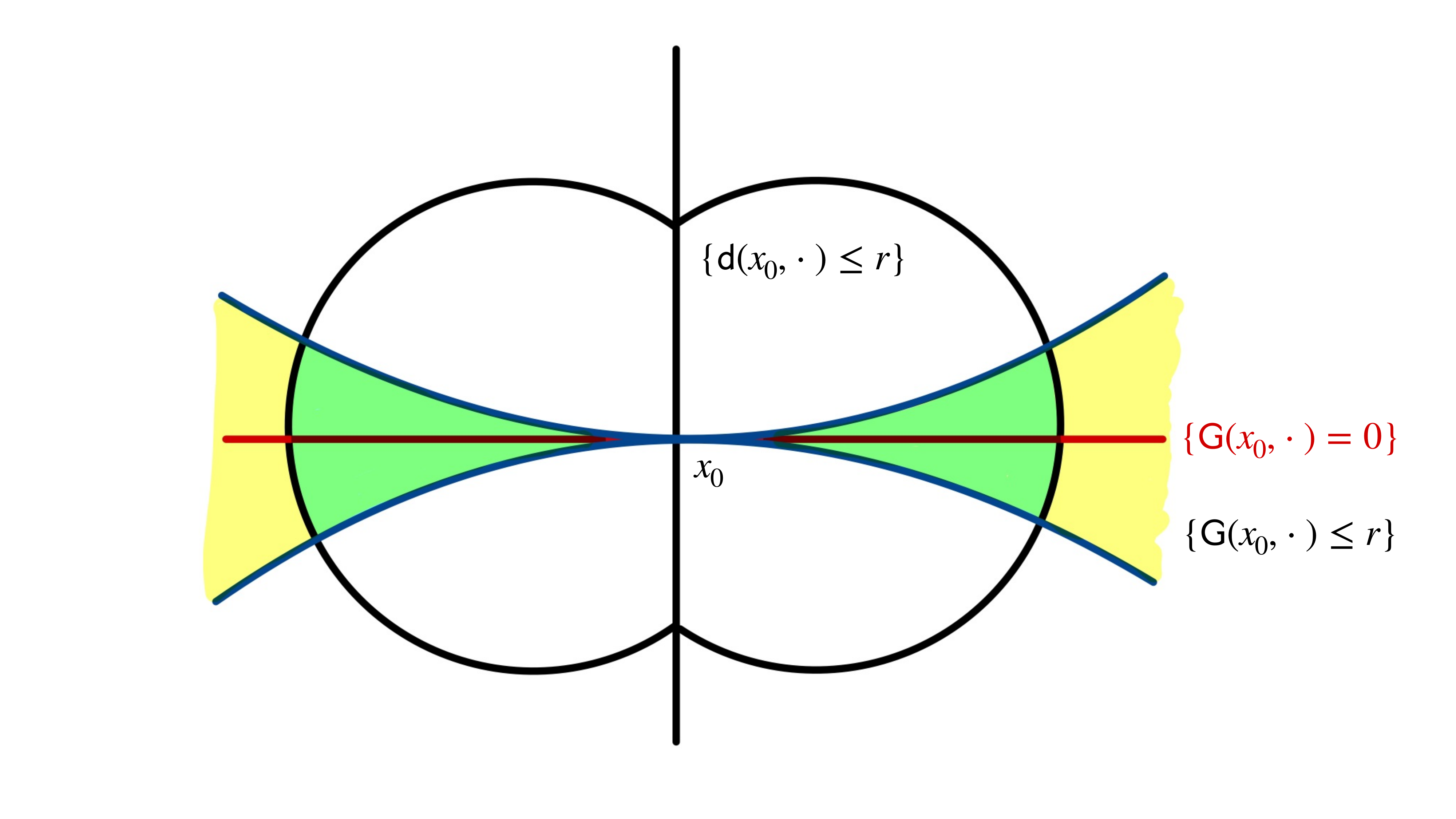}
\caption{Typical gauge sets in the Heisenberg group for the natural choice of gauge function described in Section \ref{sec:howtorecover2}. The green region corresponds to the intersection of the ball $\{\sfd(x_{0},\cdot)\leq r\}$ and the yellow one $\{\sfG(x_{0},\cdot)\leq r\}$. Notice the red line which represents the set $\{\sfG(x_{0},\cdot)=0\}$.
}\label{fig:gaugevsball}
\end{figure}

\begin{definition}[Meek]\label{def:meek}
Let $(X,\sfd,\mm)$ be a m.m.s. The gauge function $\sfG : X \times X \to [0,+\infty]$ is \emph{meek} if for all $\mu_1 \in  \Prob_{bs}^{*}(X,\sfd,\mm)$, $\bar{x}\in\supp\mm$ with $\bar{x}\notin \supp \, \mu_{1}$, and any optimal dynamical plan $\nu \in \OptGeo(\delta_{\bar{x}},\mu_1)$, there exists a Borel set $\Gamma \subset \Geo(X)$ with $\nu(\Gamma)=1$ such that it holds:
\begin{equation} \label{eq:meek1}
\sfG(\gamma_0,\gamma_t) = t\,  \sfG(\gamma_0,\gamma_1), \qquad \forall\, t\in (0,1], \quad \forall\, \gamma \in \Gamma.
\end{equation}
\end{definition}
\begin{remark}\label{rmk:weakmeek}
The meek property is used in the proof of Theorem \ref{thm:BG1} below for gauge m.m.s.\@ satisfying the $\MCP(\beta)$. There, it can be replaced by a weaker assumption where condition \eqref{eq:meek1} is required to hold for one $\nu \in \OptGeo(\delta_{\bar{x}},\mu_1)$ for which the $\MCP(\beta)$ inequality \eqref{eq:defMCPbeta} holds, instead than for \emph{every} $\nu$.  This variant is stable under the convergence of gauge metric measure spaces studied in Section~\ref{sec:stabilityandcompactness}, see proof of item \ref{i:ComMCP3} of Theorem~\ref{thm:precompactMCP}. We refrain from stating Definition \ref{def:meek} in this weaker sense since it would make it dependent on the $\MCP(\beta)$ condition.
\end{remark}

The meek property is satisfied by a large class of natural gauge functions in sub-Riemannian geometry, see  Section \ref{subsec:meek} (in particular, Theorem \ref{t:meeknatural} and Remark \ref{rem:meekexamples})

\begin{theorem}[Generalized Bishop-Gromov] \label{thm:BG1}
Let $(X,\sfd,\mm)$ be a m.m.s.\@ endowed with a gauge function $\sfG$ satisfying the $\MCP(\beta)$ condition, with $\beta$ as in \eqref{eq:defbeta}.  Assume that
\begin{itemize}
\item $\supp \mm$ is not a singleton;
\item $\sfG$ is meek;  (see Remark \ref{rmk:weakmeek} for a slightly weaker condition)
\item for all $x\in\supp\mm$ it holds (see Remark \ref{rem:onCond3.15} for a slightly weaker condition)
\begin{equation}\label{eq:Gfinite}
\mm(\{z\in X\mid  z\neq x,\, \sfG(x, z)=+\infty\})=0.
\end{equation}
\end{itemize}
Then for every $x_0\in\supp\mm$ the following properties hold:
\begin{enumerate}[label = (\roman*)]
\item \label{i:BG1} $\mm(\{x \mid \sfd(x_0,x) = r\})=0$, for every $r >0 $;
\item \label{i:BG2} $\mm(\{x_0\})=0$; in particular, the half-Brunn-Minkowski inequality \eqref{eq:halfBrunMink} holds for every $\bar{x}\in \supp\mm$ and Borel set $A\subset X$ with $\mm(A)>0$;
\item \label{i:BG3} $\mm(\{x \mid \sfG(x_0,x) = r\})=0$, for every $r >0 $;
\item \label{i:BG4} For every $\rho>0$, the functions
\begin{multline}\label{eq:BisGr}
(0,\cD)\ni r \mapsto \frac{\rs_{\sfG}(x_{0}, r,\rho)}{\sfs(r)/r}  \qquad \text{and} \\ (0, \cD]\cap (0,+\infty)\ni r \mapsto\frac{\rv_{\sfG}(x_{0}, r,\rho)-\rv_{\sfG}(x_{0}, 0,\rho)}{\int_{0}^{r} (\sfs(t)/t) \, \di t},
\end{multline}
are monotone non-increasing. Here $\sfs$ is the function defining $\beta$ as in \eqref{eq:defbeta};
\item \label{i:BG5} Assume that $\theta\mapsto \beta_t(\theta)$ is monotone non-increasing for all $t\in [0,1]$, or that for all $x_0\in\supp\mm$ it holds $\sfG(x_0,\cdot) \geq \sfd(x_0,\cdot)$ $\mm$-a.e.. Then the functions
\begin{equation}\label{eq:BisGr2}
(0,\cD)\ni r \mapsto \frac{\rs_{\sfG}(x_{0}, r)}{\sfs(r)/r}  \qquad \text{and} \qquad (0, \cD]\cap (0,+\infty)\ni r \mapsto\frac{\rv_{\sfG}(x_{0}, r)}{\int_{0}^{r} (\sfs(t)/t) \, \di t}
\end{equation}
are monotone non-increasing.
\item  \label{i:BG6} For every $R_0>0$ there exists a constant
\begin{equation}
\frac{1}{C_{r/R,0}}:=\inf\{\beta_{r/R}(\theta)\mid \theta \in [0,R_0]\} \in (0,(r/R)^N],
\end{equation}
such that it holds
\begin{equation}
 \rs_{\sfG}(x_0,R) \leq \frac{r}{R} C_{r/R,0}\cdot \rs_{\sfG}(x_0,r), \qquad \forall\, 0<r\leq R \leq R_0.
\end{equation}
As a consequence, the following doubling estimate for gauge balls holds: there exists a constant $C_0 = C_{1/2,0}>0$ such that
\begin{equation}
\rv_{\sfG}(x_0,2r)\leq C_0 \cdot \rv_{\sfG}(x_0,r), \qquad \forall\, r \in [0,R_0/2].
\end{equation}
\end{enumerate}
\end{theorem}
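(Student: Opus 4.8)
The engine of the whole proof is the half-Brunn--Minkowski inequality for $\MCP(\beta)$ spaces (Theorem~\ref{thm:half-Brunn-Mink}), always applied with centre $\bar x=x_0$, combined with the meek property to pin down where the $t$-intermediate set lands. \textbf{Items \ref{i:BG1} and \ref{i:BG3}.} I would argue both by contradiction. For \ref{i:BG1}: if the metric sphere $S_r(x_0)=\{\sfd(x_0,\cdot)=r\}$ had positive measure, then by \eqref{eq:Gfinite} --- and, if $\cD<+\infty$, by the $\sfG$-diameter estimate of Proposition~\ref{prop:Gdiamestimate} --- the set $A:=S_r(x_0)\cap\{\sfG(x_0,\cdot)\le k\}$ has positive measure for some $k<\cD$. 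Applying Theorem~\ref{thm:half-Brunn-Mink} to $A$, and noting that $t$-intermediate points of geodesics issuing from $x_0$ towards points at distance $r$ sit at distance $tr$, gives $A_t\subseteq S_{tr}(x_0)$ with $\bar\mm(A_t)\ge\beta_t(x_0,A)\,\mm(A)$; by Proposition~\ref{prop:propertiesbeta}\ref{i:propertiesbeta1} and \ref{i:propertiesbeta3}, $\beta_t(x_0,A)\ge c>0$ uniformly for $t$ in a compact subinterval of $(0,1)$. Hence uncountably many pairwise disjoint spheres $S_{tr}(x_0)\subset\overline{B_r(x_0)}$ would each have measure $\ge c\,\mm(A)>0$, contradicting local finiteness of $\mm$. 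Item \ref{i:BG3} is identical, with $\{\sfG(x_0,\cdot)=r\}\cap\{\sfd(x_0,\cdot)\le\rho\}$ in the role of the sphere: meekness forces the $t$-intermediate set into $\{\sfG(x_0,\cdot)=tr\}$, and these are pairwise disjoint; the case $r\ge\cD$ is already covered by Proposition~\ref{prop:Gdiamestimate}.

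\textbf{Item \ref{i:BG2}.} Suppose $\mm(\{x_0\})=a>0$. The point I would exploit is that then $\delta_{x_0}\in\Prob_{bs}^*(X,\sfd,\mm)$, since $\Ent(\delta_{x_0}|\mm)=-\log a\in\R$; hence $\MCP(\beta)$ applies with $\mu_1=\delta_{x_0}$ and any $\bar x\in\supp\mm\setminus\{x_0\}$ (which exists because $\supp\mm$ is not a singleton), producing a $W_2$-geodesic $(\mu_t)$ from $\delta_{\bar x}$ to $\delta_{x_0}$ with $\U_n(\mu_t)\ge\beta_t(\sfG(\bar x,x_0))^{1/n}a^{1/n}$. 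But a $W_2$-geodesic between two Dirac masses is, at each $t$, concentrated on the set of $t$-points of geodesics from $\bar x$ to $x_0$, hence on $S_{(1-t)\sfd(\bar x,x_0)}(x_0)$, which is $\mm$-null by item \ref{i:BG1}; so $\mu_t\perp\mm$ and $\U_n(\mu_t)=0$. This contradicts $\beta_t(\sfG(\bar x,x_0))^{1/n}a^{1/n}>0$, which holds for $t\in(0,1)$ whenever $\sfG(\bar x,x_0)<+\infty$, and holds for every value of $\sfG(\bar x,x_0)$ when $\cD<+\infty$ (then $\beta_t\equiv+\infty$ on $[\cD,+\infty]$ by Proposition~\ref{prop:propertiesbeta}\ref{i:propertiesbeta0}); the residual degenerate situation ($\cD=+\infty$ and $\sfG(z,x_0)=+\infty$ for all $z\ne x_0$) is dealt with by replacing $\delta_{x_0}$ by the normalized restriction of $\mm$ to $B_\rho(x_0)$ and letting $\rho\downarrow0$. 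Once $\mm(\{x_0\})=0$ for every $x_0$, the hypothesis $\mm(\{\bar x\}\cap A)=0$ in Theorem~\ref{thm:half-Brunn-Mink} becomes automatic, which is the stated consequence.

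\textbf{Items \ref{i:BG4}, \ref{i:BG5}, \ref{i:BG6}.} For the ``gauge-sphere'' monotonicity in \ref{i:BG4} I would fix $0<r<R<\cD$, set $t=r/R$, and apply Theorem~\ref{thm:half-Brunn-Mink} to the bounded gauge-annulus $A_\delta=\{\sfG(x_0,\cdot)\in(R-\delta,R],\ \sfd(x_0,\cdot)\le\rho\}$; meekness --- in the weak form of Remark~\ref{rmk:weakmeek}, for the plan realizing $\MCP$, together with Remark~\ref{rmk:concentratedimprove} --- places the interpolated measure inside $\{\sfG(x_0,\cdot)\in(tR-t\delta,tR],\ \sfd(x_0,\cdot)\le\rho\}$, while $\beta_t(x_0,A_\delta)\ge\inf\{\beta_t(\theta):\theta\in(R-\delta,R]\}$. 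Dividing by $\delta$, choosing $\delta_j\downarrow0$ realizing $\rs_\sfG(x_0,R,\rho)$, and using continuity of $\beta_t$ on $[0,\cD)$, one gets $t\,\rs_\sfG(x_0,tR,\rho)\ge\frac{\sfs(tR)}{\sfs(R)}\,\rs_\sfG(x_0,R,\rho)$, i.e.\ monotonicity of $r\mapsto\rs_\sfG(x_0,r,\rho)/(\sfs(r)/r)$. Monotonicity of the corresponding $\rv_\sfG$-quotient then follows from the classical Bishop--Gromov comparison lemma for monotone functions, once item \ref{i:BG3} is invoked to see that $r\mapsto\rv_\sfG(x_0,r,\rho)$ is continuous on $(0,+\infty)$ with $\limsup_{\delta\downarrow0}\delta^{-1}\big(\rv_\sfG(x_0,r,\rho)-\rv_\sfG(x_0,r-\delta,\rho)\big)=\rs_\sfG(x_0,r,\rho)$, a function that is locally integrable by the monotonicity just established. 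Item \ref{i:BG5} runs identically with the untruncated quantities: the shell $\{\sfG(x_0,\cdot)\le R,\,\sfd(x_0,\cdot)\le R\}\setminus\{\sfG(x_0,\cdot)\le R-\delta,\,\sfd(x_0,\cdot)\le R-\delta\}$ is bounded, its meek $t$-contraction lies in the shell at radius $tR$, and the needed bound $\beta_t(x_0,\cdot)\ge\beta_t(R)$ comes either from $\beta$ being non-increasing or from $\sfG\ge\sfd$ (which makes the distance constraint redundant, reducing the shell to $\{\sfG(x_0,\cdot)\in(R-\delta,R]\}$); the subtraction present in \ref{i:BG4} vanishes because $\rv_\sfG(x_0,0)\le\mm(\{x_0\})=0$ by item \ref{i:BG2}. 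Finally, in \ref{i:BG6} the sphere estimate is the same computation, now bounding $\beta_t(x_0,\cdot)\ge\inf\{\beta_t(\theta):\theta\in[0,R_0]\}=1/C_{r/R,0}$ (valid with no extra hypothesis since the shell lies in $\{\sfG(x_0,\cdot)\le R_0\}$), and the doubling inequality for gauge balls is immediate from Theorem~\ref{thm:half-Brunn-Mink} with $t=\tfrac12$ applied to $A=\{\sfG(x_0,\cdot)\le2r,\ \sfd(x_0,\cdot)\le2r\}$, whose meek half-contraction sits inside $\{\sfG(x_0,\cdot)\le r,\ \sfd(x_0,\cdot)\le r\}$, giving $\rv_\sfG(x_0,r)\ge C_0^{-1}\rv_\sfG(x_0,2r)$.

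\textbf{Main obstacle.} I expect the genuinely delicate steps to be: item \ref{i:BG2}, whose proof rests on the (at first sight counter-intuitive) fact that an atom at $x_0$ turns $\delta_{x_0}$ into an admissible target for $\MCP$, and which needs the small extra argument for the degenerate infinite-gauge case; and the passage from the ``sphere'' (Dini-derivative) monotonicity to the ``ball'' monotonicity in \ref{i:BG4}--\ref{i:BG5}, which uses items \ref{i:BG1} and \ref{i:BG3} to kill singular parts and the Bishop--Gromov comparison lemma to integrate. A recurring technical care-point is propagating the meek property through the gap between the set $A_t$ of all $t$-intermediate points and the (possibly smaller) set on which the optimal dynamical plan concentrates, which is precisely what Remarks~\ref{rmk:weakmeek} and \ref{rmk:concentratedimprove} are for.
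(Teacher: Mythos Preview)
Your proposal is essentially correct, but two points stand out.

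\textbf{Item \ref{i:BG2}.} You over-engineer this and even flag it as the ``genuinely delicate step''. In fact it is a one-line corollary of \ref{i:BG1}: since $\supp\mm$ is not a singleton, pick any $y\in\supp\mm$ with $y\neq x_0$; then $\{x_0\}\subseteq\{x:\sfd(y,x)=\sfd(y,x_0)\}$, and the latter has $\mm$-measure zero by \ref{i:BG1} applied with centre $y$. Your argument via ``$\delta_{x_0}$ becomes an admissible $\mu_1$ for $\MCP$'' does work when $\sfG(\bar x,x_0)<+\infty$ for some $\bar x$, or when $\cD<+\infty$; but your fix for the residual case (replacing $\delta_{x_0}$ by the normalized $\mm\llcorner_{B_\rho(x_0)}$) is not worked out and does not obviously close: once $\mu_1$ is diffuse, $\mu_t$ is no longer concentrated on a single metric sphere, so the contradiction ``$\U_n(\mu_t)=0$'' evaporates. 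None of this is needed.

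\textbf{Items \ref{i:BG1} and \ref{i:BG3}.} Your contradiction route (uncountably many disjoint level sets with uniformly positive mass inside a bounded set) is valid and more direct than the paper's. The paper instead establishes an annulus-to-annulus comparison inequality and uses it to prove continuity of $r\mapsto\rv_\sfG(x_0,\rho,r)$ (for \ref{i:BG1}) and of $r\mapsto\rv_\sfG(x_0,r,\rho)$ (for \ref{i:BG3}); null mass of spheres and gauge level sets then follows from continuity. The paper's detour pays off in \ref{i:BG4}: the very same annulus inequality gives the finite-difference estimate, and iterating it yields a self-contained proof that $\rv_\sfG$ is locally Lipschitz. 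You instead plan to deduce local Lipschitz from \ref{i:BG3} (continuity) plus boundedness of the upper Dini derivative, which also works. One ordering caveat: your \ref{i:BG3} invokes the half-Brunn--Minkowski on a gauge level set that may contain $x_0$, so it tacitly uses $\mm(\{x_0\})=0$ from \ref{i:BG2}; the logical order must therefore be \ref{i:BG1}, \ref{i:BG2}, \ref{i:BG3}, not \ref{i:BG1}, \ref{i:BG3}, \ref{i:BG2} as you group them. Your treatment of \ref{i:BG4}--\ref{i:BG6} matches the paper's.
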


\begin{remark}\label{rem:onCond3.15}
For the proof of Thorem \ref{thm:BG1} condition \eqref{eq:Gfinite} can be weakened as follows: for all $x\in\supp\mm$  and all $r>0$ it holds
\begin{equation}\label{eq:Gfinite3}
\mm(\{z\in X\mid   \sfd(x,z)=r,\, \sfG(x, z)=+\infty\})=0.
\end{equation}
Both conditions \eqref{eq:Gfinite} and  \eqref{eq:Gfinite3} follow from the $\MCP(\beta)$, provided that $\beta_t(+\infty) = +\infty$, see Remark \ref{rmk:MCPimpliesGNonTriv}.
\end{remark}
\begin{remark}
Item \ref{i:BG4} can be seen as a parametric version of the Bishop-Gromov inequality (the parameter being the ``scale'' $\rho$). The Bishop-Gromov estimate for ``gauge balls'' and ``spheres'' as in item \ref{i:BG5} holds under some additional assumptions.
\end{remark}
\begin{remark}
We are not aware of any situation in which $\rv_{\sfG}(x_0,0,\rho)$ does not vanish, but it does not seem to be a consequence of the $\MCP$ condition. Thus it seems that the extra term in item \ref{i:BG4} cannot be in general removed.
\end{remark}

\begin{proof} Fix $x_0\in\supp\mm$ throughout the proof.

\textbf{Proof of \ref{i:BG1}.} Let $\delta,\rho>0$, and $0<r-\delta r<r<R-\delta R<R<+\infty$. Let
\begin{equation}
E:=\{ x\mid \sfG(x_0,x) \leq \rho, \;\sfd(x_0,x) \in (R-R\delta, R]  \},
\end{equation}
and note that $E$ is Borel. Assume $\mm(E)>0$. Let $\mu_0=\delta_{x_0}$ and $\mu_1 = \mm(E)^{-1}\mm\llcorner_{E}$. Let $\nu\in \OptGeo(\mu_0,\mu_1)$ be such that the $W_2$-geodesic $\mu_t = (\ee_t)_\sharp \nu$ satisfies the $\MCP(\beta)$ inequality. Let $\Gamma\subseteq \Geo(X)$ be the corresponding set of geodesics in the meek assumption, for which $\nu(\Gamma)=1$. Observe that $\supp\nu \subseteq \Gamma_E:= \ee_1^{-1}(E)\cap \ee_0^{-1}(x_0)$ so that, letting $\Gamma'= \Gamma\cap \Gamma_E$, it holds that $\nu(\Gamma')=1$ and the measure $\mu_t$ is concentrated on $\ee_t(\Gamma')$. Since $\Gamma'$ is Borel, $\ee_t(\Gamma')$ is Suslin. Then, by Theorem \ref{thm:half-Brunn-Mink} and Remark \ref{rmk:concentratedimprove}, we have
\begin{align}
\bar{\mm}(\ee_{r/R}(\Gamma'))  \geq \beta_{r/R}(x_0,E) \cdot \mm(E) \geq C_{r/R,\rho} \cdot \mm(E),
\end{align}
where we set
\begin{equation}
C_{r/R,\rho}:=\inf\{\beta_{r/R}(\theta)\mid \theta \in [0,\rho]\}>0.
\end{equation}
The same inequality holds true trivially also if $\mm(E)=0$. An important remark is that any $\nu \in \OptGeo(\mu_0,\mu_1)$ is supported on a set of non-trivial geodesics, since by construction $x_0\notin E$ and $(\ee_0,\ee_1)_\sharp \nu = \delta_{x_0}\otimes \mu_1$, so that all geodesics in $\Gamma'$ are non-trivial. Then,  by the meek assumption, for all $\gamma \in \Gamma'$ it holds
\begin{align}
\sfG(x_0,\gamma_{r/R}) & = \frac{r}{R}\sfG(\gamma_0,\gamma_1) \in [0,r\rho/R], \\
\sfd(x_0,\gamma_{r/R}) & = \frac{r}{R}\sfd(\gamma_0,\gamma_1) \in (r-\delta r,r].
\end{align}
In other words, we have
\begin{equation}
\bar{\mm}\left(\ee_{r/R}(\Gamma') \setminus \{x \mid \sfG(x_0,x) \leq r\rho/R,\; \sfd(x_0,x) \in (r- \delta r,r] \}\right)=0.
\end{equation}
Since $\rho > r\rho/R$, for the given range of parameters, we have:
\begin{multline}\label{eq:multi0}
\mm\left(\{x \mid \sfG(x_0,x) \leq \rho,\; \sfd(x_0,x) \in (r- \delta r,r]\}\right) \geq  \\
C_{r/R,\rho}  \cdot \mm\left(\{x\mid\sfG(x_0,x)\leq\rho,\;  \sfd(x_0,x) \in (R-\delta R, R]\}\right),
\end{multline}
where we noted that the set in the left hand side of \eqref{eq:multi0} is $\mm$-measurable. Thus for fixed $\rho \in (0,+\infty)$ the function
\begin{equation}\label{eq:342}
(0,+\infty) \ni r \mapsto \rv_{\sfG}(x_0,\rho,r) = \mm\left(\{x\mid \sfG(x_0,x) \leq \rho,\; \sfd(x_0,x) \leq r \}\right),
\end{equation}
by \eqref{eq:multi0} satisfies
\begin{equation}\label{eq:multi00}
\rv_{\sfG}(x_0,\rho,r)-\rv_{\sfG}(x_0,\rho,r-\delta r) \geq C_{r/R,\rho}\cdot \left[\rv_{\sfG}(x_0,\rho,R)-\rv_{\sfG}(x_0,\rho,R-\delta R)\right].
\end{equation}
Recall that, by standing assumption, $\mm$ is finite on bounded sets, thus
\begin{equation}
\rv_{\sfG}(x_0,\rho,r) < +\infty,\quad \text{ for any $r,\rho>0$.}
\end{equation}
Furthermore, for fixed $\rho\in (0,+\infty)$, the function $\rv_{\sfG}(x_0,\rho,\cdot)$ is right-continuous, non-decreasing, with at most countably many discontinuities. In particular there exists arbitrarily small $r_0>0$ for which $\rv_{\sfG}(x_0,\rho,\cdot)$ is continuous at $r_0$. Choosing $r=r_0$ and sending $\delta \downarrow 0$ in \eqref{eq:multi00}, we obtain that $\rv_{\sfG}(x_0,\rho,\cdot)$ is also left-continuous on $(0,+\infty)$ and thus continuous. In particular, 
\begin{equation}\label{eq:vanish}
\mm\{x \mid \sfG(x_0,x)\leq \rho,\; \sfd(x_0,x) = r\}=0, \qquad \text{for any $r,\rho>0$}.
\end{equation}
It follows that for any $r>0$
\begin{multline}
\mm\left(\{x\mid \sfd(x_{0}, x)= r\}\right) =  \mm\left(\bigcup_{n\in \N}\{x\mid \sfG(x_0,x)\leq n,\; \sfd(x_{0}, x)= r\}\right) \\ + \mm\left(\{x\mid \sfG(x_0,x)=\infty,\; \sfd(x_{0}, x)= r\}\right) = 0.
\end{multline}
The first term vanishes by \eqref{eq:vanish}, while the second one vanishes by \eqref{eq:Gfinite}.

\textbf{Proof of \ref{i:BG2}.} By the previous item, and since $\supp\mm$ is not a singleton, any $x_0 \in \supp\mm$ is on a sphere of positive radius of another point in $\supp\mm$, hence $\mm(\{x_0\})=0$.

\textbf{Proof of \ref{i:BG3}.} Let $\ve, \delta,\rho>0$ and $0<r(1-\delta)<r<R<R(1+\delta) <\cD$.

The argument is similar to \ref{i:BG1} exchanging the roles of $\sfG$ and $\sfd$, with the important difference that while the \emph{metric} annulus $\{\sfd(x_0, \cdot)\in (r,R)\}$ does not contain $x_0$, the \emph{gauge} annulus $\{\sfG(x_0, \cdot)\in (r,R)\}$ may contain $x_0$ (see the typical situation pictured in Figure \ref{fig:gaugevsball}); for this reason we will argue by approximation by cutting off a small metric ball $B_{\ve}(x_0)$ out of the gauge annulus, perform the transport argument using the $\MCP(\beta)$ and the meek conditions, and then finally let $\ve\downarrow 0$.
 Consider the measurable set
\begin{equation}
 C_{\ve} := \{x  \mid \sfG(x_0,x) \in (R-\delta R,R],\; \sfd(x_0,x)\in [\ve, \rho] \}.
\end{equation}
Assume $\mm(C_{\ve})>0$, and set $\mu_1 = \mm( C_{\ve})^{-1}\mm\llcorner_{C_{\ve}}$, $\mu_0=\delta_{x_0}$. As in the proof of \ref{i:BG1}, we find a $W_2$-geodesic $(\mu_t)_{t\in [0,1]}$ for which the $\MCP(\beta)$ inequality holds, and a Borel set  $ \Gamma \subset \Geo(X)$ of non-trivial geodesics verifying the meek condition \eqref{eq:meek1}, $(\ee_0,\ee_1)(\Gamma) = \{x_0\}\times  C_{\ve}$, and $\mu_t$ is concentrated on $\ee_t(\Gamma)$ for all $t\in [0,1]$. Then by Theorem \ref{thm:half-Brunn-Mink}, and Remark \ref{rmk:concentratedimprove} we have
\begin{align}
\bar{\mm}(\ee_{r/R}(\Gamma)) & \geq \beta_{r/R}(x_0, C_{\ve}) \cdot \mm( C_{\ve}),
\end{align}
and the same inequality holds trivially if $\mm( C_{\ve})=0$.

 By the meek condition, for all $\gamma\in \Gamma$ we have 
\begin{align}
\sfG(x_0,\gamma_{r/R})  = \frac{r}{R}\sfG(\gamma_0,\gamma_1) \in (r-\delta r,r], \qquad \sfd(x_0,\gamma_{r/R})  = \frac{r}{R}\sfd(\gamma_0,\gamma_1) \in [ \ve r/R,\rho r/R].
\end{align}
In other words, we have
\begin{equation}
\bar{\mm}\left(\ee_{r/R}(\Gamma) \setminus \{x \mid \sfG(x_0,x) \in (r-\delta r,r],\; \sfd(x_0,x)  \in [ \ve r/R,\rho r/R]\}\right) =0.
\end{equation}
So that,  for all $0<r<R<\cD$ and $ \ve,\delta>0$ sufficiently small, we have
\begin{multline}\label{eq:multi1}
\mm\left(\{x\mid \sfG(x_0,x) \in (r-\delta r,r],\;  \sfd(x_0,x)  \in [ \ve r/R,\rho r/R]\}\right) \geq  \\
\beta_{r/R}(x_0,C_{\ve}) \cdot \mm\left(\{x\mid \sfG(x_0,x) \in (R-\delta R,R],\;  \sfd(x_0,x)\in [\ve, \rho]\}\right),
\end{multline}
where we noted that the set $\{x\mid \sfG(x_0,x) \in (r-\delta r,r]\}$ is measurable. We also remark that by definition of $\beta$ and $C_{\ve}$, for all $0<r<R<\cD$ it holds:
\begin{align}\label{eq:LBbetaCeps}
\beta_{r/R}(x_0,C_{\ve}) \geq \inf \{\beta_{r/R}(\theta)\mid \theta \in (R-\delta R,R]\} \in (0,+\infty).
\end{align}
 Letting  $\ve\downarrow 0$ in \eqref{eq:multi1}, using \eqref{eq:LBbetaCeps} and that $\mm(\{x_{0}\})=0$ by  \ref{i:BG2}, we infer that
\begin{multline}\label{eq:multi1bis}
\mm\left(\{x\mid \sfG(x_0,x) \in (r-\delta r,r],\;  \sfd(x_0,x)  \leq \rho r/R\}\right) \geq  \\
\inf_{\theta \in (R-\delta R,R]} \beta_{r/R}(\theta) \cdot \mm\left(\{x\mid \sfG(x_0,x) \in (R-\delta R,R],\;  \sfd(x_0,x) \leq \rho \}\right).
\end{multline}
Finally, since $\rho>  \rho r/R$, we obtain from \eqref{eq:multi1bis}
\begin{multline}\label{eq:multi3}
\rv_{\sfG}(x_0,r,\rho) -\rv_{\sfG}(x_0,r-\delta r,\rho) \geq \\
\inf_{\theta \in (R-\delta R,R]} \beta_{r/R}(\theta) \cdot \left[\rv_{\sfG}(x_0,R,\rho) - \rv_{\sfG}(x_0,R-\delta R,\rho)\right].
\end{multline}

Since $\mm$ is finite on bounded sets, it holds $\rv_{\sfG}(x_0,R,\rho)<+\infty$  for any $\rho>0$ and $R \in (0,\cD)$. Furthermore, by construction, $\rv_{\sfG}(x_0,\cdot,\rho)$ is right-con\-tinuous, non-decreasing, with at most countably many discontinuities. In particular, there exists arbitrarily small $r \in (0,\cD)$ for which $\rv_{\sfG}(x_0,\cdot,\rho)$ is continuous at $r$. Since also $\beta_{r/R}(\cdot)$ is continuous on $(0,\cD)$, we obtain from \eqref{eq:multi3} that $\rv_{\sfG}(x_0,\cdot,\rho)$ is left continuous and thus continuous on $(0,\cD)$. Note that the argument requires $R>0$.

It follows that $\mm\left(\{x\mid \sfG(x_{0}, x)= R,\;\sfd(x_0,x)\leq \rho\} \right)=0$ for every $R\in (0,\cD)$ and $\rho>0$, and thus $\mm(\{x\mid \sfG(x_{0}, x)= R \}) =0$.  If $\cD = +\infty$, this concludes the proof of \ref{i:BG3}.  If $\cD<+\infty$, then we invoke Proposition \ref{prop:Gdiamestimate}, so that also in this case the equality keeps holding also for $R\geq \cD$.

\textbf{Proof of \ref{i:BG4}.} From \eqref{eq:multi3}, we obtain:
\begin{multline}\label{eq:PfBG3}
\frac{\rv_{\sfG}(x_0,r,\rho)-\rv_{\sfG}(x_0,r-\delta r,\rho)}{\delta r} \\
\geq \frac{R}{r}\left( \inf_{\theta \in (R-\delta R,R]} \beta_{r/R}(\theta)\right)\cdot \frac{\rv_{\sfG}(x_0,R,\rho)-\rv_{\sfG}(x_0,R-\delta R,\rho)}{\delta R}.
\end{multline}
Taking the $\limsup$ for $\delta \downarrow 0$, and using the continuity of $\beta_{r/R}(\cdot)$ on $(0,\cD)$, we obtain:
\begin{align}
\rs_{\sfG}(x_0,r,\rho)& =\limsup_{\delta \downarrow 0} \frac{\rv_{\sfG}(x_0,r,\rho)-\rv_{\sfG}(x_0,r-\delta r,\rho)}{\delta r} \\
& \geq \frac{R}{r} \limsup_{\delta \downarrow 0} \left(\inf_{\theta \in (R-\delta R,R]} \beta_{r/R}(\theta)\right) \cdot \rs_{\sfG}(x_0,R,\rho) \\
& = \frac{R}{r} \beta_{r/R}(R) \cdot \rs_{\sfG}(x_0,R,\rho).
\end{align}
Using the fact that $\beta_t(\theta) = \frac{\sfs(t\theta)}{\sfs(\theta)}$ for $\theta \in (0,\cD)$ and $t\in [0,1]$, from \eqref{eq:defbeta} we infer the left inequality in \eqref{eq:BisGr}.

To prove its integrated counterpart, we claim that the map $(0,\cD)\ni r\mapsto \rv_{\sfG}(x_{0}, r,\rho)$ is locally Lipschitz,  for fixed $\rho>0$. If by contradiction it were not locally Lipschitz, then there would exist $R>0$ and a sequence $h_{n}\downarrow 0$ such that at least one of the two conditions hold
\begin{align}
&\rv_{\sfG}(x_{0}, R,\rho) -  \rv_{\sfG}(x_{0}, R-h_{n},\rho) \geq n h_{n}, \qquad \forall\, n\in \N, \text{ or } \label{eq:contLip}\\
& \rv_{\sfG}(x_{0}, R+h_{n},\rho) -  \rv_{\sfG}(x_{0}, R,\rho)  \geq n h_{n}, \qquad \forall\, n\in \N. \nonumber
\end{align}
We assume that \eqref{eq:contLip} holds. The argument for the second case is analogous.

For every fixed $n\in \N$, observing that $c:=\inf\{\beta_{t}(\theta) \mid t\in [1/2, 1],\, \theta\in [0,R]\}>0$, the combination of \eqref{eq:contLip} and \eqref{eq:PfBG3}  gives that
\begin{equation}
\rv_{\sfG}(x_{0}, R- (j-1) h_{n},\rho)- \rv_{\sfG}(x_{0}, R- j h_{n},\rho)\geq c  n h_{n}, \quad \forall\, n\in \N, \;\forall \,  j=1,\ldots, \left\lfloor \frac{R}{2 h_{n}} \right \rfloor.
\end{equation}
Summing up over $j$ for every fixed $n$, we obtain
\begin{align*}
\rv_{\sfG}(x_{0}, R,\rho) &\geq \rv_{\sfG}(x_{0}, R,\rho) - \rv_{\sfG}\left(x_{0}, R- \left\lfloor \frac{R}{2 h_{n}}  \right \rfloor h_{n},\rho\right)\\
&= \sum_{j=1}^{ \left\lfloor \frac{R}{2 h_{n}}  \right \rfloor} \rv_{\sfG}(x_{0}, R-(j-1) h_{n},\rho)-  \rv_{\sfG}(x_{0}, R- j h_{n},\rho) \\
&\geq c n h_{n} \left\lfloor \frac{R}{2 h_{n}}  \right \rfloor.
\end{align*}

The right hand side diverges to $+\infty$ as $n\to \infty$, contradicting that  bounded sets have finite measure. Thus the map $(0,\cD)\ni r\mapsto \rv_{\sfG}(x_{0}, r,\rho)$ is locally Lipschitz and finite.

It follows that the function  $r\mapsto \rv_{\sfG}(x_{0}, r,\rho)$ is differentiable a.e.\@ on $(0,\cD)$, and it coincides with the integral of its a.e.\@ derivative $r\mapsto \rs_{\sfG}(x_{0}, r,\rho)$. We can thus apply the classical Gromov's Lemma (see \cite[Lemma III.4.1]{Chav06})  to infer that the first claim in \eqref{eq:BisGr} implies the integrated version given in the second of \eqref{eq:BisGr} for the open interval $(0,\cD)$.

We finally show that the second in \eqref{eq:BisGr} holds on $(0,\cD]$, for $\cD<+\infty$. Since $\mm(\{ x\mid \sfG(x_{0}, x) =\cD \}) =0$ from Proposition \ref{prop:Gdiamestimate}, by monotone convergence we get
\begin{align*}
\rv_{\sfG}(x_{0}, \cD,\rho) - \rv_{\sfG}(x_{0}, 0,\rho)&=\lim_{R\uparrow \cD} \rv_{\sfG}(x_{0}, R,\rho) - \rv_{\sfG}(x_{0}, 0,\rho)  \\
&\leq \left[\rv_{\sfG}(x_{0}, r,\rho) -\rv_{\sfG}(x_{0}, 0,\rho)\right] \lim_{R\uparrow \cD} \frac{\int_{0}^{R} (\sfs(t)/t) \, \di t  }{\int_{0}^{r} (\sfs(t)/t) \, \di t }  \\
&  =\left[\rv_{\sfG}(x_{0}, r,\rho) -\rv_{\sfG}(x_{0}, 0,\rho)\right]   \frac{\int_{0}^{\cD} (\sfs(t)/t) \, \di t  }{\int_{0}^{r} (\sfs(t)/t) \, \di t }, \quad \forall\, r\in (0,\cD).
\end{align*}
This concludes the proof of \ref{i:BG4}.

\textbf{Proof of \ref{i:BG5}.} Recall that  $\rv_{\sfG}(x_0,r)$, $\rs_{\sfG}(x_0,r)$ were defined in  \eqref{eq:defvGr} and \eqref{eq:defsGr}, for $r\geq 0$.
Observe that, by item \ref{i:BG2}, it holds $\rv_{\sfG}(x_0,0)=0$. Moreover,
\begin{multline}
\rv_{\sfG}(x_0,r) -\rv_{\sfG}(x_0,r-\delta r) = \mm\left(\{x\mid\sfG(x_0,x) \in (r-\delta r,r],\; \sfd(x_0,x)\leq r\} \right)\\ +\mm\left(\{x\mid \sfG(x_0,x) \leq r-\delta r,\; \sfd(x_0,x) \in (r-\delta r,r]\} \right).
\end{multline}
Arguing as above we obtain for the two addends, for $0<r<R<\cD$ and sufficiently small $\delta>0$
\begin{multline}\label{eq:lastboundBG1}
\mm\left(\{x\mid\sfG(x_0,x) \in (r-\delta r,r],\; \sfd(x_0,x)\leq r\} \right) \geq \left( \inf_{\theta \in (R-\delta R,R]}\beta_{r/R}(\theta) \right)\\\cdot \mm\left(\{x\mid\sfG(x_0,x) \in (R-\delta R,R],\; \sfd(x_0,x)\leq R\} \right),
\end{multline}
\begin{multline}\label{eq:lastboundBG2}
\mm\left(\{x\mid \sfG(x_0,x) \leq r-\delta r,\; \sfd(x_0,x) \in (r-\delta r,r]\} \right)  \geq \left( \inf_{\theta \in [0,R-\delta R]}\beta_{r/R}(\theta) \right)\\\cdot \mm\left(\{x\mid \sfG(x_0,x) \leq R-\delta R,\; \sfd(x_0,x) \in (R-\delta R,R]\} \right).
\end{multline}
If $\theta \mapsto \beta_t(\theta)$ is non-increasing, then $ \inf_{\theta \in [0,R-\delta R]}\beta_{r/R}(\theta)  \geq \beta_{r/R}(R)$. In case $\sfG(x_0,x)\geq \sfd(x_0,x)$ for $\mm$-a.e.\@ $x\in X$, then both sides of \eqref{eq:lastboundBG2} vanish.  In both cases we obtain
\begin{equation}
\frac{\rv_{\sfG}(x_0,r) -\rv_{\sfG}(x_0,r-\delta r)}{\delta r}   \geq \frac{R}{r}\beta_{r/R}(R) \frac{\rv_{\sfG}(x_0,R) -\rv_{\sfG}(x_0,R-\delta R)}{\delta R}.
\end{equation}
We now can conclude arguing exactly as in item \ref{i:BG4}, concluding the proof of \ref{i:BG5}.

\textbf{Proof of \ref{i:BG6}.} We argue as in the proof of \ref{i:BG5}, but in \eqref{eq:lastboundBG1} and \eqref{eq:lastboundBG2} we bound from below the factors depending on the distortion coefficient by the minimum on $[0,R_0]$:
\begin{equation}
\frac{1}{C_{r/R,0}}:=\inf\{\beta_{r/R}(\theta)\mid \theta \in [0,R_0]\} \in (0,(r/R)^N],
\end{equation}
which is positive by Proposition \ref{prop:propertiesbeta} and satisfies the above upper bound by inspection of the explicit expression \eqref{eq:defbeta}. Therefore we obtain for $0<r\leq R\leq R_0$ and $\delta>0$:
\begin{equation}
\frac{\rv_{\sfG}(x_0,r) -\rv_{\sfG}(x_0,r-\delta r)}{\delta r}   \geq \frac{R}{r} \frac{1}{C_{r/R,0}}\cdot \frac{\rv_{\sfG}(x_0,R) -\rv_{\sfG}(x_0,R-\delta R)}{\delta R}.
\end{equation}
Taking the $\limsup$ for $\delta \downarrow 0$, we obtain the first inequality of \ref{i:BG6}:
\begin{equation}
 \rs_{\sfG}(x_0,R) \leq \frac{r}{R}C_{r/R,0}\cdot \rs_{\sfG}(x_0,r), \qquad \forall \, 0<r\leq R \leq R_0.
\end{equation}
As in the previous steps, we remark that $r\mapsto \rv_{\sfG}(x_0,r)$ is locally Lipschitz continuous for $r\in (0,\cD)$ and its a.e.\@ derivative is $r\mapsto\rs_{\sfG}(x_0,r)$. Integrating the above inequality, and since $\rv_{\sfG}(x_0,0)=0$, we obtain
\begin{equation}
\rv_{\sfG}(x_0,R) \leq C_{r/R,0}\cdot \rv_{\sfG}(x_0, r), \qquad \forall\, 0<r \leq R \leq R_0,
\end{equation}
concluding the proof.
\end{proof}


We can estimate the maximal number of pairwise disjoint gauge balls of radius $\varepsilon>0$ whose centres are contained in a bounded and $\sfG$-bounded set.
\begin{proposition}[Local total boundedness for gauge balls]\label{prop:number-of-butt}
With the same assumptions of Theorem \ref{thm:BG1}, let $S\subseteq \supp\mm$ be bounded and $\sfG$-bounded with $\mm(S)>0$. Then the maximal number of pairwise disjoint gauge balls of radius $\varepsilon>0$ (i.e.\@ sets $\{x\mid \sfG(x_0,x)\leq \ve,\, \sfd(x_0,x)\leq \ve \}$) contained in $S$ is bounded above by
\begin{equation}
k_0= C_0 \left(\frac{\max\{\diam(S),\diam_{\sfG}(S)\}}{\varepsilon}\right)^{\log_2 C_0},
\end{equation}
where $C_0>0$ can be estimated in terms of $\diam_{\sfG}(S)$ and $\beta$ defining the $\MCP(\beta)$ class.
\end{proposition}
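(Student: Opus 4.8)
The plan is to derive the statement from the doubling inequality for gauge balls proved in Theorem~\ref{thm:BG1}\ref{i:BG6}, by running the classical \emph{doubling-implies-total-boundedness} argument. Write $D:=\max\{\diam(S),\diam_\sfG(S)\}$; this is finite since $S$ is bounded and $\sfG$-bounded, and strictly positive because $\mm(S)>0$ forces $S$ not to be a singleton (a point has zero measure by Theorem~\ref{thm:BG1}\ref{i:BG2}). We may assume $0<\varepsilon<D$. Set $R_0:=2D$ and let
\begin{equation}
C_0:=\big(\inf\{\beta_{1/2}(\theta)\mid\theta\in[0,R_0]\}\big)^{-1},
\end{equation}
which lies in $[1,+\infty)$: finiteness is Proposition~\ref{prop:propertiesbeta}\ref{i:propertiesbeta4}, and $C_0\ge 1$ because $\beta_{1/2}(0)=2^{-N}\le 1$. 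With this $R_0$, Theorem~\ref{thm:BG1}\ref{i:BG6} provides the doubling inequality $\rv_\sfG(x_0,2r)\le C_0\,\rv_\sfG(x_0,r)$ for every $x_0\in\supp\mm$ and every $r\in[0,D]$, and $C_0$ depends only on $\beta$ and $\max\{\diam(S),\diam_\sfG(S)\}$.

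Now let $G_1,\dots,G_k\subseteq S$ be pairwise disjoint gauge balls of radius $\varepsilon$ centered at points $x_1,\dots,x_k$ of $S\subseteq\supp\mm$; the key point is to bound each $\mm(G_i)$ from below by a fixed fraction of $\mm(S)$. Fix $i$. By definition of the metric and $\sfG$-diameters of $S$, one has $\sfd(x_i,x)\le D\le R_0$ for every $x\in S$ and $\sfG(x_i,x)\le D\le R_0$ for $\mm$-a.e.\ $x\in S$, hence $\mm(S)\le \rv_\sfG(x_i,R_0)$ by \eqref{eq:defvGr}. Set $m:=\lceil\log_2(D/\varepsilon)\rceil$, so that $2^{m-1}\varepsilon<D\le 2^m\varepsilon<R_0$, and iterate the doubling inequality from radius $\varepsilon$ up to radius $2^m\varepsilon$ (all the radii at which it is applied are $<D$, as required), obtaining
\begin{equation}
\mm(S)\le \rv_\sfG(x_i,R_0)\le \rv_\sfG(x_i,2^m\varepsilon)\le C_0^{m}\,\rv_\sfG(x_i,\varepsilon)=C_0^{m}\,\mm(G_i),
\end{equation}
the last identity being \eqref{eq:defvGr} applied to the gauge ball $G_i=\{x\mid\sfG(x_i,x)\le\varepsilon,\ \sfd(x_i,x)\le\varepsilon\}$.

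To conclude I would sum this over $i$, using that the $G_i$ are disjoint subsets of $S$ and that $0<\mm(S)<+\infty$ ($\mm$ being finite on bounded sets): $k\,\mm(S)\le \sum_{i=1}^{k}C_0^{m}\,\mm(G_i)\le C_0^{m}\,\mm(S)$, so $k\le C_0^{m}$. Since $C_0\ge 1$ and $m\le \log_2(D/\varepsilon)+1=\log_2(2D/\varepsilon)$, this yields
\begin{equation}
k\le C_0^{\log_2(2D/\varepsilon)}=(2D/\varepsilon)^{\log_2 C_0}=C_0\,(D/\varepsilon)^{\log_2 C_0}=k_0,
\end{equation}
as claimed. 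The only point requiring genuine care is the choice of $R_0$: because the gauge-ball functional $\rv_\sfG(x_0,r)$ constrains the gauge radius and the metric radius simultaneously, $R_0$ must dominate both $\diam(S)$ and $\diam_\sfG(S)$ in order that a single gauge ball centered at $x_i$ capture $\mm$-almost all of $S$ (in the frequent situation $\sfG\ge\sfd$ one has $\diam(S)\le\diam_\sfG(S)$, so that $C_0$ is controlled by $\diam_\sfG(S)$ alone); everything else is a routine iteration of Theorem~\ref{thm:BG1}\ref{i:BG6}.
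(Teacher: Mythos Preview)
Your argument is essentially the same as the paper's: invoke the doubling inequality for gauge balls from Theorem~\ref{thm:BG1}\ref{i:BG6}, iterate it $\lceil\log_2(D/\varepsilon)\rceil$ times, and use that each gauge ball of radius $D$ centered at $x_i$ captures $\mm$-almost all of $S$. The only slip is in your displayed chain $\mm(S)\le \rv_\sfG(x_i,R_0)\le \rv_\sfG(x_i,2^m\varepsilon)$: since $R_0=2D>2^m\varepsilon$ and $\rv_\sfG(x_i,\cdot)$ is non-decreasing, the second inequality goes the wrong way. What you actually need (and had already justified) is $\mm(S)\le \rv_\sfG(x_i,D)\le \rv_\sfG(x_i,2^m\varepsilon)$, using $D\le 2^m\varepsilon$; after this correction the proof goes through exactly as written.
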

\begin{proof}
Fix $\varepsilon>0$ and let $x_1,\dots,x_k \in S\subset \supp \mm$ be such that 
\begin{equation}
\{x\in X \mid \sfG(x_i,x)\leq \varepsilon,\;\sfd(x_i,x)\leq \ve \} \subseteq S
\end{equation}
are pairwise disjoint. Since $S$ is bounded, $\mm(S)<+\infty$. Let 
\begin{equation}
R: = \max\{\diam(S),\diam_{\sfG}(S)\}.
\end{equation}
Let $M\in \N$ be the smallest integer such that $2^M\varepsilon \geq R$. By the doubling property for gauge balls (cf. Theorem \ref{thm:BG1} item \ref{i:BG6}), we have
\begin{equation}\label{eq:sumfortotbdd}
\mm(S)  \geq \sum_{i=1}^k \rv_{\sfG}(x_i,\varepsilon)  \geq C_{0}^{-M} \sum_{i=1}^k \rv_{\sfG}(x_i, R),
\end{equation}
where
\begin{equation}
\frac{1}{C_{0}} = \inf \{\beta_{1/2}(\theta)\mid \theta \in [0,R]\}\in (0,1/2^N].
\end{equation}
By construction,  $\mm(S\setminus \{x\in X\, :\, \sfd(x_{i}, x)\leq R, \, \sfG(x_{i}, x)\leq R \})=0$,  so that the last sum in \eqref{eq:sumfortotbdd} can be estimated from below  by $k \mm(S)>0$. Continuing the chain of inequalities, and since $\mm(S)<+\infty$, we have
\begin{equation}
k\leq k_0:=C_{0}^{M} \leq C_{0}^{\log_{2}(R/\ve)+1}=  C_0 \left(\frac{R}{\varepsilon}\right)^{\log_{2}C_{0}},
\end{equation}
proving the claim.
\end{proof}

\begin{remark}
If $\sfG$ is symmetric and there exists $\alpha\geq 1$ such that 
\begin{equation}\label{eq:ngf}
\sfG(x,z)\leq \alpha(\sfG(x,y)+\sfG(y,z)),\qquad \forall\, x,y,z \in X,
\end{equation}
so in particular if $\sfG$ is a quasi-metric, then we can estimate the number of gauge balls needed to cover a bounded and $\sfG$-bounded set. Indeed the number of gauge balls of radius $2\alpha\varepsilon$ covering $S$ is bounded above by $k_0$. This is the case when $\sfG=\sfd$. However notice that \eqref{eq:ngf} is not satisfied by natural gauge functions in sub-Riemannian geometry.
\end{remark}
\subsection{Parametric doubling for gauge balls}

We close this section with the following doubling property, for intersection of sublevel sets of $\sfG$ with arbitrarily large balls of radius $\rho$.
 
\begin{proposition}[Parametric doubling]\label{prop:pardoubling}
With the same assumptions of Theorem \ref{thm:BG1}, for every $R>0$ there exists $C_R=C_R(\beta)\geq 1$ such that
\begin{equation}
\frac{\rv_{\sfG}(x, 2r,\rho)- \rv_{\sfG}(x, 0,\rho)}{\rv_{\sfG}(x, r,\rho)- \rv_{\sfG}(x, 0,\rho)}\leq  2^N C_R, \qquad \forall\, r\in (0,R),\quad\forall\, \rho>0, \quad \forall\, x\in \supp\mm,
\end{equation}
where $N>1$ is given by  \eqref{eq:fftcN}. Furthermore, $C_R \downarrow 1$ as $R\downarrow 0$.

If $\theta\mapsto \beta_t(\theta)$ is monotone non-increasing for all $t\in [0,1]$, or if for all $x_0\in\supp\mm$ it holds $\sfG(x_0,\cdot) \geq \sfd(x_0,\cdot)$ $\mm$-a.e., then the previous inequality holds true for gauge balls, with the same constants, that is
\begin{equation}
\frac{\rv_{\sfG}(x, 2r)}{\rv_{\sfG}(x, r)}\leq  2^N C_R, \qquad \forall\, r\in (0,R), \quad \forall\, x\in \supp\mm.
\end{equation}
\end{proposition}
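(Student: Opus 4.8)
The plan is to derive both inequalities from the Bishop--Gromov monotonicity of Theorem~\ref{thm:BG1} (item \ref{i:BG4} for the first inequality, item \ref{i:BG5} for the second) by comparison with the explicit profile
\begin{equation}
\bar H(r):=\int_0^{\min\{r,\cD\}}\frac{\sfs(t)}{t}\,\di t, \qquad r\in(0,+\infty).
\end{equation}
First I would record the elementary facts about $\bar H$: since $\sfs>0$ on $(0,\cD)$ it is continuous, strictly positive and non-decreasing, finite on bounded sets, and by \eqref{eq:fftcN} it satisfies $\bar H(r)=\tfrac{c}{N}r^N+o(r^N)$ as $r\to0^+$; hence $f(r):=\bar H(2r)/\bar H(r)$ extends to a function that is continuous on $[0,+\infty)$ with $f(0)=2^N$. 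I would then set $C_R:=\tfrac{1}{2^N}\max_{r\in[0,R]}f(r)$ (to be enlarged, for large $R$, by a factor depending only on $\beta$ and $R$, as explained below). This $C_R$ is finite by compactness, satisfies $C_R\ge\tfrac{1}{2^N}f(0)=1$, is non-decreasing in $R$, and $C_R\downarrow1$ as $R\downarrow0$ by continuity of $f$ at $0$; crucially it depends only on $\sfs$ and $\cD$, hence only on $\beta$, and not on $x$, $\rho$ or the space.

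For the first inequality I would observe that, when $\cD<+\infty$, Proposition~\ref{prop:Gdiamestimate} applies (as $\beta_t(+\infty)=+\infty$, cf.\ Proposition~\ref{prop:propertiesbeta}\ref{i:propertiesbeta0} and Remark~\ref{rem:onCond3.15}), so $\sfG(x,\cdot)<\cD$ $\mm$-a.e.\ and therefore $r\mapsto\rv_{\sfG}(x,r,\rho)$ is constant on $[\cD,+\infty)$ for every fixed $x\in\supp\mm$, $\rho>0$. Combining this with Theorem~\ref{thm:BG1}\ref{i:BG4} (which, when $\cD=+\infty$, already gives the statement on all of $(0,+\infty)$), the function
\begin{equation}
(0,+\infty)\ni r\longmapsto\frac{\rv_{\sfG}(x,r,\rho)-\rv_{\sfG}(x,0,\rho)}{\bar H(r)}
\end{equation}
is monotone non-increasing. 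Evaluating at $r$ and $2r$ and using $\bar H(2r)/\bar H(r)=f(r)\le 2^NC_R$ for $r\in(0,R)$ yields the first displayed inequality, with a constant independent of $x$ and $\rho$.

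For the second inequality, under the extra hypothesis the identical argument with Theorem~\ref{thm:BG1}\ref{i:BG5} in place of \ref{i:BG4} disposes of every $r$ with $2r\le\cD$; this already covers all $r\in(0,R)$ when $\cD=+\infty$ or when $R\le\cD/2$. The only remaining range is $\cD<+\infty$ and $\cD/2<r<R$. Here $\rv_{\sfG}(x,r)$ is no longer a ``gauge ball'' because the metric constraint $\sfd(x,\cdot)\le r$ is active; however, since $\sfG(x,\cdot)<\cD\le 2r$ $\mm$-a.e.\ by Proposition~\ref{prop:Gdiamestimate}, one has $\rv_{\sfG}(x,2r)=\mm(\overline{B}_{2r}(x))$ and $\rv_{\sfG}(x,\cD)=\mm(\overline{B}_\cD(x))$. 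I would bound $\rv_{\sfG}(x,r)\ge\rv_{\sfG}(x,\tfrac{\cD}{2})\ge\frac{\bar H(\cD/2)}{\bar H(\cD)}\mm(\overline{B}_\cD(x))$ using monotonicity of $\rv_{\sfG}(x,\cdot)$ and Theorem~\ref{thm:BG1}\ref{i:BG5} on $[\cD/2,\cD]$, and, since $\diam_\sfG(\supp\mm)\le\cD<+\infty$ by Proposition~\ref{prop:Gdiamestimate}, I would invoke the classical global doubling of Corollary~\ref{cor:classicaldoubling} to get $\mm(\overline{B}_{2r}(x))\le C_{\mathrm{dbl}}^{k(R)}\mm(\overline{B}_\cD(x))$, where $C_{\mathrm{dbl}}$ depends only on $\beta$ and $k(R)$ only on $R$ and $\cD$. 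Chaining these estimates gives $\rv_{\sfG}(x,2r)\le 2^NC_R\rv_{\sfG}(x,r)$ after enlarging $C_R$ by the $\beta$- and $R$-dependent factor $2^{-N}C_{\mathrm{dbl}}^{k(R)}\bar H(\cD)/\bar H(\cD/2)$; this enlargement is void for $R\le\cD/2$, so the property $C_R\downarrow1$ as $R\downarrow0$ is unaffected.

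The only slightly delicate point is this last range $2r>\cD$ in the gauge-ball statement: there the monotone profile $\bar H$ stops controlling $\rv_{\sfG}(x,r)$ since the quantity reverts to a plain metric ball, so one must re-enter through Proposition~\ref{prop:Gdiamestimate} and the classical doubling of Corollary~\ref{cor:classicaldoubling}, taking care that the resulting constant still depends on $\beta$ only. Everything else---the analytic properties of $\bar H$, the continuity of $f$ and the bookkeeping that the constants are $x$- and $\rho$-independent---is routine.
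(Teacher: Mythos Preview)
Your argument for the first inequality is essentially the paper's: use the Bishop--Gromov monotonicity of Theorem~\ref{thm:BG1}\ref{i:BG4} and the asymptotics $\int_0^r \sfs(t)/t\,\di t\sim \tfrac{c}{N}r^N$ to control the ratio, then observe that for $r\ge\cD/2$ the numerator freezes (since $\sfG(x,\cdot)<\cD$ $\mm$-a.e.) while the denominator is non-decreasing, so the ratio cannot increase. This is correct.

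For the second inequality you take an unnecessary detour. You worry that for $\cD<+\infty$ and $r>\cD/2$ the metric constraint $\sfd(x,\cdot)\le r$ in the definition of $\rv_{\sfG}(x,r)$ becomes ``active'', and you remedy this by invoking the global doubling of Corollary~\ref{cor:classicaldoubling}. But under either of the two extra hypotheses the issue never arises. If $\theta\mapsto\beta_t(\theta)$ is non-increasing then necessarily $\cD=+\infty$: indeed, $\cD<+\infty$ forces $\beta_t(\theta)\to+\infty$ as $\theta\to\cD^-$ (Proposition~\ref{prop:propertiesbeta}\ref{i:propertiesbeta0}), which contradicts monotone non-increase. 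If instead $\sfG(x_0,\cdot)\ge\sfd(x_0,\cdot)$ $\mm$-a.e., then the metric constraint in $\rv_{\sfG}(x_0,r)$ is \emph{implied} by the gauge constraint, so $\rv_{\sfG}(x_0,r)=\mm(\{\sfG(x_0,\cdot)\le r\})$, and this is constant for $r\ge\cD$ exactly as in the parametric case. Hence the paper simply says ``same proof, using \ref{i:BG5} instead of \ref{i:BG4}'', and this suffices. Your route via Corollary~\ref{cor:classicaldoubling} is valid but superfluous; the observation above gives the cleaner argument with the same constant $C_R$ in both statements, without any enlargement.
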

\begin{proof}
Fix $\rho>0$. Assume first that $0<r<\mathcal{D}/2$. The second claim in \eqref{eq:BisGr} combined with \eqref{eq:fftcN} gives that
\[
\frac{\rv_{\sfG}(x, 2r,\rho) - \rv_{\sfG}(x, 0,\rho)}{\rv_{\sfG}(x, r,\rho)  - \rv_{\sfG}(x, 0,\rho) }\leq \frac{ \int_{0}^{2r} \sfs(t)/t \, \di t}{ \int_{0}^{r} \sfs(t)/t \, \di t }= \frac{ (2r)^{N}(1+ o(1))}{ r^{N}(1 + o(1))} \leq 2^N C_R.
\]
The constant $C_R$ depends only on the values of  $\sfs$ on the interval $[0,R]$, so that it does not depend on $\rho$.
If $\mathcal{D} =+\infty$, there is nothing left to prove since we covered already the whole range of $r\in (0,R)$. If $\mathcal{D} < +\infty$, notice that by Proposition \ref{prop:Gdiamestimate} and Theorem \ref{thm:BG1}\ref{i:BG2}, $\sfG(x,\cdot)< \cD(x,\cdot)$ $\mm$-almost everywhere, so that $\rv_{\sfG}(x, 2r,\rho)$ is constant in the range $r \geq \cD/2$.

If $\theta\mapsto \beta_t(\theta)$ is monotone non-increasing for all $t\in [0,1]$, or if for all $x_0\in\supp\mm$ it holds $\sfG(x_0,\cdot) \geq \sfd(x_0,\cdot)$ $\mm$-a.e., then we perform the same proof but starting from \eqref{eq:BisGr2} that holds in this case, instead of \eqref{eq:BisGr}.
\end{proof} 			
\section{Stability and compactness}\label{sec:stabilityandcompactness}

The aim of this section is to prove stability results for curvature-dimension conditions for sequences of m.m.s.\@ converging in the pmGH sense. First, we recall the definition of  pointed
measured Gromov Hausdorff convergence (pmGH for short; we follow the convention from \cite[Def. 4.1]{GMS} adapted from \cite[Def. 8.1.1]{Vil}).

\begin{definition}[pmGH convergence]\label{def:pmGHconv}

Let $(X_k,\sfd_k,\mm_k,\star_k)$, $k\in\bar\N$, be pointed metric measure spaces.
We say that $(X_k,\sfd_k,\mm_k, \star_k)$ converges to
$(X_\infty,\sfd_\infty,\mm_\infty, \star_\infty)$ in the pmGH sense if for any $\ve,R>0$ there exists $N({\ve,R})\in \N$ such that for all $k\geq N({\ve,R})$ there exists a Borel map $f^{R,\ve}_k:B_R(\star_k)\to X_\infty$ such that 
\begin{enumerate}[label = (\roman*)$^\prime$]
\item $f^{R,\ve}_k(\star_k)=\star_\infty$;
\item $\sup_{x,y\in B_R(\star_k)}|\sfd_k(x,y)-\sfd_\infty(f^{R,\ve}_k(x),f^{R,\ve}_k(y))|\leq\ve$;
\item the $\ve$-neighborhood of $f^{R,\ve}_k(B_R(\star_k))$ contains $B_{R-\ve}(\star_\infty)$;
\item  $(f^{R,\ve}_k)_\sharp(\mm_k\llcorner{B_R(\star_k)}) \rightharpoonup \mm_\infty\llcorner{B_R(\star_\infty)}$ as $k\to\infty$, for a.e. $R>0$ (weak convergence against bounded continuous functions with bounded support).
\end{enumerate}
\end{definition}

\begin{remark}\label{rem:pmghfkglob}
It is not hard to check (cf. \cite[Prop. 3.28]{GMS}) that the pmGH convergence of $(X_k,\sfd_k,\mm_k, \star_k)$ to $(X_\infty,\sfd_\infty,\mm_\infty, \star_\infty)$  is equivalent to the following condition: there are sequences $R_k\uparrow+\infty$, $\eps_k\downarrow 0$ and Borel maps $f_k:X_k\to X_\infty$ such that 
\begin{enumerate}[label = (\roman*)]
\item $f_k(\star_k)=\star_\infty$;
\item $\sup_{x,y\in B_{R_k}(\star_k)}|\sfd_k(x,y)-\sfd_\infty(f_k(x),f_k(y))|\leq\eps_k$;
\item the $\eps_k$-neighborhood of $f_k(B_{R_k}(\star_k))$ contains $B_{R_k-\eps_k}(\star_\infty)$;
\item $(f_k)_{\sharp}(\mm_{k}) \rightharpoonup \mm_{\infty}$ as $k\to \infty$ (weak convergence against bounded continuous functions with bounded support). 
\end{enumerate}
Having globally defined approximation maps slightly simplifies the notation; thus, in the following, we will make use of this formulation of pmGH convergence. 
\end{remark}

For sequences of gauge m.m.s.\@ it is natural to add extra assumptions to ensure that ``$\sfG_k \to \sfG_\infty$''. The natural extension of (ii) to gauge functions would be:
\begin{equation}\label{eq:iiforgauge}
|\sfG_k(x,y) - \sfG_\infty(f_k(x),f_k(y))|\leq \varepsilon_{k}, \qquad \forall \, x,y\in B_{R_k}(\star_k).
\end{equation}
However, due to the a priori low regularity of the gauge function (which in general is not Lipschitz continuous), this would be a too strong requirement in geometric situations (e.g.\ convergence to the tangent cone for sub-Riemannian structures; cf. Section \ref{sec:ex:tangent}).  It turns out that a significantly weaker condition is sufficient for the stability of our curvature-dimension inequalities.

\begin{definition}\label{def:weakL1andregularity}
Let $(X_k, \sfd_k,\mm_k,\sfG_k)$, $k\in \bar{\N}$ be gauge metric measure spaces such that $(X_k, \sfd_k,\mm_k,\star_k) \to (X_\infty,\sfd_\infty,\mm_\infty,\star_\infty)$ in the pmGH sense, with approximating maps $f_k:X_k \to X_\infty$ as in  Remark \ref{rem:pmghfkglob}. We introduce the following conditions:
\begin{itemize}
\item \textbf{$L^1_{\loc}$ convergence of gauge functions:} if for all sequences $x_k\in \supp\mm_k$ such that $f_k(x_k)$ is convergent in $\supp\mm_\infty$  it holds
\begin{equation}\label{eq:weakL1condition}
\lim_{k\to \infty}\int_{B_{R}(\star_k){\setminus \{x_k\}}}|\sfG_k(x_k,z)-\sfG_\infty(f_k(x_k),f_k(z))| \, \mm_k(\di z) =0 ,  \qquad \forall\, R>0.
\end{equation}
\item \textbf{regularity condition:} if for all $x\in \supp\mm_\infty$ and $\mm_\infty$-a.e.\ $y\in X_\infty$, $y\neq x$, with exceptional set depending on $x$, $\sfG_\infty$ is continuous at $(x,y)$.
\end{itemize}
\end{definition}
\begin{remark}
The removal of $\{x_k\}$ in the domain of integration in \eqref{eq:weakL1condition} and the condition $y\neq x$ in the regularity condition can be omitted if $\mm_k$ and $\mm$ do not give mass to points.
\end{remark}
In Section \ref{sec:regularityGExamples} we show that the regularity condition above is satisfied in natural classes of examples.

\begin{lemma}[Lower semi-continuity of $\sfG$-diameter]\label{lem:contGdiam}
Let $(X_k, \sfd_k,\mm_k,\sfG_k)$, $k\in \bar{\N}$ be gauge metric measure spaces such that $(X_k, \sfd_k,\mm_k,\star_k) \to (X_\infty,\sfd_\infty,\mm_\infty,\star_\infty)$ in the pmGH sense. Assume the $L^1_{\loc}$ convergence of gauge functions and the regularity condition as in Definition \ref{def:weakL1andregularity}. Then it holds:
\begin{equation}
\liminf_{k\to \infty} \diam_{\sfG}(\supp\mm_k) \geq \diam_{\sfG}(\supp\mm_\infty).
\end{equation}
\end{lemma}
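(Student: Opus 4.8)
The plan is to argue by contradiction after a reduction to a quantitative lower bound. Writing $D:=\diam_\sfG(\supp\mm_\infty)$ (the case $D=0$ being trivial), it suffices to show that for every $D'\in(0,D)$ one has $\diam_\sfG(\supp\mm_k)\ge D'$ for all $k$ large, and then let $D'\uparrow D$. So assume instead that $\diam_\sfG(\supp\mm_{k_j})< D'$ along a subsequence. By Definition~\ref{def:Gdiam} there is a point $x_\infty\in\supp\mm_\infty$ with $\mm_\infty\shortminus\esssup_{y\neq x_\infty}\sfG_\infty(x_\infty,y)>D'$, i.e. the Borel set $\mathcal B:=\{y\neq x_\infty\mid \sfG_\infty(x_\infty,y)>D'\}$ has $\mm_\infty(\mathcal B)>0$. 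First I would lift $x_\infty$ to the approximating spaces: using $(f_k)_\sharp\mm_k\rightharpoonup\mm_\infty$ (cf.\ Remark~\ref{rem:pmghfkglob}) and a standard diagonal argument, choose $x_k\in\supp\mm_k$ with $f_k(x_k)\to x_\infty$; then fix a radius $R$, in the a.e.\ set of ``good'' radii for which $(f_k)_\sharp(\mm_k\llcorner B_R(\star_k))\rightharpoonup\mm_\infty\llcorner B_R(\star_\infty)$, large enough that $\mm_\infty(\mathcal B\cap B_R(\star_\infty))>0$ and $x_k\in B_R(\star_k)$ eventually.

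Next I would combine the contradiction hypothesis with the $L^1_\loc$ convergence of gauge functions. Along the subsequence, $\diam_\sfG(\supp\mm_{k_j})< D'$ forces $\sfG_{k_j}(x_{k_j},z)\le D'$ for $\mm_{k_j}$-a.e.\ $z\neq x_{k_j}$. Let $\phi(t):=(t-D')^+\wedge 1$, which is continuous, bounded by $1$, and strictly positive exactly when $t>D'$. On the full-measure set where $\sfG_{k_j}(x_{k_j},z)\le D'$ one has the elementary pointwise bound $\phi\big(\sfG_\infty(f_{k_j}(x_{k_j}),f_{k_j}(z))\big)\le \big|\sfG_{k_j}(x_{k_j},z)-\sfG_\infty(f_{k_j}(x_{k_j}),f_{k_j}(z))\big|$. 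Since $x_{k_j}\in\supp\mm_{k_j}$ and $f_{k_j}(x_{k_j})\to x_\infty\in\supp\mm_\infty$, the $L^1_\loc$ convergence of gauge functions in Definition~\ref{def:weakL1andregularity} then yields $\int_{B_R(\star_{k_j})\setminus\{x_{k_j}\}}\phi\big(\sfG_\infty(f_{k_j}(x_{k_j}),f_{k_j}(z))\big)\,\mm_{k_j}(\di z)\to 0$. Pushing this integral forward by $f_{k_j}$, it equals $\int_{X_\infty}\phi\big(\sfG_\infty(f_{k_j}(x_{k_j}),w)\big)\,\nu_{k_j}(\di w)$ with $\nu_{k_j}:=(f_{k_j})_\sharp\big(\mm_{k_j}\llcorner(B_R(\star_{k_j})\setminus\{x_{k_j}\})\big)$, and the task reduces to identifying its limit.

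The last step — which I expect to be the main obstacle — is to pass to the limit in that integral, the difficulty being that three objects vary simultaneously: the measures $\nu_{k_j}$ (which converge only weakly and may carry an atom at $f_{k_j}(x_{k_j})$), the base points $f_{k_j}(x_{k_j})\to x_\infty$, and, most importantly, the merely Borel function $\sfG_\infty$. Here the \emph{regularity condition} of Definition~\ref{def:weakL1andregularity} is decisive: it provides a $\mm_\infty$-null Borel set $N\ni x_\infty$ off which $\sfG_\infty$ is continuous at $(x_\infty,\cdot)$, so that $w_j\to w\notin N$ implies $\phi\big(\sfG_\infty(f_{k_j}(x_{k_j}),w_j)\big)\to\phi\big(\sfG_\infty(x_\infty,w)\big)$ — i.e.\ the integrands converge ``continuously'' off a null set. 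Passing, up to a further subsequence, to a weak limit $\lambda$ of $\nu_{k_j}$ (which satisfies $\lambda\ge\mm_\infty\llcorner(B_R(\star_\infty)\setminus\{x_\infty\})$ and $\lambda(N)=0$ after accounting for the escaping atom, whose mass we may assume convergent), a Skorokhod-representation argument on the separable space $X_\infty$ together with dominated convergence (integrands bounded by $1$) gives $\int\phi\big(\sfG_\infty(f_{k_j}(x_{k_j}),\cdot)\big)\,\di\nu_{k_j}\to\int\phi\big(\sfG_\infty(x_\infty,\cdot)\big)\,\di\lambda$. Comparing with the vanishing obtained above forces $\phi\big(\sfG_\infty(x_\infty,\cdot)\big)=0$ $\lambda$-a.e., hence $\sfG_\infty(x_\infty,w)\le D'$ for $\mm_\infty$-a.e.\ $w\in B_R(\star_\infty)$ with $w\neq x_\infty$, contradicting $\mm_\infty(\mathcal B\cap B_R(\star_\infty))>0$. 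The remaining points are routine bookkeeping: the existence of the lifts $x_k$, the a.e.\ selection of good radii, and the atom analysis defining $\lambda$ (all of which trivialize when $\mm_k,\mm_\infty$ give no mass to points, cf.\ the remark following Definition~\ref{def:weakL1andregularity}).
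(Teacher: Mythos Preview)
Your argument is essentially correct and the overall strategy matches the paper's, but you take a more technically loaded route at the key step. Both proofs argue by contradiction, pick a point $x_\infty\in\supp\mm_\infty$ where $\mm_\infty\shortminus\esssup_{y\neq x_\infty}\sfG_\infty(x_\infty,y)>D'$, lift it to points $x_k\in\supp\mm_k$ with $f_k(x_k)\to x_\infty$, and exploit the $L^1_\loc$ convergence of gauge functions to derive a contradiction.

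The difference lies in how the regularity condition is used. You postpone it to the very end: you first obtain $\int\phi\big(\sfG_\infty(f_{k_j}(x_{k_j}),f_{k_j}(\cdot))\big)\,\di\mm_{k_j}\to 0$ from the $L^1_\loc$ convergence and the pointwise bound, and then invoke regularity plus a Skorokhod representation to identify the limit of the pushed-forward integrals. This works, but the Skorokhod step and the atom bookkeeping at $x_\infty$ add genuine overhead. The paper instead uses regularity at the outset: from $\mm_\infty(\mathcal B)>0$ it extracts a single continuity point $(x_\infty,y_o)$ and hence an \emph{open} product of balls $B_\ve(x_\infty)\times B_\ve(y_o)$ on which $\sfG_\infty\geq D+\eta$ uniformly. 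It then builds $\mu_k:=Z_k^{-1}(h\circ f_k)\,\mm_k$ using a \emph{continuous} cutoff $h$ supported in $B_\ve(y_o)$, so that $(f_k)_\sharp\mu_k\rightharpoonup Z^{-1}h\,\mm_\infty$ follows directly from weak convergence, and the contradiction comes from computing $\int\sfG_k(x_k,\cdot)\,\di\mu_k$ in two ways. No Skorokhod, no atom analysis, no splitting of measures.

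What each approach buys: your route is perhaps more robust in that it never leaves the given framework of $L^1_\loc$ convergence and a.e.\ continuity, whereas the paper's approach trades a single application of regularity (upgrading a.e.\ continuity to a uniform bound on an open set) for a much shorter endgame using only standard weak convergence against continuous test functions. If you are writing this up, the paper's shortcut is worth adopting.
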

For a counter-example to upper semi-continuity, take the trivial sequence given by the Euclidean structure and the Lebesgue measure on $\R$. Choose a sequence of gauge functions $\sfG_k(x,y) =\sfG_k(y)$, and its pointwise limit $\sfG_\infty(y) =\lim_{k\to\infty}\sfG_k(y)$ as in the picture.
\begin{wrapfigure}[8]{l}{.4\textwidth}
\includegraphics[width=.4\textwidth]{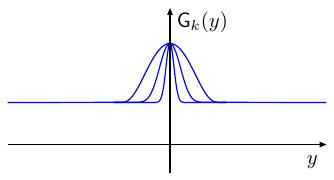}
\end{wrapfigure}
The difference $|\sfG_k(y)-\sfG_\infty(y)|$ is a small a.e.\ continuous bump localized in a shrinking neighborhood of the origin, that decreases monotonically to zero pointwise a.e. One can check in particular that the $L^1_{\loc}$ convergence in Definition \ref{def:weakL1andregularity} is satisfied by dominated convergence. In this situation, we have for all $k\in \N$: $2 = \diam_{\sfG}(\supp\mm_k) > \diam_{\sfG}(\supp\mm) = 1$.
\begin{proof}
We drop the subscript $\infty$ in the notation for the limit objects. Let $D:=\liminf_{k\to \infty} \diam_{\sfG}(\supp\mm_k)$, and suppose that $\diam_{\sfG}(\supp\mm)=D+3\eta$ for $\eta>0$. Recalling Definition \ref{def:Gdiam} of $\sfG$-diameter, there exists $x_o\in \supp\mm$ and a Borel set $P\subseteq \supp\mm$ with $\mm(P)>0$  and $x_o\notin P$ such that
\begin{equation}
\sfG(x_o,y) \geq D +2\eta, \qquad \forall\, y \in P.
\end{equation}
By the regularity condition, removing a zero $\mm$-measure set from $P$, we can assume that $\sfG$ is continuous at all points $(x_o,y)$ for $y\in P$. In particular there is $(x_o,y_o)\in \supp\mm\times \supp\mm$ and $\ve>0$ such that  $B_{2\ve}(x_o)\cap B_{2\ve}(y_o)=\emptyset$ and 
\begin{equation}
\sfG(x,y) \geq D +\eta, \qquad \forall\, (x,y)\in B_\ve(x_o)\times B_\ve(y_o).
\end{equation}
Recall the approximating maps $f_k:X_k\to X$ from Remark \ref{rem:pmghfkglob}. 
Since $(f_k)_\sharp(\mm_k)\rightharpoonup \mm$, we can find a sequence $(x_k)_{k\in \N}$ with $f_k(x_k) \to x_o$. We can assume that $\sfd(f_k(x_k),x_o)\leq \ve$ for all $k\in \N$, and thus
\begin{equation}\label{eq:geq}
\sfG(f_k(x_k),y) \geq D + \eta ,\qquad \forall\, y \in B_\ve(y_0),\quad \forall\, k\in\N.
\end{equation}
Let $h:X\to [0,1]$ be a continuous function such that $h\equiv 1$ on $\overline{B_{\ve/2}(y_o)}$ and $h\equiv 0$ on $X\setminus B_{\ve}(y_o)$. Consider the sequence $\mu_k \in \mathcal{P}_{ac}(X_k,\mm_k)$ given by
\begin{equation}
\mu_k:=\frac{1}{\int_{X_k} h(f_k(z))\,\mm_k(\di z)} h\circ f_k\, \mm_k, \qquad \forall\, k\in \N.
\end{equation}
The support of $\mu_k$ is contained in the Borel set $f_k^{-1}(B_\ve(y_o))$. 
Note also that  $x_k\notin \supp\, \mu_k$, for $k$ sufficiently large. Since $(f_k)_\sharp\mm_k\rightharpoonup\mm$ we have
\begin{equation}
\lim_{k \to \infty} \int_{X_k} h(f_k(z))\,\mm_k(\di z) = \int_X h(z)\, \mm(\di z).
\end{equation}
Furthermore, by local boundedness of $\mm$ and since $y_o\in\supp\mm$, it holds
\begin{equation}
0<\mm(B_{\ve/2}(y_o)) \leq \int_X h(z)\, \mm(\di z) \leq \mm(B_{\ve}(y_o)) <+\infty.
\end{equation}

Thus, we have:
\begin{multline}\label{eq:aboveformula}
\int_{X_k} \sfG_k(x_k,z)\, \mu_k(\di z)  = \int_{X_k} \sfG(f_k(x_k),f_k(z))\, \mu_k(\di z) \\ + \int_{X_k}\left[ \sfG_k(x_k,z)- \sfG(f_k(x_k),f_k(z))\right]\, \mu_k(\di z).
\end{multline}

 We claim that the second integral in the r.h.s.\@ of \eqref{eq:aboveformula} converges to $0$ by the $L^1_{\loc}$ convergence condition. Indeed recall that $x_k\notin \supp\mu_k$ and furthermore $\supp\mu_k\subseteq f_k^{-1}(B_\varepsilon(y_o))$. Using the properties of $f_k$, there exists $R>0$ such that $\supp\mu_k\subseteq B_R(\star_k)$ for all sufficiently large $k\in \N$. Then, using the construction of $\mu_k$, we obtain:
\begin{multline}
\limsup_{k\to \infty}\int_{X_k}\left[ \sfG_k(x_k,z)- \sfG(f_k(x_k),f_k(z))\right]\, \mu_k(\di z)  \leq \\ \limsup_{k\to \infty} \frac{1}{\mm(B_{\varepsilon/2}(y_o))}\int_{B_R(\star_k)\setminus \{x_k\}}| \sfG_k(x_k,z)- \sfG(f_k(x_k),f_k(z))|\, \mm_k(\di z) = 0,
\end{multline}
proving our claim.

Since $\supp\mu_k\subseteq f_k^{-1}(B_\ve(y_o))$, by \eqref{eq:geq} the first integral in the right hand side of \eqref{eq:aboveformula} is $\geq D +\eta$. Therefore
\begin{equation}
\liminf_{k\to \infty} \int_{X_k} \sfG_k(x_k,z)\, \mu_k(\di z) \geq D +\eta >D.
\end{equation}
On the other hand, by definition of $\sfG$-diameter,  and since $x_k \notin \supp\mu_k$ it holds:
\begin{equation}
\liminf_{k\to \infty} \int_{X_k} \sfG_k(x_k,z)\, \mu_k(\di z) \leq \liminf_{k\to \infty} \diam_{\sfG}(\supp\mm_k) = D,
\end{equation}
yielding a contradiction.
\end{proof}

\subsection{Stability of \texorpdfstring{$\MCP(\beta)$}{MCP}}

\begin{theorem}[Stability of $\MCP$]\label{thm:stabMCP}
Let $(X_k, \sfd_k,\mm_k,\sfG_k)$, $k\in \bar{\N}$ be gauge m.m.s.\@ with $(X_k, \sfd_k,\mm_k,\star_k) \to (X_\infty,\sfd_\infty,\mm_\infty,\star_\infty)$ in the pmGH sense. Let $\beta$ be distortion coefficients as in \eqref{eq:defbeta}. Assume that:
\begin{enumerate}[label = (\roman*)]
\item\label{i:stabMCP1} $(\supp\mm_k, \sfd_k)$ is locally compact and geodesic for all $k\in\bar{\N}$;
\item\label{i:stabMCP2} the supports are  locally uniformly $\sfG$-bounded:  for any $R>0$ there exists $D=D(R) >0$ such that $\diam_{\sfG}(\supp\mm_k \cap B_R(\star_k))\leq D$ for all $k\in \N$ (and thus for all $k\in \bar{\N}$ by Lemma \ref{lem:contGdiam});
\item\label{i:stabMCP3} the $L^1_{\loc}$ convergence and regularity conditions hold (cf.\ Definition \ref{def:weakL1andregularity});
\item\label{i:stabMCP4} for all $t\in (0,1)$ the function $\beta_t: [0,\cD) \to [0,\infty)$ is locally Lipschitz;
\item\label{i:stabMCP5} $(X_k, \sfd_k,\mm_k,\sfG_k)$ satisfies the $\MCP(\beta)$ for all $k\in \N$.
\end{enumerate}
Then, also $(X_\infty,\sfd_\infty,\mm_\infty,\sfG_\infty)$ satisfies the $\MCP(\beta)$.
\end{theorem}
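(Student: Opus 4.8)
The plan is to prove stability of $\MCP(\beta)$ by a direct approximation argument, following the strategy of Lott--Sturm--Villani's stability results (cf.\ \cite{sturm:I,GMS}) but adapted to the weaker hypotheses on the gauge functions. Fix $\bar{x}_\infty \in \supp\mm_\infty$ and $\mu_{1,\infty} \in \Prob_{bs}^*(X_\infty,\sfd_\infty,\mm_\infty)$ with $\bar{x}_\infty \notin \supp\mu_{1,\infty}$; we must produce a $W_2$-geodesic from $\delta_{\bar{x}_\infty}$ to $\mu_{1,\infty}$ satisfying \eqref{eq:defMCPbeta}, equivalently the entropy form \eqref{eq:defMCPbeta2}. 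First I would construct approximating data upstairs: using the maps $f_k:X_k\to X_\infty$ from Remark \ref{rem:pmghfkglob}, choose points $\bar{x}_k \in \supp\mm_k$ with $f_k(\bar{x}_k)\to \bar{x}_\infty$, and lift $\mu_{1,\infty}$ to measures $\mu_{1,k}\in \Prob_{bs}^*(X_k,\sfd_k,\mm_k)$ (for instance by a mollification/transport argument against $\mm_k$, using that $(f_k)_\sharp\mm_k\rightharpoonup\mm_\infty$) so that $\mu_{1,k}$ have uniformly bounded support, $(f_k)_\sharp\mu_{1,k}\rightharpoonup\mu_{1,\infty}$, $\bar{x}_k\notin\supp\mu_{1,k}$, and crucially $\Ent(\mu_{1,k}|\mm_k)\to \Ent(\mu_{1,\infty}|\mm_\infty)$. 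Apply the $\MCP(\beta)$ for $(X_k,\sfd_k,\mm_k,\sfG_k)$ to get $W_2$-geodesics $(\mu_{t,k})_{t\in[0,1]}$ with $\mu_{0,k}=\delta_{\bar{x}_k}$ satisfying \eqref{eq:defMCPbeta2}.

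Second, I would pass to the limit in the geometric/measure-theoretic objects. Hypothesis \ref{i:stabMCP1} (local compactness and geodesicity of the supports) together with the uniform boundedness of the supports of $\mu_{t,k}$ allows one to extract, via a compactness argument for geodesics (the supports all lie in a fixed compact set, so $\Geo$ of that set is compact), a limit $W_2$-geodesic $(\mu_{t,\infty})_{t\in[0,1]}$ from $\delta_{\bar{x}_\infty}$ to $\mu_{1,\infty}$, realized by an optimal dynamical plan $\nu_\infty$ that is a weak limit (under pushforward by $f_k$) of $\nu_k\in\OptGeo(\mu_{0,k},\mu_{1,k})$. The standard lower semicontinuity of relative entropy under pmGH convergence (cf.\ \cite[Thm.\ 5.19]{GMS} or \cite{sturm:I}) gives $\Ent(\mu_{t,\infty}|\mm_\infty)\le \liminf_k \Ent(\mu_{t,k}|\mm_k)$.

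The main obstacle — and the step requiring the new hypotheses \ref{i:stabMCP2}, \ref{i:stabMCP3}, \ref{i:stabMCP4} — is the passage to the limit in the distortion term $\int \log\beta_t(\sfG_k(\bar{x}_k,x))\,\mu_{1,k}(\di x)$. Here one cannot use uniform convergence of $\sfG_k$; instead I would argue as follows. Rewrite the $k$-th term as $\int_{X_k}\log\beta_t(\sfG_k(\bar{x}_k,z))\,\mu_{1,k}(\di z)$ and compare it with $\int_{X_k}\log\beta_t(\sfG_\infty(f_k(\bar{x}_k),f_k(z)))\,\mu_{1,k}(\di z)$. Since the relevant $\sfG$-values lie in a compact subinterval of $[0,\cD)$ by \ref{i:stabMCP2} (the supports are uniformly $\sfG$-bounded, and by Proposition \ref{prop:propertiesbeta} or the $\sfG$-diameter estimate the values stay below $\cD$), the function $\log\beta_t$ is Lipschitz there by \ref{i:stabMCP4}, so $|\log\beta_t(a)-\log\beta_t(b)|\le L|a-b|$ on that range; therefore the difference of the two integrals is controlled by $L\int_{X_k}|\sfG_k(\bar{x}_k,z)-\sfG_\infty(f_k(\bar{x}_k),f_k(z))|\,\mu_{1,k}(\di z)$, which tends to $0$ by the $L^1_{\loc}$ convergence \eqref{eq:weakL1condition} once one controls the density of $\mu_{1,k}$ with respect to $\mm_k$ (this is where the explicit mollified construction of $\mu_{1,k}$ pays off: $\mu_{1,k}\le C\mm_k$ on a fixed ball). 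Then $\int_{X_k}\log\beta_t(\sfG_\infty(f_k(\bar{x}_k),f_k(z)))\,\mu_{1,k}(\di z) = \int_{X_\infty}\log\beta_t(\sfG_\infty(f_k(\bar{x}_k), y))\,(f_k)_\sharp\mu_{1,k}(\di y)$, and using $f_k(\bar{x}_k)\to\bar{x}_\infty$, $(f_k)_\sharp\mu_{1,k}\rightharpoonup\mu_{1,\infty}$, the regularity condition (continuity of $\sfG_\infty$ at $(\bar{x}_\infty,y)$ for $\mu_{1,\infty}$-a.e.\ $y$, since $\mu_{1,\infty}\ll\mm_\infty$), and Portmanteau-type arguments (bounded above by $\log(t^{-N})$-type bounds on compact sets; an extra $\liminf$ versus $\lim$ caveat if $\cD=+\infty$ and $\sfG_\infty$ takes the value $+\infty$, handled as in the proof of Theorem \ref{thm:GeodDimEst}) one gets $\limsup_k\int\log\beta_t(\sfG_k(\bar{x}_k,z))\,\mu_{1,k}(\di z)\le \int_{X_\infty}\log\beta_t(\sfG_\infty(\bar{x}_\infty,y))\,\mu_{1,\infty}(\di y)$. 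Combining the three ingredients — lower semicontinuity of $\Ent(\mu_{t,\cdot})$, convergence of $\Ent(\mu_{1,\cdot})$, and the $\limsup$ bound on the distortion term — and passing to the limit in \eqref{eq:defMCPbeta2} for $(X_k,\ldots)$ yields \eqref{eq:defMCPbeta2} for the limit, i.e.\ $(X_\infty,\sfd_\infty,\mm_\infty,\sfG_\infty)$ satisfies $\MCP(\beta)$. I expect the delicate points to be (a) constructing $\mu_{1,k}$ with both entropy convergence and a uniform $L^\infty$ bound against $\mm_k$, and (b) the careful handling of the semicontinuity direction in the distortion term, particularly the borderline case where $\sfG_\infty$ may hit $+\infty$.
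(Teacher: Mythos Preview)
Your overall strategy matches the paper's, but two concrete steps in the distortion term do not go through as written. First, the claim that ``the relevant $\sfG$-values lie in a compact subinterval of $[0,\cD)$'' is only half justified: assumption \ref{i:stabMCP2} gives $\sfG_k(\bar{x}_k,z)\le D$ for $\mm_k$-a.e.\ (hence $\mu_{1,k}$-a.e.) $z$, but it does \emph{not} bound $\sfG_\infty(f_k(\bar{x}_k),f_k(z))$. The $\sfG_\infty$-diameter bound is an $\mm_\infty$-essential supremum, while $(f_k)_\sharp\mm_k$ need not be absolutely continuous with respect to $\mm_\infty$, and $f_k(\bar{x}_k)$ need not lie in $\supp\mm_\infty$. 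Hence your Lipschitz estimate on $\log\beta_t$ cannot be applied to the pair $(\sfG_k,\sfG_\infty\circ(f_k,f_k))$. The paper resolves this by replacing $\sfG_\infty$ with the truncation $\sfG^\varepsilon:=\min\{\sfG_\infty,D+\varepsilon\}$: then both arguments lie in $[0,D+\varepsilon]$, the $L^1_{\loc}$ convergence is preserved (since $|\sfG_k-\sfG^\varepsilon|\le|\sfG_k-\sfG_\infty|$ when $\sfG_k\le D$), and $\sfG^\varepsilon=\sfG_\infty$ holds $\mm_\infty$-a.e., hence $\mu_{1,\infty}$-a.e., so the final integral is unchanged.

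Second, you do not handle the case $\cD<+\infty$: nothing prevents $D+\varepsilon\ge\cD$, and then $\log\beta_t$ blows up on $[0,D+\varepsilon]$ and is not Lipschitz there. The paper treats this by a further truncation $\beta_t^{(\lambda)}:=\min\{\beta_t,\lambda\}$, which \emph{is} globally Lipschitz; one proves $\MCP(\beta^{(\lambda)})$ for every $\lambda$ and passes to the limit $\lambda\to\infty$ by monotone convergence. Finally, a sign slip: from $\Ent(\mu_{t,k})\le\Ent(\mu_{1,k})-I_k$, lower semicontinuity of $\Ent(\mu_{t,\cdot})$, and convergence of $\Ent(\mu_{1,\cdot})$, one obtains $\Ent(\mu_{t,\infty})\le\Ent(\mu_{1,\infty})-\limsup_k I_k$, so the inequality you need is $\limsup_k I_k\ge I_\infty$, not $\le$. (In fact the paper proves the full limit $I_k\to I_\infty$ via a Portmanteau argument for bounded, a.e.-continuous integrands, so the sign issue is moot once the truncations are in place.)
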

\begin{remark}
 Assumption \ref{i:stabMCP1} for \emph{finite} $k\in \N$ is implied by the other ones. More precisely, by \ref{i:stabMCP2}, bounded subsets of $\supp\mm_k$ are $\sfG$-bounded, and we can apply Corollary \ref{cor:totallyboundedclassic}.
\end{remark}
\begin{remark}
The local Lipschitz assumption on the function $\beta_t(\cdot)$ is not restrictive. In all cases of interest, $\beta_t(\cdot)$ is real-analytic on $[0,\cD)$.
\end{remark}
\begin{proof}
The first part of the proof follows the blueprint of the classical one (cf.\ \cite{Vil}), however, later in the argument,  we will need to deal with additional difficulties caused by  the non-continuity of $\sfG$ and the general distortion coefficients.

We drop the $\infty$ from the notation for the limit so that $(X_\infty,\sfd_\infty,\mm_\infty,\sfG_\infty,\star_\infty)$ will be denoted by $(X,\sfd,\mm,\sfG,\star)$. 
Let $f_k :X_k \to X$ be the approximations as in Remark \ref{rem:pmghfkglob}. 
Let $\mu_1 \in \Prob_{bs}^{*}(X,\sfd,\mm)$  and fix $\bar{x} \in \supp\mm \setminus \supp \mu_1$.  Since $\supp \mu_1$ is compact (recall that $(\supp\mm,\sfd)$ is assumed to be locally compact), the distance between $\bar{x}$ and $\supp \mu_1$ is positive. Up to extraction and relabeling, for all $k\in \N$ we find $\bar{x}_k\in \supp\mm_k$ such that $f_k(\bar{x}_k)\to \bar{x}$ as $k\to \infty$.

Furthermore, letting $\mu_1 = \rho_1\, \mm$ for $\rho_1 \in L^1(X,\mm)$ with bounded support, we define
\begin{equation}
\mu_{1,k}:=\frac{\rho_1 \circ f_k \, \mm_k}{\int_{X_k} \rho_1 \circ f_k\, \mm_k}.
\end{equation}
We assume from now on that $\rho_1$ is continuous, and we will deal with the discontinuous case at the end of the proof. Notice that 
\begin{equation}
Z_k:=\int_{X_k} \rho_1 \circ f_k \, \mm_k = \int_{X}\rho_1 \, (f_k)_\sharp \mm_k \to 1,
\end{equation}
so that, $\mu_{1,k}\in \Prob_{bs}^{*}(X_{k},\sfd_{k},\mm_{k})$  and $\bar{x}_k\notin \supp  \mu_{1,k}$, for sufficiently large $k$.

By the $\MCP(\beta)$ we find a $W_2(X_k,\sfd_k)$-geodesic $(\mu_{t,k})_{t\in [0,1]}$ from $\delta_{\bar{x}_k}$ to $\mu_{1,k}$ such that
\begin{equation}\label{eq:tolimit}
\Ent(\mu_{t,k}|\mm_k)\leq \Ent(\mu_{1,k}|\mm_k) - \int_{X_k} \log\beta_t(\sfG_k(\bar{x}_k,x))\, \mu_{1,k}(\di x).
\end{equation}
By construction, $\supp\mu_1$ and $\supp(f_k)_\sharp \mu_{1,k}$ are contained in a common bounded set, say $B_R(\star)\subset X$, for all $k\in \N$ and some $R>0$. Thus, using the properties of the approximating maps $f_k$ and the fact that $(\supp\mm_k,\sfd_k)$ are locally compact and geodesic, one can show that the supports of the family $\{(f_k)_\sharp \mu_{t,k}\}_{k\in \N,t\in [0,1]}$ are contained in the common compact set $\bar{B}_{2R}(\star)$. By stability of optimal transport \cite[Thm.\@ 28.9]{Vil}, up to extraction, there exists a $W_2(X,\sfd)$-geodesic $(\mu_t)_{t\in[0,1]}$ with
\begin{equation}
\sup_{t\in [0,1]} W_2((f_k)_\sharp \mu_{t,k},\mu_t) \to 0.
\end{equation}
In particular by Proposition \ref{prop:W2convNarrow} we also have $(f_k)_\sharp\mu_{t,k} \rightharpoonup \mu_t$ weakly. We can now pass to the limit in the l.h.s.\@ of \eqref{eq:tolimit}:
\begin{align}
\Ent(\mu_t|\mm) & \leq \liminf_{k\to \infty} \Ent((f_k)_\sharp\mu_{t,k}|(f_{k})_\sharp \mm_k)  \leq \liminf_{k\to \infty} \Ent(\mu_{t,k}|\mm_k),
\end{align}
where in the first inequality we use the joint lower semi-continuity of the Boltzmann-Shannon entropy, and in the second one we used the fact that the push-forward via a Borel map does not increase the entropy (cf. \cite[Thm.\@ 29.20 (i), (ii)]{Vil}).

We now study the r.h.s.\@ of \eqref{eq:tolimit} as $k\to \infty$. We first observe that
\begin{align}
\Ent(\mu_{1,k}|\mm_k) =\frac{1}{Z_k} \int_{X} \rho_1\log\rho_1\, (f_k)_\sharp \mm_k  - \log Z_k.
\end{align}
Since $\rho_1$ is continuous and with bounded support, then $\rho_1\log\rho_1$ is bounded, continuous, and with bounded support. Furthermore, since $(f_k)_\sharp \mm_k \rightharpoonup \mm$ and $Z_k\to 1$, we have
\begin{align}
\lim_{k\to\infty} \Ent(\mu_{1,k}|\mm_k)  = \Ent(\mu_{1}|\mm).
\end{align}
This settles the first term in \eqref{eq:tolimit}. 

The limit of the second term in \eqref{eq:tolimit} is quite delicate as a consequence of the low regularity of $\sfG$ and the almost-everywhere nature of the $\sfG$-diameter bound,  here is where the proof departs from the classical one. 

 As a first remark notice that $\bar{x}_k$ (resp.\ $\bar{x}$) and $\supp\mu_{1,k}$ (resp.\ $\supp\mu_1$) are contained in a common bounded set of $\supp\mm_k$ (resp.\ $\supp\mm$), say $B_{\bar{R}}(\star_k)\cap \supp\mm_k$ (resp.\ $B_{\bar{R}}(\star)\cap \supp\mm$) for some $\bar{R}>0$ and all $k\in \N$. In the rest of the proof, only the values of $\sfG_k$ (resp.\ $\sfG$) on these sets play a role. Therefore without loss of generality we can strengthen hypothesis \ref{i:stabMCP2} by assuming, instead, that $\supp\mm_k$ is $\sfG$-bounded, uniformly w.r.t.\ $k\in \bar{\N}$. In other words there exists $D>0$ such that
\begin{equation}\label{eq:diamleqD}
\mm(\{y\mid  y\neq x,\,  \sfG(x,y)\geq D+\ve\}) =0, \qquad \forall\, x\in \supp\mm,\quad \forall\, \ve>0.
\end{equation}
On the one hand, we have no information for $x\notin\supp\mm$. On the other hand, for fixed $x\in\supp\mm$, it may happen that $\sfG(x,y)\geq D+\ve$ on a set of zero $\mm$-measure which may have positive $(f_k)_\sharp \mm_k$ measure. For this reason, fix $\ve>0$ once for all, and introduce the following modification of the limit gauge function $\sfG$:
\begin{equation}\label{eq:defTrunc}
\sfG^{\ve}(x,y):=\min\{\sfG(x,y),D+\ve\}.
\end{equation}
The modified gauge function has the following properties:
\begin{enumerate}[(a)]
\item $\sfG^{\ve}(x,y) \leq D+\ve$ for all $x,y\in X$;
\item $\sfG^{\ve}(x,y) = \sfG(x,y)$ for all $x\in \supp\mm$ and $\mm$-a.e.\ $y\in X$,  $y\neq x$;
\item The $L^1_{\loc}$ convergence of the gauge functions of Definition \ref{def:weakL1andregularity} remains true replacing $\sfG$ with $\sfG^{\ve}$. In particular it holds:
\begin{equation}
\lim_{k\to \infty}\int_{B_R(\star_k){\setminus \{\bar{x}_k\}}}|\sfG_k(\bar{x}_k,z)-\sfG^\ve(f_k(\bar{x}_k),f_k(z))| \, \mm_{k}(\di z) =0, \qquad \forall\, R>0.
\end{equation}
\end{enumerate}
Item (a) is trivial, while (b) follows from the condition $\diam_{\sfG}(X) \leq D$ that is \eqref{eq:diamleqD}. Item (c) follows from the observation that, since $\diam_{\sfG}(X_k)\leq D$, the integrand in \eqref{eq:weakL1condition} decreases when replacing $\sfG$ with $\sfG^{\ve}$, for all fixed $k\in \N$.

We can now study the second term in \eqref{eq:tolimit} as $k\to \infty$. We have:
\begin{equation}\label{eq:twoterms}
\begin{multlined}
\int_{X_k} \log\beta_t\big(\sfG_k(\bar{x}_k,x)\big)\,\mu_{1,k}(\di x)  = \int_{X} \log\beta_t\big(\sfG^{\ve}(f_k(\bar{x}_k),x)\big)\,((f_k)_\sharp\mu_{1,k})(\di x)\\
+\int_{X_k} \Big[\log\beta_t \big(\sfG_k(\bar{x}_k,x)\big)-\log\beta_t \big(\sfG^{\ve} (f_k(\bar{x}_k),f_k(x))\big) \Big]\,\mu_{1,k}(\di x).
\end{multlined}
\end{equation}

To proceed with the proof, assume first that $\beta_t:[0,+\infty]\to [0,+\infty]$ is bounded on bounded intervals. This is the case precisely when $\cD=+\infty$.

We claim first that the second integral in the r.h.s.\@ of \eqref{eq:twoterms} converges to zero for any $t\in (0,1)$ as $k\to \infty$. First observe that $\beta_t(\cdot)$, restricted to $[0,D+\ve]$, is bounded from below away from zero (cf.\ Proposition \ref{prop:propertiesbeta}\ref{i:propertiesbeta4}), bounded from above, and Lipschitz continuous on $[0,D+\ve]$. Thus $\log\beta_t(\cdot)$ is Lipschitz continuous on $[0,D+\ve]$, with Lipschitz constant $C>0$ (the constant depends on $\beta_t$ and $D+\ve$ only). Furthermore, the arguments of $\log\beta_t(\cdot)$ in the second integral of \eqref{eq:twoterms} take values on $[0,D+\ve]$ $\mu_{1,k}$-almost everywhere: the first one since $\diam_{\sfG}(X_k)\leq D$ (recall that $\bar{x}_k\notin \supp\mu_{1,k}$), while the second one by construction of $\sfG^{\ve}$. Hence for the second integral in the r.h.s.\@ of \eqref{eq:twoterms} we have:
\begin{align}
\lim_{k\to \infty}\int_{X_k} &\left\lvert\log\beta_t\big(\sfG_k(\bar{x}_k,x)\big)-\log\beta_t\big(\sfG^{\ve}(f_k(\bar{x}_k),f_k(x))\big)\right\rvert\, \mu_{1,k}(\di x)  \\
 &\leq \lim_{k\to \infty} C \int_{X_k}\left\lvert\sfG_k(\bar{x}_k,x)-\sfG^{\ve}(f_k(\bar{x}_k),f_k(x))\right\rvert\, \mu_{1,k}(\di x) \label{eq:lipschitzproperty} \\
& \leq \lim_{k\to \infty} \frac{C \|\rho_{1}\|_{\infty}}{ Z_k}  \int_{B_{\bar{R}}(\star_{k})\setminus \{\bar{x}_k\}}\left\lvert\sfG_k(\bar{x}_k,x)-\sfG^{\ve}(f_k(\bar{x}_k),f_k(x))\right\rvert\, \mm_{k}(\di x)  =0,
\end{align}
where $\bar{R}>0$ is such that $\supp\mu_{1,k}\subset B_{\bar{R}}(\star_k)$ for all $k\in \N$.

It remains to discuss the limit of the first term in the r.h.s.\@ of \eqref{eq:twoterms}. Notice that $(f_k,f_k)_\sharp (\delta_{\bar{x}_k}\otimes \mu_{1,k}) \rightharpoonup \delta_{\bar{x}}\otimes \mu_1$. Thus we have
\begin{multline}
\lim_{k\to \infty}   \int_{X} \log\beta_t\big(\sfG^{\ve}(f_k(\bar{x}_k),x)\big)\,((f_k)_\sharp\mu_{1,k})(\di x)  \\ 
 = \lim_{k\to \infty}  \int_{X\times X} \log\beta_t\big(\sfG^{\ve}(x,y)\big)\,(f_k,f_k)_\sharp \left(\delta_{\bar{x}_k}\otimes \mu_{1,k}\right)(\di x \di y) \\
 = \int_{X} \log\beta_t\big(\sfG(\bar{x},x)\big)\,\mu_1(\di x).\label{eq:quiusiamoboga}
\end{multline}
In \eqref{eq:quiusiamoboga}, we used the fact that $\log\beta_t\circ\sfG^\ve = \log\beta_t\circ \sfG$ and is continuous $\delta_{\bar{x}}\otimes\mu_1$-a.e., bounded by construction, so that we can apply \cite[Cor.\ 2.2.10]{Bogachev-Weak}.

This concludes the proof in the case when $\beta_t(\cdot)$ is finite on bounded intervals (that is when $\cD=+\infty$), and $\rho_1=\di \mu_1/\di \mm$ is continuous. 

We deal now with the case $\cD<+\infty$. In this case what fails is \eqref{eq:lipschitzproperty}, as the arguments of the $\log$ can attain infinite value. We will argue by approximation. For $\lambda \in \N$ we let
\begin{equation}
\beta_t^{(\lambda)}(\theta):=\min\{\beta_t(\theta),\lambda\}, \qquad \forall\,\lambda\in \N,\,t\in[0,1],\,\theta \in [0,+\infty].
\end{equation}
Since $\beta_t \geq \beta_t^{(\lambda)}$, then $(X_k,\sfd_k,\mm_k,\sfG_k)$ satisfies the $\MCP(\beta^{(\lambda)})$ inequality \eqref{eq:defMCPbeta2}.

Furthermore, the functions $\beta_t^{(\lambda)}:[0,+\infty]\to [0,+\infty]$ are bounded from below away from zero by a positive constant, bounded from above, and Lipschitz continuous. To see the global Lipschitz continuity, recall that $\beta_t$ is locally Lipschitz on $[0,\cD)$ and $\liminf_{\theta \to \cD^-}\beta_t(\theta) = +\infty$, so that $\beta_t^{(\lambda)}$ can be written as the minimum of two globally Lipschitz functions. It follows that $\log\beta^{(\lambda)}_t$ is also Lipschitz continuous with Lipschitz constant that depends on $t$, $\lambda$, $\cD$, but not on $k$. 

Thus we can argue as in the previous part of the proof, and we obtain that for any $\mu_1\in \Prob_{bs}^{*}(X,\sfd,\mm)$ with continuous density and any $\bar{x}\in \supp\mm\setminus \supp\mu_1$    there exists a $W_2(X,\sfd)$-geodesic $(\mu_t)_{t\in[0,1]}$ between $\delta_{\bar{x}}$ and $\mu_1$ such, that for all $\lambda\in \N$:
\begin{equation}
\Ent(\mu_t|\mm)\leq \Ent(\mu_1|\mm)-\int_X \log\beta_t^{(\lambda)}(\sfG(\bar{x},x))\,\mu_1(\di x), \qquad\forall\, t\in(0,1).
\end{equation}
By construction, the sequence of functions $\log\beta_t^{(\lambda)}(\sfG(\bar{x},\cdot))$ is monotone w.r.t.\ $\lambda$, measurable, uniformly bounded from below, and $\log\beta_t^{(\lambda)}(\sfG(\bar{x},x))\uparrow \log\beta_t(\sfG(\bar{x},x))$ as $\lambda \to \infty$ for all $x\in X$. We conclude by the monotone convergence theorem.

This ends the proof in the case in which the density $\rho_1=\di \mu_1/\di \mm$ is continuous. The strategy to reduce the general case to the continuous case is the same as \cite{Vil}: we find a regularized measure $\mu_1^\ve$ with continuous density $\rho_1^\ve$ and bounded support using \cite[Ch.\ 29, First Appendix]{Vil}, and we prove the desired $\MCP$ inequality between $\bar{x}$ and $\mu_1^\ve$:
\begin{equation}
\Ent(\mu_{t}^\eps|\mm)\leq \Ent(\mu_{1}^\eps|\mm) - \int_{X} \log\beta_t(\sfG(\bar{x},x))\mu_{1}^\eps(\di x)\, ,
\end{equation}
where $(\mu_t^\eps)_{t\in[0,1]}$ is a $W_2(X,\sfd)$-geodesic between $\delta_{\bar{x}}$ and $\mu_1^\ve$ for $\ve>0$. The sequence $(\mu_t^\ve)_\ve$ converges in the $W_2$ distance to a $W_2(X,\sfd)$-geodesic between $\delta_{\bar{x}}$ and $\mu_1$. Then we pass to the limit for $\ve \to 0$ using similar arguments as before: the lower semi-continuity of the Boltzmann entropy in the l.h.s., its upper semi-continuity of the first term in the r.h.s. along this particular regularizing sequence, and again the regularity property of $\sfG$ and \cite[Cor.\ 2.2.10]{Bogachev-Weak} for the second term in the r.h.s.
\end{proof}

\begin{remark}[More general coefficients]\label{rmk:moregenfunctions}
The proof works for any family of non-negative functions $\beta_t : [0,+\infty] \to [0,+\infty]$, not necessarily defined as in \eqref{eq:defbeta}, but such that, for all $t\in (0,1)$:
\begin{itemize}
\item are bounded from below away from zero on any compact set;
\item are finite on $[0,\cD)$ for some $\cD \in (0,+\infty]$ and, if $\cD<+\infty$, it holds $\liminf_{\theta \to \cD^-}\beta_t(\theta) = +\infty$, and $\beta_t(\theta) = +\infty$ for $\theta \geq \cD$;
\item $\beta_t(\cdot)$ is locally Lipschitz on $[0,\cD)$.
\end{itemize}
\end{remark}
\begin{remark}[Variable coefficients]
Theorem \ref{thm:stabMCP} can be further extended to the case in which each space $(X_k,\sfd_k,\mm_k,\sfG_k,\star_k)$ satisfies a $\MCP(\beta^k)$ condition, with coefficients $\beta^k$ as in \eqref{eq:defbeta} for all $k\in \N$ and with the following properties: 
\begin{itemize}
\item $\liminf_{k\to\infty}\cD_k=+\infty$ (for simplicity);
\item for all fixed $t\in (0,1)$, the family $(\beta_t^k)_{k\in\N}$ is  definitely bounded below by a positive constant on any compact set, uniformly w.r.t.\ $k$;
\item for all fixed $t\in (0,1)$, the family $(\beta_t^k)_{k\in\N}$, restricted on any compact interval, is definitely Lipschitz, uniformly w.r.t.\ $k$;
\end{itemize}
In fact, up to extraction, and by Arzelà-Ascoli's Theorem, we can assume that there exists $\beta_t^\infty : [0,+\infty]\to [0,+\infty]$ such that $\beta_t^k \to \beta_t^\infty$ uniformly on compact sets, and for all fixed $t\in (0,1)$. Then under the same assumptions of Theorem \ref{thm:stabMCP} the limit space satisfies the $\MCP(\beta^\infty)$. To prove it, take the function $\mathfrak{B}_t^m(\theta):=\inf_{k\geq m} \beta_t^k(\theta)$, for $m\in \N$, $\theta \in [0,+\infty]$, $t\in [0,1]$. By construction, $\mathfrak{B}_t^m(\cdot)$ is finite on $[0,+\infty)$, locally Lipschitz, and bounded from below away from zero. We then use Remark \ref{rmk:moregenfunctions}, so that the limit space satisfies $\MCP(\mathfrak{B}^m)$. Then take the limit for $m\to\infty$, using Fatou's lemma.
\end{remark}

\begin{remark}[A semi-continuous variant]
Instead of assuming $\sfG_\infty$ to satisfy the ``$\mm_\infty$-a.e.\ continuity''  in the sense of the \emph{regularity condition} (cf.\ Definition \ref{def:weakL1andregularity}), one could alternatively assume that $\beta_{t}\circ \sfG_{\infty}$ is lower semi-continuous on $\supp \mm_{\infty} \times \supp \mm_{\infty}$, for all fixed $t\in (0,1)$. In the proof, \eqref{eq:quiusiamoboga} would be replaced by the inequality
\begin{multline}
\liminf_{k\to \infty}   \int_{X} \log\beta_t\big(\sfG^{\ve}(f_k(\bar{x}_k),x)\big)\,((f_k)_\sharp\mu_{1,k})(\di x)  \\ 
 = \liminf_{k\to \infty}  \int_{X\times X} \log\beta_t\big(\sfG^{\ve}(x,y)\big)\,(f_k,f_k)_\sharp \left(\delta_{\bar{x}_k}\otimes \mu_{1,k}\right)(\di x \di y) \\
 \geq \int_{X} \log\beta_t\big(\sfG(\bar{x},x)\big)\,\mu_1(\di x).\label{eq:quiusiamobogaLSC}
\end{multline}
which would follow from \cite[Cor.\ 2.2.6]{Bogachev-Weak}.  Note that  \eqref{eq:quiusiamobogaLSC} is sufficient for the proof.
\end{remark}

\subsection{Stability of \texorpdfstring{$\CD(\beta,n)$}{CD}}

The analogous stability statement for the $\CD(\beta,n)$ condition requires slightly stronger assumptions with respect to the ones in Definition \ref{def:weakL1andregularity}. 

The main difference is that the optimal transport plans used in the $\MCP$ condition are of the form $\pi_k=\delta_{x_k}\otimes \mu_{1,k}$ with $\mu_{1,k} \ll \mm_k$, and thus one can reduce integrals over $\pi_k$ to integrals with respect to $\mm_k$. In the corresponding construction for the $\CD$ case, the sequences of optimal plans $\pi_k$ are in general not absolutely continuous with respect to $\mm_k\otimes \mm_k$. Thus, it is convenient to strengthen the $L^1_{\loc}$ convergence of gauge functions and regularity conditions in the following way, that involves the optimal transport plans.

\begin{definition}\label{def:weakL1andregularity-CD}
Let $(X_k, \sfd_k,\sfG_k,\mm_k)$, $k\in \bar{\N}$ be gauge metric measure spaces such that $(X_k, \sfd_k,\mm_k,\star_k) \to (X_\infty,\sfd_\infty,\mm_\infty,\star_\infty)$ in the pmGH sense, with approximating maps $f_k:X_k \to X_\infty$  as in Remark \ref{rem:pmghfkglob}. We introduce the following conditions:
\begin{itemize}
\item \textbf{$L^1_{\loc}$ convergence of gauge functions over plans:}
 for all sequences $\mu_{0,k}\in \mathcal{P}_{bs}(X_k,\sfd_k,\mm_k)$, $\mu_{1,k}\in \Prob_{bs}^{*}(X_{k}, \sfd_{k}, \mm_{k})$  with $\supp\mu_{0,k}\cap \supp\mu_{1,k}=\emptyset$,  and all $\nu_k\in \OptGeo(\mu_{0,k},\mu_{1,k})$ such that \eqref{eq:defCDbetan} holds, for the corresponding sequence of optimal plans $\pi_k = (\ee_0,\ee_1)_\sharp \nu_k$ such that $(f_k,f_k)_\sharp\pi_k$ is weakly convergent in $\mathcal{P}(X_\infty\times X_\infty)$, we have
\begin{equation}\label{eq:weakL1condition-CD}
\lim_{k\to \infty}\int_{B_R((\star_k,\star_k))}|\sfG_k(x,z)-\sfG_\infty(f_k(x),f_k(z))| \, \pi_k(\di x \di z) =0, \qquad \forall\, R>0.
\end{equation}
\item \textbf{regularity condition for $\sfG_\infty$ over plans:} for all $\mu_{0}\in \mathcal{P}_{bs}(X_\infty,\sfd_\infty,\mm_\infty)$, $\mu_{1}\in \Prob_{bs}^{*}(X_{\infty}, \sfd_{\infty}, \mm_{\infty})$  with $\supp \mu_{0}\cap \supp \mu_{1}=\emptyset$, $\pi \in \Opt(\mu_0,\mu_1)$, it holds that $\sfG_\infty$ is continuous $\pi$-a.e.
\end{itemize}
\end{definition}
\begin{remark}
Since $\pi_k$ and $\pi$ are supported out of the diagonal, the conditions above can be written in a fashion similar to Definition \ref{def:weakL1andregularity}, by removing the diagonal.
\end{remark}
In Section \ref{sec:regularityGExamples} we show that the regularity condition above is satisfied in natural classes of examples.

\begin{theorem}[Stability of $\CD$]\label{thm:stabCD}
Let $(X_k, \sfd_k, \mm_k,\sfG_k)$, $k\in \bar{\N}$ be gauge m.m.s.\@ with $(X_k, \sfd_k,\mm_k,\star_k) \to (X_\infty,\sfd_\infty,\mm_\infty,\star_\infty)$ in the pmGH sense. Let $\beta$ be distortion coefficients as in \eqref{eq:defbeta}, and $n\in [1,+\infty$). Assume that:
\begin{enumerate}[label = (\roman*)]
\item\label{i:stabCD1} $(\supp\mm_k, \sfd_k)$ is locally compact and geodesic for all $k\in\bar{\N}$;
\item\label{i:stabCD2}  for any $R>0$ there exists $D = D(R) >0$ such that $\sfG_k \leq D$ on $ B_R(\star_k)\cap \supp \mm_k\times B_R(\star_k)\cap \supp\mm_k$, for all $k\in \N$;
\item\label{i:stabCD3} the $L^1_{\loc}$ convergence and regularity conditions over plans hold (cf.\ Definition \ref{def:weakL1andregularity-CD});
\item\label{i:stabCD4} for all $t\in (0,1)$ the function $\beta_t: [0,\cD) \to [0,+\infty)$ is locally Lipschitz;
\item\label{i:stabCD5} $(X_k, \sfd_k,\mm_k,\sfG_k)$ satisfies the $\CD(\beta,n)$ for all $k\in \N$;
\end{enumerate}
Then, also $(X_\infty,\sfd_\infty,\mm_\infty,\sfG_\infty)$ satisfies the $\CD(\beta,n)$.
\end{theorem}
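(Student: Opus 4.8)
The plan is to follow the blueprint of the proof of Theorem~\ref{thm:stabMCP}, the crucial difference being that the distortion terms now involve integrals against the optimal plans $\pi_k = (\ee_0,\ee_1)_\sharp\nu_k$, which are not of product form and hence cannot be reduced to integrals against $\mm_k$; this is precisely why the convergence hypotheses are phrased over plans in Definition~\ref{def:weakL1andregularity-CD}. As in the $\MCP$ case I drop the subscript $\infty$ for the limit, and I first prove the inequality in the main case where $\mu_0,\mu_1 \in \Prob_{bs}^*(X,\sfd,\mm)$ are absolutely continuous with bounded continuous densities $\rho_0,\rho_1$ and disjoint (hence compact and positively separated) supports; the reduction of the general case — non-continuous densities via the regularization of \cite[Ch.~29, First Appendix]{Vil}, and a possibly non-absolutely-continuous $\mu_0\in\Prob_{bs}(X,\sfd,\mm)$, for which $\U_n(\mu_0|\mm)=0$ and the inequality degenerates to a half-inequality of $\MCP$-type — is carried out by approximation and lower semicontinuity of the entropy exactly as at the end of the proof of Theorem~\ref{thm:stabMCP}. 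With $f_k:X_k\to X$ the approximating maps of Remark~\ref{rem:pmghfkglob}, I pull back the densities by setting $\mu_{i,k}:= Z_{i,k}^{-1}\,(\rho_i\circ f_k)\,\mm_k$ with $Z_{i,k}:=\int_{X_k}\rho_i\circ f_k\,\mm_k \to 1$; using that $f_k$ is an almost-isometry on large balls and that $\supp\mu_0,\supp\mu_1$ are compact and positively separated, for $k$ large one has $\mu_{i,k}\in\Prob_{bs}^*(X_k,\sfd_k,\mm_k)$ with $\supp\mu_{0,k}\cap\supp\mu_{1,k}=\emptyset$.

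Next I invoke $\CD(\beta,n)$ on $X_k$ (hypothesis~\ref{i:stabCD5}) to obtain $\nu_k\in\OptGeo(\mu_{0,k},\mu_{1,k})$ inducing a $W_2(X_k,\sfd_k)$-geodesic $(\mu_{t,k})_{t\in[0,1]}$ satisfying \eqref{eq:defCDbetan}. By hypothesis~\ref{i:stabCD1} and the properties of $f_k$, the pushed-forward measures $\{(f_k)_\sharp\mu_{t,k}\}_{k,t}$ have uniformly relatively compact supports, so by stability of optimal transport \cite[Thm.~28.9]{Vil} one can extract a subsequence along which there are a $W_2(X,\sfd)$-geodesic $(\mu_t)_{t\in[0,1]}$ and a dynamical plan $\nu\in\OptGeo(\mu_0,\mu_1)$ inducing it, with $(f_k)_\sharp\mu_{t,k}\rightharpoonup\mu_t$ for every $t$ and $(f_k,f_k)_\sharp\pi_k\rightharpoonup\pi:=(\ee_0,\ee_1)_\sharp\nu\in\Opt(\mu_0,\mu_1)$. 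Passing to the limit in \eqref{eq:defCDbetan} written in the exponential form of Definition~\ref{def:CDMCPbetan}: on the left, $\Ent(\mu_t|\mm)\le\liminf_k\Ent(\mu_{t,k}|\mm_k)$ by joint lower semicontinuity of the entropy and the fact that push-forward via a Borel map does not increase it \cite[Thm.~29.20]{Vil}; on the right, $\Ent(\mu_{i,k}|\mm_k)\to\Ent(\mu_i|\mm)$ for $i=0,1$ since $\rho_i\log\rho_i$ is bounded, continuous and compactly supported while $(f_k)_\sharp\mm_k\rightharpoonup\mm$ and $Z_{i,k}\to1$. As $(x,y)\mapsto -n\log(e^x+e^y)$ is continuous, it then suffices to prove
\begin{equation}
\int_{X_k\times X_k}\log\beta_t\big(\sfG_k(x,y)\big)\,\pi_k(\di x\,\di y)\ \longrightarrow\ \int_{X\times X}\log\beta_t\big(\sfG(x,y)\big)\,\pi(\di x\,\di y),
\end{equation}
together with the same statement with $t$ replaced by $1-t$ and the two arguments of $\sfG$ interchanged (the same integral against the time-reversed plan, covered by the same argument).

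The delicate step, where the low regularity of $\sfG$ enters, is this convergence, which I prove mimicking the $\MCP$ argument but over plans. By hypothesis~\ref{i:stabCD2} one has a uniform bound $\sfG_k\le D$ on the relevant balls, so, replacing $\sfG$ by $\sfG^\ve:=\min\{\sfG,D+\ve\}$ (which coincides with $\sfG$ on the bounded sets carrying $\pi_k$ and $\pi$), I split
\begin{multline}
\int_{X_k\times X_k}\log\beta_t\big(\sfG_k(x,y)\big)\,\pi_k(\di x\,\di y)=\int_{X\times X}\log\beta_t\big(\sfG^\ve(x,y)\big)\,(f_k,f_k)_\sharp\pi_k(\di x\,\di y)\\
+\int_{X_k\times X_k}\Big[\log\beta_t\big(\sfG_k(x,y)\big)-\log\beta_t\big(\sfG^\ve(f_k(x),f_k(y))\big)\Big]\,\pi_k(\di x\,\di y).
\end{multline}
The second integral tends to $0$: on $[0,D+\ve]$ the function $\log\beta_t$ is Lipschitz (hypothesis~\ref{i:stabCD4} and Proposition~\ref{prop:propertiesbeta}\ref{i:propertiesbeta4}), so the integrand is controlled by $C\,|\sfG_k(x,y)-\sfG^\ve(f_k(x),f_k(y))|$, whose integral against $\pi_k$ vanishes by the $L^1_{\loc}$ convergence of gauge functions over plans \eqref{eq:weakL1condition-CD}. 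The first integral tends to $\int\log\beta_t(\sfG(x,y))\,\pi$: indeed $\log\beta_t\circ\sfG^\ve$ is bounded and, by the regularity condition over plans, continuous $\pi$-a.e., so one passes to the weak limit $(f_k,f_k)_\sharp\pi_k\rightharpoonup\pi$ via \cite[Cor.~2.2.10]{Bogachev-Weak}, and $\log\beta_t\circ\sfG^\ve=\log\beta_t\circ\sfG$ $\pi$-a.e.\ by the uniform bound. This settles the case $\cD=+\infty$, when $\beta_t$ is finite hence bounded on $[0,D+\ve]$. When $\cD<+\infty$, $\beta_t$ blows up near $\cD$ and the Lipschitz bound fails; as in the $\MCP$ proof one replaces $\beta_t$ by the globally Lipschitz, bounded, and bounded-away-from-zero functions $\beta_t^{(\lambda)}:=\min\{\beta_t,\lambda\}$. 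Since $\beta^{(\lambda)}\le\beta$ pointwise, the same plans $\nu_k$ realise \eqref{eq:defCDbetan} for $\beta^{(\lambda)}$ on $X_k$ (Proposition~\ref{prop:hierarchy}\ref{i:hierarchy3}), so — after extracting the subsequence once — the limit plan $\nu$ satisfies $\CD(\beta^{(\lambda)},n)$ for every $\lambda$; letting $\lambda\to\infty$, monotone convergence of $\log\beta_t^{(\lambda)}(\sfG(\cdot,\cdot))\uparrow\log\beta_t(\sfG(\cdot,\cdot))$ (uniformly bounded below) and continuity of the exponential yield $\CD(\beta,n)$ for the limit space.

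The main obstacle, as in the $\MCP$ case, is exactly this third step: controlling the distortion integrals when $\sfG$ is merely Borel. The new feature compared with Theorem~\ref{thm:stabMCP} is that one works with the non-product plans $\pi_k$ rather than with second marginals $\mu_{1,k}\ll\mm_k$, so both the $L^1_{\loc}$-type convergence and the almost-everywhere continuity of $\sfG_\infty$ must be available \emph{over optimal plans}; this is the content of the strengthened hypotheses in Definition~\ref{def:weakL1andregularity-CD}, and is also why hypothesis~\ref{i:stabCD2} is stated as a genuine $L^\infty$ bound on $\sfG_k$ over balls rather than merely a $\sfG$-diameter bound.
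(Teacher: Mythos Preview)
Your proof is correct and follows the same approach as the paper. One step, however, is glossed over: when you assert that $\sfG^\ve$ ``coincides with $\sfG$ on the bounded sets carrying $\pi_k$ and $\pi$'' and later that ``$\log\beta_t\circ\sfG^\ve=\log\beta_t\circ\sfG$ $\pi$-a.e.\ by the uniform bound'', this does not follow directly from hypothesis~\ref{i:stabCD2}, which bounds only $\sfG_k$ for finite $k\in\N$, not $\sfG=\sfG_\infty$. The paper deduces the needed bound $\pi\text{-}\esssup\,\sfG\le D$ as a separate intermediate claim, using the $L^1_{\loc}$ convergence over plans: if $\pi(\{\sfG\ge D+\ve\})>0$, then on the preimages $(f_k,f_k)^{-1}(\{\sfG\ge D+\ve\})$ the integrand in \eqref{eq:weakL1condition-CD} is bounded below by $\ve$ (since $\sfG_k\le D$ on $\supp\pi_k$), yielding a positive liminf for $\int|\sfG_k-\sfG\circ(f_k,f_k)|\,\di\pi_k$, a contradiction. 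This same bound is also invoked in the paper's final approximation step for non-continuous densities (to apply \cite[Cor.~2.2.10]{Bogachev-Weak} along the regularizing sequence). With this addition your argument matches the paper's.
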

\begin{remark}
The local boundedness of $\sfG_k$ (i.e.\@ \ref{i:stabCD2})  can be weakened by asking that $\sfG_k$ is locally $\pi_k$-essentially bounded, uniformly with respect to $k\in \N$ and all optimal plans $\pi_k$ for which the $\CD$ inequality \eqref{eq:defCDbetan} holds. More precisely, for all $R>0$ there exists $D\in (0,+\infty)$ such that for all $k\in \N$ and for all $\pi_k$ for which the $\CD$ inequality is required to hold on $(X_k,\sfd_k,\mm_k,\sfG_k)$, we have $\sfG_k\leq D$ $\pi_k$-a.e. on $B_{R}(\star_k)\times B_R(\star_k)$.
\end{remark}
\begin{proof}
We drop the $\infty$ from the notation for the limit so that $(X_\infty,\sfd_\infty,\mm_\infty,\sfG_\infty,\star_\infty)$ will be denoted by $(X,\sfd,\mm,\sfG,\star)$.
Let $f_k :X_k \to X$ be the approximations as in Remark \ref{rem:pmghfkglob}.
Let $\mu_0\in \mathcal{P}_{bs}(X,\sfd,\mm)$, and $\mu_1 \in \Prob_{bs}^{*}(X,\sfd,\mm)$  with $\supp \mu_{0}\cap \supp \mu_{1}=\emptyset$.  Since $\supp  \mu_i$, $i=0,1$, is compact (recall that $(\supp\mm,\sfd)$ is assumed to be locally compact), the distance between $\supp \mu_0$ and $\supp \mu_1$ is positive. We start by assuming that $\mu_0,\mu_1 \ll \mm$, with continuous density $\mu_i = \rho_i\, \mm$, and bounded support. Define
\begin{equation}
\mu_{i,k}:=\frac{\rho_i \circ f_k \, \mm_k}{\int_{X_k} \rho_i \circ f_k\, \mm_k}, \qquad i=0,1.
\end{equation}
Notice that 
\begin{equation}
Z_{i,k}:=\int_{X_k} \rho_i \circ f_k \, \mm_k = \int_{X}\rho_i \, (f_k)_\sharp \mm_k \to 1, \qquad i=0,1,
\end{equation}
so that $\mu_{i,k}\in \Prob_{bs}^{*}(X_{k}, \sfd_{k}, \mm_{k})$ and  $\supp \mu_{0,k}\cap \supp \mu_{1,k}=\emptyset$,   for sufficiently large $k$.

By the $\CD(\beta,n)$ we find a $W_2(X_k,\sfd_k)$-geodesic $(\mu_{t,k})_{t\in [0,1]}\subset \Prob_2(X_k,\sfd_k)$, induced by $\nu_k\in\OptGeo(\mu_{0,k},\mu_{1,k})$ such that
\begin{multline}\label{eq:tolimit-CD}
\U_{n}(\mu_{t,k}|\mm_k) \geq   \exp\left( \frac{1}{n}\int_{\Geo(X_k)} \log \beta_{1-t}\big(\sfG_k(\gamma_1,\gamma_0))\, \nu_k(\di\gamma) \right) \U_{n}(\mu_{0,k}|\mm_k)  \\
+\exp\left( \frac{1}{n}\int_{\Geo(X_k)} \log \beta_{t}\big(\sfG_k(\gamma_0,\gamma_1)\big) \, \nu_k(\di\gamma) \right)   \U_{n}(\mu_{1,k}|\mm_k), \quad \forall\, t\in (0,1).
\end{multline}
By construction, $\supp\mu_i$ and $\supp(f_k)_\sharp \mu_{i,k}$, $i=0,1$, are contained in a common bounded set, say $B_R(\star)\subset X$, for all $k\in \N$ and some $R>0$. Thus, using the properties of the approximating maps $f_k$ and the fact that $(\supp\mm_k,\sfd_k)$ are locally compact and geodesic, one can show that the supports of the family $\{(f_k)_\sharp \mu_{t,k}\}_{k\in \N,t\in [0,1]}$ are contained in the common compact set $\bar{B}_{2R}(\star)$. By stability of optimal transport \cite[Thm.\@ 28.9]{Vil}, up to extraction, there exists a $W_2(X,\sfd)$-geodesic $(\mu_t)_{t\in[0,1]}$, and corresponding optimal plan $\pi\in \Opt(\mu_0,\mu_1)$ with
\begin{equation}
\sup_{t\in [0,1]} W_2((f_k)_\sharp \mu_{t,k},\mu_t) \to 0, \qquad \text{and} \qquad (f_k,f_k)_\sharp \pi_k \rightharpoonup \pi.
\end{equation}
In particular by Proposition \ref{prop:W2convNarrow} we also have $(f_k)_\sharp\mu_{t,k} \rightharpoonup \mu_t$ weakly.

We can now pass to the limit in the l.h.s.\@ of \eqref{eq:tolimit-CD}:
\begin{equation}
\U_n(\mu_t|\mm)  \geq \limsup_{k\to \infty} \U_n((f_k)_\sharp\mu_{t,k}|(f_{k})_\sharp \mm_k) \geq \limsup_{k\to \infty} \U_n(\mu_{t,k}|\mm_k),
\end{equation}
where in the first inequality we use the joint upper semi-continuity of $\U_n$, and in the second one we used the fact that the push-forward via a Borel map does not increase the entropy (cf. \cite[Thm.\@ 29.20 (i), (ii)]{Vil}).

We now study the r.h.s.\@ of \eqref{eq:tolimit-CD} as $k\to \infty$. We first observe that, for $i=0,1$
\begin{align}
\Ent(\mu_{i,k}|\mm_k)  =\frac{1}{Z_{i,k}} \int_{X} \rho_i\log\rho_i\, (f_k)_\sharp \mm_k  - \log Z_{i,k}.
\end{align}
Since $\rho_i$ is continuous and with bounded support, then $\rho_i\log\rho_i$ is bounded, continuous, and with bounded support. Furthermore, since $(f_k)_\sharp \mm_k \rightharpoonup \mm$ and $Z_{i,k}\to 1$, we have
\begin{equation}
\lim_{k\to\infty} \U_n(\mu_{i,k}|\mm_k)  = \U_n(\mu_{i}|\mm),\qquad i=0,1.
\end{equation}
This settles the entropic factors in the right hand side of \eqref{eq:tolimit-CD}. 

To deal with the remaining factors in the right hand side of \eqref{eq:tolimit-CD} we first claim that 
\begin{equation}\label{eq:piesssup}
\pi\shortminus\esssup \sfG \leq D.
\end{equation}
To prove it, notice that $\supp\pi_k \subseteq B_{\bar{R}}((\star_k,\star_k))\cap\supp\mm_k\otimes\mm_k$ for sufficiently large $\bar{R}>0$ and all $k\in \N$. Therefore by assumption \ref{i:stabCD2} there exists $D>0$ such that $\sfG_k \leq D$ on $\supp\pi_k$ for all $k\in\N$. If there exists set $A\subset X\times X$ with $\pi(A)>0$ such that $\sfG(x,y) \geq D+\varepsilon$ for all $(x,y)\in A$, then, letting $A_k:=(f_k,f_k)^{-1}(A)$, we obtain
\begin{equation}
\liminf_{k\to \infty}\int_{A_k}|\sfG_k(x,z)-\sfG(f_k(x),f_k(z))| \, \pi_k(\di x \di z) \geq\liminf_{k\to \infty} \varepsilon\int_{A_k}\pi_k(\di x \di z) >0,
\end{equation}
contradicting the $L^1_\loc$ convergence over plans of Definition \ref{def:weakL1andregularity-CD}.

In particular it holds
\begin{equation}\label{eq:diamleqD-CD}
\pi(\{(x,y)\mid \sfG(x,y) \geq D+\varepsilon\}) = 0, \qquad \forall\, \varepsilon>0,
\end{equation}
proving \eqref{eq:piesssup}. Fix $\ve>0$ once for all, and let:
\begin{equation}
\sfG^{\ve}(x,y):=\min\{\sfG(x,y),D+\ve\}, \qquad \forall\,(x,y)\in X\times X.
\end{equation}
The modified gauge function has the following properties:
\begin{enumerate}[(a)]
\item $\sfG^{\ve}(x,y) \leq D+\ve$ for all $x,y\in X$;
\item $\sfG^{\ve}(x,y) = \sfG(x,y)$ for $\pi$-a.e.\ $(x,y)\in X\times X$;
\item The $L^1_{\loc}$ convergence of the gauge functions for plans of Definition \ref{def:weakL1andregularity-CD} remains true replacing $\sfG$ with $\sfG^{\ve}$. In particular it holds:
\begin{equation}
\lim_{k\to \infty}\int_{B_{R}((\star_k,\star_k))}|\sfG_k(x,z)-\sfG^\ve(f_k(x),f_k(z))| \, \pi_{k}(\di x\di z) =0, \qquad \forall\, R>0.
\end{equation}
\end{enumerate}
Item (a) is trivial, while (b) follows from \eqref{eq:piesssup}. Item (c) follows from the observation that, since  $\sfG_k\leq D$ on $\supp\pi_k$ for large $k$, the integrand in \eqref{eq:weakL1condition-CD} decreases when replacing $\sfG$ with $\sfG^{\ve}$, for all fixed $k\in \N$.

We now study the second factor in the right hand side of \eqref{eq:tolimit-CD} (the other one is similar). We have:
\begin{multline}\label{eq:twoterms-CD}
\int_{\Geo(X_k)} \log\beta_t\big(\sfG_k(\gamma_0,\gamma_1)\big)\,\nu_{k}(\di \gamma)  = \int_{X\times X} \log\beta_t\big(\sfG^{\ve}(x,z)\big)\,((f_k,f_k)_\sharp\pi_{k})(\di x \di z)\\
+\int_{X_k\times X_k} \Big[\log\beta_t \big(\sfG_k(x,z)\big)-\log\beta_t \big(\sfG^{\ve} (f_k(x),f_k(z))\big) \Big]\,\pi_{k}(\di x \di z).
\end{multline}
To proceed with the proof, assume that $\beta_t:[0,+\infty]\to [0,+\infty]$ is bounded on bounded intervals. This is the case precisely when $\cD=+\infty$.

We claim first that the second integral in the r.h.s.\@ of \eqref{eq:twoterms-CD} converges to zero for any $t\in (0,1)$ as $k\to \infty$. First observe that $\beta_t(\cdot)$, restricted to $[0,D+\ve]$, is bounded from below away from zero (cf.\ Proposition \ref{prop:propertiesbeta}\ref{i:propertiesbeta4}), bounded from above, and Lipschitz continuous on $[0,D+\ve]$. Thus $\log\beta_t(\cdot)$ is Lipschitz continuous on $[0,D+\ve]$, with Lipschitz constant $C>0$ (the constant depends on $\beta_t$ and $D+\ve$ only). Furthermore, the arguments of $\log\beta_t(\cdot)$ take values on $[0,D+\ve]$ everywhere: the first one since $\sfG_k\leq D$ on  $\supp\pi_k$ for large $k$, while the second one by construction of $\sfG^{\ve}$. Hence for the second integral in the r.h.s.\@ of \eqref{eq:twoterms-CD} we have:
\begin{align}\label{eq:lipschitzproperty-CD}
\lim_{k\to \infty} \int_{X_k\times X_k} &\left\lvert\log\beta_t\big(\sfG_k(x,z)\big)-\log\beta_t\big(\sfG^{\ve}(f_k(x),f_k(z))\big)\right\rvert\, \pi_{k}(\di x \di z) \nonumber  \\
 &\leq C \lim_{k\to \infty} \int_{X_k\times X_k}\left\lvert\sfG_k(x,z)-\sfG^{\ve}(f_k(x),f_k(z))\right\rvert\, \pi_{k}(\di x\di z)  =0.
\end{align}

It remains to discuss the limit of the first term in the r.h.s.\@ of \eqref{eq:twoterms-CD}. We have
\begin{equation}\label{eq:quiusiamoboga-CD}
\lim_{k\to \infty}   \int_{X\times X} \log\beta_t\big(\sfG^{\ve}(x,z)\big)\,(f_k,f_k)_\sharp\pi_{k}(\di x \di z)  = \int_{X\times X} \log\beta_t\big(\sfG^{\ve}(x,z)\big)\,\pi(\di x\di z).
\end{equation}
In \eqref{eq:quiusiamoboga-CD}, we used the fact that $\log\beta_t\circ\sfG^\ve = \log\beta_t\circ \sfG$ and is continuous $\pi$-a.e., bounded by construction, so that we can apply \cite[Cor.\ 2.2.10]{Bogachev-Weak}.

Furthermore we observe that, if $\nu \in \OptGeo(\mu_0,\mu_1)$ is the dynamic optimal plan representing the $W_2$-geodesic $(\mu_t)_{t\in [0,1]}$, we have
\begin{equation}
\int_{X\times X} \log\beta_t\big(\sfG^{\ve}(x,z)\big)\,\pi(\di x\di z) = \int_{\Geo(X)} \log\beta_t\big(\sfG^{\ve}(\gamma_0,\gamma_1)\big)\,\nu(\di\gamma).
\end{equation}
This concludes the proof in the case when $\beta_t(\cdot)$ is finite on bounded intervals (that is when $\cD=+\infty$), and both $\rho_i=\di \mu_i/\di \mm$ are continuous. The extension to the case $\cD<+\infty$ is done verbatim as in the proof of Theorem \ref{thm:stabMCP}.

The strategy to reduce the general case (i.e.\@ $\mu_0\in \mathcal{P}_{bs}(X,\sfd,\mm)$, $\mu_1\in \Prob_{bs}^{*}(X,\sfd,\mm)$ with $\supp \mu_0\cap \supp \mu_1=\emptyset$) to the previous one is also along the same lines. As in the proof of Theorem \ref{thm:stabMCP}, we regularize $\mu_0,\mu_1$ to obtain a.c.\ measures with continuous density \cite[Ch.\ 29, First Appendix]{Vil}. The only difference in the argument is that, when passing to the limit, one uses \eqref{eq:piesssup} in order to apply \cite[Cor.\ 2.2.10]{Bogachev-Weak}.
\end{proof}

\begin{remark}[A semi-continuous variant]
Instead of assuming $\sfG_\infty$ to satisfy the \emph{regularity condition over plans} (cf. Definition \ref{def:weakL1andregularity-CD}), one could alternatively assume that $\beta_{t}\circ \sfG_{\infty}$ is lower semi-continuous on $\supp \mm_{\infty} \times \supp \mm_{\infty}$, for all fixed $t\in (0,1)$. In the proof, \eqref{eq:quiusiamoboga-CD} would be replaced by the inequality
\begin{equation}\label{eq:quiusiamoboga-CDLSC}
\liminf_{k\to \infty}   \int_{X\times X} \log\beta_t\big(\sfG^{\ve}(x,z)\big)\,(f_k,f_k)_\sharp\pi_{k}(\di x \di z)  \geq \int_{X\times X} \log\beta_t\big(\sfG^{\ve}(x,z)\big)\,\pi(\di x\di z).
\end{equation}
which would follow from \cite[Cor.\ 2.2.6]{Bogachev-Weak}.  Note that  \eqref{eq:quiusiamoboga-CDLSC} is sufficient for the proof.
\end{remark}

\subsection{Compactness  for \texorpdfstring{$\MCP(\beta)$}{MCP} and \texorpdfstring{$\CD(\beta,n)$}{CD} spaces}

The goal of this section is to establish some compactness results for $\MCP(\beta)$ and $\CD(\beta,n)$ spaces. First, let us recall some notation. Let $(X,\sfd,\mm)$ be a metric measure space.
Given an open subset $\Omega\subset X$, we denote with $\mathrm{LIP}(\Omega)$ the set of Lipschitz functions $u:\Omega\to \R$. The space $\mathrm{LIP}_{\loc}(\Omega)$ is the set of functions $u:\Omega\to \R$ such that $u|_A \in  \mathrm{LIP}(A)$ for every open set $A \Subset \Omega$.

For $u\in \mathrm{LIP}_{\loc}(\Omega)$, we define the \emph{modulus of the gradient} of $u$ at $x\in \Omega$ as
\begin{equation}
\| \nabla u \| (x) := \liminf_{r \to 0} \frac{1}{r} \sup_{y\in \overline{B_r (x)}} |u(y)-u(x)|.
\end{equation}
Following \cite[Sec. 3]{Miranda}, for $u\in L^1_{loc}(\Omega, \mm\llcorner \Omega)$, we define the \emph{total variation} of $u$ on $\Omega$ as
\begin{equation}
\|Du\|(\Omega):=\inf \left \{ \liminf_{k\to \infty} \int_\Omega \| \nabla u_k\| \mm \mid (u_k)_k\subset \mathrm{LIP}_{\loc}(\Omega), \; u_k \to u \text{ in } L^1_{loc}(\Omega,\mm\llcorner \Omega) \right\} .
\end{equation}
A function $u\in L^1_{loc}(\Omega, \mm\llcorner \Omega)$ is said to have bounded total variation on $\Omega$ provided $\|Du\|(\Omega)<\infty$. For $u\in L^1(\Omega, \mm\llcorner \Omega)$ with bounded total variation on $\Omega$, we denote
\begin{equation}
\|u\|_{BV(\Omega)}:=\|u\|_{L^1(\Omega,  \mm\llcorner \Omega)}+\|Du \|(\Omega).
\end{equation}

The next result gives pre-compactness criteria for gauge metric measure spaces satisfying the $\MCP$, that generalizes the well-known one for metric measures spaces. 

\begin{theorem}[Pre-compactness for $\MCP$]\label{thm:precompactMCP}
Let $\beta$ be distortion coefficients as in \eqref{eq:defbeta}. Let $\{(X_k,\sfd_k,\mm_k, \sfG_{k},\star_k)\}_{k\in \N}$ be a family of pointed gauge metric measure spaces, with $\supp\mm_k = X_k$, satisfying the $\MCP(\beta)$. Assume that for all $L>0$ there exists $D=D(L)>0$ such that
\begin{equation}
\diam_{\sfG}(B_L(\star_k)) \leq D(L),\qquad \forall\, k\in \N.
\end{equation}
Then, the following hold:
\begin{enumerate}[label = (\roman*)]
\item \label{i:ComMCP1} If there exists $M>0$ such that $\tfrac{1}{M}\leq \mm_k(B_1(\star_k))\leq M$ for all $k\in \N$, then the family $\{(X_k,\sfd_k,\mm_k,\star_k)\}_{k\in \N}$ is pre-compact in the pmGH topology.  

Moreover, if $(X_\infty,\sfd_\infty,\mm_\infty,\star_\infty)$ is any pmGH limit space, then $(\supp\mm_\infty, \sfd_\infty)$ is proper and geodesic.
\item  \label{i:ComMCP2} Assume that $(X_k,\sfd_k,\mm_k,\star_k)\to (X_\infty,\sfd_\infty,\mm_\infty,\star_\infty)$ in the pmGH topology, by means of approximating maps $f_k:X_k \to X_\infty$.  Assume that the family $\{\sfG_k\}_{k\in\N}$ is locally asymptotically equi-continuous, in the sense that for all $L>0$ and $\ve>0$ there exist $\delta=\delta(\ve,L)>0$ and $\bar{k}=\bar{k}(\ve,L)\in \N$ such that for all $k\geq \bar{k}$ it holds:
\begin{equation}\label{eq:GkEquicont}
\sfd_k(x_k,x_k')\leq \delta, \quad \sfd_k(y_k,y_k')\leq \delta \quad \Longrightarrow \quad |\sfG_k(x_k,y_k)-\sfG_k(x'_k,y'_k)| \leq \ve,
\end{equation}
for all $x_k,x_k',y_k,y_k'\in B_L(\star_k)$.

Then, up to extraction of a subsequence, there exists a continuous map $\sfG_\infty:X_\infty\times X_\infty \to [0,+\infty)$, and $\varepsilon_k \downarrow 0$, such that
\begin{equation}
|\sfG_k(x,y) - \sfG_\infty(f_k(x),f_k(y))|\leq \varepsilon_k, \qquad \forall\, x,y\in B_{L}(\star_k).
\end{equation}
In particular the conditions of Definition \ref{def:weakL1andregularity} hold, and if the functions $\beta_t:[0,\cD)\to \R$ are locally Lipschitz, then $(X_\infty,\sfd_\infty, \mm_\infty, \sfG_\infty)$ satisfies the $\MCP(\beta)$.
\item  \label{i:ComMCP3} With the same assumption as in item \ref{i:ComMCP2}, if the $\sfG_k$ are meek (cf.\ Definition \ref{def:meek}), then the generalized Bishop-Gromov Theorem \ref{thm:BG1} and Propositions \ref{prop:number-of-butt}--\ref{prop:pardoubling} hold for $(X_\infty,\sfd_\infty,\mm_{\infty}, \sfG_\infty)$.
\item  \label{i:ComMCP4} Assume that $(X_k,\sfd_k,\mm_k,\star_k)\to (X_\infty,\sfd_\infty,\mm_\infty,\star_\infty)$ in the pmGH topology, by means of approximating maps $f_k:X_k \to X_\infty$.  Assume that
\begin{itemize}
\item for every $x\in X_\infty$ there exist $x_{k}\in X_{k}$ such that $f_k(x_k) \to x$  in $X_\infty$ and
\begin{equation}\label{eq:BVlocUnif}
\sup_{k\in \N} \| \sfG_k (x_{k}, \cdot) \|_{{\rm BV}(B_{L} (\star_{k}))} <\infty, \qquad \forall\, L>0;
\end{equation}
\item the spaces $(X_k,\sfd_k,\mm_k)$ satisfy a weak $L^1$-Poincar\'e inequality, with uniform constants in $k$: there exist $c\geq 1, C>0, R>0$ such that for every $f\in \mathrm{LIP}_{\loc}(X_k,\sfd_k)$, every $r\in (0,R]$, and every $x\in X_k$, it holds
\begin{multline}\label{eq:L1weakpoinc}
\frac{1}{\mm_{k}(B^{X_{k}}_r(x))} \int_{B^{X_{k}}_r(x)}  \int_{B^{X_{k}}_r(x)} |f(y)-f(z)| \, \mm_{k}(\di y) \, \mm_{k}(\di z) \\
 \leq C r   \int_{B^{X_{k}}_{cr}(x)} \|\nabla f \|(y) \, \mm_{k}(\di y).
\end{multline}
\end{itemize}
Then, up to extraction of a subsequence, there exists a  map $\sfG_\infty:X_{\infty}\times X_{\infty} \to [0,+\infty]$ with $\sfG_\infty(x, \cdot)\in {\rm BV_{loc}}(X_{\infty}, \sfd_\infty,\mm_\infty)$ for every $x\in X_{\infty}$ such that  $\sfG_k\to \sfG_\infty$ in the $L^1_\loc$ sense of Definition \ref{def:weakL1andregularity}. If in addition $\sfG_\infty$ satisfies the regularity condition of Definition \ref{def:weakL1andregularity} and the functions $\beta_t:[0,\cD)\to \R$ are locally Lipschitz, then $(X_\infty,\sfd_\infty,\sfG_\infty,\mm_\infty)$ satisfies the $\MCP(\beta)$.
\end{enumerate}
\end{theorem}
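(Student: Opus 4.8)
The plan is to dispatch the four items in turn, reducing (ii)--(iv) to the stability Theorem~\ref{thm:stabMCP} by verifying, in each case, the hypotheses of Definition~\ref{def:weakL1andregularity}.

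\textbf{Item (i).} The first step is to extract a \emph{uniform} local doubling inequality. By Proposition~\ref{prop:doublingsimple} applied with $S = B_L(\star_k)$, together with the hypothesis $\diam_{\sfG}(B_L(\star_k)) \le D(L)$, the doubling constant in \eqref{eq:crR} depends only on $L$ (through $D(L)$), on $t$, and on $\beta$, hence not on $k$. Combined with the normalization $\tfrac1M \le \mm_k(B_1(\star_k)) \le M$, the classical Gromov pre-compactness criterion in the pmGH formulation (cf.\ \cite{GMS}) produces a pmGH-convergent subsequence. Uniform doubling passes to any pmGH limit, so $(\supp\mm_\infty, \sfd_\infty)$ is proper; and since each $(X_k, \sfd_k)$ is geodesic --- in fact proper geodesic, by Corollary~\ref{cor:totallyboundedclassic}, using that bounded subsets of $X_k = \supp\mm_k$ are $\sfG$-bounded --- a standard Arzel\`a--Ascoli argument on the almost-geodesics of $X_k$ transported through the maps $f_k$ shows that the limit is geodesic as well.

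\textbf{Items (ii) and (iii).} Under the local asymptotic equi-continuity \eqref{eq:GkEquicont}, I would run an Arzel\`a--Ascoli argument across the varying spaces: fix a countable dense set $\{p_j\}_{j} \subset X_\infty$, pick for each $j$ points $x^{(j)}_k \in X_k$ with $f_k(x^{(j)}_k) \to p_j$, and by a diagonal extraction set $\sfG_\infty(p_i, p_j) := \lim_k \sfG_k(x^{(i)}_k, x^{(j)}_k)$; condition \eqref{eq:GkEquicont} makes this limit independent of the approximating sequences and uniformly continuous on bounded sets, so it extends to a continuous $\sfG_\infty : X_\infty \times X_\infty \to [0,+\infty)$, with $\varepsilon_k \downarrow 0$ such that $|\sfG_k(x,y) - \sfG_\infty(f_k(x), f_k(y))| \le \varepsilon_k$ on $B_L(\star_k)$. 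This uniform convergence yields at once the $L^1_{\loc}$ convergence of Definition~\ref{def:weakL1andregularity} (integrate the uniformly small difference against $(f_k)_\sharp \mm_k \rightharpoonup \mm_\infty$, finite on bounded sets), while the regularity condition is automatic since $\sfG_\infty$ is continuous; together with the uniform $\sfG$-bound, the local compactness and geodesicity from (i), and the assumed local Lipschitzness of $\beta_t$, Theorem~\ref{thm:stabMCP} gives $\MCP(\beta)$ for the limit. For (iii) I would revisit the proof of Theorem~\ref{thm:stabMCP}: there the limiting optimal dynamical plan $\nu$ for which the $\MCP(\beta)$ inequality holds arises, via stability of optimal transport, as a limit of plans $\nu_k$ which are meek for $\sfG_k$; passing to the limit in $\sfG_k(\gamma_0, \gamma_t) = t\, \sfG_k(\gamma_0, \gamma_1)$ using the uniform convergences $\sfG_k \to \sfG_\infty$ and $\gamma^k \to \gamma$ in $\Geo$, one obtains that $\nu$ is concentrated on geodesics meek for $\sfG_\infty$. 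This is precisely the weakened meek condition of Remark~\ref{rmk:weakmeek}; since $\sfG_\infty$ is finite-valued (so \eqref{eq:Gfinite} holds trivially) and we may assume $\supp\mm_\infty$ is not a singleton (otherwise there is nothing to prove), Theorem~\ref{thm:BG1} and its corollaries apply to the limit.

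\textbf{Item (iv).} This is the main obstacle, since only an $L^1$-type control on $\sfG_k$ is available. The key input is a Rellich--Kondrachov-type compactness for $BV$ functions along a pmGH-convergent sequence of uniformly doubling metric measure spaces satisfying a uniform weak $(1,1)$-Poincar\'e inequality: the uniform doubling is provided by (i), the uniform Poincar\'e follows from \eqref{eq:L1weakpoinc} combined with doubling, and the relevant notion of $BV$ and of $L^1_{\loc}$ convergence across spaces is the one of \cite{Miranda} adapted to varying spaces. For each $x \in X_\infty$, applying this compactness to $u_k := \sfG_k(x_k, \cdot)$ with $x_k$ as in \eqref{eq:BVlocUnif}, the uniform bound \eqref{eq:BVlocUnif} gives, along a subsequence, an $L^1_{\loc}$-limit $\sfG_\infty(x, \cdot) \in \mathrm{BV}_{\loc}(X_\infty)$ (with lower semicontinuity of the total variation). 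A diagonal extraction over a countable dense set of base points fixes a single subsequence, and the $L^1_{\loc}$ convergence of Definition~\ref{def:weakL1andregularity} is then verified along the approximating sequences furnished by \eqref{eq:BVlocUnif} --- which is the only form of the convergence actually used in the proof of Theorem~\ref{thm:stabMCP}, where the points approximating $\bar{x}$ may be chosen at will. With the extra regularity hypothesis on $\sfG_\infty$ and the local Lipschitzness of $\beta_t$, Theorem~\ref{thm:stabMCP} once more delivers $\MCP(\beta)$ for the limit. The delicate parts are the precise formulation and proof of the $BV$-compactness statement in varying PI spaces (joint lower semicontinuity of the total variation under pmGH convergence, and the compact embedding $\mathrm{BV} \hookrightarrow L^1_{\loc}$ with constants uniform in $k$) and the bookkeeping needed to pass from a countable dense family of base points to the base-point-free convergence demanded by Definition~\ref{def:weakL1andregularity}.
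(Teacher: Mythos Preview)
Your proposal is correct and follows essentially the same approach as the paper's proof: item~(i) via uniform doubling and Gromov pre-compactness, item~(ii) via an Arzel\`a--Ascoli argument across varying spaces (the paper cites \cite[Prop.~27.20]{Vil} directly rather than spelling out the diagonal extraction), item~(iii) via passing to the limit in the meek identity along converging geodesics to land in the weak-meek setting of Remark~\ref{rmk:weakmeek}, and item~(iv) via BV compactness in varying PI spaces (the paper invokes \cite[Thm.~4.15]{KuwaeShyoiaTAMS208}). The only cosmetic difference is that for the geodesicity of the limit in~(i) the paper argues ``GH-limit of length spaces is length, and proper $+$ length $\Rightarrow$ geodesic'' instead of your Arzel\`a--Ascoli on almost-geodesics; and in~(iv) the paper does not explicitly address the diagonal-extraction bookkeeping you flag, simply asserting the $L^1_{\loc}$ convergence of Definition~\ref{def:weakL1andregularity} as an output of the cited BV compactness.
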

\begin{proof}
From $\supp\mm_k=X_k$, the $\MCP(\beta)$ condition, and the local $\sfG$-boundedness assumption, each $(X_k,\sfd_k)$ is a locally compact geodesic space (see Corollary \ref{cor:totallyboundedclassic}).  We adopt the convention of Remark \ref{rem:pmghfkglob} for the pmGH convergence. 

\textbf{Proof of \ref{i:ComMCP1}.} Thanks to our assumptions on the $\sfG$-diameters, we have that the family of metric spaces $\{(B_L(\star_k),\sfd_k)\}_{k\in\N}$ are totally bounded for any fixed $L>0$, uniformly with respect to $k$ (cf.\ Corollary \ref{cor:totallyboundedclassic} and Remark \ref{rmk:totallybdduniform}); hence, the pre-compactness in the pGH topology follows from the classical characterization in terms of nets (cf.\ for example \cite[Thm.\@ 27.10, Def.\ 27.13]{Vil}). Using the local doubling inequality of Corollary \ref{cor:localdoubling}, we have that the assumption $\mm_k(B_1(\star_k))\leq M$ propagates to any scale, that is there exists $M = M(R)>0$ such that $\mm_k(B_R(\star_k))\leq M(R)$ for all $k\in \N$. Thus we obtain via a diagonal argument that $(f_k)_\sharp\mm_k$ is pre-compact in the weak topology.  The assumption $ \mm_k(B_1(\star_k))\geq \tfrac{1}{M}$ prevents the  limit measures to vanish identically.

Now, let  $(X_\infty,\sfd_\infty,\mm_\infty,\star_\infty)$ be any pmGH limit space. Then $(\supp\mm_\infty, \sfd_\infty, \mm_\infty)$ satisfies a local doubling inequality with the same (uniform) constants as the approximating sequence, and thus $(\supp\mm_\infty,\sfd_\infty)$ is proper. Moreover, $\supp\mm_\infty$ is a length space, as GH-limit of length spaces. Since any proper length space is geodesic, we conclude that $(\supp\mm_\infty, \sfd_\infty)$ is a geodesic space.

\textbf{Proof of \ref{i:ComMCP2}}. We can apply Arzelà-Ascoli theorem for pmGH converging sequences (see for instance \cite[Prop.\@ 27.20]{Vil}). Letting $f'_k : X_\infty \to X_k$ be the approximate inverses of $f_k$ (i.e.\@ $\sfd_\infty(f_k\circ f'_k(y),y) \to 0$ and $\sfd_k(f'_k\circ f_k(x),x) \to 0$ as $k\to \infty$), there exists a continuous $\sfG_\infty : X_\infty \times X_\infty \to [0,+\infty)$ such that
\begin{equation}
\lim_{k\to \infty}\sup_{x,y\in B_L(\star_\infty)}|\sfG_k(f_k'(x),f_k'(y))-\sfG_\infty(x,y)| =0, \qquad \forall\, L>0.
\end{equation}
Using the approximate inverse property and the equi-continuity of $\{\sfG_k\}_{k\in\N}$, we obtain
\begin{equation}\label{eq:strongconv}
\lim_{k\to\infty}\sup_{x,y \in B_L(\star_k)} |\sfG_k(x,y)-\sfG_\infty(f_k(x),f_k(y))| =0, \qquad \forall\, L >0.
\end{equation}
Clearly,  \eqref{eq:strongconv} implies the $L^1_\loc$ convergence of Definition \ref{def:weakL1andregularity}.

Furthermore, by construction, $\sfG_\infty$ is continuous. All the assumptions of Theorem \ref{thm:stabMCP} are then met, and thus the limit $(X_\infty,\sfd_\infty,\sfG_\infty,\mm_\infty)$ satisfies the $\MCP(\beta)$.

\textbf{Proof of \ref{i:ComMCP3}.} Fix $\mu_1 \in \Prob_{bs}^{*}(X,\sfd,\mm)$ and  $\bar{x}\in\supp\mm\setminus \supp \mu_{1}$. We claim that there exists $\nu \in \OptGeo(\delta_{\bar{x}},\mu_1)$ satisfying the $\MCP$ inequality \eqref{eq:defMCPbeta} and there exists a Borel set $\Gamma \subset \Geo(X)$ with $\nu(\Gamma)=1$, such that it holds
\begin{equation} \label{eq:meek1PfStab}
\sfG(\gamma_0,\gamma_t) = t\, \sfG(\gamma_0,\gamma_1), \qquad \forall\, t\in (0,1], \quad \forall\, \gamma \in \Gamma.
\end{equation}
Note that \eqref{eq:meek1PfStab} is weaker than the meek property for $\sfG$, but it is sufficient for the proof of the generalized Bishop-Gromov Theorem \ref{thm:BG1}, as explained in Remark \ref{rmk:weakmeek}.

We prove now the claim. By the argument used in the proof of Theorem \ref{thm:stabMCP}, we know that there exist $\mu_{1,k}  \in  \Prob_{bs}^{*}(X_{k},\sfd_{k},\mm_{k})$, $\bar{x}_k\in\supp\mm_k$ with $\bar{x}_{k}\notin  \supp \mu_{1,k}$, $\nu_k \in \OptGeo(\delta_{\bar{x}_k},\mu_{1,k})$, and $\nu \in \OptGeo(\delta_{\bar{x}},\mu_1)$  such that $\nu_k$ and $\nu$ satisfy the $\MCP$ condition and moreover (up to a extraction of a subsequence)   
\begin{equation}\label{eq:fketnuk}
(f_k)_{\sharp}\big( (\ee_t)_{\sharp} \nu_k \big)\rightharpoonup (\ee_t)_{\sharp} \nu,\qquad \forall\,t\in [0,1]. 
\end{equation}
Since each $\sfG_k$ is meek, for every $k\in \N$ there exists $\Gamma_k \subset \Geo(X_k)$ with $\nu_k(\Gamma_k)=1$, such that for all $\gamma \in \Gamma_k$ it holds
\begin{equation} \label{eq:meek1PfStabk}
\sfG_k(\gamma_0,\gamma_t) = t\, \sfG_k(\gamma_0,\gamma_1), \qquad \forall\, t\in (0,1].
\end{equation}
From \eqref{eq:fketnuk} we infer that for $\nu$-a.e.\ $\gamma$, there exist $\gamma_k\in \Gamma_k$ such that 
\begin{equation}\label{eq:fkgammakconvQ}
f_k(\gamma_{k,t})\to \gamma_t, \qquad \forall\, t\in [0,1]\cap \Q.
\end{equation}
 Since $\mu_1$ has bounded support, one can choose the
$\nu_k \in \OptGeo(\delta_{\bar{x}_k},\mu_{1,k})$ with the following property: there exists $R>0$ such that  $\cup_{t\in [0,1]} \supp \big( (\ee_t)_{\sharp} \nu_k \big)\subset B_R(\bar x_k)$. It follows that the geodesics $\gamma_k$ have a uniform bound on the length and thus are equi-Lipschitz.  Hence, by the Arzelà-Ascoli Theorem for varying spaces converging in the GH sense (see for instance \cite[Prop.\@ 27.20]{Vil}), we can promote \eqref{eq:fkgammakconvQ} to 
\begin{equation}\label{eq:fkgammakconv}
f_k(\gamma_{k,t})\to \gamma_t \; \text{ for every $t\in [0,1]$}.
\end{equation}
Combining \eqref{eq:meek1PfStabk} and \eqref{eq:fkgammakconv}  with the asymptotic equi-continuity of the Gauge functions \eqref{eq:GkEquicont}, we obtain for $\nu$-a.e.\ $\gamma\in\Geo(X)$
\begin{equation}
\sfG(\gamma_0,\gamma_t)= \lim_{k\to \infty} \sfG_k(\gamma_{k,0},\gamma_{k,t})= t \lim_{k\to \infty} \sfG_k(\gamma_{k,0},\gamma_{1,t})= t\,  \sfG(\gamma_0,\gamma_t), \qquad \forall\, t\in (0,1],
\end{equation}
as claimed.

\textbf{Proof of \ref{i:ComMCP4}.}  By assumption, the spaces $\{(X_{k}, \sfd_{k}, \mm_{k})\}_{k\in \N}$ satisfy local doubling (see Corollary \ref{cor:localdoubling}) and weak $L^{1}$-Poincar\'e inequalities, with constants uniform in $k\in \N$; moreover they converge in pmGH sense to $(X_\infty,\sfd_\infty,\mm_\infty,\star_\infty)$. Under such conditions, it is well-known that BV compactness in varying spaces holds (see for instance \cite[Thm.\@ 4.15]{KuwaeShyoiaTAMS208}). It follows that  there exists a  map $\sfG_\infty:X_{\infty}\times X_{\infty} \to [0,+\infty)$ with $\sfG_\infty(x, \cdot)\in {\rm BV_{loc}}(X_{\infty}, \sfd_\infty,\mm_\infty)$ for every $x\in X_{\infty}$ such that  $\sfG_k\to \sfG_\infty$ in the $L^1_\loc$ sense of Definition \ref{def:weakL1andregularity}. If in addition $\sfG_\infty$ satisfies the regularity condition of Definition \ref{def:weakL1andregularity} and the functions $\beta_t:[0,\cD)\to \R$ are locally Lipschitz, then the assumptions of Theorem \ref{thm:stabMCP} are met; thus we can conclude that the limit $(X_\infty,\sfd_\infty,\mm_\infty, \sfG_\infty)$ satisfies the $\MCP(\beta)$. 
\end{proof}

We next comment on the assumptions of Theorem \ref{thm:precompactMCP}. 

\begin{remark} The weak $L^{1}$-Poincar\'e inequality \eqref{eq:L1weakpoinc} is a very natural assumption in the framework of this paper. Indeed it was proved to hold with $c=2$, any $R>0$, for an explicit $C=C(K,N,R)>0$ in the class of  $\MCP(K,N)$ spaces $(X,\sfd, \mm)$ satisfying the following mild  ``a.e. non-branching'' assumption \cite[Cor.\ p.\ 28]{VRN}: the set
\begin{equation}
\{y\in X \mid \text{ there exist } \gamma^{1}\neq \gamma^{2}\in {\rm Geo}(X) \text{ such that } x=\gamma_{0}^{1}=\gamma_{0}^{2}, \, y=\gamma_{1}^{1}=\gamma_{1}^{2}\}
\end{equation}
has $\mm$-measure zero for $\mm$-a.e. $x\in X$. It is well-known that such a property holds for essentially non-branching $\MCP(K,N)$ spaces (see for instance \cite[Rmk.\ 2.6]{CavaMondAPDE}).
\end{remark}

\begin{remark}The classical framework of Lott-Sturm-Villani's theory corresponds to the choice of gauge function as $\sfG(x_{0}, \cdot):=\sfd(x_{0}, \cdot)$, for  $x_{0}\in X$. In this case, $\sfG(x_{0}, \cdot)$ is (trivially) 1-Lipschitz; thus the asymptotic equi-continuity assumed in item \ref{i:ComMCP2} holds trivially,   and the uniform local BV bounds \eqref{eq:BVlocUnif} assumed in  item \ref{i:ComMCP4} hold as a consequence of the uniform upper bound on the volume of metric balls. 
\end{remark}

Since $\CD(\beta,n)$ implies $\MCP(\beta)$, it is clear that \ref{i:ComMCP1} in Theorem \ref{thm:precompactMCP}  implies a corresponding pre-compactness result for the class of $\CD(\beta,n)$ spaces. Also a compactness result analogous to item \ref{i:ComMCP2} can be proved along the same lines, so  we only state it.

\begin{theorem}[Pre-compactness for $\CD$]\label{thm:precompact-CD}
Let $\beta$ be the distortion coefficients as \eqref{eq:defbeta}, and $n\in [1,+\infty)$. Let $\{(X_k,\sfd_k,\mm_k, \sfG_k,\star_k)\}_{k\in \bar{\N}}$ be a family of pointed gauge metric measure spaces, with $(X_k,\sfd_k,\mm_k,\star_k)\to (X_\infty,\sfd_\infty,\mm_\infty,\star_\infty)$ in the pmGH topology, by means of approximating maps $f_k:X_k \to X_\infty$  as in  Remark \ref{rem:pmghfkglob}. Assume that
\begin{itemize}
\item  for any $L>0$ there exists $D=D(L)>0$ such that $\sfG_k\leq D$ on $B_L(\star_k)\times B_L(\star_k) \cap \supp\mm_k\times \supp\mm_k$;
\item the family $\{\sfG_k\}_{k\in\N}$ is locally asymptotically equi-continuous, in the sense that for all $L>0$ and $\ve>0$ there exist $\delta=\delta(\ve,L)>0$ and $\bar{k}=\bar{k}(\ve,L)\in \N$ such that for all $k\geq \bar{k}$ it holds:
\begin{equation}
\sfd_k(x_k,x_k')\leq \delta, \quad \sfd_k(y_k,y_k')\leq \delta \qquad \Longrightarrow \qquad |\sfG_k(x_k,y_k)-\sfG_k(x'_k,y'_k)| \leq \ve,
\end{equation}
for all $x_k,x_k',y_k,y_k'\in B_L(\star_k)$.
\end{itemize}
 Then, up to extraction of a subsequence, there exists a continuous map $\sfG_\infty:X_\infty\times X_\infty \to [0,+\infty)$, and $\varepsilon_k \downarrow 0$, such that
\begin{equation}
|\sfG_k(x,y) - \sfG_\infty(f_k(x),f_k(y))|\leq \varepsilon_k, \qquad \forall\, x,y\in B_{L}(\star_k).
\end{equation}
In particular the conditions of Definition \ref{def:weakL1andregularity-CD} hold. If the functions $\beta_t:[0,\cD)\to \R$ are locally Lipschitz, and $(X_k,\sfd_k,\mm_k, \sfG_k)$ satisfy the $\CD(\beta,n)$, then also $(X_\infty,\sfd_\infty,\mm_\infty, \sfG_\infty)$ satisfies the $\CD(\beta,n)$.
\end{theorem}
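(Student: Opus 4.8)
The plan is to follow closely the pattern already established in the proof of Theorem \ref{thm:precompactMCP}\ref{i:ComMCP2}, replacing the $\MCP$ stability input with its $\CD$ counterpart (Theorem \ref{thm:stabCD}). First I would observe that, by the $\CD(\beta,n)$ assumption and the implication $\CD(\beta,n)\Rightarrow \MCP(\beta)$ (Remark \ref{rem:CDimpliesMCP}), together with the hypotheses of Theorem~\ref{thm:precompact-CD} — in particular the pmGH convergence being given — each $(X_k,\sfd_k,\mm_k)$ is a locally compact geodesic space: indeed locality of $\sfG_k$ implies that bounded sets are $\sfG$-bounded, so Corollary \ref{cor:totallyboundedclassic} applies. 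We adopt the convention of Remark \ref{rem:pmghfkglob} for the pmGH convergence and denote the approximating maps by $f_k:X_k\to X_\infty$, with approximate inverses $f'_k:X_\infty \to X_k$.

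The second step is the construction of $\sfG_\infty$. Since $(X_k,\sfd_k,\mm_k,\star_k)\to(X_\infty,\sfd_\infty,\mm_\infty,\star_\infty)$ in the pmGH sense and the family $\{\sfG_k\circ(f'_k\times f'_k)\}_{k}$ is equi-continuous on balls of any fixed radius (by the local asymptotic equi-continuity hypothesis transported via the approximate inverses), the Arzelà–Ascoli theorem for pmGH-converging sequences (see \cite[Prop.\@ 27.20]{Vil}) yields, up to extraction of a subsequence, a continuous limit $\sfG_\infty:X_\infty\times X_\infty\to[0,+\infty)$ with
\begin{equation*}
\lim_{k\to\infty}\sup_{x,y\in B_L(\star_\infty)}|\sfG_k(f'_k(x),f'_k(y))-\sfG_\infty(x,y)|=0,\qquad \forall\, L>0.
\end{equation*}
Using the approximate inverse property and the equi-continuity again, this upgrades to
\begin{equation*}
\lim_{k\to\infty}\sup_{x,y\in B_L(\star_k)}|\sfG_k(x,y)-\sfG_\infty(f_k(x),f_k(y))|=0,\qquad \forall\, L>0,
\end{equation*}
which gives the stated uniform estimate with a suitable $\varepsilon_k\downarrow 0$ obtained by a diagonal argument.

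The third step is to verify that the conditions of Definition~\ref{def:weakL1andregularity-CD} hold for the limit data, so that Theorem~\ref{thm:stabCD} applies. The uniform estimate above clearly implies the $L^1_{\loc}$ convergence of gauge functions over plans \eqref{eq:weakL1condition-CD}, since for any sequence of plans $\pi_k$ with supports in a fixed ball the integrand is bounded by $\varepsilon_k$ (the supports of the relevant plans are uniformly bounded because the measures have uniformly bounded supports, as in the proof of Theorem~\ref{thm:stabCD}). The regularity condition over plans is immediate: $\sfG_\infty$ is \emph{everywhere} continuous, hence $\pi$-a.e.\@ continuous for every $\pi$. The local boundedness hypothesis \ref{i:stabCD2} of Theorem~\ref{thm:stabCD} follows directly from the assumed local uniform bound $\sfG_k\leq D(L)$ combined with the uniform convergence (or can be read off $\sfG_\infty$ directly), and local compactness and the geodesic property of $(\supp\mm_k,\sfd_k)$ for $k\in\bar\N$ follow as above together with the fact that GH-limits of geodesic spaces that are proper are geodesic. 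With the local Lipschitz hypothesis on $\beta_t$ in hand, all five hypotheses of Theorem~\ref{thm:stabCD} are met, and we conclude that $(X_\infty,\sfd_\infty,\mm_\infty,\sfG_\infty)$ satisfies $\CD(\beta,n)$.

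I do not expect a serious obstacle here: the statement is explicitly flagged in the excerpt as provable ``along the same lines'' as Theorem~\ref{thm:precompactMCP}\ref{i:ComMCP2}, and all the analytic heavy lifting (passing $\CD$ inequalities to the limit under weak convergence of plans, handling the low regularity and the possible value $+\infty$ of $\sfG$, the case $\cD<+\infty$) is already absorbed into Theorem~\ref{thm:stabCD}. The only point that requires mild care is the bookkeeping of \emph{which} plans one must control in \eqref{eq:weakL1condition-CD}: one must make sure that the optimal plans $\pi_k$ produced by the $\CD(\beta,n)$ condition on $(X_k,\sfd_k,\mm_k,\sfG_k)$ have uniformly bounded supports and that $(f_k,f_k)_\sharp\pi_k$ is (sub-sequentially) weakly convergent — but this is exactly the stability-of-optimal-transport argument already used inside the proof of Theorem~\ref{thm:stabCD}, so it transfers without change.
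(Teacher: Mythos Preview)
Your proposal is correct and follows precisely the approach indicated in the paper: the authors explicitly state that Theorem~\ref{thm:precompact-CD} ``can be proved along the same lines'' as Theorem~\ref{thm:precompactMCP}\ref{i:ComMCP2} and therefore only state it without proof. Your write-up fills in exactly those lines --- Arzel\`a--Ascoli for pmGH-converging sequences to produce the continuous limit $\sfG_\infty$, then verifying the hypotheses of Theorem~\ref{thm:stabCD} in place of Theorem~\ref{thm:stabMCP} --- and the bookkeeping you flag (uniform boundedness of the supports of the relevant plans, and the triviality of the regularity condition over plans when $\sfG_\infty$ is continuous) is handled just as you describe.
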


\begin{remark}
Let $(X,\sfd,\mm)$ be a Carnot group, equipped with its Haar measure. It is well-known that they support Poincaré inequalities (see e.g.\@ \cite[Prop.\@ 11.17]{SobmetPoincare} and references within). If the Carnot group is ideal (which means, in the special case of Carnot groups, that it is also fat and thus of step $\leq 2$) and it is equipped with a natural gauge function (see Section \ref{sec:nonsmooth}), then $\sfG$ is locally bounded (see Figure \ref{fig:implications}). Furthermore $\sfd$ is locally semiconcave in charts outside of the diagonal \cite{riffordbook}. As a consequence one can prove that natural gauge functions satisfy
\begin{equation}\label{eq:labelfake}
\sup_{x\in \Omega} \| \sfG(x,\cdot)\|_{BV(\Omega)} < +\infty,
\end{equation}
for any bounded set $\Omega$. This means that one can apply the compactness Theorem \ref{thm:precompactMCP}\ref{i:ComMCP4} to sequences of ideal Carnot groups equipped with natural gauge functions provided that the Poincaré inequality, the $\mathrm{BV}$ bound on $\sfG$, and the local $\sfG$-diameter bound hold with uniform constants w.r.t.\@ to the sequence.
\end{remark}
\section{Vector-valued gauge functions}\label{sec:vectorial}

In this section we extend the previous theory to the case of a vector-valued gauge function. Let $\R^m_{+}:=[0,+\infty)^m$. We consider the real projective ``positive'' space:
\begin{equation}
\RP^m_+ :=  \Big(\R^{m+1}_{+} \setminus \{0\}\Big)/ \sim\,,
\end{equation}
where $x\sim y$ if $x=\lambda y$ for $\lambda > 0$.  To convey the correct intuition, in this section $\RP^m_{+}$ shall be thought as a compactification of $\R^m_{+}$, adding one point at infinity for every direction, rather than space of directions in $\R^{m+1}_{+}$.

The map $x \mapsto [x:1]$ embeds $\R^m_+$ in $\RP^m_+$.  For $\theta\in  \RP_+^m\setminus  \R^m_+$, we set $|\theta|=+\infty$; otherwise, if $\theta = [x:1]$ for some $x\in \R^m_+$, $|\theta|$ is the standard norm of the vector $x \in  \R^m_+$.

Finally, dilations on $\R^m_+$ extend to $\RP^m_+$: if $\theta = [x:a]$ for $x\in \R^m$ and $a\in \R$, and $t>0$, we denote $t\theta = [t x: a]$.

A vector valued gauge function on a metric measure space $(X,\sfd,\mm)$ is  a Borel function
\begin{equation}\label{eq:defGVec}
\sfG : X\times X \to \RP^m_+\, .
\end{equation}

Let $N\in [1,+\infty)$, and $\sfs:[0,+\infty)^{m}\to \R$ be a continuous function such that
\begin{equation}\label{eq:fftcNVec}
\sfs(\theta)=c  |\theta|^{N}+o(|\theta|^{N}) \qquad \text{ for some } c >0.
\end{equation}

The \emph{positivity domain} of $\sfs$ is the largest open and star-shaped (with respect to the origin $0\in \R^m$) subset $\DOM \subseteq \R^m_+$ such that $\sfs>0$ on $\DOM\setminus\{0\}$. Since $\DOM$ is star-shaped then, for every $\theta \neq 0\in \R^m_+$, there is a unique non-zero limit point of $\DOM$ in $\RP^m_+$, along the  half-line defined by $\theta$, which we denote by $\cD_{\theta} \in \RP^m_+$ (it can be either in the affine part $\R^{m}_+\subset \RP^m_+$ or at infinity, see Figure \ref{fig:DOM-cdtheta}).

\begin{figure}[t]
\centering
\includegraphics[width=.85\textwidth]{../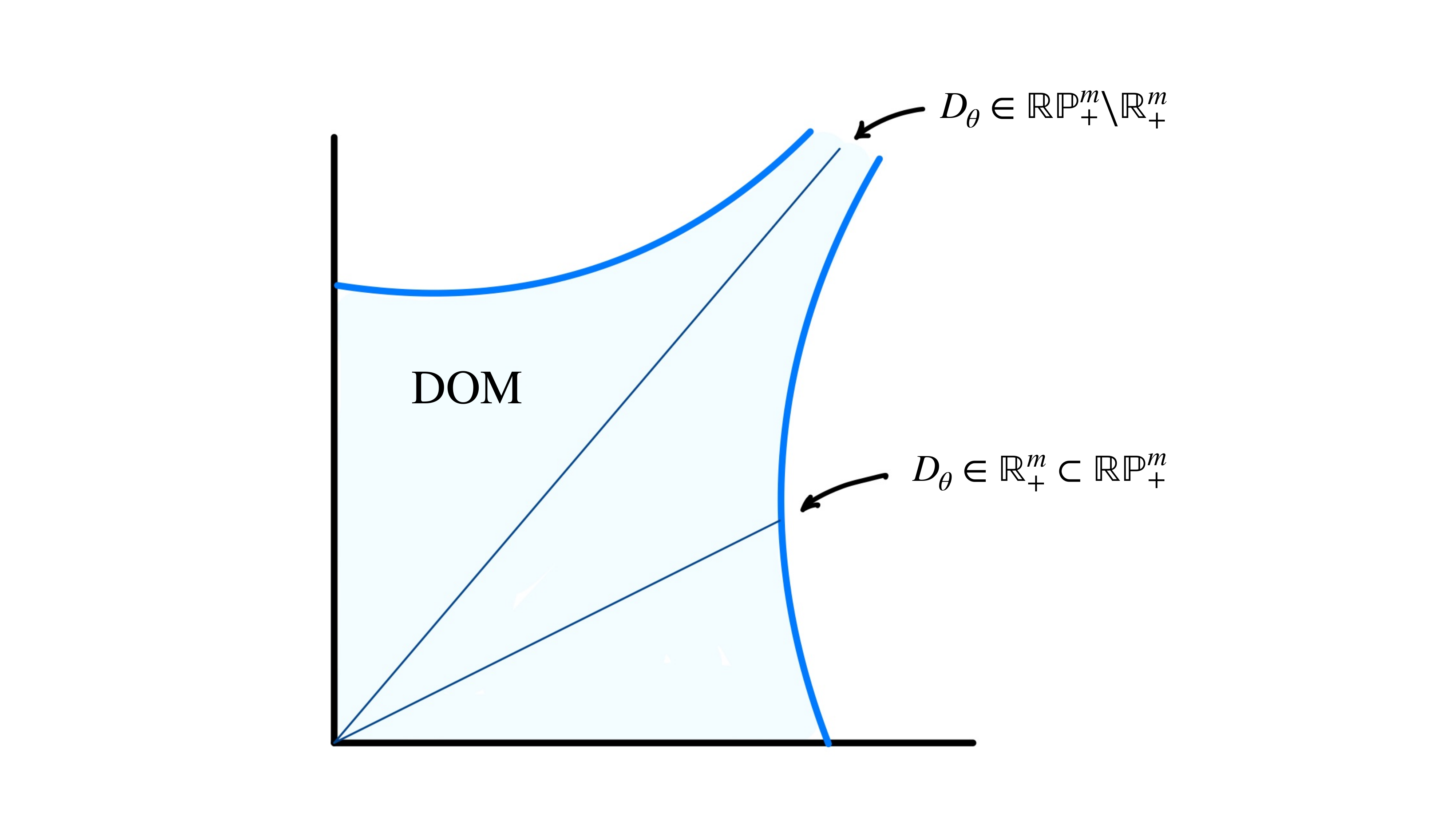}
\caption{An example of positivity domain $\DOM$ and the corresponding $\cD_{\theta} \in \RP^m_+$.}\label{fig:DOM-cdtheta}
\end{figure}

Define the \emph{distortion coefficient} $\beta_{(\cdot)}(\cdot):[0,1]\times \RP^{m}_+\to [0,+\infty]$ as
\begin{align}\label{eq:defbetaVec}
 \beta_{t}(\theta):=
 \begin{cases}
 t^{N}\quad &|\theta| =0,  \\
\dfrac{\sfs(t\theta )}{\sfs(\theta)}& |\theta|\neq 0 \text{ and } \theta \in \mathrm{DOM},\\
 \displaystyle  \liminf_{\phi \to \cD_{\theta}^-}  \dfrac{\sfs(t\phi)}{\sfs(\phi)} & \theta \notin \DOM,
 \end{cases}
\end{align}
where the liminf is a directional limit, in the direction of $\theta$, from the inside of $\DOM$.

For a scalar gauge function, in some statements it was useful to consider the case when $\theta\mapsto \beta_t(\theta)$ is monotone. When  the gauge function is vector-valued, the appropriate counterpart is the \emph{radial monotonicity}:  we say that $\beta_t(\cdot)$ is \emph{radially non-increasing} (resp.\@ \emph{radially non-decreasing}) if, for every fixed $\theta\in \DOM\setminus \{0\}$ and $t\in (0,1)$, the function  $\lambda\mapsto \beta_t(\lambda\theta)$ is non-increasing  (resp.\@ non-decreasing).  Likewise, we say that  $\sfs(\cdot)$ is \emph{radially real-analytic on $\overline{\DOM}$} if for any $\theta \in \R^m_+\setminus\{0\}$ the function $[0,|\cD_{\theta}|] \ni t \mapsto \sfs(t\hat{\theta})$ is real-analytic, where $\hat{\theta}:= \theta/|\theta|$.

Upon a suitable reformulation, we have the following elementary properties that generalize the ones of Proposition \ref{prop:propertiesbeta} to the vector case.

\begin{proposition}\label{prop:propertiesbetaVec}
Any distortion coefficient satisfies the following:
\begin{enumerate}[label=(\roman*)]
\item \label{i:propertiesbetaVec-1}  $\beta_0(\theta)=0$ and $\beta_1(\theta) = 1$ for all $\theta \in \RP^m_+$;
\item \label{i:propertiesbetaVec0}  if $|\cD_\theta|<+\infty$, then $\beta_t(\theta) = +\infty$ for all $\theta \notin\DOM$ and $t\in (0,1)$;
\item \label{i:propertiesbetaVec1} $\beta_{t}(\theta)=0$ for some $|\theta| < +\infty$ if and only if $t=0$;
\item \label{i:propertiesbetaVec2} for every $t\in [0,1]$,  $\theta_{j}, \theta\in  \R^m_+$,  if $\theta_{j}\to \theta$ along the direction of $\theta$, then $\beta_{t}(\theta_{j})\to \beta_{t}(\theta)$ in $[0,+\infty]$;  if moreover $\theta, \theta_j\in \DOM$ for all $j\in \N$ and $\theta_{j}\to \theta$, then $\beta_{t}(\theta_{j})\to \beta_{t}(\theta)$ in $[0,+\infty)$, for every $t\in [0,1]$;
\item \label{i:propertiesbetaVec3} $\beta$ is  continuous and finite when restricted to $[0,1]\times \DOM $;
\item \label{i:propertiesbetaVec4} for all fixed $t\in (0,1]$,  the distortion coefficient $\beta_t(\cdot)$ is bounded from below away from zero on any bounded set of $\R^m_+$;
\item \label{i:propertiesbetaVec5} assume that $\DOM$ is a bounded subset of $\R^m_+$, that $\sfs \in C^\infty(\overline{\DOM})$ and $\sfs$ is radially real analytic on $\overline{\DOM}$. Then there exists $N'\geq N$ such that $\beta_t(\theta) \geq t^{N'}$ for all $t\in [0,1]$ and $\theta \in\RP^{m}_+$.
\end{enumerate}
\end{proposition}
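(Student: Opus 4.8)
The plan is to prove (i)--(vii) one by one, adapting the scalar arguments behind Proposition~\ref{prop:propertiesbeta} and exploiting that $\DOM\subseteq\R^m_+$ is open and star-shaped with respect to the origin. I would first record a few structural facts that will be used repeatedly. For each direction $\hat\theta\in\S^{m-1}_+$, the set $\DOM\cap\{s\hat\theta:s>0\}$ is a half-open segment $(0,R_{\hat\theta})\hat\theta$ with $R_{\hat\theta}=|\cD_{\hat\theta}|$, and $\overline\DOM\cap\{s\hat\theta:s\ge0\}=[0,R_{\hat\theta}]\hat\theta$ (here one uses the elementary fact that the segment joining an interior point of an open star-shaped set to one of its closure points stays in the interior). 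When $R_{\hat\theta}<+\infty$, maximality of $\DOM$ forces $\sfs(\cD_{\hat\theta})=0$: otherwise, by continuity, $\DOM$ could be enlarged near $\cD_{\hat\theta}$ by a small open star-shaped set on which $\sfs>0$, a contradiction. Next, $\sfs(0)=0$, and by \eqref{eq:fftcNVec} one has $\sfs(\psi)/|\psi|^N\to c>0$ uniformly in the direction of $\psi$; combined with $C^\infty$-regularity (where it is assumed), comparison of the Taylor homogeneous parts and Euler's identity give, near the origin, $\sfs(\psi)=c|\psi|^N+o(|\psi|^N)$ and $\langle\nabla\sfs(\psi),\psi\rangle=Nc|\psi|^N+o(|\psi|^N)$. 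Finally, $\theta\in\DOM$ and $t\in[0,1]$ imply $t\theta\in\DOM$.

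With these in hand: (i) is a direct evaluation of \eqref{eq:defbetaVec} using $\sfs(0)=0$. For (v), the map $(t,\psi)\mapsto\sfs(t\psi)/\sfs(\psi)$ is a ratio of continuous functions with positive denominator on $[0,1]\times(\DOM\setminus\{0\})$, and it tends to $t^N$ at $\psi=0$ by the uniform expansion above, giving continuity and finiteness on $[0,1]\times\DOM$. For (ii), along the $\theta$-direction $\sfs(t\phi)\to\sfs(t\cD_\theta)>0$ (since $t\cD_\theta$ lies strictly inside the ray's $\DOM$-segment) while $\sfs(\phi)\to\sfs(\cD_\theta)=0^+$, so the defining $\liminf$ is $+\infty$. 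Item (iv) then follows: if $\theta\in\DOM$ the claim is continuity of $\beta_t$ on $\DOM$ from (v); if $\theta\notin\DOM$ and $\theta_j\to\theta$ radially, either $\theta_j$ is eventually outside $\DOM$ on the same ray, so $\beta_t(\theta_j)=\beta_t(\theta)$ identically, or $\theta_j\in\DOM$ with $|\theta_j|\uparrow R_{\hat\theta}$, whence $\theta=\cD_\theta$ and $\beta_t(\theta_j)=\sfs(t\theta_j)/\sfs(\theta_j)\to+\infty=\beta_t(\theta)$ as in (ii). Item (iii) combines (i) for $t=0$ with the fact that for $t\in(0,1]$ and $|\theta|<+\infty$ one has either $\beta_t(\theta)=t^N>0$, or $\beta_t(\theta)=\sfs(t\theta)/\sfs(\theta)>0$ (as $t\theta\in\DOM\setminus\{0\}$), or $\theta\notin\DOM$ with $|\cD_\theta|<+\infty$, reducing to (ii). For (vi): it suffices to treat $t\in(0,1)$ ($t=1$ being trivial by (i)); on $B\setminus\DOM$ one has $|\cD_\theta|<+\infty$ and hence $\beta_t=+\infty$ by (ii), while on $B\cap\DOM$ one argues by contradiction: a sequence $\theta_j\in B\cap\DOM$ with $\beta_t(\theta_j)\to0$ subconverges in $\overline B$ to some $\psi\in\overline\DOM$; if $\psi\in\DOM$ this contradicts (iv), and if $\psi\in\partial\DOM$ (so $\psi=\cD_{\hat\psi}$, $\sfs(\psi)=0$) then $\sfs(t\psi)>0$ forces $\beta_t(\theta_j)\to+\infty$, again a contradiction.

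The substantial item is (vii), generalizing Proposition~\ref{prop:propertiesbeta}\ref{i:propertiesbeta5}. As in the scalar case, $\beta_t(\theta)\ge t^{N'}$ for $\theta\in\DOM$ and $t\in[0,1]$ is equivalent to $t\mapsto t^{-N'}\sfs(t\theta)$ being non-increasing on $(0,1]$, i.e.\ to $\langle\nabla\sfs(\psi),\psi\rangle\le N'\sfs(\psi)$ for all $\psi\in\DOM$; taking $N':=\lceil E\rceil$ with
\[
E:=\sup_{\psi\in\DOM\setminus\{0\}}\frac{\langle\nabla\sfs(\psi),\psi\rangle}{\sfs(\psi)}
\]
then works (and $N'\ge N$ automatically, since $E\ge N$ by the near-origin expansion), with the bound propagating to $\RP^m_+$ (to $|\theta|=0$ since $t^{N'}\le t^N$, and to $\theta\notin\DOM$ through the $\liminf$ defining $\beta_t$). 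I note that the hypotheses already force $N\in 2\N$: radial real-analyticity forces the radial vanishing order at $0$ to equal $N$, and comparison of $C^\infty$-Taylor homogeneous parts forces the degree-$N$ part to be $c|\psi|^N$. So the whole point is $E<\infty$: near $0$ the expansion gives $\langle\nabla\sfs(\psi),\psi\rangle/\sfs(\psi)=N+O(|\psi|)$, hence boundedness on $\DOM\cap B_{\delta_0}(0)$; on the compact set $\overline\DOM\setminus B_{\delta_0}(0)$ the quotient is continuous wherever $\sfs>0$, so one is left to bound $\limsup$ of $\langle\nabla\sfs(\psi),\psi\rangle/\sfs(\psi)=sg_{\hat\psi}'(s)/g_{\hat\psi}(s)$ (writing $\psi=s\hat\psi$, $g_{\hat\psi}(s):=\sfs(s\hat\psi)$) as $\psi\in\DOM$ approaches $\partial\DOM$. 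Using that $\hat\theta\mapsto R_{\hat\theta}$ is continuous (lower semicontinuity from openness of $\DOM$, upper from closedness of $\overline\DOM$ and the star-shapedness lemma), that each $g_{\hat\psi}$ is real-analytic and not identically zero hence has finite vanishing order $\ell_{\hat\psi}$ at $R_{\hat\psi}$, and that these orders stay bounded by a compactness argument (if $\ell_{\hat\psi_n}\to\infty$ then all radial derivatives of $\sfs$ would vanish at the limit boundary point, contradicting analyticity), one writes $g_{\hat\psi}(s)=(R_{\hat\psi}-s)^{\ell_{\hat\psi}}v_{\hat\psi}(s)$ with $v_{\hat\psi}>0$ and $v_{\hat\psi},v_{\hat\psi}'$ uniformly controlled near $R_{\hat\psi}$ (via Taylor-with-remainder and the uniform bounds on derivatives of $\sfs$ on $\overline\DOM$); then $sg_{\hat\psi}'(s)/g_{\hat\psi}(s)=-s\ell_{\hat\psi}/(R_{\hat\psi}-s)+s\,v_{\hat\psi}'(s)/v_{\hat\psi}(s)$ is bounded above, uniformly in $\hat\psi$, and compactness of $\S^{m-1}_+$ yields $E<\infty$.

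The main obstacle is precisely this last uniformity over directions: for a single ray the scalar statement is immediate, but one must show that the quantities governing the bound — the ray endpoint $R_{\hat\theta}$, the boundary vanishing order $\ell_{\hat\theta}$, and the remainder factor $v_{\hat\theta}$ — vary in a uniformly controlled way, which is exactly where the $C^\infty$-regularity of $\sfs$ on the compact closure $\overline\DOM$, together with radial analyticity, is essential. Everything else is bookkeeping with \eqref{eq:defbetaVec} and the star-shaped structure of $\DOM$.
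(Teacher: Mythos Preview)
Your route to item (vii) is essentially the paper's: both hinge on showing that the radial vanishing orders of $\sfs$ at $\partial\DOM$ are uniformly bounded, via a compactness argument using $C^\infty$-smoothness on $\overline\DOM$ and radial real-analyticity. The paper records only this vanishing-order step (if orders were unbounded along $(x_k)\subset\partial\DOM$, a subsequential limit $x_\infty\in\partial\DOM\setminus\{0\}$ would have all radial derivatives zero, contradicting analyticity and the asymptotics at $0$); you additionally spell out the reduction to finiteness of $E=\sup_{\psi\in\DOM\setminus\{0\}}\langle\nabla\sfs(\psi),\psi\rangle/\sfs(\psi)$ and the near-origin estimate, which is a useful complement.

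There is, however, a real gap in one of your preliminary ``structural facts'': the claim that in an open star-shaped set the segment from an interior point to a closure point stays in the interior is \emph{false} --- that is a property of convex sets. Concretely, take $\sfs(x,y)=(x^2+y^2)(1-x^2-y^2)\big((x-\tfrac12)^2+y^2\big)$ on $\R^2_+$: then $N=2$ and $\DOM=(B_1(0)\cap\R^2_+)\setminus\{(s,0):s\ge\tfrac12\}$; one has $(1,0)\in\overline\DOM$ yet $(t,0)\notin\DOM$ for $t\in[\tfrac12,1)$, and $(\tfrac34,0)\in\partial\DOM$ with $\sfs(\tfrac34,0)>0$. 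This invalidates your derived claims $\overline\DOM\cap\{s\hat\theta:s\ge0\}=[0,R_{\hat\theta}]\hat\theta$, ``$\psi\in\partial\DOM\Rightarrow\psi=\cD_{\hat\psi}$ and $\sfs(\psi)=0$'', and upper semicontinuity of $\hat\theta\mapsto R_{\hat\theta}$, which you invoke in (vi) and in the uniformity step of (vii). For (vii) one can salvage the argument without these claims by working directly at the limit boundary point as the paper does, since what is genuinely needed is only $[0,x_\infty]\subset\overline\DOM$, and this \emph{does} follow from star-shapedness ($t x_\infty$ is a limit of $t$-scalings of interior points). For (vi), however, the same example gives $\beta_{2/3}(\theta_j)\to 0$ for $\theta_j=(\tfrac34,\tfrac1j)\in\DOM$, so the gap there is not merely cosmetic.
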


\begin{proof}
The only item requiring a proof is \ref{i:propertiesbetaVec5}. Denote by $\sfs^{(k)}_{rad}(x):=\left.\frac{d^k}{dt^k}\right|_{t=1}\sfs(tx)$ the $k^{th}$-order derivative of $\sfs$ in the radial direction.  Assume by contradiction that there exists a sequence $(x_k)\subset \partial \DOM$ such that $\sfs^{(j)}_{rad}(x_k)=0$ for all $j\leq k$. Since by assumption $\DOM$ is bounded and $\sfs$ satisfies  \eqref{eq:fftcNVec} then, up to a non-relabeled subsequence, there exists $x_\infty\in \partial \DOM {\setminus\{0\}}$ such that $x_k\to x_\infty$.  Then, by the smoothness of $\sfs$, we infer that $\sfs^{(k)}_{rad}(x_\infty)=0$ for all $k\in\N$. But then, since $\sfs$ is radially real-analytic on $\overline{\DOM}$, it must be that $\sfs\equiv 0$ on the half line $\{t x_\infty \mid t>0\}\cap \overline\DOM$, contradicting \eqref{eq:fftcNVec}.
\end{proof}

The definitions of $\CD(\beta,n)$ and $\MCP(\beta)$ for a metric measure space $(X,\sfd,\mm)$  endowed with a vector-valued gauge $\sfG:X\times X\to \RP^m_+$ are verbatim as in Definition \ref{def:CDMCPbetan}, once the distortion coefficients are as in \eqref{eq:defbetaVec}.

\subsection{Geometric consequences}\label{sec:geometricconsequencesVec}

\subsubsection*{Generalized Brunn-Minkowski}

The generalized Brunn-Minkowski inequality for $\CD(\beta, n)$ spaces  (Theorem \ref{thm:Brunn-Mink}) and the generalized half-Brunn-Minkowski inequality for $\MCP(\beta)$ spaces (Theorem \ref{thm:half-Brunn-Mink}) hold with verbatim the same statements and proofs in case of a vector-valued gauge function.

\subsubsection*{Gauge diameter estimate}\label{sec:diamVec}

The following extends Definition \ref{def:Gdiam} to the vector-valued case.

\begin{definition}\label{def:GdiamVec}
The $\sfG$-diameter for a non-empty set $S\subset X$ is defined as
\begin{equation}
\diam_\sfG(S)=  \sup_{x\in S} \left(\mm\shortminus\esssup_{y\in S, \,  y\neq x} |\sfG(x,y)|\right).
\end{equation}
A non-empty set $S\subset X$ is $\sfG$-bounded if it has finite $\sfG$-diameter.
\end{definition}

Recall  that $\DOM\subseteq \R^m_+\subset \RP^m_+$ is defined as the largest open and star-shaped (with respect to the origin) subset such that $\sfs>0$ on $\DOM\setminus \{0\}$.

\begin{proposition}[$\sfG$-diameter estimate]\label{prop:GdiamestimateVec}
Let $(X,\sfd,\mm)$ be a m.m.s.\@ with gauge function $\sfG$. Let $\beta$ as in \eqref{eq:defbetaVec}, and assume that $\beta_t(\theta)=+\infty$ for all $\theta \notin \DOM$, $t\in (0,1)$. Note that this is in particular the case if $\DOM$ is bounded.
\begin{itemize}
\item If $(X,\sfd,\mm,\sfG)$ satisfies the $\MCP(\beta)$, then for all $x\in\supp\mm$ it holds
\begin{equation}
\sfG(x,y)\in \DOM, \qquad \mm\text{-a.e. $y \in X$,\, $y\neq x$}.
\end{equation}
In particular, $\diam_{\sfG}(\supp\mm) \leq \diam_{\R^m} (\DOM)$.

\item If $(X,\sfd,\mm,\sfG)$ satisfies the $\CD(\beta,n)$ for some $n\in [1,+\infty)$, then
\begin{equation}
\sfG(x,y) \in \DOM,\qquad \pi\text{-a.e. }(x,y)\in X\times X,
\end{equation}
for all $\pi = (\ee_0, \ee_1)_\sharp\nu$, where $\nu \in \OptGeo(\mu_0,\mu_1)$ is such that inequality  \eqref{eq:defCDbetan} holds.
\end{itemize}
\end{proposition}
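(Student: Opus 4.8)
The plan is to adapt the proof of the scalar case (Proposition \ref{prop:Gdiamestimate}) essentially verbatim, replacing the scalar threshold $\{\sfG(x,\cdot) \geq \cD\}$ by the set where $\sfG(x,\cdot)$ fails to lie in $\DOM$, and using the vector-valued half-Brunn-Minkowski inequality (which holds with the same proof, as recorded just above) and the properties of $\beta$ from Proposition \ref{prop:propertiesbetaVec}. The key point that makes the argument go through in both cases is the hypothesis $\beta_t(\theta)=+\infty$ for all $\theta\notin\DOM$ and $t\in(0,1)$, which is the vector analogue of $\beta_t(\theta)=+\infty$ for $\theta\geq\cD$, and which holds in particular when $\DOM$ is bounded by Proposition \ref{prop:propertiesbetaVec}\ref{i:propertiesbetaVec0}.

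For the first item, I would fix $x\in\supp\mm$ and, for each $R>0$, consider the Borel set $A = \{y\in X \mid y\neq x,\ \sfG(x,y)\notin\DOM\}\cap B_R(x)$; measurability follows since $\DOM$ is open in $\R^m_+$ and $\sfG$ is Borel, with the points mapped to $\RP^m_+\setminus\R^m_+$ (infinite $|\theta|$) also excluded from $\DOM$. Suppose $\mm(A)>0$. Since $x\notin A$ and $\mm(\{x\}\cap A)=0$, Theorem \ref{thm:half-Brunn-Mink} (vector version) applies with $\bar x=x$, $A_1=A$, giving $\bar\mm(A_t)\geq \beta_t(x,A)\cdot\mm(A)$ for $t\in(0,1)$. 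By definition \eqref{eq:betainf} (adapted to the vector gauge) and the standing hypothesis, $\beta_t(x,A) = +\infty$ for $t\in(0,1)$ because $\beta_t(\theta)=+\infty$ for every $\theta\notin\DOM$. On the other hand $A_t = Z_t(\{x\},A)\subseteq Z_t(\{x\},B_R(x))\subseteq B_{tR}(x)$, so $\mm(B_{tR}(x))\geq\bar\mm(A_t)=+\infty$, contradicting local finiteness of $\mm$. Hence $\mm(A)=0$ for all $R$, and letting $R\to\infty$ gives $\sfG(x,y)\in\DOM$ for $\mm$-a.e. $y\neq x$. The diameter bound $\diam_\sfG(\supp\mm)\leq\diam_{\R^m}(\DOM)$ then follows from Definition \ref{def:GdiamVec}: for $\mm$-a.e. $y$ one has $|\sfG(x,y)|\leq\sup_{\theta\in\DOM}|\theta| = \diam_{\R^m}(\DOM)$ (using that $\DOM$ is star-shaped with respect to $0$, so its Euclidean diameter equals $\sup_{\theta\in\DOM}|\theta|$... more precisely $\sup|\theta|\le\diam_{\R^m}(\DOM)$, which suffices).

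For the second item, I would argue in the same way as in Proposition \ref{prop:Gdiamestimate}: let $\nu\in\OptGeo(\mu_0,\mu_1)$ be as in the statement, with $\mu_0,\mu_1$ of bounded support, and set $\pi=(\ee_0,\ee_1)_\sharp\nu$. If $\pi(\{(x,y)\mid \sfG(x,y)\notin\DOM\})>0$, then plugging this into the $\CD(\beta,n)$ inequality \eqref{eq:defCDbetan} and using $\log\beta_t(\sfG(\gamma_0,\gamma_1))=+\infty$ on a positive-$\nu$-measure set of geodesics forces $\U_n(\mu_t|\mm)=+\infty$ for all $t\in(0,1)$, hence $\mm(\supp\mu_t)=+\infty$; but $\supp\mu_t$ is bounded since $\supp\mu_0$ and $\supp\mu_1$ are, contradicting local finiteness. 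I do not foresee a genuine obstacle here — the only mild subtlety is checking measurability of the relevant sets and confirming that the vector-valued $\beta$ blows up off $\DOM$ under the stated hypothesis; both are immediate from Proposition \ref{prop:propertiesbetaVec} and the definitions. The proof is therefore a routine transcription of the scalar argument, with $\cD$ replaced by $\partial\DOM$.
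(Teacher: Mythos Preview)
Your proposal is correct and follows essentially the same argument as the paper's proof: both items proceed by contradiction, using the half-Brunn-Minkowski inequality with $A=\{y\neq x:\sfG(x,y)\notin\DOM\}\cap B_R(x)$ for the first and the $\CD(\beta,n)$ inequality for the second, in each case deriving $\mm(\text{bounded set})=+\infty$ from $\beta_t(\theta)=+\infty$ off $\DOM$. The additional remarks you include on measurability and the diameter bound are reasonable elaborations but not needed beyond what the paper does.
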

\begin{proof}
We argue by contradiction. Fix $R>0$, $x\in\supp\mm$ and let 
\begin{equation}
A=\{y\mid  y\neq x, \, \sfG(x,y)\notin \DOM \}\cap B_R(x).
\end{equation}
Assume by contradiction that $\mm(A)>0$. Then we apply the half-Brunn-Minkowski inequality to the set $A$. Notice that $\beta_t(x,A) = +\infty$ for all $t\in (0,1)$, and that $A_t \subset (B_R(x))_t \subset B_{tR}(x)$. We obtain hence
\begin{equation}
\mm(B_{tR}(x)) \geq \bar{\mm}(A_t) = +\infty,\qquad \forall\, t\in(0,1),
\end{equation}
contradicting the local finiteness of $\mm$.

To prove the second part of the proposition, we argue in a similar way: assume that there exists $\nu\in \OptGeo(\mu_0,\mu_1)$ as in the statement (in particular $\mu_0,\mu_1$ have bounded and disjoint supports) such that 
\begin{equation}
\pi(\{(x,y)\in X\times X \mid \sfG(x,y) \notin \DOM \})>0.
\end{equation}
Using \eqref{eq:defCDbetan} we obtain that, for the corresponding $W_2$-geodesic $\mu_t = (\ee_t)_\sharp \nu$ it holds
\begin{equation}
\U_n(\mu_t|\mm) = +\infty,\qquad \forall\, t\in(0,1).
\end{equation}
This implies that $\mm(\supp\mu_t)=+\infty$. However, since $\supp\mu_0$ and $\supp\mu_1$ are bounded, then also $\supp\mu_t$ is bounded, yielding a contradiction.
\end{proof}

\subsubsection*{Local doubling}

The following proposition can be proved along the same lines of Proposition \ref{prop:doublingsimple}.

\begin{proposition}[Local doubling]\label{prop:doublingsimpleVec2}
Let $(X,\sfd,\mm)$ be a m.m.s.\@ with gauge function $\sfG$, satisfying the $\MCP(\beta)$ with $\beta$ as in \eqref{eq:defbetaVec}. Then any $\sfG$-bounded Borel subset $S\subseteq \supp\mm$ satisfies the following inequality: for all $t\in (0,1)$ there exists $C_{S,t}>0$ such that
\begin{equation}\label{eq:doublingsimpleVec2}
\mm(B_{r}(x_0)\cap S)\leq C_{S,t}\cdot \mm(B_{tr}(x_0)),\qquad \forall\, r\geq 0,\;  \forall\, x_0\in S.
\end{equation}
The constant $C_{S,t}$ can be estimated in terms of $\beta$ and $\diam_{\sfG}(S)$:
\begin{equation}\label{eq:crRVec2}
\frac{1}{C_{S,t}} =\inf\{\beta_{t}(\theta)\, :\,  |\theta| \in [0,\diam_{\sfG}(S)] \}  \in (0,t^N].
\end{equation}
\end{proposition}

Corollaries \ref{cor:localdoubling}, \ref{cor:classicaldoubling} and \ref{cor:totallyboundedclassic} hold with verbatim the same statements and proofs replacing, when necessary, the monotonicity of $\beta_t$ with the radial monotonicity.

\subsubsection*{Geodesic dimension estimates}

Recall the role of the parameter $N\in [1,+\infty)$ in \eqref{eq:fftcNVec} and the Definition \ref{def:geodim} of geodesic dimension $\NN(x)$ at a point $x$  of a metric measure space. Since the proof of the geodesic dimension estimate (see Theorem \ref{thm:GeodDimEst})
\begin{equation}
\NN(x) \leq N,\qquad  \text{for all }  x\in \supp\mm,
\end{equation}
built on top of the half-Brunn Minkowski inequality, which holds in the case of a vector-valued gauge function, the same result holds with analogous proof,  provided that we assume the following non-triviality condition
\begin{equation}\label{eq:GNonTrivVec}
\mm(\{z\in X \mid  z\neq x, \,  |\sfG(x, z)|<+\infty\})>0, \qquad \forall\, x\in \supp\mm.
\end{equation}
which replaces \eqref{eq:GNonTriv} in the vector case.

\subsubsection*{Generalized Bishop-Gromov inequality}

Let $(X,\sfd,\mm)$ be a  metric measure space with gauge function $\sfG:X\times X\to \RP^{m}_+$.  In the following, for $r\in \R^m_+,\sfG\in \RP^m_+$, the notation
\begin{equation}
\sfG \leq r, \qquad \text{(resp.\@ $\sfG\geq r$)},
\end{equation}
means that $\sfG  =[\sfG_1:\dots:\sfG_m:1]$ is in the affine part $\R^m_+ \subset \RP^m_+$ and $\sfG_i \leq r_i$  (resp.\@ $\sfG_i \geq r_i$) for all $i=1,\dots,m$.  In particular, the notation $\sfG \in (r,R)$ for $r,R\in \R^{m}_{+}$ means that $\sfG\leq R$ and $r\leq\sfG$. In other words, the inequalities for vector-valued functions are meant component-wise. Furthermore, for $r\in \R^m_+\setminus 0$, we set $\hat{r} := \tfrac{r}{|r|}$; if $r=0$ then $\hat{r} =0$. 

\begin{definition}\label{def:gaugesetsVec}
For a point $x_{0}\in \supp \mm$ and for $r\in \R^m_+$, $\rho\geq 0$, we set:
\begin{align}
\rv_{\sfG}(x_0,r,\rho)& :=\mm\left(\{x\in X \mid \sfG(x_0,x)\leq r,\;  \sfd(x_0,x)\leq \rho\}\right), \\
\rs_{\sfG}(x_0,r,\rho)& :=\limsup_{\delta\downarrow 0}\frac{1}{\delta} \mm\left(\left\{x\in X \mid \sfG(x_0,x) \in (r-\delta\hat{r}, r],\; \sfd(x_0,x)\leq \rho\right\}\right), \quad r\neq 0.
\end{align}
Consider also the following measure of ``gauge balls'' and ``gauge spheres'':
\begin{align}
\rv_{\sfG}(x_0,r) & :=  \mm\left( \{ x\in X \mid \sfG(x_0,x)\leq r,\; \sfd(x_0,x)\leq |r|\}\right), \label{eq:defvGrVec} \\
\rs_{\sfG}(x_0,r) & := \limsup_{\delta\downarrow 0}\frac{1}{\delta}\left[\rv_{\sfG}(x_0,r)-\rv_{\sfG}(x_0,r-\delta \hat{r})\right], \quad r\neq 0. \label{eq:defsGrVec}
\end{align}
Notice that $\rv_{\sfG}(x_0,r) = \rv_{\sfG}(x_0,r,|r|)$ but $\rs_{\sfG}(x_0,r) \neq \rs_{\sfG}(x_0,r,|r|)$.
\end{definition}

The definition of the meek property is formulated verbatim for vector-valued gauge functions, see Definition \ref{def:meek}.

The proof of the generalized Bishop-Gromov inequality for a vector-valued gauge function can be adapted from the scalar case (Theorem \ref{thm:BG1}), by arguing along rays.

\begin{theorem}[Generalized Bishop-Gromov] \label{thm:BG1Vec}
Let $(X,\sfd,\mm)$ be a m.m.s.\@ endowed with a gauge function $\sfG$ satisfying the $\MCP(\beta)$ condition, with $\beta$ as in \eqref{eq:defbetaVec}.
Assume that 
\begin{itemize}
\item $\supp \mm$ is not a singleton;
\item $\sfG$ is meek;  (see Remark \ref{rmk:weakmeek} for a slightly weaker condition)
\item for all $x\in \supp\mm$ it holds (see Remark \ref{rem:onCond3.15bis} for a slightly weaker condition)
\begin{equation}\label{eq:GfiniteVec}
\mm(\{z\in X \mid x\neq z, \, |\sfG(x, z)|=+\infty\})=0.
\end{equation}
\end{itemize}
Then for every $x_0\in\supp\mm$ the following properties hold:
\begin{enumerate}[label = (\roman*)]
\item \label{i:BG1Vec} $\mm(\{x \mid \sfd(x_0,x) = r\})=0$, for every $r >0 $;
\item \label{i:BG2Vec} $\mm(\{x_0\})=0$; in particular, the half-Brunn-Minkowski inequality \eqref{eq:halfBrunMink} holds for every $\bar{x}\in \supp\mm$ and Borel set $A\subset X$ with $\mm(A)>0$;
\item \label{i:BG3Vec} $\mm(\{x \mid \sfG(x_0,x) = r\})=0$, for every $r \in \R^m_+$, $r\neq 0$;
\item \label{i:BG4Vec}  For every $\rho>0$, and  $r,R\in \mathbb{R}^{m}_+$ lying on the same direction $\theta\in \mathbb{S}^{m-1}_+$ with $0<|r|\leq |R|<\cD_{\theta}$ , the following holds:
\begin{align}
\frac{\rs_{\sfG}(x_{0}, r,\rho)}{ \rs_{\sfG}(x_{0}, R,\rho)} \geq \frac{|R|}{|r|}  \frac{\sfs( r )}{ \sfs(R)}. \label{eq:BisGr1VecS}
\end{align}
Furthermore, it holds
 \begin{equation} \label{eq:BisGr1VecV}
  \frac{\rv_{\sfG}(x_{0}, r,\rho)-\rv_{\sfG}(x_{0}, 0,\rho)}{ \int_{0}^{|r|} \sfs(t\theta)/t \, \di t} \geq \frac{\rv_{\sfG}(x_{0}, R,\rho)-\rv_{\sfG}(x_{0}, 0,\rho)}{ \int_0^{|R|} \sfs(t\theta)/t\, \di t}; 
 \end{equation}
\item \label{i:BG5Vec}  Assume that  $\theta \mapsto \beta_t(\theta)$ is radially  non-increasing for all $t\in [0,1]$, or that for all $x_0\in\supp\mm$ it holds $|\sfG(x_0,\cdot)| \geq \sfd(x_0,\cdot)$ $\mm$-a.e.. Then, for every $x_{0}\in \supp\mm$, $r,R\in \mathbb{R}^{m}_+$ lying on the same direction $\theta\in \mathbb{S}^{m-1}_+$ with $0<|r|\leq |R|<\cD_{\theta}$, the following holds:
\begin{align}
\frac{\rs_{\sfG}(x_{0}, r)}{ \rs_{\sfG}(x_{0}, R)} \geq \frac{|R|}{|r|} \frac{\sfs(r)}{ \sfs(R)}. \label{eq:BisGr2VecA} 
\end{align}
Furthermore, it holds:
 \begin{equation} \label{eq:BisGr2VecV}
 \frac{\rv_{\sfG}(x_{0}, r)}{ \int_{0}^{|r|}   \sfs(t\theta)/t \, \di t} \geq \frac{\rv_{\sfG}(x_{0}, R)}{ \int_0^{|R|} \sfs(t\theta)/t \, \di t};
 \end{equation}

\item  \label{i:BG6Vec} For every compact set $K_0\subseteq \R^m_+$, and $r,R\in K_0$ there exists a constant 
\begin{equation}
\frac{1}{C_{|r|/|R|,0}}:=\inf\{\beta_{|r|/|R|}(\theta) \mid \theta\in K_0 \} \in (0,(|r|/|R|)^N],
\end{equation}
such that,  if $r,R \in K_0$ lie along the same direction, it holds
\begin{equation}
 \rs_{\sfG}(x_0,R) \leq \frac{|r|}{|R|} C_{|r|/|R|,0}\cdot \rs_{\sfG}(x_0,r).
\end{equation}
As a consequence, the following doubling estimate for gauge balls holds: there exists a constant $C_0 = C_{1/2,0}>0$ such that
\begin{equation}
\rv_{\sfG}(x_0,2r)\leq C_0 \cdot \rv_{\sfG}(x_0,r), \qquad \forall\, r \in K_0/2.
\end{equation}
\end{enumerate}
\end{theorem}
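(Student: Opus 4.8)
The plan is to mimic the proof of Theorem \ref{thm:BG1} almost verbatim, with the single structural change that every monotonicity statement and every ``annulus'' is taken along the ray $\{\lambda\theta \mid \lambda>0\}$ determined by a fixed direction $\theta\in\S^{m-1}_+$. The dilation law $t\theta=[tx:a]$ together with the fact that the inequalities $\sfG\leq r$ are componentwise in $\R^m_+$ guarantees that the meek identity $\sfG(\gamma_0,\gamma_t)=t\,\sfG(\gamma_0,\gamma_1)$ sends a radial increment $(R-\delta\hat R,R]$ to the radial increment $(r-\delta'\hat r,r]$ with $r=(|r|/|R|)R$ on the same ray and $\delta'=(|r|/|R|)\delta$; this is the only place where the vector structure interacts with the transport argument, and once it is in place the half-Brunn--Minkowski inequality of Theorem \ref{thm:half-Brunn-Mink} can be invoked exactly as in the scalar case.

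For \ref{i:BG1Vec} I would fix $x_0\in\supp\mm$, a vector threshold $\rho\in\R^m_+$, and metric radii $0<r<R$, and transport $\delta_{x_0}$ onto the normalised restriction of $\mm$ to $E=\{x\mid \sfG(x_0,x)\leq\rho,\ \sfd(x_0,x)\in(R-R\delta,R]\}$ along a plan $\nu$ for which both the $\MCP(\beta)$ inequality and the meek identity hold on a full set $\Gamma$. Since $t=r/R<1$ forces $t\rho\leq\rho$ componentwise, the interpolant at time $r/R$ is concentrated on $\{\sfG(x_0,\cdot)\leq\rho,\ \sfd(x_0,\cdot)\in(r-\delta r,r]\}$, and Theorem \ref{thm:half-Brunn-Mink} gives the recursive bound with constant $C_{r/R,\rho}=\inf\{\beta_{r/R}(\theta')\mid|\theta'|\in[0,|\rho|]\}>0$, positive by Proposition \ref{prop:propertiesbetaVec}\ref{i:propertiesbetaVec4}. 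As in the scalar proof this forces $r\mapsto\mm(\{\sfG(x_0,\cdot)\leq\rho,\ \sfd(x_0,\cdot)\leq r\})$ to be locally Lipschitz (hence continuous), so metric spheres intersected with $\{\sfG(x_0,\cdot)\leq\rho\}$ are $\mm$-null; letting $\rho\to+\infty$ componentwise along a countable sequence and using \eqref{eq:GfiniteVec} for the locus $|\sfG|=+\infty$ yields \ref{i:BG1Vec}, and \ref{i:BG2Vec} follows immediately because $\supp\mm$ is not a singleton.

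For \ref{i:BG3Vec}--\ref{i:BG5Vec} I would fix a direction $\theta$, write thresholds as $r=\lambda_1\theta$, $R=\lambda_2\theta$ with $0<\lambda_1\leq\lambda_2<|\cD_\theta|$, and repeat the argument of \ref{i:BG3} with the roles of $\sfd$ and $\sfG$ exchanged: one cuts a small metric ball $B_\varepsilon(x_0)$ out of the gauge annulus $\{\sfG(x_0,\cdot)\in(R-\delta\hat R,R]\}$ (which may contain $x_0$), runs the transport at time $\lambda_1/\lambda_2$, uses the meek property to land in $\{\sfG(x_0,\cdot)\in(r-\delta\hat r,r]\}$, and lets $\varepsilon\downarrow0$ using $\mm(\{x_0\})=0$. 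This produces the differential inequality $\rs_{\sfG}(x_0,r,\rho)\geq(|R|/|r|)\,\beta_{|r|/|R|}(R)\,\rs_{\sfG}(x_0,R,\rho)$, and since $\beta_{|r|/|R|}(R)=\sfs(r)/\sfs(R)$ on $\DOM$ this is \eqref{eq:BisGr1VecS}. To pass to \eqref{eq:BisGr1VecV} I would first show, by the same telescoping/contradiction argument as in \ref{i:BG4} (using that $\mm$ is finite on bounded sets), that $\lambda\mapsto\rv_{\sfG}(x_0,\lambda\theta,\rho)$ is locally Lipschitz and finite on $(0,|\cD_\theta|)$, hence the integral of its a.e.\ derivative $\lambda\mapsto\rs_{\sfG}(x_0,\lambda\theta,\rho)$, and then apply the one-dimensional Gromov Lemma \cite[Lemma III.4.1]{Chav06} in the variable $\lambda$; the endpoint $\lambda=|\cD_\theta|$ when $\DOM$ is bounded is handled by Proposition \ref{prop:GdiamestimateVec}, exactly as Proposition \ref{prop:Gdiamestimate} is used at the end of the scalar proof. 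Part \ref{i:BG5Vec} is the same computation applied to $\rv_{\sfG}(x_0,\cdot)$ of \eqref{eq:defvGrVec}, splitting $\rv_{\sfG}(x_0,r)-\rv_{\sfG}(x_0,r-\delta\hat r)$ into the piece where $\sfG$ crosses the threshold and the piece where $\sfd$ does, the latter being dominated by $\beta_{|r|/|R|}(R)$ under radial monotonicity of $\beta_t$ and vanishing under $|\sfG|\geq\sfd$. Finally \ref{i:BG6Vec} is obtained by replacing the sharp factor $\beta_{|r|/|R|}(R)$ with the uniform lower bound $1/C_{|r|/|R|,0}=\inf\{\beta_{|r|/|R|}(\theta')\mid\theta'\in K_0\}\in(0,(|r|/|R|)^N]$ (positive and with the stated upper bound by Proposition \ref{prop:propertiesbetaVec} and inspection of \eqref{eq:defbetaVec}), integrating along the ray, and specialising to $K_0\supseteq\{r,2r\}$ with time $1/2$. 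I expect the main obstacle to be bookkeeping rather than conceptual: checking $\mm$-measurability of the sets produced by the transport argument (which a priori are only Suslin), verifying the compatibility of the vector dilation with componentwise inequalities so that the meek property maps radial increments to radial increments on the same ray, and — the genuinely delicate point — establishing the local Lipschitz regularity of $\lambda\mapsto\rv_{\sfG}(x_0,\lambda\theta,\rho)$ that makes Gromov's Lemma applicable, which reuses verbatim the finiteness-of-measure contradiction from the scalar proof of \ref{i:BG4}.
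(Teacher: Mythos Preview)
Your proposal is correct and matches the paper's approach: the paper does not give a detailed proof of this theorem, stating only that it ``can be adapted from the scalar case (Theorem \ref{thm:BG1}), by arguing along rays,'' which is precisely the strategy you outline in full detail. Your identification of the key bookkeeping points (measurability, compatibility of dilations with componentwise inequalities, and the local Lipschitz argument needed for Gromov's Lemma) goes beyond what the paper spells out but is consistent with how the scalar proof is structured.
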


The estimate on the maximal number of disjoint gauge balls (Proposition \ref{prop:number-of-butt}) holds with verbatim the same statement and proof in case of a vector-valued gauge function (replacing $\sfG$ by $|\sfG|$ in its statement and its proof).

\begin{remark}\label{rem:onCond3.15bis}
For the proof of Thorem \ref{thm:BG1Vec} condition \eqref{eq:GfiniteVec} can be weakened as follows: for all $x\in\supp\mm$  and all $r>0$ it holds
\begin{equation}\label{eq:GfiniteVec2}
\mm(\{z\in X\mid   \sfd(x,z)=r,\, |\sfG(x, z)|=+\infty\})=0.
\end{equation}
Both conditions \eqref{eq:GfiniteVec} and  \eqref{eq:GfiniteVec2} follow from the $\MCP(\beta)$, provided that $\beta_t(+\infty) = +\infty$, see Remark \ref{rmk:MCPimpliesGNonTriv}.
\end{remark}
\subsubsection*{Parametric doubling for gauge balls}
 
Finally, the parametric doubling property (Proposition \ref{prop:pardoubling}) holds also in the case of vector-valued gauge functions. The proof follows verbatim making use of the appropriate vector-valued versions of the Bishop-Gromov theorem (Theorem \ref{thm:BG1Vec}) and gauge diameter estimates (Proposition \ref{prop:GdiamestimateVec}).
\begin{proposition}[Parametric doubling property]\label{prop:pardoublingVec}
With the same assumptions of Theorem \ref{thm:BG1Vec}, for every bounded set $K\subseteq\DOM$ there exists $C_K=C_K(\beta)\geq 1$ such that
\begin{equation}
\frac{\rv_{\sfG}(x, 2r,\rho)-\rv_{\sfG}(x,0,\rho)}{\rv_{\sfG}(x, r,\rho)-\rv_{\sfG}(x,0,\rho)}\leq  2^N C_K, \qquad \forall\, r \in K,\quad\forall\, \rho>0, \quad \forall\, x\in \supp\mm,
\end{equation}
where $N>1$ is given by \eqref{eq:fftcNVec}. Furthermore, $C_K \downarrow 1$ as $\diam_{\R^m}(K)\downarrow 0$.

If $\theta\mapsto \beta_t(\theta)$ is radially monotone non-increasing for all $t\in [0,1]$, or if for all $x_0\in\supp\mm$ it holds $|\sfG(x_0,\cdot)| \geq \sfd(x_0,\cdot)$ $\mm$-a.e., then the previous inequality holds true for gauge balls, with the same constants, that is
\begin{equation}
\frac{\rv_{\sfG}(x, 2r)}{\rv_{\sfG}(x, r)}\leq  2^N C_{K}, \qquad \forall\, r\in K, \quad \forall\, x\in \supp\mm.
\end{equation}
\end{proposition}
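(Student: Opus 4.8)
The plan is to transcribe, ray by ray, the proof of the scalar statement (Proposition~\ref{prop:pardoubling}), feeding in the vector-valued Bishop--Gromov inequality of Theorem~\ref{thm:BG1Vec} in place of the scalar one and the vector gauge-diameter estimate (Proposition~\ref{prop:GdiamestimateVec}) in place of the scalar one. Fix once and for all $x\in\supp\mm$, $\rho>0$, $r\in K$, and set $\theta:=\hat r\in\S^{m-1}_+$, so that $r$ and $2r$ lie along the direction $\theta$.

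Suppose first we are in the interior regime $|2r|<|\cD_\theta|$, so that $2r\in\DOM$ (recall $\DOM$ is open and star-shaped with respect to $0$). Then the integrated Bishop--Gromov inequality \eqref{eq:BisGr1VecV} of Theorem~\ref{thm:BG1Vec}\ref{i:BG4Vec}, applied to the pair $(r,2r)$ on the ray $\theta$, rearranges to
\[
\frac{\rv_{\sfG}(x,2r,\rho)-\rv_{\sfG}(x,0,\rho)}{\rv_{\sfG}(x,r,\rho)-\rv_{\sfG}(x,0,\rho)}\ \le\ \frac{\int_0^{|2r|}\sfs(t\theta)/t\,\di t}{\int_0^{|r|}\sfs(t\theta)/t\,\di t}.
\]
By \eqref{eq:fftcNVec} --- a statement about $\sfs$ near $0$, hence uniform in the direction --- one has $\int_0^s\sfs(t\theta)/t\,\di t=\tfrac cN\,s^N(1+o(1))$ as $s\to0^+$ uniformly in $\theta$, so the right-hand side equals $2^N(1+o(1))$ as $|r|\to0$, uniformly over directions; and since the integrands are positive and integrable on the relevant segments, the quotient $\int_0^{|2r|}\sfs(t\hat r)/t\,\di t\big/\int_0^{|r|}\sfs(t\hat r)/t\,\di t$ extends continuously to the compact set $\{r\in\overline K:|2r|\le|\cD_{\hat r}|\}$ and is bounded there. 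I would set $2^NC_K$ to be such a bound: then $C_K\ge1$, $C_K$ depends only on $\beta$ (the values of $\sfs$ on the compact cone over $K$ truncated at $\overline\DOM$), and $C_K\downarrow1$ as $\diam_{\R^m}(K)\downarrow0$ by the uniform expansion.

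In the boundary regime $|2r|\ge|\cD_\theta|$ --- which forces $|\cD_\theta|<\infty$ and $\tfrac12|\cD_\theta|\le|r|<|\cD_\theta|$, the analogue of the case $\cD<\infty$ in the scalar proof --- inequality \eqref{eq:BisGr1VecV} is not available at $R=2r$. Instead I would use Proposition~\ref{prop:GdiamestimateVec}: since $\sfG(x,y)\in\DOM$ for $\mm$-a.e.\ $y$, the sublevel set defining $\rv_{\sfG}(x,2r,\rho)$ coincides, up to an $\mm$-null set, with its intersection with $\{\sfG(x,\cdot)\in\DOM\}$, so $\rv_{\sfG}(x,2r,\rho)$ is controlled by $\rv_{\sfG}(x,\cdot,\rho)$ at scales along the ray approaching $\cD_\theta$; the denominator is bounded below using the monotonicity of $\rv_{\sfG}(x,\cdot,\rho)$ along the ray together with the endpoint form of \eqref{eq:BisGr1VecV}, obtained by letting $|R|\uparrow|\cD_\theta|$ via monotone convergence exactly as in the proof of Theorem~\ref{thm:BG1}\ref{i:BG4}. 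The resulting constant again depends only on $\sfs$ and is absorbed into $C_K$, and combining the two regimes gives the first inequality. For the last assertion, under either the radial non-increasing hypothesis on $\beta_t(\cdot)$ or $|\sfG(x_0,\cdot)|\ge\sfd(x_0,\cdot)$ $\mm$-a.e.\ for all $x_0$, the stronger inequality \eqref{eq:BisGr2VecV} of Theorem~\ref{thm:BG1Vec}\ref{i:BG5Vec} holds for the unparametrised gauge balls $\rv_{\sfG}(x,\cdot)$, and the same argument with \eqref{eq:BisGr2VecV} in place of \eqref{eq:BisGr1VecV} and $|r|$ as the radius yields $\rv_{\sfG}(x,2r)\le2^NC_K\,\rv_{\sfG}(x,r)$.

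The step I expect to be delicate is the boundary regime. For a scalar gauge the gauge-diameter estimate makes $\rv_{\sfG}(x,2r,\rho)$ literally constant in $r$ once $2r\ge\cD$, which trivialises the doubling ratio; for a vector-valued gauge the sublevel box $\{\sfG(x,\cdot)\le2r\}$ genuinely keeps changing with both $\hat r$ and $|r|$ as $2r$ leaves $\DOM$, so one cannot reuse \eqref{eq:BisGr1VecV} at $R=2r$ and must take care that $C_K$ stays uniform over directions and scales --- e.g.\ by partitioning $K$ into finitely many directional sectors and, on each, combining the endpoint Bishop--Gromov inequality with the crude upper bound $\rv_{\sfG}(x,2r,\rho)\le\mm(B_\rho(x))<\infty$ and a lower bound for $\rv_{\sfG}(x,r,\rho)-\rv_{\sfG}(x,0,\rho)$ obtained by propagating positivity along the ray via \eqref{eq:BisGr1VecV}. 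If one is content to assume $2K\subseteq\DOM$, only the interior regime arises and the proof is entirely routine. The remaining bookkeeping --- measurability of the sublevel sets, the $\mm$-a.e.\ nature of the gauge-diameter bound, local Lipschitz continuity of $r\mapsto\rv_{\sfG}(x,r,\rho)$ along rays --- is inherited verbatim from Theorem~\ref{thm:BG1Vec}.
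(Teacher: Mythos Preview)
Your overall approach matches the paper's: the paper simply asserts that the proof ``follows verbatim'' from the scalar case (Proposition~\ref{prop:pardoubling}), replacing the scalar Bishop--Gromov inequality and gauge-diameter estimate by their vector-valued versions (Theorem~\ref{thm:BG1Vec} and Proposition~\ref{prop:GdiamestimateVec}). Your interior regime $|2r|<|\cD_{\hat r}|$ is handled correctly and is exactly the direct translation the paper has in mind: applying \eqref{eq:BisGr1VecV} to the pair $(r,2r)$ on the ray $\hat r$, then controlling the resulting ratio of integrals via the expansion \eqref{eq:fftcNVec}.

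You are right that the boundary regime $|2r|\geq |\cD_{\hat r}|$ is where the scalar argument does \emph{not} transcribe verbatim, and the paper does not spell this out. However, your own handling of this regime has a genuine gap. In the scalar case the key fact is that $\rv_{\sfG}(x,2r,\rho)$ becomes \emph{constant} once $2r\geq\cD$, because $\sfG(x,\cdot)<\cD$ a.e.\ forces $\{\sfG\leq 2r\}=\{\sfG<\cD\}$ up to a null set. In the vector case this fails: the sublevel set $\{\sfG(x,\cdot)\leq 2r\}$ is a coordinate box, and even after intersecting with $\DOM$ (using $\sfG(x,\cdot)\in\DOM$ a.e.) it can strictly increase as $|r|$ grows along the ray --- the box sweeps out portions of $\DOM$ off the ray $\hat r$ that are not captured by $\{\sfG\leq s\hat r\}$ for any $s<|\cD_{\hat r}|$. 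So your claim that the numerator is ``controlled by $\rv_{\sfG}(x,\cdot,\rho)$ at scales along the ray approaching $\cD_\theta$'' is not justified. Your fallback, bounding the numerator by $\mm(B_\rho(x))$, does not work either: this bound depends on $x$ and $\rho$, whereas $C_K$ must depend only on $\beta$ (equivalently, on $\sfs$). The sectorial partition you sketch does not repair this, since the crude upper bound is still non-uniform. If you restrict to $2K\subseteq\DOM$ (as you note), the proof is indeed routine; handling the general boundary regime uniformly requires a different idea than what you have written.
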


\subsection{Stability and compactness}

We adopt verbatim the same notion of $L^1_{\loc}$ convergence of gauge functions and regularity condition for the limit gauge function (resp.\@ over plans) as in Definition \ref{def:weakL1andregularity} (resp.\@ as in Definition  \ref{def:weakL1andregularity-CD}), in case of vector-valued gauge functions.
Under this convention, verbatim the same statements for the stability of $\MCP(\beta)$ (i.e.\@ Theorem \ref{thm:stabMCP}) and of $\CD(\beta,n)$ (i.e.\@ Theorem \ref{thm:stabCD}) hold. The proofs hold verbatim, with the only difference being the definition of the truncated gauge function $\sfG^{\ve}$, which is now vector-valued; however, in order to follow the arguments, it is enough to replace \eqref{eq:defTrunc} with 
\begin{equation}\label{eq:defTruncGVec}
\sfG^{\ve}(x,y):=
\begin{cases}
 \sfG & \text{ if } |\sfG|\leq D+\ve, \\
 \frac{D+\ve}{|\sfG|} \cdot \sfG  & \text{ if } |\sfG|>D+\ve.
\end{cases}
\end{equation}

Also the statements and proofs of the pre-compactness criteria for $\MCP(\beta)$ (i.e.\@ Theorem \ref{thm:precompactMCP}) and $\CD(\beta,n)$ (i.e.\@ Theorem \ref{thm:precompact-CD}) hold verbatim in case of vector-valued gauge functions. 
 
\section{Natural gauge functions}\label{sec:nonsmooth}

The purpose of this section is to introduce natural gauge functions for metric spaces equipped with an additional reference metric. This construction is inspired by sub-Riemannian geometry, where the sub-Riemannian distance can always be seen as the restriction of a Riemannian length structure on a subset of admissible curves.

\begin{definition}[Reference and extension]\label{def:extension}
Let $(X,\sfd)$ be a complete metric space. We say that a complete length metric $\sfd_R$ on $X$ is a \emph{reference} for $\sfd$ if $\sfd_R\leq \sfd$, and $\sfd,\sfd_R$ induce the same topology on $X$. Furthermore, we say that the reference metric $\sfd_R$ is an \emph{extension} of $\sfd$ if for all $\sfd$-rectifiable curves $\gamma$ it holds
\begin{equation}\label{eq:extension}
L_{\sfd_R}(\gamma) = L_{\sfd}(\gamma).
\end{equation}
\end{definition}
\begin{remark}
The inequality $\sfd_R\leq \sfd$ implies that each rectifiable curve in  $(X,\sfd)$ is rectifiable also in  $(X,\sfd_R)$. The fact that $\sfd_R$ is complete is not implied by the other requirements. For example take the standard Carnot-Carathéodory metric on the Heisenberg group. Extend it to a Riemannian metric by setting $\|\partial_z\| = 1/(1+z^2)$. Any point on the $z$-axis is at extended distance $\leq \pi/2$ from the origin, and the extension is not complete.
\end{remark}

\begin{definition}[Asymptotic Lipschitz number]\label{def:asLipnumb}
Let $(X,\rho)$ be a metric space. For a Borel function $f: X \to [0,+\infty]$ we define the asymptotic Lipschitz number with respect to $\rho$ at $x\in X$ as
\begin{equation}
\mathrm{Lip}_a^{\rho} [f](x): =\limsup_{z,w\to x} \frac{|f(z)-f(w)|}{\rho(z,w)},
\end{equation}
with the convention that $\mathrm{Lip}_a^{\rho} [f](x) = 0$ if $x$ is an isolated point.
\end{definition}

\subsection{The \texorpdfstring{$\sfD$}{D} function}\label{sec:D}

We introduce the building block for natural gauge functions.

\begin{definition}[The $\sfD$ function]\label{def:D}
Let $(X,\sfd)$ be a metric space with reference $\sfd_R$. Let $c: = \frac{1}{2}\sfd^2$. We define:
\begin{equation}
\sfD :X\times X \to [0,+\infty], \qquad \sfD(x,y) := \mathrm{Lip}_a^{\sfd_R} [c(\cdot,y)](x).
\end{equation}
\end{definition}

For the next proposition we say that $f,g:X\times X \to [0,+\infty]$ are locally equivalent if for any $o\in X$ there exists a neighborhood $\mathcal{O}$ of $o$ and a constant $C=C_{\mathcal{O}}>0$ such that
\begin{equation}
C^{-1} g(x,y)\leq f(x,y)\leq C g(x,y),\qquad \forall\, x,y\in\mathcal{O}.
\end{equation}

\begin{proposition}[Properties of the $\sfD$ function]\label{prop:propertiesD}
Let $(X,\sfd)$ be a length metric space with reference $\sfd_R$. The $\sfD$ function has the following properties:
\begin{enumerate}[label = (\roman*)]
\item \label{i:propertiesD1} it holds  $\sfd \leq \sfD$;
\item \label{i:propertiesD2} $\sfd_R$ and $\sfd$ are locally equivalent if and only if $\sfD$ and $\sfd$ are locally equivalent;
\item \label{i:propertiesD3} $\sfD= \sfd$ if and only if the reference is the trivial one, that is $\sfd_R=\sfd$;
\item \label{i:propertiesD4} assume that $\sfd_R$ is an extension of $\sfd$; then $\sfd$ is locally equivalent to $\sfd_R$ if and only if $\sfD=\sfd$;
\item \label{i:propertiesD5} For every fixed $y\in X$, the map $\sfD(\cdot, y) :X  \to [0,+\infty]$ is upper semi-continuous.
\end{enumerate}
\end{proposition}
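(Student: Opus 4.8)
\textbf{Proof strategy for Proposition \ref{prop:propertiesD}.}

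The plan is to unwind the definition $\sfD(x,y) = \mathrm{Lip}_a^{\sfd_R}[c(\cdot,y)](x) = \limsup_{z,w\to x} \frac{|\sfd^2(z,y)-\sfd^2(w,y)|}{2\sfd_R(z,w)}$ and exploit two elementary facts throughout: first, $\sfd_R \leq \sfd$ (from the definition of reference), and second, for the distance $\sfd$ the triangle inequality gives $|\sfd(z,y)-\sfd(w,y)| \leq \sfd(z,w)$, hence $|\sfd^2(z,y)-\sfd^2(w,y)| = |\sfd(z,y)-\sfd(w,y)|\cdot(\sfd(z,y)+\sfd(w,y)) \leq \sfd(z,w)(\sfd(z,y)+\sfd(w,y))$.

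For \ref{i:propertiesD1}: I would take $w=x$ in the $\limsup$, so $\sfD(x,y) \geq \limsup_{z\to x}\frac{|\sfd^2(z,y)-\sfd^2(x,y)|}{2\sfd_R(z,x)}$, then pick $z$ along a $\sfd$-almost-geodesic from $x$ towards $y$ (or, more robustly, estimate from below using $|\sfd(z,y)-\sfd(x,y)|$ and the fact that one can choose $z$ with $\sfd(z,y) = \sfd(x,y) - \sfd(x,z)$ up to error, since $\sfd$ is a length metric by the standing assumption that reference metrics exist only for length spaces — actually $\sfd$ need not be length here, so instead I would argue: there exist $z_k\to x$ with $\sfd(z_k,y) \geq \sfd(x,y)-\sfd(x,z_k)$ is automatic from triangle inequality only as $\leq$; the clean route is to use that $\sfd_R(z,w)\leq\sfd(z,w)$ so $\sfD(x,y)\geq \limsup \frac{|\sfd^2(z,y)-\sfd^2(w,y)|}{2\sfd(z,w)} = \mathrm{Lip}_a^{\sfd}[c(\cdot,y)](x) = \|\nabla_{\sfd} \tfrac12\sfd^2(\cdot,y)\|(x)$, which for the distance function equals $\sfd(x,y)$ — this last identity is standard and holds on any metric space). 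For \ref{i:propertiesD2}: if $\sfd_R$ and $\sfd$ are locally equivalent, say $C^{-1}\sfd\leq \sfd_R\leq \sfd$ on a neighborhood, then $\sfD(x,y) \leq C\,\mathrm{Lip}_a^\sfd[c(\cdot,y)](x) = C\,\sfd(x,y)$, and combined with \ref{i:propertiesD1} this gives local equivalence of $\sfD$ and $\sfd$; conversely if $\sfD$ and $\sfd$ are locally equivalent, the bound $|\sfd^2(z,y)-\sfd^2(w,y)|\leq \sfd(z,w)(\sfd(z,y)+\sfd(w,y))$ together with a matching lower bound (choosing $w=x$ and $z$ suitably) forces $\sfd_R \geq c'\sfd$ locally by contradiction; I expect this direction to need a small argument ruling out $\sfd_R/\sfd \to 0$. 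For \ref{i:propertiesD3}: if $\sfd_R = \sfd$ then $\sfD(x,y) = \mathrm{Lip}_a^\sfd[c(\cdot,y)](x) = \sfd(x,y)$ by the slope-of-squared-distance identity; conversely if $\sfD=\sfd$ everywhere, then in particular $\sfd_R$ cannot be strictly smaller than $\sfd$ along any direction near any point, else the $\limsup$ defining $\sfD$ would blow up the ratio strictly above $\sfd$ — make this precise by picking $w=x$, $z$ approaching $x$ along a near-minimizing $\sfd$-path to $y$. For \ref{i:propertiesD4}: $(\Leftarrow)$ is immediate from \ref{i:propertiesD2} plus \ref{i:propertiesD3}'s computation; for $(\Rightarrow)$, when $\sfd_R$ is an extension the length-preservation \eqref{eq:extension} means $\sfd$-rectifiable curves have the same $\sfd_R$-length, and local equivalence of $\sfd_R$ and $\sfd$ then yields $\sfD = \sfd$ via \ref{i:propertiesD2} and the value computation. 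For \ref{i:propertiesD5}: upper semicontinuity of $x\mapsto \sfD(x,y)$ is a general feature of the asymptotic Lipschitz number — for any $\limsup_{z,w\to x}$-type quantity, $\mathrm{Lip}_a^\rho[f](\cdot)$ is automatically u.s.c. because it is an infimum over neighborhoods $U\ni x$ of $\sup_{z,w\in U, z\neq w}\frac{|f(z)-f(w)|}{\rho(z,w)}$, and each such sup, viewed as a function of the center point via shrinking $U$, is monotone; concretely, if $\sfD(x,y)<\lambda$ then there is a neighborhood $U$ of $x$ with $\sup_{z,w\in U}\frac{|c(z,y)-c(w,y)|}{\sfd_R(z,w)}<\lambda$, and for any $x'\in U$ the same $U$ (or a smaller one) witnesses $\sfD(x',y)<\lambda$.

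The main obstacle I anticipate is the converse directions of \ref{i:propertiesD2} and \ref{i:propertiesD3}, where one must deduce a \emph{lower} bound on $\sfd_R$ (i.e.\ that $\sfd_R$ is not too degenerate) from an \emph{upper} bound on $\sfD$; the difficulty is that $\sfD(x,y)$ is a $\limsup$ over \emph{all} pairs $z,w\to x$, so a small value of $\sfD$ forces the ratio $|c(z,y)-c(w,y)|/\sfd_R(z,w)$ to be controlled along every approach, and one needs to test this against approaches $z,w$ that move "towards $y$" (so that the numerator is comparable to $\sfd(x,y)\cdot\sfd(z,w)$) — this requires a mild amount of care since $(X,\sfd)$ need not be geodesic, only the reference $\sfd_R$ is a length metric. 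I would handle this by working with near-minimizing $\sfd$-curves (which exist whenever the relevant distance is realized as an infimum of curve-lengths, which holds here because $\sfd_R$ is length and $\sfd_R\leq\sfd$) or, failing geodesy of $\sfd$, by a direct estimate: fix $y$, let $\gamma$ be a $\sfd_R$-geodesic from $x$ towards $y$ and compare $c(\gamma_s,y)$ to $c(x,y)$, which gives the needed lower bound on the difference quotient in terms of $\sfd_R$-arclength. All remaining computations (the value $\mathrm{Lip}_a^\sfd[\tfrac12\sfd^2(\cdot,y)](x)=\sfd(x,y)$, monotonicity bookkeeping for u.s.c.) are routine.
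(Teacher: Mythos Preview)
Your plan for \ref{i:propertiesD1}, the forward direction of \ref{i:propertiesD2}, and \ref{i:propertiesD5} is correct and matches the paper (your argument for \ref{i:propertiesD5} is in fact more self-contained than the paper's, which simply cites the general upper semi-continuity of the asymptotic Lipschitz number).

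The real gap is in the converse directions of \ref{i:propertiesD2} and \ref{i:propertiesD3}. Your framing is backwards: you propose to exhibit approaches $z,w\to x$ along which the difference quotient is \emph{large}, forcing $\sfd_R$ large by contradiction. But the hypothesis $\sfD\leq C\sfd$ is an \emph{upper} bound on a $\limsup$, hence a uniform upper bound on \emph{all} nearby difference quotients; the way to exploit it is by integration, not by testing. Concretely, the paper divides the factorized identity
\[
\frac{|c(z,y)-c(w,y)|}{\sfd_R(z,w)}=\frac{\sfd(z,y)+\sfd(w,y)}{2}\cdot\frac{|\sfd(z,y)-\sfd(w,y)|}{\sfd_R(z,w)}
\]
by $\sfd(x,y)$ to obtain $\mathrm{Lip}_a^{\sfd_R}[\sfd(\cdot,y)](x)\leq C$ for all $x$ in a neighborhood (crucially a \emph{constant} bound, independent of $x$). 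Since $\sfd_R$ is a length metric, one then integrates this bound along a $\sfd_R$-unit-speed curve from $a$ to $b$, obtaining $|\sfd(a,y)-\sfd(b,y)|\leq C\,\sfd_R(a,b)$; setting $b=y$ gives $\sfd(a,y)\leq C\,\sfd_R(a,y)$. This reduction from $c$ to $\sfd$ (so that the Lipschitz bound becomes position-independent and hence integrable) is the step you are missing. Your fallback of ``compare $c(\gamma_s,y)$ to $c(x,y)$ along a $\sfd_R$-geodesic'' runs into the problem that the resulting inequality $|\partial_s c(\gamma_s,y)|\leq C\sfd(\gamma_s,y)$ has a right-hand side you cannot control without circularity; passing to $\sqrt{2c}=\sfd$ is exactly what resolves this. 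Also, your claim that near-minimizing $\sfd$-curves exist ``because $\sfd_R$ is length and $\sfd_R\leq\sfd$'' is false in general --- $\sfd$ need not be a length metric here --- so that alternative route is not available.

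For \ref{i:propertiesD4}$(\Rightarrow)$, invoking \ref{i:propertiesD2} only gives local equivalence of $\sfD$ and $\sfd$, not equality. The paper instead observes that local equivalence of $\sfd$ and $\sfd_R$ forces them to have the same class of rectifiable curves; the extension hypothesis then gives equal lengths on this class, and since $\sfd_R$ is length this yields $\sfd=\sfd_R$ globally, whence $\sfD=\sfd$ by \ref{i:propertiesD3}.
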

\begin{proof}
\textbf{Proof of \ref{i:propertiesD1}.} By definition $\sfd_R \leq \sfd$. Therefore for all $x,y\in X$ it holds
\begin{equation}
\sfD(x,y)=\mathrm{Lip}_a^{\sfd_R}[c(\cdot,y)](x)\geq  \mathrm{Lip}_a^{\sfd}[c(\cdot,y)](x) =  \sfd(x,y),
\end{equation}
where we used the assumption that $(X,\sfd)$ is a length metric space in the last identity.

\textbf{Proof of \ref{i:propertiesD2}.} Observe first that for any $z,w,y\in X$ it holds
\begin{align}
\frac{|c(z,y)-c(w,y)|}{\sfd_R(z,w)} & = \frac{|\sfd(z,y)+\sfd(w,y)||\sfd(z,y)-\sfd(w,y)|}{2 \sfd_R(z,w)} \label{eq:formulaD} \\
& \leq \left(\frac{\sfd(z,y)+\sfd(w,y)}{2}\right) \frac{\sfd(z,w)}{\sfd_R(z,w)}. \label{eq:formulaDineq}
\end{align}
We prove first the $\Rightarrow$ implication. By assumption, for $o\in X$ there exists a neighborhood $\mathcal{O}$ where it holds $\sfd_R \leq \sfd \leq C \sfd_R$. Plugging the upper bound in \eqref{eq:formulaDineq} and taking the limsup for $z,w\to x$ we obtain $\sfD(x,y)\leq C \sfd(x,y)$, for all $y\in X$ and $x\in \mathcal{O}$. Together with the global inequality of \ref{i:propertiesD1} we obtain that $\sfD$ and $\sfd$ are locally equivalent.

To prove the $\Leftarrow$ implication, assume that $\sfD$ and $\sfd$ are locally equivalent. Thus for $o\in X$ there exists $r>0$ and $C=C_r>0$ such that $\sfD \leq C \sfd$ on $B_{3r}(o)$.  By dividing \eqref{eq:formulaD} by $\sfd(x,y)$, and taking the limsup, we obtain:
\begin{equation}\label{eq:equalto1}
\mathrm{Lip}_a^{\sfd_R}[\sfd(\cdot,y)](x)=\limsup_{z,w\to x} \frac{|\sfd(z,y)-\sfd(w,y)|}{\sfd_R(w,z)} \leq  C, \qquad \forall\, x,y\in B_{3r}(o),\quad x\neq y.
\end{equation}

Let now $\mathcal{O} = B_r(o)$. For $a,b\in \mathcal{O}\setminus \{y\}$, and for any $\varepsilon>0$ take a $\sfd_R$-unit-speed curve $\gamma:[0,\sfd_R(a,b)+\varepsilon]\to X$ joining $a$ and $b$. Observe that $\gamma$ is contained in $B_{3r}(o)$, that is the region where \eqref{eq:equalto1} holds. If $\gamma$ does not cross $y$ it holds
\begin{equation}
|\sfd(a,y)-\sfd(b,y)| \leq \lim_{\varepsilon\to 0} \int_0^{\sfd_R(a,b)+\varepsilon} \mathrm{Lip}_a^{\sfd_R}[\sfd(\cdot,y)](\gamma_t)dt \leq C \sfd_R(a,b), \quad \forall\, a,b\in \mathcal{O}\setminus \{y\},
\end{equation}
where in the second equality we used \eqref{eq:equalto1} since $\gamma$ does not intersect $y$. 

If, instead, $\gamma$ intersect $y$, we can first assume without loss of generality that the intersection occurs at a single time $t \in (0,\sfd_R(a,b)+\varepsilon)$, where $\gamma_t = y$. Then, define $y_\pm =\gamma_{t\pm \eta}$ for sufficiently small $\eta>0$. Consider the two $\sfd_R$-unit-speed curves $\gamma_- = \gamma|_{[0,t-\eta]}$ joining $a$ with $y_-$ and $\gamma_+ = \gamma|_{[t+\eta,\sfd_R(a,b)+\varepsilon]}$ joining $y_+$ with $b$. Neither of them intersects $y$ and we can use the previous estimate for the endpoints of $\gamma_\pm$. Hence
\begin{align}
|\sfd(a,y)-\sfd(b,y)| & \leq |\sfd(a,y)-\sfd(y_-,y)| + |\sfd(y_+,y)-\sfd(b,y)| + \sfd(y_-,y_+) \\
& \leq C(t-\eta) + C(\sfd_R(a,b) +\varepsilon-t-\eta)   + \sfd(y_-,y_+) \\
& = C(\sfd_R(a,b)+\varepsilon-2\eta)  + \sfd(y_-,y_+).
\end{align}
Letting $\varepsilon,\eta\to 0$ we have $y_-,y_+ \to y$ and we obtain that $|\sfd(a,y)-\sfd(b,y)| \leq C \sfd_R(a,b)$ for all $a,b\in \mathcal{O}\setminus \{y\}$. By continuity such an inequality holds for all $a,b\in \mathcal{O}$. Taking for example $b=y$ we obtain $\sfd(a,b) \leq C \sfd_R(a,b)$ for all $a,b\in \mathcal{O}$. Since $\sfd_R$ is a reference, we also have the opposite inequality $\sfd_R\leq \sfd$, and thus $\sfd$, $\sfd_R$ are locally equivalent.

\textbf{Proof of \ref{i:propertiesD3}.} If $\sfd=\sfd_R$ then clearly $\sfD=\sfd$. Conversely, if $\sfD=\sfd$,  one can follow the same steps of the proof of point \ref{i:propertiesD2} with $C=1$ and $\mathcal{O}=X$, yielding that $\sfd\leq \sfd_R$ on $X$. Since $\sfd_R$ is a reference, it holds $\sfd_R\leq \sfd$ so that $\sfd=\sfd_R$.

\textbf{Proof of \ref{i:propertiesD4}.} Observe that if $\sfd_R$ and $\sfd$ are locally equivalent, then they have the same class of rectifiable curves; moreover, if $(X,\sfd_R)$ is an extension of $(X,\sfd)$, on the class of rectifiable curves the length functionals associated with $\sfd$ and $\sfd_R$ coincide. Since $\sfd_R$ is length it follows that $\sfd\leq \sfd_R$ and since $\sfd_R$ is an extension $\sfd_R\leq \sfd$. Thus $\sfd=\sfd_R$ and so $\sfD=\sfd$. On the other hand if $\sfD=\sfd$, then $\sfd_R =\sfd$ by item \ref{i:propertiesD3}.

\textbf{Proof of \ref{i:propertiesD5}.} This is a consequence of the upper semi-continuity of the asymptotic Lipschitz number of a Borel function with respect to a length metric  (cf.\ \cite{ACM-Sobolev}).
\end{proof}

\subsection{Definition of natural gauge functions}\label{sec:ngf}

The functions $\sfd$, $\sfD$, and $\sqrt{\sfD^2-\sfd^2}$ appear as gauge functions for a large class of sub-Riemannian metric spaces, depending on the geometric structure under consideration:

\begin{itemize}
\item $\sfG =\sfd$ for Riemannian manifolds, see Section \ref{sec:howtorecover};
\item $\sfG = \sfD$ for qualitative bounds on fat sub-Riemannian manifolds, see Section \ref{sec:fat};
\item $\sfG = \sqrt{\sfD^2 -\sfd^2}$ for the Heisenberg group, see Section \ref{sec:heisenbergpuro}. This quantity corresponds to the ``absolute value of the vertical part of the covector'', appearing in the study of interpolation inequalities in the Heisenberg group performed in \cite{BKS}, see Section \ref{sec:howtorecover2} (there, such a function was called $\theta$); 
\item $\sfG = (\sfG_1,\sfG_2)$, with $\sfG_1=\sqrt{\sfD^2 -\sfd^2}$ and $\sfG_2 = \sfd$ (this is a vector-valued gauge function, see Section \ref{sec:vectorial}), for left-invariant structures on three-dimensional Lie groups, see Section \ref{sec:ex:vectorial}.
\end{itemize}

This motivates the following definition of natural gauge functions.

\begin{definition}[Natural gauge functions]\label{def:naturalgauge} 
We call a \emph{natural gauge function} any function $\sfG : X\times X\to [0,+\infty]$, such that $\sfG(x,y) = f(\sfd(x,y),\sfD(x,y))$ for all $x,y\in X$, where 
\begin{equation}
f :  \Omega:=\{(a,b) \in [0,+\infty)\times [0,+\infty]\mid a \leq b\} \to [0,+\infty],
\end{equation}
is continuous and homogeneous on $\Omega$, i.e.\@ $f(\lambda a,\lambda b) = \lambda f(a,b)$ for all $\lambda >0$, $(a,b)\in\Omega$ with $f(a,b) < +\infty$. Similarly, natural vector-valued gauge functions are those $\sfG : X\times X \to \RP^m_+$, $m\in \N$, induced by a continuous function $f: \Omega\to\RP^m_+$, such that $f(\lambda a,\lambda b) = \lambda f(a,b)$ for all $\lambda>0$ and $a,b\in  \Omega$ with $|f(a,b)|<+\infty$ (see Section \ref{sec:vectorial} for the definition of dilations on $\RP^m_+$).

To avoid trivialities, we assume that $|f(a,b)|<+\infty$ for $(a,b)\in \Omega$ if $a,b<+\infty$.
\end{definition}
\begin{remark}
The $1$-homogeneity property in Definition \ref{def:naturalgauge} is required to preserve the meek assumption of Definition \ref{def:meek}, provided that $\sfD$  is meek as well.
\end{remark}
\begin{remark}
The function $\sqrt{\sfD^2-\sfd^2}$ is well-defined for general metric spaces equipped with a reference metric, by Proposition \ref{prop:propertiesD}\ref{i:propertiesD1}.
\end{remark}

\subsection{Natural gauge functions in sub-Riemannian metric spaces}\label{sec:naturalinSR}

We assume a basic knowledge of sub-Riemannian geometry. For the benefit of the reader, we collect in Appendix \ref{a:SR} a summary tailored to our purposes.

\begin{definition}[Sub-Riemannian m.m.s.]\label{def:srmms}
 A sub-Riemannian metric measure space $(M,\sfd,\mm)$ is a smooth manifold $M$, equipped with a complete Carnot-Carath\'eodory distance $\sfd$, and a smooth measure $\mm$, i.e.\@ with smooth positive density in local charts.
\end{definition}

Under standard assumptions in sub-Rieman\-nian geometry, natural gauge functions satisfy the conditions required for the validity of the results of Sections \ref{sec:geometricconsequences} and \ref{sec:stabilityandcompactness}. We discuss, in particular:

\begin{itemize}
\item the meek condition for $\sfG$, see Definition \ref{def:meek}, used for the generalized Bishop-Gromov Theorem \ref{thm:BG1};
\item the regularity properties for $\sfG$, see Definitions \ref{def:weakL1andregularity} and \ref{def:weakL1andregularity-CD}, used for the pre-compactness Theorems \ref{thm:precompactMCP} and \ref{thm:precompact-CD};
\item the $\sfG-$boundedness properties, used for the stability Theorems \ref{thm:stabMCP} and \ref{thm:stabCD}.
\end{itemize}

For simplicity we focus on the case of scalar-valued gauge functions, but all the statements of this section hold in general for the vector-valued case, with no modifications, using the corresponding definitions and statements that can be found in Section \ref{sec:vectorial}.

\subsubsection{Meek condition}\label{subsec:meek}

We start with a sufficient condition for the validity of the meek assumption for $\sfD$, in terms of the local regularity of $\sfd$. We remark that it is valid for general m.m.s.\@ with an underlying smooth structure.

\begin{proposition}[Criterion for meekness]\label{p:meek1}
Let $(X,\sfd,\mm)$ be a m.m.s.\@ equipped with a reference metric $\sfd_R$. Assume that:
\begin{enumerate}[label = (\alph*)]
\item \label{i:meek11} $X$ is a smooth manifold and $\sfd_R$ is the distance induced by a Riemannian metric;
\item \label{i:meek12} for any $x \in X$ there exists a Borel set $C_x$ with $\mm(C_x)=0$, such that for all $y\notin C_x$ and geodesic $\gamma$ with $(\gamma_0,\gamma_1)=(x,y)$ the function $z\mapsto \sfd(z,\gamma_t)$ is continuously differentiable in charts in a neighborhood of $x$ for any $t\in (0,1]$.
\end{enumerate}
Then  any natural gauge function $\sfG:X\times X\to [0,+\infty]$ is meek.
\end{proposition}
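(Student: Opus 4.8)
The goal is to verify that, under hypotheses \ref{i:meek11}–\ref{i:meek12}, the function $\sfD$ satisfies the meek property of Definition \ref{def:meek}: for any admissible $\mu_1$ and $\bar x\notin\supp\mu_1$, and any $\nu\in\OptGeo(\delta_{\bar x},\mu_1)$, we want a Borel set $\Gamma$ of full $\nu$-measure on which $\sfD(\gamma_0,\gamma_t)=t\,\sfD(\gamma_0,\gamma_1)$ for all $t\in(0,1]$. The key observation is that $\sfD(x,y)=\Lip_a^{\sfd_R}[c(\cdot,y)](x)$ where $c=\tfrac12\sfd^2$, and that along a geodesic $\gamma$ the points $\gamma_t$ for $t\in(0,1]$ are all ``downstream'' of $\gamma_0$; so the plan is to compute $\sfD(\gamma_0,\gamma_t)$ using the first-order behaviour at $\gamma_0$ of the function $z\mapsto \sfd^2(z,\gamma_t)$, which by hypothesis \ref{i:meek12} is differentiable in charts near $\gamma_0$ (off an $\mm$-null set of endpoints).

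First I would fix $\bar x=\gamma_0$ and invoke hypothesis \ref{i:meek12}: the set $C_{\gamma_0}$ is $\mm$-null, hence also $\mu_1$-null since $\mu_1\ll\mm$; therefore $\nu$ is concentrated on the Borel set of geodesics $\gamma$ with $\gamma_1\notin C_{\gamma_0}$, for which $z\mapsto \sfd(z,\gamma_1)$ — and, because every $\gamma_t$ with $t\in(0,1]$ is the endpoint of the geodesic $s\mapsto\gamma_{st}$ issuing from the \emph{same} point $\gamma_0$ with $\gamma_1\notin C_{\gamma_0}$ still controlling things, I should be slightly careful here and instead apply \ref{i:meek12} directly to the endpoint $\gamma_t$: one needs that for a.e.\ $\gamma$ and all $t\in(0,1]$ the map $z\mapsto\sfd(z,\gamma_t)$ is $C^1$ in charts near $\gamma_0$. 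Since \ref{i:meek12} is stated precisely as ``for all $y\notin C_x$ and geodesic with endpoints $(x,y)$, $z\mapsto\sfd(z,\gamma_t)$ is $C^1$ near $x$ for all $t\in(0,1]$'', applying it with $x=\gamma_0$, $y=\gamma_1$ gives exactly this for the chosen geodesic. So on a full-$\nu$-measure set $\Gamma_0$, the functions $h_t(z):=\tfrac12\sfd^2(z,\gamma_t)$, $t\in(0,1]$, are $C^1$ in a fixed chart near $\gamma_0$.

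Next, for a $C^1$ function $h$ near $\gamma_0$ (in a Riemannian chart), the asymptotic Lipschitz number $\Lip_a^{\sfd_R}[h](\gamma_0)$ equals the Riemannian norm $\|\nabla_{\sfd_R} h(\gamma_0)\|$ of its gradient at $\gamma_0$; this is where hypothesis \ref{i:meek11} enters, identifying $\sfd_R$ with a genuine Riemannian metric so that $\Lip_a^{\sfd_R}$ coincides with the gradient norm at differentiability points. Hence $\sfD(\gamma_0,\gamma_t)=\|\nabla_z\big(\tfrac12\sfd^2(z,\gamma_t)\big)\big|_{z=\gamma_0}\|$. It remains to show this quantity is $t\cdot\sfD(\gamma_0,\gamma_1)$, which is a purely first-variation fact: the first variation of the squared distance to $\gamma_t$ at the point $\gamma_0$ equals (up to sign) the initial velocity of the geodesic $s\mapsto\gamma_{st}$, whose $\sfd_R$-norm is $\sfd(\gamma_0,\gamma_t)=t\,\sfd(\gamma_0,\gamma_1)$ \emph{if} $\sfd$ were Riemannian, but in the sub-Riemannian case one must argue via the Hamiltonian/cotangent description: $\nabla_z\big(\tfrac12\sfd^2(z,\gamma_t)\big)|_{z=\gamma_0}$ is the $\sfd_R$-dual of the initial covector $\lambda_0^{(t)}$ of the unique minimizing geodesic from $\gamma_0$ to $\gamma_t$, and since that geodesic is the reparametrization $s\mapsto\gamma_{st}$ of the original one, its initial covector is $t\,\lambda_0^{(1)}$. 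Taking $\sfd_R$-norms and using linearity gives $\sfD(\gamma_0,\gamma_t)=t\,\sfD(\gamma_0,\gamma_1)$. Finally, measurability of $\Gamma:=\Gamma_0$ (intersected with the Borel set where $\gamma_1\notin C_{\gamma_0}$) follows from Borel-measurability of $\gamma\mapsto\sfD(\gamma_0,\gamma_t)$, which in turn follows from upper semicontinuity of $\sfD(\cdot,y)$ (Proposition \ref{prop:propertiesD}\ref{i:propertiesD5}) and standard measurable-selection arguments; one takes $\Gamma$ to be a Borel set of full measure contained in it.

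\textbf{Main obstacle.} The delicate point is the identification $\Lip_a^{\sfd_R}[h](\gamma_0)=\|\nabla_{\sfd_R}h(\gamma_0)\|$ together with the claim that the gradient of $z\mapsto\tfrac12\sfd^2(z,\gamma_t)$ at $\gamma_0$ is $t$ times the gradient of $z\mapsto\tfrac12\sfd^2(z,\gamma_1)$ at $\gamma_0$. The first equality requires care because the $C^1$ regularity in \ref{i:meek12} is only \emph{in charts}, so one must check that $\Lip_a$ computed with respect to the Riemannian $\sfd_R$ sees the Riemannian norm of the Euclidean-chart differential, which is routine but needs the local bi-Lipschitz comparison between $\sfd_R$ and the Euclidean chart metric. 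The second claim is the genuine geometric input: it hinges on the fact that minimizing geodesics from $\gamma_0$, parametrized on $[0,1]$, scale their initial covectors linearly under reparametrization of the target time, i.e.\ the geodesic from $\gamma_0$ to $\gamma_t$ is exactly $s\mapsto\gamma_{st}$ — this uses that $\gamma_1\notin C_{\gamma_0}$ so there is a \emph{unique} minimizer and no branching, which is implicit in the differentiability hypothesis. I would isolate this scaling statement as the crux of the argument and dispatch the rest as routine verification.
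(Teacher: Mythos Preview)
Your overall architecture is right: define $\Gamma$ as the Borel set of geodesics from $\bar x$ with endpoint outside $C_{\bar x}$, check $\nu(\Gamma)=1$ using $\mu_1\ll\mm$, identify $\sfD(\gamma_0,\gamma_t)$ with $\|\nabla^R_{\bar x} c(\cdot,\gamma_t)\|_R$ via hypotheses \ref{i:meek11}--\ref{i:meek12}, and then prove that this gradient scales linearly in $t$. The gap is in this last step. You invoke the Hamiltonian/covector description --- initial covectors of geodesics, their scaling under reparametrization, and the identity $d_x c(\cdot,y)=\lambda^{x,y}$ --- but none of this structure is part of the hypotheses of Proposition~\ref{p:meek1}: the proposition is stated for a general m.m.s.\ on a smooth manifold with a Riemannian reference $\sfd_R$, and $\sfd$ need not be sub-Riemannian (the paper itself emphasizes this generality just before the statement). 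Your argument therefore proves the proposition only in the sub-Riemannian case, and even there it would need Proposition~\ref{prop:naturalobject} or the Lagrange-multiplier characterization to justify the gradient--covector identification.

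The paper obtains the scaling $\nabla^R_{\bar x} c(\cdot,\gamma_t) = t\,\nabla^R_{\bar x} c(\cdot,\gamma_1)$ by a purely metric argument. Squaring the triangle inequality gives, for every $z\in X$ and $t\in(0,1)$,
\[
c(z,\gamma_1)\ \le\ \tfrac{1}{t}\,c(z,\gamma_t)\ +\ \tfrac{1}{1-t}\,c(\gamma_t,\gamma_1),
\]
with equality at $z=\bar x$ because $\gamma$ is a geodesic. Hence $z\mapsto c(z,\gamma_t)-t\,c(z,\gamma_1)$ attains its minimum at $\bar x$; since both terms are $C^1$ near $\bar x$ by hypothesis~\ref{i:meek12}, the Riemannian gradient of the difference vanishes there, which is exactly the desired scaling. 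This uses nothing beyond the triangle inequality and the assumed $C^1$ regularity, so it delivers the proposition in its stated generality and sidesteps the covector machinery entirely.
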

\begin{proof}
It is sufficient to prove the statement for $\sfG=\sfD$. Fix $\bar{x}\in X$ and $\mu_1\in \mathcal{P}_{bs}^*(X,\sfd,\mm)$, $\bar{x}\notin \supp\mu_1$. Let $\Gamma\subset \Geo(X)$ be the set of geodesics starting at $\bar{x}$ and ending out of $C_{\bar{x}}$:
\begin{equation}
\Gamma := (\ee_0\times \ee_1)^{-1}(\{\bar{x}\}\times X\setminus C_{\bar{x}}),
\end{equation}
so that $\Gamma$ is Borel.

Let $\nu \in \OptGeo(\delta_{\bar{x}},\mu_1)$. It holds $(\ee_0,\ee_1)_\sharp \nu = \pi = (\bar{x}, {\rm id})_\sharp \mu_1$, which is the (unique)  optimal plan between $\delta_{\bar{x}}$ and $\mu_1$. We have
\begin{equation}
1 = \mu_1(X) = \mu_1(X\setminus C_{\bar{x}}) = \pi(\{\bar{x}\}\times X\setminus C_{\bar{x}})= \nu((\ee_0\times \ee_1)^{-1}(\{\bar{x}\}\times X\setminus C_{\bar{x}})) = \nu(\Gamma),
\end{equation}
where we used $\mu_1\ll \mm$ and $\mm(C_{\bar{x}})=0$.

We now show that any $\gamma\in\Gamma$ satisfies the meek equality \eqref{eq:meek1}. Fix an arbitrary $\gamma \in \Gamma$. We will use the following general inequality, which follows from squaring the triangle inequality (recall that $c=\sfd^2/2$):
\begin{equation}\label{eq:squaredtriangle}
c(p,q) \leq (1+\varepsilon^{-1})c(p,r) + (1+\varepsilon)c(r,q), \qquad \forall\, p,q,r\in X,\; \forall\, \varepsilon>0.
\end{equation}
Now take $p=z$, $q=y$, $r=\gamma_t$, $t\in (0,1)$, and $\varepsilon=t/(1-t)$, so that \eqref{eq:squaredtriangle} reads
\begin{equation}
c(z,y) \leq \frac{1}{t}c(z,\gamma_t) + \frac{1}{1-t}c(\gamma_t,y), \qquad\forall\, z\in X,
\end{equation}
with equality at $z=\bar{x}$. It follows that for all $t\in (0,1)$ the function
\begin{equation}
f: z\mapsto c(z,\gamma_t)-tc(z,y),\qquad z\in X,
\end{equation}
has a minimum at $z=\bar{x}$. By assumption \ref{i:meek12}, for all $t\in (0,1)$ it holds
\begin{equation}\label{eq:equalitygrad}
\nabla_{\bar{x}}^R c(\cdot,\gamma_t) = t \nabla_{\bar{x}}^R c(\cdot,y),
\end{equation}
where $\nabla^R_{\bar{x}}$ denotes the Riemannian gradient at $\bar{x}$. Furthermore, we observe that since $c(\cdot,\gamma_t)$ is continuously differentiable in a neighborhood of $\bar{x}$, the asymptotic $\sfd_R$-Lipschitz constant is the norm of the Riemannian gradient, that is
\begin{equation}
\sfD(\bar{x},\gamma_t) = \|\nabla^R_{\bar{x}} c(\cdot,\gamma_t)\|_R, \qquad \forall\, t\in (0,1).
\end{equation}
Using \eqref{eq:equalitygrad} we obtain that $\sfD(\gamma_0,\gamma_t) = t \sfD(\gamma_0,\gamma_1)$ for all $t\in (0,1]$. 
\end{proof}

We use Proposition \ref{p:meek1} to prove that natural gauge functions satisfy the meek property. We first recall the definition of sub-Riemannian cut locus.

\begin{definition}[Smooth points and cut locus]\label{def:cut}
 Let $(M, \sfd)$ be a sub-Riemannian metric space.
We say that $y\in M$ is a \emph{smooth point}, with respect to $x\in M$, if there exists a unique geodesic joining $x$ with $y$, which is not abnormal, and with non-conjugate endpoints. The \emph{cut locus} $\Cut(x)$ is the complement of the set of smooth points with respect to $x$. The \emph{global cut locus} of $M$ is 
\begin{equation}
\Cut(M) := \{(x,y) \in M \times M \mid y \in \Cut(x)\}.
\end{equation}
\end{definition}

We have the following fundamental result \cite{agrasmoothness,RT-MorseSard}.

\begin{theorem}[Agrachev, Rifford, Trélat]\label{thm:agrariffsmooth}
The set of smooth points is open and dense in $M$, and the squared sub-Riemannian distance is smooth on $M\times M \setminus \Cut(M)$.
\end{theorem}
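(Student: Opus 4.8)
The statement is classical; the plan is to reproduce the argument of Agrachev \cite{agrasmoothness} for the openness and density of smooth points, with the measure-theoretic input provided by the Morse--Sard theorem of Rifford--Tr\'elat \cite{RT-MorseSard}, and then to deduce the regularity of $\sfd^2$ by an inverse function theorem argument applied to the exponential map.

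First I would recall the sub-Riemannian exponential map based at $x\in M$, namely $\exp_x : \mathcal{D}_x \subseteq T_x^*M \to M$, $\exp_x(\lambda) := \pi\big(e^{\vec{H}}(\lambda)\big)$, where $\vec{H}$ is the sub-Riemannian Hamiltonian vector field and $\pi : T^*M\to M$ the canonical projection. This map is smooth, and jointly smooth in $(x,\lambda)$, by smooth dependence of the flow of $\vec{H}$ on its initial data; a point $y$ is \emph{non-conjugate} to $x$ along a normal geodesic with initial covector $\lambda$ exactly when $d_\lambda\exp_x$ is an isomorphism. For the \textbf{openness}, suppose $y$ is a smooth point with respect to $x$, realized by the unique minimizing geodesic with initial covector $\lambda$, which is not abnormal and has non-conjugate endpoints. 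The inverse function theorem makes $\exp_{x'}$ a local diffeomorphism near $\lambda$ for $x'$ close to $x$, producing for every $(x',y')$ near $(x,y)$ a normal geodesic from $x'$ to $y'$ close to the original and still with non-conjugate endpoints; uniqueness and minimality persist by a contradiction-and-compactness argument (if a second minimizer existed for a sequence $(x_k,y_k)\to (x,y)$, an Arzel\`a--Ascoli limit would produce either a second minimizer from $x$ to $y$ or a nontrivial abnormal minimizer, both excluded). This gives that the set of smooth points is open.

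For the \textbf{density}, the point is that $\Cut(x)$ has empty interior for every $x$. One decomposes $\Cut(x)$ into the set of endpoints of abnormal minimizers from $x$, the set of points conjugate to $x$, and the set of points joined to $x$ by two distinct minimizers. The conjugate set is the $\exp_x$-image of the critical set of $\exp_x$, which is Lebesgue-negligible by the Morse--Sard theorem for sub-Riemannian exponential maps \cite{RT-MorseSard}; the abnormal endpoint set is negligible as well (the ``minimizing Sard property'' type input invoked in \cite{agrasmoothness}); and the double-minimizer set is negligible by a standard argument. Hence $\Cut(x)$ is negligible, in particular with empty interior, so smooth points are dense. I expect this density step to be the main obstacle, as it is the deepest analytic ingredient; in the present paper it is quoted from \cite{agrasmoothness,RT-MorseSard} rather than reproved.

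Finally, for the \textbf{smoothness of $\sfd^2$} on $M\times M\setminus \Cut(M)$: fix $(x_0,y_0)\notin\Cut(M)$ with minimizing covector $\lambda_0$. Since $y_0$ is non-conjugate, the inverse function theorem provides neighborhoods $U\ni x_0$, $V\ni y_0$ and a smooth map $(x,y)\mapsto\lambda(x,y)\in T_x^*M$ with $\exp_x(\lambda(x,y))=y$ and $\lambda(x_0,y_0)=\lambda_0$; by the openness just established, after shrinking $U,V$ this normal geodesic is the unique minimizer between $x$ and $y$. Since the Hamiltonian $H$ is constant along its own flow and equals half the squared speed of the corresponding geodesic, one gets
\begin{equation}
\sfd^2(x,y) = 2\, H\big(\lambda(x,y)\big), \qquad \forall\, (x,y)\in U\times V,
\end{equation}
whose right-hand side is smooth. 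As $(x_0,y_0)$ was arbitrary, $\sfd^2$ is smooth on $M\times M\setminus \Cut(M)$, which also re-confirms that the smooth points form an open set.
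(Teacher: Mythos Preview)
The paper does not supply its own proof of this theorem; it is quoted from \cite{agrasmoothness,RT-MorseSard}. So there is no in-paper argument to compare against, and I will assess your proposal on its own merits.

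Your openness and smoothness arguments are essentially correct. The inverse function theorem applied to the jointly smooth exponential map, together with a compactness argument for minimizers, does give openness of the set of smooth points, and the formula $\sfd^2(x,y)=2H(\lambda(x,y))$ with $\lambda(x,y)$ the smooth local inverse of $\exp$ gives smoothness of $\sfd^2$ off the cut locus.

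The density argument, however, has a genuine gap. You argue that $\Cut(x)$ has Lebesgue measure zero by decomposing it into conjugate points, points with multiple minimizers, and endpoints of abnormal minimizers, and you claim that the last set is negligible by ``the minimizing Sard property type input invoked in \cite{agrasmoothness}''. This is not correct: the minimizing Sard property (negligibility of $\mathrm{Abn}(x)$) is \emph{not} proved in \cite{agrasmoothness} and is in fact an open problem in general, as the present paper states explicitly right after the theorem. Indeed the paper itself, in the proof of Lemma~\ref{lem:starsardpointofsmoothness}, shows that $\Cut(x)$ has zero measure \emph{only under the assumption} that the minimizing Sard property holds. Your route to density therefore assumes an open conjecture.

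Agrachev's actual density proof does not pass through measure zero of $\Cut(x)$. The key step is to show directly that any open set $U\subset M$ contains a point $y$ admitting a \emph{strictly normal} minimizer from $x$: one produces such a $y$ by a variational trick (e.g.\ taking $y$ to be a local extremum of a function of the form $\sfd(x,\cdot)-\sfd(z,\cdot)$ for an auxiliary $z$), which forces the existence of a normal Lagrange multiplier without any control on the size of $\mathrm{Abn}(x)$. Once a strictly normal minimizer to some $y\in U$ is in hand, Sard for $\exp_x$ and the local diffeomorphism property give smooth points nearby. If you want to repair your proposal, replace the ``$\Cut(x)$ has measure zero'' strategy with this direct existence argument for a strictly normal minimizer in every open set.
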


It is an open problem to determine whether $\Cut(x)$ has zero measure. Even in this case, the complement of $\Cut(x)$ may not be geodesically star-shaped: more precisely, if $y\notin \Cut(x)$ and $\gamma$ is the geodesic joining $x$ with $y$, it may happen that $\gamma_t \in \Cut(x)$ for $t\in (0,1)$. This is related with the presence of abnormal non-trivial segments on strictly normal geodesics, and the branching phenomenon in sub-Riemannian geometry \cite{MR-branching}. Therefore we cannot apply Proposition \ref{p:meek1} simply with $C_x=\Cut(x)$. 

To proceed, we recall the classical minimizing Sard property, and we introduce its strengthening that we call ``$*$-minimizing Sard property''.

\begin{definition}[Sard properties]\label{def:Sard} A sub-Riemannian metric space $(M,\sfd)$ satisfies
\begin{itemize}
\item the \emph{minimizing Sard property} if for any $x\in M$ the set of final points of abnormal geodesics has zero measure in $M$;
\item the \emph{$*$-minimizing Sard property} if for any $x\in M$ the set of final points of geodesics from $x$ containing non-trivial abnormal segments has zero measure in $M$.
\end{itemize} 
\end{definition}

It is well-known that, if the minimizing Sard property holds, then $\Cut(x)$ has zero measure for all $x\in M$. We now show that, if the $*$-minimizing Sard property holds, then there exists a set, larger than $\Cut(x)$, but still with zero measure, whose complement is star-shaped. We do not know if such a set is closed, but it is Borel, which is what we need in Proposition \ref{p:meek1}.

\begin{lemma}[Star-shaped set of smooth points]\label{lem:starsardpointofsmoothness}
Let $(M,\sfd)$ be a sub-Riemannian metric space. Then for any $x\in M$ there exists a set $\mathcal{U}_x\subseteq M$ such that:
\begin{enumerate}[label=(\roman*)]
\item \label{i:starsardpointofsmoothness1} $\mathcal{U}_x$ is the countable intersection of open sets, and in particular it is Borel;
\item \label{i:starsardpointofsmoothness2} $\sfd^2$ is smooth in a neighborhood of $(x,y)$ for any $y\in \mathcal{U}_x$, i.e.\@ $\mathcal{U}_x\subseteq M\setminus \Cut(x)$;
\item \label{i:starsardpointofsmoothness3} $\mathcal{U}_x$ is geodesically star-shaped at $x$: for any $y\in \mathcal{U}_x$ there exists a unique geodesic $\gamma$ joining $x$ with $y$, and $\gamma_t \in \mathcal{U}_x$ for all $t\in (0,1]$ ($t=0$ is excluded);
\item \label{i:starsardpointofsmoothness4} if the $*$-minimizing Sard property holds, $\mathcal{U}_x$ has full measure and it is dense.
\end{enumerate}
\end{lemma}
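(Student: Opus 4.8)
The idea is to build $\mathcal{U}_x$ from the set of smooth points by closing it up under the requirement of star-shapedness, using a nested construction indexed by finer and finer dyadic partitions of $[0,1]$. Concretely, for a fixed $x\in M$, first recall from Theorem \ref{thm:agrariffsmooth} that the set of smooth points $S_x := M\setminus\Cut(x)$ is open. For a geodesic $\gamma$ with $\gamma_0=x$, write $\gamma_1 = \exp_x(\lambda)$ for the corresponding (unique, normal, non-conjugate) initial covector $\lambda$; by the smoothness of the squared distance away from the cut locus and the inverse function theorem, the normal exponential map $\exp_x$ is a local diffeomorphism near such $\lambda$. The plan is to define, for $n\in\N$,
\begin{equation}
\mathcal{U}_x^{(n)} := \left\{ y \in S_x \mid \text{the geodesic } \gamma \text{ from } x \text{ to } y \text{ satisfies } \gamma_{k/2^n}\in S_x \text{ for all } k=1,\dots,2^n \right\},
\end{equation}
and then set $\mathcal{U}_x := \bigcap_{n\in\N}\mathcal{U}_x^{(n)}$. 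One must check that $\gamma_{k/2^n}\in S_x$ makes sense and is equivalent to asking that $t\lambda$ (rescaled initial covector) is a regular point of $\exp_x$ whose image lies off the cut locus; since $\exp_x$ is continuous in the covector and $S_x$ is open, and the map $y\mapsto\lambda(y)$ is continuous on $S_x$ (being the inverse of the local diffeomorphism $\exp_x$), each $\mathcal{U}_x^{(n)}$ is open, giving item \ref{i:starsardpointofsmoothness1}. Item \ref{i:starsardpointofsmoothness2} is immediate since $\mathcal{U}_x\subseteq S_x = M\setminus\Cut(x)$.

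For the star-shaped property \ref{i:starsardpointofsmoothness3}, take $y\in\mathcal{U}_x$ with geodesic $\gamma$ and fix $s\in(0,1]$. We must show $\gamma_s\in\mathcal{U}_x$, i.e.\@ $\gamma_s\in\mathcal{U}_x^{(n)}$ for every $n$. The geodesic from $x$ to $\gamma_s$ is $t\mapsto\gamma_{st}$, so its dyadic sample points are $\gamma_{sk/2^n}$ for $k=1,\dots,2^n$. We need these to lie in $S_x$. The subtle point is that $sk/2^n$ need not itself be a dyadic rational in $[0,1]$. To handle this I would use the elementary fact that $\{\gamma_t : t\in\{k/2^m : 0\le k\le 2^m, m\in\N\}\cap[0,1]\}\subseteq S_x$ for $y\in\mathcal{U}_x$ — which follows because membership of $\gamma_{k/2^n}$ in $S_x$ for all $k,n$ already covers all dyadic times — together with the observation that $\mathcal{U}_x$ being contained in \emph{all} the $\mathcal{U}_x^{(n)}$ forces $\gamma_t\in S_x$ for \emph{every} $t\in(0,1]$, not merely dyadic ones: indeed the set $\{t\in[0,1] : \gamma_t\in\Cut(x)\}$ is closed (as $\Cut(x)$ is closed and $\gamma$ continuous), and it misses the dense set of dyadic times, hence if it is to be avoided at all dyadic scales simultaneously one concludes it is contained in $\{0\}$. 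Wait — $\Cut(x)$ need not be closed in general; however $S_x$ is open, so $\{t : \gamma_t\in S_x\}$ is open in $[0,1]$, and it contains all dyadic $t\in(0,1]$ by construction of $\mathcal{U}_x$, but an open set containing all dyadics need not be everything. The cleaner route is therefore \emph{not} to deduce $\gamma_t\in S_x$ for all $t$, but rather to work directly: to show $\gamma_s\in\mathcal{U}_x^{(n)}$ we need $\gamma_{sk/2^n}\in S_x$ for $k\le 2^n$; choose $m$ large and approximate — actually the honest fix is to redefine $\mathcal{U}_x^{(n)}$ using \emph{all} partitions into $n$ equal subintervals, or better, to intersect over all rational sample points. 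Let me restructure: define $\mathcal{U}_x := \{y\in S_x \mid \gamma_t\in S_x \ \forall t\in[0,1]\cap\Q\}$ where $\gamma$ is the geodesic from $x$ to $y$; this is a countable intersection of the open sets $\{y\in S_x\mid \gamma^y_q\in S_x\}$ (open since $y\mapsto\gamma^y_q=\exp_x(q\lambda(y))$ is continuous and $S_x$ open), giving \ref{i:starsardpointofsmoothness1}. For \ref{i:starsardpointofsmoothness3}: if $y\in\mathcal{U}_x$ and $s\in(0,1]$, the geodesic from $x$ to $\gamma_s$ has time-$q$ point $\gamma_{sq}$; we need $\gamma_{sq}\in S_x$ for all $q\in[0,1]\cap\Q$. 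If $s\in\Q$ this is immediate. For general $s$, use density: pick rationals $q_j\uparrow s q$ — but $S_x$ open does not give $\gamma_{sq}\in S_x$ from $\gamma_{q_j}\in S_x$. So this still needs that $\{t : \gamma_t\notin S_x\} = \{t:\gamma_t\in\Cut(x)\}$ is \emph{closed} along $\gamma$.

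\textbf{The main obstacle} is precisely this: controlling the time-set $\{t\in[0,1] : \gamma_t\in\Cut(x)\}$. The resolution I would pursue exploits a known structural fact about normal geodesics: for a strictly normal geodesic $\gamma$ parametrized on $[0,1]$ with non-conjugate endpoint, the set of times $t$ at which $\gamma_t$ is a smooth point relative to $x$ is \emph{open and its complement is contained in the abnormal-segment structure} — more precisely, $\gamma_t\notin S_x$ can only happen at a cut time or a conjugate time or when $\gamma|_{[0,t]}$ contains an abnormal sub-segment or $t$ is beyond the first conjugate time. Since along a fixed geodesic there is a \emph{first} cut time $t_{\mathrm{cut}}(\gamma)\in(0,1]$ and $\gamma_t$ is a smooth point for $t<t_{\mathrm{cut}}$ away from the (closed, measure-zero in $t$) abnormal locus, one shows: $y\in\mathcal{U}_x$ (defined via all rational sample points in $S_x$) actually forces $t_{\mathrm{cut}}(\gamma)=1$ and $\gamma|_{(0,1)}$ abnormal-free, hence $\gamma_t\in S_x$ for all $t\in(0,1]$ by the openness of $S_x$ combined with the semicontinuity of the cut time. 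This gives \ref{i:starsardpointofsmoothness3} cleanly, since then for $y\in\mathcal{U}_x$ all of $\gamma_{(0,1]}$ lies in $S_x$ and, re-running the argument, $\gamma_{(0,s]}\subseteq S_x$ for the sub-geodesic, so $\gamma_s\in\mathcal{U}_x$. Finally, for \ref{i:starsardpointofsmoothness4}: assume the $*$-minimizing Sard property. Let $N_x$ be the union of $\Cut(x)$ with the set of endpoints of geodesics from $x$ containing a non-trivial abnormal segment, which has $\mm$-measure zero by the $*$-minimizing Sard hypothesis (the cut locus itself has measure zero because it is contained in the union of the conjugate locus — image of the critical set of $\exp_x$, measure zero by Sard — and the abnormal endpoint set). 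For $y\notin N_x$ the geodesic $\gamma$ from $x$ to $y$ is strictly normal with no abnormal segment, hence every $\gamma_t$, $t\in(0,1]$, is again strictly normal, abnormal-segment-free, and a smooth point, so $\gamma_t\notin N_x$: thus $y\in\mathcal{U}_x$. Therefore $M\setminus N_x\subseteq\mathcal{U}_x$, giving full measure; density follows since the complement of a measure-zero set in a connected manifold is dense, or alternatively from Theorem \ref{thm:agrariffsmooth}. I would present the argument in the order: (1) recall $S_x$ open, $\exp_x$ local diffeo off the conjugate locus, cut time upper semicontinuous; (2) define $\mathcal{U}_x$ via rational sample points, prove \ref{i:starsardpointofsmoothness1}–\ref{i:starsardpointofsmoothness2}; (3) prove the "all-times" upgrade and deduce \ref{i:starsardpointofsmoothness3}; (4) invoke $*$-minimizing Sard for \ref{i:starsardpointofsmoothness4}.
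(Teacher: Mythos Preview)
Your construction via rational sample points gives a $G_\delta$ set contained in $S_x$, so items \ref{i:starsardpointofsmoothness1} and \ref{i:starsardpointofsmoothness2} are fine. The genuine gap is in \ref{i:starsardpointofsmoothness3}. You correctly identify the obstruction: from $\gamma_q\in S_x$ for all rational $q\in(0,1]$ one cannot conclude $\gamma_t\in S_x$ for all $t$, since an open subset of $(0,1]$ containing $\Q\cap(0,1]$ need not be all of $(0,1]$. Your attempted fix --- that membership in your $\mathcal{U}_x$ forces the geodesic to be abnormal-segment-free --- is not justified: if $\gamma$ carries an abnormal sub-segment $\gamma|_{[a,b]}$ with $a>0$, there is no a priori reason why $\gamma_q\in\Cut(x)$ for rational $q\in(a,b)$. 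The abnormal segment produces conjugate pairs \emph{along itself} (any two distinct points on a normal-and-abnormal segment are conjugate), but this does not force $\gamma_q$ to be conjugate to $x=\gamma_0$, nor to be reached by multiple geodesics from $x$, nor $\gamma|_{[0,q]}$ to be abnormal as a whole. So your $\mathcal{U}_x$ may contain endpoints of geodesics with hidden abnormal segments, and then star-shapedness is in doubt. The appeals to ``cut time upper semicontinuity'' and ``abnormal-segment structure'' do not close this.

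The paper sidesteps the issue by a different definition: $y\in\mathcal{U}_x^n$ iff $y\in S_x$ and along the unique geodesic $\gamma$ from $x$ to $y$ there is no pair of distinct conjugate points at parameter-distance $\geq 1/n$. Openness follows from a compactness argument (invertibility of $d\exp_{\gamma_{\tau_1}}((\tau_2-\tau_1)\,\cdot\,)$ is an open condition, and one checks it on the compact set $\{(\tau_1,\tau_2):|\tau_1-\tau_2|\geq 1/n\}$). Crucially, this condition is \emph{manifestly} stable under restriction to $[0,t]$: shortening the geodesic can only remove potential conjugate pairs. The intersection $\mathcal{U}_x=\bigcap_n\mathcal{U}_x^n$ then consists exactly of those $y\in S_x$ whose geodesic has no distinct conjugate pairs at all --- equivalently (since a minimizing normal geodesic can have interior conjugate pairs only along abnormal sub-segments, cf.\ \cite[Thm.~72]{BRInv}), no non-trivial abnormal segments. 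This characterization makes \ref{i:starsardpointofsmoothness3} immediate and simultaneously identifies $M\setminus\mathcal{U}_x$ as $\Cut(x)$ union the endpoints of geodesics from $x$ containing abnormal segments, which is what one needs for \ref{i:starsardpointofsmoothness4}.
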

\begin{remark}
If the sub-Riemannian structure is real-analytic, geodesics are either abnormal, or do not contain non-trivial abnormal segments, and then the $*$-minimizing Sard property coincides with the classical minimizing Sard property. For real-analytic sub-Riemannian structures satisfying the minimizing Sard property, one can take $\mathcal{U}_x:=M\setminus \mathrm{Cut}(x)$, and thus $\mathcal{U}_x$ is open, see \cite[Prop.\@ 6]{BR-realanalMCP}.
\end{remark}
\begin{proof} 
Fix $x\in M$. Let $\mathrm{Cut}(x)$ be the sub-Riemannian cut locus, as in Definition \ref{def:cut}, and recall Theorem \ref{thm:agrariffsmooth}. We also recall that if $\gamma$ is a normal geodesic that is also abnormal, then any pair of distinct points along $\gamma$ are conjugate.

For all $n\in \N$, let us define the following monotone family of sets:
\begin{multline}
\mathcal{U}^n_x:=\{\gamma_1\mid \gamma\in \Geo(M),\; \gamma_0 = x, \; \gamma_1 \in M\setminus \Cut(x), \; \text{the distance between} \\ \text{two distinct conjugate points along $\gamma$ is strictly smaller than $1/n$}\}.
\end{multline}
By construction, for any $y\in  \mathcal{U}^{n}_x$ there is a unique geodesic $\gamma$ between $x$ and $y$, $\gamma$ is not abnormal, but $\gamma$ can contain abnormal segments of length strictly smaller than $1/n$.

We claim that the set $\mathcal{U}^n_x$ is open. Let $y\in \mathcal{U}_x^n$, and let $\gamma$ be the unique non-abnormal geodesic between $x$ and $y$, that is $\gamma_t = \exp_x(t\lambda)$ for a unique $\lambda \in T_x^*M$. For any $\tau_1,\tau_2\in [0,1]$, with $|\tau_2-\tau_1|\geq 1/n$ the map 
\begin{equation}
d_{e^{\tau_1\vec{H}}(\lambda)} \exp_{\gamma_{\tau_1}}((\tau_2-\tau_1)\;\cdot\;): T_{\gamma_{\tau_1}}^*M \to T_{\gamma_{\tau_2}}M,
\end{equation}
must be invertible, otherwise $\gamma(\tau_1)$ and $\gamma(\tau_2)$ would be conjugate. Notice also that since $y\notin \Cut(x)$, we have a neighborhood $\mathcal{W}_y$ of $y$ and a neighborhood $\mathcal{V}_\lambda$ of $\lambda$ such that $\exp_x:\mathcal{V}_\lambda \to \mathcal{W}_y$ is a diffeomorphism. It follows by compactness that there exists an open neighborhood $\mathcal{W}'_y$ of $y$ such that for all $y'\in \mathcal{W}'_y$ there exists a unique geodesic $\gamma'$ joining $x$ with $y'$, that is not abnormal, and all pairs of points along $\gamma'$ with distance $\geq 1/n$ are not conjugate (and in particular, $\gamma'$ cannot contain abnormal segments of length $\geq 1/n$). In other words, $\mathcal{W}'_y\subseteq \mathcal{U}^n_x$, proving that $\mathcal{U}_n^x$ is open. Define
\begin{equation}
\mathcal{U}_x:=\bigcap_{n\in \N} \mathcal{U}_x^n,
\end{equation}
which satisfies \ref{i:starsardpointofsmoothness1}. By construction $\mathcal{U}_x\subseteq M\setminus \Cut(x)$, proving \ref{i:starsardpointofsmoothness2}. Furthermore, for all $y \in \mathcal{U}_x$ there exists a unique geodesic $\gamma$ joining $x$ with $y$, which does not contain non-trivial segments with conjugate endpoints (and in particular, it does not contain non-trivial abnormal segments). Thus $\gamma_t \in \mathcal{U}_x$ for all $t\in (0,1]$, proving \ref{i:starsardpointofsmoothness3}.

\textbf{Claim.} We claim that $M\setminus \mathcal{U}_x$ is equal to the union of $\Cut(x)$ with the set of endpoints of geodesics starting from $x$ that contain a non-trivial abnormal segment. Notice that the latter set has zero measure if the $*$-minimizing Sard property holds. Furthermore also $\mathrm{Cut}(x)$ has zero measure\footnote{This is proved for example in \cite[Prop.\@ 6]{BR-realanalMCP}, and the minimizing Sard property is sufficient. A self-contained argument will be provided at the end of this proof.}. Thus, the claim implies that $\mathcal{U}_x$ has full measure. Furthermore, each open set $\mathcal{U}_x^n$ must have full measure, and in particular it must be dense in $M$. By the Baire theorem $\mathcal{U}_x$ is dense, concluding the proof of \ref{i:starsardpointofsmoothness4}. 

It only remains to prove the claim:

$\supseteq:$ If $y\in \Cut(x)$ clearly it does not belong to any of the $\mathcal{U}_x^n$. Let then $y\notin\Cut(x)$, such that there exists a geodesic $\gamma$ with $\gamma_0 = x$ that contains a non-trivial abnormal segment. Observe that $\gamma$ is not abnormal and is the unique geodesic joining $x$ with $y$. Let $\gamma|_{[\tau_1,\tau_2]}$ with $0<|\tau_1-\tau_2|<1$ be the non-trivial abnormal segment. Any pair of distinct points on $\gamma|_{[\tau_1,\tau_2]}$ are conjugate, and thus $y\notin \mathcal{U}_x^n$ for $n$ sufficiently large.

$\subseteq:$ Let $y\notin \mathcal{U}_x^n$ for some $n$. It follows that either $y \in \Cut(x)$, or if not there exist two distinct conjugate points along the unique geodesic joining $x$ with $y$. This can happen only if $\gamma$ contains non-trivial abnormal segments (otherwise it would cease to be a geodesic after meeting the first conjugate point \cite[Thm.\@ 72]{BRInv}).

To make our proof self-contained, we show that $\mathrm{Cut}(x)$ has zero measure if the minimizing Sard conjecture holds, that is when the set $\mathrm{Abn}(x)$ of endpoints of abnormal geodesics from $x$ has zero measure. By completeness, the image of $\exp_x : T_x^*M \to M$ covers $M\setminus \mathrm{Abn}(x)$. The set of conjugate points to $x$ is contained in the set $S(x)$ of singular values of $\exp_x$, which has zero measure by the Sard theorem. Furthermore, one can show that the subset $R_2(x)$ of points $z$ where $z$ is not conjugate to $x$ and there exist at least two normal geodesics joining $x$ with $z$ has zero measure, cf.\ \cite[Lemma 4.8]{RT-MorseSard}. Hence, by construction, for any $y\in M$ which is not in $R_2(x) \cup \mathrm{Abn}(x) \cup S(x)$ there exists a unique strictly normal geodesic $\gamma$ joining $x$ with $y$, with non-conjugate endpoints. In other words $\mathrm{Cut}(x) \subseteq R_2(x) \cup \mathrm{Abn}(x) \cup S(x)$, which is a union of zero measure sets.
\end{proof}

We can now prove that, for a large class of sub-Riemannian structures, the $\sfD$ function (cf.\ Definition \ref{def:D}) satisfies the meek assumption (cf.\ Definition \ref{def:meek}).

\begin{theorem}[Natural gauge functions are meek]\label{t:meeknatural}
Let $(M,\sfd,\mm)$ be a sub-Riemannian m.m.s., equipped with a Riemannian reference $\sfd_R$. Assume that $(M,\sfd)$ satisfies the $*$-minimizing Sard property.  Then any natural gauge function $\sfG$ is meek.
\end{theorem}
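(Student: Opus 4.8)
The plan is to reduce the statement to the meekness of the $\sfD$ function itself, and then to verify the hypotheses of the criterion in Proposition~\ref{p:meek1}. First I would observe that, by Definition~\ref{def:naturalgauge}, a natural gauge function has the form $\sfG(x,y)=f(\sfd(x,y),\sfD(x,y))$ with $f$ continuous and $1$-homogeneous on $\Omega$. Since the sub-Riemannian distance $\sfd$ trivially satisfies the geodesic identity $\sfd(\gamma_0,\gamma_t)=t\,\sfd(\gamma_0,\gamma_1)$, if we can produce, for any $\bar x\in\supp\mm$ and $\mu_1\in\Prob_{bs}^*(M,\sfd,\mm)$ with $\bar x\notin\supp\mu_1$ and any $\nu\in\OptGeo(\delta_{\bar x},\mu_1)$, a Borel set $\Gamma\subset\Geo(M)$ with $\nu(\Gamma)=1$ on which $\sfD(\gamma_0,\gamma_t)=t\,\sfD(\gamma_0,\gamma_1)$ for all $t\in(0,1]$, then on that same set $\Gamma$ we get, using $1$-homogeneity of $f$,
\begin{equation}
\sfG(\gamma_0,\gamma_t)=f(\sfd(\gamma_0,\gamma_t),\sfD(\gamma_0,\gamma_t))=f(t\,\sfd(\gamma_0,\gamma_1),t\,\sfD(\gamma_0,\gamma_1))=t\,f(\sfd(\gamma_0,\gamma_1),\sfD(\gamma_0,\gamma_1))=t\,\sfG(\gamma_0,\gamma_1),
\end{equation}
which is exactly the meek identity \eqref{eq:meek1} for $\sfG$. (When $\sfG(\gamma_0,\gamma_1)=+\infty$ the homogeneity convention still gives the identity for $t\in(0,1]$ after noting $\sfD(\gamma_0,\gamma_1)=+\infty$ forces $f=+\infty$ there; this degenerate case is handled by the convention in Definition~\ref{def:naturalgauge}.) So the whole proof reduces to: \emph{$\sfD$ is meek}.

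To prove that $\sfD$ is meek I would invoke Proposition~\ref{p:meek1}, whose hypotheses are \ref{i:meek11} that $X=M$ is a smooth manifold with $\sfd_R$ a Riemannian distance — which holds by assumption — and \ref{i:meek12} the existence, for each $x\in M$, of a Borel $\mm$-null set $C_x$ such that for all $y\notin C_x$ and all geodesics $\gamma$ from $x$ to $y$, the function $z\mapsto\sfd(z,\gamma_t)$ is $C^1$ in charts near $x$ for every $t\in(0,1]$. Here is where the $*$-minimizing Sard property enters, via Lemma~\ref{lem:starsardpointofsmoothness}: take $C_x:=M\setminus\mathcal U_x$, where $\mathcal U_x$ is the geodesically star-shaped Borel set of that lemma. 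By item \ref{i:starsardpointofsmoothness4} of Lemma~\ref{lem:starsardpointofsmoothness}, $\mm(C_x)=0$ under the $*$-minimizing Sard assumption. By items \ref{i:starsardpointofsmoothness2} and \ref{i:starsardpointofsmoothness3}, for $y\in\mathcal U_x$ there is a unique geodesic $\gamma$ from $x$ to $y$, it is non-abnormal with non-conjugate endpoints, and moreover $\gamma_t\in\mathcal U_x\subseteq M\setminus\Cut(x)$ for all $t\in(0,1]$; hence $(x,\gamma_t)\notin\Cut(M)$, so by Theorem~\ref{thm:agrariffsmooth} $\sfd^2$ is smooth near $(x,\gamma_t)$, and in particular $z\mapsto\sfd(z,\gamma_t)^2$, hence $z\mapsto\sfd(z,\gamma_t)$ (as $\sfd(x,\gamma_t)>0$ for $t>0$), is $C^1$ in charts near $x$. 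This verifies \ref{i:meek12}. Proposition~\ref{p:meek1} then yields that $\sfD$ is meek.

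The only subtle points I expect to need care are the edge cases and the bookkeeping. Most importantly, I must make sure the star-shapedness in Lemma~\ref{lem:starsardpointofsmoothness}\ref{i:starsardpointofsmoothness3} is used with the endpoint $y\in\mathcal U_x$ rather than $y\in C_x$: the set $\Gamma$ of geodesics with $\gamma_0=\bar x$ and $\gamma_1\notin C_{\bar x}$ has full $\nu$-measure exactly as in the proof of Proposition~\ref{p:meek1} — one uses that $(\ee_0,\ee_1)_\sharp\nu=(\bar x,\mathrm{id})_\sharp\mu_1$ is the unique optimal plan, $\mu_1\ll\mm$, and $\mm(C_{\bar x})=0$ — so nothing new is required there. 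A second point: the homogeneity of $f$ in Definition~\ref{def:naturalgauge} is only asserted where $f(a,b)<+\infty$ (respectively $|f(a,b)|<+\infty$ in the vector case), so in the final chain of equalities above one should first note that if $\sfD(\gamma_0,\gamma_1)<+\infty$ (automatic on $\Gamma$, since $\gamma_1\notin C_{\bar x}$ implies $(\bar x,\gamma_1)\notin\Cut(\bar x)$ and $\sfd^2$, hence $\sfD$, is finite and smooth there), then $f$ is applied only at points of $\Omega$ with finite value, where homogeneity is available. The vector-valued case is identical, using Definition~\ref{def:naturalgauge}'s vector homogeneity and the convention therein. No genuine obstacle remains: the heavy lifting — producing a star-shaped Borel full-measure set of smooth points — is already done in Lemma~\ref{lem:starsardpointofsmoothness}, and the meekness transfer from $\sfD$ to an arbitrary natural $\sfG$ is purely the $1$-homogeneity computation displayed above.
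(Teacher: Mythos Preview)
Your proposal is correct and follows exactly the paper's approach: reduce to the case $\sfG=\sfD$ via the $1$-homogeneity of $f$, then apply Proposition~\ref{p:meek1} with $C_x:=M\setminus\mathcal U_x$ from Lemma~\ref{lem:starsardpointofsmoothness}. The paper's proof is a terse two sentences; you have simply written out the details (the homogeneity computation, the verification of hypothesis~\ref{i:meek12} via star-shapedness and Theorem~\ref{thm:agrariffsmooth}, and the finiteness of $\sfD$ on $\Gamma$) that the paper leaves to the reader.
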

\begin{proof}
It is sufficient to prove the property for $\sfG=\sfD$. In this case, apply Proposition \ref{p:meek1} with $C_x := M\setminus \mathcal{U}_x$, with the set $\mathcal{U}_x$ of Lemma \ref{lem:starsardpointofsmoothness}.
\end{proof}

\begin{remark}\label{rem:meekexamples}
The $*$-minimizing Sard property is true for:
\begin{itemize}
\item ideal structures, i.e.\@ those not admitting non-trivial abnormal geodesics, which is generically true for constant rank $\geq 3$ distributions, cf.\ \cite[Thm.\@ 2.8]{CJT-Genericity};
\item real-analytic structures satisfying the minimizing Sard conjecture. This includes Carnot groups of step $2$, or more general Carnot groups, cf.\ \cite[Thm.\@ 1.2]{SardProp}.
\end{itemize}
Thus, for all those structures, then all natural gauge functions are meek.
\end{remark}

\subsubsection{Regularity condition}\label{sec:regularityGExamples}

We first discuss the regularity condition required in the $\MCP(\beta)$ stability Theorem \ref{thm:stabMCP}.

\begin{theorem}[Natural gauge functions are regular I]\label{t:regularityforstabilityMCP}
Let $(M,\sfd,\mm)$ be a sub-Rieman\-nian m.m.s., equipped with a Riemannian reference $\sfd_R$. Then any natural gauge function $\sfG$ is continuous and finite on $M\times M \setminus \Cut(M)$. In particular, if $(M,\sfd)$ satisfies the minimizing Sard property, any such $\sfG$ satisfies the regularity condition of Definition \ref{def:weakL1andregularity}.
\end{theorem}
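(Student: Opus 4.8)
The plan is to reduce everything to the model case $\sfG = \sfD$, then exploit the smoothness of $\sfd^2$ away from the cut locus together with the upper semi-continuity of the asymptotic Lipschitz number. First I would observe that, by Definition \ref{def:naturalgauge}, $\sfG(x,y) = f(\sfd(x,y),\sfD(x,y))$ with $f$ continuous on $\Omega$ and finite whenever both arguments are finite; hence it suffices to prove that $\sfd$ and $\sfD$ are continuous and finite in a neighborhood of $M \times M \setminus \Cut(M)$, and then compose with $f$. The continuity and finiteness of $\sfd$ is clear, so the crux is the claim for $\sfD$.

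Next I would fix $(\bar x,\bar y) \in M\times M \setminus \Cut(M)$. By Theorem \ref{thm:agrariffsmooth}, the set $M\times M \setminus \Cut(M)$ is open, and $c = \tfrac12\sfd^2$ is smooth on a neighborhood $\mathcal{O}\times \mathcal{O}'$ of $(\bar x,\bar y)$. On such a neighborhood, for each fixed $y \in \mathcal{O}'$ the function $x\mapsto c(x,y)$ is $C^\infty$, so its asymptotic Lipschitz number with respect to the Riemannian reference $\sfd_R$ coincides with the norm of its Riemannian gradient,
\begin{equation}
\sfD(x,y) = \mathrm{Lip}_a^{\sfd_R}[c(\cdot,y)](x) = \|\nabla^R_x c(\cdot,y)\|_R, \qquad (x,y)\in \mathcal{O}\times\mathcal{O}'.
\end{equation}
The right-hand side is a smooth (in particular continuous) and finite function of $(x,y)$ on $\mathcal{O}\times\mathcal{O}'$, because $c$ is smooth there and the Riemannian metric is smooth. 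This proves $\sfD$ (and hence $\sfG$) is continuous and finite in a neighborhood of $M\times M\setminus \Cut(M)$.

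For the final assertion, assume the Sard property holds, so that $\Cut(x)$ has zero $\mm$-measure for every $x\in M$ (and $M\setminus\mathrm{Cut}(x)$ is dense). Fix $x\in \supp\mm = M$. For $\mm$-a.e.\ $y$, namely all $y \notin \Cut(x)$ with $y\neq x$, the pair $(x,y)$ lies in $M\times M\setminus\Cut(M)$, which is open, so by the first part $\sfG$ is continuous at $(x,y)$. This is exactly the regularity condition of Definition \ref{def:weakL1andregularity}, with exceptional set $\Cut(x)\cup\{x\}$ depending on $x$. The main obstacle — and the reason the statement is phrased the way it is — is that away from the diagonal but on the cut locus $\sfG$ may genuinely be discontinuous (indeed not even finite, since $\sfD$ can be $+\infty$ there when $\sfd^2$ fails to be locally semiconcave), so one cannot hope for global continuity; the argument must route through the openness of $M\times M\setminus\Cut(M)$ and the Sard property to discard the bad set. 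A minor technical point to check is that the identification of the asymptotic Lipschitz number with the gradient norm for a $C^1$ function is valid for the Riemannian reference metric — this is the content of the cited fact from \cite{ACM-Sobolev} used already in Proposition \ref{prop:propertiesD}\ref{i:propertiesD5}, so it may be invoked directly.
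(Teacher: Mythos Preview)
Your proof is correct and follows essentially the same approach as the paper: reduce to $\sfG=\sfD$, use Theorem \ref{thm:agrariffsmooth} to get smoothness of $c$ on $M\times M\setminus\Cut(M)$, identify $\sfD$ with $\|\nabla^R_x c(\cdot,y)\|_R$ there (the paper cites Proposition \ref{prop:naturalobject} for this), and invoke the Sard property to make $\Cut(x)$ negligible. The paper's version is terser but the content is the same.
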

\begin{proof}
It is sufficient to prove the statement for the case $\sfG = \sfD$. We observe that $\sfd$ is smooth in charts on $M\times M \setminus \Cut(M)$ by Theorem \ref{thm:agrariffsmooth}. At all those points, $\sfD$ coincides with the norm of the gradient of $c=\frac{1}{2}\sfd^2$ computed with respect to the Riemannian reference, cf.\ Proposition \ref{prop:naturalobject}. Then $\sfD$ is finite and continuous on $M\times M \setminus \Cut(M)$. Furthermore, if the minimizing Sard property holds, then $\Cut(x)$ is a closed set with zero measure for all $x\in M$. Thus $\sfD$ is finite and continuous at all points $(x,y)$ with $y\notin \Cut(x)$, which is the regularity condition of Definition \ref{def:weakL1andregularity}.
\end{proof}

We turn now to the regularity condition for plans occurring in $\CD(\beta,n)$ stability in Theorem \ref{thm:stabCD}. We recall the following definition, introduced by Rifford in \cite{RiffordCarnot,riffordbook}.

\begin{definition}[Ideal structures]\label{def:ideal}
A sub-Riemannian metric space $(M,\sfd)$ is ideal if it has no non-trivial abnormal geodesics.
\end{definition}

Generic sub-Riemannian structures are ideal, when the rank of the distribution is constant and at least 3, see \cite[Thm.\@ 2.8]{CJT-Genericity}. For ideal structures, a sub-Riemannian version of the McCann-Brenier theorem holds \cite{FR-mass,riffordbook}. Furthermore, the non-static part of the transport map between suitable measures avoids almost surely the cut locus \cite{BRInv}, so that $\sfd$ is smooth $\pi$-a.e.\ for any optimal plan whose marginals have disjoint support.

\begin{theorem}[Natural gauge functions are regular II]\label{t:regularityforstabilityCD}
Let $(M,\sfd,\mm)$ be a sub-Rieman\-nian m.m.s., equipped with a Riemannian reference $\sfd_R$. Assume that $(M,\sfd)$ is ideal. Then for all $\mu_{0}\in \mathcal{P}_{bs}(M,\sfd)$, $\mu_{1}\in \Prob_{bs}^{*}(M,\sfd, \mm)$, with $\supp\mu_0 \cap \supp\mu_1 = \emptyset$ there is a unique $\pi \in \Opt(\mu_0,\mu_1)$, and any natural gauge function $\sfG$ is continuous and finite $\pi$-a.e. In particular, the regularity condition of Definition \ref{def:weakL1andregularity-CD} is verified.
\end{theorem}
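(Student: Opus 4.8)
The plan is to reduce everything to the case $\sfG=\sfD$, since a natural gauge function is by definition a continuous function $f$ of the pair $(\sfd,\sfD)$, and both the continuity and the finiteness of $\sfG$ at a point follow from the continuity and finiteness of $(\sfd,\sfD)$ there (using the standing assumption in Definition~\ref{def:naturalgauge} that $|f(a,b)|<+\infty$ when $a,b<+\infty$). So it suffices to show that, for $\mu_0,\mu_1$ as in the statement, there is a unique optimal coupling $\pi$, and that $\sfd$ and $\sfD$ are both continuous and finite $\pi$-a.e. The uniqueness of $\pi$ and the structure of the optimal transport is exactly the content of the sub-Riemannian McCann--Brenier theorem for ideal structures (\cite{FR-mass,riffordbook}): since $\mu_1\ll\mm$ (as $\mu_1\in\Prob_{bs}^{*}$) and the supports are disjoint and compact, there is a unique $\pi\in\Opt(\mu_0,\mu_1)$, induced by a map, and $\pi$ is concentrated on a set of pairs $(x,y)$ joined by a unique minimizing geodesic.

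The key step is to invoke the result of \cite{BRInv} (quoted in the paragraph preceding the statement) that, for ideal structures and for $\mu_0\in\mathcal P_{bs}$, $\mu_1\in\Prob_{bs}^{*}$ with disjoint supports, the non-static part of the optimal plan $\pi$ avoids the cut locus almost surely; more precisely, $\pi(\Cut(M))=0$, hence $\pi$ is concentrated on $M\times M\setminus\Cut(M)$. On this open set $\sfd$ (indeed $\sfd^2$) is smooth in charts by Theorem~\ref{thm:agrariffsmooth}, hence in particular continuous and finite. It then remains to argue that $\sfD$ is continuous and finite at $\pi$-a.e.\ point. For this I would use the computation underlying Proposition~\ref{prop:naturalobject}: at a point $(x,y)\notin\Cut(M)$, the function $z\mapsto c(z,y)=\tfrac12\sfd^2(z,y)$ is $C^\infty$ in charts near $x$, so its asymptotic $\sfd_R$-Lipschitz number is just the $\sfd_R$-norm of its Riemannian gradient, $\sfD(x,y)=\|\nabla^R_x c(\cdot,y)\|_R$, which is finite and depends continuously on $(x,y)$ on $M\times M\setminus\Cut(M)$ (the gradient is a smooth function of $(x,y)$ there, and $\sfd_R$ comes from a smooth Riemannian metric). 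Therefore $\sfD$ is continuous and finite on the open set $M\times M\setminus\Cut(M)$, and since $\pi$ charges this set fully, $\sfG=f(\sfd,\sfD)$ is continuous $\pi$-a.e., which is precisely the regularity condition of Definition~\ref{def:weakL1andregularity-CD}.

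The main obstacle — and the reason the hypothesis is ideality rather than merely the Sard property — is controlling the behaviour of the optimal plan near the cut locus: a priori $\pi$ could put positive mass on $\Cut(M)$, where $\sfd^2$ (and hence $\sfD$) need not even be continuous, let alone finite with the expected formula. The ideal assumption is exactly what rules out abnormal geodesics and makes the $\pi$-negligibility of the cut locus available from \cite{BRInv}; without it (e.g.\ under only the minimizing Sard property) one controls $\Cut(x)$ for fixed $x$ but not the transport plan, so the $\CD$-type regularity over plans would not follow. One should also take a small amount of care that the ``non-static'' caveat is harmless here: on the static part $x=y$ the plan is supported on the diagonal, but since $\supp\mu_0\cap\supp\mu_1=\emptyset$ there is no static part, so $\pi$ is entirely concentrated on the non-static set and the cited avoidance result applies directly. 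Assembling these observations gives the statement.

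\begin{proof}
It suffices to prove the statement for $\sfG=\sfD$: indeed a natural gauge function has the form $\sfG(x,y)=f(\sfd(x,y),\sfD(x,y))$ with $f$ continuous on $\Omega$, and by the convention in Definition~\ref{def:naturalgauge} one has $|f(a,b)|<+\infty$ whenever $a,b<+\infty$; hence if $\sfd$ and $\sfD$ are continuous and finite at a point $(x,y)$, so is $\sfG$.

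Let $\mu_0\in\mathcal P_{bs}(M,\sfd)$ and $\mu_1\in\Prob_{bs}^{*}(M,\sfd,\mm)$ with $\supp\mu_0\cap\supp\mu_1=\emptyset$. Since $(M,\sfd)$ is ideal, the sub-Riemannian version of the McCann--Brenier theorem applies \cite{FR-mass,riffordbook}: as $\mu_1\ll\mm$, there is a unique $\pi\in\Opt(\mu_0,\mu_1)$, and $\pi$ is concentrated on a set of pairs $(x,y)$ joined by a unique minimizing geodesic. Moreover, since $\supp\mu_0\cap\supp\mu_1=\emptyset$, the coupling $\pi$ gives no mass to the diagonal, i.e.\@ it coincides with its non-static part. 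By the result of \cite{BRInv} on ideal structures, the non-static part of $\pi$ avoids the cut locus $\mm$-almost surely; hence
\begin{equation}
\pi\big(\Cut(M)\big)=0,
\end{equation}
so that $\pi$ is concentrated on the open set $M\times M\setminus\Cut(M)$.

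It remains to check that $\sfd$ and $\sfD$ are continuous and finite on $M\times M\setminus\Cut(M)$. By Theorem~\ref{thm:agrariffsmooth}, $\sfd^2$ is smooth in charts on $M\times M\setminus\Cut(M)$; in particular $\sfd$ is continuous and finite there. Fix $(x,y)\notin\Cut(M)$. Then $z\mapsto c(z,y)=\tfrac12\sfd^2(z,y)$ is $C^\infty$ in a chart around $x$, so its asymptotic $\sfd_R$-Lipschitz number equals the $\sfd_R$-norm of its Riemannian gradient:
\begin{equation}
\sfD(x,y)=\mathrm{Lip}_a^{\sfd_R}[c(\cdot,y)](x)=\|\nabla^R_x c(\cdot,y)\|_R,
\end{equation}
as in the computation underlying Proposition~\ref{prop:naturalobject}. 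Since $\sfd^2$ is smooth in both variables on $M\times M\setminus\Cut(M)$ and $\sfd_R$ is induced by a smooth Riemannian metric, the right-hand side is finite and depends continuously on $(x,y)$ on $M\times M\setminus\Cut(M)$. Thus $\sfD$ is continuous and finite there.

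Combining the last two paragraphs, $\sfD$ (and hence $\sfG=f(\sfd,\sfD)$) is continuous and finite at $\pi$-a.e.\@ point, which is exactly the regularity condition of Definition~\ref{def:weakL1andregularity-CD}.
\end{proof}
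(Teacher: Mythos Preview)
Your proof is correct and follows essentially the same approach as the paper: reduce to $\sfG=\sfD$, use the ideal assumption and \cite{BRInv} to conclude that the unique optimal plan $\pi$ is concentrated off $\Cut(M)$ (the disjoint-support hypothesis ruling out the static part), and then invoke smoothness of $\sfd^2$ on $M\times M\setminus\Cut(M)$ together with Proposition~\ref{prop:naturalobject} to get continuity and finiteness of $\sfD$ there. Your write-up is in fact slightly more explicit than the paper's in justifying the continuity of $\sfD$ on the complement of the cut locus.
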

\begin{proof}
It is sufficient to prove the result for the case $\sfG=\sfD$. If $(M,\sfd)$ is ideal, there exists a unique $\pi \in \Opt(\mu_0,\mu_1)$, and it is induced by a transport map $T:M\to M$. Furthermore, for $\mu_0$-a.e.\ $x\in M$, either $T(x)=x$, or $T(x)\notin \Cut(x)$, cf.\ \cite[Cor.\ 38]{BRInv}. Since $\supp\mu_0$ and $\supp\mu_1$ are disjoint, the first case never occurs. It follows that $\sfD$ is continuous at $(x,y)$ for $\pi$-a.e.\ $(x,y) \in M\times M$.
\end{proof}
\begin{remark}
One can appreciate in the above proof the importance of the restriction to measures $\mu_0,\mu_1$ with disjoint support. In general (and more precisely, whenever the sub-Riemannian structure is not Riemannian), the diagonal $\Delta\subset M\times M$ is always part of the cut locus, and $\sfD$ is not continuous there. Thus, the regularity condition of Definition \ref{def:weakL1andregularity-CD} would not be verified without that restriction.
\end{remark}
\begin{remark}
The ideal assumption cannot be removed. In fact, in \cite[Fig.\ 2]{BKS2}, the authors built an explicit example, on corank 1 Carnot groups, and measures $\mu_i \in \mathcal{P}^*_{bs}(X,\sfd,\mm)$ with disjoint support where any point $x\in\supp\mu_0$ is sent to $T(x)\in \Cut(x)$, and more precisely $T(x)$ is the endpoint of an abnormal geodesic from $x$. Here, it turns out that $\sfD$ is not continuous at every $(x,y)\in \supp\pi$.
\end{remark}

\subsubsection{G-boundedness condition}\label{sec:boundedness}

We discuss here the boundedness properties of natural gauge functions. We recall the following definition, see \cite[Sec.\ 3.1]{nostrolibro} for further details.

\begin{definition}[Step]\label{def:step}
A sub-Riemannian metric space has step $k\in \N$ at $x\in M$ if Lie brackets of length $k$ are sufficient to satisfy the bracket-generating condition at $x$, and $k=k(x)$ is the smallest number with this property. The step of a sub-Riemannian metric space is the supremum of the step at all its points.
\end{definition}

The next result is a reformulation of well-known facts about the regularity of the sub-Riemannian distance.

\begin{theorem}[Boundedness properties of natural gauge functions]\label{t:boundedness}
Let $(M,\sfd,\mm)$ be a sub-Riemannian m.m.s. Let $\sfG$ be a natural gauge function. Then the following hold:
\begin{enumerate}[label = (\roman*)]
\item \label{i:boundedness1} The function $\sfG$ is locally bounded on an open and dense set of $M\times M$. In particular for any non-empty open set $\mathcal{O}$ it holds
\begin{equation}
\mm(\{y\in M \; : \; |\sfG(x,y)|<+\infty\}\cap \mathcal{O}) >0, \qquad \forall\, x \in M.
\end{equation}
\item \label{i:boundedness2} If $(M,\sfd)$ satisfies the minimizing Sard property, then for any $x\in M$ the function $\sfG(x,\cdot)$ is locally bounded almost everywhere. In particular it holds
\begin{equation}
\mm(\{y\in M  \; : \; |\sfG(x,y)| = +\infty\}) = 0, \qquad \forall\, x\in M.
\end{equation}
\item \label{i:boundedness3} If $(M,\sfd)$ has step $\leq 2$, then $\sfG$ is locally bounded, that is for every bounded subset $\mathcal{O}$ there exists $C>0$ such that
\begin{equation}
|\sfG(x,y)| \leq C,\qquad \forall\, x,y\in \mathcal{O}.
\end{equation}

\item \label{i:boundedness4} If $\sfd$ is locally Lipschitz in charts outside of the diagonal $\Delta\subset M\times M$, then $\sfG$ is locally bounded away from the diagonal, that is for all $\varepsilon>0$ and bounded subset $\mathcal{O}$ there exists $C>0$ such that 
\begin{equation}
|\sfG(x,y)| \leq C,\qquad \forall\, x,y\in \mathcal{O},\; \sfd(x,y)\geq \varepsilon.
\end{equation}
\end{enumerate}
\end{theorem}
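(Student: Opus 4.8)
The statement to prove is Theorem \ref{t:boundedness}, which collects four boundedness properties of natural gauge functions. Let me sketch a proof plan.

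\medskip

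The plan is to reduce everything to the case $\sfG = \sfD$, which is legitimate since a natural gauge function has the form $\sfG(x,y) = f(\sfd(x,y),\sfD(x,y))$ with $f$ continuous and homogeneous on $\Omega$, and $|f(a,b)| < +\infty$ whenever $a,b < +\infty$; since $\sfd \leq \sfD$ always, finiteness and local boundedness of $\sfD$ immediately transfer to $|\sfG|$ through continuity of $f$ on the compact sets of $\Omega$ with finite second coordinate, and conversely $\sfD$ appears as a building block so its unboundedness is the only obstruction. Thus in each item I would work with $\sfD$ only. The unifying mechanism for all four items is the formula from Proposition \ref{prop:propertiesD}\ref{i:propertiesD2} (specifically \eqref{eq:formulaDineq}): for $z,w,y$ near a point $x$,
\[
\frac{|c(z,y)-c(w,y)|}{\sfd_R(z,w)} \leq \left(\frac{\sfd(z,y)+\sfd(w,y)}{2}\right) \frac{\sfd(z,w)}{\sfd_R(z,w)},
\]
so $\sfD(x,y)$ is controlled by the asymptotic ratio $\sfd/\sfd_R$ near $x$, i.e. by the asymptotic Lipschitz number of $\sfd$ with respect to the Riemannian reference. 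The whole theorem therefore follows from known regularity properties of the Carnot–Carathéodory distance.

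\medskip

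For item \ref{i:boundedness1}: by Theorem \ref{thm:agrariffsmooth} the set $M\times M\setminus \Cut(M)$ is open and dense, and $\sfd^2$ is smooth there; at a smooth point $(x,y)$, $\sfD(x,y)$ equals $\|\nabla^R_x c(\cdot,y)\|_R$ (as already used in the proof of Proposition \ref{p:meek1}, and recorded in Proposition \ref{prop:naturalobject}), hence is finite and $\sfD$ is locally bounded on that open dense set; by upper semicontinuity (Proposition \ref{prop:propertiesD}\ref{i:propertiesD5}) it is locally bounded on a possibly larger open set. Since $M\setminus\Cut(x)$ is dense in $M$ for every fixed $x$ and $\mm$ is a smooth (hence positive on open sets) measure, intersecting with any non-empty open $\mathcal{O}$ gives positive measure. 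For item \ref{i:boundedness2}: under the minimizing Sard property, $\Cut(x)$ is closed with $\mm(\Cut(x))=0$ for every $x$ (this is exactly the self-contained argument at the end of the proof of Lemma \ref{lem:starsardpointofsmoothness}), so $\sfD(x,\cdot)$ is finite and locally bounded off a set of measure zero. For item \ref{i:boundedness3}: when the step is $\leq 2$ it is classical that $\sfd$ is locally Lipschitz in charts (everywhere, including the diagonal), so the ratio $\sfd(z,w)/\sfd_R(z,w)$ is locally bounded and $\sfd(z,y)+\sfd(w,y)$ is locally bounded, giving a local bound on $\sfD$ through \eqref{eq:formulaDineq}. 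For item \ref{i:boundedness4}: away from the diagonal, the hypothesis gives local Lipschitzness of $\sfd$ in charts on $\{\sfd(x,y)\geq\varepsilon\}\cap(\mathcal{O}\times\mathcal{O})$, and again \eqref{eq:formulaDineq} converts this into a local bound on $\sfD$ (the factor $\sfd(z,y)+\sfd(w,y)$ being bounded on the bounded set $\mathcal{O}$).

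\medskip

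The main obstacle, such as it is, is bookkeeping rather than mathematics: one must be careful that the quantity $\sfD(x,y) = \mathrm{Lip}_a^{\sfd_R}[c(\cdot,y)](x)$ is computed as a limsup of difference quotients in the \emph{first} variable $x$, so "local boundedness of $\sfD$'' near $(x_0,y_0)$ requires a bound on $|c(z,y)-c(w,y)|/\sfd_R(z,w)$ that is uniform for $(z,w)$ near $x_0$ \emph{and} $y$ near $y_0$ simultaneously — this is where one needs the Lipschitz/smoothness statements for $\sfd$ to hold on a full neighborhood in $M\times M$, not merely along slices. In item \ref{i:boundedness1} this is supplied by smoothness on the open set $M\times M\setminus\Cut(M)$; in items \ref{i:boundedness3}--\ref{i:boundedness4} by the (chart-wise) Lipschitz regularity on the relevant bounded regions; and in item \ref{i:boundedness2} one accepts only the slice-wise statement, which is why the conclusion there is the weaker "locally bounded almost everywhere''. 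I would also remark that the real-analytic case is subsumed, since real-analytic structures with the minimizing Sard property satisfy the hypotheses of \ref{i:boundedness2}, and Carnot groups of step $2$ fall under \ref{i:boundedness3}.
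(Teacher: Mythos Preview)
Your proposal is correct and follows essentially the same approach as the paper: reduce to $\sfG=\sfD$, then use smoothness of $\sfd^2$ off the cut locus (item \ref{i:boundedness1}), zero measure of $\Cut(x)$ under Sard (item \ref{i:boundedness2}), the equivalence of step $\leq 2$ with local Lipschitzness of $\sfd$ in charts (item \ref{i:boundedness3}), and the direct hypothesis for item \ref{i:boundedness4}. Your discussion of the uniformity issue via \eqref{eq:formulaDineq} is a bit more explicit than the paper's, but the underlying argument is the same.
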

\begin{proof}
It is sufficient to prove the statements for $\sfG=\sfD$. 

\textbf{Proof of \ref{i:boundedness1}.} Fix $x\in M$ and $y\notin\Cut(x)$. Then $\sfd$ is smooth in a neighborhood of $(x,y)$, and $\sfD(x,y) = \|\nabla^R_x \sfd(\cdot,y)\|_R <+\infty$. The set $M\setminus \Cut(x)$ is open and dense by Theorem \ref{thm:agrariffsmooth}, and since $\mm$ is smooth the statement follows.

\textbf{Proof of \ref{i:boundedness2}.} By the previous argument, if $\sfD(x,y) = +\infty$ we must have $y\in \Cut(x)$. If the Sard property holds, then $\Cut(x)$ has zero measure.

\textbf{Proof of \ref{i:boundedness3}.} The step of $(M,\sfd)$ is $\leq 2$ if and only if $\sfd$ is locally Lipschitz in charts \cite[Cor.\ 6.2]{AAPL-transport}. Since $\sfD$ is the asymptotic Lipschitz constant of $\sfd$ with respect to a Riemannian metric $\sfd_R$, then $\sfD$ must be locally bounded.

\textbf{Proof of \ref{i:boundedness4}.} Obvious.
\end{proof}

\begin{remark}
Item \ref{i:boundedness1} yields the non-triviality condition \eqref{eq:GNonTriv} for natural gauge functions. Item \ref{i:boundedness2} yields the finiteness condition \eqref{eq:Gfinite} required in the generalized Bishop-Gromov Theorem \ref{thm:BG1}.
\end{remark}
\begin{remark}\label{rmk:Goh}
Item \ref{i:boundedness4} holds for example when $(M,\sfd)$ is ideal, and more generally if there are no non-trivial Goh paths, see \cite[Thm.\@ 5.5]{AAPL-transport} for details. This is true, for example, if the underlying distribution is medium-fat \cite[Sec.\ 4.5]{FR-mass}, \cite{AS-minimalityVSsubanalyticity}.
\end{remark}
\begin{remark}
The step $2$ assumption in \ref{i:boundedness3} cannot be removed. Notice that, by the Ball-Box theorem \cite{Jeanbook,Bellaiche}, $\sfd(o,\cdot)$ cannot be Lipschitz in charts in a neighborhood of any point $o$ where the step $s(o)\geq 3$, so that $\sfD(o,o)=+\infty$. One can show that, in general, $\sfD$ may not be essentially bounded in any neighborhood of the diagonal around such points.

To illustrate this fact, let us discuss the case of Carnot groups $(\mathbb{G},\sfd)$. These are left-invariant structures on a nilpotent, connected and simply connected Lie group $\mathbb{G} \simeq \R^n$. We denote points as $(x_1,\dots,x_n)$ in exponential coordinates, with $o=(0,\dots,0)$ the identity of the group. Carnot groups are equipped with metric dilations $\delta_\lambda$, $\lambda>0$, that in these coordinates read $\delta_\lambda(x) = (\lambda^{w_1} x_1,\dots,\lambda^{w_n}x_n)$, where $w_i$ denotes the weight of the $i$-th coordinate. Furthermore, we may choose a Riemannian reference $\sfd_R$, left-invariant by the group action of $\mathbb{G}$. Notice that, for any bounded neighborhood $\mathcal{O}\subset \R^n$, there exists a constant $C=C_{\mathcal{O}}>0$ such that it holds
\begin{equation}
\frac{1}{C} \sum_{i=1}^n|x_i-x_i'|\leq \sfd_R(x,x') \leq C \sum_{i=1}^n|x_i-x_i'|, \qquad \forall\, x,x'\in \mathcal{O}.
\end{equation}
Furthermore, using dilations, one checks that there exists a constant $C'>0$ such that 
\begin{equation}
\frac{1}{C'}\sum_{i=1}^n |x_i-x'_i|^{1/w_i}\leq \sfd(x,x') \leq C'\sum_{i=1}^n |x_i-x'_i|^{1/w_i}, \qquad \forall\, x,x'\in \R^n.
\end{equation}
Let now $\mathcal{O}$ be a bounded neighborhood of the origin $o\in \mathbb{G}$. Up to restricting it, we may assume that $p\in \mathcal{O} \Leftrightarrow p^{-1}\in \mathcal{O}$ (recall that, in exponential coordinates, group inversion corresponds to the map $x\mapsto -x$). We have, for all $x\in \mathcal{O}$
\begin{align}
\sfD(o,x^{-1})=\sfD(x,o) & = \frac{1}{2}\limsup_{z,w\to x} \frac{\sfd(z,o)^2-\sfd(w,o)^2}{\sfd_R(z,w)} \\
& \geq \frac{1}{2}\limsup_{\lambda,\lambda'\to 1} \frac{\sfd(\delta_\lambda(x),o)^2-\sfd(\delta_{\lambda'}(x),o)^2}{\sfd_R(\delta_\lambda(x),\delta_{\lambda'}(x))} \\
& \geq \frac{1}{2} \limsup_{\lambda,\lambda'\to 1} \frac{|\lambda^2-\lambda^{'2}|\sfd(x,o)^2}{|\lambda - \lambda'|C^2 \sfd_R(x,o)}\\
& \geq \frac{1}{C^2} \frac{\sfd(x,o)^2}{\sfd_R(x,o)} \geq C'' \frac{\sum_{i=1}^n |x_i|^{2/w_i}}{\sum_{i=1}^n |x_i|},\label{eq:finalestimate-unbound}
\end{align}
where in the first equality we used the left-invariance of $\sfD$ (which follows from the left-invariance of $\sfd$ and $\sfd_R$), and we note that the constant $C''>0$ depends only on $\mathcal{O}$. It follows that, if there is one $w_i\geq 3$ (that is, when the step is $\geq 3$), then it holds
\begin{equation}
\mm\shortminus\esssup_{x\in \mathcal{O}, x\neq o}\sfG(o,x) = +\infty,
\end{equation}
where $\mm$ is any smooth measure on $\R^n$ (this includes the Haar measures of $\mathbb{G}$). Notice that in the last estimate, $\mathcal{O}$ can be any bounded neighborhood of $o$. It follows by left-invariance that for any neighborhood $\mathcal{O}$ in $\mathbb{G}$ we have
\begin{equation}\label{eq:unboundedness}
\diam_{\sfG}(\mathcal{O}) = +\infty.
\end{equation}

Thus, for sub-Riemannian structures of step $3$, natural gauge functions do not satisfy, in general, the $\sfG$-boundedness assumptions of the stability Theorems \ref{thm:stabMCP} and \ref{thm:stabCD}.
\end{remark}

The diagram in Figure~\ref{fig:implications} illustrates most of the properties for natural gauge functions we proved in this section, on sub-Riemannian structures.
\begin{figure}[t]
\[
\begin{tikzcd}[arrows=Rightarrow]
\fbox{\text{step $\leq 2$ (Def.\ \ref{def:step})}} \arrow[d] \arrow[rrr, "\text{Theorem \ref{t:boundedness}}"] & & & \fbox{$\sfG$ \text{is locally bounded}} \arrow[d] \\
\fbox{\begin{tabular}{@{}c@{}}minimizing Sard \\ (Def.\ \ref{def:Sard})\end{tabular}} \arrow[rrrd, "\text{Theorem \ref{t:regularityforstabilityMCP}}", sloped] \arrow[rrr, "\text{Theorem \ref{t:boundedness}}"] \arrow[dd, "\text{real-analytic}", shift left=2] & & &
\fbox{\begin{tabular}{@{}c@{}} $\sfG$ is locally bounded\\ almost everywhere \end{tabular}}\\
& & & \fbox{\begin{tabular}{@{}c@{}}$\sfG$ satisfies the\\ regularity for stability\\ of $\MCP$ (Def.\ \ref{def:weakL1andregularity})\end{tabular}} \\
\fbox{\begin{tabular}{@{}c@{}} $*$-minimizing Sard \\ (Def.\ \ref{def:Sard})\end{tabular}} \arrow[uu, shift left=2] \arrow[rrr, "\text{Theorem \ref{t:meeknatural}}"] & & & \fbox{$\sfG$ is meek (Def.\ \ref{def:meek})}\\
\fbox{\text{ideal (Def.\ \ref{def:ideal})}} \arrow[u] \arrow[d] \arrow[rrr, "\text{Theorem \ref{t:regularityforstabilityCD}}"] & & & 
\fbox{\begin{tabular}{@{}c@{}}$\sfG$ satisfies the \\ regularity for stability \\ of $\CD$   (Def.\ \ref{def:weakL1andregularity-CD})\end{tabular}} \\
\fbox{\begin{tabular}{@{}c@{}}no non-trivial Goh \\ geodesics \end{tabular}} \arrow[rrr, "\text{Remark \ref{rmk:Goh}}"] & & &
\fbox{\begin{tabular}{@{}c@{}}$\sfG$ is locally bounded\\ away from diagonal\end{tabular}}
\end{tikzcd}
\]
\caption{Sub-Riemannian properties (left), and of natural gauge functions $\sfG$ (right). When $\sfG=\sfd$ all properties are trivially verified with no assumptions.}\label{fig:implications}
\end{figure}
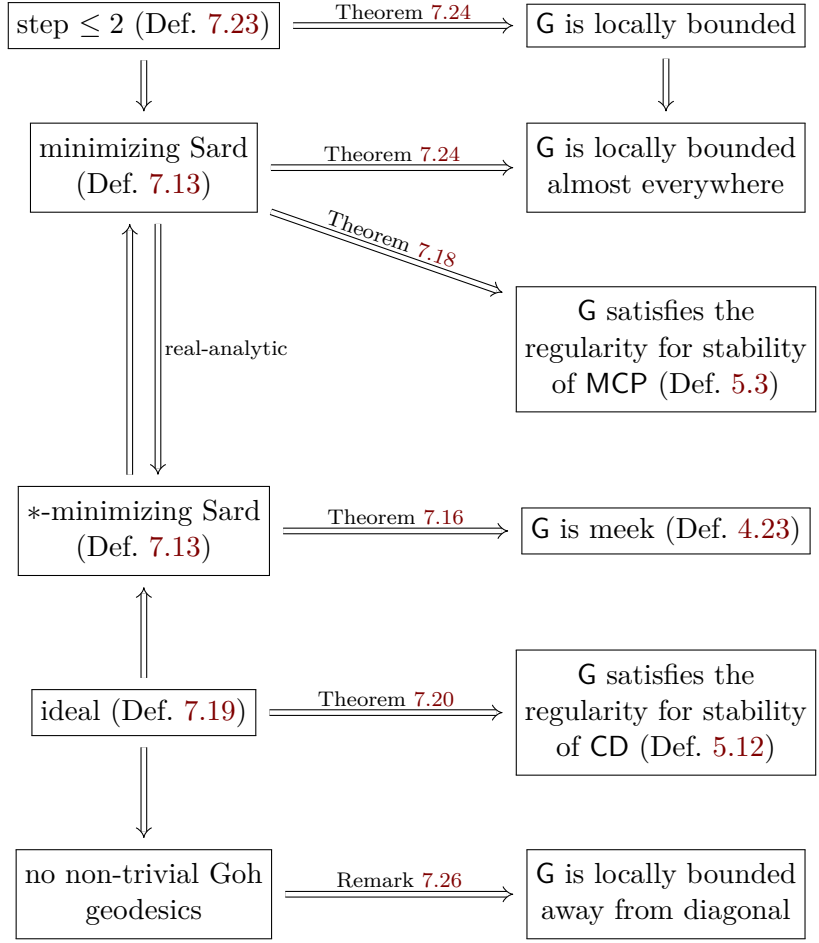

\subsection{Expression of \texorpdfstring{$\sfD$}{D} out of the cut locus}\label{sec:Doutofcut}

In this final section we show how $\sfD$ is related with more familiar objects in sub-Riemannian geometry. It will also serve the purpose to better motivate the expression $\sqrt{\sfD^2-\sfd^2}$ that appears in the list of natural gauge functions in Section \ref{sec:ngf}.

\begin{proposition}\label{prop:naturalobject}
Let $(M,\sfd)$ be a sub-Riemannian metric space, equipped with a reference Riemannian metric $\sfd_R$. Let $\sfD:M\times M\to [0,+\infty]$ be as in Definition \ref{def:D}. For all $x\in M$, let $T_x M = \hor_x \oplus \ver_x$, where $\hor_x$ is the horizontal subspace at $x$ and $\hor_x \perp \ver_x$, and denote with $\pi_x^\ver,\pi_x^\hor$ the corresponding projections. Let also $\nabla^R$ and $\|\cdot\|_R$ denote the gradient and the norm of the Riemannian metric. Then it holds:
\begin{equation}\label{eq:naturalobject1}
\sfD(x,y) = \|\nabla_x^R c(\cdot,y)\|_R,\qquad \forall\, (x,y)\notin\Cut(M).
\end{equation}
If, in addition, the reference $\sfd_R$ is an extension of $\sfd$ (cf.\ Definition \ref{def:extension}), then it holds
\begin{equation}\label{eq:naturalobject2}
\sfD(x,y)^2 = \sfd(x,y)^2 + \|\pi^\ver_x \nabla_x^R c(\cdot,y)\|_R^2, \qquad \forall\, (x,y)\notin\Cut(M).
\end{equation}
Equivalently, identifying $T_xM \simeq T_x^*M$ by means of the Riemannian structure, it holds
\begin{equation}\label{eq:naturalobject3}
\|\pi_x^\ver \lambda^{x,y}\|^2 = \sfD(x,y)^2 - \sfd(x,y)^2, \qquad (x,y)\notin\Cut(M),
\end{equation}
where $\lambda^{x,y} \in T_x^*M$ is the initial covector of the unique geodesic joining $x$ with $y$.
\end{proposition}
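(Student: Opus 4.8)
The statement has three parts, and the natural order is to establish \eqref{eq:naturalobject1} first, then derive \eqref{eq:naturalobject2} from it, and finally observe that \eqref{eq:naturalobject3} is a mere reformulation. For \eqref{eq:naturalobject1}: fix $(x,y)\notin\Cut(M)$. By Theorem \ref{thm:agrariffsmooth}, the function $c(\cdot,y)=\tfrac12\sfd^2(\cdot,y)$ is smooth in charts in a neighborhood of $x$. For a $C^1$ function near $x$, the asymptotic Lipschitz number with respect to a Riemannian metric $\sfd_R$ coincides with the $\sfd_R$-norm of its Riemannian gradient at $x$ — this is a standard local computation (the difference quotient $|c(z,y)-c(w,y)|/\sfd_R(z,w)$ is controlled, via the mean value inequality along $\sfd_R$-geodesics and the continuity of $\nabla^R c$, by $\sup$ of $\|\nabla^R c\|_R$ near $x$, and the value $\|\nabla^R_x c(\cdot,y)\|_R$ is attained in the limit by testing along the $\sfd_R$-geodesic in the gradient direction). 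This gives $\sfD(x,y)=\operatorname{Lip}_a^{\sfd_R}[c(\cdot,y)](x)=\|\nabla^R_x c(\cdot,y)\|_R$.

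For \eqref{eq:naturalobject2}: decompose the Riemannian gradient as $\nabla^R_x c(\cdot,y)=\pi^\hor_x\nabla^R_x c(\cdot,y)+\pi^\ver_x\nabla^R_x c(\cdot,y)$, orthogonally, so that $\|\nabla^R_x c(\cdot,y)\|_R^2=\|\pi^\hor_x\nabla^R_x c(\cdot,y)\|_R^2+\|\pi^\ver_x\nabla^R_x c(\cdot,y)\|_R^2$. Thus it suffices to show that the horizontal part has $\sfd_R$-norm equal to $\sfd(x,y)$ whenever $\sfd_R$ is an extension of $\sfd$. The key fact here is the sub-Riemannian Hamiltonian description of the distance: away from the cut locus, the horizontal differential of $c(\cdot,y)$ at $x$ is exactly the initial covector $\lambda^{x,y}$ (restricted to $\hor_x$) of the unique minimizing geodesic from $x$ to $y$, and $\|\lambda^{x,y}\|^2_{\text{sub-Riem. norm on covectors}} = H(\lambda^{x,y})\cdot 2 = \sfd(x,y)^2$. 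Since $\sfd_R$ is an extension, the sub-Riemannian norm on $\hor_x$ coincides with the restriction of the Riemannian norm to $\hor_x$; hence, identifying $T_xM\simeq T_x^*M$ via $\sfd_R$, the horizontal projection $\pi^\hor_x\nabla^R_x c(\cdot,y)$ corresponds to $\lambda^{x,y}|_{\hor_x}$ and has $\|\cdot\|_R$-norm $\sfd(x,y)$. Plugging into the orthogonal decomposition yields \eqref{eq:naturalobject2}. Finally, \eqref{eq:naturalobject3} follows by rearranging \eqref{eq:naturalobject2} and using the identification $T_xM\simeq T_x^*M$, under which $\nabla^R_x c(\cdot,y)$ corresponds to the covector $\lambda^{x,y}$ (this is the standard fact that out of the cut locus $\di_x\big(\tfrac12\sfd^2(\cdot,y)\big)=\lambda^{x,y}$), so $\pi^\ver_x\nabla^R_x c(\cdot,y)$ corresponds to $\pi^\ver_x\lambda^{x,y}$.

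The main obstacle — or rather the only point requiring genuine care — is the precise bookkeeping in the second step: one must carefully match the three norms in play (the Riemannian norm $\|\cdot\|_R$ on $T_xM$, the sub-Riemannian norm on the subspace $\hor_x$, and the dual norm on $T^*_xM$ used to define $H$) and verify that the ``extension'' hypothesis $L_{\sfd_R}(\gamma)=L_{\sfd}(\gamma)$ for $\sfd$-rectifiable $\gamma$ forces the Riemannian and sub-Riemannian inner products to agree \emph{on} $\hor_x$ (not merely $\sfd_R\le\sfd$, which only gives an inequality). Concretely: the extension property implies that the Riemannian metric restricted to $\hor_x$ equals the sub-Riemannian metric there — otherwise one could shorten horizontal curves — and this is what makes the identification isometric on the horizontal part. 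Once this is pinned down, the computation $\|\pi^\hor_x\nabla^R_x c(\cdot,y)\|_R = \|\lambda^{x,y}\|_{\hor^*_x}=\sfd(x,y)$ is immediate from the Hamiltonian formula $H(\lambda^{x,y})=\tfrac12\sfd(x,y)^2$ (here using that the geodesic is parametrized on $[0,1]$ with speed $\sfd(x,y)$). I would state the needed facts about $\di_x c(\cdot,y)=\lambda^{x,y}$ and $H(\lambda^{x,y})=\tfrac12\sfd^2(x,y)$ out of the cut locus with a reference to Appendix \ref{a:SR} (or to \cite{BRInv}), since they are standard in sub-Riemannian geometry and the excerpt already invokes Theorem \ref{thm:agrariffsmooth}.
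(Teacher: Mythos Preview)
Your proposal is correct and follows essentially the same route as the paper: smoothness of $c(\cdot,y)$ out of the cut locus gives \eqref{eq:naturalobject1} as the gradient norm, the extension hypothesis forces $g_R|_{\hor_x}=g|_{\hor_x}$ (the paper proves this via the flow of a horizontal extension of $v_x$, you sketch it as ``otherwise one could shorten horizontal curves'', which amounts to the same thing), and then the orthogonal decomposition together with $\di_x c(\cdot,y)=\lambda^{x,y}$ and $2H(\lambda^{x,y})=\sfd(x,y)^2$ yields \eqref{eq:naturalobject2} and \eqref{eq:naturalobject3}. The only cosmetic difference is that the paper writes the decomposition in an adapted orthonormal basis $v_1,\dots,v_n$ rather than using the projection notation, but the computation is identical.
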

\begin{proof}
By Theorem \ref{thm:agrariffsmooth}, on $M\setminus \Cut(y)$ the function $c(\cdot,y)=\tfrac{1}{2}\sfd(\cdot,y)^2$ is smooth. Thus, out of $\Cut(y)$, the asymptotic Lipschitz constant with respect to $\sfd_R$ coincides with the Riemannian norm of the Riemannian gradient, thus proving \eqref{eq:naturalobject1}.

We then prove that, if $\sfd_R$ is a Riemannian extension induced by the Riemannian metric $g_R$, and $g$ denotes the sub-Riemannian metric, for all $x\in X$ and $v_x\in \hor_x$ it holds $\|v_x\|_{g_R} = \|v_x\|_{g}$. In fact, let $X_1,\dots,X_L$ be a generating family for the sub-Riemannian structure. There exist $c_{i}\in \R$ such that $v_x = \sum_{i=1}^N c_i X_i(x)$, then let $V:=\sum_{i=1}^N c_i X_i$. Let also $\gamma_t = e^{tV}(x)$ be the flow of $V$, for $t\in [0,\ve]$. This is by construction a horizontal curve with $\dot{\gamma}_t = V(\gamma_t)$. Furthermore, it holds (cf.\ \cite[Ex.\ 3.51, 3.52]{nostrolibro}):
\begin{equation}
L_{\sfd}(\gamma|_{[0,t]}) = \int_0^t \|V(\gamma_s)\|_g\,\di s, \qquad \Longrightarrow \qquad \|v_x\|_{g} = \left.\frac{d}{dt}\right|_{t=0} L_{\sfd}(\gamma|_{[0,t]}).
\end{equation}
Since $\sfd_R$ is an extension, $\gamma_t$ is rectifiable also for $\sfd_R$, and it holds $L_{\sfd}(\gamma|_{[0,t]}) = L_{\sfd_R}(\gamma|_{[0,t]})$. We deduce that $\|v_x\|_{g_R} = \|v_x\|_{g}$, which proves \eqref{eq:naturalobject2}.

As a consequence, letting $k(x)=\dim\hor_x$, we can find a $g_R$-orthonormal basis $v_1,\dots,v_n \in T_x M$ such that $v_1,\dots,v_{k(x)}$ is an orthonormal basis for $\hor_x$ with respect to the sub-Riemannian metric at the point $x$. Therefore, if $(x,y)\notin \Cut(M)$, it holds
\begin{equation}\label{eq:derivate}
\sfD(x,y)^2 = \|\nabla^R_x c(\cdot,y)\|_{g_R}^2 = \sum_{i=1}^{k(x)} \langle \di_x c(\cdot,y),v_i\rangle^2 + \sum_{i=k(x)+1}^{n} \langle \di_x c(\cdot,y),v_i\rangle^2,
\end{equation}
where $\langle \cdot,\cdot\rangle : T_x^*M\times T_x M \to \R$ denotes the dual action of covectors on vectors. Notice that $\di_x c(\cdot,y)=\lambda^{x,y}$ is the initial covector of the unique geodesic $\gamma:[0,1]\to M$ joining $x$ with $y$: this follows by the characterization of sub-Riemannian geodesics via the Lagrange multipliers rule, cf.\ for example \cite[Prop.\@ 4.3, Lemma 2.20]{ABR-curvature}. The first term in the r.h.s.\@ of \eqref{eq:derivate} is $2H(\lambda^{x,y}) = \sfd(x,y)^2$, concluding the proof of \eqref{eq:naturalobject3}.
\end{proof}

\section{Sub-Riemannian comparison theory}\label{sec:comparison}

We recall from \cite{BRMathAnn} a general comparison theory for lower Ricci curvature bounds in the sub-Riemannian setting. We remind first what comparison models are in this setting.

\subsection{LQ optimal control problems}\label{sec:LQ}

Comparison models are a special class of dynamical systems, called \emph{linear quadratic optimal control problems} (LQ in the following), and are a classical topic in optimal control theory. They arise as variational problems in $\R^n$ with a quadratic cost and linear dynamics. We recall their general features, and we refer to \cite[Ch.\@ 16]{Agrachevbook}, \cite[Ch.\@ 1]{Coron} and \cite[Ch.\@ 7]{Jurdjevicbook} for further details.

Let $A,B$ be $\ell\times \ell$ real matrices, with $B$ symmetric and $B \geq 0$. Letting $k\leq \ell$ be the rank of $B$, there exist $b_1,\dots,b_k\in \R^\ell$, unique up to orthogonal transformations, such that $B = \sum_{i=1}^k b_i b_i^*$. Let also $Q$ be a symmetric $\ell\times \ell$ real matrix, and $T>0$. We are interested in \emph{admissible trajectories}, namely  curves $q:[0,T]\to \mathbb{R}^\ell$ for which there exists a \emph{control} $u \in L^2([0,T],\mathbb{R}^k)$ such that
\begin{equation}\label{eq:lq1}
\dot{q} = Aq + \sum_{i=1}^k u_i b_i.
\end{equation}
Thus, we look for admissible trajectories with fixed endpoints  $q(0)=x$,  $q(T)=y$, that minimize the quadratic functional $C_T: L^2([0,T],\mathbb{R}^k) \to \mathbb{R}$
\begin{equation}\label{eq:lq2}
C_T(u) = \frac{1}{2}\int_{0}^{T} \left(u^* u - q^*Qq \right)dt.
\end{equation}
Admissible trajectories minimizing \eqref{eq:lq2} are called \emph{minimizers}. The vector $Aq$ represents the \emph{drift}, while  $b_1,\dots,b_k$ are the \emph{controllable directions}. The matrix $Q$ is the \emph{potential} of the LQ problem.

We only deal with \emph{controllable} systems, i.e.\@ for which there exists $s >0$ such that
\begin{equation}\label{eq:kalman}
\rank(B,AB,\ldots,A^{s-1}B) = \ell.
\end{equation}
Condition \eqref{eq:kalman} is known as \emph{Kalman condition} in control theory. It is equivalent to the fact that, for any choice of $x,y\in \R^\ell$ and $T>0$, there is a non-empty set of admissible trajectories $q :[0,T] \to \R^\ell$ joining $x$ with $y$.

It is well-known that the admissible trajectories minimizing \eqref{eq:lq2} are projections $(p,q) \mapsto q$ of the solutions of the Hamilton equations
\begin{equation} \label{eq:eqhhh}
\dot{p}  = -\partial_q H, \qquad \dot{q} = \partial_p H, \qquad (p,q) \in T^*\R^\ell = \R^{2\ell},
\end{equation}
where the Hamiltonian function $H: \mathbb{R}^{2\ell} \to \mathbb{R}$ is defined by
\begin{equation}\label{eq:Hamiltonian}
H(p,q) = \frac{1}{2}\left( p^* B p +2 p^* A q + q^* Q q \right).
\end{equation}
We remark that the Hamiltonian flow is defined for all times, since $H$ is quadratic.

\begin{definition}[Conjugate time]
We say that $t_*>0$ is a \emph{conjugate time} if there exists a non-trivial solution of the Hamilton equations \eqref{eq:eqhhh} such that $q(0) = q(t_*) = 0$.
\end{definition}

The first conjugate time $t_c = \inf\{t_*>0 \mid t_* \text{ is a conjugate time}\} \in (0,+\infty]$ determines existence and the uniqueness of solutions of minimizers, as specified by the following proposition (see \cite[Sec.\ 16.4]{Agrachevbook}).
 
\begin{proposition}[Conjugate times and existence of minimizers]\label{p:LQconj}
Let $t_c$ be the first conjugate time of the LQ problem \eqref{eq:lq1}-\eqref{eq:lq2}. Then, for any $x,y\in \R^\ell$,
\begin{itemize}
\item if $T<t_c$ there exists a unique minimizer between $x$ with $y$;
\item if $T>t_c$ there exist no minimizers between $x$ with $y$;
\item if $T= t_c$ existence of minimizers depends on $x$, $y$.
\end{itemize}
\end{proposition}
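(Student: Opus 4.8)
\textbf{Proof plan for Proposition \ref{p:LQconj}.} The statement is a classical result in linear-quadratic optimal control, and the plan is to organize the standard argument around the second variation of the cost functional and the properties of conjugate points. Throughout I identify the minimization problem with the study of the quadratic form obtained as the second variation of $C_T$ on the affine space of admissible controls steering $x$ to $y$ in time $T$; since the problem is linear-quadratic, this form does not depend on $x$, $y$ or on the reference control, so it is enough to work with the homogeneous problem $x=y=0$ and the quadratic form $Q_T(u) := C_T(u)$ restricted to $\{u \mid q(0)=q(T)=0\}$. The Kalman condition \eqref{eq:kalman} guarantees that the set of admissible controls joining any pair of endpoints is a non-empty affine subspace of $L^2([0,T],\R^k)$, so the problem is well-posed as a minimization of a quadratic functional over an affine subspace.

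First I would record the first-order optimality conditions: by the Pontryagin Maximum Principle (or directly via Lagrange multipliers, since the endpoint constraint is affine and the cost quadratic), every minimizer is the projection of a solution of the Hamilton equations \eqref{eq:eqhhh} with Hamiltonian \eqref{eq:Hamiltonian}, and conversely, when $Q_T$ is positive definite, such a projected extremal is the unique minimizer. The key analytic input is the relation between sign properties of $Q_T$ and conjugate times: (i) if $T < t_c$ then $Q_T$ is positive definite (coercive) on the constraint space; (ii) if $T > t_c$ then $Q_T$ is indefinite, indeed takes negative values, so the infimum of the cost is $-\infty$ and no minimizer exists; (iii) if $T = t_c$ then $Q_T$ is positive semidefinite with non-trivial kernel, and existence becomes a compatibility (Fredholm-type) condition on the endpoints $x,y$. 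This trichotomy is proved by analyzing the Jacobi equation associated to the Hamiltonian flow: conjugate times are exactly the times $t_*$ at which the curve of Lagrangian subspaces $e^{t\vec H}(\{p\text{-axis}\})$ becomes non-transversal to the $q$-axis, and the Morse index of $Q_T$ equals the number of conjugate times in $(0,T)$ counted with multiplicity (a Morse-index-type theorem for LQ problems, see \cite[Ch.\@ 16]{Agrachevbook}).

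Given this, the three bullet points follow quickly. For $T < t_c$: coercivity of $Q_T$ plus the affine constraint gives existence and uniqueness of the minimizer by the direct method (a coercive strictly convex quadratic on a closed affine subspace of a Hilbert space has a unique minimum), and the minimizer satisfies the Hamilton equations. For $T > t_c$: pick $\bar u$ in the kernel-related negative cone, i.e.\@ a control with $q(0)=q(T)=0$ and $Q_T(\bar u) < 0$; then for any admissible $u$ joining $x$ to $y$, the family $u + s\bar u$ is still admissible and $C_T(u+s\bar u) \to -\infty$ as $s\to\infty$, so no minimizer exists. For $T = t_c$: $Q_T \geq 0$ with non-trivial kernel $\mathcal K$; the infimum over controls joining $x$ to $y$ is finite and attained if and only if the affine constraint space is not "tilted" against $\mathcal K$, i.e.\@ the linear functional $u \mapsto \langle \text{(first variation)}, \bar u\rangle$ vanishes on $\mathcal K$ for the specific endpoint data — this is precisely the statement that existence depends on $x,y$.

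The main obstacle is item (ii) and the sharp form of (i): proving that $Q_T$ is coercive (not merely positive) for $T < t_c$, and that it genuinely attains negative values for $T > t_c$, requires the Morse-index count for LQ problems, i.e.\@ the identification of the index of the second variation with the number of conjugate points. I would not reprove this from scratch but cite \cite[Sec.\@ 16.4]{Agrachevbook}, which contains exactly Proposition \ref{p:LQconj}; the role of the proof here is to recall the structure, since everything downstream (the construction of model distortion coefficients $\beta^{\mathrm{mod}}$ and the parameter $t_\kappa$) rests on this trichotomy. The one subtlety worth a line of care is that $t_c$ may be $+\infty$ (e.g.\@ when $Q \leq 0$), in which case only the first bullet is relevant and the conclusion is global existence and uniqueness for all $T$; this is consistent with the statement and needs no separate argument.
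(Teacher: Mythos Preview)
The paper does not give its own proof of this proposition: it simply states the result and refers the reader to \cite[Sec.\@ 16.4]{Agrachevbook}. Your outline is a correct sketch of the classical second-variation/Morse-index argument underlying that reference, so there is nothing to compare---you have supplied more detail than the paper itself, and your plan to ultimately cite \cite[Sec.\@ 16.4]{Agrachevbook} matches exactly what the paper does.
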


The minimization of the functional \eqref{eq:lq2} with fixed endpoints and $T>0$ does not define a metric on $\R^{\ell}$, in general. Nevertheless, one can still define a distortion coefficient.

\begin{definition}[LQ distortion coefficient]\label{d:LQdist}
Consider the LQ problem \eqref{eq:lq1}-\eqref{eq:lq2}, with $T=1$. For $x,y \in \R^\ell$ and $t\in[0,1]$, define
\begin{equation}
Z_{t}(x,y)=\{q(t)\mid q : [0,1] \to \R^\ell  \text{ is a minimizer s.t. } q(0) = x,\, q(1) = y\}.
\end{equation}
The \emph{distortion coefficient} of the LQ problem is
\begin{equation}
\beta_{t}^{A,B,Q}(x,y)=\limsup_{r\to 0}\frac{|Z_{t}(x,B_{r}(y))|}{|B_{r}(y)|}, \qquad t \in [0,1],
\end{equation}
where $x,y\in \R^\ell$, $B_r(y)$ denotes the Euclidean ball with center $y$ and radius $r>0$, and $|\cdot|$ denotes the Lebesgue measure of $\R^\ell$.
\end{definition}

The next proposition, proven in \cite[Prop.\@ 27]{BRMathAnn}, links the distortion coefficient with the Hamilton equations of the LQ problem.
\begin{proposition}\label{p:LQdist}
Consider the LQ problem \eqref{eq:lq1}-\eqref{eq:lq2}, with $T=1$, and assume that $t_c>1$. Its distortion coefficient does not depend on the choice of $x,y$, and satisfies
\begin{equation}\label{eq:firstrepformula}
\beta_{t}^{A,B,Q}=\frac{\det N(t)}{\det N(1)}>0, \qquad \forall\, t \in [0,1],
\end{equation}
where $M(t),N(t) :[0,+\infty) \to \mathrm{Mat}(\ell \times \ell)$ are the solutions of the Hamiltonian system
\begin{equation}\label{eq:hamiltoniansystem}
\frac{d}{dt} \begin{pmatrix}
M \\
N
\end{pmatrix} = \begin{pmatrix}
-A^* & - Q \\
B &  A
\end{pmatrix} \begin{pmatrix}
M \\
N
\end{pmatrix}, \qquad \begin{pmatrix}
M(0) \\N(0) 
\end{pmatrix} = \begin{pmatrix}
\mathbb{1} \\ \mathbb{0}
\end{pmatrix}.
\end{equation}
Equivalently, we have
\begin{equation}\label{eq:seconrepformula}
\beta_{t}^{A,B,Q}=\exp\left( -\int_{t}^{1}\tr(BV(s) +A) \di s\right) >0, \qquad \forall\, t \in (0,1],
\end{equation}
where $V :(0,t_c) \to \mathrm{Sym}(\ell\times \ell)$ is the maximal solution of the matrix Riccati equation
\begin{equation}\label{eq:RiccatiLQ}
\dot{V} + A^*V + VA + VBV + Q = \mathbb{0}, \qquad \lim_{t\to 0^+} V^{-1}(t) = \mathbb{0}.
\end{equation}
\end{proposition}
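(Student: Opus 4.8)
The statement is \cite[Prop.~27]{BRMathAnn}; the plan is the following.

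\emph{Linearity of the flow and the exponential map.} Since $H$ in \eqref{eq:Hamiltonian} is a quadratic form, the Hamiltonian flow of \eqref{eq:eqhhh} is linear: with $\mathcal{H} := \begin{pmatrix}-A^* & -Q\\ B & A\end{pmatrix}$, the solution with $(p(0),q(0))=(p_0,x)$ is $e^{t\mathcal{H}}\,(p_0,x)^*$. For fixed $x\in\R^\ell$ I would introduce the affine \emph{exponential map} $E^t_x:\R^\ell\to\R^\ell$, $E^t_x(p_0):=q(t)$. Comparing with \eqref{eq:hamiltoniansystem}, the block columns of $e^{t\mathcal{H}}$ issued from $(\mathbb{1},\mathbb{0})^*$ are exactly $(M(t),N(t))^*$, so the linear part of $E^t_x$ equals $N(t)$, independently of $x$. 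I would then record that $t_*>0$ is a conjugate time if and only if $\det N(t_*)=0$: the solution with $q(0)=0$, $p(0)=p_0$ is $(M(t)p_0,N(t)p_0)$, which is non-trivial precisely when $p_0\neq 0$ (as $M(0)=\mathbb{1}$) and satisfies $q(t_*)=0$ precisely when $p_0\in\ker N(t_*)$. Since $t_c>1$, $\det N$ does not vanish on $(0,1]$ and $E^1_x$ is an affine isomorphism.

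\emph{Computation of $\beta^{A,B,Q}_t$.} By Proposition~\ref{p:LQconj}, for any $x,y$ the unique minimizer from $x$ to $y$ is the projection of the Hamiltonian trajectory with initial covector $p_0=(E^1_x)^{-1}(y)$; hence $Z_t(x,y)=E^t_x\!\big((E^1_x)^{-1}(y)\big)$ is a single point, and $y\mapsto Z_t(x,y)$ is the affine map with linear part $N(t)N(1)^{-1}$. Consequently $Z_t(x,B_r(y))$ is the image of the ball $B_r(y)$ under this affine map, so $|Z_t(x,B_r(y))|=|\det(N(t)N(1)^{-1})|\,|B_r(y)|$ for \emph{every} $r>0$; the limsup in Definition~\ref{d:LQdist} is thus a constant, and $\beta^{A,B,Q}_t(x,y)=|\det N(t)|/|\det N(1)|$ is independent of $x$ and $y$. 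To remove the absolute values and obtain \eqref{eq:firstrepformula}, I would argue that $\det N(t)>0$ on $(0,1]$: $\det N$ is continuous, nonvanishing on $(0,t_c)\supseteq(0,1]$ by the conjugate-time characterization, and positive for small $t>0$; the last fact is exactly where the Kalman controllability condition \eqref{eq:kalman} is used (note $N(t)=tB+O(t^2)$, which is degenerate when $k<\ell$, so one must expand further, cf.\ \cite[Ch.~16]{Agrachevbook}). By the intermediate value theorem positivity then holds on all of $(0,t_c)$.

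\emph{Riccati representation.} On $(0,t_c)$ set $V(t):=M(t)N(t)^{-1}$. Using \eqref{eq:hamiltoniansystem} one checks that $\tfrac{d}{dt}\big(M^*N-N^*M\big)=\mathbb{0}$ and $M(0)^*N(0)-N(0)^*M(0)=\mathbb{0}$, so $N^*M=M^*N$ and $V$ is symmetric; differentiating $V$ with $\dot M=-A^*M-QN$ and $\dot N=BM+AN$ gives the Riccati equation \eqref{eq:RiccatiLQ}, while $V^{-1}(t)=N(t)M(t)^{-1}\to N(0)M(0)^{-1}=\mathbb{0}$ as $t\to0^+$, which identifies $V$ as the maximal solution with the stated initial datum. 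Finally, by Jacobi's formula and $\dot N=BM+AN$,
\[
\frac{d}{dt}\log\beta^{A,B,Q}_t=\frac{d}{dt}\log\det N(t)=\tr\!\big(N(t)^{-1}\dot N(t)\big)=\tr\!\big(BM(t)N(t)^{-1}+A\big)=\tr\big(BV(t)+A\big),
\]
and integrating from $t$ to $1$, where $\beta^{A,B,Q}_1=1$, yields \eqref{eq:seconrepformula}.

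\emph{Main difficulty.} The only non-routine point is the positivity of $\det N(t)$ as $t\to0^+$ when $B$ is singular; the cleanest approach is to deduce from the Kalman condition that $N(t)$ is invertible for small $t>0$ with positive determinant (for instance by relating $N(t)^{-1}$ to the Hessian at $0$ of the positive-definite cost of the restricted problem on $[0,t]$), after which continuity and the conjugate-time characterization close the argument. Everything else is bookkeeping with the linear Hamiltonian ODE.
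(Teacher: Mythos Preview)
Your proof is correct and follows the natural route; note that the paper itself does not supply a proof of this proposition but simply cites \cite[Prop.~27]{BRMathAnn}, so there is no in-paper argument to compare against. Your identification of the only nontrivial point (positivity of $\det N(t)$ for small $t>0$ via the Kalman condition, since $N(t)=tB+O(t^2)$ is degenerate when $\rank B<\ell$) is accurate, and the remaining steps---the affine exponential map, the conjugate-time characterization $\det N(t_*)=0$, symmetry of $V=MN^{-1}$ via conservation of $M^*N-N^*M$, and the Jacobi-formula passage to \eqref{eq:seconrepformula}---are all clean and correct. One cosmetic remark: the strict inequality in \eqref{eq:firstrepformula} fails at $t=0$ since $N(0)=\mathbb{0}$, so the ``$>0$'' in the statement should really read ``$\geq 0$'' (or be restricted to $t\in(0,1]$); this is a harmless imprecision in the stated proposition, not in your argument.
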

Notice that, by the Kalman condition \eqref{eq:kalman}, the Cauchy problem with limit initial datum \eqref{eq:RiccatiLQ} is well-posed, its solution is well-defined on the maximal interval $(0,t_c)$, where $t_c$ is the first conjugate time of the corresponding LQ problem. In particular, the standing assumption $t_c>1$ makes \eqref{eq:seconrepformula} well-defined (see \cite[Appendix A]{BR-comparison}).

\subsection{Constant curvature models}\label{sec:constcurvmodels}

A LQ problem depends on data $A,B,Q$. In this section we explain how we choose these, yielding a class of constant curvature models. The matrices $A$ and $B$ are determined by a Young diagram. The general case can be reduced to diagrams with one row, which can be seen as basic building blocks. Consider thus the following Young diagram:
\begin{equation}
\ytableausetup{centertableaux}
Y = \begin{ytableau}
1 & 2 & \none[\cdots] & \ell
\end{ytableau}\,,
\end{equation}
of length $\ell\in \N$. The case $\ell=1$ must be thought of as the ``Riemannian case''. We associate the matrices $A,B$ as follows:
\begin{equation}\label{eq:AandB}
A = \Gamma_1^*(Y), \qquad B = \Gamma_2(Y),
\end{equation}
(the transpose is a convention to agree with the one of \cite{BR-comparison}), where
\begin{equation}\label{eq:Gamma}
\Gamma_1(Y) := \begin{pmatrix}
\mathbb{0} & \mathbb{1}_{\ell-1} \\
0 & \mathbb{0}
\end{pmatrix} , 
\qquad \Gamma_2(Y) := \begin{pmatrix}
1 & \mathbb{0} \\
\mathbb{0} & \mathbb{0}_{\ell-1}
\end{pmatrix}.
\end{equation}
The matrices $A=\Gamma_{1}^*(Y)$ and $B=\Gamma_{2}(Y)$ satisfy the Kalman condition \eqref{eq:kalman}. The potential $Q$ is a diagonal matrix, whose entries represent the Ricci curvature bounds: 
\begin{equation}\label{eq:QQ}
Q = \mathrm{diag}(\kappa_1,\dots,\kappa_\ell), \qquad \kappa \in \R^\ell.
\end{equation}
\paragraph*{Notation.} Following the above prescriptions, a basic LQ model is uniquely specified by the choice of $\ell\in \N$, $\kappa \in \R^\ell$. We denote the corresponding distortion coefficient by
\begin{equation}
\beta_t^\kappa:=\beta_t^{A,B,Q}, \qquad \text{$A$, $B$, $Q$ as in \eqref{eq:AandB}-\eqref{eq:QQ}}.
\end{equation}

These LQ problems have constant curvature in the sense discussed in Appendix \ref{a:canonicalframe}. More precisely, one can show that any Jacobi curve of the corresponding Hamiltonian flow has Young diagram $Y$ of one row of length $\ell$, constant canonical curvature $\mathfrak{R}_{\lambda_t} = Q$, and constant canonical Ricci curvatures $\mathfrak{Ric}_{\lambda_t}^{i} = \kappa_{i}$, where $i=1,\dots,\ell$. This can be proved by observing that the restrictions to any extremal of $E_i=\partial_{p_i}$ and $F_i =\partial_{q_i}$, for $i=1,\dots,\ell$ is a canonical frame along that extremal (see Section \ref{sec:can-along-extremal}).

\begin{proposition}[Basic models]\label{p:basic}
Let $\ell\in\N$, and $\kappa\in \R^\ell$. Consider the LQ problem \eqref{eq:lq1}-\eqref{eq:lq2} on $\R^\ell$, with $A$, $B$ as in \eqref{eq:AandB} and $Q$ as in \eqref{eq:QQ}. Then there exist a computable real-analytic function $\sfs_\kappa: [0,+\infty) \to \R$, and $t_\kappa \in (0,+\infty]$, such that:
\begin{enumerate}[(i)]
\item \label{i:basic-1} $\sfs_\kappa$ is strictly positive on $(0,t_\kappa)$, and vanishes at the endpoints;
\item \label{i:basic-2} the first conjugate time of the LQ problem is $t_\kappa$;
\item \label{i:basic-3} there exists $c>0$ depending only on $\ell$ (and not on $\kappa$) and $N=\ell^2$ such that 
\[
\sfs_\kappa(t) \sim c t^{N}, \qquad t \to 0;
\]
\item \label{i:basic-4} assuming that $t_\kappa>1$, then the distortion coefficient of the LQ problem is
\begin{equation}
\beta_t^{\kappa} = \frac{\sfs_\kappa(t)}{\sfs_\kappa(1)}, \qquad \forall\, t \in [0,1];
\end{equation}
\item \label{i:basic-5} for $\lambda\geq 0$, let $\lambda \odot \kappa :=(\kappa_1 \lambda^2,\dots,\kappa_\ell \lambda^{2\ell})$. Then it holds:
\begin{equation}
t_{\lambda\odot\kappa} = \frac{t_\kappa}{\lambda}, \qquad \text{and} \qquad \sfs_{\lambda\odot \kappa}(t) = \frac{\sfs_\kappa(t\lambda)}{\lambda^{\ell^2}} ,\qquad \forall\, t\in \R,
\end{equation}
with the convention that, if $\lambda=0$, then $t_0=+\infty$ and $\sfs_0(t) = t^{\ell^2}$ for all $t\in [0,1]$;
\item \label{i:basic-6} let $m\in \N$, and assume that $\bar{\kappa} : \R^m_+\to \R^\ell$ where each $\bar{\kappa}_i : \R^m_+ \to \R$ is a homogeneous function of degree $2i$, for $i=1,\dots,\ell$. Let $\sfs_{\bar{\kappa}}: \R^m_+ \to \R$ be the function defined by
\begin{equation}\label{eq:link}
\sfs_{\bar{\kappa}}(\theta) := \sfs_{\bar{\kappa}(\theta/|\theta|)}(|\theta|), \qquad \forall\, \theta \in \R^m_+,
\end{equation}
with the convention $\sfs_{\bar{\kappa}}(0) = 0$.

The function $\sfs_{\bar\kappa}$ is radially real-analytic, satisfies $\sfs_{\bar{\kappa}}(\theta) = c|\theta|^N + o(|\theta|^N)$ for some $c>0$ and $N=\ell^2$. Its positivity domain, that is the largest open star-shaped subset $\DOM_{\bar{\kappa}}\subseteq \R^m_+$ such that $\sfs_{\bar{\kappa}}>0$ on $\DOM_{\bar{\kappa}}\setminus \{0\}$, is characterized as
\begin{equation}
\DOM_{\bar{\kappa}} = \{ \theta\in \R^m_+ \mid 1 < t_{\bar{\kappa}(\theta)} \} = \{\theta \in \R^m_+ \mid |\theta| < t_{\bar{\kappa}(\theta/|\theta|)}\},
\end{equation}
and for all $t\in [0,1]$ and $\theta \in \DOM_{\bar{\kappa}}$ it holds
\begin{equation}
\beta_t^{\bar{\kappa}(\theta)} = \begin{cases}
t^N & |\theta|=0, \\
\frac{\sfs_{\bar{\kappa}}(t\theta)}{\sfs_{\bar{\kappa}}(\theta)} & |\theta|\neq 0 \text{ and } \theta \in \DOM_{\bar{\kappa}},
\end{cases}
\end{equation}
with $N=\ell^2$.
\end{enumerate}
\end{proposition}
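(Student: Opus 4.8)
The plan is to reduce everything to the explicit LQ problem on $\R^\ell$ defined by the one-row Young diagram and the diagonal potential $Q = \mathrm{diag}(\kappa_1,\dots,\kappa_\ell)$, and to extract the function $\sfs_\kappa$ from the Hamiltonian system \eqref{eq:hamiltoniansystem}. First I would observe that, with $A = \Gamma_1^*(Y)$ and $B = \Gamma_2(Y)$ as in \eqref{eq:AandB}--\eqref{eq:Gamma}, the matrix $N(t)$ solving \eqref{eq:hamiltoniansystem} is determined by a single scalar function. Concretely, because $B$ has rank one and $A$ is the shift, the columns of $\begin{pmatrix} M \\ N\end{pmatrix}$ decouple and the relevant scalar quantity $w(t)$ (essentially the first entry of the controllable block) satisfies a linear ODE of order $2\ell$ with constant coefficients depending polynomially on $\kappa$; one sets $\sfs_\kappa(t) := \det N(t)$, which by the structure of the system is (up to a positive multiplicative constant) a concrete real-analytic function of $t$ and $\kappa$. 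This immediately gives analyticity and, via Proposition \ref{p:LQdist}, item \ref{i:basic-4} once we know $t_\kappa > 1$: indeed $\beta_t^\kappa = \det N(t)/\det N(1) = \sfs_\kappa(t)/\sfs_\kappa(1)$.

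Next I would establish items \ref{i:basic-1}--\ref{i:basic-3}. For \ref{i:basic-2} and \ref{i:basic-1}: a conjugate time is by definition a zero of $\det N(t)$ for $t>0$ (a non-trivial Hamiltonian solution with $q(0)=q(t_*)=0$ corresponds exactly to $N(t_*)$ being singular, since $N(0)=\mathbb{0}$ already encodes $q(0)=0$), so $t_\kappa$, the first conjugate time, is the first positive zero of $\sfs_\kappa$, and $\sfs_\kappa>0$ on $(0,t_\kappa)$ follows from $\sfs_\kappa(t)\sim ct^N>0$ near $0$ together with continuity. The vanishing at the right endpoint $t_\kappa$ is by definition of first zero (if $t_\kappa = +\infty$ the statement is vacuous). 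For \ref{i:basic-3}, the small-time asymptotics: when $\kappa = 0$ the system is nilpotent and one computes directly $N(t)$ entrywise, getting $\det N(t) = c\, t^{1+2+\dots} $-type expansion; the precise exponent is $N = \ell^2$ (this is the known nilpotent/flat value, matching the one-row diagram) and $c$ depends only on $\ell$. For general $\kappa$, the potential $Q$ is a lower-order perturbation in the Taylor expansion of the flow at $t=0$, so it does not affect the leading term; hence $\sfs_\kappa(t) = \sfs_0(t) + O(t^{N+1}) = c\,t^N + o(t^N)$ with the same $c$. This also pins down $\sfs_0(t) = t^{\ell^2}$ after fixing the normalization of $\sfs_\kappa$ as $\det N(t)$ (or a fixed scalar multiple thereof).

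For item \ref{i:basic-5}, the scaling law, I would use the dilation $\delta_\lambda$ on $\R^\ell$ acting with weights $(1,2,\dots,\ell)$ on the $q$-coordinates (and dually on $p$). A direct check shows that this conjugates the Hamiltonian vector field of the model with parameter $\kappa$ to $\lambda$ times that of the model with parameter $\lambda\odot\kappa = (\kappa_1\lambda^2,\dots,\kappa_\ell\lambda^{2\ell})$: the drift $A$ scales homogeneously, $B$ scales as $\lambda^2$ against the weight-$1$ direction, and $Q$ must absorb the factors $\lambda^{2i}$. Consequently a minimizer for $\kappa$ on $[0,t]$ is taken, under $\delta_\lambda$ and time-rescaling $t\mapsto t/\lambda$, to a minimizer for $\lambda\odot\kappa$; this gives $t_{\lambda\odot\kappa} = t_\kappa/\lambda$ and, tracking the Jacobian of $\delta_\lambda$ (which is $\lambda^{1+2+\dots+\ell}$, but the correct bookkeeping on $N(t)$ yields the exponent $\ell^2$), the identity $\sfs_{\lambda\odot\kappa}(t) = \sfs_\kappa(t\lambda)/\lambda^{\ell^2}$. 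The degenerate case $\lambda=0$ recovers $\sfs_0$ by continuity. I expect the main obstacle here to be getting the homogeneity weight bookkeeping exactly right so that the exponent comes out as $\ell^2$ rather than $\binom{\ell+1}{2}$ — this requires carefully distinguishing the scaling of $N(t)$ (which involves both the $q$-weights and the $p$-weights) from the naive volume scaling on $\R^\ell$.

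Finally, item \ref{i:basic-6} is essentially a reformulation. Given $\bar\kappa:\R^m_+\to\R^\ell$ with $\bar\kappa_i$ homogeneous of degree $2i$, the key point is that $\bar\kappa(\theta) = |\theta|\odot \bar\kappa(\theta/|\theta|)$ in the notation of \ref{i:basic-5}, precisely because the $i$-th component carries weight $2i$. Then applying \ref{i:basic-5} with $\lambda = |\theta|$ and $\kappa = \bar\kappa(\theta/|\theta|)$ gives $\sfs_{\bar\kappa(\theta)}(t) = \sfs_{\bar\kappa(\theta/|\theta|)}(t|\theta|)/|\theta|^{\ell^2}$; setting $t=1$ and recalling the definition \eqref{eq:link} of $\sfs_{\bar\kappa}$, we get $\sfs_{\bar\kappa}(\theta) = |\theta|^{\ell^2}\sfs_{\bar\kappa(\theta)}(1)$, and more generally $\sfs_{\bar\kappa}(t\theta) = (t|\theta|)^{\ell^2}\sfs_{\bar\kappa(t\theta)}(1) = t^{\ell^2}|\theta|^{\ell^2}\sfs_{\bar\kappa(\theta)}(1)\cdot\frac{\sfs_{\bar\kappa(\theta)}(t)}{\sfs_{\bar\kappa(\theta)}(1)}$ after again using \ref{i:basic-5} and homogeneity; from this the ratio formula $\beta_t^{\bar\kappa(\theta)} = \sfs_{\bar\kappa}(t\theta)/\sfs_{\bar\kappa}(\theta)$ follows by cancellation, using \ref{i:basic-4} for the fixed parameter $\bar\kappa(\theta)$ (valid since $\theta\in\DOM_{\bar\kappa}$ forces $t_{\bar\kappa(\theta)}>1$). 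The characterization of $\DOM_{\bar\kappa}$ as $\{|\theta| < t_{\bar\kappa(\theta/|\theta|)}\}$ follows from \ref{i:basic-5}: $\sfs_{\bar\kappa(\theta)}(1) > 0 \iff 1 < t_{\bar\kappa(\theta)} = t_{\bar\kappa(\theta/|\theta|)}/|\theta|$. Radial real-analyticity and the leading-order asymptotics $\sfs_{\bar\kappa}(\theta) = c|\theta|^N + o(|\theta|^N)$ transfer directly from \ref{i:basic-3}, since along each ray $\theta = t\hat\theta$ the function reduces to $\sfs_{\bar\kappa(\hat\theta)}(t)/|\text{const}|$ which is analytic in $t$ by \ref{i:basic-1}--\ref{i:basic-4}. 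Star-shapedness of $\DOM_{\bar\kappa}$ is immediate from the ray description. I would flag that the only genuinely delicate verification is the decoupling/scalar-reduction of \eqref{eq:hamiltoniansystem} for the one-row diagram and the resulting exponent $N=\ell^2$; everything downstream is algebraic manipulation of the scaling identity.
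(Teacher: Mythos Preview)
Your approach is essentially the paper's: define $\sfs_\kappa(t) = \det N(t)$, read off \ref{i:basic-1}, \ref{i:basic-2}, \ref{i:basic-4} from Proposition~\ref{p:LQdist}, obtain \ref{i:basic-5} by a weighted dilation, and deduce \ref{i:basic-6} algebraically from \ref{i:basic-5}. Two places where the paper is more concrete than your sketch: for \ref{i:basic-3}, rather than a heuristic perturbative argument, the paper computes $\tfrac{\di}{\di t}\log\det N(t) = \tr(BM(t)N(t)^{-1} + A)$ and invokes the known asymptotic of $M(t)N(t)^{-1}$ from \cite[Thm.~7.4, Cor.~7.5]{ABR-curvature} to obtain $\lim_{t\to 0}\big(\tfrac{\di}{\di t}\log\det N(t) - \ell^2/t\big)=0$; for \ref{i:basic-5}, the bookkeeping you flag is resolved cleanly by setting $\Omega_\lambda = \mathrm{diag}(\lambda,\dots,\lambda^\ell)$, checking $\Omega_\lambda A = \lambda A\Omega_\lambda$ and $B\Omega_\lambda = \Omega_\lambda B = \lambda B$, and deriving $N_\lambda(t) = \lambda\,\Omega_\lambda^{-1}N(t\lambda)\Omega_\lambda^{-1}$, from which $\det N_\lambda(t) = \lambda^{-\ell^2}\det N(t\lambda)$ drops out. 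Your remark about column decoupling and reduction to a single scalar ODE of order $2\ell$ is not needed and is a slight muddle (the columns of $N$ evolve under the same system but $\det N$ is not itself governed by that scalar equation); the paper works directly with $\det N(t)$ throughout.
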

\begin{remark}[Link with CD theory]\label{r:link} 
Proposition \ref{p:basic}, and in particular item \ref{i:basic-6}, is the bridge between the curvature-dimension theory of Section \ref{sec:CDbeta} and the comparison theory of Section \ref{sec:comparison}. It establishes that model distortion coefficients of LQ problems coincide with model distortion coefficients in the sense of (\ref{eq:defbeta}) (for the scalar-valued case) and (\ref{eq:defbetaVec}) (for the vector-valued case).
\end{remark}
\begin{remark}[Scalar case]\label{rmk:identify}
Item \ref{i:basic-6} is stated in the general case $m\in \N$, in order to describe model coefficients for vector-valued gauge functions. The scalar case corresponds to $m=1$. In this case, we identify $\bar\kappa: \R^m_+\to \R^\ell$ with $\kappa\in \R^\ell$ by the identity $\bar{\kappa}_i(\theta) = \kappa_i \theta^{2i}$ for all $i=1,\dots,\ell$. Our notation is consistent since it holds
\begin{equation}
\sfs_{\bar{\kappa}}(t\theta) = \sfs_\kappa(t\theta) \qquad \text{and} \qquad \DOM_{\bar{\kappa}} = [0,t_{\kappa}).
\end{equation}
In particular, there is no need to introduce the $\sfs_{\bar\kappa}$ functions, and the positivity domain is just a segment. For general $m\in \N$, however
\begin{equation}
\sfs_\kappa :[0,+\infty) \to \R\qquad\text{and}\qquad \sfs_{\bar{\kappa}} : \R^m_+\to \R
\end{equation}
act on different domains. The link between them is provided by \eqref{eq:link}.
\end{remark}
\begin{remark}[Convexity]
If $\ell=m=1$ then $(0,t_\kappa)\ni \theta\mapsto \log\beta_t^{\kappa}(\sqrt{\theta})$ is convex and monotone for fixed $t \in [0,1]$ (the case of Riemannian coefficients, see Section \ref{ss:Riemanniancase}). Convexity, in particular, is important for the deduction of the standard form of the $\CD(K,N)$ inequality. Both properties fail, in general, if $\ell > 1$. Take for example $\kappa = (2, -1)$, and compute the solution of the Hamiltonian system \eqref{eq:hamiltoniansystem} (the values are chosen so that the system has a simple Jordan form, with eigenvalues $\pm i$).
\end{remark}

\begin{proof}
Using the notation of Proposition \ref{p:LQdist}, we set
\begin{equation}
\sfs_\kappa(t) := \det N(t).
\end{equation}
By definition, the first conjugate time of the LQ problem is 
\begin{equation}
t_\kappa :=\inf\{t>0 \mid \sfs_\kappa(t) =0\}.
\end{equation}
The Kalman's condition implies that $\det N(t)\neq 0$ for small $t>0$ so that $t_\kappa >0$. Furthermore one can check, by studying the asymptotic behaviour of the solutions of \eqref{eq:hamiltoniansystem}, that $\sfs_\kappa(t)>0$ for small $t$. Thus \ref{i:basic-1}, \ref{i:basic-2} and \ref{i:basic-4} follow immediately.

To prove \ref{i:basic-3}, we first observe that $\sfs_\kappa(\cdot)$ is real-analytic, and thus there exist\footnote{There should be no confusion between $N\in \mathbb N$ and the family of matrices $N(t)$.} $N\in \N$ and $c \neq 0$ such that $\sfs_\kappa(t) \sim c t^N$ as $t\to 0$. To prove that $N=\ell^2$, we claim that the following asymptotic formula holds:
\begin{equation}\label{eq:requiredasymptotics}
\lim_{t\to 0} \left(\frac{\di}{\di t}\log\det N(t) - \frac{\ell^2}{t}\right) = 0.
\end{equation}
To prove it, observe that the first term on the left hand side is
\begin{equation}
\frac{\di}{\di t}\log\det N(t)= \tr(\dot{N}(t) N(t)^{-1}) = \tr(\Gamma_2 M(t)N(t)^{-1} + \Gamma_1^*).
\end{equation}
The asymptotics of $M(t)N(t)^{-1}$ as $t\to 0$ is obtained in \cite[Thm.\@ 7.4 and Cor.\ 7.5]{ABR-curvature}, yielding \eqref{eq:requiredasymptotics}. We do not repeat the details here. We remark that \eqref{eq:requiredasymptotics} can also be proved by deducing asymptotic formulas for the solutions $M(t),N(t)$ of the Hamiltonian system in Proposition \ref{p:LQdist}, using the properties of $A,B$ in \eqref{eq:AandB}. Such a proof also shows that the constant $c$ depends only on $\ell$ and not on $Q$ (i.e.\@ not on $\kappa\in\R^\ell$), and $c >0$.

To prove \ref{i:basic-5}, for all $\lambda>0$ let
\begin{equation}
Q=\mathrm{diag}(\kappa_1,\dots,\kappa_\ell), \qquad Q_{\lambda} = \mathrm{diag} (\kappa_1 \lambda^2,\dots,\kappa_\ell \lambda^{2\ell}),
\end{equation}
be the potentials corresponding to $\kappa$ and $\lambda \odot \kappa$, respectively. We remark that
\begin{equation}
Q_\lambda = \Omega_\lambda Q \Omega_{\lambda}, \qquad \Omega_\lambda := \mathrm{diag}(\lambda,\dots,\lambda^\ell).
\end{equation}
For the specific choice $A=\Gamma_1^*(Y)$ and $B=\Gamma_2(Y)$ of the statement, see \eqref{eq:AandB}, we denote by $M(t),N(t)$ (resp.\ $M_{\lambda}(t),N_{\lambda}(t)$) the solution of the Hamiltonian system \eqref{eq:hamiltoniansystem} for potential $Q$ (resp.\ $Q_{\lambda}$). We observe from the explicit form \eqref{eq:Gamma} that
\begin{equation}
\Omega_{\lambda} A = \lambda A \Omega_{\lambda} ,\qquad B\Omega_{\lambda} = \Omega_{\lambda} B = \lambda B.
\end{equation}
Using these properties and \eqref{eq:hamiltoniansystem} we immediately see that
\begin{equation}
M_{\lambda}(t) = \Omega_{\lambda} M(t\lambda)\Omega_{\lambda}^{-1},\qquad N_{\lambda}(t)=\lambda \Omega_{\lambda}^{-1} N(t\lambda)\Omega_{\lambda}^{-1},\qquad \forall\, t\in \R.
\end{equation}
It follows that, for conjugate times $t_{\lambda\odot \kappa} = t_{\kappa}/\lambda$, and furthermore
\begin{equation}
\sfs_{\lambda\odot\kappa}(t) =\det N_{\lambda}(t) = \lambda^{-\ell^2} \sfs_{\kappa}(t\lambda),\qquad \forall\, \lambda>0.
\end{equation}
The above rescaling formula yields  \ref{i:basic-5} in the case $\lambda>0$. The case $\lambda=0$ can be seen as a limit of the previous one, or also proved directly by solving \eqref{eq:hamiltoniansystem}; in this case $Q=0$ and thus $M(t)_{ij} = (-1)^{j-i}t^{j-i}/(j-i)!$ for $j\geq i$, and zero otherwise, while $N(t)_{ij} = (-1)^{j+1}t^{i+j-1}/(i+j-1)!$ for all $i,j=1,\dots,\ell$.

The proof of \ref{i:basic-6} is a straightforward consequence of all previous items, unraveling the notation. The fact that $\sfs_{\bar{\kappa}}$ is radially real-analytic follows from the fact that $\sfs_{\kappa}$ is real-analytic for $\kappa\in \R^\ell$. Notice that if the $\bar{\kappa}_i$ are homogeneous of degree $2i$, then for $\theta \neq 0$ we have $\bar{\kappa}(\theta) = |\theta|\odot \bar{\kappa}(\theta/|\theta|)$. Therefore it holds
\begin{equation}
\sfs_{\bar{\kappa}}(\theta) = \sfs_{\bar{\kappa}(\theta/|\theta|)}(|\theta|) \sim c |\theta|^{\ell^2}, \qquad |\theta| \to 0,
\end{equation}
where we used \ref{i:basic-3}, and the fact that the constant $c>0$ in \ref{i:basic-3} does not depend on $\kappa \in \R^\ell$. The characterization of the positivity domain follows from the homogeneity property \ref{i:basic-5}. Finally, by definition, and using \ref{i:basic-5}, it holds
\begin{equation}
\beta_t^{\bar{\kappa}(\theta)} = \frac{\sfs_{\bar{\kappa}(\theta)}(t)}{\sfs_{\bar{\kappa}(\theta)}(1)} =
\frac{\sfs_{|\theta|\odot\bar{\kappa}(\theta/|\theta|)}(t)}{\sfs_{|\theta|\odot\bar{\kappa}(\theta/|\theta|)}(1)}=
\frac{\sfs_{\bar{\kappa}(\theta/|\theta|)}(t|\theta|)}{\sfs_{\bar{\kappa}(\theta/|\theta|)}( |\theta|)}=
 \frac{\sfs_{\bar{\kappa}}(t\theta)}{\sfs_{\bar{\kappa}}(\theta)},
\end{equation}
for all $\theta \in \DOM_{\bar{\kappa}}\setminus\{0\}$ and all $t\in (0,1)$. Finally if $|\theta|=0$, we have $\bar\kappa(0)=0$ and we proceed in a similar fashion reminding that $\sfs_0(t) = t^{\ell^2}$ by item \ref{i:basic-5}. The cases $t=0$ and $t=1$ are trivially verified.
\end{proof}

The proof of Proposition \ref{p:basic} is constructive and the distortion coefficients are obtained by solving the Hamiltonian system \eqref{eq:hamiltoniansystem} associated with corresponding LQ problem. In the next three sections we give explicit examples, recovering all typical comparison functions that have arisen in the recent literature in sub-Riemannian geometry.

\subsubsection{The Riemannian case}\label{ss:Riemanniancase}
Let $\ell=1$, so that $Y=\ytableausetup{smalltableaux}\ytableaushort{\empty}$. According to \eqref{eq:AandB}, we have $A=0$, $B=1$, and we set $Q=\kappa$ for some $\kappa \in \R$. The Hamiltonian of the LQ problem is
\begin{equation}
H(p,q) = \frac{1}{2} \left(p^2+ \kappa q^2\right),
\end{equation}
which is the Hamiltonian of a one-dimensional harmonic oscillator. From Proposition \ref{p:basic} we obtain easily that
\begin{equation}
\sfs_{\kappa}(t) = \begin{cases} \sin(\sqrt{\kappa} t) & \kappa > 0, \\
\sinh(\sqrt{-\kappa}  t) & \kappa \leq 0,
\end{cases} \qquad t_{\kappa} = \begin{cases}
\frac{\pi}{ \sqrt{\kappa}} & \kappa >0, \\
+\infty & \kappa \leq 0.
\end{cases}
\end{equation}
The typical right hand side of Ricci curvature lower bounds has the form $\bar{\kappa}(\sfd) = \kappa \sfd^2$. Thus, according to Proposition \ref{p:basic}, we obtain that for $\theta \in [0,+\infty)$
\begin{equation}
\sfs_{\bar{\kappa}}(\theta) = \sfs_{\kappa}(\theta),  \qquad \DOM_{\bar\kappa} = [0,t_{\kappa}).
\end{equation}
Here we stress that $m=1$. The corresponding distortion coefficient is:
\begin{equation}
\beta_{t}^{\bar{\kappa}(\theta)} = \beta_{t}^{\kappa\theta^2}= \begin{cases}
\frac{\sin (\sqrt{\kappa} \theta t)}{\sin (\sqrt{\kappa}\theta)} & \kappa >0,\\
t & \kappa =0,\\
\frac{\sinh (\sqrt{|\kappa|} \theta t)}{\sinh (\sqrt{|\kappa|}\theta)} & \kappa<0,
\end{cases} \qquad \forall\, t\in [0,1], \quad \theta \in \DOM_{\bar\kappa}= [0,t_{\kappa}).
\end{equation}
The condition $\theta \in \DOM_{\bar\kappa}$ corresponds to $\bar\kappa(\theta) = \kappa \theta^2 < \pi^2$, which is reminiscent of the Bonnet-Myers bound.

\paragraph{Relation with classical coefficients.} This is the building block for the 
construction of Riemannian distortion coefficients. Consider a space form of dimension 
$N$ and (sectional) curvature equal to $K/(N-1)$, and thus Ricci curvature equal to 
$K\in \R$. The potential ``felt'' in the directions orthogonal to the one of the motion, 
along a geodesic of length $\sfd$, is $K\sfd^2/(N-1)$. There are $N-1$ 
orthogonal directions, and one direction where there is no curvature (the direction of the 
motion). Therefore, taking the product of all these factors we obtain
\begin{equation}
\beta_t^0 \prod_{i=1}^{N-1} \beta_t^{K\sfd^2/(N-1)} =  \begin{cases}
t \left(\frac{\sin (t\sfd \sqrt{K/(N-1)})}{\sin (\sfd\sqrt{ K/(N-1)})}\right)^{N-1} & K>0,\\
t^N & K=0,\\
t \left(\frac{\sinh (t\sfd \sqrt{|K|/(N-1)})}{\sinh (\sfd\sqrt{ |K|/(N-1)})}\right)^{N-1} & K<0,
\end{cases}
\quad \forall\, t\in [0,1],\ \sfd < t_{K/(N-1)}.
\end{equation}
This is the usual distortion coefficient for a space form of Ricci curvature $K$ and dimension $N$. Compare with Section \ref{sec:howtorecover}.

\subsubsection{The Sasakian case}\label{ss:Sasakiancase}

In this section we describe the typical coefficients arising in the description of three-dimensional Sasakian space forms, which were the first sub-Riemannian structures where an effective comparison theory was established \cite{HughenPhD,AAPL-Ricci, LeeLiZel-Sasakian}.

Let $\ell=2$, so that $Y=\ytableausetup{smalltableaux}\ytableaushort{\empty\empty}$. We have $A=\left(\begin{smallmatrix}0 & 0 \\ 1 & 0 \end{smallmatrix}\right)$, $B=\left( \begin{smallmatrix} 1 & 0 \\ 0 & 0\end{smallmatrix}\right)$, and we set $Q=\mathrm{diag}(\kappa_1,0)$ for some $\kappa_1 \in \R$ (formally, $\kappa=(\kappa_1,\kappa_2)$ with $\kappa_2=0$, which greatly simplifies this case). Proposition \ref{p:LQdist} can be used to obtain
\begin{equation}
\mathsf{\sfs}_{\kappa_1,0}(t) = 
\frac{2 -2 \cos \left(\sqrt{\kappa_1} t\right)-\sqrt{\kappa_1} t 
\sin \left(\sqrt{\kappa_1} t\right)}{\kappa_1^2}, 
\qquad t_{\kappa_1,0}
 = 
\begin{cases}
\tfrac{2\pi}{\sqrt{\kappa_1}} & \kappa_1>0, \\ +\infty & \kappa_1\leq 0,
\end{cases}
\end{equation}
For brevity, instead of writing explicitly all the cases depending on the sign of $\kappa_1$ as done in Section \ref{ss:Riemanniancase},  here $\mathsf{\sfs}_{\kappa_1,0}(t) $ is understood as real-analytic function of $\kappa_{1}\in \R$, to interpret it for $\kappa_1 \leq 0$. Furthermore we obtain
\begin{equation}
\beta_t^{\kappa_1,0} =  \frac{2-2 \cos \left(\sqrt{\kappa_1} t\right)-\sqrt{\kappa_1} t \sin \left(\sqrt{\kappa_1} t\right)}{2-2 \cos \left(\sqrt{\kappa_1}\right)-\sqrt{\kappa_1} \sin \left(\sqrt{\kappa_1}\right)}.
\end{equation}

On a three-dimensional Sasakian manifold with Tanaka-Webster curvature equal to $K\in \R$, the sharp distortion coefficient along a geodesic $\gamma$ is the product of two factors. One flat factor $t$, arising from a one-dimensional subspace (the direction of the motion), and one factor $\beta_t^{\kappa_1,0}$ as the one we just described, with 
\begin{equation}
\kappa_1=\bar{\kappa}_1(|h_0|,\sfd) := |h_0|^2+K \sfd^2,
\end{equation}
where $\sfd$ is the length of the geodesic and $h_0$ is the Reeb component of its initial covector. In particular $\bar{\kappa}_1 :\R^2_+ \to \R$ is homogeneous of degree $2$ and the distortion coefficient thus obtained is suitable for a theory with vector-valued gauge function (the gauge function being $\sfG = (\sfG_1,\sfG_2)$ with $\sfG_1=|h_0|$ and $\sfG_2=\sfd$).	

We will describe more precisely this case in Section \ref{sec:ex:vectorial}.

\subsubsection{The two columns case}\label{ss:twocolumnscase}

This example generalizes the previous one. Let again $\ell =2$ so that $Y=\ytableausetup{smalltableaux}\ytableaushort{\empty\empty}$, and let $Q = \mathrm{diag}(\kappa_1,\kappa_2)$, with $\kappa_1,\kappa_2 \in \R$. The difference with respect to Section \ref{ss:Sasakiancase} is that now $\kappa_2$ may be non-zero. The Hamiltonian of the corresponding LQ problem is
\begin{equation}
H(p,q) = \frac{1}{2} \left(p_1^2 +2p_{2}q_{1}+\kappa_{1} q_{1}^2+\kappa_{2}q_{2}^{2}\right).
\end{equation}
We can compute the distortion coefficient using Proposition \ref{p:LQdist}. By reduction to Jordan normal form of the Hamiltonian system \eqref{eq:hamiltoniansystem} (see details in \cite[Prop.\@ 28]{RS-3Sas}), one obtains
\begin{equation}
\sfs_{\kappa_1,\kappa_2}(t)=\frac{\xi _-^2 \sin ^2\left(\xi _+ t\right)-\xi _+^2 \sin ^2\left(\xi _- t\right)}{4 \xi _-^2 \xi _+^2(\xi _-^2- \xi _+^2)},
\end{equation} 
where we have set $\kappa_1 = 2 (\xi_+^2 + \xi_-^2)$ and $\kappa_2 = -(\xi_+^2-\xi_2^2)^2$. In other words, choosing the principal branch of the square root:
\begin{equation}
\xi_{\pm}= \frac{1}{2}(\sqrt{x+y} \pm \sqrt{x-y}),\qquad \text{with} \qquad x = \frac{\kappa_1}{2}, \quad y = \frac{\sqrt{4\kappa_2 + \kappa_1^2}}{2}.
\end{equation}
It seems not trivial to deduce an explicit expression for $t_{\kappa_1,\kappa_2}$ from the above expression. Abstract necessary and sufficient conditions in terms of $\kappa_1,\kappa_2$ for the finiteness of $t_{\kappa_1,\kappa_2}$ are obtained via the results in \cite{ARS-LQ}. Estimates for $t_{\kappa_1,\kappa_2}$ are found in \cite[Prop.\@ 1]{RS-3Sas}.

Thus, assuming $t_{\kappa_1,\kappa_2} >1$, the distortion coefficient is
\begin{equation}
\beta_{t}^{\kappa_1,\kappa_2}= \frac{\xi _-^2 \sin ^2\left(\xi _+ t\right)-\xi _+^2 \sin ^2\left(\xi _- t\right)}{\xi _-^2 \sin ^2\left(\xi _+ \right)-\xi _+^2 \sin ^2\left(\xi _- \right)}, \qquad \forall\, t\in [0,1],
\end{equation}
understood with the usual conventions as a real-analytic function of $\xi_{\pm} \in \mathbb{C}$. 

\subsection{Main comparison result}\label{sec:maincomparison}

Recall from Definition \ref{def:truedistcoeff} the distortion coefficient of a metric measure space. For a sub-Riemannian m.m.s\ $(M,\sfd,\mm)$, it can always be written as
\begin{equation}
\beta_t^{(M,\sfd,\mm)}(x,y) = t \beta_t^\perp(x,y), \qquad \forall\, (x,y)\notin \mathrm{Cut}(M).
\end{equation}
The factor $t$ corresponds to the distortion felt in the direction of the geodesic between $x$ and $y$, while the factor $\beta_t^\perp$ represents the distortion felt in the perpendicular directions.

The following result is a reformulation of \cite[Thm.\@ 16]{BRMathAnn}, with $N=n$ (cf.\@ Rmks. 17 and 12 there), with an explicit dependence on a possibly vector-valued gauge function $\sfG:M\times M \to \RP^m_+$, combined with Proposition \ref{p:basic}\ref{i:basic-6}. We will state directly the general vector-valued case, the scalar-valued one is obtained by setting $m=1$,  in which case the statement greatly simplifies, see in particular Remark \ref{rmk:identify}.

\begin{theorem}\label{thm:maincomparison}
Let $(M,\sfd,\mm)$ be a sub-Riemannian metric measure space, with $n=\dim M$, and let $\sfG : M \times M \to \R^m_+$ be a finite Borel function, $m\in \N$. Let $(x,y)\notin \mathrm{Cut}(M)$ and $\gamma :[0,1]\to M$ be a geodesic joining them, with initial covector $\lambda$, and reduced Young diagram $Y$. Assume that there exists a function $\bar{C}: \R^m_+\to \R$, homogeneous of degree $1$, such that
\begin{equation}
\rho_{\mm,\lambda} \leq \bar{C}(\sfG(x,y)).
\end{equation}
Denote with $\Upsilon$ the set of levels of $Y$. Assume that for any level $\alpha\in \Upsilon$, with size $r_\alpha$ and length $\ell_\alpha$, there exist $\bar{\kappa}_\alpha : \R^m_+ \to \R^{\ell_\alpha}$ such that each component $\bar{\kappa}_{\alpha_i} : \R^m_+\to \R$ is homogeneous of degree $2i$, for $i=1,\dots,\ell_{\alpha}$, and such that for any superbox $\alpha_i$ of that level it holds
\begin{equation}
\frac{1}{r_\alpha}\mathfrak{Ric}^{\alpha_i}_{\lambda} \geq \bar{\kappa}_{\alpha_i}(\sfG(x,y)), \qquad i=1,\dots,\ell_\alpha,
\end{equation}
with the convention that if $\alpha$ is the level of length $\ell_{\alpha}=1$ then $r_\alpha$ is replaced by $r_\alpha-1$; and if $r_\alpha =0$ after this replacement, then that level is omitted. Then:
\begin{itemize}
\item denoting with $\DOM_{\bar\kappa_\alpha} \subseteq \R^m_+$ the positivity domain of $\sfs_{\bar{\kappa}_{\alpha}}:\R^m_+\to \R$ as in Proposition \ref{p:basic}, it holds
\begin{equation}
\sfG(x,y)\in \bigcap_{\alpha\in\Upsilon} \DOM_{\bar{\kappa}_\alpha},
\end{equation}
so that the distortion coefficients $\beta_t^{\bar{\kappa}_\alpha(\sfG(x,y))}$ are well-defined for all $\alpha \in \Upsilon$ as in Proposition \ref{p:basic};
\item with the same convention as above, we have that
\begin{equation}
\frac{\beta_t^\perp(x,y)}{\displaystyle\prod_{\alpha \in \Upsilon} \left(\beta_t^{\bar{\kappa}_{\alpha}(\sfG(x,y))}\right)^{r_\alpha}}e^{-t\bar{C}(\sfG(x,y)) } \quad \text{is a non-increasing function of $t\in (0,1]$},
\end{equation}
and in particular, the following holds
\begin{equation}
\beta_t^{(M,\sfd,\mm)}(x,y) \geq t e^{(t-1)\bar{C}(\sfG(x,y))}  \prod_{\alpha \in \Upsilon} \left(\beta_t^{\bar{\kappa}_{\alpha}(\sfG(x,y))}\right)^{r_\alpha}, \qquad \forall\, t\in [0,1].
\end{equation}
\end{itemize}
\end{theorem}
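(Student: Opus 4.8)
The plan is to deduce Theorem~\ref{thm:maincomparison} from the main comparison result \cite[Thm.~16]{BRMathAnn}, reorganizing its conclusion in terms of the (possibly vector-valued) gauge function $\sfG$ and the model distortion coefficients $\beta_t^{\bar\kappa_\alpha(\sfG(x,y))}$ furnished by Proposition~\ref{p:basic}\ref{i:basic-6}. First I would recall the structure of the Jacobi curve along $\gamma$: fixing $(x,y)\notin\Cut(M)$, the reduced Young diagram $Y$ decomposes the perpendicular part of the tangent space along $\gamma$ into levels $\alpha\in\Upsilon$, each level contributing $r_\alpha$ copies of a one-row LQ problem of length $\ell_\alpha$, with potential governed by the canonical Ricci curvatures $\mathfrak{Ric}^{\alpha_i}_\lambda$. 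The content of \cite[Thm.~16]{BRMathAnn} is that, under the pointwise lower bounds $\tfrac{1}{r_\alpha}\mathfrak{Ric}^{\alpha_i}_\lambda\geq \kappa_{\alpha_i}$ and the upper bound $\rho_{\mm,\lambda}\leq C$ on the geodesic volume derivative, the ratio $\beta_t^\perp(x,y)\, e^{-tC}\big/\prod_\alpha(\beta_t^{\kappa_\alpha})^{r_\alpha}$ is non-increasing in $t$, where $\beta_t^{\kappa_\alpha}$ is the basic model coefficient of Section~\ref{sec:constcurvmodels}. Evaluating this monotone function at $t=1$ (where $\beta_1^\perp=1$ and $\beta_1^{\kappa_\alpha}=1$) against its value at a generic $t$, and multiplying through by the trivial factor $t$ coming from the direction of motion, yields the stated inequality for $\beta_t^{(M,\sfd,\mm)}(x,y) = t\,\beta_t^\perp(x,y)$.

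The remaining work is purely a change of variables in the curvature parameters. Here I would invoke the hypotheses: the bound $\rho_{\mm,\lambda}\leq \bar C(\sfG(x,y))$ with $\bar C$ homogeneous of degree $1$ plays the role of $C = \bar C(\sfG(x,y))$, and for each level $\alpha$ and superbox $\alpha_i$ the bound $\tfrac{1}{r_\alpha}\mathfrak{Ric}^{\alpha_i}_\lambda\geq \bar\kappa_{\alpha_i}(\sfG(x,y))$ plays the role of $\kappa_{\alpha_i} = \bar\kappa_{\alpha_i}(\sfG(x,y))$, with the homogeneity of degree $2i$ of $\bar\kappa_{\alpha_i}$ being exactly the scaling required to apply Proposition~\ref{p:basic}\ref{i:basic-6}. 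That proposition then tells us two things: first, that $\sfG(x,y)\in \DOM_{\bar\kappa_\alpha}$ is \emph{equivalent} to the conjugate-time condition $1 < t_{\bar\kappa_\alpha(\sfG(x,y))}$, so that the models are non-degenerate on $[0,1]$ and the distortion coefficients $\beta_t^{\bar\kappa_\alpha(\sfG(x,y))}$ are well-defined; and second, that for $\theta\in\DOM_{\bar\kappa_\alpha}$ one has the identity $\beta_t^{\bar\kappa_\alpha(\theta)} = \sfs_{\bar\kappa_\alpha}(t\theta)/\sfs_{\bar\kappa_\alpha}(\theta)$, so that the model coefficient appearing in \cite[Thm.~16]{BRMathAnn} (which is $\beta_t^{\kappa_\alpha}$ with $\kappa_\alpha$ a fixed real vector) coincides with $\beta_t^{\bar\kappa_\alpha(\sfG(x,y))}$. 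One must also be careful to carry through the bookkeeping convention for the distinguished level of length $\ell_\alpha=1$ (where $r_\alpha$ is replaced by $r_\alpha-1$, and the level is dropped if this is $0$), which is precisely the convention under which \cite[Thm.~16]{BRMathAnn} is stated: this is the level containing the direction of the motion, and the ``missing'' copy is the factor $t$ we have separated out.

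The first bullet of the conclusion — that $\sfG(x,y)\in\bigcap_{\alpha\in\Upsilon}\DOM_{\bar\kappa_\alpha}$ — deserves a separate word. It is \emph{not} assumed but \emph{derived}: it follows from the fact that $(x,y)\notin\Cut(M)$, hence $\gamma$ has no conjugate points on $[0,1]$, combined with the Riccati comparison built into \cite[Thm.~16]{BRMathAnn}, which forces the first conjugate time of each model LQ problem to exceed $1$; by Proposition~\ref{p:basic}\ref{i:basic-6} this is exactly the statement $1 < t_{\bar\kappa_\alpha(\sfG(x,y))}$, i.e.\ $\sfG(x,y)\in\DOM_{\bar\kappa_\alpha}$. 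I expect this to be the only genuinely non-bookkeeping step; it is essentially the observation that a comparison inequality $\beta_t^\perp \gtrsim \prod_\alpha (\beta_t^{\kappa_\alpha})^{r_\alpha}$ can only hold on all of $[0,1]$ if the right-hand side does not blow up before $t=1$, which is the assertion that no model is past its first conjugate time. Everything else is tracking constants, exponents $r_\alpha$, and the reindexing between the fixed-parameter formulation of \cite{BRMathAnn} and the gauge-dependent formulation here; no new analysis is needed beyond what Proposition~\ref{p:basic} already provides.
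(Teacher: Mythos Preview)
Your proposal is correct and matches the paper's own treatment exactly: the paper does not give a separate proof of Theorem~\ref{thm:maincomparison} but states outright that it ``is a reformulation of \cite[Thm.\@ 16]{BRMathAnn}, with $N=n$ (cf.\@ Rmks.\@ 17 and 12 there), with an explicit dependence on a possibly vector-valued gauge function $\sfG$, combined with Proposition~\ref{p:basic}\ref{i:basic-6}.'' Your write-up simply unpacks what that reformulation entails, including the derivation of $\sfG(x,y)\in\bigcap_\alpha\DOM_{\bar\kappa_\alpha}$ from the conjugate-time bound and the bookkeeping for the $\ell_\alpha=1$ level, and this is precisely the content the paper leaves implicit.
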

\begin{remark}\label{rmk:linkbetabeta}
We recall from Proposition \ref{p:basic} that, for $\bar{\kappa} : \R^m_+ \to \R^\ell$, and $t\in [0,1]$
\begin{equation}
\beta_t^{\bar{\kappa}(\sfG)} = \begin{cases}
t^N & |\sfG|=0, \\
\frac{\sfs_{\bar{\kappa}}(t\sfG)}{\sfs_{\bar{\kappa}}(\sfG)} & |\sfG|\neq 0 \text{ and } \sfG \in \DOM_{\bar{\kappa}},
\end{cases}
\end{equation}
with $N=\ell^2$, is the distortion coefficient of the LQ model with a Young diagram of one row and $\ell$ columns, with potential $Q=\mathrm{diag}(\bar{\kappa}_1(\sfG),\dots,\bar{\kappa}_\ell (\sfG))$. In particular the functions $\sfs_{\bar{\kappa}}:  \R^m_+ \to \R$ depend only on the $\ell$ homogeneous functions $\bar{\kappa}_1,\dots,\bar{\kappa}_\ell$.
\end{remark}

Theorem \ref{thm:maincomparison} yields a comparison result for distortion along a single geodesic. If its assumptions hold globally, then we obtain the following result, establishing the compatibility of our $\CD(\beta,n)$ theory with the one of sub-Riemannian Ricci bounds. Once again, in the scalar-valued case $m=1$ what follows greatly simplifies, see Remark \ref{rmk:identify}.

\begin{definition}[Sub-Riemannian m.m.s.\@ with Ricci curvature bounded from below]\label{def:srbddbelow}
Let $(M,\sfd,\mm)$ be a sub-Riemannian metric measure space, with $n=\dim M$, equipped with a finite gauge function $\sfG : M \times M \to \R^m_+$, $m\in \N$. We say that $(M,\sfd,\mm,\sfG)$ has \emph{Ricci curvatures bounded from below} if there exist:
\begin{itemize}
\item a Young diagram $Y$,
\item functions $\bar{C}: \R^m_+\to \R$ homogeneous of degree $1$,
\item functions $\bar{\kappa}_\alpha : \R^m_+ \to \R^{\ell_\alpha}$ such that each component $\bar{\kappa}_{\alpha_i} : \R^m_+\to \R$ is homogeneous of degree $2i$, for $i=1,\dots,\ell_{\alpha}$ for any level $\alpha$ of $Y$, with length $\ell_{\alpha}$,
\end{itemize}
such that for all $x\in M$ and $\mm$-a.e.\@ $y$ there exists a unique geodesic $\gamma \in \Geo(M)$ joining $x$ with $y$, with Young diagram $Y$, initial covector $\lambda \in T_x^*M$, for which the following hold:
\begin{itemize}
\item bound on the geodesic volume derivative:
\begin{equation}
\rho_{\mm,\lambda} \leq \bar{C}(\sfG(x,y)),
\end{equation}
\item bound on the Ricci curvatures: for all levels $\alpha$ of the Young diagram $Y$
\begin{equation}
\frac{1}{r_\alpha}\mathfrak{Ric}^{\alpha_i}_{\lambda} \geq \bar{\kappa}_{\alpha_i}(\sfG(x,y)), \qquad i=1,\dots,\ell_\alpha,
\end{equation}
with the convention that if $\alpha$ is the level of length $\ell_{\alpha}=1$ then $r_\alpha$ is replaced by $r_\alpha-1$; and if $r_\alpha =0$ after this replacement, then that level is omitted.
\end{itemize}
The above data determine a function $\sfs:\R^m_+\to \R$ defined by
\begin{equation}\label{eq:sfsbddbelow}
\sfs(\theta) := |\theta|\cdot e^{\bar{C}(\theta)}\cdot \prod_{\alpha\in\Upsilon} \sfs_{\bar{\kappa}_{\alpha}}(\theta)^{r_\alpha},\qquad \forall\, \theta \in \R^m_+,
\end{equation}
where $\sfs_{\bar{\kappa}_{\alpha}} :\R^m_+ \to \R$ are as in Proposition \ref{p:basic}\ref{i:basic-6}, $\Upsilon$ denotes the set of levels of $Y$, and with the convention that if $\alpha$ is the level of length $\ell_{\alpha}=1$ then $r_\alpha$ is replaced by $r_\alpha-1$; and if $r_\alpha =0$ after this replacement, then that level is omitted from the product in \eqref{eq:sfsbddbelow}. Finally, let $\beta$ be the corresponding distortion coefficient defined as in \eqref{eq:defbetaVec}.
\end{definition}
We note that the positivity domain $\DOM$ of $\sfs$ (see Section \ref{sec:vectorial}) is
\begin{equation}
\DOM = \bigcap_{\alpha\in\Upsilon} \DOM_{\bar{\kappa}_\alpha},
\end{equation}
and that $\sfs$ satisfies the asymptotic relation $\sfs(\theta) = c |\theta|^{N} + o(|\theta|^N)$ for some $c>0$ and
\begin{equation}
N=\sum_{\alpha\in \Upsilon} r_{\alpha} \ell_{\alpha}^2.
\end{equation}

\begin{theorem}[Ideal sub-Riemannian structures with Ricci bounded below are $\CD$]\label{thm:RiccbddbelowareCD}
Let $(M,\sfd,\mm)$ be an ideal sub-Riemannian metric measure space, with $n=\dim M$, equipped with a finite gauge function $\sfG : M \times M \to \R^m_+$, $m\in \N$, with Ricci curvatures bounded from below in the sense of Definition \ref{def:srbddbelow}, and let $\beta$ be the corresponding distortion coefficient. Then $(M,\sfd,\mm,\sfG)$ satisfies the $\CD(\beta,n)$ condition.
\end{theorem}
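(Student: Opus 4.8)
The strategy is to reduce the statement to an application of Theorem~\ref{thm:mcptocd} ($\MCP \Rightarrow \CD$ for spaces supporting interpolation inequalities), combined with the pointwise comparison provided by Theorem~\ref{thm:maincomparison}. The three ingredients we must assemble are: (1) that an ideal sub-Riemannian m.m.s.\ supports interpolation inequalities for densities with dimensional parameter $n=\dim M$, in the sense of Definition~\ref{def:mcptocd}; (2) that the hypothesis of Ricci curvatures bounded below (Definition~\ref{def:srbddbelow}) forces the ``true'' distortion coefficient $\beta^{(M,\sfd,\mm)}_t(x,y)$ to dominate the model coefficient $\beta_t(\sfG(x,y))$ for $\mm$-a.e.\ pair $(x,y)$; and (3) that the regularity/continuity hypotheses in Theorem~\ref{thm:mcptocd} are met, so that the implication $\MCP(\beta)\Rightarrow\CD(\beta,n)$ actually applies.

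\textbf{Step 1: interpolation inequalities.} For ideal sub-Riemannian m.m.s., the sub-Riemannian analogue of the McCann--Brenier theorem holds (see \cite{FR-mass,riffordbook,BRInv}): between $\mu_0\in\mathcal{P}_{bs}(M,\sfd,\mm)$ and $\mu_1\in\Prob_{bs}^*(M,\sfd,\mm)$ with disjoint supports there is a unique optimal plan, induced by a transport map, and the associated $W_2$-geodesic is absolutely continuous for $t\in(0,1]$ with density $\rho_t$. Moreover, since abnormal non-trivial geodesics are absent, all optimal geodesics are non-branching (cf.\ \cite{MR-branching}) and one has the pointwise Jacobian/monotonicity identity along $\nu$-a.e.\ geodesic, giving exactly \eqref{eq:interpolationineq} with dimensional parameter $n=\dim M$ and with $\beta^{(M,\sfd,\mm)}_t$ the true distortion coefficient of Definition~\ref{def:truedistcoeff}. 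This is the content of \cite{BRInv} (interpolation inequalities for densities); I would cite it and unravel the normalization so that the exponent is $1/n$ with $n=\dim M$.

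\textbf{Step 2: the pointwise comparison gives $\MCP(\beta)$.} Under Definition~\ref{def:srbddbelow}, for $\mm$-a.e.\ $y$ (and all $x$) there is a unique geodesic $\gamma$ from $x$ to $y$, $(x,y)\notin\Cut(M)$ (ideal structures satisfy the minimizing Sard property generically in rank $\ge 3$, but here one uses that for ideal structures $\Cut(x)$ has zero measure and that the non-static transport avoids it, cf.\ \cite{BRInv}), and Theorem~\ref{thm:maincomparison} applies: $\sfG(x,y)\in\DOM$ and
\begin{equation}
\beta_t^{(M,\sfd,\mm)}(x,y)\;\geq\; t\, e^{(t-1)\bar C(\sfG(x,y))}\prod_{\alpha\in\Upsilon}\big(\beta_t^{\bar\kappa_\alpha(\sfG(x,y))}\big)^{r_\alpha}\;=\;\beta_t(\sfG(x,y)),
\end{equation}
where the last equality is the definition \eqref{eq:sfsbddbelow}--\eqref{eq:defbetaVec} of $\sfs$ and $\beta$, using Proposition~\ref{p:basic}\ref{i:basic-6} to identify $\beta_t^{\bar\kappa_\alpha(\sfG(x,y))}=\sfs_{\bar\kappa_\alpha}(t\sfG(x,y))/\sfs_{\bar\kappa_\alpha}(\sfG(x,y))$ and hence the product with the $|\theta|e^{\bar C(\theta)}$ factor as $\sfs(t\theta)/\sfs(\theta)$. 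Plugging this lower bound into the half-Brunn--Minkowski computation — more directly, taking $\mu_1=\mm(A)^{-1}\mm\llcorner_A$ for bounded Borel $A$ with $\bar x\notin\bar A$, $\mu_0=\delta_{\bar x}$, using the interpolation inequality of Step~1 with the first density term dropped, and the comparison $\beta^{(M,\sfd,\mm)}_t(\bar x,x)\ge\beta_t(\sfG(\bar x,x))$ — yields, after integrating with Jensen's inequality and the convexity of $(x,y)\mapsto\log(e^x+e^y)$, exactly the $\MCP(\beta)$ inequality \eqref{eq:defMCPbeta}. A standard approximation argument removes the boundedness/closedness restrictions, as in the proof of Theorem~\ref{thm:CDbetaVsLSV}\ref{CDbetaVsLSV-iii}.

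\textbf{Step 3: upgrading $\MCP$ to $\CD$.} We now invoke Theorem~\ref{thm:mcptocd}. Its two hypotheses are met: the first (each $\nu\in\OptGeo(\mu_0,\mu_1)$ occurring in Definition~\ref{def:mcptocd} is concentrated on geodesics whose endpoint pairs $(\gamma_0,\gamma_1)$, $(\gamma_1,\gamma_0)$ are continuity points of $\sfG$) holds because for ideal structures the optimal transport avoids the cut locus $\pi$-a.e.\ (Theorem~\ref{t:regularityforstabilityCD}, applicable since any $\sfG$ here is — by the structure of Definition~\ref{def:srbddbelow} — controlled by natural objects on $M\setminus\Cut(M)$; more conservatively one argues directly that $\sfG$ is continuous where $\sfd^2$ is smooth, by Theorem~\ref{thm:agrariffsmooth}); the second is precisely the $\MCP(\beta)$ established in Step~2 together with $N\ge n$, which holds since $N=\sum_{\alpha}r_\alpha\ell_\alpha^2 \ge \sum_\alpha r_\alpha\ell_\alpha = n$. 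Theorem~\ref{thm:mcptocd} then delivers $\CD(\beta,n)$.

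\textbf{Main obstacle.} The delicate point is Step~1 combined with the regularity hypothesis of Step~3: one needs that for \emph{ideal} structures the interpolation inequality \eqref{eq:interpolationineq} holds with the true distortion coefficient \emph{and} that the relevant optimal plans are supported away from $\Cut(M)$ so that $\sfG$ is continuous there. Both are available in the literature (\cite{BRInv,FR-mass,riffordbook}), but care is needed because $\sfG$ is only assumed Borel and finite — its continuity is only guaranteed on $M\times M\setminus\Cut(M)$, which is why the disjoint-support restriction and the ideal assumption are essential (the diagonal is always in the cut locus in the genuinely sub-Riemannian case). A secondary bookkeeping issue is checking that the product of model coefficients in Theorem~\ref{thm:maincomparison}, with the $\bar C$ and $|\theta|$ factors, matches verbatim the $\beta$ built from \eqref{eq:sfsbddbelow} via \eqref{eq:defbetaVec}; this is immediate from Proposition~\ref{p:basic}\ref{i:basic-6} but should be stated explicitly.
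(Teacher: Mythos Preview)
Your ingredients are correct, but your assembly introduces an unnecessary detour that creates a genuine gap. The issue is Step~3: invoking Theorem~\ref{thm:mcptocd} requires that $\sfG$ be continuous at the endpoints $(\gamma_0,\gamma_1)$ and $(\gamma_1,\gamma_0)$ for $\nu$-a.e.\ $\gamma$. You try to justify this via Theorem~\ref{t:regularityforstabilityCD} or smoothness of $\sfd^2$, but neither applies here: the hypothesis of Theorem~\ref{thm:RiccbddbelowareCD} is only that $\sfG$ is a \emph{finite Borel} function $M\times M\to\R^m_+$. It need not be a natural gauge function, and nothing in Definition~\ref{def:srbddbelow} forces $\sfG$ to be continuous anywhere. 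So the continuity assumption of Theorem~\ref{thm:mcptocd} is simply not available.

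The fix is to observe that the detour through $\MCP(\beta)$ is pointless: the only role of $\MCP(\beta)$ plus continuity of $\sfG$ in the proof of Theorem~\ref{thm:mcptocd} is to recover the pointwise bound $\beta_t^{(M,\sfd,\mm)}(x,y)\ge\beta_t(\sfG(x,y))$. But you already have this bound directly from Theorem~\ref{thm:maincomparison} in your Step~2, valid for all $(x,y)\notin\Cut(M)$ where the Ricci bounds of Definition~\ref{def:srbddbelow} hold. The paper's proof does exactly this short-circuit: it plugs the comparison of Theorem~\ref{thm:maincomparison} \emph{directly} into the interpolation inequality \eqref{eq:interpolationineq} from \cite{BRInv}, using that for ideal structures with disjoint supports the unique optimal dynamical plan $\nu$ is concentrated on geodesics with endpoints off the cut locus (\cite[Cor.~38]{BRInv}); then it integrates with Jensen and the convexity of $(x,y)\mapsto\log(e^x+e^y)$ to obtain \eqref{eq:defCDbetan}. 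No continuity of $\sfG$ is needed at any stage. Your Steps~1 and~2 already contain everything required --- simply merge them and drop the $\MCP$ intermediary.
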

\begin{proof}
\textbf{Step 1: comparison out of the cut locus.} We use the Ricci curvature bounds assumption of Definition \ref{def:srbddbelow}, paired with Theorem \ref{thm:maincomparison}, to deduce that for all $x\in M$ and $\mm$-a.e.\ $y\in M$
\begin{equation}
\sfG(x,y) \in \DOM= \bigcap_{\alpha \in \Upsilon} \DOM_{\bar{\kappa}_\alpha},
\end{equation}
where $\DOM$ is the positivity domain of  $\sfs: \R^m_+ \to \R$ in \eqref{eq:sfsbddbelow}, and furthermore
\begin{equation}\label{eq:comparisonbeta-proof}
\beta_t^{(M,\sfd,\mm)}(x,y) \geq \beta_t^{\mathrm{mod}}(\sfG(x,y)), \qquad \forall\, t\in [0,1],
\end{equation}
where $\beta^{\mathrm{mod}}$ in the right hand side is the distortion coefficient built out of $\sfs$ according to the construction in Section \ref{sec:CDbeta} (Section \ref{sec:vectorial} for the vector case). See Remark \ref{rmk:linkbetabeta}.

\textbf{Step 2: interpolation inequalities for densities.} By \cite[Thm.\@ 4]{BRInv}, for any $\mu_0\in \Prob_{bs}(M,\sfd,\mm)$, $\mu_1 \in \Prob_{bs}^{*}(M,\sfd,\mm)$ there exists a unique $W_2$-geodesic $(\mu_t)_{t\in [0,1]}$, associated with $\nu\in\OptGeo(\mu_0,\mu_1)$ such that $\mu_t\ll \mm$ for all $t\in (0,1]$, and letting $\rho_t:=\tfrac{\di \mu_t}{\di \mm}$ be the corresponding density, it holds for all $t\in (0,1)$:
\begin{equation}\label{eq:interpolationineq-proof}
\frac{1}{\rho_t(\gamma_t)^{1/n}}\geq \frac{\beta_{1-t}^{(M,\sfd,\mm)}(\gamma_1,\gamma_0)^{1/n}}{\rho_0(\gamma_0)^{1/n}} + \frac{\beta_t^{(M,\sfd,\mm)}(\gamma_0,\gamma_1)^{1/n}}{\rho_1(\gamma_1)^{1/n}},\quad \nu\text{-a.e.}\ \gamma \in \Geo(M),
\end{equation}
with the understanding that the first term in the right hand side of \eqref{eq:interpolationineq-proof} is omitted if $\mu_0\notin \mathcal{P}_{ac}(M,\mm)$. We also remark that $\nu\in \OptGeo(\mu_0,\mu_1)$, the corresponding optimal plan $\pi$, and the corresponding $W_2$-geodesic $\mu_t$ are unique, and they are induced by an optimal transport map $T:M\to M$ such that $T_\sharp \mu_0 = \mu_1$.

Furthermore, by \cite[Cor.\ 3.8]{BRInv}, optimal transport on ideal structures almost surely is either static or avoids the cut locus. More precisely:
\begin{equation}
\mu_1\left(M\setminus \{x\in M \mid T(x)=x\} \cup \{x\in M \mid T(x)\notin \Cut(x)\}\right)=0.
\end{equation}
Recall that the $\CD(\beta,n)$ inequality \eqref{eq:defCDbetan} must be tested only for measures such that $\supp\mu_0\cap \supp\mu_1= \emptyset$. In this case, $\nu$ is concentrated on a set of geodesics with endpoints out of the cut locus.

Thus, using \eqref{eq:comparisonbeta-proof}, we have that \eqref{eq:interpolationineq-proof} holds for $\nu$-a.e.\ geodesic $\gamma \in \Geo(M)$, with the replacements:
\begin{equation}
\beta_{1-t}^{(M,\sfd,\mm)}(\gamma_1,\gamma_0)  \mapsto \beta_{1-t}^{\mathrm{mod}}(\sfG(\gamma_1,\gamma_0)),\qquad \beta_{t}^{(M,\sfd,\mm)}(\gamma_0,\gamma_1)  \mapsto \beta_{t}^{\mathrm{mod}}(\sfG(\gamma_0,\gamma_1)).
\end{equation}

Integrating the aforementioned inequality with respect to $\nu$, and using Jensen's inequality, we obtain the $\CD(\beta,n)$ condition \eqref{eq:defCDbetan} for $\beta=\beta^{\mathrm{mod}}$.
\end{proof}
\section{Curvature estimates for fat sub-Riemannian structures}\label{sec:fat}
 
In this section, we establish new Ricci curvature lower bounds for fat sub-Riemannian structures. We address  the reader to Appendix \ref{a:canonicalframe} for a self-contained summary on the canonical curvature in sub-Riemannian geometry that we use extensively.

 We provide here an informal explanation of the basic concepts of Young diagram and Ricci curvatures. To the generic\footnote{More precisely, for an open and dense set of points $x\in M$ there exists a non-empty Zariski open (and thus dense) set $A_x \subseteq T_x^*M$ of initial covectors for which the associated geodesic has a well-defined Young diagram. See \cite[Sec.\@ 5]{ZeLi} and \cite[Sec.\@ 5.2]{ABR-curvature}.} geodesic of a sub-Riemannian manifold we can associate a Young diagram. Each Young diagram must be thought of as an ordered collection of $n=\dim M$ boxes, each one corresponding to a one-dimensional subspace of the tangent space along the given geodesic. The position of the box in the diagram is determined by the number of derivations (aka Lie brackets in the direction of the geodesic) that one must perform to access to that direction starting with the original distribution. According to the general theory, a sub-Riemannian canonical curvature operator can be defined on the tangent space along the geodesic. It behaves in a different way when restricted to each one of these one-dimensional subspaces. It is natural then to group up the one-dimensional subspaces having homogeneous behaviour. We obtain in this way a so-called \emph{reduced} Young diagram, whose boxes are called superboxes. Each superbox of a reduced Young diagram represents a subspace of the tangent space along the geodesic where the curvature behaves in a homogeneous way, and thus it makes sense to trace the curvature operator in these subspaces. This gives rise to Ricci curvatures, one for each superbox. Note that, on Riemannian manifolds, the reduced Young diagram is the same for any geodesic, it is formed by only one superbox, and a large part of the canonical curvature theory explained in Appendix \ref{a:canonicalframe} trivializes.

\subsection{Fat curvature estimates}

A distribution $\distr$ on a smooth manifold $M$ of dimension $n\geq 3$ is called \emph{fat} if it is \emph{strong bracket-generating} \cite{Strichartz,montgomerybook}: for any smooth vector field $X$ on $M$ it holds
\begin{equation}\label{eq:fat-definition}
X|_x \neq 0 \quad \Longrightarrow \quad \distr_x + [X,\distr]_x = T_x M, \qquad \forall\, x\in M.
\end{equation}
Let $g$ be a sub-Riemannian metric on $\distr$.  We call $(\distr,g)$ a \emph{fat sub-Riemannian structure} on $M$. Letting $\sfd$ be the corresponding Carnot-Carathéodory distance. 

 \begin{remark}
We recall that the following popular classes of sub-Riemannian structures are fat: the Heisenberg groups, contact structures, all H-type groups in the sense of Kaplan \cite{Kaplan}, and more generally H-type foliations \cite{BGMR-Htype2}.
\end{remark}

For a fat structure, any non-trivial geodesic has the same Young diagram with two columns.  We label the superboxes of the reduced Young diagram as follows:
\[
\ytableausetup{centertableaux,nosmalltableaux}
\begin{ytableau}
b & a \\
c\\
\end{ytableau}
\]
The sizes of the superboxes are $\size(a)=\size(b) =n-k$ and $\size(c)= 2k-n$. To each superbox, we associate the canonical Ricci curvatures $\mathfrak{Ric}^a$, $\mathfrak{Ric}^b$, $\mathfrak{Ric}^c$ respectively.

Without loss of generality, we assume that $g$ is the restriction to $\distr$ of a Riemannian metric $g_R$. Recall from Definition \ref{def:D} the function $\sfD:M\times M \to [0,+\infty]$ associated with the Riemannian extension $\sfd_{R}=\sfd_{g_R}$. By Proposition \ref{prop:naturalobject}, it holds
\begin{equation}
\sfD(x,y) = \|\nabla^R_x c(\cdot,y)\|_R, \qquad \forall\, (x,y) \notin \mathrm{Cut}(M),
\end{equation}
where $c =\tfrac{1}{2}\sfd^2$. To fix the ideas, in the Heisenberg group, and choosing the canonical Riemannian extension, for $(x,y)\notin \mathrm{Cut}(M)$ it holds
\begin{equation}
\sfD(x,y)^2 = \sfd(x,y)^2 + h_0(\lambda^{x,y})^2,
\end{equation}
where $h_0(\lambda^{x,y})$ is the vertical component of the unique initial covector $\lambda^{x,y}$ of the geodesic between $x$ and $y$.

The next result will be pivotal to show that fat sub-Riemannian structures on a compact $n$-dimensional manifold are $\CD(\beta,n)$ spaces, for suitable $\beta$ (cf.\@ Theorem \ref{thm:fatcptareCD}).

\begin{theorem}[Fat curvature estimates]\label{t:fattheorem}
Let $(M,\sfd)$ be a sub-Riemannian metric space with dimension $n$ and with fat distribution of rank $k< n$. Then, for any compact set $K\subset M$ there exists a constant $\kappa>0$ such that for any $x,y\in K\setminus \mathrm{Cut}(M)$ and for the unique geodesic $\gamma:[0,1]\to M$ joining them, with initial covector $\lambda \in T_x^*M$, it holds
\begin{align}
\mathfrak{Ric}^a_{\lambda} & \geq - \kappa \sfD(x,y)^4, \\
\mathfrak{Ric}^b_{\lambda} & \geq - \kappa \sfD(x,y)^2, \\
\mathfrak{Ric}^c_{\lambda} & \geq - \kappa \sfD(x,y)^2.
\end{align}
Furthermore, if $\mm$ is a smooth measure on $M$, there exists $C>0$ such that for any $x,y\in K\setminus \mathrm{Cut}(M)$ and for the unique geodesic $\gamma:[0,1]\to M$ joining them, with initial covector $\lambda\in T_x^*M$, it holds:
\begin{equation}
|\rho_{\mm,\lambda}| \leq C \sfD(x,y),
\end{equation}
where $\rho_{\mm,\lambda}$ is the geodesic volume derivative (see Section \ref{sec:gvd}).
\end{theorem}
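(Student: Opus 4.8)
\textbf{Proof strategy for Theorem \ref{t:fattheorem}.}

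The plan is to reduce everything to the behaviour of the canonical curvature and the geodesic volume derivative along geodesics whose initial covectors lie in a compact subset of $T^*M$, and then to exploit homogeneity in the covector. First I would fix a compact $K\subset M$ and recall that, since $\sfD\geq\sfd$ (Proposition \ref{prop:propertiesD}\ref{i:propertiesD1}), for $(x,y)\notin\Cut(M)$ with $x,y\in K$ the quantity $\sfD(x,y)$ controls $\sfd(x,y)$ from below; and that, by Proposition \ref{prop:naturalobject}, $\sfD(x,y)^2=\sfd(x,y)^2+\|\pi^\ver_x\lambda^{x,y}\|^2$, i.e. $\sfD(x,y)^2$ equals the \emph{full} Riemannian squared-norm $\|\lambda^{x,y}\|_R^2$ of the initial covector of the unique geodesic joining $x$ to $y$. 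So the geometric content is: covectors $\lambda$ with $\|\lambda\|_R\leq\Lambda$ and base point in $K$ form a compact set $\mathcal{K}_\Lambda\subset T^*M$, and I must bound $\mathfrak{Ric}^a_\lambda$, $\mathfrak{Ric}^b_\lambda$, $\mathfrak{Ric}^c_\lambda$ and $\rho_{\mm,\lambda}$ on such a set in terms of $\|\lambda\|_R$.

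The homogeneity is the key point. The Hamiltonian $H$ of a sub-Riemannian structure is fibrewise $2$-homogeneous, so the Hamiltonian vector field $\vec H$ satisfies a dilation rule under $\delta_c:\lambda\mapsto c\lambda$ on the fibres, and the Jacobi curve / canonical frame machinery of Appendix \ref{a:canonicalframe} rescales accordingly: the canonical curvature entry in the superbox of ``height $h$'' (i.e. accessed after $h-1$ derivations) is homogeneous of degree $2h$ in $\lambda$, while the geodesic volume derivative $\rho_{\mm,\lambda}$ is homogeneous of degree $1$. For a fat (hence step-$2$) structure the reduced Young diagram has the stated shape: superbox $a$ sits at the top of the second column (height $1$), superbox $b$ at the top of the first column (height $2$... here I need to be careful which superbox is ``horizontal'' and which is ``vertical''), and superbox $c$ at height $2$. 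Concretely I expect: $\mathfrak{Ric}^a_\lambda$ is $2$-homogeneous, $\mathfrak{Ric}^b_\lambda$ is $4$-homogeneous, $\mathfrak{Ric}^c_\lambda$ is $4$-homogeneous in $\lambda$ — wait, this must be matched to the exponents $\sfD^4,\sfD^2,\sfD^2$ in the statement; since $\sfD(x,y)=\|\lambda\|_R$, the exponents $4,2,2$ say exactly that $\mathfrak{Ric}^a$ is $4$-homogeneous and $\mathfrak{Ric}^b,\mathfrak{Ric}^c$ are $2$-homogeneous. So the plan is: (i) identify, from the explicit normal forms for fat structures (two columns of heights $2$ and $1$, or rather the reduced diagram with $a,b$ of size $n-k$ and $c$ of size $2k-n$), the homogeneity degree of each canonical Ricci curvature; (ii) on the unit cosphere bundle $\{\|\lambda\|_R=1\}$ restricted over $K$ — a compact set — the functions $\lambda\mapsto\mathfrak{Ric}^\bullet_\lambda$ and $\lambda\mapsto\rho_{\mm,\lambda}$ are continuous (this follows from smoothness of $\vec H$ and of the canonical frame construction away from the pathological covectors, together with the fact that for a fat structure \emph{every} non-zero covector is admissible, with Young diagram independent of $\lambda$, so there are no exceptional directions to worry about over $K$), hence bounded; (iii) extend by homogeneity to all of $\mathcal{K}_\Lambda$ with $\Lambda$ chosen so that $\sfD(x,y)\le\Lambda$ for $x,y\in K\setminus\Cut(M)$ — which holds because $\sfD$ is locally bounded on fat (step $\le 2$) structures by Theorem \ref{t:boundedness}\ref{i:boundedness3}. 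Then set $\kappa:=-\min$ of the relevant lower bounds (with sign) and $C:=\max|\rho_{\mm,\lambda}|$ on the cosphere over $K$.

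The main obstacle I anticipate is step (ii): making precise that the canonical Ricci curvatures $\mathfrak{Ric}^a,\mathfrak{Ric}^b,\mathfrak{Ric}^c$ depend continuously on $\lambda$ over the compact cosphere bundle $\{\|\lambda\|_R=1,\ \pi(\lambda)\in K\}$. For a \emph{general} sub-Riemannian structure the Young diagram can jump and the canonical curvature is only defined on a Zariski-open set of covectors, so continuity would be delicate; the point is that \emph{fatness} removes this difficulty entirely — strong bracket-generation forces the growth vector, hence the reduced Young diagram, to be constant in $\lambda$ (the two-column shape above), and then the canonical frame depends smoothly on the extremal, so $\mathfrak{Ric}^\bullet_\lambda$ and $\rho_{\mm,\lambda}$ are smooth functions of $\lambda\in T^*M\setminus\{H=0\}$ (cf. the constructions recalled in Appendix \ref{a:canonicalframe}). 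I would spell this out by writing the canonical frame ODE, noting its coefficients are smooth in $\lambda$, and invoking smooth dependence on initial data; compactness of the cosphere over $K$ then finishes it. A secondary technical point is the exact bookkeeping of which superbox carries which homogeneity degree: I would verify this against the Heisenberg model computation $\sfD(x,y)^2=\sfd(x,y)^2+h_0(\lambda^{x,y})^2$ displayed just before the statement, where $\mathfrak{Ric}^a$ involves the ``$-3h_0^2$''-type terms (degree $4$ after accounting for the extra scaling of the second-column direction) and $\mathfrak{Ric}^b,\mathfrak{Ric}^c$ involve degree-$2$ terms — this sanity check pins down the exponents $4,2,2$ and $1$ in the statement.
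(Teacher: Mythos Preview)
Your overall strategy --- identify $\sfD(x,y)=\|\lambda^{x,y}\|_R$ via Proposition~\ref{prop:naturalobject}, invoke the homogeneity $\mathfrak{Ric}^\boxdot_{c\lambda}=c^{2\col(\boxdot)}\mathfrak{Ric}^\boxdot_\lambda$, and reduce to a uniform bound on the Riemannian unit cosphere over a compact set --- is precisely the paper's (see the short derivation of Theorem~\ref{t:fattheorem} from Theorem~\ref{t:fatcurvbound}). You also correctly flag step~(ii) as the obstacle.

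The gap is that you then dismiss it. The Riemannian cosphere $\{\|\lambda\|_R=1,\ \pi(\lambda)\in K\}$ is compact, but it contains covectors in the annihilator $\distr^0$, i.e.\ with $H(\lambda)=0$ (in local coordinates: $u=0$, $|v|=1$). At such points the geodesic is trivial, the Jacobi curve is constant, and the canonical frame of Appendix~\ref{a:canonicalframe} is \emph{not defined}: fatness makes the Young diagram constant on $T^*M_{\neq 0}=\{H>0\}$, not on all of $T^*M\setminus\{0\}$. Your sentence ``every non-zero covector is admissible'' conflates $\lambda\neq 0$ with $H(\lambda)\neq 0$. Nor can you retreat to the sub-Riemannian sphere $\{H=1/2\}$, which is non-compact over $K$; and normalized initial covectors of geodesics with endpoints in $K\setminus\Cut(M)$ \emph{do} approach $\{u=0\}$ (already in $\H^1$: short geodesics with $|h_0|$ bounded away from zero have $\sfd/\sfD\to 0$). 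What must actually be proved --- and this is the content of Lemmas~\ref{l:lemmaEa}--\ref{l:canvorder} and Theorem~\ref{t:fatcurvbound} --- is that although the canonical \emph{frame} carries explicit $1/\sqrt{2H}$ singularities as $u\to 0$, these cancel in the symplectic products $\sigma(\dot F_\boxtimes,F_\boxplus)$ defining the curvature, so that $\mathfrak{Ric}^\boxdot_\lambda$ and $\rho_{\mm,\lambda}$ remain bounded on $u$-star-shaped neighborhoods of $\distr^0$. Your ``smooth dependence on initial data'' argument cannot see this cancellation, because the ODE for the canonical frame already has singular coefficients at $u=0$.
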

\begin{remark}
If the last block is one-dimensional, i.e.\@ $\size(c)=2k-n=1$, then the lower bound for $\mathfrak{Ric}^c$ is redundant as $\mathfrak{Ric}^c_{\lambda} = 0$.
\end{remark}
\begin{remark}
The constants in Theorem \ref{t:fattheorem} do depend on the choice of $g_R$.
\end{remark}

Recall that the canonical frame and curvature, seen as functions of the initial covector $\lambda \in T^*M$, are not defined when the associated geodesic is the trivial (abnormal) one, corresponding to $\lambda \in \distr^0$ (the annihilator bundle of $\distr$). From an analytic viewpoint, this translates into a singularity as $\lambda \to \distr^0$ of the canonical frame. This does not happen in Riemannian geometry, where the canonical frame is defined even for the zero covector. 

In this section we will prove that even if the canonical frame is singular, the canonical curvature is not. More precisely, we will establish a stronger result on the structure of the canonical frame in a neighborhood of the annihilator bundle, which is the main result of this section (see Theorem \ref{t:fatcurvbound}). The latter yields Theorem \ref{t:fattheorem} as a consequence.

\subsection{Preliminary setup}\label{sec:preliminary}
We start by setting up some notation. Let $X_1,\dots,X_n$ be a local Riemannian orthonormal frame defined on a neighborhood $\mathcal{O}\subset M$, such that $X_1,\dots,X_k$ are horizontal (i.e., smooth sections of $\distr$). Define linear-on-fibers functions $h_i: T^*\mathcal{O} \to \R^n$ via $h_i(\lambda) = \langle\lambda,X_i\rangle$, for $i=1,\dots,n$. These functions define a local trivialization $T^*\mathcal{O} \simeq \mathcal{O}\times \R^n$. Let also $c_{ij}^k : \mathcal{O}\to \R$ be the smooth functions defined by
\begin{equation}
[X_i,X_j] = \sum_{k=1}^n c_{ij}^k X_k, \qquad i,j=1,\dots,n.
\end{equation}
We routinely use symplectic calculus (cf.\@ \cite[Ch.\@ 4]{nostrolibro}). We recall some basic concepts here. To any $a \in C^\infty(T^*\mathcal{O})$ we associate the vector field $\vec{a}$ on $T^*\mathcal{O}$ 
\begin{equation}
\vec{a}(f) = \{a,f\}, \qquad \forall\, f \in C^\infty(T^*\mathcal{O}),
\end{equation}
where $\{\cdot,\cdot\}$ denotes the Poisson bracket induced by the symplectic structure on $T^*M$. Thus, on $T^*\mathcal{O}$, we define a frame for $T(T^*\mathcal{O})$ given by
\begin{equation}\label{eq:frame}
\vec{h}_1,\dots , \vec{h}_n,\partial_{h_1},\dots,\partial_{h_n}.
\end{equation}
Notice that it holds $\pi_* \vec{h}_i = X_i$, where $\pi :T^*M \to M$ is the bundle projection. Let also $\nu_1,\dots,\nu_n$ be the dual frame of one-forms on $\mathcal{O}$ such that $\langle \nu_i,X_j\rangle=\delta_{ij}$ for $i,j=1,\dots,n$. In terms of this frame, the tautological one-form is $\tau = \sum_{\ell=1}^n h_\ell \pi^*\nu_\ell$ on $T^*\mathcal{O}$ and the symplectic form $\sigma = d\tau$ is
\begin{equation}
\sigma = \sum_{\ell=1}^n dh_\ell \wedge \pi^*\nu_\ell + h_\ell \pi^* d\nu_\ell.
\end{equation}
In terms of the frame \eqref{eq:frame} we have, on $T^*\mathcal{O}$ and for $i,j=1,\dots,n$:
\begin{equation}\label{eq:sigmaframe}
\sigma(\partial_{h_i},\partial_{h_j})=0, \qquad \sigma(\partial_{h_i},\vec{h}_j)=\delta_{ij},\qquad \sigma(\vec{h}_i,\vec{h}_j) =  \sum_{\mu=1}^n h_\mu c_{ij}^\mu.
\end{equation}
Let $H : T^*M \to \R$ be the sub-Riemannian Hamiltonian and denote with $\vec{H}$ the corresponding Hamiltonian vector field, which we assume to be complete, and denote with $e^{t\vec{H}}$ its flow. The curves $\lambda :[0,1]\to T^*M$ given by
\begin{equation}
\lambda_t = e^{t\vec{H}}(\lambda), \qquad \forall\, t\in [0,1],\qquad \forall\, \lambda \in T^*M,
\end{equation}
are called normal extremals, and the projections $\gamma = \pi\circ\gamma$ are locally length-minimizing curves parametrized with constant speed $\|\dot\gamma_t\|^2 = 2H(\lambda)$, for all $t\in [0,1]$. In particular if $\gamma$ is a geodesic it holds $\sfd(\gamma_0,\gamma_1)^2 = 2H(\lambda)$.

For any tensor field on $T^*M$, and without risk of confusion, the dot denotes the derivation with respect to $\vec{H}$. For example if $V\in \Gamma(T(T^*M))$:
\begin{equation}
\dot{V}|_{\lambda}= \left.\frac{d}{d\varepsilon}\right|_{\varepsilon=0} e^{-\varepsilon \vec{H}}_*V|_{e^{\varepsilon\vec{H}}(\lambda)}=[\vec{H},V]|_{\lambda}, \qquad \forall\, \lambda \in T^*M,
\end{equation}
and for functions $a :T^*M \to \R$, we have $\dot{a} = \vec{H}(a)$.

The frame \eqref{eq:frame} will serve as a reference for the computation of the canonical frame. It is convenient to adopt the following notation to distinguish between ``horizontal'' and ``vertical'' coordinates. We let $u_i = h_i$ for $i=1,\dots,k$ and $v_j = h_j$ for $j=k+1,\dots,n$. In particular $u,v$ denote tuples $u=(u_1,\dots,u_k)$, $v=(u_{k+1},\dots,u_n)$, and thus $h=(h_1,\dots,h_n) = (u,v)$. In this notation we have the following trivialization of $T^*\mathcal{O}$:
\begin{equation}
T^*\mathcal{O} = \mathcal{O} \times \R^{k} \times \R^{n-k},
\end{equation}
and we denote a point $\lambda \in T^*\mathcal{O}$ as $\lambda = (x;h)=(x;u,v)$. On $T^*\mathcal{O}$, the sub-Riemannian and Riemannian Hamiltonians $H,H_R:T^*M \to \R$, respectively, read:
\begin{equation}
H =\frac{1}{2}|u|^2\qquad \text{and} \qquad H_R = \frac{1}{2}\left(|u|^2+|v|^2\right).
\end{equation}
For any set $\Lambda \subset T^*M$, we adopt the notation (recall that $ \distr^0$ is the annihilator of $\distr$)
\begin{equation}\label{eq:0}
\Lambda_{\neq 0}:= \Lambda \setminus \distr^0 = \{\lambda \in \Lambda \mid H(\lambda)>0\},
\end{equation}
so that, if $\Lambda\subset T^*\mathcal{O}$, in the given trivialization we have
\begin{equation}
\Lambda_{\neq 0} = \{(x;u,v)\in \Lambda \mid u \neq 0\}.
\end{equation}

\subsubsection{Fundamental computations}
The following lemma collects computations and notations that will be used throughout the section. The proofs are routine computation, and are omitted.
\begin{lemma}[Fundamental computations]\label{l:fundamentalcomputations}
The following formulas hold on $T^*\mathcal{O}$:
\begin{align}
\dot{\partial}_{v_\mu} & = -u_\ell c_{\ell A}^\mu \partial_{h_A},\\
\dot{\partial}_{u_i}  & = - \vec{u}_i - u_\ell c_{\ell A}^i \partial_{h_A},\\
\dot{{\vec{h}}}_{A}   &  =  - h_B c_{A \ell}^B \vec{u}_\ell +u_\ell c_{\ell A}^B \vec{h}_B + u_\ell h_B f_{\ell A C}^B \partial_{h_C},
\end{align}
where for lower-case latin indices $i,j,\ell=1,\dots,k$, for greek ones $\mu,\nu,\sigma=k+1,\dots,n$, for upper-case latin ones $A,B,C=1,\dots,n$, repeated indices are understood as summed over their range, and we defined the following smooth functions on $\mathcal{O}$:
\begin{equation}
f_{\ell A C}^B\in C^\infty(\mathcal{O}),\qquad f_{\ell A C}^B := X_\ell(c_{AC}^B) - X_A(c_{\ell C}^B) - c_{\ell A}^D c_{DC}^B +c_{\ell D}^B c_{AC}^D-c_{AD}^B  c_{\ell C}^D.
\end{equation}
We rewrite these equations in a compact notation as follows:\begin{align}
\dot{\partial}_v & = \A \cdot \partial_u + \B \cdot \partial_v, \label{eq:partialvdot}\\
\dot{\partial}_u & = -\vec{u} + \C \cdot \partial_u + \D \cdot \partial_v, \label{eq:partialudot}\\
\dot{\vec{u}} & =  \E \cdot \vec{u} -\A^* \cdot \vec{v} + \F \cdot \partial_u + \G \cdot \partial_v, \label{eq:vecudot} \\ 
\dot{\vec{v}} & = \H \cdot \vec{u}  -\B^* \cdot \vec{v} + \I \cdot \partial_u + \L \cdot \partial_v, \label{eq:vecvdot}
\end{align}
where the dot denotes multiplication of tuples\footnote{Here and in the following, the notation $V = O\cdot W$, where $O$ is an $n\times m$ matrix, $V=(V_1,\dots,V_n)$ is a $n$-tuple of vector fields, and similarly $W=(W_1,\dots,W_m)$, with $n,m \in \N$, means that $V_i = \sum_{j=1}^m O_{ij} W_j$.}, and where we defined the following smooth matrix-valued maps:
\begin{align}
\A & : T^*\mathcal{O} \to \mathrm{M}(n-k, k), & \A_{\mu i} & := - u_\ell c_{\ell i}^\mu,  \\
\B & : T^*\mathcal{O} \to \mathrm{M}(n-k, n-k), & \B_{\mu \nu} & := - u_\ell c_{\ell\nu}^\mu,\\
\C & : T^*\mathcal{O} \to \mathrm{M}(k, k), & \C_{i j} & := - u_\ell c_{\ell j}^i, \\
\D & : T^*\mathcal{O} \to \mathrm{M}(k, n-k),& \D_{i \mu} & := - u_\ell c_{\ell \mu}^i, \\
\E & : T^*\mathcal{O} \to \mathrm{M}(k, k), & \E_{ij} & := -h_Bc_{ij}^B + u_\ell c_{\ell i}^j,   \\
\F & : T^*\mathcal{O} \to \mathrm{M}(k, k), & \F_{ij} & := u_\ell h_B f_{\ell i j}^B, \\
\G & : T^*\mathcal{O} \to \mathrm{M}(k, n-k),& \G_{i \mu} & := u_\ell h_B f_{\ell i \mu}^B, \\
\H & : T^*\mathcal{O} \to \mathrm{M}(n-k, k), & \H_{\mu i} & := - h_B c^{B}_{\mu i}+u_\ell c_{\ell \mu}^i,  \\
\I & : T^*\mathcal{O} \to \mathrm{M}(n-k, k), & \I_{\mu i} & :=  u_\ell h_B f_{\ell \mu i}^B, \\
\L & : T^*\mathcal{O} \to \mathrm{M}(n-k, n-k),& \L_{\mu \nu} & := u_\ell h_B f_{\ell \mu \nu}^B.
\end{align}
\end{lemma}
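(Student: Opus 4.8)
\textbf{Proof plan for Lemma \ref{l:fundamentalcomputations}.}

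The plan is to verify each formula by a direct symplectic computation, exploiting that $\vec{H}$ is the Hamiltonian vector field of $H = \tfrac12 |u|^2 = \tfrac12 \sum_{i=1}^k h_i^2$, so that $\vec{H} = \sum_{i=1}^k u_i \vec{h}_i$ (using $\partial_{u_i} H = u_i$ and the defining relation $\vec{h}_i = \widehat{h_i}$). The dot operation is $\dot{V} = [\vec{H},V]$ for vector fields and $\dot{a} = \vec{H}(a)$ for functions, so the whole lemma reduces to Lie-bracket manipulations among the frame $\{\vec{h}_A, \partial_{h_A}\}_{A=1,\dots,n}$ on $T^*\mathcal{O}$.

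First I would record the basic bracket relations in this frame. From the Poisson-bracket description, $[\vec{h}_A,\vec{h}_B]$ corresponds to $\{h_A, h_B\} = \sum_\mu h_\mu c_{AB}^\mu$ (since $\{h_A,h_B\}$ is the linear-on-fibers function dual to $[X_A,X_B] = \sum_\mu c_{AB}^\mu X_\mu$), hence $[\vec{h}_A,\vec{h}_B] = \widehat{\{h_A,h_B\}}$; one must be careful to differentiate the coefficients $c_{AB}^\mu$, which live on the base, obtaining also $\partial_h$-components via $X_A(c_{\cdot})$-type terms. The brackets $[\vec{h}_A,\partial_{h_B}]$ produce $-\partial_{h_B}(\text{coeff})$-type vertical corrections, and $[\partial_{h_A},\partial_{h_B}] = 0$. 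With these at hand, the computation of $\dot\partial_{v_\mu} = [\vec{H},\partial_{v_\mu}] = \sum_{\ell=1}^k [u_\ell \vec{h}_\ell, \partial_{v_\mu}]$ is immediate: the only surviving terms come from $\partial_{v_\mu}$ acting on the coefficients of $\vec{h}_\ell$ through the symplectic form entries $\sigma(\vec{h}_\ell,\vec{h}_B)$, giving the stated $-u_\ell c_{\ell A}^\mu \partial_{h_A}$. The formula for $\dot\partial_{u_i}$ is the same computation but now $[u_\ell \vec{h}_\ell, \partial_{u_i}]$ additionally picks up $-\delta_{\ell i}\vec{h}_\ell = -\vec{h}_i = -\vec{u}_i$ from $\partial_{u_i}(u_\ell) = \delta_{\ell i}$. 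The formula for $\dot{\vec h}_A$ is the most involved: one writes $[\vec{H},\vec{h}_A] = \sum_\ell u_\ell [\vec{h}_\ell,\vec{h}_A] + \sum_\ell (\vec{h}_A u_\ell)\,\vec{h}_\ell$; the second sum contributes $-h_B c_{A\ell}^B \vec{u}_\ell$ (since $\vec{h}_A u_\ell = \{h_A,u_\ell\} = \{h_A,h_\ell\}$), while the first sum, after expanding $[\vec{h}_\ell,\vec{h}_A]$ into its $\vec{h}$- and $\partial_h$-parts, yields the $u_\ell c_{\ell A}^B \vec{h}_B$ term and the triple-index term with the functions $f_{\ell A C}^B$; the precise form of $f$ is exactly what one reads off after collecting all the $X_\ell(c_{AC}^B)$, $X_A(c_{\ell C}^B)$ and quadratic-in-$c$ contributions from iterated brackets and the Jacobi identity.

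Once these three ``primitive'' identities are established, the compact-notation equations \eqref{eq:partialvdot}–\eqref{eq:vecvdot} are obtained by bookkeeping: one splits the index ranges into horizontal ($i,j,\ell \le k$) and vertical ($\mu,\nu,\sigma > k$), restricts $\dot\partial_{h}$ and $\dot{\vec h}$ to the corresponding blocks, and reads off the matrix entries of $\A,\dots,\L$ from the coefficients; equations \eqref{eq:vecudot}–\eqref{eq:vecvdot} are just $\dot{\vec h}_A$ with $A$ ranging over horizontal, resp. vertical, indices and the $\vec h$- and $\partial_h$-parts sorted by type (noting $\A^*$, $\B^*$ appear because $c_{A\ell}^B$ with one horizontal and one vertical leg transposes between the $\vec u$-to-$\vec v$ and $\vec v$-to-$\vec u$ couplings). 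I expect no genuine obstacle here — the statement is explicitly flagged as ``routine computation'' — the only place demanding care is getting all the signs and the exact expression for $f_{\ell A C}^B$ right, since these enter the curvature estimates of Theorem \ref{t:fatcurvbound} and Theorem \ref{t:fattheorem} downstream; a useful sanity check is to verify the antisymmetry/Jacobi-type relations that $f$ must satisfy and to test the formulas in the Heisenberg group where all $c_{ij}^\mu$ are constant and the $f$-terms simplify drastically.
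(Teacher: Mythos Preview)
Your approach is correct and is precisely the ``routine computation'' the paper alludes to (the paper omits the proof entirely). The only slip is in your Leibniz expansion for $\dot{\vec{h}}_A$: you write $[\vec{H},\vec{h}_A] = \sum_\ell u_\ell [\vec{h}_\ell,\vec{h}_A] + \sum_\ell (\vec{h}_A u_\ell)\,\vec{h}_\ell$, but $[fX,Y] = f[X,Y] - (Yf)X$ gives a minus sign on the second sum; your stated contribution $-h_B c_{A\ell}^B \vec{u}_\ell$ is nonetheless correct, so this is a typo rather than an error in the computation.
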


\subsubsection{Partial dilations}

Recall that we have fixed a Riemannian extension so that $TM = \distr \oplus \ver$, with $\ver \perp \distr$, and thus $T^*M =\ver^0 \oplus \distr^0$. This induces partial dilations on the fibers of the cotangent bundle. For simplicity we only provide local definitions in the trivialization $T^*\mathcal{O}$, even though these maps are defined on the whole $T^*M$.

\begin{definition}[$u$-dilation]
For $a>0$, we define the $u$-dilation $\delta_a^{hor} :T^*\mathcal{O}\to T^*\mathcal{O}$ by
\begin{equation}
\delta_a^{hor}(x;u,v) :=(x;au,v),\qquad (x;u,v)\in T^*\mathcal{O}.
\end{equation}
\end{definition}
\begin{definition}[$u$-homogeneous functions]
Let $V$ be a vector space. A map $f: T^*\mathcal{O} \to V$ is \emph{$u$-homogeneous of degree $d$} if 
\begin{equation}
f(\delta_a^{hor}(\lambda)) = a^d f(\lambda),\qquad \forall\, a>0,\; \lambda \in T^*\mathcal{O}.
\end{equation}
\end{definition}
\begin{definition}[$u$-star shaped set]
A set $\Lambda\subset T^*\mathcal{O}$ is \emph{$u$-star-shaped} if
\begin{equation}\label{eq:strip}
\Lambda = \bigcup_{a\in [0,1]} \delta_a^{hor}(\Lambda).
\end{equation}
\end{definition}
Similar definitions can be given for $v$-dilations, $v$-homogeneous functions, $v$-star-shaped sets but they will not be used, so they are omitted.

\subsubsection{Pseudo-homogeneous maps}

The class of $u$-homogeneous maps is not closed with respect to $\vec{H}$-derivations. Thus, we introduce a larger class of maps with this property. We only give a local definition.

\begin{definition}[$u$-pseudo-homogeneous maps]\label{def:pseudohom}
Let $V$ be a vector space and let $\Lambda\subset T^*\mathcal{O}$ be an open set. We say that $f: \Lambda \to V$ is \emph{$u$-pseudo-homogeneous} of degree $d\in \N$ if there exist
\begin{itemize}
\item a finite set of indices $I$;
\item smooth functions $g_i,c_i:T^*\mathcal{O}\to \R$, $i\in I$ (defined on the whole $T^*\mathcal{O}$);
\end{itemize}
such that it holds
\begin{equation}\label{eq:ordergeqd}
f(x;u,v) = |u|^d \sum_{i\in I} c_i(x;u,v) g_i(x;u/|u|,v), \qquad \forall\, (x;u,v) \in \Lambda_{\neq 0}.
\end{equation}
\end{definition}
The following simple observation will be crucial.
\begin{lemma}\label{l:property}
If $f:\Lambda \to V$ is $u$-pseudo-homogeneous of degree $d$, then $\vec{H}(f)$ is also $u$-pseudo-homogeneous of degree $d$. In particular, if $\Lambda\subset T^*\mathcal{O}$ is relatively compact domain, $j\in \N$, there exists $C=C_{j,\Lambda}$ such that
\begin{equation}
\|\vec{H}^{(j)}(f)(\lambda)\| \leq C H(\lambda)^{d/2}, \qquad \forall\, \lambda \in \Lambda_{\neq 0},
\end{equation}
where $\|\cdot\|$ denotes a fixed arbitrary norm on $V$.
\end{lemma}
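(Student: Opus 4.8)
The plan is to prove Lemma~\ref{l:property} in two moves: first the closure of the class of $u$-pseudo-homogeneous maps under $\vec{H}$-derivation, and then the pointwise estimate as a direct consequence of that closure.

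\textbf{Step 1: closure under $\vec{H}$.} Suppose $f:\Lambda\to V$ is $u$-pseudo-homogeneous of degree $d$, written as in \eqref{eq:ordergeqd} with finite index set $I$, smooth $c_i,g_i$ on $T^*\mathcal{O}$. I would expand $\vec{H}(f)$ using the Leibniz rule on the product $|u|^d\cdot c_i(x;u,v)\cdot g_i(x;u/|u|,v)$. Here one uses the local formula $H=\tfrac12|u|^2$ on $T^*\mathcal{O}$, so $\vec{H}=\sum_{i=1}^k u_i\vec{u}_i$, together with the fundamental computations of Lemma~\ref{l:fundamentalcomputations}: $\vec{H}(u_\mu)$ and $\vec{H}(v_\mu)$ are polynomial in $h=(u,v)$ with smooth coefficients on $\mathcal{O}$ (from \eqref{eq:vecudot}--\eqref{eq:vecvdot} one reads off $\dot u_i = \langle\cdot\rangle$ in the $h$-variables). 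The three contributions are: (i) $\vec{H}(|u|^d) = d\,|u|^{d-2}\sum_i u_i\vec{H}(u_i)$; since $\vec{H}(u_i)=\dot u_i$ is a smooth function on $T^*\mathcal{O}$ that vanishes to first order only in the obvious way, $u_i\vec{H}(u_i)$ contributes a smooth factor, and $|u|^{d-2}\cdot(u_i\vec{H}(u_i)) = |u|^d\cdot\bigl(\tfrac{u_i}{|u|}\bigr)\cdot\bigl(\tfrac{\vec H(u_i)}{|u|}\bigr)$ — wait, rather I would absorb $|u|^{d-2}u_i\vec H(u_i) = |u|^{d}\cdot \tfrac{u_i}{|u|}\cdot\tfrac{\vec H(u_i)}{|u|}$, and note $\vec H(u_i)/|u|$ is itself $u$-homogeneous of degree $1$ hence a function of $(x;u/|u|,v)$ times $|u|$; combining, the whole term has the shape $|u|^d\sum c\cdot g(x;u/|u|,v)$. (ii) $|u|^d\vec{H}(c_i)g_i$: $\vec H(c_i)$ is a new smooth function on $T^*\mathcal{O}$, fitting the template with the same $g_i$. (iii) $|u|^d c_i\vec{H}\bigl(g_i(x;u/|u|,v)\bigr)$: expand by the chain rule; $\vec{H}$ applied to $x$ gives $X_\ell$-derivatives (smooth coefficients), $\vec H$ applied to $u_j/|u|$ gives $\vec H(u_j)/|u| - (u_j/|u|^3)\sum u_m\vec H(u_m)$, each summand being $|u|^0$ times a function of $(x;u/|u|,v)$ times possibly a smooth $T^*\mathcal{O}$-function, and $\vec H(v_\mu)$ is smooth on $T^*\mathcal{O}$. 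The point is that every resulting summand can be re-arranged into the normal form $|u|^d\,c'(x;u,v)\,g'(x;u/|u|,v)$ with $c'$ smooth on all of $T^*\mathcal{O}$ and $g'$ smooth on the set where $u\neq 0$, and the number of summands is finite. Hence $\vec H(f)$ is again $u$-pseudo-homogeneous of degree $d$. Iterating, $\vec H^{(j)}(f)$ is $u$-pseudo-homogeneous of degree $d$ for every $j\in\N$.

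\textbf{Step 2: the estimate.} Fix a relatively compact domain $\Lambda\subset T^*\mathcal{O}$ and $j\in\N$. By Step~1, $\vec H^{(j)}(f)$ has the form $|u|^d\sum_{i\in I'}c_i(x;u,v)g_i(x;u/|u|,v)$ on $\Lambda_{\neq 0}$, with $I'$ finite. On the closure $\overline\Lambda$ (compact) the smooth functions $c_i$ are bounded, and the functions $g_i$ are continuous on the compact set $\overline{\pi_1(\Lambda)}\times \mathbb S^{k-1}\times \overline{\pi_v(\Lambda)}$, hence bounded; here $u/|u|\in\mathbb S^{k-1}$. Therefore there is $C=C_{j,\Lambda}>0$ with $\|\vec H^{(j)}(f)(\lambda)\|\le C|u|^d$ for all $\lambda=(x;u,v)\in\Lambda_{\neq 0}$. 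Since $H(\lambda)=\tfrac12|u|^2$ on $T^*\mathcal{O}$, we have $|u|^d = (2H(\lambda))^{d/2}$, so up to adjusting the constant $\|\vec H^{(j)}(f)(\lambda)\|\le C\,H(\lambda)^{d/2}$ on $\Lambda_{\neq 0}$, as claimed; equivalence of all norms on the finite-dimensional $V$ makes the choice of norm immaterial.

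\textbf{Main obstacle.} The delicate point is Step~1, specifically checking that differentiating the $g_i(x;u/|u|,v)$ factor does not lower the homogeneity degree and does not introduce $|u|$-singularities worse than those already allowed. The bookkeeping is: $\vec H$ lowers one power of $|u|$ through $\vec H\sim u\cdot\vec u$, but each $\vec u$-derivative of a component of $u/|u|$ produces exactly one compensating negative power of $|u|$ in the Jacobian of $u\mapsto u/|u|$, so the net effect is degree-preserving. Making this cancellation transparent — rather than juggling explicit powers — is the crux; once the normal form is seen to be stable, both the closure and the estimate follow mechanically. I would present the normal-form computation carefully for one representative term (say term (iii)) and indicate that (i),(ii) are strictly easier, then invoke induction on $j$.
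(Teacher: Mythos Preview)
Your outline is correct, but you are working much harder than necessary because you overlook the decisive simplification that the paper exploits: since $H=\tfrac12|u|^2$, the Hamiltonian is conserved along its own flow, i.e.\ $\vec H(|u|)=0$ on $T^*\mathcal{O}_{\neq 0}$. This immediately annihilates your term (i), so there is nothing to verify there; and it means that in coordinates $(x,|u|,u/|u|,v)$ the field $\vec H$ has no radial component, which is exactly the ``degree-preserving'' statement you identify as the main obstacle. With this in hand the Leibniz expansion collapses to terms (ii) and (iii), both of which are mechanical, and the paper's proof is accordingly a two-line remark.

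A minor correction: your claim that $\vec H(u_i)/|u|$ is $u$-homogeneous of degree $1$ is not right. From $\vec H(u_i)=\sum_{j,B}u_j h_B c_{ji}^B$ one sees terms with $h_B=u_\ell$ (degree $2$ in $u$) and terms with $h_B=v_\nu$ (degree $1$ in $u$), so the quotient has mixed homogeneity. Your conclusion for term (i) still holds, either because one can absorb factors differently (write $|u|^{d-2}u_iu_j h_B c_{ji}^B=|u|^d\cdot(u_i/|u|)(u_j/|u|)\cdot h_B c_{ji}^B(x)$, with $c=h_Bc_{ji}^B$ smooth on $T^*\mathcal{O}$ and $g$ depending only on $u/|u|$), or, more to the point, because the whole term vanishes by antisymmetry of $c_{ji}^B$ in $j,i$. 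So your Step~2 goes through as written, and the proof is complete---but do lead with $\vec H(|u|)=0$.
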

\begin{proof}
Notice that $H=\tfrac{1}{2}|u|^2$ so that $\vec{H} = \sum_{i=1}^k u_i\vec{u}_i$, and it holds $\vec{H}(|u|)=0$ on $T^*\mathcal{O}_{\neq 0}$. In particular $\vec{H}$ is a derivation that acts only on $x,v$ and the spherical part of $u$. The statement follows easily from \eqref{eq:ordergeqd}.
\end{proof}
\begin{remark}
All $u$-homogeneous functions of degree $d$ are $u$-pseudo-homogeneous. In this case \eqref{eq:ordergeqd} is satisfied for $I=\{1\}$, $c_1=1$, and $g_1$ is any everywhere-smooth function such that $g_1(x;u,v) = f(x;u,v)$ for $u\in \mathbb{S}^{k-1}$.
\end{remark}

\subsection{Regularity of the canonical frame}

In the next series of lemmas we prove that one can choose in a smooth and locally bounded fashion a canonical frame as a function of the initial covector. The key feature is that the sets $\Lambda$ in the statements are $u$-star-shaped so that the estimates hold arbitrarily close to the singularity at $\distr^0$, corresponding to $2H=0$ or, equivalently, $u=0$.

Recall that, for a fat structure, all non-trivial normal extremals have the same Young diagram, with two columns. The elements of the frame are labeled according to the boxes of the (reduced) Young diagram:
\begin{equation}
\ytableausetup{centertableaux,nosmalltableaux}
Y = \begin{ytableau}
b & a \\
c\\
\end{ytableau} \qquad\qquad \begin{aligned}
\size(a) & = \size(b) = n-k, \\ \size(c) & =2k-n.
\end{aligned}
\end{equation}
We organize a canonical frame along an extremal $\lambda_t=e^{t\vec{H}}(\lambda)$ as a collection of tuples
\begin{equation}
\{E_a,E_b,E_c,F_a,F_b,F_c\},
\end{equation}
where, for $\boxdot=a,b,c$, each $E_{\boxdot}$, $F_{\boxdot}$ in this list is a tuple of smooth vector fields along $\lambda_t$, of size equal to the one of the corresponding superbox $\size(\boxdot)$.

\begin{remark}
In Lemmas \ref{l:lemmaEa}, \ref{l:lemmaEbFb}, $\Lambda$ is a neighborhood of any $\bar{\lambda}\in T^*M$. Furthermore, the $u$-star-shaped property of $\Lambda$ is a triviality since, as it is clear from the proof, $\Lambda$ can actually be any relatively bounded domain contained in a local trivialization $T^*\mathcal{O}$. Starting from Lemma \ref{l:lemmaEcFc} onward, however, $\bar{\lambda} \in T^*M_{\neq 0}$, and furthermore the $u$-star-shaped property is not trivial and must be proved in the construction.
\end{remark}

\begin{lemma}\label{l:lemmaEa}
For any $\bar{\lambda}\in T^*M$ there exist a $u$-star-shaped neighborhood $\Lambda \subset T^*M$ of $\bar{\lambda}$, $T>0$, and a smooth map $\P: [0,T]\times \Lambda_{\neq 0} \to \mathrm{GL}(\R^{n-k})$, such that for all $\lambda \in \Lambda_{\neq 0}$ the tuple
\begin{equation}
E_a|_{\lambda_t} = \frac{1}{\sqrt{2H(\lambda)}}\P(t,\lambda) \cdot\partial_v|_{\lambda_t}, \qquad t\in[0,T],
\end{equation}
is (a choice of) the $E_a$-component of the canonical frame along $\lambda_t = e^{t\vec{H}}(\lambda)$.

Furthermore, the map $\P$ and all its time-derivatives are uniformly bounded, that is for all $j\in \N$ there exists a constant $C=C_j>0$ such that
\begin{equation}
\|\partial_t^{j}\P(t,\lambda)\| \leq C ,\qquad \forall\, t \in [0,T],\; \lambda \in \Lambda_{\neq 0},
\end{equation}
where $\|\cdot\|$ denotes a fixed matrix norm.
\end{lemma}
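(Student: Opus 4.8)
The plan is to build the $E_a$-component explicitly out of the vertical frame $\partial_v$ and to show that the required normalization factor $\P(t,\lambda)$ solves a linear ODE with uniformly bounded (in fact $u$-pseudo-homogeneous of degree $0$) coefficients on a relatively compact $u$-star-shaped domain, whence the uniform bounds on $\P$ and its time derivatives follow from Gr\"onwall together with Lemma \ref{l:property}. Concretely, I would fix $\bar\lambda$, choose a local trivialization $T^*\mathcal{O}$ around $\pi(\bar\lambda)$ with the adapted frame of Section \ref{sec:preliminary}, and take $\Lambda$ to be a relatively compact domain of the form $\mathcal{O}'\times B^k\times B^{n-k}$ so that $\Lambda$ is automatically $u$-star-shaped and $\Lambda_{\neq 0}$ is the complement of $\{u=0\}$ in it. The time $T>0$ can be taken so that all relevant Hamiltonian flows stay inside a slightly larger relatively compact set, using completeness of $\vec H$.

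First I would recall, from the Agrachev–Zelenko–Li theory (Appendix \ref{a:canonicalframe}), that for a fat structure the superbox $a$ sits at the top of the second column, so the ``$F_a$'' directions of the canonical frame span, at $t=0$, precisely the vertical fibre $\ver$ (equivalently, span of $\partial_v$), while $E_a$ is obtained by a single $\vec H$-derivation of $F_a$ followed by Gram–Schmidt-type normalization against the canonical symplectic pairing $\sigma(E_a,F_a)=\mathbb 1$, $\sigma(E_a,E_a)=0$. Using equation \eqref{eq:partialvdot} of Lemma \ref{l:fundamentalcomputations}, $\dot\partial_v = \A\cdot\partial_u+\B\cdot\partial_v$, together with the symplectic relations \eqref{eq:sigmaframe}, one computes that the span of $\partial_v$ is $\vec H$-invariant modulo $\partial_u$-directions in the appropriate sense, and the structure equations of the canonical frame reduce, on the $a$-block, to a linear matrix ODE $\dot\P = \mathcal{M}(t,\lambda)\cdot\P$ for the change-of-basis matrix, with initial datum $\P(0,\lambda)=\mathbb 1$ (after fixing the residual $O(n-k)$ ambiguity). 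The factor $1/\sqrt{2H(\lambda)} = 1/|u|$ is forced by the requirement that $E_a$ have the correct homogeneity: the entries of $\A,\B$ are $u$-homogeneous of degree $1$, and $\sigma(\partial_v,\vec h_j)$-type pairings scale accordingly, so that after extracting $|u|^{-1}$ the matrix $\mathcal{M}(t,\lambda)$ becomes $u$-pseudo-homogeneous of degree $0$, hence smooth and uniformly bounded on $\Lambda_{\neq 0}$ by Lemma \ref{l:property} (applied with $d=0$).

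With the ODE $\dot\P=\mathcal{M}\cdot\P$, $\P(0,\cdot)=\mathbb 1$ in hand, the uniform bounds are routine: since $\|\mathcal{M}(t,\lambda)\|\le C_0$ uniformly on $[0,T]\times\Lambda_{\neq 0}$, Gr\"onwall gives $\|\P(t,\lambda)\|\le e^{C_0 T}$, and invertibility (so $\P\in\mathrm{GL}(\R^{n-k})$) follows from Liouville's formula $\det\P(t,\lambda)=\exp\int_0^t\tr\mathcal M$. Higher time-derivative bounds follow by differentiating the ODE: $\partial_t^{j}\P$ is a polynomial expression in $\partial_t^{\le j-1}\mathcal M$ and $\partial_t^{\le j-1}\P$, and each $\partial_t^{i}\mathcal M = \vec H^{(i)}(\mathcal M)$ is again $u$-pseudo-homogeneous of degree $0$, hence uniformly bounded by Lemma \ref{l:property}; an induction on $j$ then yields $\|\partial_t^{j}\P\|\le C_j$. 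The main obstacle I anticipate is purely bookkeeping rather than conceptual: one must carefully extract the correct powers of $|u|$ from the structure equations of the canonical frame so that $\mathcal{M}$ genuinely lands in degree $0$ (and not a negative degree, which would destroy the bound near $\distr^0$), and one must pin down the residual gauge freedom in the definition of the canonical frame on the $a$-block so that $\P$ is well-defined as a genuine smooth map into $\mathrm{GL}(\R^{n-k})$ — this is where the precise normalization conventions from Appendix \ref{a:canonicalframe} enter and must be tracked with care.
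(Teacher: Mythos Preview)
Your high-level strategy (reduce to a linear ODE with $u$-pseudo-homogeneous degree-$0$ coefficients, then apply Lemma \ref{l:property} and Gr\"onwall) matches the paper's, but the execution has a genuine gap: you treat the determination of $E_a$ as a single ODE, whereas in fact it involves an \emph{algebraic} normalization constraint together with an ODE, and the separation of the two is where the fat assumption enters.

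Concretely: writing $E_a = \theta(t)\cdot\partial_v$ (forced by ``$E_a$ and $\dot E_a$ vertical''), the normalization condition is $\sigma(\ddot E_a,\dot E_a)=\mathbb 1$, which via \eqref{eq:partialvdot}--\eqref{eq:vecudot} becomes the \emph{pointwise} constraint $\theta\A\A^*\theta^*=\mathbb 1$. In particular your initial datum $\P(0,\lambda)=\mathbb 1$ does not satisfy it unless $\A\A^*=|u|^2\mathbb 1$. The paper's key step is to use the fat assumption ($\A\A^*>0$ on $\{u\neq 0\}$) together with its degree-$2$ $u$-homogeneity to factor $\A\A^*=\S\S^*$ with $\S$ smooth and $u$-homogeneous of degree $1$. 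One then writes $\theta=\phi\,\S^{-1}$, so the constraint becomes $\phi\in\mathrm O(n-k)$, and the remaining Darboux condition $\sigma(\ddot E_a,\ddot E_a)=\mathbb 0$ gives a genuine ODE $\dot\phi=\phi\,\Xi$ with $\Xi$ skew and $u$-pseudo-homogeneous of degree $0$. This factorization is precisely what makes the ODE coefficients degree $0$ after extracting $|u|^{-1}$; without it, your claim that $\mathcal M$ is degree $0$ is unjustified. The final map is then $\P(t,\lambda)=\Phi(t,\lambda)\sqrt{2H(\lambda)}\,\S(\lambda_t)^{-1}$, not the solution of a single ODE with identity initial datum.

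A minor point: you have the roles of $E_a$ and $F_a$ reversed. It is $E_a$ (not $F_a$) that is vertical, and since $a$ lies in the second column one has $\dot E_a=E_b$ (also vertical), whence $E_a=\theta\cdot\partial_v$; the $F$-tuples are obtained by further differentiation.
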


\begin{proof}
Let $\bar{x}=\pi(\bar{\lambda})$ and let $\mathcal{O}\subseteq M$ be an open neighborhood of $\bar{x}$ equipped with a local orthonormal frame, as explained in Section \ref{sec:preliminary}, so that we have the trivialization
\begin{equation}
T^*\mathcal{O} = \mathcal{O} \times \R^k\times \R^{n-k}, \qquad \lambda = (x;u,v).
\end{equation}
Let $\Lambda\subset T^*\mathcal{O}$ be any $u$-star-shaped and relatively compact neighborhood of $\bar{\lambda}$. Let $\Lambda_t= e^{t\vec{H}}(\Lambda)$. Indeed, the union of all $\Lambda_t$, for $t\in [0,T]$, is relatively compact. Furthermore, $\pi(\Lambda_t)$ is contained in the union of all the sub-Riemannian balls with centers in the bounded set $\pi(\Lambda)$ and radius $t\rho$, where $\rho = \sup\{|u| \mid (x;u,v) \in \Lambda\}$. If $T$ is sufficiently small, then $\Lambda_t \subset T^*\mathcal{O}$ for all $t\leq T$. Hence, in the following, all extremals $\lambda_t$ with $\lambda_0=\lambda \in \Lambda$, are contained in a common compact subset of $T^*\mathcal{O}$.

Now fix $\lambda \in \Lambda_{\neq 0}$ and $t\in [0, T]$. The tuple $E_a$ is determined (up to a constant orthogonal transformation) by the following conditions along $\lambda_t$:
\begin{itemize}
\item[(i)] $E_a$ is vertical (i.e.\@ $\pi_* E_a =0$);
\item[(ii)] $\dot{E}_a$ is vertical;
\item[(iii)] normalization condition: $\mathbb{1}=\sigma(\ddot{E}_a,\dot{E}_a)$;
\item[(iv)] Darboux frame condition: $\mathbb{0}=\sigma(\ddot{E}_a,\ddot{E}_a)$.
\end{itemize}

By Lemma \ref{l:fundamentalcomputations}, points (i) and (ii), $E_a$ must have the form
\begin{equation}\label{eq:ansatzEa}
E_a|_{\lambda_t} = \theta(t) \cdot \partial_v|_{\lambda_t}, \qquad \forall\, t\in [0,T],
\end{equation}
for some smooth $t\mapsto \theta(t) \in \mathrm{GL}(\R^{n-k})$. 

Using Lemma \ref{l:fundamentalcomputations}, we obtain
\begin{equation}\label{eq:partialvddot}
\ddot{\partial}_v = (\dot{\A} +\A\C + \B\A) \cdot \partial_u + (\dot{\B} +\A\D + \B^2) \cdot \partial_v - \A \cdot \vec{u}.
\end{equation}
Here $\dot\A : T^*\mathcal{O} \to \mathrm{M}(n-k\times k)$ is the smooth map given by $\dot{\A}(\lambda) = (\vec{H}\A)(\lambda)$, where the action of $\vec{H}$ is meant component-wise. The notation is consistent  as $\dot{\A}(\lambda_t) = \frac{d}{dt} \A(\lambda_t)$. Similarly for $\dot\B$.

Using \eqref{eq:partialvddot} and \eqref{eq:partialvdot} in the ansatz \eqref{eq:ansatzEa}, we obtain:
\begin{align}
\dot{E}_a & = \theta \A \cdot \partial_u + (\dot\theta +\theta \B) \cdot \partial_v, \label{eq:Eadot} \\
\ddot{E}_a & = [2\dot\theta \A+ \theta (\dot{\A} +\A\C + \B\A)] \cdot \partial_u + [\ddot{\theta} +2\dot\theta \B +\theta (\dot{\B} +\A\D + \B^2)] \cdot \partial_v - \theta \A \cdot \vec{u}, \label{eq:Eaddot} 
\end{align}
where we omitted the explicit evaluation along $\lambda_t$.

We now impose (iii). It yields:
\begin{equation} \label{eq:conditioniii}
\mathbb{1}  = -\sigma(\dot{E}_a,\ddot{E}_a)= \sigma(\theta \A \cdot \partial_u,\theta \A \cdot\vec{u}) =(\theta \A) \sigma(\partial_u,\vec{u})(\theta \A)^* = \theta \A\A^*\theta^*,
\end{equation}
where we used \eqref{eq:sigmaframe} for $\sigma(\partial_u,\vec{u}) = \mathbb{1}$, and the star denotes the transpose. 

Notice that $\A\A^* : T^*\mathcal{O} \to \mathrm{M}(n-k, n-k)$ is given by
\begin{equation}\label{eq:AAstar}
\A\A^*(x;u,v)_{\mu\nu}=-\sum_{i,j,\ell=1}^{k} u_\ell u_j c_{\ell i}^\mu(x) c_{j i}^\nu(x), \qquad \mu,\nu=k+1,\dots,n.
\end{equation}
In particular $\A\A^*$ is $u$-homogeneous of degree $2$. The fat assumption means that $\A=\A(x;u,v)$ has non-trivial kernel if and only if $u=0$, or equivalently that $\A\A^*>0$ on $T^*\mathcal{O}_{\neq 0}$, and thus the restriction
\begin{equation}
\A\A^*|_{T^*\mathcal{O}_{\neq 0}}: T^*\mathcal{O}_{\neq 0} \to \mathrm{GL}(\R^{n-k}),
\end{equation}
is a smooth map taking values in the space of scalar products.

By the Gram-Schmidt process applied to the columns of $\A\A^*|_{T^*\mathcal{O}_{\neq 0}}$, and the fact that the latter is smooth, we deduce the existence of a smooth map
\begin{equation}
\S:  T^*\mathcal{O}_{\neq 0} \to \mathrm{GL}(\R^{n-k}),
\end{equation}
which is $u$-homogeneous of degree $1$, and such that
\begin{equation}
\A\A^*(x;u,v) = \S\S^*(x;u,v), \qquad \forall\, (x;u,v)\in T^*\mathcal{O}_{\neq 0}.
\end{equation}
Notice that $\A$ is a rectangular matrix, while $\S$ is an invertible square matrix.

We define then the smooth map $t\mapsto \phi(t) \in \mathrm{GL}(\R^{n-k})$ by
\begin{equation}
\phi(t) := \theta(t)\S(\lambda_t), \qquad t\in [0,T].
\end{equation}
Condition (iii) in \eqref{eq:conditioniii}, expressed in terms of $\phi$, is equivalent to
\begin{equation}
\phi\phi^* = \theta \S\S^*\theta^* = \theta \A\A^*\theta^* = \mathbb{1},
\end{equation}
omitting evaluation along $\lambda_t$. Since $\phi(t)$ is invertible we obtain that
\begin{equation}
\text{condition (iii)} \qquad \Leftrightarrow \qquad \phi(t) \in \mathrm{O}(\R^{n-k}), \qquad \forall\, t\in[0,T].
\end{equation}
So that we can refine \eqref{eq:ansatzEa} as:
\begin{equation}\label{eq:ansatzEarefined}
E_a|_{\lambda_t} = \phi(t) \S(\lambda_t)^{-1} \cdot \partial_v|_{\lambda_t},\qquad \forall\, t\in [0,T],
\end{equation}
for a smooth map $t\mapsto \phi(t)$ taking values now in $\mathrm{O}(\R^{n-k})$. 

We now impose condition (iv), namely $\mathbb{0} = \sigma(\ddot{E}_a,\ddot{E}_a)$. This yields an ODE which will characterize $t\mapsto \phi(t)$. Using \eqref{eq:Eaddot} and  \eqref{eq:sigmaframe}, we obtain
\begin{equation}
\mathbb{0} = [2\dot\theta \A+ \theta (\dot{\A} +\A\C + \B\A)] (\theta\A)^* - \theta\A [2\dot\theta \A+ \theta (\dot{\A} +\A\C + \B\A)]^*.
\end{equation}
We re-express this condition in terms of $\phi$. To do this, we use again the shorthand $\dot\S = \vec{H}(\mathbb{S}) : T^*\mathcal{O}_{\neq 0} \to \mathrm{GL}(\R^{n-k})$, so that $\dot{\S}(\lambda_t) = \frac{d}{dt}\S(\lambda_t)$, consistently with the previous notation. Omitting $t$ and evaluation along $\lambda_t$, and denoting with $\mathcal{A}(M) = \tfrac{1}{2}(M-M^*)$ the skew-symmetric part of a matrix $M$, we obtain the following ODE for $\phi$:
\begin{align}
2\dot{\phi} & =  \phi \S^{-1}\mathcal{A}\left(2\dot{\S}\S-(\dot{\A}+\A\C+\B\A)\A^* + \tfrac{1}{2}\A\sigma(\vec{u},\vec{u})\A^*  \right) \S^{-1},
\end{align}
where we used $\phi \in \mathrm{O}(\R^{n-k}) \Rightarrow \dot{\phi}\phi^* =- \phi\dot\phi^*$. We can rewrite the above ODE as $\dot{\phi}(t) = \phi(t) \Xi(\lambda_t)$, where the smooth map
\begin{equation}
\Xi : T^*\mathcal{O}_{\neq 0} \to \mathrm{M}(n-k\times n-k),
\end{equation}
takes values in skew-symmetric matrices and is defined by
\begin{equation}\label{eq:Xi}
\Xi := \frac{1}{2}\S^{-1}\mathcal{A}\left[2\dot{\S}\S-(\dot{\A}+\A\C+\B\A)\A^* + \tfrac{1}{2}\A\sigma(\vec{u},\vec{u})\A^*  \right] \S^{-1}.
\end{equation}
The terms $\A,\dot{\A},\B,\C,\D,\sigma(\vec{u},\vec{u})$ are smoothly defined on $T^*\mathcal{O}$, on the other hand $\mathbb{S}$ and $\mathbb{S}^{-1}$ are well-defined and smooth only on $T^*\mathcal{O}_{\neq 0}$, and they blow-up as $H(\lambda)\to 0$. 

We claim that $\Xi$ and all its $\vec{H}$-derivatives $\vec{H}^{(j)}(\Xi)$ remain uniformly bounded on $\Lambda_{\neq 0}$. To prove this claim, observe that $\A$, $\B$, $\C$, $\sigma(\vec{u},\vec{u})$, $\S$ appearing in \eqref{eq:Xi} are $u$-homogeneous of degree one, in particular they are $u$-pseudo-homogeneous of degree $1$. By Lemma \ref{l:property}, the $\vec{H}$-derivatives of these maps are $u$-pseudo-homogeneous of degree $1$. In particular this is the case for $\dot{\A}$ and $\dot{\S}$ appearing in \eqref{eq:Xi}. Taking into account the form \eqref{eq:ordergeqd} of $u$-pseudo-homogeneous maps all possible singularities at $u=0$ in \eqref{eq:Xi} cancel out, and $\Xi$ and its $\vec{H}$-derivatives are $u$-pseudo-homogeneous of degree $d=0$. Another application of Lemma \ref{l:property} proves the claim.

We choose $t  \mapsto \phi(t)$ in \eqref{eq:ansatzEarefined} as the solution of the Cauchy problem
\begin{equation}\label{eq:ODEphi}
\dot{\phi}(t) = \phi(t) \Xi(\lambda_t), \qquad \phi(0)=\mathbb{1}.
\end{equation}
Of course one might choose as the initial condition $\phi(0)$ any orthogonal matrix, and this freedom in the choice of the initial condition is expected as the canonical frame is uniquely defined only up to a constant orthogonal transformation.

By the construction of $\Lambda$ and $T$ all extremals $\lambda_t=e^{t\vec{H}}(\lambda)$, for $t\leq T$ and $\lambda \in \Lambda$, are contained in a common relatively compact subset of $T^*\mathcal{O}_{\neq 0}$. Furthermore, $\Xi$ and its $\vec{H}$-derivatives are uniformly bounded for all $t\in [0,T]$, $\lambda \in \Lambda_{\neq 0}$. In particular the solution to the ODE \eqref{eq:ODEphi} is well-defined on $[0,T]$. Define the map
\begin{equation}
\Phi : [0,T]\times \Lambda_{\neq 0} \to \mathrm{O}(\R^{n-k}),
\end{equation}
such that $\Phi(\cdot,\lambda)$ is the solution of \eqref{eq:ODEphi} with $\lambda_t=e^{t\vec{H}}(\lambda)$. Since $\Xi$ is skew-symmetric then $\Phi$ is orthogonal for $t\in [0,T]$.

Thanks to the aforementioned claim of uniform boundedness property of $\Xi$, we can apply Gronwall's lemma to \eqref{eq:ODEphi}, and we obtain that for any $j\in \N$ there exists $C = C(j,\Lambda,T)>0$ such that
\begin{equation}\label{eq:boundPhi}
\|\partial_t^{j}\Phi(t,\lambda)\| \leq C, \qquad \forall\, t\in [0,T],\; \lambda \in \Lambda_{\neq 0}.
\end{equation}

In order to recover the statement of the lemma, we set
\begin{equation}
\P(t,\lambda) := \Phi(t,\lambda) \sqrt{2H(\lambda)} \S(\lambda_t)^{-1}, \qquad \lambda_t = e^{t\vec{H}}(\lambda),\qquad \forall\, (t,\lambda)\in [0,1]\times \Lambda_{\neq 0}.
\end{equation}
By construction, the map
\begin{equation}
(t,\lambda)\mapsto E_a|_{\lambda_t} = \frac{1}{\sqrt{2H(\lambda)}} \P(t,\lambda) \cdot \partial_v|_{\lambda_t},
\end{equation}
is smooth for $(t,\lambda) \in [0,T]\times \Lambda_{\neq 0}$, and $E_a|_{\lambda_t}$ is the $E_a$-component of a canonical frame along the extremal $\lambda_t$ with initial covector $\lambda$.

It remains to prove that the map $\P$ and its time-derivatives $\partial_t^j\P$ are uniformly bounded as required on $[0,T]\times \Lambda_{\neq 0}$. Notice that $\Phi(t,\lambda)$ and its time-derivatives already satisfy the required property by \eqref{eq:boundPhi}, while $\lambda \mapsto \sqrt{2H(\lambda)}\S(\lambda)^{-1}$ is $u$-homogeneous of degree $0$ and smooth on $T^*\mathcal{O}_{\neq 0}$, so that we can apply Lemma \ref{l:property} for $d=0$, so that also $\sqrt{2H(\lambda_t)}\S(\lambda_t)^{-1}$ and all its time-derivatives are uniformly bounded as required.
\end{proof}

Lemma \ref{l:lemmaEa} for the $E_a$-component of the frame is the starting point for analogous statement for all the other tuples.

\begin{lemma}\label{l:lemmaEbFb}
For any $\bar{\lambda}\in T^*M$ there exist a $u$-star-shaped neighborhood $\Lambda \subset T^*M$ of $\bar{\lambda}$, $T>0$, and smooth maps $\Q_i: [0,T]\times  \Lambda_{\neq 0} \to \mathrm{M}(n_i,m_i)$, with $n_i,m_i \in \N$, $i=1,\dots,5$, such that for all $\lambda \in \Lambda_{\neq 0}$ the tuples
\begin{align}
E_b|_{\lambda_t} & = \Q_1(t,\lambda)\cdot \partial_u|_{\lambda_t}  + \frac{1}{\sqrt{2H(\lambda)}}\Q_2(t,\lambda) \cdot \partial_v|_{\lambda_t} , & t\in [0,T],\\
F_b|_{\lambda_t} & = \Q_3(t,\lambda)\cdot \vec{u} |_{\lambda_t} + \Q_4(t,\lambda)\cdot\partial_u|_{\lambda_t}  + \frac{1}{\sqrt{2H(\lambda)}}\Q_5(t,\lambda)\cdot\partial_v|_{\lambda_t} ,  & t\in [0,T],
\end{align}
are (a choice of) the $E_b$ and $F_b$-components of the canonical frame along $\lambda_t = e^{t\vec{H}}(\lambda)$.

Furthermore, the maps $\Q_i$ and all their time-derivatives are bounded, that is for all $j\in \N$ there exists a constant $C = C_j>0$ such that
\begin{equation}
\|\partial_t^{j}\Q_i(t,\lambda)\| \leq C ,\qquad \forall\, t \in [0,T],\; \lambda \in \Lambda_{\neq 0},\; i=1,\dots,5,
\end{equation}
where $\|\cdot\|$ denotes a fixed matrix norm.
\end{lemma}
\begin{proof}
Let $\Lambda$ and $T>0$ as in Lemma \ref{l:lemmaEa} (cf.\@ first paragraph of its proof). Let $\lambda \in \Lambda_{\neq 0}$. By the structural equations $E_b|_{\lambda_t} = \dot{E}_a|_{\lambda_t}$. Using Lemma \ref{l:fundamentalcomputations}, we obtain
\begin{equation}\label{eq:Ebexplicit}
E_b|_{\lambda_t}  = \frac{1}{\sqrt{2H}} \left[ \P(t,\lambda)\A(\lambda_t) \cdot \partial_u|_{\lambda_t} +  (\dot{\P}(t,\lambda)+ \P(t,\lambda) \B(\lambda_t) )\cdot \partial_v|_{\lambda_t}\right],
\end{equation}
where $\dot{\P}(t,\lambda) =\partial_t \P(t,\lambda)$. We set hence:
\begin{equation}
 \Q_1(t,\lambda):= \frac{1}{\sqrt{2H(\lambda)}}\P(t,\lambda)\A(\lambda_t), \qquad \Q_2(t,\lambda):=\dot{\P}(t,\lambda) + \P(t,\lambda)\B(\lambda_t).
\end{equation}
Notice that $\P$ and all its time-derivatives are uniformly bounded by the estimate in Lemma \ref{l:lemmaEa}. Since $\A$ and $\B$ are $u$-pseudo-homogeneous of degree $1$, an application of Lemma \ref{l:property} yields the required bounds on $\Q_1$ and $\Q_2$ and their time-derivatives.

Likewise, $F_b|_{\lambda_t} = - \dot{E}_b|_{\lambda_t}$, so that, using Lemma \ref{l:fundamentalcomputations}, we obtain:
\begin{multline}
F_b = \frac{1}{\sqrt{2H}} \left[ 
\P\A \cdot \vec{u}
- (\dot{\P} \A + \P\dot{\A}+\P\A\C + \P\B\A)\cdot \partial_u \right. \\
\left. - (\ddot{\P} + 2 \dot{\P} \B + \P\dot{\B} + \P\A\D + \P\B^2)\cdot\partial_v
\right],
\end{multline}
omitting evaluation along $\lambda_t$ from $\A$, $\B$, $\D$, and evaluation at $(t,\lambda)$ for $\P$. To obtain the statement for $F_b$ we set
\begin{align}
\Q_3 & :=  \frac{1}{\sqrt{2H}} \P\A, \\
\Q_4 & := -\frac{1}{\sqrt{2H}} (\dot{\P} \A + \P\dot{\A}+\P\A\C + \P\B\A), \\
\Q_5 & := -(\ddot{\P} + 2 \dot{\P} \B + \P\dot{\B} + \P\A\D + \P\B^2),
\end{align}
and argue as above using Lemma \ref{l:property} to prove the required estimates on the $\Q_i$'s.
\end{proof}

\begin{lemma}\label{l:lemmaEcFc}
For any $\bar{\lambda}\in T^*M_{\neq 0}$ there exist a $u$-star-shaped neighborhood $\Lambda \subset T^*M$ of $\bar{\lambda}$, $T>0$, and smooth maps $\Q_i: [0,T]\times  \Lambda_{\neq 0} \to \mathrm{M}(n_i,m_i)$, with $n_i,m_i\in \N$, $i=6,\dots,10$, such that for all $\lambda \in \Lambda_{\neq 0}$ the tuples
\begin{align}
E_c|_{\lambda_t} & = \Q_6(t,\lambda) \cdot \partial_u|_{\lambda_t}  + \frac{1}{\sqrt{2H(\lambda)}}\Q_7(t,\lambda)\cdot \partial_v|_{\lambda_t} , & t\in [0,T],\\
F_c|_{\lambda_t} & =  \Q_8(t,\lambda) \cdot \vec{u}|_{\lambda_t}  + \Q_9(t,\lambda) \cdot \partial_u|_{\lambda_t}  + \frac{1}{\sqrt{2H(\lambda)}}\Q_{10}(t,\lambda)\cdot \partial_v|_{\lambda_t} , & t\in [0,T],
\end{align}
are (a choice of) the $E_c$ and $F_c$-components of the canonical frame along $\lambda_t = e^{t\vec{H}}(\lambda)$.

Furthermore, the maps $\Q_i$ and all their time-derivatives are bounded, that is for all $j\in \N$ there exists a constant $C = C_j>0$ such that
\begin{equation}
\|\partial_t^{j}\Q_i(t,\lambda)\| \leq C ,\qquad \forall\, t \in [0,T],\; \lambda \in \Lambda_{\neq 0},\; i=6,\dots,10,
\end{equation}
where $\|\cdot\|$ denotes a fixed matrix norm.
\end{lemma}
\begin{proof}
Let $\Lambda$ and $T>0$ as in Lemma \ref{l:lemmaEa} (cf.\@ first paragraph of its proof). Let $\lambda_t = e^{t\vec{H}}(\lambda)$, for $\lambda \in \Lambda_{\neq 0}$ and $t\in [0,T]$. The $(2k-n)$-tuple $E_c|_{\lambda_t}$ is determined, up to a constant orthogonal transformation, by the following conditions along $\lambda_t$:
\begin{itemize}
\item[(i)] $E_c$ is vertical (i.e.\@ $\pi_* E_c =0$);
\item[(ii)] Darboux frame conditions: $\mathbb{0}=\sigma(\dot{E}_b,E_c)$ and $\mathbb{0}=\sigma(\ddot{E}_b,E_c)$;
\item[(iii)] normalization condition: $\mathbb{1} = \sigma(\dot{E}_c,E_c)$;
\item[(iv)] isotropic condition: $\mathbb{0}=\sigma(\dot{E}_c,\dot{E}_c)$.
\end{itemize}
We observe that if $\bar{E}_c$ is a tuple which verifies (i), (ii) and (iii) along $\lambda_t$, then all these conditions will be also verified by
\begin{equation}
E_c|_{\lambda_t} : = \psi(t)\cdot \bar{E}_c|_{\lambda_t}, \qquad t\in [0,T],
\end{equation}
for any smooth map $t\mapsto \psi(t) \in \mathrm{O}(\R^{2k-n})$. Then, we find first smooth tuple $\bar{E}_c$ such that (i), (ii) and (iii) are verified, and subsequently we will fix $\psi$ via (iv). 

By (i), we must have
\begin{equation}\label{eq:ansatzEc}
\bar{E}_c|_{\lambda_t}= U(t)\cdot\partial_u|_{\lambda_t} + V(t)\cdot \partial_v|_{\lambda_t}, \qquad t\in [0,T],
\end{equation}
for some smooth $t\mapsto U(t) \in \mathrm{M}(2k-n,k)$ and $t\mapsto V(t)\in \mathrm{M}(2k-n,n-k)$. From this, and Lemma \ref{l:fundamentalcomputations}, we obtain:
\begin{align}
\dot{\bar{E}}_c &  = -U\cdot \vec{u}+(\dot{U} + U\C + V\A)\cdot \partial_u + (\dot{V} + U\D + V\B)\cdot\partial_v, \\
\ddot{\bar{E}}_c & =  -(2\dot{U} + U\E +  U\C + V\A)\cdot \vec{u} +U\A^* \cdot \vec{v} + \mathrm{span}\{\partial_u,\partial_v\}, 
\end{align}
Similarly from \eqref{eq:Ebexplicit} in the proof of Lemma \ref{l:lemmaEbFb}, and Lemma \ref{l:fundamentalcomputations} we obtain:
\begin{align}
\dot{E}_b & = \frac{1}{\sqrt{2H}}\P\A \cdot \vec{u} + \mathrm{span}\{\partial_u,\partial_v\},\\
\ddot{E}_b & = \mathbb{T}\cdot\vec{u} - \frac{1}{\sqrt{2H}} \P\A\A^* \cdot\vec{v} + \mathrm{span}\{\partial_u,\partial_v\},
\end{align}
where, for brevity of notation, we set $\mathbb{T} = \mathbb{T}(t,\lambda)$, as
\begin{equation}\label{eq:defT}
\mathbb{T}(t,\lambda):= - \frac{1}{\sqrt{2H}}\Big[2(\partial_t\P)\A + 3\P\dot{\A}+\P(\A\E+\A\C+\B\A) \Big],
\end{equation}
where the matrix maps $\A$, $\B$, $\C$, $\E$ in the r.h.s.\@ are evaluated along $\lambda_t = e^{t\vec{H}}(\lambda)$, $\P = \P(t,\lambda)$, for $\lambda \in \Lambda_{\neq 0}$ and $t\in [0,T]$. Notice that $\A$ is $u$-pseudo-homogeneous of degree $1$, together with $\dot{\A}$, by Lemma \ref{l:property}. Therefore we obtain that $\mathbb{T}$ is uniformly bounded with all its time-derivatives, that is for all $j\in \N$ there exists $C = C_j$ such that
\begin{equation}
\|\partial_t^{j}\mathbb{T}(t,\lambda)\|\leq C_j,\qquad \forall\, t\in [0,T],\; \lambda \in \Lambda_{\neq 0}.
\end{equation}

Using relations \eqref{eq:sigmaframe}, and the fact that $\P$ and $\A\A^*$ are invertible on $\Lambda_{\neq 0}$ (cf.\@ proof of Lemma \ref{l:lemmaEa}), conditions (ii) are equivalent to:
\begin{equation}\label{eq:pairofeqs}
\A U^* = 0, \qquad \text{and}\qquad V^* = \sqrt{2H}(\P\A\A^*)^{-1}\mathbb{T}U^*,
\end{equation}
along $\lambda_t$ and where $\P = \P(t,\lambda)$ and $\mathbb{T} = \mathbb{T}(t,\lambda)$.
The second equation of \eqref{eq:pairofeqs} determines $V(t)$ in terms of $U(t)$. The first one is an orthogonality relation between the rows of $\A(\lambda_t)$ and those of $U(t)$. Recall from Lemma \ref{l:fundamentalcomputations} that $\A : T^*\mathcal{O} \to \mathrm{M}(n-k,k)$ has constant rank equal to $n-k$ on $T^*\mathcal{O}_{\neq 0}$, while $U: [0,T]\to \mathrm{M}(2k-n,k)$ must have rank $2k-n$. By applying the Gram-Schmidt process to a local orthogonal complement to the rows of $\A(\bar{\lambda})$ we can find a neighborhood $\Lambda' \subset T^*\mathcal{O}$ of $\bar{\lambda}$ and a smooth map
\begin{equation}
\A^\perp : \Lambda'_{\neq 0} \to \mathrm{M}(2k-n,k).
\end{equation}
such that 
\begin{equation}\label{eq:propertiesA}
\A^\perp \A^* = \mathbb{0}, \qquad \A^\perp \A^{\perp *} = |u|^2 \mathbb{1}.
\end{equation}
Furthermore, since $\A$ is $u$-homogeneous of degree one, we can take $\A^\perp$ to be also $u$-homogeneous of degree $1$ and $\Lambda'$ to be $u$-star-shaped. The $u$-star-shaped neighborhood $\Lambda$ of $\bar{\lambda}$ built at the beginning of the proof of Lemma \ref{l:lemmaEa} was arbitrary. Up to restriction of $\Lambda$ to a smaller $u$-star-shaped set, and up to taking a smaller $T$ (and since the flow of $\vec{H}$ is smooth and preserves $|u|$), we can assume that for all $\lambda \in \Lambda_{\neq 0}$ and $t\in [0,T]$, the extremals $\lambda_t = e^{t\vec{H}}(\lambda)$ take value in a common bounded neighborhood contained in $\Lambda'_{\neq 0}$, so that $\A^\perp(\lambda_t)$ is well-defined. We then set:
\begin{align}\label{eq:UandV}
U(t) := \frac{1}{\sqrt{2H}}\A^{\perp}, \qquad V(t)  :=-\A^{\perp}\mathbb{T}^*(\P\A\A^*)^{-1*},
\end{align}
where the matrix maps $\A$, $\A^\perp$ in the r.h.s.\@ are evaluated along $\lambda_t = e^{t\vec{H}}(\lambda)$ and $\P = \P(t,\lambda)$, $\mathbb{T}=\mathbb{T}(t,\lambda)$ for $\lambda \in \Lambda_{\neq 0}$ and $t\in [0,T]$. Since $\A$ and $\A^\perp$ are $u$-homogeneous of degree $1$, $\P(t,\lambda)$ and $\mathbb{T}(t,\lambda)$ are uniformly bounded with all time-derivatives and $\lambda \in \Lambda_{\neq 0}$, and using Lemma \ref{l:property}, we observe that for all $j\in \N$ there exists a $C=C_j>0$ independent on $\lambda$ such that
\begin{equation}\label{eq:estimatesUV}
\|\partial_t^j U(t)\| \leq C, \qquad \|\partial_t V(t)\| \leq C H(\lambda)^{-1/2}, \qquad \forall\, t\in [0,T],\; \lambda \in \Lambda_{\neq 0}.
\end{equation}
Estimates \eqref{eq:estimatesUV} will be used later.

With the definition \eqref{eq:UandV}, conditions (i) and (ii) are fully verified, while (iii) corresponds to the following normalization:
\begin{equation}
\mathbb{1} = UU^* = \frac{1}{2H}\A^{\perp} \A^{\perp *},
\end{equation}
along $\lambda_t$, which is verified by our choice in the normalization of $\A^\perp$ in \eqref{eq:propertiesA}. This concludes the construction of the tuple $\bar{E}_c$.

Set hence $E_c|_{\lambda_t} = \psi(t)\cdot \bar{E}_c|_{\lambda_t}$, for $\psi(t) \in \mathrm{O}(\R^{2k-n})$. Condition (iv) yields an ODE that determines $\psi$, once the initial condition is fixed, for example $\phi(0) = \mathbb{1}$. We obtain
\begin{equation}\label{eq:ODEpsi}
\dot{\psi}(t) = \frac{1}{2}\psi(t)\sigma(\dot{\bar{E}}_c|_{\lambda_t},\dot{\bar{E}}_c|_{\lambda_t}), \qquad \psi(0)=\mathbb{1}.
\end{equation}
Since $(t,\lambda)\mapsto \sigma(\bar{E}_c|_{\lambda_t},\bar{E}_c|_{\lambda_t})$ is skew-symmetric and smooth, and $\mathrm{O}(\R^{2k-n})$ is compact, \eqref{eq:ODEpsi} will have a unique solution in $\mathrm{O}(\R^{2k-n})$ for all $t\in [0,T]$, and for any $\lambda \in \Lambda_{\neq 0}$. We define then the smooth map
\begin{equation}
\Psi : [0,T]\times \Lambda_{\neq 0} \to \mathrm{O}(\R^{2k-n}),
\end{equation}
such that $t\mapsto \Psi(t,\lambda)$ is the solution of \eqref{eq:ODEpsi} on $[0,T]$ corresponding to $\lambda_t = e^{t\vec{H}}(\lambda)$.

The matrix in the r.h.s.\@ of \eqref{eq:ODEpsi} is given more explicitly by\label{L: check this}
\begin{align}
\sigma(\dot{\bar{E}}_c|_{\lambda_t},\dot{\bar{E}}_c|_{\lambda_t}) & = U\sigma(\vec{u},\vec{u}) U^* -2\mathcal{A}\left[(\dot{U}+U\C+V\A)U^*\right].
\end{align}
where $\mathcal{A}(M)=\tfrac{1}{2}(M-M^*)$ denotes the skew-symmetric part of a matrix $M$, and $U$, $V$ are defined in \eqref{eq:UandV}. By the uniform estimates for $U(t)$ and $V(t)$ in \eqref{eq:estimatesUV}, and since $\A$ is $u$-homogeneous of degree $1$, we see that $\sigma(\dot{\bar{E}}_c|_{\lambda_t},\dot{\bar{E}}_c|_{\lambda_t})$ and all its time-derivatives remain uniformly bounded for all extremals $\lambda_t =e^{t\vec{H}}(\lambda)$ for $t\in [0,T]$ and $\lambda \in \Lambda_{\neq 0}$. By Gronwall's lemma we deduce an analogous uniform boundedness property of $\Psi$ and its time-derivatives, that is for all $j\in \N$ there exists $C=C_j >0$ such that
\begin{equation}
\|\partial_t^{j}\Psi(t,\lambda)\| \leq C, \qquad \forall\, t\in [0,T]\;  \lambda \in \Lambda_{\neq 0}.
\end{equation}

Finally, to obtain the statement of the lemma, we set then
\begin{align}
\Q_6(t,\lambda) & := \frac{1}{\sqrt{2H(\lambda)}}\Psi(t,\lambda) \A^\perp(\lambda_t), \\
\Q_7(t,\lambda) & :=  -\sqrt{2H(\lambda)}\Psi(t,\lambda)\A^\perp(\lambda_t)\mathbb{T}^*(t,\lambda)\left[\P(t,\lambda)\A(\lambda_t)\A(\lambda_t)^*\right]^{-1*}.
\end{align}
Using the fact that $\A^\perp$ is $u$-homogeneous of degree one, that $\Psi,\mathbb{T},\P$ are uniformly bounded on $[0,T]\times \Lambda_{\neq 0}$ with all their time-derivatives, and applying Lemma \ref{l:property}, we obtain the desired uniform bounds on $\Q_6,\Q_7$.

To prove the analogous statement for the tuple $F_c$, we recall that by the structural equations
$F_c|_{\lambda_t} = -\dot{E}_c|_{\lambda_t}$. Then, making use of the Leibniz rule, we obtain the corresponding statement of the lemma, for suitably defined matrix-valued maps $\Q_8,\Q_9,\Q_{10}$ defined on $[0,T]\times \Lambda_{\neq 0}$, satisfying the required uniform bounds.
\end{proof}

We recall first, that for a general reduced Young diagram $Y$ and for two superboxes $\boxtimes,\boxplus$ of $Y$, the corresponding curvature map can be computed as
\begin{equation}
R_t(\boxtimes,\boxplus) = \sigma(\dot{F}_{\boxtimes}|_{\lambda_t},F_{\boxplus}|_{\lambda_t}).
\end{equation}
Hence a first series of estimates for some of the curvature maps comes as a consequence of the previous ones for the canonical frame.

\begin{remark}\label{rmk:hilbert-schmidt}
Notice that $R_{t}(\boxtimes,\boxplus)$ is a one-parameter family of $\size(\boxtimes)\times \size(\boxplus)$ matrices representing the canonical curvature operator for the given choice of the canonical frame along $\lambda_t$. By the uniqueness part of Theorem \ref{t:ZeLi}, and since the Hilbert-Schmidt norm $\|M\|^2=\tr(MM^*)$ is invariant under this action of the orthogonal group, the quantities $\|\partial_t^j R_t(\boxtimes,\boxplus)\|$ do not depend on choice of the canonical frame and are thus well-defined estimates for the corresponding canonical curvature operator $\mathfrak{R}_{\lambda_t}^{\boxtimes\boxplus}$ and their time-derivatives.
\end{remark}

\begin{lemma}[Curvature estimates I]\label{l:lemmaRfirst}
For any $\bar{\lambda} \in T^*M_{\neq 0}$ there exist a $u$-star-shaped neighborhood $\Lambda \subset T^*M$ of $\bar{\lambda}$ and $T>0$ such that, for all $j\in \N$, there exists a constant $C=C_j>0$ such that for the canonical curvature map $R_{t}$ along the extremal $\lambda_t = e^{t\vec{H}}(\lambda)$ it holds:
\begin{equation}
\|\partial_t^j R_{t}(\boxtimes,\boxplus)\| \leq C, \qquad \forall\, t\in [0,T],\, \lambda \in \Lambda_{\neq 0},
\end{equation}
where $\|\cdot\|$ denotes the Hilbert-Schmidt norm, and $\boxtimes,\boxplus = a,b,c$, with the exclusion of the pairs $(\boxtimes,\boxplus)=(a,a)$, $(a,c)$ and $(c,a)$.
\end{lemma}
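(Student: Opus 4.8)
The plan is to leverage the explicit formulas for the canonical frame established in Lemmas \ref{l:lemmaEa}, \ref{l:lemmaEbFb} and \ref{l:lemmaEcFc}, together with the fundamental computations of Lemma \ref{l:fundamentalcomputations}, to express each relevant curvature map $R_t(\boxtimes,\boxplus) = \sigma(\dot F_{\boxtimes}|_{\lambda_t}, F_{\boxplus}|_{\lambda_t})$ as a contraction of the matrix-valued maps $\P, \Q_1,\dots,\Q_{10}$ with the symplectic form in the reference frame $\{\vec u,\vec v,\partial_u,\partial_v\}$ (whose pairings are the smooth, $u$-pseudo-homogeneous expressions recorded in \eqref{eq:sigmaframe}). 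Concretely, first I would fix $\bar\lambda\in T^*M_{\neq 0}$, take $\Lambda$ and $T>0$ small enough that the hypotheses of all three previous lemmas hold simultaneously (intersecting the $u$-star-shaped neighborhoods and shrinking $T$), and recall the structural identities $F_{\boxtimes}=-\dot E_{\boxtimes}$ for $\boxtimes=a,b,c$ together with $E_b=\dot E_a$. This gives $\dot F_{\boxtimes}=-\ddot E_{\boxtimes}$, which by the Leibniz rule and Lemma \ref{l:fundamentalcomputations} is again a finite linear combination of $\vec u,\vec v,\partial_u,\partial_v$ with coefficients built from $\P,\Q_i$, their time-derivatives (all uniformly bounded by the cited lemmas), and the matrix maps $\A,\B,\C,\D,\E,\F,\G,\H,\I,\L$ and their $\vec H$-derivatives, which are $u$-pseudo-homogeneous of degrees $1$ or $2$.

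The key computational step is then to pair $\dot F_{\boxtimes}$ with $F_{\boxplus}$ using $\sigma$. The only pairings in \eqref{eq:sigmaframe} that are not automatically smooth-and-bounded are $\sigma(\vec h_i,\vec h_j)=\sum_\mu h_\mu c_{ij}^\mu$, which is $u$-pseudo-homogeneous of degree $1$; all the rest are either $0$ or $\delta_{ij}$. The crucial bookkeeping observation is the pattern of $1/\sqrt{2H}$ factors: in Lemmas \ref{l:lemmaEbFb} and \ref{l:lemmaEcFc} the $\partial_v$-components carry an extra $1/\sqrt{2H}$, while the $\vec u,\partial_u$-components do not, and the $E_b,E_c$ tuples carry one overall $1/\sqrt{2H}$ (coming from $E_b=\dot E_a$ and $E_a\propto 1/\sqrt{2H}$). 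One must check that in each admissible pair $(\boxtimes,\boxplus)$ with $(\boxtimes,\boxplus)\notin\{(a,a),(a,c),(c,a)\}$, the negative powers of $\sqrt{2H}$ coming from the frame components are exactly compensated by positive powers of $H$ coming from: (i) the $u$-pseudo-homogeneity (degree $\geq 1$) of the structure coefficients that appear, (ii) the factor $\sqrt{2H}$ sitting in front of $E_a$ itself, and (iii) the identity $\sigma(\partial_u,\vec u)=\mathbb 1$ which pairs a ``vertical'' object with a ``horizontal'' one without introducing any $H$ power but also without introducing the dangerous $\sigma(\vec h,\vec h)$ term. Once every term is shown to be $u$-pseudo-homogeneous of degree $0$ on $\Lambda_{\neq 0}$, Lemma \ref{l:property} immediately gives the uniform bound $\|\partial_t^j R_t(\boxtimes,\boxplus)\|\leq C_j$ on $[0,T]\times\Lambda_{\neq 0}$; the Hilbert–Schmidt norm is the natural choice here because, as noted in Remark \ref{rmk:hilbert-schmidt}, it is invariant under the residual orthogonal gauge freedom and hence the estimate is well-posed.

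The main obstacle I anticipate is precisely the power-counting of $1/\sqrt{2H}$ for the pairs that do appear versus those that are excluded — it is exactly for $(a,a)$, $(a,c)$, $(c,a)$ (which involve pairing two objects both carrying the unmitigated $1/\sqrt{2H}$ from their $\partial_v$-slots, with no compensating $u$-homogeneous structure coefficient, since $\sigma(\partial_v,\partial_v)=0$ forces the surviving contribution to come from the lower-order $\partial_u$ or $\vec u$ slots in a way that leaves a residual $1/\sqrt{2H}$) that the argument breaks down, and these are deferred to a subsequent lemma (presumably the announced Theorem \ref{t:fatcurvbound}, where a finer analysis shows the curvature — as opposed to the frame — is nonetheless regular). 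So the proof here must carefully verify that the listed exclusions are exactly the right ones, i.e. that for all \emph{other} pairs the cancellation genuinely occurs. I would organize this as a short case analysis over the finitely many admissible pairs $\{(a,b),(b,a),(b,b),(b,c),(c,b),(c,c)\}$, writing out for each the explicit $\sigma$-contraction from the formulas of Lemmas \ref{l:lemmaEa}–\ref{l:lemmaEcFc}, tracking the $H$-degree, and invoking Lemma \ref{l:property} for $d=0$. The remaining steps — that the finitely many constants can be taken uniform in $j$ up to any fixed order, and that shrinking $\Lambda,T$ preserves all the needed properties — are routine and follow the template already used in the proofs of the preceding lemmas.
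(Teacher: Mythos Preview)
Your overall strategy for the pairs $(b,b)$, $(b,c)$, $(c,b)$, $(c,c)$ matches the paper's: write $R_t(\boxtimes,\boxplus)=\sigma(\dot F_{\boxtimes},F_{\boxplus})$ using the explicit expressions for $F_b,F_c$ from Lemmas~\ref{l:lemmaEbFb}--\ref{l:lemmaEcFc}, split into a regular part and a possibly singular part carrying $1/\sqrt{2H}$, and check that the latter always pairs a $\partial_v$-component against a $\vec u$-component through $\sigma(\dot\partial_v,\vec u)=\A$ or $\sigma(\dot{\vec u},\partial_v)=-\A^*$, which are $u$-homogeneous of degree~$1$ and cancel the singularity.

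However, there is a genuine gap in your treatment of the pairs $(a,b)$ and $(b,a)$. The structural identity $F_{\boxtimes}=-\dot E_{\boxtimes}$ holds only for superboxes in the \emph{first column} of $Y$, i.e.\ for $\boxtimes=b,c$; for the superbox $a$ (second column) the structural equation is $\dot E_a=E_b$, and $F_a$ is determined only through \eqref{eq:formulaFa}, which already involves $R(b,a)$, $R(b,b)$, $R(b,c)$. So at this point of the argument $F_a$ is simply not available, and computing $R(b,a)=\sigma(\dot F_b,F_a)$ directly is circular. The paper breaks this circle by a trick you are missing: the \emph{normal condition} for a two-column diagram forces $R(b,a)=-R(b,a)^*$, and combining this with the Darboux relations one obtains
\[
R(b,a)=\sigma(\dot F_b,F_a)=-\sigma(\dot F_b,\dot F_b)+R(b,a)^*=-\sigma(\dot F_b,\dot F_b)-R(b,a),
\]
so that $R(b,a)=-\tfrac12\,\sigma(\dot F_b,\dot F_b)$. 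This expresses $R(b,a)$ entirely in terms of $F_b$ and its $\vec H$-derivative, after which the same power-counting argument applies. Without this step your case analysis for $(a,b),(b,a)$ cannot start.
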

\begin{proof}
Let $T>0$ and $\Lambda$ as in all previous lemmas (\ref{l:lemmaEa}, \ref{l:lemmaEbFb}, \ref{l:lemmaEcFc}). Take $\lambda \in \Lambda_{\neq 0}$, $t\in [0,T]$. We routinely omit the evaluation along the extremal $\lambda_t$ for vector fields, and on $(t,\lambda)$ for maps $\Q_i$. Recall from the previous lemma that
\begin{align}
F_b & = \Q_3 \cdot \vec{u} + \Q_4 \cdot \partial_u + \frac{1}{\sqrt{2H}}\Q_5 \cdot \partial_v, \\
F_c & =  \Q_8 \cdot \vec{u} + \Q_9 \cdot \partial_u + \frac{1}{\sqrt{2H}}\Q_{10}\cdot \partial_v,
\end{align}
where $\Q_i=\Q_i(t,\lambda)$ are uniformly bounded with all their time-derivatives for $(t,\lambda)\in [0,T]\times \Lambda_{\neq 0}$. Thanks to these estimates, the only possible singularities as $H\to 0$ can arise from the $\partial_v$-term in the above expressions. Recalling the symplectic product in the static basis $\partial_u,\partial_v,\vec{u},\vec{v}$ in \eqref{eq:sigmaframe}, and using Leibniz rule we see that, for example
\begin{align}
R(b,b)  = \sigma(\dot{F}_b,F_b) = \text{regular part} + \frac{1}{\sqrt{2H}}\left(\Q_3 \sigma(\dot{\vec{u}},\partial_v)\Q_5^* + \Q_5 \sigma(\dot{\partial_v},\vec{u}) \Q_3^*\right),
\end{align}
where the regular part is the multiplication of a finite number of matrix maps $\Q_i$ and their time-derivatives, evaluated at $(t,\lambda)$, and the smooth matrix maps $\A,\B,\dots$ from Lemma \ref{l:fundamentalcomputations} evaluated along $\lambda_t$. In particular the regular part will be bounded with all its time-derivatives, uniformly for $(t,\lambda)\in [0,T]\times \Lambda_{\neq 0}$.

For what concerns the possibly singular part, recall from Lemma \ref{l:fundamentalcomputations} that $\sigma(\dot{\vec{u}},\partial_v) = -\A^*$, and $\sigma(\dot{\partial_v},\vec{u}) = \A$. Since $\A : T^*\mathcal{O}_{\neq 0} \to \mathrm{M}(n-k,n-k)$ is $u$-homogeneous of degree $1$, by Lemma \ref{l:property} the term
\begin{equation}
\frac{1}{\sqrt{2H}}\left(\Q_3 \sigma(\dot{\vec{u}},\partial_v)\Q_5^* + \Q_5 \sigma(\dot{\partial_v},\vec{u}) \Q_3^*\right),
\end{equation}
when evaluated along the extremal $\lambda_t$, is bounded with all its time-derivatives, uniformly for $(t,\lambda)\in [0,T]\times \Lambda_{\neq 0}$.

Therefore $R(b,b)$ is uniformly bounded with all its time-derivatives as required. The same argument proves the similar estimates for $R(c,c)$, $R(c,b)$ and $R(b,c)= R(c,b)^*$. 

The estimate of $R(a,b)$ is more delicate. A direct estimate requires the knowledge of $F_a$, which are not in position to estimate yet. However, using the normal condition in the construction of the canonical frame, one can determine $R(a,b)$ only using the estimates for $F_b$ appearing in Lemma \ref{l:lemmaEbFb}. Indeed, recall that for a Young diagram with two columns, the normal condition is (cf.\@ Appendix \ref{a:canonicalframe}):
\begin{equation}
R(b,a) = -R(b,a)^*.
\end{equation}
Therefore, using \eqref{eq:formulaFa}, and the fact that the canonical frame is Darboux, we obtain
\begin{align}
R(b,a) & = \sigma(\dot{F}_b,F_a)  = -\sigma(\dot{F}_b,\dot{F}_b) + \sum_{\mu=a,b,c}\sigma(\dot{F}_b,E_\mu)R(b,\mu)^* \\
& = -\sigma(\dot{F}_b,\dot{F}_b) + R(b,a)^*  =-\sigma(\dot{F}_b,\dot{F}_b) - R(b,a),
\end{align}
from which it follows that
\begin{equation}
R(b,a) =  -\frac{1}{2}\sigma(\dot{F}_b,\dot{F}_b).
\end{equation}
Using the explicit estimates for $F_b$ and its derivative, one can now estimate $R(b,a)$. Using Leibniz rule and arguing as above we obtain that
\begin{multline}\label{eq:108}
R(b,a) = \text{regular part} + \frac{1}{\sqrt{2H}}\left[\Q_3 \sigma(\dot{\vec{u}},\partial_v)\dot{\Q}_5^* + \Q_3 \sigma(\dot{\vec{u}},\dot{\partial_v})\Q_5^* + \dot{\Q}_3 \sigma(\vec{u},\dot{\partial_v})\Q_5^* \right. \\
\left. + \dot{\Q}_5 \sigma(\partial_v,\dot{\vec{u}})\Q_3^* + \Q_5\sigma(\dot{\partial_v},\dot{\vec{u}})\Q_3^* + \Q_5\sigma(\dot{\partial_v},\vec{u})\dot{\Q}_3^*\right],
\end{multline}
where $\dot{\Q}_i = \partial_t\Q_i(t,\lambda)$. Using Lemma \ref{l:fundamentalcomputations} we observe that
\begin{align}
\sigma(\dot{\vec{u}},\partial_v)  = -\sigma(\partial_v,\dot{\vec{u}})^* &  =  -\A^*, \\
\sigma(\dot{\vec{u}},\dot{\partial_v})  = -\sigma(\dot{\partial_v},\dot{\vec{u}})^* & = -\E\A^* + \A^*\B^*,\\
\sigma(\vec{u},\dot{\partial_v})  = -\sigma(\dot{\partial_v},\vec{u})^* & = -\A^*.
\end{align}
Using again the fact that $\A$ is $u$-homogeneous of degree $1$, and applying Lemma \ref{l:property} to the possibly singular part in \eqref{eq:108}, we see that all possible singularities cancel out and also $R(b,a)$ remains uniformly bounded with all derivatives along $\lambda_t$, as claimed.

The estimates from the missing pairs of superboxes in the statement of the lemma follow by the global symmetry of the curvature maps $R(\boxtimes,\boxplus) = R(\boxplus,\boxtimes)^*$.
\end{proof}

We can now prove the estimate for the last element of the canonical frame, which is the $n-k$-tuple $F_a$. Using the structural equations, we have the general formula:
\begin{equation}\label{eq:formulaFa}
F_a|_{\lambda_t} = -\dot{F}_b|_{\lambda_t} + \sum_{\boxtimes =a,b,c} R_t(b,\boxtimes)\cdot E_{\boxtimes}|_{\lambda_t}.
\end{equation}
The curvatures $R_t(b,\boxtimes)$ in \eqref{eq:formulaFa} are precisely those estimated in Lemma \ref{l:lemmaRfirst}.

\begin{lemma}\label{l:lemmaFa}
For any $\bar{\lambda} \in T^*M_{\neq 0}$ there exist a $u$-star-shaped neighborhood $\Lambda \subset T^*M$ of $\bar{\lambda}$, $T>0$, and smooth maps $\Q_i: [0,T]\times  \Lambda_{\neq 0} \to \mathrm{M}(n_i,m_i)$, with $n_i,m_i\in \N$, $i=11,\dots,14$, such that for all $\lambda \in \Lambda_{\neq 0}$ the tuple
\begin{multline}
F_a|_{\lambda_t}  = \Q_{11}(t,\lambda)\cdot \vec{u}|_{\lambda_t} + \sqrt{2H(\lambda)} \Q_{12}(t,\lambda)\cdot \vec{v}|_{\lambda_t} \\
+  \Q_{13}(t,\lambda) \cdot \partial_u|_{\lambda_t} + \frac{1}{\sqrt{2H(\lambda)}}\Q_{14}(t,\lambda)\cdot \partial_v|_{\lambda_t},
\end{multline}
for $t\in [0,T]$, is (a choice of) the $F_a$-component of the canonical frame along $\lambda_t = e^{t\vec{H}}(\lambda)$.

Furthermore, the maps $\Q_i$ and all their time-derivatives are bounded, that is for all $j\in \N$ there exists a constant $C = C_j>0$ such that
\begin{equation}
\|\partial_t^{j}\Q_i(t,\lambda)\| \leq C ,\qquad \forall\, t \in [0,T],\; \lambda \in \Lambda_{\neq 0},\; i=11,\dots,14,
\end{equation}
where $\|\cdot\|$ denotes a fixed matrix norm.
\end{lemma}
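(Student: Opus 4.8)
The plan is to follow the same blueprint as Lemmas~\ref{l:lemmaEa}, \ref{l:lemmaEbFb} and \ref{l:lemmaEcFc}: use the structural equations of the canonical frame to write $F_a$ in terms of already-estimated objects, and then bookkeep the powers of $\sqrt{2H}$ using the $u$-pseudo-homogeneity machinery of Lemma~\ref{l:property}. Concretely, $F_a$ is determined by formula~\eqref{eq:formulaFa}, namely $F_a|_{\lambda_t} = -\dot F_b|_{\lambda_t} + \sum_{\boxtimes=a,b,c} R_t(b,\boxtimes)\cdot E_{\boxtimes}|_{\lambda_t}$, so I take $\Lambda$ and $T>0$ small enough that Lemmas~\ref{l:lemmaEa}, \ref{l:lemmaEbFb}, \ref{l:lemmaEcFc} and \ref{l:lemmaRfirst} all apply simultaneously (intersecting their $u$-star-shaped neighborhoods and taking the minimum of the times). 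Then I substitute the explicit expressions for $E_a,E_b,E_c$ and $F_b$ from those lemmas and differentiate $F_b$ once with respect to $\vec H$, using Lemma~\ref{l:fundamentalcomputations} to re-express the $\vec H$-derivatives of $\vec u, \vec v, \partial_u,\partial_v$ in the static frame; collecting the coefficients of $\vec u, \vec v, \partial_u, \partial_v$ defines the four matrix maps $\Q_{11},\Q_{12},\Q_{13},\Q_{14}$.

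The power-counting is the heart of the argument and should be organized as follows. The term $-\dot F_b$: differentiating $F_b = \Q_3\cdot\vec u + \Q_4\cdot\partial_u + \tfrac{1}{\sqrt{2H}}\Q_5\cdot\partial_v$ produces, via $\dot{\vec u} = \E\cdot\vec u - \A^*\cdot\vec v + \F\cdot\partial_u + \G\cdot\partial_v$ (eq.~\eqref{eq:vecudot}), a $\vec v$-contribution $-\Q_3\A^*\cdot\vec v$; since $\A$ is $u$-homogeneous of degree $1$ and $\Q_3 = \tfrac{1}{\sqrt{2H}}\P\A$ already carries a $\tfrac{1}{\sqrt{2H}}$, the product $\Q_3\A^*$ is, up to the $\tfrac{1}{\sqrt{2H}}$ prefactor, $u$-pseudo-homogeneous of degree $2$, hence of the form $\sqrt{2H}\cdot(\text{bounded})$ — this is exactly the $\sqrt{2H}\,\Q_{12}\cdot\vec v$ term in the statement. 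The $\partial_v$-pieces of $-\dot F_b$ come from $\dot\Q_5$, from $\Q_5\dot{\partial_v} = \Q_5(\A\cdot\partial_u + \B\cdot\partial_v)$ (eq.~\eqref{eq:partialvdot}), and from $\Q_3,\Q_4$ acting through the $\partial_v$-components of $\dot{\vec u},\dot{\partial_u}$; all carry at worst $\tfrac{1}{\sqrt{2H}}$, matching the $\tfrac{1}{\sqrt{2H}}\Q_{14}\cdot\partial_v$ term. The curvature term $\sum_{\boxtimes} R_t(b,\boxtimes)\cdot E_\boxtimes$: here $R_t(b,a), R_t(b,b), R_t(b,c)$ are uniformly bounded with all time-derivatives by Lemma~\ref{l:lemmaRfirst}, $E_a = \tfrac{1}{\sqrt{2H}}\P\cdot\partial_v$, $E_b = \Q_1\cdot\partial_u + \tfrac{1}{\sqrt{2H}}\Q_2\cdot\partial_v$, $E_c = \Q_6\cdot\partial_u + \tfrac{1}{\sqrt{2H}}\Q_7\cdot\partial_v$, so this contributes only bounded-coefficient $\partial_u$-terms and $\tfrac{1}{\sqrt{2H}}$-coefficient $\partial_v$-terms, absorbed into $\Q_{13}$ and $\Q_{14}$. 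Finally, once the four $\Q_i$ are written as products of the $\Q_3,\Q_4,\Q_5$ (and their $\vec H$-derivatives), the $R_t(b,\boxtimes)$, the $\P,\Q_1,\Q_2,\Q_6,\Q_7$, and the smooth $u$-pseudo-homogeneous matrix maps $\A,\B,\C,\D,\E,\F,\G$ from Lemma~\ref{l:fundamentalcomputations}, the uniform bounds $\|\partial_t^j\Q_i\|\le C_j$ on $[0,T]\times\Lambda_{\ne 0}$ follow from: (i) the uniform bounds already established in the previous lemmas, (ii) Lemma~\ref{l:property} applied repeatedly to the $u$-pseudo-homogeneous factors (products of $u$-pseudo-homogeneous maps are $u$-pseudo-homogeneous of the added degree, and $\vec H$ preserves pseudo-homogeneity), and (iii) the observation that every apparent singularity at $u=0$ is killed by a matching power of $|u|=\sqrt{2H}$ coming from an $\A$-type factor.

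The step I expect to be the main obstacle is making the cancellation of singularities fully precise, i.e.\ verifying that the worst term arising from $-\dot F_b$ really is only $O(\tfrac{1}{\sqrt{2H}})$ in the $\partial_v$-direction and $O(\sqrt{2H})$ in the $\vec v$-direction, rather than $O(\tfrac{1}{2H})$. The danger is that $\dot\Q_5$ itself contains a $\ddot\P$ together with further $\A$-factors, and one must check — using that $\P$ and all its time-derivatives are uniformly bounded (Lemma~\ref{l:lemmaEa}) and that $\A$ is $u$-homogeneous of degree exactly $1$ — that no extra negative power of $|u|$ is generated. This is where the structural bookkeeping of Definition~\ref{def:pseudohom} is essential: one should argue at the level of pseudo-homogeneity degrees rather than chasing explicit formulas. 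A secondary, purely organizational point is ensuring that the neighborhood $\Lambda$ can be chosen $u$-star-shaped and that the extremals $\lambda_t=e^{t\vec H}(\lambda)$ stay in a common compact subset of $T^*\mathcal O_{\ne 0}$ for $t\in[0,T]$ — but this is handled exactly as in the first paragraph of the proof of Lemma~\ref{l:lemmaEa}, since $\vec H$ preserves $|u|$ and $\bar\lambda\in T^*M_{\ne 0}$. Once these points are settled, the proof is a routine (if lengthy) application of Lemma~\ref{l:fundamentalcomputations}, Lemma~\ref{l:property}, and Gronwall-free direct estimation, and I would present it compactly by defining the $\Q_i$ as the coefficient matrices and invoking the preceding lemmas for the bounds.
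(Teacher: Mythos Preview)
Your proposal is correct and follows essentially the same approach as the paper: use the structural formula \eqref{eq:formulaFa} for $F_a$, substitute the expressions for $E_a,E_b,E_c,F_b$ from the earlier lemmas, differentiate via Lemma~\ref{l:fundamentalcomputations}, and read off the $\Q_i$ with the claimed $\sqrt{2H}$-weights using $u$-pseudo-homogeneity. The paper in fact writes out the resulting expression explicitly, and your identification of the key cancellation --- that the $\vec v$-coefficient is $-\Q_3\A^* = -\tfrac{1}{\sqrt{2H}}\P\A\A^*$ with $\A\A^*$ $u$-homogeneous of degree $2$, hence of the form $\sqrt{2H}\,\Q_{12}$ --- matches the paper's argument exactly. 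Your ``main obstacle'' is not really one: the uniform bounds on $\Q_5$ and all its time-derivatives are already part of the conclusion of Lemma~\ref{l:lemmaEbFb}, so $\dot\Q_5$ is bounded outright and no further pseudo-homogeneity bookkeeping is needed for that term.
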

\begin{proof}
By the structural equations we have, along any given extremal $\lambda_t=e^{t\vec{H}}(\lambda)$:
\begin{equation}\label{eq:formulaFa2}
F_a = -\dot{F}_b + R(b,b)\cdot E_b +R(b,a)\cdot E_a + R(b,c)\cdot E_c.
\end{equation}
The claim is now a consequence of lemmas \ref{l:lemmaEa}, \ref{l:lemmaEbFb}, \ref{l:lemmaEcFc} for the previous parts of the canonical frame and Lemma \ref{l:lemmaRfirst} for estimate on the curvature maps appearing in \eqref{eq:formulaFa2}. Using Leibniz rule and omitting as usual evaluation along $\lambda_t$ and $(t,\lambda)$ we obtain
\begin{multline}
F_a  = \left(\dot{\Q}_3 + \Q_3\E - \Q_4\right) \cdot \vec{u} - \Q_3\A^*\cdot \vec{v} 
+\bigg(\Q_3 \F + \dot{\Q}_4 + \Q_4\C + \frac{1}{\sqrt{2H}}\Q_5 \A  \\
+ R(b,b)\Q_1 + R(b,c)\Q_6 \bigg)\cdot\partial_u 
+\bigg( \Q_3 \G + \Q_4 \D + \frac{1}{\sqrt{2H}}\dot{\Q}_5 + \frac{1}{\sqrt{2H}}\Q_5\B \\
+ \frac{1}{\sqrt{2H}}R(b,b)\Q_2 + \frac{1}{\sqrt{2H}}R(b,a)\P+ \frac{1}{\sqrt{2H}}R(b,c)\Q_7\bigg)\cdot \partial_v.
\end{multline}
Using the $u$-homogeneity of $\A$ coupled with Lemma \ref{l:property}, we obtain the required uniform estimates, for $(t,\lambda) \in [0,T]\times \Lambda_{\neq 0}$, where $T$ and $\Lambda$ are as in the previous lemmas.
\end{proof}

We are ready to complete the estimates on the canonical curvature maps.

\begin{lemma}[Curvature estimates II]\label{l:lemmaRlast}
For any $\bar{\lambda} \in T^*M$ there exist a $u$-star-shaped neighborhood $\Lambda \subset T^*M_{\neq 0}$ of $\bar{\lambda}$, and $T>0$ such that, for all $j\in \N$ there exists a constant $C=C_j>0$ such that for the canonical curvature maps $R_{t}$ along the extremal $\lambda_t = e^{t\vec{H}}(\lambda)$ it holds:
\begin{equation}
\|\partial_t^j R_{t}(\boxtimes,\boxplus)\| \leq C, \qquad \forall\, t\in [0,T],\; \lambda \in \Lambda_{\neq 0},
\end{equation}
where $\|\cdot\|$ denotes the Hilbert-Schmidt norm, and $\boxtimes,\boxplus = a,b,c$.
\end{lemma}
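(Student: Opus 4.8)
The plan is to complete the curvature estimates by addressing the pairs of superboxes not already covered by Lemma~\ref{l:lemmaRfirst}, namely the pairs involving the $F_a$-component: $(a,a)$, $(a,b)$, $(a,c)$ and their transposes. All the hard structural work has in fact been done: Lemma~\ref{l:lemmaFa} provides a decomposition of $F_a|_{\lambda_t}$ in terms of the static frame $\{\vec u, \vec v, \partial_u, \partial_v\}$ with uniformly bounded coefficient maps $\Q_{11},\dots,\Q_{14}$, the only dangerous term being the $\partial_v$-coefficient which carries a factor $\tfrac{1}{\sqrt{2H}}$. First I would set $T>0$ and the $u$-star-shaped neighborhood $\Lambda$ exactly as in all the previous lemmas (\ref{l:lemmaEa}, \ref{l:lemmaEbFb}, \ref{l:lemmaEcFc}, \ref{l:lemmaRfirst}, \ref{l:lemmaFa}), so that along every extremal $\lambda_t=e^{t\vec H}(\lambda)$, $\lambda\in\Lambda_{\neq 0}$, $t\in[0,T]$, all the maps $\P,\Q_i,\mathbb T$ and the smooth matrix maps $\A,\B,\C,\D,\E,\F,\G,\H,\I,\L$ from Lemma~\ref{l:fundamentalcomputations} are available with uniform bounds on all time-derivatives.

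Next I would compute $R_t(a,\boxplus)=\sigma(\dot F_a|_{\lambda_t},F_\boxplus|_{\lambda_t})$ for $\boxplus=a,b,c$ by differentiating the expression of Lemma~\ref{l:lemmaFa} with the Leibniz rule and expanding via the symplectic relations \eqref{eq:sigmaframe} together with the $\vec H$-derivative formulas \eqref{eq:partialvdot}--\eqref{eq:vecvdot}. As in the proof of Lemma~\ref{l:lemmaRfirst}, the computation naturally splits into a ``regular part'' --- a finite product of the bounded maps $\Q_i$, their time-derivatives, and the smooth maps $\A,\B,\dots$ evaluated along $\lambda_t$, which is automatically uniformly bounded with all time-derivatives --- plus a ``potentially singular part'' collecting all terms where a factor $\tfrac{1}{\sqrt{2H}}$ (from the $\partial_v$-components of $F_a,F_b,F_c,E_a,E_b,E_c$) is paired with a symplectic product. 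The key observation, already exploited repeatedly, is that every such pairing produces one of the matrix maps $\A$, $\A^*$, $\A\A^*$, $\E\A^*$, $\A^*\B^*$, etc., which are $u$-homogeneous of degree at least $1$; hence the product with $\tfrac{1}{\sqrt{2H}}\sim |u|^{-1}$ is $u$-pseudo-homogeneous of degree $\geq 0$, and Lemma~\ref{l:property} gives the uniform bound on the term and all its $\vec H$-derivatives. The diagonal pair $(a,a)$ also produces the ``doubly singular'' terms $\tfrac{1}{2H}\,\Q_{14}\sigma(\partial_v,\partial_v)\Q_{14}^*=0$ (since $\sigma(\partial_{h_i},\partial_{h_j})=0$ by \eqref{eq:sigmaframe}), so the worst genuine singularity is again only $\tfrac{1}{\sqrt{2H}}$ times a $u$-homogeneous-degree-$\geq 1$ map, which cancels. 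By Remark~\ref{rmk:hilbert-schmidt}, the resulting bounds on $\|\partial_t^j R_t(\boxtimes,\boxplus)\|$ in the Hilbert--Schmidt norm are independent of the choice of canonical frame, and combining with Lemma~\ref{l:lemmaRfirst} one gets the bound for all pairs $\boxtimes,\boxplus=a,b,c$, completing the proof.

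The main obstacle I expect is bookkeeping rather than conceptual: the expansion of $\sigma(\dot F_a,F_a)$ contains many terms (each of $F_a$ and $\dot F_a$ has four static-frame components, so there are sixteen raw pairings, several of which further split when $\vec H$-derivatives hit the $\Q_i$'s and the $\A,\B,\dots$), and one must verify term by term that the net power of $|u|$ is nonnegative. The one subtlety worth flagging is the term where $\tfrac{1}{\sqrt{2H}}\partial_v$ from $\dot F_a$ is paired with $\tfrac{1}{\sqrt{2H}}\partial_v$ from $F_a$: this is harmless only because $\sigma(\partial_v,\partial_v)=\mathbb 0$; and the terms pairing the $\tfrac{1}{\sqrt{2H}}\partial_v$ of one factor with a $\vec u$- or $\vec v$-component of the other, which land on $\A$ resp.\ $\A^*$ (hence degree $1$, safe) --- but here one must use the improved decomposition of Lemma~\ref{l:lemmaFa}, where $F_a$ also has a $\sqrt{2H}\,\vec v$-component with coefficient $\Q_{12}$, so the pairing of that $\sqrt{2H}\,\vec v$-term with the $\tfrac{1}{\sqrt{2H}}\partial_v$-term produces $\Q_{12}\sigma(\vec v,\partial_v)^*\Q_{14}^* = \Q_{12}\Q_{14}^*$, with the two square-root factors cancelling outright. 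Once this pattern is recognized the verification is mechanical, and no new idea beyond Lemma~\ref{l:property} and the $u$-homogeneity of $\A$ and its relatives is needed.
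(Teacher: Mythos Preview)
Your approach is correct and essentially identical to the paper's: both reduce to the pairs not covered by Lemma~\ref{l:lemmaRfirst}, expand $\sigma(\dot F_a,F_\boxplus)$ via the decomposition of $F_a$ from Lemma~\ref{l:lemmaFa}, split into regular and potentially singular parts, and cancel the $\tfrac{1}{\sqrt{2H}}$ factors against the $u$-homogeneity of $\A$ and its relatives using Lemma~\ref{l:property}; the paper likewise stresses that the $\sqrt{2H}$ factor in front of the $\vec v$-component of $F_a$ is what makes the cancellations work. One minor bookkeeping slip: the pairs $(a,b)$ and $(b,a)$ were already handled in Lemma~\ref{l:lemmaRfirst} (via the normal condition $R(b,a)=-R(b,a)^*$ and the identity $R(b,a)=-\tfrac12\sigma(\dot F_b,\dot F_b)$), so only $(a,a)$, $(a,c)$, $(c,a)$ remain --- your argument covers these, it just does a bit of redundant work.
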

\begin{proof}
We only need to prove the statement for the pairs $\boxtimes,\boxplus$ excluded in Lemma \ref{l:lemmaRfirst}, that is the pairs $(a,a)$, $(a,c)$ and $(c,a)$. We sketch the proof since the argument is analogous to the one in Lemma \ref{l:lemmaRfirst} for the first curvature estimates, coupled with the new piece of information on $F_a$ coming from Lemma \ref{l:lemmaFa}. 

Let $T>0$ and $\Lambda$ so that all previous lemmas and estimates about the canonical frame hold. Then we decomposing, for example
\begin{equation}
R(a,a)  = \sigma(\dot{F}_a,F_a)=\text{regular part} + \text{possibly singular part},
\end{equation}
where the possibly singular part contains all terms with a factor containing a positive power of $H^{-1/2}$. Then, using the estimates for $F_a$, Lemma \ref{l:fundamentalcomputations} and Lemma \ref{l:property}, one can prove that all singularities cancel out. We stress in particular that the presence of the factor $\sqrt{2H}$ in the $\vec{v}$-component of $F_a$ of Lemma \ref{l:lemmaFa} is necessary for these cancellations. One then argues similarly for $R(c,a) = \sigma(\dot{F}_c,F_a)$ and $R(a,c) = \sigma(\dot{F}_a,F_c)$.
\end{proof}

We now estimate the geodesic volume derivative. Recall that the fat sub-Riemannian manifold $M$ comes equipped with a Riemannian extension. We denote with $\mathrm{vol}$ the associated Riemannian density, while $M$ is equipped with a smooth reference volume $\mm = e^{-V}\mathrm{vol}$, where $V:M \to \R$ is a smooth function. The geodesic volume derivative with respect to $\mm$ is the smooth function $\rho_{\mm}:T^*M_{\neq 0} \to \R$, defined by
\begin{equation}\label{eq:volder}
\rho_{\mm,\lambda} := \left.\frac{\di}{\di t}\right|_{t=0}\log\mm(\pi_* F_a|_{\lambda_t},\pi_* F_b|_{\lambda_t},\pi_*F_c|_{\lambda_t}), \qquad \forall\, \lambda \in T^*M_{\neq 0},
\end{equation}
for any choice of canonical frame along $\lambda_t=e^{t\vec{H}}(\lambda)$, see Section \ref{sec:gvd}.

\begin{lemma}[Volume derivative estimates]\label{l:canvorder}
For any $\bar{\lambda} \in T^*M_{\neq 0}$ there exist a $u$-star-shaped neighborhood $\Lambda \subset T^*M_{\neq 0}$ of $\bar{\lambda}$ such that, for all $j\in N$ there exists a constant $C=C_j>0$ such that for the geodesic volume derivative $\rho_{\mm,\lambda}$ it holds:
\begin{equation}
\vec{H}^{(j)}\rho_{\mm,\lambda} \leq C, \qquad \forall\, \lambda \in \Lambda_{\neq 0}.
\end{equation}
\end{lemma}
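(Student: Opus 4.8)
The plan is to make $\rho_{\mm,\lambda}$ completely explicit by substituting the \emph{projected} canonical frame into \eqref{eq:volder}, and then to recognize the outcome as the Hamiltonian derivative of the logarithm of a well-behaved $u$-homogeneous quantity, so that Lemma \ref{l:property} immediately yields all the estimates. Fix $\bar\lambda\in T^*M_{\neq 0}$, pick a trivializing chart $T^*\mathcal{O}$ around $\pi(\bar\lambda)$ and a relatively compact, $u$-star-shaped neighborhood $\Lambda$ of $\bar\lambda$ exactly as in the first paragraph of the proof of Lemma \ref{l:lemmaEa}, and work with the canonical frame built in Lemmas \ref{l:lemmaEa}--\ref{l:lemmaFa} along $\lambda_t = e^{t\vec{H}}(\lambda)$.

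Since $\pi_*\vec{h}_A = X_A$ and $\pi_*\partial_{h_A}=0$, pushing forward the expressions for $F_a,F_b,F_c$ in Lemmas \ref{l:lemmaEbFb}, \ref{l:lemmaEcFc}, \ref{l:lemmaFa} gives
\[ \pi_*F_b|_{\lambda_t}=\Q_3\cdot X^{\mathrm{hor}},\qquad \pi_*F_c|_{\lambda_t}=\Q_8\cdot X^{\mathrm{hor}},\qquad \pi_*F_a|_{\lambda_t}=\Q_{11}\cdot X^{\mathrm{hor}}+\sqrt{2H(\lambda)}\,\Q_{12}\cdot X^{\mathrm{ver}}, \]
where $X^{\mathrm{hor}}=(X_1,\dots,X_k)$ and $X^{\mathrm{ver}}=(X_{k+1},\dots,X_n)$. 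Unwinding the proofs of those lemmas one has $\Q_3=\Phi\S^{-1}\A$, $\Q_8=\tfrac{1}{\sqrt{2H}}\Psi\A^{\perp}$ and $\sqrt{2H}\,\Q_{12}=-\Q_3\A^{*}=-\Phi\S^{*}$, together with $\A\A^{*}=\S\S^{*}$, $\A^{\perp}\A^{*}=\mathbb{0}$ and $\A^{\perp}\A^{\perp *}=|u|^2\mathbb{1}$. Writing the $n\times n$ matrix $\mathcal{M}(t,\lambda)$ of $(\pi_*F_a,\pi_*F_b,\pi_*F_c)$ in the orthonormal frame $X_1,\dots,X_n$, a Laplace expansion along the last $n-k$ columns factors $\det\mathcal{M}$ as $\pm\det\big(\sqrt{2H}\,\Q_{12}\big)\cdot\det\begin{pmatrix}\Q_3\\ \Q_8\end{pmatrix}$; using the identities above and $2H=|u|^2$, the first factor equals $\pm\det\S(\lambda_t)$ and the second has modulus $1$, so that $|\det\mathcal{M}(t,\lambda)|=\det\big(\A\A^{*}(\lambda_t)\big)^{1/2}$, all powers of $H$ cancelling. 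Hence, with $\mm=e^{-V}\mathrm{vol}$,
\[ \mm(\pi_*F_a,\pi_*F_b,\pi_*F_c)|_{\lambda_t}=e^{-V(\pi(\lambda_t))}\det\big(\A\A^{*}(\lambda_t)\big)^{1/2}, \]
and by \eqref{eq:volder} the geodesic volume derivative is the restriction to $T^*\mathcal{O}_{\neq 0}$ of the function
\[ \rho_{\mm,\cdot}=\vec{H}\!\big(-V\circ\pi+\tfrac{1}{2}\log\det(\A\A^{*})\big)=-\vec{H}(V\circ\pi)+\tfrac{1}{2}\,\tr\!\big[(\A\A^{*})^{-1}\vec{H}(\A\A^{*})\big]. \]

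It then remains to observe that $\rho_{\mm,\cdot}$ is a sum of $u$-pseudo-homogeneous functions (Definition \ref{def:pseudohom}) of degree at most $1$: the term $\vec{H}(V\circ\pi)=\sum_i u_i X_i(V)$ is $u$-homogeneous of degree $1$, while writing $(\A\A^{*})^{-1}=\mathrm{adj}(\A\A^{*})/\det(\A\A^{*})$ exhibits $\tr[(\A\A^{*})^{-1}\vec{H}(\A\A^{*})]$ as a quotient of two functions smooth on all of $T^*\mathcal{O}$, $u$-homogeneous of the same degree, with denominator nonvanishing on $T^*\mathcal{O}_{\neq 0}$ by the fat assumption; it is therefore $u$-homogeneous of degree $0$ and representable in the form \eqref{eq:ordergeqd}, i.e.\ $u$-pseudo-homogeneous of degree $0$. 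By Lemma \ref{l:property}, both summands and all their $\vec{H}$-derivatives are bounded on $\Lambda_{\neq 0}$ by constants depending only on the order of the derivative and on $\Lambda$. Since $\vec{H}^{(j)}\rho_{\mm,\lambda}$ is precisely such a combination of $\vec{H}$-derivatives, this gives $|\vec{H}^{(j)}\rho_{\mm,\lambda}|\le C_j$ on $\Lambda_{\neq 0}$, which is the assertion (in particular, a fortiori, the one-sided bound in the statement).

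The delicate step is the determinant bookkeeping: one must carry along the singular factors $1/\sqrt{2H}$ appearing in the $\partial_v$-components of $F_a,F_b,F_c$ and, crucially, the compensating factor $\sqrt{2H}$ in the $\vec{v}$-component of $F_a$ supplied by Lemma \ref{l:lemmaFa}, and check that these, together with the $u$-homogeneity of $\A$, $\A^{\perp}$ and $\S$, make $\det\mathcal{M}$ collapse onto the single well-behaved quantity $\det\S$ — this is the cancellation that removes the singularity of the canonical frame at $\distr^{0}$. Everything else is routine within the pseudo-homogeneous calculus of this section; note also that, in contrast to the preceding lemmas, no control of the flow of $\vec{H}$ over a time interval is needed, since $\rho_{\mm,\cdot}$ has been identified as a bona fide function on $T^*\mathcal{O}_{\neq 0}$ and all derivatives are taken at the base point itself.
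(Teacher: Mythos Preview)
Your proof is correct and takes a genuinely different, more explicit route than the paper. Both approaches begin by projecting the canonical frame and writing the density as a block determinant. The paper stops at
\[
\rho_{\mm,\lambda_t} = -\vec{H}(V)(\lambda_t) + \partial_t \log \left\lvert\det \Q_{12}(t,\lambda)\det \begin{pmatrix} \Q_{3}(t,\lambda) \\ \Q_{8}(t,\lambda) \end{pmatrix}\right\rvert,
\]
and argues that the right-hand side is bounded with all time derivatives directly from the uniform bounds on the $\Q_i$'s and the full-rank property of the two determinant factors. You instead push the computation further, identifying $\Q_3=\Phi\S^{-1}\A$, $\Q_8=\tfrac{1}{\sqrt{2H}}\Psi\A^{\perp}$, $\sqrt{2H}\,\Q_{12}=-\Phi\S^{*}$, and collapsing the determinant to the intrinsic quantity $\det(\A\A^{*})^{1/2}$. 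This yields the closed formula $\rho_{\mm}=-\vec{H}(V\circ\pi)+\tfrac{1}{2}\tr[(\A\A^{*})^{-1}\vec{H}(\A\A^{*})]$, after which Lemma~\ref{l:property} applies immediately. Your route buys an explicit, frame-independent expression for $\rho_{\mm}$ and makes the singularity cancellation at $\distr^{0}$ fully transparent; the paper's route is shorter because it recycles the $\Q_i$ estimates without reopening their construction, but it relies implicitly on a uniform lower bound for those determinants (which your orthonormality computation in fact supplies).

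One small imprecision: you assert that the numerator $\tr[\mathrm{adj}(\A\A^{*})\,\vec{H}(\A\A^{*})]$ is $u$-\emph{homogeneous}. It is not, because $\vec{H}$ does not preserve strict $u$-homogeneity (the Poisson brackets $\{u_j,u_\ell\}$ introduce $v$-terms). It is, however, $u$-\emph{pseudo}-homogeneous of degree $2(n-k)$ by Lemma~\ref{l:property}, which matches the degree of the denominator $\det(\A\A^{*})$; since the latter is nonvanishing on the unit sphere in $u$ by the fat assumption, the quotient is indeed $u$-pseudo-homogeneous of degree $0$ as you conclude. So the slip is only in wording, not in substance.
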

\begin{proof}
Let $\Lambda$ and $T>0$ as in the previous lemmas. By construction, for all $\lambda \in \Lambda_{\neq 0}$ the extremal $\lambda_t$ remains for all $t\in [0,T]$ in a common bounded subset of a trivial neighborhood $T^*\mathcal{O}$, and the corresponding geodesic $\gamma_t = \pi(\lambda_t)$ remains in $\mathcal{O}$. Recall that from Lemmas \ref{l:lemmaEbFb}, \ref{l:lemmaEcFc} and \ref{l:lemmaFa} we have
\begin{align}
\pi_* F_a|_{\lambda_t} & = \Q_{11}(t,\lambda) X|_{\gamma_t} + \sqrt{2H(\lambda)} \Q_{12}(t,\lambda) Z_{\gamma_t}, \\
\pi_* F_b|_{\lambda_t} & = \Q_{3}(t,\lambda) X|_{\gamma_t}, \\
\pi_* F_c|_{\lambda_t} & = \Q_{8}(t,\lambda) X|_{\gamma_t},
\end{align}
for all $t\in [0,T]$, and where $X=(X_1,\dots,X_k)$, $Z=(X_{k+1},\dots,X_{n})$ are tuples forming an adapted orthonormal frame for $g$ on $\mathcal{O}$. In particular since $\mm(X,Z)=1$, we have
\begin{align}\label{eq:calcolorho}
\rho_{\mm,\lambda_t} = -\vec{H}(V)(\lambda_t) + \frac{\di}{\di t} \log \left\lvert\det \Q_{12}(t,\lambda)\det \begin{matrix} \Q_{3}(t,\lambda) \\ \Q_{8}(t,\lambda) \end{matrix}\right\rvert,
\end{align}
for all $\lambda \in \Lambda_{\neq 0}$ and $t\in[0,T]$. Notice that by construction $\mathbb{Q}_{12}$ is a $n-k\times n-k$ matrix of full rank, while $\left(\begin{smallmatrix} \Q_3 \\ \Q_8\end{smallmatrix}\right)$ is a $k\times k$ matrix of full rank. To prove the statement, observe that $\vec{H}^{(j)}\rho_{\mm,\lambda} = \partial_t^j\rho_{\mm,\lambda_t}|_{t=0}$, so that one has only to bound the time-derivatives of \eqref{eq:calcolorho} at $t=0$ and $\lambda \in \Lambda_{\neq 0}$. The first term $\vec{H}(V)(\lambda_t)$ is uniformly bounded with all time-derivatives since all curves $\lambda_t$ remain in a common bounded subset of the trivial neighborhood $T^*\mathcal{O}$ by construction, and $V$ is smooth. The second term in \eqref{eq:calcolorho} is also uniformly bounded with all time-derivatives, by the properties of the $\Q_i$'s.
\end{proof}

\subsection{Regularity of the canonical curvature maps}

We collect the local estimates from Lemmas \ref{l:lemmaRfirst}--\ref{l:lemmaRlast} in a unified statement for the canonical curvature operators defined as in Section \ref{sec:Jaccurve}.

\begin{remark}
The fact that the neighborhoods $\Lambda$ of Lemmas from \ref{l:lemmaEa} to \ref{l:lemmaRlast} are $u$-star-shaped will be used now.
\end{remark}
\begin{theorem}[Fat curvature estimates]\label{t:fatcurvbound}
Let $(\distr,g)$ be a fat sub-Riemannian structure on $M$. Then for any compact set $B\subset M$ and $j\in \N$ there exists a constant $C=C_j(B)>0$ such that, for all superboxes $\boxtimes,\boxplus =a,b,c$ of the Young diagram $Y$ it holds
\begin{equation}
\|\vec{H}^{(j)}\mathfrak{R}_{\lambda}^{\boxtimes\boxplus}\| \leq C H_R(\lambda)^{\tfrac{j+\col(\boxtimes)+\col(\boxplus)}{2}}, \qquad \forall\, \lambda \in T^*B_{\neq 0},
\end{equation}
where $\|\cdot\|$ denotes the Hilbert-Schmidt norm, and $\col(\boxtimes)$ is the column index of the superbox. As a consequence, for some $C'= C'_j(B)>0$ it also holds
\begin{equation}
|\vec{H}^{(j)}\mathfrak{Ric}_{\lambda}^{\boxtimes}| \leq C' H_R(\lambda)^{\tfrac{j}{2}+\col(\boxtimes)}, \qquad \forall\, \lambda \in T^*B_{\neq 0}.
\end{equation}
Furthermore, if $\mm = e^{-V}\mathrm{vol}$, for the geodesic volume derivative it holds
\begin{equation}
|\vec{H}^{(j)}\rho_{\mm,\lambda}| \leq C H_R(\lambda)^{\tfrac{j+1}{2}}, \qquad \forall\, \lambda \in T^*B_{\neq 0}.
\end{equation}
\end{theorem}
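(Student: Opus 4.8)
The plan is to deduce Theorem \ref{t:fatcurvbound} from the local uniform estimates already established in Lemmas \ref{l:lemmaRfirst}, \ref{l:lemmaRlast} and \ref{l:canvorder}. The crucial point is to upgrade those \emph{local, fixed-time-interval, uniform-in-$\lambda$} bounds on $[0,T]\times\Lambda_{\neq 0}$ into \emph{globally homogeneous} estimates in terms of $H_R(\lambda)$ over the cotangent bundle restricted to a compact base $B$. First I would recall the behaviour of the canonical curvature and the geodesic volume derivative under the (total) dilations $\delta_a:T^*M\to T^*M$, $\lambda\mapsto a\lambda$: by the homogeneity properties of the Hamiltonian flow ($e^{t\vec H}\circ\delta_a=\delta_a\circ e^{at\vec H}$) and the weighting of the canonical frame recorded in Appendix \ref{a:canonicalframe}, one has the scaling identities $\mathfrak R^{\boxtimes\boxplus}_{a\lambda}=a^{\col(\boxtimes)+\col(\boxplus)}\,\mathfrak R^{\boxtimes\boxplus}_{\lambda}$, $\mathfrak{Ric}^{\boxtimes}_{a\lambda}=a^{2\col(\boxtimes)}\mathfrak{Ric}^{\boxtimes}_\lambda$, and $\rho_{\mm,a\lambda}=a\,\rho_{\mm,\lambda}+(\text{correction from }V)$; moreover $\vec H^{(j)}$ picks up an extra factor $a^j$ under $\delta_a$, and $H_R(a\lambda)=a^2H_R(\lambda)$. (The correction term coming from $V$ in the volume derivative is not homogeneous, but it is bounded with all derivatives on $T^*B$ by smoothness of $V$ and compactness of $B$, so it is harmless after the rescaling, since $H_R\geq$ const on the relevant set.) These identities are exactly what converts the exponents $j+\col(\boxtimes)+\col(\boxplus)$, $\tfrac j2+\col(\boxtimes)$ and $\tfrac{j+1}2$ in the statement into the right powers of $H_R$.

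The second ingredient is a covering/compactness argument. Fix $j\in\N$. For each $\bar\lambda$ in the unit cosphere bundle $S^*M:=\{H_R=1\}$ over $B$ — which is compact since $B$ is — apply the relevant lemma among \ref{l:lemmaRlast} and \ref{l:canvorder} to obtain a $u$-star-shaped neighbourhood $\Lambda_{\bar\lambda}$ and $T_{\bar\lambda}>0$ on which $\|\partial_t^j R_t(\boxtimes,\boxplus)\|\leq C_{\bar\lambda}$ and $|\vec H^{(j)}\rho_{\mm,\lambda}|\leq C_{\bar\lambda}$ uniformly. Intersecting with $\{H_R\leq 1\}$ and using the $u$-star-shaped property, these neighbourhoods together with their dilated images cover a neighbourhood of $S^*B\cap T^*B$ inside $T^*B_{\neq 0}$; by compactness of $S^*B$ extract a finite subcover $\Lambda_1,\dots,\Lambda_m$ with constants $C_1,\dots,C_m$ and take $C=\max_i C_i$ and $T=\min_i T_i$. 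This yields the estimates with $H_R(\lambda)=1$. Then for arbitrary $\lambda\in T^*B_{\neq 0}$ write $\lambda=\delta_{a}\mu$ with $a=\sqrt{H_R(\lambda)}$ and $\mu\in S^*B$, apply the $H_R=1$ bound to $\mu$, and push it through the scaling identities of the previous paragraph — using that $\vec H^{(j)}\mathfrak R^{\boxtimes\boxplus}_\lambda=\partial_t^j\mathfrak R^{\boxtimes\boxplus}_{\lambda_t}|_{t=0}$ equals (a Hilbert-Schmidt-norm-equivalent expression in) $\partial_t^j R_t(\boxtimes,\boxplus)|_{t=0}$ by Remark \ref{rmk:hilbert-schmidt}, and similarly for the Ricci traces by summing over the appropriate boxes and for $\rho_{\mm}$ directly. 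Evaluation at $t=0$ is automatic since the lemmas bound $\partial_t^j$ uniformly on all of $[0,T]$.

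The Ricci estimate then follows from the curvature estimate by tracing: $\mathfrak{Ric}^{\boxtimes}_\lambda=\sum_{\boxplus}\tr(\mathfrak R^{\boxtimes\boxplus}_\lambda\,\text{(appropriate block)})$ over superboxes $\boxplus$ in the same column as $\boxtimes$ (cf.\ Appendix \ref{a:canonicalframe}), so $|\vec H^{(j)}\mathfrak{Ric}^{\boxtimes}_\lambda|\leq C' H_R(\lambda)^{j/2+\col(\boxtimes)}$ with $\col(\boxtimes)=\col(\boxplus)$ for the surviving terms; the off-diagonal contributions with $\col(\boxplus)\neq\col(\boxtimes)$ either do not appear in the Ricci trace or, if they did, would only improve the exponent. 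I expect the main obstacle to be purely bookkeeping rather than conceptual: one must check carefully that the finitely many $u$-star-shaped neighbourhoods produced by the lemmas, after rescaling by all $a\in(0,1]$, genuinely cover $T^*B_{\neq 0}$ (this is where the $u$-star-shaped property is indispensable — it guarantees that shrinking $u$ keeps $\lambda$ in $\Lambda_i$, so the dilates $\delta_a\Lambda_i$ fill out the ``cone'' down to the singular locus $\distr^0$), and that the correction term from $V$ in $\rho_{\mm}$ is absorbed correctly given that $H_R$ is bounded below on $T^*B\cap\{H_R\geq 1\}$ but not on all of $T^*B_{\neq 0}$ — for small $H_R$ one simply uses the $H_R=1$ scaling directly and the $V$-term, being $O(1)$, is dominated by $H_R(\lambda)^{(j+1)/2}$ only after noting that the lemma already bounds the \emph{whole} $\vec H^{(j)}\rho_{\mm,\lambda}$, $V$-term included, uniformly on each $\Lambda_i$, so no separate treatment of $V$ is actually needed once the rescaling is set up on the nose.
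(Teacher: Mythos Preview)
Your approach is correct and is essentially the paper's own: reduce to the unit cosphere $K=\{H_R=1\}$ over $B$ via the exact homogeneity of $\mathfrak R$, $\mathfrak{Ric}$, $\rho_{\mm}$ under the full fiber dilation $\lambda\mapsto c\lambda$, and glue the local estimates from Lemmas~\ref{l:lemmaRfirst}--\ref{l:lemmaRlast} and~\ref{l:canvorder} by a finite cover. Two points to tighten: first, you cannot apply the lemmas at centers $\bar\lambda\in S^*B\cap\distr^0$ (they require $\bar\lambda\in T^*M_{\neq 0}$), so the compactness argument must instead cover the \emph{non-compact} set $K_{\neq 0}$ --- and this is precisely where the $u$-star-shaped property is used (it forces the $\Lambda_i$ to have uniform size in the $u$-direction near $u=0$, allowing a finite subcover of $K_{\neq 0}$), not in the full-dilation rescaling step; second, the scaling $\rho_{\mm,c\lambda}=c\,\rho_{\mm,\lambda}$ is exact with no $V$-correction (cf.\ \cite[Prop.~4.9]{BR-Connection}), so your last caveat is unnecessary.
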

\begin{proof}
With no loss of generality we assume that $B$ is properly contained in a trivial neighborhood $\mathcal{O}$ such that
\begin{equation}
T^*\mathcal{O} = \mathcal{O}\times \R^{k}\times \R^{n-k} , \qquad \lambda = (x;u,v),
\end{equation}
and for  the sub-Riemannian and Riemannian Hamiltonians it holds:
\begin{equation}
H(x;u,v) = \frac{1}{2}|u|^2,\qquad H_R(x;u,v) = \frac{1}{2}(|u|^2+|v|^2).
\end{equation}
Take the following compact set $K \subseteq T^*B$:
\begin{equation}\label{eq:Kset}
K = \{(x;u,v)\in B\times \R^k\times \R^{n-k}\mid |u|^2+|v|^2 = 1\}.
\end{equation}
We can find a finite number of neighborhoods $\Lambda^i$, $i=1,\dots,N$ as in Lemmas \ref{l:lemmaRfirst}--\ref{l:lemmaRlast}, that cover $K_{\neq 0}$. This is possible, even if $K_{\neq 0}$ is non-compact, since the neighborhoods $\Lambda$ obtained in the previous lemmas are $u$-star-shaped (otherwise, their size in the $u$-coordinate might get smaller and smaller as $u\to0$). We deduce that for all $j\in \N$ there exists $C=C_j>0$ such that
\begin{equation}
\|\partial_t^j\mathfrak{R}_{\lambda_t}^{\boxtimes\boxplus}|_{t=0}\| \leq C, \qquad  \forall\, t\in [0,T],\; \lambda \in K_{\neq 0},
\end{equation}
where $\lambda_t = e^{t\vec{H}}(\lambda)$. Recall the homogeneity properties of the curvature from \cite[Thm.\@ 4.7]{BR-Connection}: for any $c>0$, and up to constant congruence (cf.\@ Remark \ref{rmk:hilbert-schmidt}), it holds
\begin{equation}
\mathfrak{R}_{\lambda^c_t}^{\boxtimes\boxplus} \simeq c^{\col(\boxtimes) + \col(\boxplus)}\mathfrak{R}_{\lambda_{ct}}^{\boxtimes\boxplus}, \qquad \forall\, \lambda \in T^*M_{\neq 0},
\end{equation}
where $\lambda^c_t = e^{t\vec{H}}(c\lambda)$. In particular taking derivatives with respect to $t$, and evaluating at $t=0$, we have
\begin{equation}
\partial_t \mathfrak{R}_{\lambda_t}^{\boxtimes\boxplus}|_{t=0} \simeq c^{\col(\boxtimes) + \col(\boxplus) + j}\partial_t \mathfrak{R}_{\eta_t}^{\boxtimes\boxplus}|_{t=0}, \qquad \forall\, \lambda \in T^*M_{\neq 0},
\end{equation}
where $\eta = \lambda/c$. The statement then follows by reduction to $K_{\neq 0}$, since any point $\lambda \in T^*\mathcal{O}_{\neq 0}$ has the form $c \eta$ with $\eta \in K_{\neq 0}$ and $c= \sqrt{2H_R(\lambda)}$.

The statement concerning the canonical Ricci curvatures follows immediately as, for any superbox $\boxtimes \in Y$ and $\lambda \in T^*M_{\neq 0}$ it holds by definition: $\mathfrak{Ric}_\lambda^\boxtimes = \tr (\mathfrak{R}_\lambda^{\boxtimes\boxtimes})$.

The final statement about the geodesic volume derivative follows in a similar fashion, starting from Lemma \ref{l:canvorder} and using the fact that the following property holds:
\begin{equation}
\rho_{\mm,\lambda^c_t} = c\,\rho_{\mm,\lambda_{ct}}, \qquad \forall\, \lambda \in T^*M_{\neq 0},
\end{equation}
which is immediately obtained by using \cite[Prop.\@ 4.9]{BR-Connection} in \eqref{eq:volder}.
\end{proof}

\subsection{Proof of Theorem \ref{t:fattheorem}}

We can now prove Theorem \ref{t:fattheorem}. In fact, it is a corollary of Theorem \ref{t:fatcurvbound}.
\begin{proof}[Proof of Theorem \ref{t:fattheorem}]
If $x,y\notin  \Cut(M)$, we have by Proposition \ref{prop:naturalobject} that
\begin{equation}
\sfD(x,y)^2 = \|\nabla^R_x c(\cdot,y)\|_R^2  = 2H_R( \di_x c(\cdot,y)) = 2H_R(\lambda^{x,y}),
\end{equation}
where $\lambda^{x,y} = \di_x c(\cdot,y) \in T_x ^*M_{\neq 0}$ is the initial covector of the unique strictly normal geodesic $\gamma :[0,1]\to M$ joining $x$ with $y$. Notice that the first equality follows by the very definition of $\sfD$ in the sub-Riemannian case and by the fact that $c$ is smooth outside of $\mathrm{Cut}(M)$. The second equality is obvious by the usual duality between norm and Hamiltonian. In the third equality we used the fact that the initial covector can be recovered by the differential of the squared distance (cf.\@ for example \cite[Lemma 2.20(ii)]{ABR-curvature}).

Let $K\subset M$ be a compact set as in the statement of Theorem \ref{t:fattheorem}. Let $B$ be a larger compact set such that for any $x,y\in K$ all geodesics joining them belong to $B$. On $B$ we apply Theorem \ref{t:fatcurvbound}. In particular if $\lambda\in T_x^*M$ is the initial covector of the unique geodesic $\gamma$ joining $x,y \in K \setminus \mathrm{Cut}(M)$ there exists $C' >0$ such that
\begin{equation}
|\mathfrak{Ric}_{\lambda}^\boxdot| \leq C' H_R(\lambda)^{\col(\boxdot)} = \frac{C'}{2^{\col(\boxdot)}} \sfD(x,y)^{2\col(\boxdot)},
\end{equation}
and where $\boxdot =a,b,c$. This (stronger) inequality in particular implies the desired lower bounds on $\mathfrak{Ric}$ by recalling that $\col(a) =\col(b) =2$ and $\col(c)=1$.

In an analogous way one proves the statement about the geodesic volume derivative starting from the corresponding estimate in Theorem \ref{t:fatcurvbound}.
\end{proof}

\subsection{Compact fat structures satisfy the \texorpdfstring{$\CD(\beta,n)$}{CD} condition}\label{sec:fatcptareCD}

We combine the curvature estimates of Theorem \ref{t:fattheorem} for fat structures, and Theorem \ref{thm:RiccbddbelowareCD}, to obtain the main result of this section.

Recall that $\sfs_\kappa:[0,+\infty)\to \R$, for $\kappa\in\R^\ell$, $\ell\in \N$, are the basic comparison functions of Proposition \ref{p:basic}, associated with a Young diagram of length $\ell$. In particular:
\begin{itemize}
\item for $\ell=1$, $\sfs_{\kappa}$ is a Riemannian-type function (see Section \ref{ss:Riemanniancase});
\item for $\ell=2$, $\sfs_{\kappa_1,\kappa_2}$ is a two-columns-type function (see Section \ref{ss:twocolumnscase}).
\end{itemize}
In all cases, the $\sfs_\kappa$ are real-analytic and have order $N=\ell^2$ at zero.

\begin{theorem}\label{thm:fatcptareCD}
Let $(M,\sfd,\mm)$ be a compact $n$-dimensional sub-Riemannian metric measure space, with fat distribution of rank $k<n$. Let $\sfD$ be the natural gauge function associated with a Riemannian extension. There exist constants $C,\kappa_a,\kappa_b,\kappa_c\in \R$ with the following property: let $\sfs : [0,+\infty) \to \R$ be the real-analytic function
\begin{equation}
\sfs(\theta) := \theta \cdot e^{C\theta} \cdot \sfs_{\kappa_b,\kappa_a}(\theta)^{n-k} \cdot\sfs_{\kappa_c}(\theta)^{2k-n-1} ,
\end{equation}
and let $\beta$ be defined accordingly as in \eqref{eq:defbeta}. Then $(M,\sfd,\mm,\sfD)$ satisfies the $\CD(\beta,n)$.
\end{theorem}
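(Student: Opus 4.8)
The plan is to obtain Theorem~\ref{thm:fatcptareCD} by feeding the fat curvature estimates of Theorem~\ref{t:fattheorem} into the abstract result Theorem~\ref{thm:RiccbddbelowareCD}. First I would record the structural facts about compact fat structures. Since $\distr$ is fat it is strong bracket-generating, hence $(M,\sfd)$ has no non-trivial abnormal geodesics and is therefore ideal in the sense of Definition~\ref{def:ideal}; being fat it also has step $\leq 2$, so Theorem~\ref{t:boundedness}\ref{i:boundedness3} (applied with $\mathcal{O}=M$) shows that the natural gauge $\sfG=\sfD$ of Definition~\ref{def:D}, built from a Riemannian extension $\sfd_R$, is globally bounded and in particular finite on $M\times M$. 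Moreover every non-trivial geodesic has the same two-column reduced Young diagram $Y$, whose set of levels $\Upsilon$ consists of one level of length $\ell=2$ and size $r=n-k$, carrying the superboxes $b$ (column index $1$) and $a$ (column index $2$), and one level of length $\ell=1$ and size $r=2k-n\geq 1$ (fatness forces $2k\geq n+1$), carrying the superbox $c$. Finally, being ideal the structure satisfies the minimizing Sard property vacuously, so $\mm(\Cut(x))=0$ for every $x\in M$ (cf.\ Lemma~\ref{lem:starsardpointofsmoothness} and \cite[Prop.\ 6]{BR-realanalMCP}); thus for each $x$ and $\mm$-a.e.\ $y$ the pair $(x,y)$ lies off $\Cut(M)$, so the joining geodesic is unique, has Young diagram $Y$, and the estimates below apply.

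\textbf{Verifying Definition~\ref{def:srbddbelow}.} Applying Theorem~\ref{t:fattheorem} with $K=M$ produces constants $\kappa,C>0$ such that for all $(x,y)\notin\Cut(M)$, with $\gamma$ the unique joining geodesic of initial covector $\lambda$, one has $\mathfrak{Ric}^{a}_{\lambda}\geq -\kappa\,\sfD(x,y)^4$, $\mathfrak{Ric}^{b}_{\lambda}\geq -\kappa\,\sfD(x,y)^2$, $\mathfrak{Ric}^{c}_{\lambda}\geq -\kappa\,\sfD(x,y)^2$ and $\rho_{\mm,\lambda}\leq|\rho_{\mm,\lambda}|\leq C\,\sfD(x,y)$. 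I would then set $\bar C(\theta):=C\theta$ (homogeneous of degree $1$); for the length-$2$ level take $\bar\kappa=(\kappa_b,\kappa_a)$ with $\kappa_a=\kappa_b:=-\kappa/(n-k)$, so that $\bar\kappa_1(\theta)=\kappa_b\theta^2$ and $\bar\kappa_2(\theta)=\kappa_a\theta^4$ are homogeneous of degrees $2$ and $4$ and $\tfrac1{n-k}\mathfrak{Ric}^{b}_\lambda\geq\kappa_b\sfD(x,y)^2$, $\tfrac1{n-k}\mathfrak{Ric}^{a}_\lambda\geq\kappa_a\sfD(x,y)^4$; for the length-$1$ level, after the prescribed replacement $r\mapsto r-1=2k-n-1$, take $\bar\kappa=(\kappa_c)$ with $\kappa_c:=-\kappa/\max\{2k-n-1,1\}$, so that $\bar\kappa_1(\theta)=\kappa_c\theta^2$ is homogeneous of degree $2$ and $\tfrac1{2k-n-1}\mathfrak{Ric}^{c}_\lambda\geq\kappa_c\sfD(x,y)^2$ (this level is simply omitted when $2k-n-1=0$, in which case $\mathfrak{Ric}^{c}_\lambda\equiv0$ anyway). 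Together with $\rho_{\mm,\lambda}\leq\bar C(\sfD(x,y))$, and the ``$\mm$-a.e.'' reduction from the first paragraph, this is exactly Definition~\ref{def:srbddbelow} with $m=1$ and $\sfG=\sfD$.

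\textbf{Identifying $\sfs$ and concluding.} It remains to compute the function attached to these data by \eqref{eq:sfsbddbelow}. In the scalar case $m=1$ one identifies $\bar\kappa$ with a vector of $\R^\ell$ (Remark~\ref{rmk:identify}), so $\sfs_{\bar\kappa}$ is $\sfs_{\kappa_b,\kappa_a}$ for the length-$2$ level (a two-columns model function, Section~\ref{ss:twocolumnscase}) and $\sfs_{\kappa_c}$ for the length-$1$ level (a Riemannian-type model function, Section~\ref{ss:Riemanniancase}); since $e^{\bar C(\theta)}=e^{C\theta}$ and $|\theta|=\theta$, formula \eqref{eq:sfsbddbelow} with the replacement convention yields precisely
\[
\sfs(\theta)=\theta\,e^{C\theta}\,\sfs_{\kappa_b,\kappa_a}(\theta)^{n-k}\,\sfs_{\kappa_c}(\theta)^{2k-n-1},
\]
which is real-analytic (a product and composition of real-analytic functions, by Proposition~\ref{p:basic}) and, by Proposition~\ref{p:basic}\ref{i:basic-3}, satisfies $\sfs(\theta)=c\theta^{N}+o(\theta^{N})$ with $c>0$ and $N=1+4(n-k)+(2k-n-1)=3n-2k$, so that \eqref{eq:fftcN} holds. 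Letting $\beta$ be the distortion coefficient built from this $\sfs$ via \eqref{eq:defbeta}, Theorem~\ref{thm:RiccbddbelowareCD} applies (the structure is ideal, $\sfD$ is finite, Ricci is bounded below in the required sense) and gives that $(M,\sfd,\mm,\sfD)$ satisfies the $\CD(\beta,n)$ condition with $n=\dim M$, as claimed.

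\textbf{Main obstacle.} All the analytic content is already packaged in Theorems~\ref{t:fattheorem} and \ref{thm:RiccbddbelowareCD}, so the real work here is bookkeeping: reading off the two-column Young diagram and its levels, matching the homogeneity degrees (degree $2$ in column $1$, degree $4$ in column $2$), absorbing the $1/r_\alpha$ normalizations into the constants $\kappa_a,\kappa_b,\kappa_c$, and correctly handling the length-$1$-level convention (subtract one from its size, which pulls out the geodesic-direction factor $\theta$ in $\sfs$, and drop the level when $2k-n=1$). A secondary point needing care is the ``$\mm$-a.e.\ $y$'' clause of Definition~\ref{def:srbddbelow}, i.e.\ that $\mm(\Cut(x))=0$; here this is automatic because fat $\Rightarrow$ ideal $\Rightarrow$ minimizing Sard.
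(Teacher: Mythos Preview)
Your proposal is correct and follows essentially the same route as the paper: invoke Theorem~\ref{t:fattheorem} on the compact $M$ to obtain the Ricci and volume-derivative bounds, package them as Definition~\ref{def:srbddbelow} for the two-column Young diagram with levels of length $2$ (size $n-k$) and length $1$ (size $2k-n$, with the $r\mapsto r-1$ convention), and then apply Theorem~\ref{thm:RiccbddbelowareCD}. Your explicit identification of $\sfs$ and its order $N=3n-2k$ at zero matches the paper's computation exactly.
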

Since $\sfs$ is real-analytic, using Proposition \ref{prop:propertiesbeta}\ref{i:propertiesbeta5}, we obtain the following.
\begin{corollary}\label{cor:fatcptareMCP}
Let $(M,\sfd,\mm)$ be a compact $n$-dimensional sub-Riemannian metric measure space, with fat distribution of rank $k<n$. Then there exists $N'\geq 3n-2k$ such that $(M,\sfd,\mm)$ satisfies the classical $\MCP(0,N')$.
\end{corollary}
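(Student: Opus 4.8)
The plan is to deduce this directly from Theorem \ref{thm:fatcptareCD}, the monotonicity properties of the $\CD(\beta,n)$ classes (Proposition \ref{prop:hierarchy}), and the lower bound $\beta_t(\theta)\geq t^{N'}$ available when $\sfs$ is real-analytic on a compact interval (Proposition \ref{prop:propertiesbeta}\ref{i:propertiesbeta5}). First I would invoke Theorem \ref{thm:fatcptareCD}: since $(M,\sfd)$ is compact with fat distribution of rank $k<n$, equipping $M$ with the natural gauge function $\sfD$ associated with a Riemannian extension yields constants $C,\kappa_a,\kappa_b,\kappa_c$ and the real-analytic function $\sfs(\theta)=\theta\,e^{C\theta}\,\sfs_{\kappa_b,\kappa_a}(\theta)^{n-k}\,\sfs_{\kappa_c}(\theta)^{2k-n-1}$, such that $(M,\sfd,\mm,\sfD)$ satisfies $\CD(\beta,n)$ for the associated $\beta$. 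Note $\sfs$ has order at zero equal to $N=1+2^2(n-k)+1^2(2k-n-1)=3n-2k$, by the order computation for the factors ($\sfs_{\kappa_b,\kappa_a}$ has order $\ell^2=4$ and $\sfs_{\kappa_c}$ has order $1$, cf.\ Proposition \ref{p:basic}\ref{i:basic-3}), so $N=3n-2k$.

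Next I would distinguish two cases according to whether $\cD=\inf\{\theta>0\mid \sfs(\theta)=0\}$ is finite. If $\cD<+\infty$, then since $\sfs$ is real-analytic on $[0,\cD]$ we may apply Proposition \ref{prop:propertiesbeta}\ref{i:propertiesbeta5} to obtain $N'\geq N=3n-2k$ with $\beta_t(\theta)\geq t^{N'}$ for all $t\in[0,1]$ and $\theta\in[0,+\infty]$. If instead $\cD=+\infty$, I would proceed differently: by the gauge-diameter estimate (Proposition \ref{prop:Gdiamestimate}, whose hypothesis $\beta_t(+\infty)=+\infty$ holds as recorded there, or directly because $\sfD\geq\sfd$ and $M$ is compact so $\diam_\sfD(\supp\mm)<+\infty$ by Corollary \ref{cor:BonMy}), the gauge function $\sfD$ is $\mm$-essentially bounded, say $\sfD\leq R_0$, on $M\times M$. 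Restricting to $\theta\in[0,R_0]$, real-analyticity of $\sfs$ on the compact interval $[0,R_0]$ gives, by the same Weierstrass-division argument as in Proposition \ref{prop:propertiesbeta}\ref{i:propertiesbeta5}, a finite $N'\geq N$ with $\beta_t(\theta)\geq t^{N'}$ for all $t\in[0,1]$ and $\theta\in[0,R_0]$; since the $\MCP(\beta)$ condition only ever evaluates $\beta$ along the gauge $\sfD$, which takes values in $[0,R_0]$ $\mm$-a.e., this suffices. In either case I record the resulting inequality $\beta_t(\theta)\geq t^{N'}$ on the relevant range, with $N'\geq 3n-2k$.

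Finally I would conclude. From $\CD(\beta,n)$ we get $\MCP(\beta)$ (Remark \ref{rem:CDimpliesMCP}), and since $\beta\geq \beta'$ where $\beta'_t(\theta):=t^{N'}$ (on the relevant range), Proposition \ref{prop:hierarchy}\ref{i:hierarchy3} — or directly the monotonicity of the $\MCP(\beta)$ condition in $\beta$ visible in \eqref{eq:defMCPbeta2} — yields that $(M,\sfd,\mm,\sfD)$ satisfies $\MCP(t^{N'})$. But $\sfG=\sfd$ is the classical choice and $\sfD\geq\sfd$; more to the point, $\MCP(t^{N'})$ with $\sfG=\sfD$ implies $\MCP(t^{N'})$ with $\sfG=\sfd$, because $\theta\mapsto t^{N'}$ is constant in $\theta$ (so the value of the gauge is irrelevant in the estimate \eqref{eq:defMCPbeta2}). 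By Theorem \ref{thm:CDbetaVsLSV}\ref{CDbetaVsLSV-iii} (the case $K=0$), $\MCP(t^{N'})=\MCP(\beta^\tau_{0,N'})$ with $\sfG=\sfd$ is equivalent to the classical Ohta–Sturm $\MCP(0,N')$, exactly as carried out in Proposition \ref{prop:mcptoclassicalmcp}; note that the essential non-branching hypothesis needed there is automatic since an ideal — in particular, fat, hence step $\leq 2$ — sub-Riemannian manifold admits no non-trivial abnormal geodesics and thus is (essentially) non-branching. This gives $(M,\sfd,\mm)\in\MCP(0,N')$ with $N'\geq 3n-2k$, as claimed. I expect the only mildly delicate point to be the bookkeeping in the case $\cD=+\infty$, ensuring that the $t^{N'}$-lower bound is needed only on the bounded range $[0,R_0]$ where the gauge actually lives; this is handled cleanly by the gauge-boundedness of $\sfD$ on the compact manifold $M$.
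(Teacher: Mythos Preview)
Your approach is essentially the same as the paper's: invoke Theorem \ref{thm:fatcptareCD}, use Proposition \ref{prop:propertiesbeta}\ref{i:propertiesbeta5} to get $\beta_t\geq t^{N'}$ with $N'\geq N=3n-2k$, and then pass to the classical $\MCP(0,N')$ via essential non-branching and Theorem \ref{thm:CDbetaVsLSV}\ref{CDbetaVsLSV-iii}. The paper dispatches this in one line (``Since $\sfs$ is real-analytic, using Proposition \ref{prop:propertiesbeta}\ref{i:propertiesbeta5}\ldots''), while you are more explicit and more careful about the case $\cD=+\infty$, which the paper glosses over.

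One small correction in your treatment of that case: your justifications for the boundedness of $\sfD$ are not the right ones. Proposition \ref{prop:Gdiamestimate} gives $\diam_{\sfG}(\supp\mm)\leq\cD$, which is vacuous when $\cD=+\infty$; and its hypothesis $\beta_t(+\infty)=+\infty$ need not hold here (the factor $e^{C\theta}$ with $C>0$ can force $\beta_t(\theta)\to 0$ as $\theta\to+\infty$). Likewise, $\sfD\geq\sfd$ together with compactness of $M$ does not by itself bound $\sfD$. The correct argument, already recorded in the remark following Corollary \ref{cor:fatcptareMCP}, is that fat implies step $\leq 2$, so $\sfD$ is locally bounded by Theorem \ref{t:boundedness}\ref{i:boundedness3}, hence globally bounded on the compact $M\times M$. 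With this fix your case split works: on $[0,R_0]$ the function $\sfs$ is real-analytic and strictly positive on $(0,R_0]$, so $\theta\,\sfs'(\theta)/\sfs(\theta)$ is bounded there (it tends to $N$ as $\theta\to 0$), and taking $N'$ to be its supremum yields $\beta_t(\theta)\geq t^{N'}$ on the relevant range. Your observation that for $\beta_t\equiv t^{N'}$ the gauge is irrelevant, so $\MCP(t^{N'})$ with $\sfG=\sfD$ coincides with $\MCP(t^{N'})$ with $\sfG=\sfd$, is correct and is the clean way to land on Theorem \ref{thm:CDbetaVsLSV}\ref{CDbetaVsLSV-iii}.
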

\begin{remark}
Both these results can be seen as a sub-Riemannian  counterpart of the fact that any compact Riemannian manifold has Ricci curvature bounded from below.
\end{remark}
\begin{remark}
Corollary \ref{cor:fatcptareMCP} removes the real-analytic assumption of the analogous result proved in  \cite[Thm.\@ 1.3]{BR-realanalMCP}, in the case when the underlying distribution is fat.
\end{remark}
\begin{remark} 
Fat sub-Riemannian structures are ideal and their step is $2$. In particular $\sfD$ is locally bounded, $\sfD$ is meek, and the regularity conditions for the stability of the $\CD$ and $\MCP$ hold. See Figure \ref{fig:implications}.
\end{remark}
\begin{proof}[Proof of Theorem \ref{thm:fatcptareCD}]
For a fat structure, all normal extremals corresponding to non-trivial geodesics have the same Young diagram, with two columns. It has two levels, denoted by $I$ and $II$, the first one with length $\ell_{I}=2$ and size $r_{I} = n-k$, while the second one with length $\ell_{II}=1$ and size $r_{II}=2k-n$. We label the superboxes as follows:
\begin{equation}
\ytableausetup{nosmalltableaux}
\ytableausetup{centertableaux}
Y = \begin{ytableau}
b & a \\
c\\
\end{ytableau}.
\end{equation}
By Theorem \ref{t:fattheorem}, and since $M$ is compact, we find $\kappa_a,\kappa_b,\kappa_c, C\in \R$ such that, for any $(x,y)\notin \Cut(M)$ and the corresponding geodesic $\gamma:[0,1]\to M$ joining them with initial covector $\lambda \in T^*_x M$, it holds
\begin{gather}
\frac{1}{n-k}\mathfrak{Ric}_{\lambda}^a  \geq \kappa_a \sfD(x,y)^4, \qquad  \frac{1}{n-k}\mathfrak{Ric}^{b}_\lambda \geq \kappa_b \sfD(x,y)^2, \\
\frac{1}{2k-n-1} \mathfrak{Ric}_{\lambda}^c  \geq \kappa_c \sfD(x,y)^2, \qquad \rho_{\mm,\lambda} \leq C \sfD(x,y),
\end{gather}
the bound on $\mathfrak{Ric}_{\lambda}^c$ being omitted if $2k-n=1$. Thus it follows that $(M,\sfd,\mm)$, equipped with the gauge function $\sfD$, has Ricci curvatures bounded from below according to Definition \ref{def:srbddbelow}, with the above Young diagram and
\begin{align}
\bar{\kappa}_I & : [0,+\infty) \to \R^2, &  \bar{\kappa}_I(\sfD)  & := (\kappa_b\sfD^2,\kappa_a \sfD^4),\\
\bar{\kappa}_{II} & : [0,+\infty) \to \R, & \bar{\kappa}_{II}(\sfD) & := \kappa_c\, \sfD^2,\\
\bar{C} & : [0,+\infty) \to \R, & \bar{C}(\sfD) & := C \sfD.
\end{align}
We conclude by applying Theorem \ref{thm:RiccbddbelowareCD} obtaining that $(M,\sfd,\mm,\sfD)$ is a $\CD(\beta,n)$ space. The distortion coefficient $\beta$ is induced by the function $\sfs :\R^m_+ \to \R$ of \eqref{eq:sfsbddbelow}. To describe the latter explicitly, we recall that we are in the scalar case $m=1$, and thus by Remark \ref{rmk:identify} the function $\sfs : [0,+\infty) \to \R$ is given by
\begin{equation}
\sfs(\theta) = \theta \cdot e^{C\theta} \cdot \sfs_{\kappa_b,\kappa_a}(\theta)^{n-k} \cdot\sfs_{\kappa_c}(\theta)^{2k-n-1}, \qquad \theta \in [0,+\infty).
\end{equation}
The positivity domain of $\sfs$ is given by
\begin{equation}
\DOM=  \DOM_{\bar{\kappa}_I}\cap\DOM_{\bar{\kappa}_{II}} = [0,\min\{t_{\kappa_c},t_{\kappa_b,\kappa_a}\}],
\end{equation}
and $\sfs$ has order $N=1+ (n-k)\ell_I^{2} + (2k-n-1)\ell_{II}^{2} =3n-2k$ as $\theta \to 0$.
\end{proof}		
\section{Examples and applications}\label{sec:applications}

\subsection{Heisenberg group}\label{sec:heisenbergpuro}

In this section we describe the results already presented in Section~\ref{sec:howtorecover2}, in the special three-dimensional case, in order to make a bridge with the comparison theory developed in the previous sections.

The first Heisenberg group $\mathbb{H}^1$ is the non-commutative group structure on $\R^{3}$ given by the law
\begin{equation}
(x,y,z) \star (x',y',z') = \left(x+x',y+y', z+z'+\frac{1}{2}(xy'-yx')\right).
\end{equation}

Consider the left-invariant vector fields 
\begin{equation}
X = \frac{\partial}{\partial x} -\frac{y}{2}\frac{\partial}{\partial z}, \qquad Y =\frac{\partial}{\partial y} + \frac{x}{2}\frac{\partial}{\partial z},
\end{equation}
and the left-invariant metric making $\{X,Y\}$ a global orthonormal frame. We equip $\mathbb{H}^{1}$ with the associated sub-Riemannian (Carnot-Carath\'eodory) distance, denoted by $\sfd_{0}$ (this notation is slightly different from the one used in Section \ref{sec:howtorecover2}, but it is more convenient in what follows). We also equip $\mathbb{H}^1$ with the Lebesgue measure $\mathscr{L}^{3}$, which is a Haar measure. We set
\begin{equation}
\sfs(\theta) =\theta \sin\left(\frac{\theta}{2}\right)\left[\sin\left(\frac{\theta}{2}\right)-\frac{\theta}{2} \cos\left(\frac{\theta}{2}\right)\right], \qquad \theta\in [0,+\infty).
\end{equation} 
Notice that $\sfs(\theta)$ has finite order equal to $N=5$ as $\theta \to 0$. Moreover, it is clear that
\begin{equation}
\cD=\inf\{\theta>0\mid \sfs(\theta)=0\}=2\pi.
\end{equation}
The corresponding model distortion coefficient defined as in \eqref{eq:defbeta} is then
\begin{equation}
[0,1]\times [0,\infty]\ni (t,\theta) \mapsto \beta^{\mathbb{H}^{1}}_t(\theta) =
\begin{cases}
 t^{5}\quad &\theta =0, \\
t\dfrac{ \sin\left(\frac{t\theta}{2}\right)\left(\sin\left(\frac{t\theta}{2}\right)-\frac{t\theta}{2} \cos\left(\frac{t\theta}{2}\right)\right)}{ \sin\left(\frac{\theta}{2}\right)\left(\sin\left(\frac{\theta}{2}\right)-\frac{\theta}{2} \cos\left(\frac{\theta}{2}\right)\right)}& 0<\theta <2\pi,\\
 +\infty & \theta \geq 2\pi, t\neq 0, \\
0 & \theta \geq 2\pi, t= 0. \\
 \end{cases}
\end{equation}

The main result in \cite[Thm.~1.1]{BKS} is a Jacobian determinant inequality, see \cite[Eq.\@ (3.17), p.\@ 61]{BKS},  which yields by standard manipulations an interpolation inequality for optimal transport. More precisely, for all  $\mu_0,\mu_1\in \Prob_{bs}^{*}(\mathbb{H}^{1},\sfd_0,\mathscr{L}^3)$, there exists $\nu\in\OptGeo(\mu_0,\mu_1)$ associated with a $W_2$-geodesic $(\mu_t)_{t\in [0,1]}$ such that $\mu_t \ll \mathscr{L}^3$ for all $t\in (0,1]$, for $\nu$-a.e.\@ $\gamma$ it holds $(\gamma_0,\gamma_t)\notin \Cut(\mathbb{H}^1)$ for all $t\in (0,1]$, and
\begin{equation}\label{eq:interpolationineq3}
\frac{1}{\rho_t(\gamma_t)^{1/3}}\geq \frac{\beta^{\mathbb{H}^{1}}_{1-t}(\theta^{\gamma_1,\gamma_0})^{1/3}}{\rho_0(\gamma_0)^{1/3}} + \frac{\beta^{\mathbb{H}^{1}}_{t}(\theta^{\gamma_0,\gamma_1})^{1/3}}{\rho_1(\gamma_1)^{1/3}},\qquad \forall\,t\in [0,1],
\end{equation}
where $\rho_t = \frac{\di \mu_t}{\di \mathscr{L}^3}$, and $\theta^{\gamma_0,\gamma_1}$ is the vertical norm of the covector associated with the  unique geodesic with joining $\gamma_0$ with $\gamma_1$.  In \eqref{eq:interpolationineq3} we use the convention that if $\mu_0$ is not absolutely continuous then the first term in the right hand side is omitted.

Notice that the argument in the r.h.s.\@ of \eqref{eq:interpolationineq3} does not depend on the distance. It is then natural to set as a gauge function any map $\sfG:\mathbb{H}^1\times \mathbb{H}^1\to [0,+\infty]$ such that
\begin{equation}\label{eq:sopradacancellare}
\sfG(q,q')= \theta^{q,q'}, \qquad \forall\,(q,q')\notin\Cut(\mathbb{H}^1).
\end{equation}
We remark that $\Cut(\mathbb{H}^1)$ has zero $(\ee_0,\ee_1)_\sharp\nu$-measure for all optimal dynamical plans $\nu$ involved, so that the values of $\sfG$ on that set are irrelevant.  By standard arguments (cf.\@ again Section~\ref{sec:howtorecover2}), the following holds.

\begin{theorem}
The gauge m.m.s.\@ $(\mathbb{H}^{1},\sfd_0,\mathscr{L}^3,\sfG)$ satisfies the $\CD(\beta^{\mathbb{H}^{1}},3)$ condition.
\end{theorem}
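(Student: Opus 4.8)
The plan is to recognize that this theorem for $\mathbb{H}^1$ is simply the $d=1$ specialization of the general result already established for $\mathbb{H}^d$ in Theorem \ref{t:HeisisCD} of Section \ref{sec:howtorecover2}. Indeed, setting $d=1$ in the distortion coefficient $\beta^{\mathbb{H}^d}$ gives precisely $\beta^{\mathbb{H}^1}$ here: the exponent $2d-1$ becomes $1$, the order $N=2d+3$ becomes $N=5$, the dimensional parameter $2d+1$ becomes $3$, and the defining function $\sfs$ matches verbatim. So at the most economical level, the proof is a single sentence invoking Theorem \ref{t:HeisisCD}.

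If a self-contained argument is wanted, I would reproduce the short derivation from Section \ref{sec:howtorecover2} in this special case. First I would record that the interpolation inequality \eqref{eq:interpolationineq3} is exactly the Jacobian determinant inequality of \cite[Thm.~1.1]{BKS} (specialized to $d=1$), rephrased in the optimal-transport form of \cite[Eq.~(3.17)]{BKS}; this is cited, not proved. Then, for $\mu_0\in\Prob_{bs}(\mathbb{H}^1,\sfd_0,\mathscr{L}^3)$ and $\mu_1\in\Prob_{bs}^*(\mathbb{H}^1,\sfd_0,\mathscr{L}^3)$ with disjoint supports, I would take the $W_2$-geodesic $(\mu_t)$ and the dynamical plan $\nu$ furnished by \eqref{eq:interpolationineq3}, raise that pointwise inequality to suitable powers, and integrate. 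Concretely: from $\rho_t(\gamma_t)^{-1/3}\geq \beta^{\mathbb{H}^1}_{1-t}(\sfG(\gamma_1,\gamma_0))^{1/3}\rho_0(\gamma_0)^{-1/3}+\beta^{\mathbb{H}^1}_t(\sfG(\gamma_0,\gamma_1))^{1/3}\rho_1(\gamma_1)^{-1/3}$, take logarithms, use that $(x,y)\mapsto\log(e^x+e^y)$ is convex together with Jensen's inequality with respect to $\nu$, and re-exponentiate; writing everything in terms of $\U_3(\mu_t|\mathscr{L}^3)=\exp(-\Ent(\mu_t|\mathscr{L}^3)/3)$ yields exactly the $\CD(\beta^{\mathbb{H}^1},3)$ inequality \eqref{eq:defCDbetan} with $n=3$ and $\sfG(x,y)=\theta^{x,y}$. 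One must also check the endpoint densities: since $\mu_1\in\Prob_{bs}^*$ it is absolutely continuous with $\Ent(\mu_1|\mathscr{L}^3)\in\R$, and when $\mu_0$ is not absolutely continuous the first term is dropped, matching the convention $\infty\cdot 0=0$ in \eqref{eq:defCDbetan} and the convention in \eqref{eq:interpolationineq3}.

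The only genuinely nontrivial input is the validity of \eqref{eq:interpolationineq3} itself, i.e.\ the sharp Jacobian/interpolation inequality of Balogh--Kristály--Sipos; but this is an external cited result, so within the scope of this paper there is no real obstacle — the work is purely the bookkeeping of translating their Jacobian inequality through the $\log(e^x+e^y)$ convexity and Jensen into the entropic form of Definition \ref{def:CDMCPbetan}, exactly as in the proof of Theorem \ref{t:HeisisCD}. I would therefore present the proof as: ``This is the case $d=1$ of Theorem \ref{t:HeisisCD}; for the reader's convenience we recall the argument,'' followed by the three-line computation above. A final cosmetic point worth a remark: the function $\sfs$ displayed here differs from the $\mathbb{H}^d$ one of Section \ref{sec:howtorecover2} only through $\sin(\theta/2)^{2d-1}\rightsquigarrow\sin(\theta/2)$, and $\cD=2\pi$ and $N=5$ are the corresponding specializations, so nothing new needs to be verified about $\beta^{\mathbb{H}^1}$ beyond what Proposition \ref{prop:propertiesbeta} already gives.
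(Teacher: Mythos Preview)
Your proposal is correct and matches the paper's approach exactly: the paper does not give a separate proof but simply says ``By standard arguments (cf.\@ again Section~\ref{sec:howtorecover2}), the results of \cite{BKS} can be restated within our framework,'' which is precisely your observation that this is the $d=1$ case of Theorem~\ref{t:HeisisCD}, with the same convexity-of-$\log(e^x+e^y)$ plus Jensen argument applied to the interpolation inequality \eqref{eq:interpolationineq3}.
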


One can check that $\beta^{\H^1}_t(\theta) \geq t^{5}$ for all $t\in [0,1]$ and $\theta \in [0,2\pi]$. Thus, by Proposition~\ref{prop:hierarchy}\ref{i:hierarchy3}, $\CD(\beta^{\H^1},3) \Rightarrow \CD(t^{5},3)$. In turn, the latter implies the classical $\MCP(0,5)$.

\subsubsection{Application of the comparison theory to the Heisenberg group}
To make the link with the curvature computations of Sections \ref{sec:comparison}-\ref{sec:fat}, notice that for every pair  $(q,q')\in \mathbb{H}^1\times \mathbb{H}^1\setminus\Cut(\mathbb{H}^1)$ and for the geodesic $\gamma:[0,1]\to \mathbb{H}^1$ joining them, the (reduced) Young diagram $Y$ along $\gamma$ has two columns which can be labeled as follows
\[
Y=\ytableausetup{centertableaux}
\begin{ytableau}
b & a \\
c\\
\end{ytableau}
\]
with all sizes of the superboxes equal to 1. The sub-Riemannian Ricci curvatures have been computed in \cite{AAPL-Ricci}, and are:
\begin{align}
\mathfrak{Ric}^a_{\lambda} =\mathfrak{Ric}^c_{\lambda} = 0, \qquad
\mathfrak{Ric}^b_{\lambda} & = \theta^{2}, 
\end{align}
where
\begin{equation}
\theta = \theta^{q,q'}= |\langle \lambda^{q,q'}, \partial_z\rangle|,
\end{equation}
is the absolute value of the $z$-component of the unique initial covector $\lambda^{q,q'}$ of the geodesic $\gamma :[0,1]\to \mathbb{H}^1$ joining  $q$ and $q'$. We stress that, for such geodesics, $\theta<2\pi$.
We have (cf.\@ Section~\ref{ss:Sasakiancase})
\begin{equation}
\mathsf{\sfs}_{\kappa_1,0}(t) = 
\frac{2 -2 \cos \left(\sqrt{\kappa_1} t\right)-\sqrt{\kappa_1} t 
\sin \left(\sqrt{\kappa_1} t\right)}{\kappa_1^2}, 
\qquad t_{\kappa_1,0}
 = 
\begin{cases}
\tfrac{2\pi}{\sqrt{\kappa_1}} & \kappa_1>0, \\ +\infty & \kappa_1\leq 0.
\end{cases}
\end{equation}

From Theorem~\ref{thm:maincomparison}, one obtains the following comparison theorem
\begin{align}
\beta_{t}^{(\mathbb{H}^1,\sfd_0,\mathcal{L}^3)}(q,q')& \geq 
t\frac{2 -2 \cos \left(t\theta \right)-t\theta  
\sin \left(t\theta \right) }
{2 -2 \cos \left(\theta \right)-\theta  
\sin \left(\theta \right)}\\
& = t \frac{\sin\left(\frac{t\theta}{2}\right)\left[\sin\left(\frac{t\theta}{2}\right)-\frac{t\theta}{2}\cos\left(\frac{t\theta}{2}\right)\right]}{\sin\left(\frac{\theta}{2}\right)\left[\sin\left(\frac{\theta}{2}\right)-\frac{\theta}{2}\cos\left(\frac{\theta}{2}\right)\right]}\\
& = \beta^{\mathbb{H}^1}_t(\theta),
\end{align}
where $\beta^{\mathbb{H}^1}$ is the expression given in Section~\ref{sec:howtorecover2}.

\subsection{The Grushin plane}\label{sec:Grushin}

The Grushin plane is a quotient of the Heisenberg group. More precisely, there is a dilation-invariant (and thus non-compact) subgroup $H$ of the Heisenberg group $\mathbb{H}^1$ and a sub-Riemannian structure on the space of \emph{right} cosets $H\backslash \mathbb{H}^1$ such that $\pi : \mathbb{H}^1 \to H\backslash \mathbb{H}^1$ is a submetry, and $H\backslash \mathbb{H}^1$ is isometric to the Grushin plane, see \cite[Sec.\ 5]{Bellaiche}. It is well-known that the classical synthetic theory of Ricci curvature bounds descends to suitable quotients \cite{GKMS-quotients}. We expect that a similar theory can be developed for gauge metric measure spaces, but this is out of the scope of the present work.\footnote{One should be aware, however, that the Grushin plane is not a homogeneous space in the classical sense: the Heisenberg group acts transitively (from the right) on $\mathbb{G}_2$, but this action is not by isometries, see the discussion in \cite[p.\ 53]{Bellaiche}.}

In this section we illustrate how the Grushin plane, equipped with a suitable gauge function, satisfies the same curvature-dimension inequalities of the Heisenberg group.

To this purpose, we introduce a presentation of the Grushin plane. The Grushin plane $\mathbb{G}_2$ is the sub-Riemannian structure on $\R^2$ defined by the global generating frame
\begin{equation}
X_1 = \partial_x, \qquad X_2 = x \partial_y.
\end{equation}
We equip the Grushin plane with the corresponding sub-Riemannian distance $\sfd_{\mathbb{G}_2}$ and the Lebesgue measure $\mm=\mathscr{L}^2$ of $\R^2$, making it a metric measure space $(\mathbb{G}_2,\sfd_{\mathbb{G}_2},\mathcal{L}^2)$.

Fix $q = (x,y)\in \R^2$ and $q'\notin \Cut(q)$. Let $\gamma:[0,1]\to \R^2$ be the geodesic from $q$ to $q'$. Let $\lambda = u\, dx+v\, dy \in T^*_{(x,y)}\R^2$ be its initial covector. We set $\sfG_{\mathbb{G}_2}:\mathbb{G}_2\times \mathbb{G}_2 \to [0,\infty)$:
\begin{equation}\label{eq:GGrushin}
\sfG_{\mathbb{G}_2}(q,q') = v,\qquad \forall (q,q')\notin \Cut(\mathbb{G}_2),
\end{equation}
arbitrarily extending it to $0$ at the cut-locus. It is well-known that $|v|<\pi$, and conversely if $|v|<\pi$ it holds $\exp_{(x,y)}(u\, dx+v\, dy) \notin \Cut(q)$, see \cite[Ch.\ 13]{nostrolibro}. See Figure \ref{f:Grushin}.

\begin{figure}[t]
\centering
\includegraphics[width=.4\textwidth]{../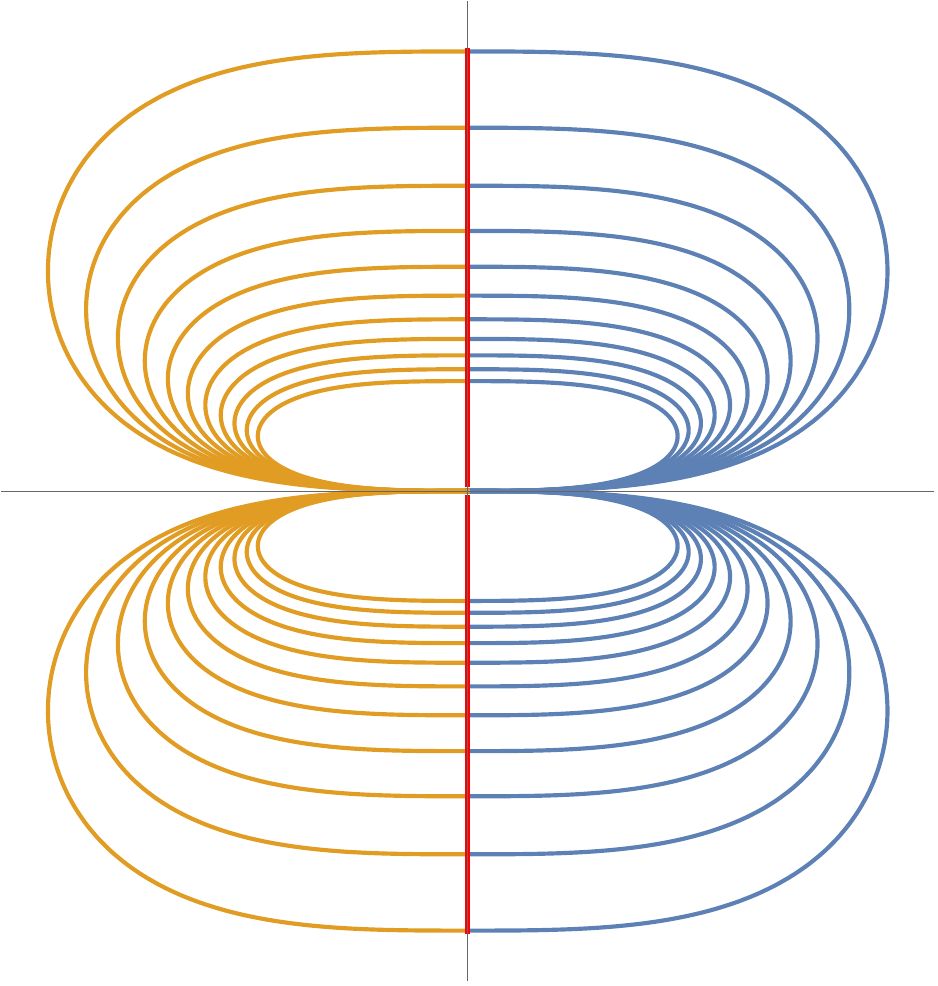} \qquad\qquad \includegraphics[width=.4\textwidth]{../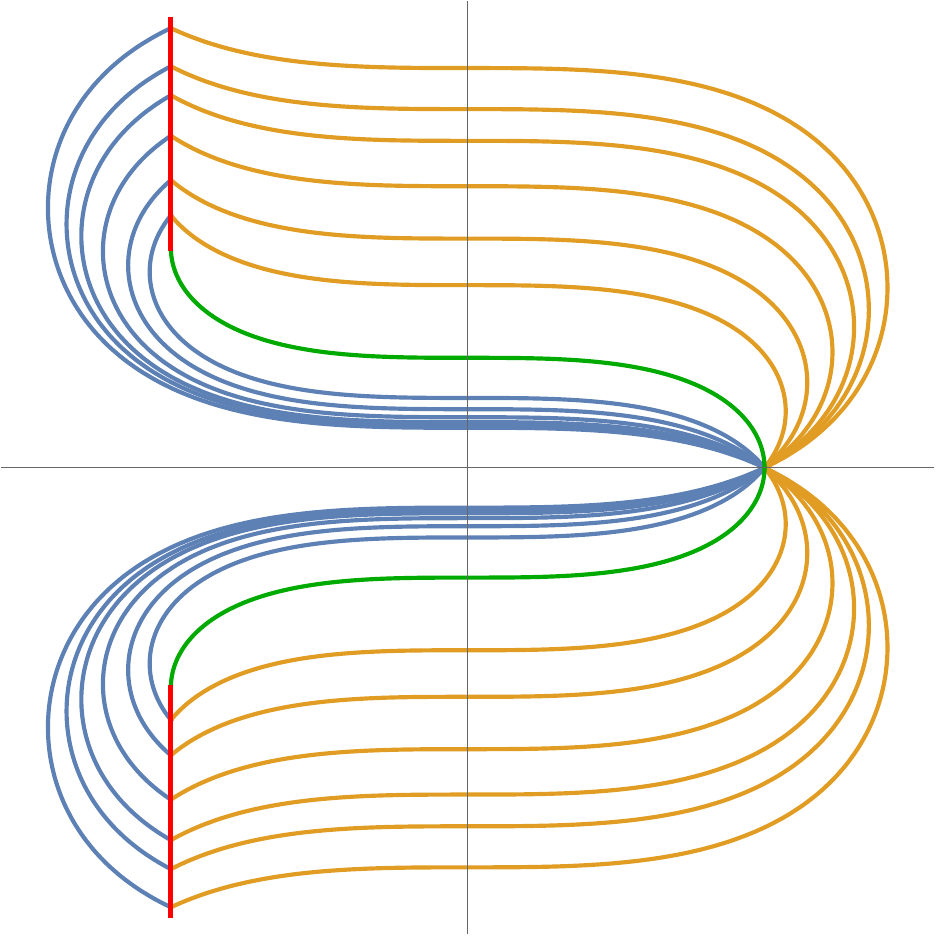}
\caption{Geodesics and cut-locus (in red) of the Grushin plane starting from the origin and from $q=(1,0)$.  Displayed geodesics have initial covector $\lambda=u\, dx + v\, dy$, with $v=\pm \pi$ and different values of $u$.} \label{f:Grushin}
\end{figure}

It is easy to see that \eqref{eq:GGrushin} is a meek gauge function. The Grushin plane is an ideal structure, and hence by \cite{BRInv} it supports interpolation inequalities for densities (with dimensional parameter $n=2$). More precisely, for all  $\mu_0,\mu_1\in \Prob_{bs}^{*}(\mathbb{G}_2,\sfd,\mathscr{L}^2)$, there exists $\nu\in\OptGeo(\mu_0,\mu_1)$ associated with a $W_2$-geodesic $(\mu_t)_{t\in [0,1]}$ such that $\mu_t \ll \mathscr{L}^2$ for all $t\in [0,1]$ and for $\nu$-a.e.\@ $\gamma$ it holds $(\gamma_0,\gamma_t)\notin \Cut(\mathbb{G}_2)$ and
\begin{equation}\label{eq:interpolationineq-grushin}
\frac{1}{\rho_t(\gamma_t)^{1/2}}\geq \frac{\beta^{(\mathbb{G}_2,\sfd,\mathscr{L}^2)}_{1-t}(\gamma_1,\gamma_0)^{1/2}}{\rho_0(\gamma_0)^{1/2}} + \frac{\beta^{(\mathbb{G}_2,\sfd,\mathscr{L}^2)}_{t}(\gamma_0,\gamma_1)^{1/2}}{\rho_1(\gamma_1)^{1/2}},\qquad \forall\,t\in [0,1],
\end{equation}
where $\rho_t = \frac{\di \mu_t}{\di \mathscr{L}^2}$, and $\beta^{(\mathbb{G}_2,\sfd,\mathscr{L}^2)}$ is the ``true'' distortion coefficient of the Grushin plane, see Definition \ref{def:truedistcoeff}. The latter was computed in \cite[Sec.\ 7.3]{BRInv}, and it is given by
\begin{equation}\label{eq:grushintruedistortion}
\beta_t^{(\mathbb{G}_2,\sfd,\mathscr{L}^2)}(q,q') = t \frac{(u^2+t u v^2 x + v^2 x^2)\sin(t v) - t u^2 v \cos(t v)}{(u^2+u v^2 x + v^2 x^2)\sin(v) - u^2 v \cos(v)}, \qquad \forall t\in [0,1],
\end{equation}
where $\lambda = u\, dx + v\, dy \in T_{(x,y)}^*\R^2$ is the initial covector of the geodesic joining $q$ with $q'$.
\begin{lemma}\label{l:GvsH}
For all $(q,q')\notin\Cut(\mathbb{G}_2)$ it holds
\[
\beta_t^{(\mathbb{G}_2,\sfd_{\mathbb{G}_2},\mathscr{L}^2)}(q,q')\geq \beta_t^{\mathbb{H}^1}(\sfG_{\mathbb{G}_2}(q,q')),
\]
where we recall, by definition, $\sfG_{\mathbb{G}_2}(q,q') = v$ in the above notation.
\end{lemma}
\begin{proof}
This can be proven by direct computation, passing to inequalities between logarithmic derivative similarly to  \cite[Sec.\ 7.3]{BRInv}.
\end{proof}

Replacing the inequality of Lemma \ref{l:GvsH} in \eqref{eq:interpolationineq-grushin}, using the definition of $\U_n$, the convexity of the map $\R^{2}\ni(x,y)\mapsto \log(e^{x}+e^{y})$ and Jensen's inequality, one establishes the validity of the corresponding curvature-dimension inequality.
\begin{theorem}\label{t:GrushinCD}
The gauge m.m.s.\@ $(\mathbb{G}_2,\sfd_{\mathbb{G}_2},\mathscr{L}^2,\sfG_{\mathbb{G}_2})$ satisfies the $\CD(\beta^{\mathbb{H}^{1}},2)$ condition.
\end{theorem}
\begin{remark}
We stress that the Heisenberg group satisfies the $\CD(\beta^{\mathbb{H}^1},3)$, while Grushin correctly satisfies the $\CD(\beta^{\mathbb{H}^1},2)$. The correct dimensional parameter appears through the interpolation inequalities for densities, namely \eqref{eq:interpolationineq-grushin}.
\end{remark}

As we already remarked, $\beta^{\H^1}_t(\theta) \geq t^{5}$ for all $t\in [0,1]$ and $\theta \in [0,2\pi]$. Thus, by Proposition~\ref{prop:hierarchy}\ref{i:hierarchy3}, $\CD(\beta^{\H^1},3) \Rightarrow \CD(t^{5},3)$. In turn, the latter implies the classical $\MCP(0,5)$. Thus Theorem \ref{t:GrushinCD} can be seen as a strengthening in the $\CD$ sense of the fact that the Grushin plane satisfies the classical $\MCP(0,5)$.

\subsection{Canonical variation}\label{sec:ex:canvar}

 In this section we consider what is called the \emph{canonical variation} of the Heisenberg group, namely the one-parameter family of left-invariant Riemannian distances $\sfd_\varepsilon$ on $\H^1$, which are those associated with a left-invariant Riemannian metric for which
\begin{equation}
\{X,Y,\varepsilon Z\}, \qquad \varepsilon>0,
\end{equation}
is orthonormal, where
\begin{equation}
X = \partial_x -\frac{y}{2}\partial_z, \qquad Y =\partial_y + \frac{x}{2}\partial_z, \qquad Z=\partial_{z}.
\end{equation}
The terminology canonical variation is used in the context of Riemannian submersions \cite{Besse}. Only for this subsection, it will be handy to denote by $\sfd_0$ the Carnot-Carathéodory metric, so that we study the family of metric measure spaces $(\H^1,\sfd_\varepsilon,\mathscr{L}^3)$ for all $\varepsilon\geq 0$.

 We remark that the Riemannian Ricci curvature of $(\H^1,\sfd_\varepsilon)$ is unbounded as $\varepsilon\to 0$. Therefore it is convenient to study the distortion coefficients of the corresponding metric measure spaces by direct methods. To this purpose, let $\beta_t^\varepsilon : \H^1\times \H^1 \to [0,+\infty]$ be the distortion coefficient of $(\H^1,\sfd_\varepsilon,\mathscr{L}^3)$, for $\varepsilon\geq 0$. It can be computed explicitly, out of the cut locus, denoted by $\Cut_\varepsilon$, for $\varepsilon\geq 0$.

This was done, for example, in \cite[Sec.\@ 4.2]{RiffordCarnot}: letting $\exp_0^{\varepsilon}: T^*_0\H^1 \to \H^1$ be the exponential map for the metric $\sfd_\varepsilon$ starting from $0\in \H^1$, we use cylindrical  coordinates on $T^*_0\H^1$ so that $\lambda = (\theta,\rho,p_z)$. It holds that, for a general point $q\in \H^1$:
\begin{equation}\label{eq:cut-can-var-eps}
q = \exp_0(\theta,\rho,p_z), \qquad q\notin\Cut_\varepsilon(0) \Rightarrow |p_z|<2\pi,
\end{equation}
and furthermore for all $q\notin \Cut_\varepsilon(0)$:
\begin{equation}\label{eq:beta-can-var-eps}
\beta_t^\varepsilon(0,q)= t^2\frac{ h(tp_z/2)\left[\varepsilon^2 t p_z^3 h(tp_z/2) +2\rho^2 k(tp_z/2)\right]}{h(p_z/2)\left[\varepsilon^2 p_z^3 h(p_z/2) + 2\rho^2 k(p_z/2)\right]}, \qquad \forall\, t\in [0,1],
\end{equation}
with the convention that, if $p_z=0$, then we take the limit in the above formula so that $\beta_t^\varepsilon(0,q) = t^5$. In \eqref{eq:beta-can-var-eps}, we have set
\begin{equation}
h(\varsigma) := \frac{\sin(\varsigma)}{\varsigma}, \qquad k(\varsigma):=\sin(\varsigma) - \varsigma\cos(\varsigma)\qquad \varsigma \in (0,\pi).
\end{equation}
The fact that $q\notin \Cut_\varepsilon(0)$, and thus $|p_z|< 2\pi$, implies that the right hand side of \eqref{eq:beta-can-var-eps} is well-defined, for all $\varepsilon\geq 0$. One can check via elementary methods that for all $\varepsilon\geq 0$:
\begin{equation}
\beta_t^\varepsilon(0,q) \geq \beta_t^0(0,q) = t \frac{2-2\cos(p_z t)- p_z t \sin(p_z t)}{2-2\cos(p_z)-p_z \sin(p_z)}, \qquad \forall\, t \in [0,1],\,q\notin \Cut_\varepsilon(0).
\end{equation}

The initial covector $\lambda$ of the unique $\sfd_\varepsilon$-geodesic joining the origin with $q\notin \Cut_\varepsilon(0)$ is given by $\lambda = d_0\left[ \tfrac{1}{2}\sfd_\varepsilon(\cdot,q)^2\right]$, where $d_0$ denotes the differential at zero, and thus $p_z = \lambda(\partial_z)$. We define the left-invariant gauge function $\sfG_\varepsilon: \H^1 \times \H^1 \to [0,2\pi]$ such that:
\begin{equation}\label{eq:prescribed}
\sfG_\varepsilon(0,q):=\begin{cases} \tfrac{1}{2}\left|\left.\tfrac{\partial}{\partial z}\right|_{0}\sfd_\varepsilon(\cdot,q)^2\right|  & q\notin \Cut_\varepsilon(0),\\
0 & q\in \Cut_{\varepsilon}(0),
\end{cases}\qquad \forall \, \varepsilon\geq 0.
\end{equation}
We remark that $|\sfG_\varepsilon|<2\pi$ by \eqref{eq:cut-can-var-eps}. We also observe that the prescribed value on the cut locus in \eqref{eq:prescribed} is arbitrary and unimportant.

Since all structures $(\H^1,\sfd_\varepsilon,\mathscr{L}^3)$ support interpolation inequalities for densities (with dimensional parameter $n=3$, see Definition \ref{def:mcptocd}), with an argument identical to step $2$ in the proof of Theorem \ref{thm:fatcptareCD}, we obtain the following.

\begin{theorem}[Uniform $\CD$ for canonical variation]\label{thm:uniformCDHeis}
Let $\beta$ be defined as in \eqref{eq:defbeta}, with
\begin{equation}
\sfs(\theta) =\theta(2-2\cos(\theta)-\theta \sin(\theta)), \qquad \theta \in [0,+\infty).
\end{equation}
Then, for all $\varepsilon\geq 0$, the gauge m.m.s.\@ $(\H^1,\sfd_\varepsilon,\mathscr{L}^3,\sfG_\varepsilon)$ satisfies the $\CD(\beta,3)$ condition.
\end{theorem}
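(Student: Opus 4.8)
The plan is to follow the blueprint of Step~2 in the proof of Theorem~\ref{thm:RiccbddbelowareCD}, replacing the curvature-theoretic input used there ("Step~1") by the explicit computation \eqref{eq:beta-can-var-eps} of the true distortion coefficients $\beta^{(\H^1,\sfd_\varepsilon,\mathscr{L}^3)}=\beta^\varepsilon$ together with the elementary inequality $\beta_t^\varepsilon(0,q)\geq\beta_t^0(0,q)$ recorded above, which holds for every $\varepsilon\geq 0$ with the same right-hand side.

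First I would set up the pointwise comparison between $\beta^{(\H^1,\sfd_\varepsilon,\mathscr{L}^3)}$ and the model coefficient $\beta$ built from $\sfs(\theta)=\theta(2-2\cos\theta-\theta\sin\theta)$ as in \eqref{eq:defbeta}. Here $\cD=\inf\{\theta>0\mid\sfs(\theta)=0\}=2\pi$ and $\sfs$ has order $N=5$ at $0$, so for $0<\theta<2\pi$ one has $\beta_t(\theta)=\sfs(t\theta)/\sfs(\theta)=t\,[2-2\cos(t\theta)-t\theta\sin(t\theta)]/[2-2\cos\theta-\theta\sin\theta]$ and $\beta_t(0)=t^5$. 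If $q\notin\Cut_\varepsilon(0)$ and $\lambda=(\theta,\rho,p_z)$ is the initial covector of the unique $\sfd_\varepsilon$-geodesic from $0$ to $q$, then by \eqref{eq:cut-can-var-eps} and \eqref{eq:prescribed} we have $\sfG_\varepsilon(0,q)=|p_z|\in[0,2\pi)$, since $p_z=\lambda(\partial_z)=\tfrac12\,\partial_z|_0\sfd_\varepsilon(\cdot,q)^2$. As $\theta\mapsto\cos\theta$ and $\theta\mapsto\theta\sin\theta$ are even, $\beta_t(\sfG_\varepsilon(0,q))=\beta_t^0(0,q)$ for all $q\notin\Cut_\varepsilon(0)$ (the value $t^5$ at $p_z=0$ matching). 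Combined with $\beta_t^\varepsilon(0,q)\geq\beta_t^0(0,q)$ this gives $\beta_t^{(\H^1,\sfd_\varepsilon,\mathscr{L}^3)}(0,q)=\beta_t^\varepsilon(0,q)\geq\beta_t(\sfG_\varepsilon(0,q))$ for all $q\notin\Cut_\varepsilon(0)$ and $t\in[0,1]$, and, since $\sfd_\varepsilon$, $\mathscr{L}^3$ and $\sfG_\varepsilon$ are left-invariant, $\beta_t^{(\H^1,\sfd_\varepsilon,\mathscr{L}^3)}(x,y)\geq\beta_t(\sfG_\varepsilon(x,y))$ for every $(x,y)\notin\Cut_\varepsilon$.

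Next I would run the transport argument. Fix $\mu_0\in\Prob_{bs}(\H^1,\sfd_\varepsilon,\mathscr{L}^3)$ and $\mu_1\in\Prob_{bs}^*(\H^1,\sfd_\varepsilon,\mathscr{L}^3)$ with $\supp\mu_0\cap\supp\mu_1=\emptyset$. Since $(\H^1,\sfd_\varepsilon,\mathscr{L}^3)$ supports interpolation inequalities for densities with dimensional parameter $n=3$ (Definition~\ref{def:mcptocd}), there is a $W_2$-geodesic $(\mu_t)_{t\in[0,1]}$, induced by some $\nu\in\OptGeo(\mu_0,\mu_1)$, with $\mu_t\ll\mathscr{L}^3$ of density $\rho_t$, satisfying \eqref{eq:interpolationineq} for $\nu$-a.e.\ $\gamma$. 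Moreover $(\H^1,\sfd_\varepsilon)$ is ideal when $\varepsilon=0$ (it is contact, hence fat and of step $2$) and Riemannian when $\varepsilon>0$; in either case optimal transport between $\mu_0$ and $\mu_1$ is induced by a map whose non-static part avoids the cut locus, and since the supports are disjoint there is no static part, so $\nu$ is concentrated on geodesics $\gamma$ with $(\gamma_0,\gamma_1),(\gamma_1,\gamma_0)\notin\Cut_\varepsilon$. The pointwise comparison therefore applies $\nu$-a.e.\ and can be inserted into \eqref{eq:interpolationineq}, yielding for $\nu$-a.e.\ $\gamma$
\[
\frac{1}{\rho_t(\gamma_t)^{1/3}}\geq\frac{\beta_{1-t}(\sfG_\varepsilon(\gamma_1,\gamma_0))^{1/3}}{\rho_0(\gamma_0)^{1/3}}+\frac{\beta_t(\sfG_\varepsilon(\gamma_0,\gamma_1))^{1/3}}{\rho_1(\gamma_1)^{1/3}},
\]
with the first term omitted if $\mu_0\notin\Prob_{ac}(\H^1,\mathscr{L}^3)$. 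Taking $-\tfrac13\log$, integrating against $\nu$, using the convexity of $(x,y)\mapsto\log(e^x+e^y)$ and Jensen's inequality exactly as in the proof of Theorem~\ref{thm:CDbetaVsLSV} (and of Theorem~\ref{thm:RiccbddbelowareCD}), and recalling $\int_{\Geo(\H^1)}\log\rho_t(\gamma_t)\,\nu(\di\gamma)=\Ent(\mu_t|\mathscr{L}^3)$ so that $\U_3(\mu_t|\mathscr{L}^3)=\exp(-\tfrac13\Ent(\mu_t|\mathscr{L}^3))$, one obtains exactly the $\CD(\beta,3)$ inequality \eqref{eq:defCDbetan}; if $\mu_0\notin\Prob_{ac}(\H^1,\mathscr{L}^3)$ the missing term corresponds to $\U_3(\mu_0|\mathscr{L}^3)=0$, consistently with the convention $\infty\cdot 0=0$. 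As this works for every $\varepsilon\geq 0$ with one and the same $\beta$, the theorem follows.

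The routine transport machinery being copied from Theorem~\ref{thm:RiccbddbelowareCD}, the only genuine work is the inequality $\beta_t^\varepsilon(0,q)\geq\beta_t^0(0,q)$ uniform in $\varepsilon\geq 0$. Writing $\varsigma=p_z/2\in(-\pi,\pi)$, formula \eqref{eq:beta-can-var-eps} exhibits $\beta_t^\varepsilon(0,q)$ as $t^2$ times a mediant $\tfrac{\varepsilon^2 a_1(t)+\rho^2 b_1(t)}{\varepsilon^2 a_2+\rho^2 b_2}$ of two ratios; one checks that $t^2 a_1(t)/a_2=t\sin^2(t\varsigma)/\sin^2\varsigma\geq\beta_t^0(0,q)$, which reduces to the monotonicity of $s\mapsto s\cot s$ on $(0,\pi)$, while $t^2 b_1(t)/b_2=\beta_t^0(0,q)$ identically; hence by the mediant inequality the combination is $\geq\beta_t^0(0,q)$ for all $\varepsilon,\rho\geq 0$. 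Thus the sub-Riemannian limit $\varepsilon\to 0$ and the adiabatic limit $\varepsilon\to+\infty$ both stay inside the same $\CD(\beta,3)$ class. A last technical point is the cut-locus avoidance used above: it is crucial to restrict, as in Definition~\ref{def:CDMCPbetan}, to marginals with disjoint support, because for $\varepsilon=0$ the diagonal of $\H^1\times\H^1$ always lies in the cut locus and the comparison would be unavailable there.
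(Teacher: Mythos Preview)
Your proof is correct and follows essentially the same approach as the paper, which simply says that since all structures $(\H^1,\sfd_\varepsilon,\mathscr{L}^3)$ support interpolation inequalities for densities with dimensional parameter $n=3$, one argues identically to Step~2 in the proof of Theorem~\ref{thm:RiccbddbelowareCD}. You actually supply more detail than the paper: your mediant-inequality verification of $\beta_t^\varepsilon(0,q)\geq\beta_t^0(0,q)$ (reducing to the decrease of $s\mapsto s\cot s$ on $(0,\pi)$) fleshes out what the paper leaves as ``one can check via elementary methods''.
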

\begin{remark}
As a matter of fact, the $\beta$ in the previous result satisfies $\beta_t(\theta)\geq t^5$ for all $t\in [0,1]$ and $\theta \in [0,2\pi]$. It follows that this $\CD(\beta,3)$ implies the $\CD(t^5,3)$ or, equivalently, the $\MCP(0,5)$. The above result can be then seen as a strengthening of the known fact, proved in \cite{RiffordCarnot}, that the canonical variation of the Heisenberg group satisfies uniformly the classical $\MCP(0,5)$. See also \cite{PL-senzamani}.
\end{remark}

\paragraph{The sub-Riemannian limit.}  As $\varepsilon\to 0$, it holds $\sfd_\varepsilon \to \sfd_0$ uniformly on compact sets, and a fortiori $(\H^1,\sfd_\varepsilon,\mathscr{L}^3)\to(\H^1,\sfd_0,\mathscr{L}^3)$ in the pmGH-sense. Furthermore, $\sfd_\varepsilon \to \sfd_0$ also in the $C^\infty$ sense on compact subsets of $\H^1 \setminus \Cut_0$ (cf.\@ \cite[Prop.\@ 2.8]{BGMR-Htype}, where this fact is proved for general canonical variations). It follows that:
\begin{equation}
\lim_{\varepsilon\to 0} \sfG_\varepsilon(q,q') = \sfG_0(q,q'),\qquad \forall\, (q,q')\notin \Cut_0.
\end{equation}
The unique optimal plan $\pi$ between $\mu_0\in \Prob_{bs}(\H^1,\sfd_0,\mathscr{L}^3)$, $\mu_1 \in \Prob_{bs}^{*} (\H^1,\sfd_0,\mathscr{L}^3)$, with disjoint supports, is concentrated on $\H^1\setminus\Cut_0$. It follows that $\sfG_\varepsilon \to \sfG_0$ $\pi$-a.e.\@ as $\varepsilon\to 0$. We also recall that $|\sfG_\varepsilon|<2\pi$ for all $\varepsilon\geq 0$. Furthermore, $\sfG_\varepsilon$ is continuous (actually smooth) out of $\Cut_\varepsilon$. Using all these facts, it follows that:	
\begin{itemize}
\item the conditions of Definition \ref{def:weakL1andregularity} are satisfied. Namely $\sfG_\varepsilon \to \sfG_0$ as $\varepsilon\to 0$ in the $L^1_{\loc}$ sense, and the regularity condition for $\sfG_0$ holds.
\item the condition of Definition \ref{def:weakL1andregularity-CD} are satisfied. Namely $\sfG_\varepsilon\to \sfG_0$ as $\varepsilon\to 0$ in the $L^1_{\loc}$ sense over plans, and the regularity condition for $\sfG_0$ over plans is verified.
\end{itemize}
In particular, Theorems \ref{thm:stabMCP} and \ref{thm:stabCD} apply. This is an explicit example of stability of the $\MCP(\beta)$ and $\CD(\beta,3)$, for the sequence of gauge metric measure spaces $(\mathbb{R}^3,\sfd_\varepsilon,\mathscr{L}^3,\sfG_\varepsilon)$, pmGH converging to the Heisenberg group $(\mathbb{R}^3,\sfd_0,\mathscr{L}^3,\sfG_0)$.

\paragraph{The adiabatic limit.} The terminology ``adiabatic limit'' can be traced back to \cite{W-anomalies}. Let $\pi:\H^1\to \R^2$ be the projection, so that $\pi$ is a Riemannian submersion from $(\H^1,\sfd_\varepsilon)$ onto the Euclidean $(\R^2,\sfd_{e})$. As $\varepsilon \to +\infty$, it holds $\sfd_\varepsilon \to \sfd_e \circ \pi$ uniformly on compact sets, and a fortiori $(\H^1,\sfd_\varepsilon,\mathscr{L}^3) \to (\R^2,\sfd_{e},\mathscr{L}^2)$ in the pmGH-sense.
One can prove that:
\begin{equation}
\lim_{\varepsilon \to +\infty} \sfG_\varepsilon(q,q') =0, \qquad \forall\, q,q'\in \H^1,
\end{equation}
and as a consequence all assumptions of Theorems \ref{thm:stabMCP} and \ref{thm:stabCD} are met by setting $\sfG_\infty =0$. Similarly as in the previous case, one can take the limit for $\varepsilon\to +\infty$, obtaining that the Euclidean space $(\R^2,\sfd_e,\mathscr{L}^2)$, equipped with the trivial gauge function $\sfG_\infty \equiv 0$, satisfies the $\CD(\beta,3)$. Notice that $\beta_t(\sfG_\infty) = \beta_t(0) = t^5$, so that in this specific case $\CD(\beta,3)$ is actually equivalent to the $\CD(t^5,3)$. In contrast with the sub-Riemannian limit, in the adiabatic limit we obtain a non-sharp result.

\subsubsection{Sasakian foliations}

A version of Theorem \ref{thm:uniformCDHeis} holds more in general for the canonical variation associated with Sasakian foliations with non-negative Tanaka-Webster curvature, and more generally for H-type foliations satisfying suitable assumptions in the sense of \cite{BGMR-Htype}. The proof makes use of more technical comparison tools, which are a sharpened version of the ones appearing in \cite{BGMR-Htype} (see also \cite{BGKT-Sasakian}). These techniques are out of the scope of this paper, and thus we do not delve into details here.

We only mention that, as a corollary of these generalizations one can show that the canonical variation $(M,\sfd_\varepsilon,\mm)$ of a $2d+1$-dimensional Sasakian foliation with non-negative Tanaka-Webster curvature satisfies the $\MCP(0,2d+3)$ for all $\varepsilon\geq 0$, obtained in \cite{PL-senzamani} with somewhat weaker curvature assumptions. See also \cite[Thm. 3.11]{BGKT-Sasakian}.

\subsection{Convergence to the tangent cone}\label{sec:ex:tangent}

Let $(M,\sfd)$ be a smooth sub-Riemannian metric space of topological dimension $n$. 
It is well-known that for any $p\in M$, the tangent cone at $p$ exists and it is isometric to a homogeneous space for a Carnot group. We describe this convergence explicitly, following \cite{Bellaiche}.

Let $X_1,\dots,X_L$, be a generating family for the sub-Riemannian structure, defined in a neighborhood of $p$. For any multi-index $I\in \{1,\dots,L\}^{\times i}$ we denote $|I| := i$ and $X_I := [X_{I_1},[\dots,[X_{I_{i-1}},X_{I_i}]]]$. Set
\begin{equation}
\distr_p^i := \spn\{X_I(p)\, :\, |I|\leq i\}, \qquad i\in \N.
\end{equation}
Let $k_i = k_i(p):=\dim \distr_p^i$. These subspaces yield a filtration
\begin{equation}\label{eq:filtration}
\{0\}=:\distr_p^0 \subset\distr_p^1\subseteq \dots \subseteq \distr_p^{s} =T_p M,
\end{equation}
where $s=s(p) \in \N$ is the step of the sub-Riemannian structure at $p$. A diffeomorphism $z:U\to \R^n$, from a neighborhood $U\subset M$ of $p$, yields local coordinates that are said:
\begin{itemize}
\item  \emph{linearly adapted} at $p$, if they are centered at $p$, i.e.\@ $z(p) =0$, and $\partial_{z_1}|_0,\dots,\partial_{z_{k_i}}|_0$ form a basis for $\distr_p^i$ in these coordinates, for all $i=1,\dots,s$. We say that the coordinate $z_i$ has \emph{weight} $w_i=j$ if $\partial_{z_i}|_0$ belongs to $\distr_p^j\setminus \distr_p^{j-1}$;
\item \emph{privileged at $p$}, if they are adapted at $p$ and $z_i(q) = O(\sfd(p,q)^{w_i})$ for all $i=1,\dots,n$.
\end{itemize}
Privileged coordinates exist in a neighborhood of any point. From now on we fix a system of privileged coordinates and we identify its domain $U\subseteq M$ with $\R^n$, and $p$ with the origin $0\in \R^n$. Similarly, vector fields defined on $U$ are identified with vector fields on $\R^n$, and the restriction of the sub-Riemannian distance $\sfd$ to $U$ is identified with a distance function on $\R^n$. We define \emph{dilations}, for $\lambda>0$:
\begin{equation}
\delta_\lambda : \R^n\to \R^n, \qquad \delta_\lambda(z_1,\dots,z_n) := (\lambda^{w_1}z_1,\dots,\lambda^{w_n}z_n).
\end{equation}
Dilations induce a concept of \emph{homogeneity} of degree $d\in \N$:
\begin{itemize}
\item for a function $f:\R^n \to \R^n$, if $f(\delta_\lambda(q)) = \lambda^d f(q)$ for all $\lambda >0$ and $q\in \R^n$;
\item for a one-form $\omega$ on $\R^n$, if $\delta_\lambda^* \omega = \lambda^d \omega$ for all $\lambda>0$;
\item for a vector field $X$ on $\R^n$, if $\delta_{\lambda*} X = \lambda^{-d} X$ for all $\lambda >0$.
\end{itemize}
The \emph{principal part} of the generating family $\mathscr{F}=\{X_1,\dots,X_L\}$ is given by
\begin{equation}\label{eq:defXhat}
\hat{X}_i:= \lim_{\varepsilon \to 0} \varepsilon \delta_{1/\varepsilon*} X_i, \qquad i=1,\dots,L.
\end{equation}
The family $\hat{\mathscr{F}}=\{\hat{X}_1,\dots,\hat{X}_L\}$ is a set of complete vector fields on $\R^n$, homogeneous of degree $-1$, with polynomial coefficients. They generate a nilpotent Lie algebra of nilpotency step $s=s(p)$, and they satisfy the bracket-generating condition at any point of $\R^n$. They define a sub-Riemannian structure on $\R^n$, called the \emph{nilpotent approximation} at $p$, and denoted with $(\R^n,\hat{\sfd})$. An essential property of $\hat{\sfd}$ is that it is homogeneous of degree $1$, that is $\hat{\sfd}(\delta_\lambda(q),\delta_\lambda(q')) = \lambda \hat{\sfd}(q,q')$ for all $q,q'\in \R^n$, and $\lambda>0$.

\subsubsection{Description of convergence to the tangent cone}\label{s:desc-convergence}

Fix $p\in M$. The following fundamental estimate holds \cite[Thm.\@ 7.32]{Bellaiche}\footnote{Our statement is a amended version of \cite[Thm.\@ 7.32]{Bellaiche}, which contains a typo.}.
\begin{theorem}\label{thm:bellaiche}
There exist $\varepsilon_p>0$ and $C_p>0$ such that for all $q,q'\in B_{\varepsilon_p}(p)$ it holds
\begin{equation}
-C_p\,  M_p(q,q')\,  \sfd(q,q')^{1/s(p)} \leq \sfd(q,q') - \hat{\sfd}(q,q') \leq C_p \, M_p(q,q') \, \hat{\sfd}(q,q')^{1/s(p)},
\end{equation}
where $M_p(q,q'):=\max\{\hat{\sfd}(0,q),\hat{\sfd}(0,q')\}$ and $s(p)$ is the step of the structure at $p$.
\end{theorem}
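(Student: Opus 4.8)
The plan is to reproduce the classical argument of Bellaïche (the stated theorem is his Theorem~7.32, in amended form), comparing the two distances by transporting length‑minimizing curves between the original structure and its nilpotent approximation and controlling the discrepancy through a rescaled Gronwall estimate. I work throughout in the fixed system of privileged coordinates at $p$, identifying a neighbourhood of $p$ with a ball around $0\in\R^n$, so that $\sfd$, $\hat\sfd$, the dilations $\delta_\lambda$, the generating family $\{X_i\}$ and its principal part $\{\hat X_i\}$ all live on $\R^n$; the constants produced will depend only on this chart, hence only on $p$. The first task is to record the three local facts that make the estimates uniform on a sufficiently small $B_{\varepsilon_p}(p)$: (i) the ball--box theorem \cite{Jeanbook,Bellaiche}, giving $C^{-1}\|z-z'\|_*\le \sfd(z,z')\le C\|z-z'\|_*$ and the analogous two-sided bound for $\hat\sfd$, with $\|\cdot\|_*$ a weighted pseudo-norm adapted to the filtration \eqref{eq:filtration}; (ii) the decomposition $X_i=\hat X_i+R_i$, where, since the coordinates are privileged, the $j$-th component of the remainder obeys $|R_i^j(z)|\le C\|z\|_*^{\,w_j}$, i.e.\ $R_i$ has weighted order strictly larger than $\hat X_i$ (this is precisely the content of \eqref{eq:defXhat}); (iii) as a consequence of (ii), the rescaled metrics $\rho\,\delta_{1/\rho}^*g$ converge to the nilpotent metric $\hat g$ as $\rho\to 0$, uniformly on bounded subsets of $\R^n$ and at rate $O(\rho)$.

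For the upper inequality, fix $q,q'\in B_{\varepsilon_p}(p)$ and set $\rho:=M_p(q,q')$; by (i) one also has $\sfd(q,q')\le C\rho$ and $\hat\sfd(q,q')\le 2\rho$. Choose a $\hat\sfd$-geodesic $\hat\gamma:[0,1]\to\R^n$ from $q$ to $q'$ with controls $\hat u$ relative to the $\hat g$-orthonormal family $\hat X_i$, so $\int_0^1|\hat u|\,\di t$ is comparable to $\hat\sfd(q,q')$, and let $\gamma$ be the trajectory of the original control system with the \emph{same} controls and $\gamma(0)=q$, which is $\sfd$-horizontal. Passing to $\gamma^\rho:=\delta_{1/\rho}\gamma$ and $\hat\gamma^\rho:=\delta_{1/\rho}\hat\gamma$, one checks — using the homogeneity of $\hat\sfd$ and the bounds $\hat\sfd(q,q'),\hat\sfd(0,q),\hat\sfd(0,q')\le 2\rho$ — that both rescaled curves remain in a fixed compact set $K\subset\R^n$ (independent of $q,q'$), that $\hat\gamma^\rho$ solves $\dot{\hat\gamma}^\rho=\sum_i\hat u_i\hat X_i(\hat\gamma^\rho)$, and that $\gamma^\rho$ solves the same equation with $\hat X_i$ replaced by $\hat X_i+\rho\,\delta_{1/\rho}^*R_i$. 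Since $\|\rho\,\delta_{1/\rho}^*R_i\|=O(\rho)$ on $K$ by (ii) and the $\hat X_i$ are Lipschitz on $K$, Gronwall's lemma gives $|\gamma^\rho(1)-\hat\gamma^\rho(1)|\le C\rho\int_0^1|\hat u|\le C'\hat\sfd(q,q')$. Converting back through the dilation and the ball--box bound for $\hat\sfd$, via $\|\delta_\rho a-\delta_\rho b\|_*\le C\rho\,|a-b|^{1/s(p)}$ for $|a-b|\le 1$, yields $\sfd(\gamma(1),q')\le C_p\,\rho\,\hat\sfd(q,q')^{1/s(p)}$; and the length comparison (iii) gives $\operatorname{len}_\sfd(\gamma)\le \hat\sfd(q,q')+O\!\big(\rho\,\hat\sfd(q,q')\big)$. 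Adding the two estimates and using $\hat\sfd\le 1$ (hence $\hat\sfd\le\hat\sfd^{1/s(p)}$) produces $\sfd(q,q')\le \hat\sfd(q,q')+C_p\,M_p(q,q')\,\hat\sfd(q,q')^{1/s(p)}$.

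The lower inequality is proved by the symmetric transport in the opposite direction: start from a $\sfd$-geodesic $\gamma$ from $q$ to $q'$ with controls $u$, rescale to $\gamma^\rho=\delta_{1/\rho}\gamma$, and compare it with the \emph{nilpotent} trajectory $\eta$ carrying the same controls $u$ and $\eta(0)=\delta_{1/\rho}q$. As before $\gamma^\rho$ and $\eta$ stay in a fixed compact set, Gronwall gives $|\eta(1)-\gamma^\rho(1)|=O(\sfd(q,q'))$, and hence $\hat\sfd(\eta(1),\delta_{1/\rho}q')=O\!\big(\sfd(q,q')^{1/s(p)}\big)$ by the ball--box bound for $\hat\sfd$. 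Combining this with $\hat\sfd(\delta_{1/\rho}q,\eta(1))\le \operatorname{len}_{\hat\sfd}(\eta)\le (1+O(\rho))\,\rho^{-1}\sfd(q,q')$ (again (iii)) and with the homogeneity identity $\hat\sfd(\delta_{1/\rho}q,\delta_{1/\rho}q')=\rho^{-1}\hat\sfd(q,q')$, then multiplying by $\rho$, gives $\hat\sfd(q,q')\le \sfd(q,q')+C_p\,M_p(q,q')\,\sfd(q,q')^{1/s(p)}$, which rearranges to the asserted left-hand bound. Finally, $\varepsilon_p$ is chosen small enough that items (i)--(iii) all hold on $B_{\varepsilon_p}(p)$, which is possible since all these are local and quantitative.

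The main obstacle is the rescaling bookkeeping: one has to verify that after applying $\delta_{1/\rho}$ with the \emph{variable} scale $\rho=M_p(q,q')$ every curve in play lands in one compact set independent of $q,q'$, so that the Lipschitz constants of the $\hat X_i$ and the remainder bounds of (ii) are uniform — this is exactly what prevents the Gronwall constant from degenerating as $q,q'\to p$ — and, in parallel, one must keep the \emph{leading} coefficient on $\hat\sfd$ (resp.\ $\sfd$) equal to $1$, which forces all length discrepancies to be controlled at relative order $O(\rho)$ rather than $O(1)$, using the metric convergence in (iii). The exponent $1/s(p)$ is intrinsic rather than an artefact: it appears exactly once, in the passage from a Euclidean error estimate at unit scale to a weighted sub-Riemannian distance via $\|\delta_\rho a-\delta_\rho b\|_*\le C\rho\,|a-b|^{1/s(p)}$, the worst coordinate weight $w_j=s(p)$ being responsible for the root.
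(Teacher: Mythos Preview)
The paper does not prove this theorem: it is quoted as a fundamental known estimate with the citation \cite[Thm.\@ 7.32]{Bellaiche} (and a footnote noting a typo in the original statement), and is then used as a black box in the description of convergence to the tangent cone. There is therefore no proof in the paper to compare your proposal against.

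Your outline is a faithful reconstruction of Bellaïche's classical argument --- rescaling to unit scale via $\delta_{1/\rho}$ with $\rho=M_p(q,q')$, transporting controls between the original and nilpotent systems, Gronwall on the rescaled ODE with the $O(\rho)$ remainder bound coming from the privileged-coordinate expansion, and the ball--box inequality producing the exponent $1/s(p)$ when converting Euclidean endpoint errors back to (weighted) sub-Riemannian distance. The discussion of the main obstacle (uniformity of the Gronwall constant under the variable rescaling) identifies the actual technical content. One minor caution: the step ``$|\gamma^\rho(1)-\hat\gamma^\rho(1)|\le C\rho\int_0^1|\hat u|$'' implicitly uses that the rescaled controls have $L^1$-norm $\le \rho^{-1}\hat\sfd(q,q')\le 2$, so the Gronwall exponential is uniformly bounded; you should make this explicit, and likewise check that in the lower-bound argument the rescaled $\sfd$-geodesic indeed stays in a fixed compact set (this uses (i) for $\sfd$, not just for $\hat\sfd$).
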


Recall that the tangent cone is the pGH limit for $k\to \infty$, when it exists, of $(M,k\sfd,p)$. Letting $\sfd_k := k \sfd$, the set $B_{R/k}(p)$ coincides with the ball of radius $R$ and center $p$, with respect with the rescaled metric $\sfd_k$. For $R>0$, we define approximating maps:
\begin{equation}\label{eq:explicitmaps}
f_k : B_{R/k}(p) \to \R^n,\qquad f_k(q) := \delta_k(q),
\end{equation}
where $k$ is taken so large that $B_{R/k}(p)$ is contained in the domain of privileged coordinates, thus $f_k$ is well-defined. Note that $f_k(p) = 0$ for all $k\in \N$. 

By Theorem \ref{thm:bellaiche}, for all $R>0$ and $\varepsilon>0$, there exists $k_0=k_0(R,\varepsilon)$ such that for all $k\geq k_0$ it holds:
\begin{equation}\label{eq:ballscontained}
\hat{B}_{R(1-\varepsilon)}(0) \subseteq f_k(B_{R/k}(p)) \subseteq \hat{B}_{R(1+\varepsilon)}(0),
\end{equation}
where we denoted with $\hat{B}$ the ball with respect to $\hat{\sfd}$. Furthermore, it holds
\begin{equation}
|\sfd_k(q,q') - \hat{\sfd}(f_k(q),f_k(q'))| \leq \varepsilon,\qquad \forall\, q,q'\in B_{R/k}(p).
\end{equation}

Finally, if in the given set of privileged coordinates $\mm = \varphi\,\mathscr{L}^n$ for some smooth positive function $\varphi$, we set
\begin{equation}
\mm_k:=k^{Q(p)}\mm, \qquad \text{where} \qquad Q(p):=\sum_{j=1}^n j w_j(p).
\end{equation}
Then for all $R>0$ it holds $(f_k)_\sharp(\mm_k|_{B_{R/k}}) \rightharpoonup  \varphi(0)\mathscr{L}^n|_{\hat{B}_R(0)}$ weakly as $k\to \infty$. We can assume, up to modification of the privileged coordinates, that $\varphi(0)=1$. Thus
\begin{equation}
\text{pmGH}-\lim_{k\to \infty}(M,k\sfd,k^Q\mm,p) = (\R^n,\hat{\sfd},\mathscr{L}^n,0),
\end{equation}
with approximating maps \eqref{eq:explicitmaps}. Hence $(\R^n,\hat{\sfd},\mathscr{L}^n)$ is isomorphic to the measured tangent cone at $p$ of $(M,\sfd,\mm)$. Different choices of privileged coordinates and different sequences of rescalings yield isomorphic copies of the same m.m.s.

\subsubsection{Convergence of natural gauge functions to the tangent cone}

For structures of step $\leq 2$, natural gauge functions induce a limit gauge function on the metric cone, with an explicit expression in privileged coordinates. In the following, $\widehat{\Cut}(x)\subset\R^n$ denotes the cut locus from a point $x\in \R^n$ for $(\R^n,\hat{\sfd})$, and $\widehat{\Cut}(\R^n) = \{(x,y)\mid y\in \widehat{\Cut}(x)\}\subset \R^n\times \R^n$.

\begin{theorem}[Gauge functions induced on the tangent cone]\label{thm:naturalconvergencestep2}
Let $(M,\sfd,\mm)$ be a sub-Riemannian m.m.s. Let $p\in M$, $\sfd_k = k\sfd$ and $\mm_k = k^{Q(p)}\mm$, so that $(M,\sfd_k,\mm_k,p)$ converges in the pmGH sense to the tangent cone $(\R^n,\hat{\sfd},\mathscr{L}^n,0)$, with approximating maps \eqref{eq:explicitmaps} in a set of privileged coordinates.

Let $\sfG : M\times M\to [0,\infty]$ be a natural gauge function as in Definition \ref{def:naturalgauge}. Assume that the sub-Riemannian distribution has step $s(p)\leq 2$ at $p$. Then:
\begin{enumerate}[label = (\roman*)]
\item \label{i:naturalconvergencestep2-1} the sequence $\sfG_k := \sfG$ converges in the $L^1_{\loc}$ sense of Definition \ref{def:weakL1andregularity} to a gauge function $\hat{\sfG} : \R^n\times \R^n \to [0,+\infty]$;
\item \label{i:naturalconvergencestep2-2} $\hat{\sfG}$ satisfies the regularity condition of Definition \ref{def:weakL1andregularity};
\item \label{i:naturalconvergencestep2-3} $\hat{\sfG}$ is bounded on $\R^n\times \R^n$;
\item \label{i:naturalconvergencestep2-4} $\hat{\sfG}$ is homogeneous of degree $0$ with respect to dilations, that is
\begin{equation}
\hat{\sfG}(\delta_\lambda(x),\delta_\lambda(y)) = \hat{\sfG}(x,y),\qquad \forall\, x,y\in \R^n, \quad \forall\, \lambda >0;
\end{equation}
\item \label{i:naturalconvergencestep2-5} $\hat{\sfG}$ admits the following explicit description. For $(x,y) \in \widehat{\Cut}(\R^n)$, $\hat{\sfG}(x,y)=0$. For $(x,y)\notin \widehat{\Cut}(\R^n)$, let $\hat{\lambda}^{x,y}\in T_x^*\R^n$ be the initial covector of the unique geodesic in $\Geo(\R^n,\hat{\sfd})$ joining $x$ with $y$:
\begin{equation}
\hat{\lambda}^{x,y} = \sum_{j=1}^n \hat{\lambda}_j^{x,y} \di z_j|_x.
\end{equation}
Let $g=\sum_{i,j=1}^n g_{ij}(z)\di z_i\otimes \di z_j$ be the metric tensor of the Riemannian reference $\sfd_R$, and let $f$ be the function appearing in Definition \ref{def:naturalgauge} of $\sfG$. Then there exists a constant $f(0,1)\in [0,+\infty]$ such that:
\begin{equation}
\hat{\sfG}(x,y) = f(0,1) \sqrt{\sum_{i,j \mid w_i=w_j=2} g^{ij}(0) \hat{\lambda}_i^{x,y} \hat{\lambda}_j^{x,y}}, \qquad \forall\, (x,y)\notin \widehat{\Cut}(\R^n).
\end{equation}
In other words, $\hat{\sfG}(x,y)$ is the norm of the homogeneous component of degree $2$ of $\hat{\lambda}^{x,y}$, with respect to a suitable metric on $\R^n$.
\end{enumerate}
If $\sfG:M\times M \to \RP^m_+$ is vector-valued, an analogous statement holds, with $\hat{\sfG}:\R^n\times \R^n \to \RP^m_+$ and, in item \ref{i:naturalconvergencestep2-5}, $f(0,1)\in \RP^m_+$.
\end{theorem}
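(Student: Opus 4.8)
The plan is to do everything off the cut locus of the tangent cone, tracking the natural gauge function through the initial covector of the minimizing geodesic, and to use the hypothesis $s(p)\le 2$ precisely to see which covector components survive the blow-up. First I would record, from §\ref{s:desc-convergence} and Theorem~\ref{thm:bellaiche}, that $\tilde\sfd_k(x,y):=k\,\sfd(\delta_{1/k}(x),\delta_{1/k}(y))\to\hat\sfd$ uniformly on compacts, hence $\tilde c_k:=\tfrac12\tilde\sfd_k^2\to\hat c:=\tfrac12\hat\sfd^2$ uniformly on compacts. Since the rescaled generating fields $\varepsilon\,\delta_{1/\varepsilon*}X_i$ converge in $C^\infty$, with polynomial coefficients, to their principal parts $\hat X_i$, and the set of smooth pairs is open (Theorem~\ref{thm:agrariffsmooth}), this upgrades to $C^1_{\loc}$ convergence on $\R^n\times\R^n\setminus\widehat{\Cut}(\R^n)$; in particular $\widehat{\Cut}(\R^n)$ is closed and $d_x\tilde c_k(\cdot,y)\to\hat\lambda^{x,y}:=d_x\hat c(\cdot,y)$ locally uniformly there. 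Combining the scaling identity $c(\delta_{1/k}(x'),\delta_{1/k}(y))=k^{-2}\tilde c_k(x',y)$ with $\delta_{1/k}^*(dz_i)=k^{-w_i}dz_i$ and $w_i\in\{1,2\}$, I get, for $\xi_k\to x$, $(x,y)\notin\widehat{\Cut}(\R^n)$ and $q=\delta_{1/k}(\xi_k)$,
\begin{equation*}
\big(d_q c(\cdot,\delta_{1/k}(y))\big)_i = k^{\,w_i-2}\big(d_{\xi_k}\tilde c_k(\cdot,y)\big)_i,
\end{equation*}
whose right-hand side tends to $0$ if $w_i=1$ and to $\hat\lambda^{x,y}_i$ if $w_i=2$.

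Consequently, for $(x,y)\notin\widehat{\Cut}(\R^n)$ and $\xi_k\to x$ one has $\sfd(\delta_{1/k}(\xi_k),\delta_{1/k}(y))=k^{-1}\tilde\sfd_k(\xi_k,y)\to 0$, while by Proposition~\ref{prop:propertiesD}\ref{i:propertiesD1},
\begin{equation*}
\sfD(\delta_{1/k}(\xi_k),\delta_{1/k}(y))^2 = \sum_{i,j} g^{ij}(q)\,\big(d_qc(\cdot,\delta_{1/k}(y))\big)_i\big(d_qc(\cdot,\delta_{1/k}(y))\big)_j \ \longrightarrow\ \sum_{w_i=w_j=2} g^{ij}(0)\,\hat\lambda^{x,y}_i\hat\lambda^{x,y}_j =: \hat\sfG_0(x,y)^2,
\end{equation*}
the weight-$1$ terms vanishing through the factor $k^{-1}$ or $k^{-2}$ and $g^{ij}(q)\to g^{ij}(0)$. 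Since $\sfG=f(\sfd,\sfD)$ with $f$ continuous and $1$-homogeneous, so that $f(a,b)=b\,f(a/b,1)$ for $b>0$ and $f(0,0)=0$, this gives $\sfG(\delta_{1/k}(\xi_k),\delta_{1/k}(y))\to f(0,1)\,\hat\sfG_0(x,y)$. I then \emph{define} $\hat\sfG:=f(0,1)\,\hat\sfG_0$ off $\widehat{\Cut}(\R^n)$ and $\hat\sfG:=0$ on it, which is exactly the formula in item~\ref{i:naturalconvergencestep2-5}, with $f(0,1)<\infty$ by the finiteness assumption in Definition~\ref{def:naturalgauge}. Because the nilpotent approximation is a step-$\le2$ Carnot group, it satisfies the minimizing Sard property (cf.\ Remark~\ref{rem:meekexamples}), so the fibre $\widehat{\Cut}(x)$ is Lebesgue-null for every $x$.

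For item~\ref{i:naturalconvergencestep2-1}, the change of variables $z=\delta_{1/k}(\zeta)$ turns the integrand of Definition~\ref{def:weakL1andregularity} into $|\sfG(\delta_{1/k}(\xi_k),\delta_{1/k}(\zeta))-\hat\sfG(\xi_k,\zeta)|$ tested against $(\delta_{1/k})^*\mm_k=\varphi(\delta_{1/k}(\cdot))\,\mathscr{L}^n$ (densities uniformly bounded, converging to $\mathscr{L}^n$) over a set squeezed between the cone balls in \eqref{eq:ballscontained}; the integrand tends to $0$ for a.e.\ $\zeta$ by the previous step — using that $\widehat{\Cut}(\R^n)$ is closed and $\widehat{\Cut}(x)$ is null — and is uniformly bounded since $s(p)\le2$ forces $\sfG=\sfD$ bounded on the fixed compact containing all $\delta_{1/k}(\zeta)$ (Theorem~\ref{t:boundedness}\ref{i:boundedness3}), while $\hat\sfG$ is bounded by item~\ref{i:naturalconvergencestep2-3}; dominated convergence concludes. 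Item~\ref{i:naturalconvergencestep2-2} follows by taking $\widehat{\Cut}(x)$ as exceptional set: off the closed set $\widehat{\Cut}(\R^n)$, $\hat\sfG=f(0,1)\hat\sfG_0$ is continuous because $\hat\lambda^{(\cdot,\cdot)}$ is smooth there. For item~\ref{i:naturalconvergencestep2-4}, $\hat c(\delta_\lambda\cdot,\delta_\lambda y)=\lambda^2\hat c(\cdot,y)$ and $\delta_\lambda^* dz_i=\lambda^{w_i}dz_i$ give $\hat\lambda^{\delta_\lambda x,\delta_\lambda y}_i=\lambda^{2-w_i}\hat\lambda^{x,y}_i$, so the weight-$2$ part of the covector, and hence $\hat\sfG$, is dilation-invariant ($\delta_\lambda$ preserving $\widehat{\Cut}(\R^n)$). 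Finally item~\ref{i:naturalconvergencestep2-3}: $\hat\sfG_0^2=\hat\sfD^2-\hat\sfd^2$ for the $\sfD$-function of the cone relative to the frozen reference $g_R(0)$, so $\hat\sfG=f(0,1)\sqrt{\hat\sfD^2-\hat\sfd^2}$ is $f(0,1)$ times a natural gauge function of a step-$\le2$ structure, hence locally bounded by Theorem~\ref{t:boundedness}\ref{i:boundedness3}; combined with the degree-$0$ homogeneity \ref{i:naturalconvergencestep2-4}, which reduces $\sup\hat\sfG$ to the compact set $\{\max(\hat\sfd(0,x),\hat\sfd(0,y))\le1\}$, this gives global boundedness. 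The vector-valued case runs verbatim, with $f(0,1)\in\RP^m_+$ and componentwise estimates.

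The only genuinely delicate point is the $C^1_{\loc}$ convergence $\tilde c_k\to\hat c$ off the cut locus — equivalently, the convergence of the initial covectors, uniformly in the moving base point $\xi_k\to x$ — which rests on the smooth dependence of minimizing geodesics on the (smoothly converging) sub-Riemannian structure and on the openness of the set of smooth pairs; it is exactly this that makes the weight-$2$ component of the covector, and therefore $\hat\sfG$, behave well precisely under the hypothesis $s(p)\le2$ (for step $\ge 3$ the weight-$3$ components blow up like $k$, consistently with the unboundedness discussed in Remark~\ref{rmk:Goh} and the surrounding text).
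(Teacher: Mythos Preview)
Your outline follows the paper's approach: reduce to $\sfG=\sfD$, establish convergence of the initial covectors off $\widehat{\Cut}(\R^n)$, read off which components survive via the weights, and conclude by dominated convergence. The arguments for items~\ref{i:naturalconvergencestep2-2}--\ref{i:naturalconvergencestep2-4} are correct.

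However, the step you flag as ``delicate'' --- the $C^1_{\loc}$ convergence $\tilde c_k\to\hat c$ off $\widehat{\Cut}(\R^n)$, equivalently the convergence of covectors --- is the heart of the proof, and your justification is not one. Theorem~\ref{thm:agrariffsmooth} concerns a \emph{fixed} structure; it does not tell you that, for $(x,y)\notin\widehat{\Cut}(\R^n)$ and large $k$, the pair $(\delta_{1/k}(x),\delta_{1/k}(y))$ is out of $\Cut(M,\sfd)$, which is needed for $\tilde c_k$ to even be differentiable there. The paper proves this through a Lagrange-multiplier argument: take any minimizing control $u_k$, show $ku_k\rightharpoonup\hat u$ with $\hat\gamma_{\hat u}$ the unique strictly normal nonconjugate geodesic, deduce that the multiplier $(\lambda_1^k,\nu_k)$ is eventually normal, prove $k^2\delta_{1/k}^*\lambda_1^k\to\hat\lambda_1^{x,y}$ by testing against the operator-convergent differentials $D_{ku_k}\End^{\mathscr{F}_k}_x$, push back to initial covectors via convergence of Hamiltonian flows, and finally run an implicit-function argument on the family $F_\varepsilon(\eta)=\pi\bigl(\varepsilon^{-2}\delta_\varepsilon^*e^{\vec H}(\varepsilon^2\delta_{1/\varepsilon}^*\eta)\bigr)$ to get uniqueness and regularity for small $\varepsilon$. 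Without something equivalent to this, your key displayed limit for $\sfD(\delta_{1/k}(\xi_k),\delta_{1/k}(y))^2$ is unjustified.

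Two minor points. The formula for $\sfD$ out of the cut locus is Proposition~\ref{prop:naturalobject}, not Proposition~\ref{prop:propertiesD}\ref{i:propertiesD1}. For item~\ref{i:naturalconvergencestep2-3}, the identification $\hat\sfG_0^2=\hat\sfD^2-\hat\sfd^2$ on the cone presupposes that the frozen reference $g_R(0)$ is an extension of the nilpotent structure and that there are no weight-$1$/weight-$2$ cross terms; the paper avoids this by noting directly that $\hat\lambda^{x,y}=-\tfrac12\,d_x\hat\sfd^2(\cdot,y)$ is locally bounded since $\hat\sfd$ is locally Lipschitz in charts (step $\le 2$), then invoking the degree-$0$ homogeneity.
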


\begin{proof}
We remind the following basic facts in sub-Riemannian geometry, that will be used throughout the proof (see Appendix \ref{a:SR}). Let $(M,\sfd)$ be a sub-Riemannian manifold induced by a family of smooth vector fields $\mathscr{F} = \{X_1,\dots,X_L\}$. For any $u\in L^2([0,1],\R^L)$, and point $o\in \R^n$, consider the following ODE:
\begin{equation}\label{eq:ODE-End}
\dot{\gamma}(t) = \sum_{i=1}^L u_i(t) X_i(\gamma(t)), \qquad \gamma(0) = o.
\end{equation}
Solutions $\gamma$ of \eqref{eq:ODE-End} are called horizontal trajectories, and $u$ is called a control for $\gamma$. Let $\mathcal{U}_o^{\mathscr{F}} \subset L^2([0,1],\R^L)$ be the open subset such that the solution to \eqref{eq:ODE-End} is well-defined up to time $1$. The endpoint map for the family $\mathscr{F}$, with base point $o\in \R^n$, is the map $\End_o^{\mathscr{F}} : \mathcal{U}_o^{\mathscr{F}}  \to \R^n$ which associates to $u$ the value $\gamma(1)$ of the solution of \eqref{eq:ODE-End}.

For any $q,q'\in M$, let $\gamma^{q,q'} \in \Geo(M,\sfd)$ be a geodesic from $q$ to $q'$. There exists a non-zero Lagrange multiplier $(\lambda^{q,q'}_1,\nu)\in T_{q'}^*M\times \{0,1\}$, and a control $u\in \mathcal{U}_q^{\mathscr{F}}$ with $|u(t)| = \sfd(q,q')$, such that $\gamma^{q,q'}$ satisfies \eqref{eq:ODE-End} and furthermore
\begin{equation}\label{eq:lagmultpre}
\langle \lambda^{q,q'}_1 , D_{u} \End_{q}^{\mathscr{F}} (v)\rangle = \nu (u,v)_{L^2},\qquad \forall\, v\in L^2([0,1],\R^L),
\end{equation}
where $D$ denotes the Fréchet differential. Geodesics satisfying \eqref{eq:lagmultpre} with $\nu=0$ are called \emph{abnormal}, and they correspond to controls that are critical point of the endpoint map. Geodesics satisfying \eqref{eq:lagmultpre} with $\nu=1$ are called normal. In this latter case, $\gamma$ can be seen as the projection of an integral curve $\lambda :[0,1]\to T^*M$ of a Hamiltonian flow on $T^*M$, with Hamiltonian $H:T^*M\to \R$ given by $H(\eta) = \tfrac{1}{2}\sum_{i=1}^L \langle \eta,X_i\rangle^2$. For such a lift, it holds $\lambda(1) = \lambda_1^{q,q'}$.

We now proceed with the proof assuming $\sfG= \sfD$ (cf.\@ Definition \ref{def:D}).
 
\textbf{Proof of \ref{i:naturalconvergencestep2-1} and \ref{i:naturalconvergencestep2-5}}. Fix $R>0$ and let $k$ be so large that $B_{2R/k}(p)$ is in the domain of privileged coordinates. Thus,  without loss of generality, we assume $M=\R^n$ and the structure $(M,\sfd)$ is defined by a family $\mathscr{F} = \{X_1,\dots,X_L\}$ of smooth vector fields on $\R^n$. Let $(x,y)\notin \widehat{\Cut}(\R^n)$. Let $u_k$ be the control of a geodesic $\gamma_k \in \Geo(M,\sfd)$ joining $\delta_{1/k}(x)$ with $\delta_{1/k}(y)$ such that $\|u_k\|_{L^2} = \sfd(\delta_{1/k}(x),\delta_{1/k}(y))$. For now $\gamma_k$ is not necessarily unique even though we will see that this is the case for large $k$.

Define also the family $\mathscr{F}_k := \{\tfrac{1}{k} \delta_{k*} X_1,\dots,\tfrac{1}{k} \delta_{k*} X_L\}$ of vector fields on $\R^n$, for all $k\in \N$. It is easy to show that
\begin{equation}
\delta_k (\End_{\delta_{1/k}(x)}^{\mathscr{F}}(u)) = \End_x^{\mathscr{F}_k}(k u), \qquad \forall\, u \in \mathcal{U}_{\delta_{1/k}(x)}^{\mathscr{F}}.
\end{equation}

Thus, there exist sequences of Lagrange multipliers $(\lambda^{\delta_{1/k}(x),\delta_{1/k}(y)},\nu_k)\in T_{\delta_{1/k}(y)}^*M\times \{0,1\}$, controls $u_k \in L^2([0,1],\R^L)$ with $\|u_k\|_{L^2} = \sfd(\delta_{1/k}(x),\delta_{1/k}(y))$, and corresponding geodesics $\gamma_k \in \Geo(M,\sfd)$ between $\delta_{1/k}(x)$ and $\delta_{1/k}(y)$ such that
\begin{equation}\label{eq:lagrangeK}
 k^2 \langle\delta_{1/k}^* \lambda^{\delta_{1/k},\delta_{1/k}(y)}_1, D_{ku_k} E_x^{\mathscr{F}_k}(v)\rangle  = \nu_k (k u_k,v)_{L^2},\qquad \forall\, v\in L^2([0,1],\R^L).
\end{equation}
Our goal is to take the limit in \eqref{eq:lagrangeK} for $k\to +\infty$.

Firstly, by construction $k u_k$ is a control associated with the curve $\delta_k\circ \gamma_k$, joining $x$ with $y$. Furthermore, by Theorem \ref{thm:bellaiche}, $\|k u_k\|_{L^2} = k\sfd(\delta_{1/k}(x),\delta_{1/k}(y))\to \hat{\sfd}(x,y)$. In particular $\{ku_k\}_{k\in \N}$ is weakly precompact in $L^2([0,1],\R^L)$. Let $\hat{u}\in L^2([0,1],\R^L)$ be one of its weak limits. Weak convergence of controls together with local uniform convergence of $\tfrac{1}{k}\delta_{k*}X_i\to \hat{X}_i$ for $i=1,\dots,L$, implies that the sequence of corresponding curves $\delta_k\circ \gamma_k$ has a subsequence converging to a limit one $\hat{\gamma}:[0,1]\to \R^n$, with control $\hat{u}$ with respect to the family $\hat{\mathscr{F}} = \{\hat{X}_1,\dots,\hat{X}_L\}$. In particular $\hat{\gamma}$ is the solution of
\begin{equation}
\dot{\hat{\gamma}}(t) = \sum_{i=1}^L \hat{u}_i(t)\hat{X}_i(\hat{\gamma}(t)), \qquad \hat{\gamma}(0) =x,\quad \hat{\gamma}(1) = y.
\end{equation}
By definition of $\hat{\sfd}$, and the weak semi-continuity of the norm we have
\begin{equation}
\hat{\sfd}(x,y)\leq \|\hat{u}\|_{L^2} \leq \liminf_{k\to \infty} \|k u_k\|_{L^2} = \hat{\sfd}(x,y).
\end{equation}
Thus $\hat{\gamma} \in \Geo(M,\hat{\sfd})$, with control $\hat{u}$ such that $\|\hat{u}\| = \hat{\sfd}(x,y)$. Now we use the fact that $(x,y)\notin \widehat{\Cut}(\R^n)$. In this case there is a unique such  geodesic $\hat\gamma$, and it follows that for the original sequence $\delta_k\circ\gamma_k \to \hat{\gamma}$ and $ku_k\rightharpoonup \hat{u}$, respectively.

Secondly, since $ku_k \rightharpoonup \hat{u}$, then $D_{k u_k}\End_x^{\mathscr{F}_k} \to D_{\hat{u}} \End_x^{\hat{\mathscr{F}}}$ in the operator topology (by the same argument in \cite[Prop.\@ 3.7]{Trelatvalue} or  \cite[Appendix, Thm.\@ 23]{LarryBoa}).

Lastly, $\nu_k$ must be definitely equal to $1$, otherwise $\hat{u}$ would be a critical point of $\End_x^{\hat{\mathscr{F}}}$, and thus $\hat{\gamma}$ would be an abnormal geodesic, which is not possible since $(x,y)\notin\widehat{\Cut}(\R^n)$. By the same reason, $\End^{\hat{\mathscr{F}}}_x$ is a submersion at $\hat{u}$, thus for any $V\in T_y \R^n$ there exist $\hat{v} \in L^2([0,1],\R^L)$ such that $D_{\hat{u}}\End_x^{\hat{\mathscr{F}}}(\hat{v})=V$.

We are now ready to take the limit in \eqref{eq:lagrangeK}, taking $v=\hat{v}$, we obtain that
\begin{equation}
\lim_{k\to +\infty} \langle k^2\delta_{1/k}^* \lambda^{\delta_{1/k}(x),\delta_{1/k}(y)}_1,V\rangle = (\hat{u},\hat{v}).
\end{equation}
Since $V\in T_y^*\R^n$ was arbitrary, we have proved that the sequence $k^2\delta_{1/k}^* \lambda^{\delta_{1/k}(x),\delta_{1/k}(y)}_1$ is convergent, and its limit is the Lagrange multiplier for $\hat\gamma$. We must then have
\begin{equation}\label{eq:limitforfinalcov}
\lim_{k\to\infty} k^2 \delta_{1/k}^* \lambda^{\delta_{1/k}(x),\delta_{1/k}(y)}_1= \hat{\lambda}^{x,y}_1,
\end{equation}
where $\hat{\lambda}^{x,y}_1\in T_y^*\R^n$ is the normal Lagrange multiplier of $\hat{\gamma}\in\Geo(\R^n,\hat{\sfd})$. 

We want to translate \eqref{eq:limitforfinalcov} into a statement for \emph{initial} covectors. To do it, let $H,\hat{H}:T^*\R^n\to \R$ be the Hamiltonians of $(\R^n,\sfd)$ and $(\R^n,\hat{\sfd})$, respectively:
\begin{equation}
H(\eta) = \frac{1}{2}\sum_{i=1}^L \langle \eta, X_i\rangle^2, \qquad \hat{H}(\eta) = \frac{1}{2}\sum_{i=1}^L \langle \eta , \hat{X}_i\rangle^2, \qquad \forall \eta \in T^*\R^n.
\end{equation}
We observe that $\tfrac{1}{k^2} H(\delta_{k}^*\eta) \to \hat{H}(\eta)$ as $k\to +\infty$. It follows that, for the Hamiltonian vector fields (the one encoding Hamilton's equations), it holds
\begin{equation}
\lim_{k\to +\infty} \frac{1}{k^2} (\delta_{1/k}^*)_* \vec{H} = \vec{\hat{H}},
\end{equation}
where $(\delta_{1/k}^*)_*$ is the push-forward of $\delta_{1/k}^*: T^*\R^n\to T^*\R^n$. Furthermore, using also the fact that the Hamiltonian is quadratic, namely $H(k \eta) =k^2 H(\eta)$, for flows it holds:
\begin{equation}
\lim_{k\to +\infty} k^2\delta_{1/k}^* e^{t\vec{H}}\left(\frac{1}{k^2} \delta_{k}^*\eta\right) = e^{t\vec{\hat{H}}}(\eta), \qquad \forall\, \eta\in T^*\R^n,
\end{equation}
for all $t \in \R$ (recall that $\vec{\hat{H}}$ is complete). For the initial covector $\lambda^{\delta_{1/k}(x),\delta_{1/k}(y)}$ corresponding to the Lagrange multiplier $\lambda_1^{\delta_{1/k}(x),\delta_{1/k}(y)}$, using \eqref{eq:limitforfinalcov}, we obtain
\begin{align}\label{eq:weknow}
\lim_{k\to +\infty} k^2 \delta_{1/k}^* \lambda^{\delta_{1/k}(x),\delta_{1/k}(y)} & = \lim_{k\to +\infty}
 k^2 \delta_{1/k}^* e^{-\vec{H}} \lambda^{\delta_{1/k}(x),\delta_{1/k}(y)}_1 \\
& = \lim_{k\to +\infty}\underbrace{k^2 \delta_{1/k}^* e^{-\vec{H}} \left( \frac{1}{k^2} \delta_{k}^*\right.}_{\to e^{-\vec{\hat{H}}}} \underbrace{\left.k^2 \delta_{1/k}^*\lambda^{\delta_{1/k}(x),\delta_{1/k}(y)}_1\right)}_{\to \hat{\lambda}^{x,y}_1} = \hat{\lambda}^{x,y}.
\end{align}
In other words, \eqref{eq:limitforfinalcov} holds also for the corresponding initial covectors.

Furthermore, let $\exp_q: T_q^*M\to M$ and $\widehat{\exp}_x : T_o^*\R^n \to \R^n$ be the exponential maps with base point $q\in M$ and $o\in \R^n$ for $(M,\sfd)$ and $(\R^n,\hat{\sfd})$, respectively. That is
\begin{equation}
\exp_q = \pi \circ e^{\vec{H}}|_{T_q^*M}, \qquad \widehat{\exp}_o = \pi\circ e^{\vec{\hat{H}}}|_{T_o^*\R^n}.
\end{equation}
Since $\hat{\lambda}^{x,y}$ is not a critical point for $\widehat{\exp}_x$, we claim that for all sufficiently large $k$, the covectors $\lambda^{\delta_{1/k}(x),\delta_{1/k}(y)}$, and thus the corresponding geodesic $\gamma_k\in \Geo(M,\sfd)$, are uniquely determined by the choice of the endpoints $x$ and $y$, and $\lambda^{\delta_{1/k}(x),\delta_{1/k}(y)}$ is not a critical point for $\exp_{\delta_{1/k}(x)}$. 

To prove the claim, let $F : [0,\varepsilon_0)\times T_x^*\R^n \to \R^n$ be the map
\begin{equation}
F_\varepsilon(\eta) := \begin{cases} \pi \left(\frac{1}{\varepsilon^2} \delta_{\varepsilon}^* e^{\vec{H}}\left(\varepsilon^2 \delta_{1/\varepsilon}^*\eta\right)\right) & \varepsilon>0, \\
\pi\left(e^{\vec{{\hat{H}}}}(\eta)\right) & \varepsilon=0.
\end{cases}
\end{equation}
The map  $F$ is smooth. We know that $F_0 = \widehat{\exp}_x :T_x^*\R^n \to \R^n$ is a diffeomorphism in a neighborhood of $\lambda_0:=\hat{\lambda}^{x,y}$, and that $F_0(\lambda_0) = y$. We already know that any family $\gamma_{1/\varepsilon}$ of geodesics from $\delta_\varepsilon(x)$ and $\delta_\varepsilon(y)$ are not abnormal for small $\varepsilon$. Let then $\lambda^{\delta_\varepsilon(x),\delta_\varepsilon(y)}$ be a corresponding family of initial covectors. We also know from \eqref{eq:weknow} that the sequence $\lambda_\varepsilon:= \tfrac{1}{\varepsilon^2} \delta_{\varepsilon}^* \lambda^{\delta_{\varepsilon}(x),\delta_{\varepsilon}(y)}$ is such that $\lambda_\varepsilon \to \lambda_0$, and by construction $F_\varepsilon(\lambda_\varepsilon) = y$ for all $\varepsilon$. It follows that for sufficiently small $\varepsilon$ there is a unique such $\lambda_\varepsilon$ and thus $\lambda^{\delta_{\varepsilon}(x),\delta_{\varepsilon}(y)}$ and $\gamma_{1/\varepsilon}$ are uniquely determined by $x$ and $y$. Since $\lambda_0$ is regular for $F_0$, and since $\varepsilon \mapsto d F_\varepsilon$ is continuous, then $\lambda_\varepsilon$ is a regular point of $F_\varepsilon$ for small $\varepsilon$. Unraveling the notation, this means that $\lambda^{\delta_\varepsilon(x),\delta_\varepsilon(y)}$ is a regular point for $\exp_x$ for all sufficiently small $\varepsilon$. This concludes the proof of the claim.

Summing up, so far we proved that for any $(x,y)\notin \Cut(\R^n,\hat{\sfd})$ and sufficiently large $k\in \N$ it holds:
\begin{itemize}
\item $(\delta_{1/k}(x),\delta_{1/k}(y)) \notin \Cut(M,\sfd)$;
\item the initial covectors $\lambda^{\delta_{1/k}(x),\delta_{1/k}(y)}$ of the unique geodesics $\gamma_k\in\Geo(M,\sfd)$ from $\delta_{1/k}(x)$ to $\delta_{1/k}(y)$ satisfy
\begin{equation}
\lim_{k\to +\infty} k^2\delta_{1/k}^* \lambda^{\delta_{1/k}(x),\delta_{1/k}(y)} = \hat{\lambda}^{x,y},
\end{equation}
where $\hat{\lambda}$ is the initial covector of the unique $\hat{\gamma}$ in $\Geo(\R^n,\hat{\sfd})$ joining $x$ with $y$.
\end{itemize}

Let us now discuss more precisely the action of dilations on covectors in our system of privileged coordinates. For all $q\in \R^n$, it holds:
\begin{equation}\label{eq:actionofdeltaoncovectors}
\eta = \sum_{i=1}^n \eta_i \di z^i|_{\delta_{1/k}(q)} \qquad \Rightarrow \qquad k^2\delta_{1/k}^* \eta = \sum_{i=1}^n k^{2-w_i} \eta_i  \di z^i|_{q}.
\end{equation}
Thus, the map $k^2\delta_{1/k}^*$ dilates by a factor $k^{2-j}$ the homogeneous component of degree $j$ of $\eta$. We stress the following consequences of \eqref{eq:weknow} and \eqref{eq:actionofdeltaoncovectors}:
\begin{itemize}
\item the homogeneous component of degree $1$ of $\lambda^{\delta_{1/k}(x),\delta_{1/k}(y)}$ converges to zero;
\item the homogeneous component of degree $2$ of $\lambda^{\delta_{1/k}(x),\delta_{1/k}(y)}$ converges to the homogeneous component of degree $2$ of $\hat{\lambda}^{x,y}$;
\item the homogeneous components of degree $j>2$ have growth $O(k^{j-2})$.
\end{itemize}
To conclude, recall from Proposition \ref{prop:naturalobject} that if $(\delta_{1/k}(x),\delta_{1/k}(y))\notin\Cut(M,\sfd)$, then
\begin{equation}
\sfD(\delta_{1/k}(x),\delta_{1/k}(y)) = \sqrt{\sum_{i,j=1}^n g_{ij}(\delta_{1/k}(x))\lambda^{\delta_{1/k}(x),\delta_{1/k}(y)}_i \lambda^{\delta_{1/k}(x),\delta_{1/k}(y)}_j}.
\end{equation}
Thus if the step $s(p)=2$ at $p$, we must have at all $(x,y)\notin \widehat{\Cut}(\R^n)$
\begin{equation}\label{eq:pointwiseforgauge-a.e.}
\lim_{k\to +\infty} \sfD(\delta_{1/k}(x),\delta_{1/k}(y)) = \sqrt{\sum_{w_i=w_j=2} g_{ij}(0)\hat{\lambda}^{x,y}_i \hat{\lambda}^{x,y}_j} =:\hat{\sfG}(x,y).
\end{equation}
We extend $\hat{\sfG}$ to zero at all cut points. A little care in the argument proves also that the convergence in \eqref{eq:pointwiseforgauge-a.e.} is uniform in any sufficiently small neighborhood of a point $(x_o,y_o)\notin \widehat{\Cut}(\R^n)$. Also $(x,y)\mapsto \hat{\lambda}^{x,y}$ is continuous out of the cut locus. It follows that if $(x,y)\notin \widehat{\Cut}(\R^n)$ and $x_k\to x$, $y_k\to y$, we have
\begin{equation}\label{eq:pointwiseforgauge-a.e.2}
\lim_{k\to +\infty} \sfD(\delta_{1/k}(x_k),\delta_{1/k}(y_k)) =\hat{\sfG}(x,y).
\end{equation}

We now show how \eqref{eq:pointwiseforgauge-a.e.2} implies the $L^1_{\loc}$ convergence of $\sfD$ to $\hat{\sfG}$ as in Definition \ref{def:weakL1andregularity}. Fix $\varepsilon,R>0$. Recall that by \eqref{eq:ballscontained}, $\delta_k(B_{R/k}(p))\subseteq \hat{B}_{R(1+\varepsilon)}(0)$ for large $k$. Let $q_k\in B_{R/k}(p)$ form a sequence such that $\delta_k(q_k) = x_k$ is convergent in $\R^n$. For sufficiently large $k$, then $q_k = \delta_{1/k}(x_k)$ where $x_k$ is a convergent sequence in $B_{R(1+\varepsilon)}(0)$. Then
\begin{multline}
\int_{B_{R/k}(p)} |\sfD(q_k,z) - \hat{\sfG}(\delta_k(q_k),\delta_k(z))|k^Q \varphi(z)\,\mathscr{L}^n(\di z) \\
 =
\int_{\delta_k(B_{R/k}(p))} |\sfD(\delta_{1/k}(x_k),\delta_{1/k}(z)) - \hat{\sfG}(x_k,z)|\varphi(\delta_{1/k}(z))\,\mathscr{L}^n(\di z)  \\
 \leq \int_{\hat{B}_{R(1+\varepsilon)}(0)}|\sfD(\delta_{1/k}(x_k),\delta_{1/k}(z))-\hat{\sfG}(x_k,z)|\varphi(\delta_{1/k}(z))\,\mathscr{L}^n(\di z) .\label{eq:lastintegral}
\end{multline}
Remove from the domain of integration the set $Z:=\widehat{\Cut}(\bar{x})$, given by the cut locus of $\bar{x}:=\lim x_k$ with respect to the metric $\hat{\sfd}$. It is well-known that, in the step $2$ setting, $Z$ has zero Lebesgue measure. At any $z\in \hat{B}_{R(1+\varepsilon)}(0) \setminus Z$, we have that:
\begin{equation}
\lim_{k\to +\infty}\sfD(\delta_{1/k}(x_k),\delta_{1/k}(z)) = \hat{\sfG}(\bar{x},z), \qquad \text{and} \qquad \lim_{k\to +\infty}\hat{\sfG}(x_k,z) = \hat{\sfG}(\bar{x},z).
\end{equation}
The first limit is \eqref{eq:pointwiseforgauge-a.e.2}, while the second one follows from the fact that $(x,z)\mapsto\hat{\lambda}^{x,z}$ is continuous out of the cut locus. Furthermore, the integrand of  \eqref{eq:lastintegral} is locally bounded thanks to the step $2$ assumption. In fact, $\hat{\sfG}$ is locally bounded by item \ref{i:naturalconvergencestep2-3}. On the other hand, since $s(p)\leq 2$, then the step is $\leq 2$ in a neighborhood of $p$, and thus $\sfD$ is also locally bounded close to $p$ (cf.\@ Theorem \ref{t:boundedness}). Thus, by the dominated convergence theorem, \eqref{eq:lastintegral} tends to zero as $k\to \infty$, proving the $L^1_{\loc}$ convergence of Definition \ref{def:weakL1andregularity}.

\textbf{Proof of \ref{i:naturalconvergencestep2-2}.} The map $(x,y)\mapsto \hat{\lambda}^{x,y}$ is continuous out of $\widehat{\Cut}(\R^n)$. For any $x\in \R^n$, the set $\widehat{\Cut}(x)$ has zero Lebesgue measure, and thus $\hat{\sfG}$ satisfies the regularity condition of Definition \ref{def:weakL1andregularity}.

\textbf{Proof of \ref{i:naturalconvergencestep2-3}.} The metric tangent $(\R^n,\hat{\sfd})$ has constant step equal to the one of the original structure $s(p) \leq 2$. Thus, $\hat{\sfd}$ is locally Lipschitz in charts \cite[Cor.\@ 6.2]{AAPL-transport}. For $(x,y) \in \widehat{\Cut}(\R^n)$ we have set $\hat{\sfG}(x,x)=0$. For $(x,y)\notin \widehat{\Cut}(\R^n)$, we have $\hat{\lambda}^{x,y} = - \tfrac{1}{2} \di_x \hat{\sfd}^2(\cdot,y)$. Thus $\hat{\sfG}$ is locally bounded. By item \ref{i:naturalconvergencestep2-4} it is also globally bounded.

\textbf{Proof of \ref{i:naturalconvergencestep2-4}.} Observe that dilations map $\widehat{\Cut}(\R^n)$ to itself. Furthermore, homogeneity of $\hat{\sfd}$ and the fact that $\hat{\lambda}^{x,y} = -\tfrac{1}{2} \di_x \hat{\sfd}^2(\cdot,y) \in T_x^*\R^n$ yield
\begin{equation}
\hat{\lambda}^{\delta_{1/k}(x),\delta_{1/k}(y)} = \frac{1}{k^2}\delta_{k}^* \hat{\lambda}^{x,y},\qquad \forall\, (x,y)\notin \widehat{\Cut}(\R^n).
\end{equation}
Thanks to the explicit formula for $\hat{\sfG}$ of item \ref{i:naturalconvergencestep2-5}, $\hat{\sfG}(x,y)$ depends only on the homogeneous part of degree $2$ of $\hat{\lambda}^{x,y}$, and this is invariant by \eqref{eq:actionofdeltaoncovectors} by the action of $\frac{1}{k^2}\delta_{k}^*$. In other words $\hat{\sfG}(\delta_k(x),\delta_{k}(y)) = \hat{\sfG}(x,y)$ out of the cut locus, and also at the cut locus since there we set $\hat{\sfG}\equiv 0$.

This concludes the proof in the case $\sfG = \sfD$. Assume that, instead of $\sfD$, one had chosen a general natural gauge function $\sfG: M \times M \to [0,+\infty]$, induced by a $1$-homogeneous function $f: \Omega \to [0,+\infty]$ as in Definition \ref{def:naturalgauge}. Notice that $\sfd(\delta_{1/k}(x),\delta_{1/k}(y)) \to 0$ as $k\to \infty$. Furthermore, $\sfD$ is locally bounded in the step $2$ case, and thus $f$ is also bounded and smooth on its domain. It follows that $\sfG$ converges in the $L^1_{\loc}$ sense to $f(0,\hat{\sfG}_0)$, where $\hat{\sfG}_0$ is the $L^1_{\loc}$ limit of $\sfD$ that we have described in the previous case. Using the $1$-homogeneity of $f$ we deduce that $\hat{\sfG}=f(0,1)\hat{\sfG}_0$. This concludes the proof for scalar valued gauge functions.

The proof is unchanged in the vector-valued case, with obvious modifications.
\end{proof}

We record one main consequence of Theorem \ref{thm:naturalconvergencestep2}.

\begin{theorem}[$\MCP$ on the tangent]\label{t:MCPfortangent}
Let $(M,\sfd,\mm)$ be a sub-Riemannian metric measure space equipped with a natural gauge function $\sfG$. Let $(\R^n,\hat{\sfd})$ be a tangent cone of $(M,\sfd)$ at $p$ in a system of privileged coordinates, where $n= \dim M$. Equip it with:
\begin{itemize}
\item the measure $\hat{\mm}=\lim_{k\to 0}(\delta_{k})_\sharp \mm$, proportional to the Lebesgue measure on $\R^n$;
\item the limit gauge function $\hat{\sfG}$ of Theorem \ref{thm:naturalconvergencestep2}, assuming that the step is $s(p)\leq 2$.
\end{itemize}
Let $\beta$ as in \eqref{eq:defbeta}, and assume that $\beta_t : [0,\cD)\to \R$ is locally Lipschitz. If $(M,\sfd,\sfG,\mm)$ satisfies the $\MCP(\beta)$ then also $(\R^n,\hat{\sfd},\hat{\sfG},\hat{\mm})$ satisfies the $\MCP(\beta)$.
\end{theorem}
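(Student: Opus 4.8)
The plan is to realize $(\R^n,\hat{\sfd},\hat{\mm},\hat{\sfG})$ as a pmGH limit of rescalings of $(M,\sfd,\mm,\sfG)$ and to invoke the stability Theorem~\ref{thm:stabMCP}. Concretely, following Section~\ref{s:desc-convergence}, I set $\sfd_k:=k\sfd$ and $\mm_k:=k^{Q(p)}\mm$; by Theorem~\ref{thm:bellaiche} and the ensuing discussion, $(M,\sfd_k,\mm_k,p)$ converges in the pmGH sense to the nilpotent approximation $(\R^n,\hat{\sfd},\mathscr{L}^n,0)$, with approximating maps $f_k=\delta_k$ in the chosen privileged coordinates; after normalising the coordinates (which is harmless, as $\hat{\mm}$ is proportional to $\mathscr{L}^n$ and the general case follows from the invariance noted below) we may assume $\hat{\mm}=\mathscr{L}^n$. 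I keep the gauge function fixed along the sequence, $\sfG_k:=\sfG$, exactly as in Theorem~\ref{thm:naturalconvergencestep2}.

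The first — and really the only conceptual — step is the observation that the $\MCP(\beta)$ condition is invariant under the rescalings $\sfd\mapsto k\sfd$ and $\mm\mapsto c\mm$ ($c>0$), provided the gauge function is left untouched. Indeed, writing the condition in the equivalent form of Remark~\ref{rem:eqMCPbeta},
\[
\Ent(\mu_t|\mm)\le \Ent(\mu_1|\mm)-\int_X\log\beta_t\big(\sfG(\bar x,x)\big)\,\mu_1(\di x),
\]
one notes that the classes $\Prob_{bs}(X,\cdot,\cdot)$ and $\Prob_{bs}^*(X,\cdot,\cdot)$, the notion of $W_2$-geodesic, and the set of admissible pairs $(\bar x,\mu_1)$ depend only on the underlying topology and on the absolute-continuity class of the reference measure, hence are unchanged; the entropy satisfies $\Ent(\mu|c\mm)=\Ent(\mu|\mm)-\log c$, so both sides are shifted by the same constant; and the right-hand side does not involve the distance at all. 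Therefore $(M,\sfd_k,\mm_k,\sfG)$ satisfies $\MCP(\beta)$ for every $k\in\N$, and the same remark shows that replacing $\mathscr{L}^n$ by $\hat{\mm}$ at the end of the argument costs nothing.

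It then only remains to match the hypotheses of Theorem~\ref{thm:stabMCP}. Hypothesis~\ref{i:stabMCP1} holds since each $(M,\sfd_k)$ is a complete sub-Riemannian manifold — locally compact and geodesic — and $(\R^n,\hat{\sfd})$ is a complete sub-Riemannian metric space, likewise locally compact and geodesic, with $\supp\mm_k=M$ and $\supp\hat{\mm}=\R^n$. Hypothesis~\ref{i:stabMCP2} uses the standing step $\le 2$ assumption: $B_R(\star_k)=B_{R/k}(p)\subseteq B_R(p)$ for $k\ge 1$, and by Theorem~\ref{t:boundedness}\ref{i:boundedness3} the natural gauge $\sfG$ is bounded on the compact set $\overline{B_R(p)}$, so by monotonicity of the $\sfG$-diameter under inclusion we may take $D(R):=\diam_\sfG(\overline{B_R(p)})<\infty$, uniformly in $k$; on the limit side $\hat{\sfG}$ is globally bounded by Theorem~\ref{thm:naturalconvergencestep2}\ref{i:naturalconvergencestep2-3}. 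Hypothesis~\ref{i:stabMCP3}, i.e.\ the $L^1_{\loc}$ convergence $\sfG_k\to\hat{\sfG}$ together with the regularity of $\hat{\sfG}$, is precisely Theorem~\ref{thm:naturalconvergencestep2}\ref{i:naturalconvergencestep2-1}--\ref{i:naturalconvergencestep2-2}. Hypothesis~\ref{i:stabMCP4} is part of the assumptions of the statement, and Hypothesis~\ref{i:stabMCP5} is the previous paragraph. Theorem~\ref{thm:stabMCP} then gives that $(\R^n,\hat{\sfd},\mathscr{L}^n,\hat{\sfG})$, hence $(\R^n,\hat{\sfd},\hat{\mm},\hat{\sfG})$, satisfies $\MCP(\beta)$.

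I do not anticipate a genuine obstacle: the substantive work is already carried out in Theorem~\ref{thm:naturalconvergencestep2} — the $L^1_{\loc}$ convergence and the regularity of the induced gauge function, which is where the step $\le 2$ hypothesis is essential — and in the stability Theorem~\ref{thm:stabMCP}. The one point that must be handled carefully is the bookkeeping of the two simultaneous rescalings — of the distance by $k$ and of the measure by $k^{Q(p)}$ — against the requirement that the gauge function be kept \emph{fixed} rather than dilated; this is consistent precisely because the gauge enters the $\MCP(\beta)$ inequality only through the term $\int\log\beta_t\circ\sfG\,\di\mu_1$, which is insensitive both to the metric and to constant rescalings of $\mm$.
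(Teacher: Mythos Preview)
Your proof is correct and follows the same strategy as the paper: realize the tangent cone as a pmGH limit of rescalings, verify the hypotheses of the stability Theorem~\ref{thm:stabMCP} using Theorem~\ref{thm:naturalconvergencestep2} and the local boundedness of $\sfG$ under step $\leq 2$, and conclude. You are in fact more careful than the paper in spelling out the invariance of $\MCP(\beta)$ under the rescalings $\sfd\mapsto k\sfd$, $\mm\mapsto k^{Q(p)}\mm$ with $\sfG$ fixed, which the paper leaves implicit. One small imprecision: the hypothesis is $s(p)\leq 2$, which only guarantees step $\leq 2$ in a \emph{neighborhood} of $p$ (by upper semi-continuity of the step), not necessarily on all of $\overline{B_R(p)}$; so your uniform bound $D(R)$ should rather be taken on a fixed neighborhood $U$ of $p$ where step $\leq 2$, using that $B_{R/k}(p)\subseteq U$ for all large $k$.
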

\begin{corollary}[$\CD$ on the tangent]\label{c:CDfortangent}
In Theorem \ref{t:MCPfortangent}, assume moreover that $(M,\sfd)$ is fat. Then $(\R^n,\hat{\sfd},\hat{\sfG},\hat{\mm})$, which is an ideal Carnot group, satisfies the $\CD(\beta,n)$.
\end{corollary}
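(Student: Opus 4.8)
The plan is to reduce the statement to the implication ``$\MCP\Rightarrow\CD$'' of Theorem~\ref{thm:mcptocd}. Since $(M,\sfd)$ is fat it is of step $\leq 2$ at every point; in particular $s(p)\leq 2$, so the hypothesis of Theorem~\ref{t:MCPfortangent} is automatically met and that theorem already supplies the first ingredient: the tangent cone $(\R^n,\hat{\sfd},\hat{\sfG},\hat{\mm})$, equipped with the limit gauge function $\hat{\sfG}$ of Theorem~\ref{thm:naturalconvergencestep2}, satisfies the $\MCP(\beta)$. It then remains to check that the hypotheses of Theorem~\ref{thm:mcptocd} are fulfilled by $(\R^n,\hat{\sfd},\hat{\mm},\hat{\sfG})$, and to conclude $\CD(\beta,n)$.

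\textbf{Structure of the tangent cone and interpolation inequalities.} First I would record that the nilpotent approximation at $p$ of a fat structure is a fat Carnot group: fatness at $p$ is a property of the graded nilpotent Lie algebra $\distr_p\oplus(T_pM/\distr_p)$ (namely $\mathrm{ad}_v$ surjective onto the second layer for every nonzero $v$ in the first layer), and this is exactly the Lie algebra of $(\R^n,\hat\sfd)$; by left-invariance the strong bracket generating condition for $(\R^n,\hat\sfd)$ reduces to this same condition at the identity. Hence $(\R^n,\hat\sfd)$ is a fat Carnot group, and therefore ideal, by the characterization of ideal Carnot groups as precisely the fat ones recalled in the paper. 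Being ideal, $(\R^n,\hat\sfd,\hat\mm)$ supports interpolation inequalities for densities with dimensional parameter $n=\dim\R^n$, by \cite{BRInv} (the very input used in the proof of Theorem~\ref{thm:RiccbddbelowareCD}); moreover, for $\mu_0\in\Prob_{bs}(\R^n,\hat\sfd,\hat\mm)$ and $\mu_1\in\Prob_{bs}^*(\R^n,\hat\sfd,\hat\mm)$ with disjoint supports, the associated dynamical plan $\nu$ is the unique one, induced by a transport map, and concentrated on geodesics with both endpoints outside the cut locus $\widehat{\Cut}(\R^n)$ (\cite{BRInv}, as exploited in the proof of Theorem~\ref{t:regularityforstabilityCD}).

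\textbf{Regularity of $\hat\sfG$ and the dimensional bound.} By the explicit description in Theorem~\ref{thm:naturalconvergencestep2}\ref{i:naturalconvergencestep2-5}, $\hat\sfG(x,y)$ is a fixed continuous function of the initial covector $\hat\lambda^{x,y}$ of the unique $\hat\sfd$-geodesic from $x$ to $y$; since $(x,y)\mapsto\hat\lambda^{x,y}$ is continuous on $(\R^n\times\R^n)\setminus\widehat{\Cut}(\R^n)$, the function $\hat\sfG$ is continuous there. The cut locus $\widehat{\Cut}(\R^n)$ is symmetric (uniqueness, non-conjugacy and non-abnormality of geodesics are preserved under reversal, using the symmetry of $\hat\sfd$), so for $\nu$-a.e.\ $\gamma$ both $(\gamma_0,\gamma_1)$ and $(\gamma_1,\gamma_0)$ are continuity points of $\hat\sfG$ --- exactly the first hypothesis of Theorem~\ref{thm:mcptocd}. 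Finally, the dimensional constraint $N\geq n$ holds: the natural gauge $\sfG$ on $M$ satisfies the non-triviality condition \eqref{eq:GNonTriv} by Theorem~\ref{t:boundedness}\ref{i:boundedness1}, and since $(M,\sfd,\sfG,\mm)$ satisfies the $\MCP(\beta)$, Theorem~\ref{thm:GeodDimEst} gives $N\geq\NN(\supp\mm)\geq\NH(\supp\mm)=\NH(M)\geq n$ (the last inequality being Mitchell's theorem). Applying Theorem~\ref{thm:mcptocd} to $(\R^n,\hat\sfd,\hat\mm,\hat\sfG)$ now yields the $\CD(\beta,n)$ condition.

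\textbf{Main obstacle.} The difficulty is not a single hard estimate but the careful assembly of known facts: one must ensure that the gauge function $\hat\sfG$ produced abstractly by the blow-up of Theorem~\ref{thm:naturalconvergencestep2} is regular enough (continuous off $\widehat{\Cut}(\R^n)$) and interacts correctly with the optimal-transport regularity theory of \cite{BRInv} on the ideal Carnot group $(\R^n,\hat\sfd)$ --- in particular that the plan realizing the interpolation inequalities avoids the cut locus, so that the continuity of $\hat\sfG$ is actually usable. One should also make explicit that fatness genuinely descends to the nilpotent approximation. An alternative, heavier route would be to transport the curvature lower bounds of Theorem~\ref{t:fattheorem} through the blow-up, using the homogeneity and continuity of the canonical curvature maps from Theorem~\ref{t:fatcurvbound}, and then invoke Theorem~\ref{thm:RiccbddbelowareCD} directly; the $\MCP\Rightarrow\CD$ argument above is preferable precisely because it avoids redoing those computations.
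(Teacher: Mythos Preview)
Your proof is correct and follows essentially the same approach as the paper: apply Theorem~\ref{t:MCPfortangent} to get $\MCP(\beta)$ on the tangent, observe that fatness descends to the nilpotent approximation (hence the tangent is an ideal Carnot group supporting interpolation inequalities from \cite{BRInv} with plans avoiding the cut locus, where $\hat\sfG$ is continuous), verify $N\geq n$ via Theorem~\ref{thm:GeodDimEst}, and conclude by Theorem~\ref{thm:mcptocd}. Your justifications are in fact somewhat more detailed than the paper's (e.g., the explicit Lie-algebraic reason fatness passes to the tangent, the symmetry of the cut locus to handle both endpoint orderings, and the use of Mitchell's theorem for $\NH(M)\geq n$).
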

\begin{proof}[Proof of Theorem \ref{t:MCPfortangent} and Corollary \ref{c:CDfortangent}]
Using the explicit description of the convergence to the tangent cone at the beginning of Section \ref{s:desc-convergence}, we apply the stability Theorem \ref{thm:stabMCP}. To apply it, we first remark that, since $s(p)\leq 2$, then the step is $\leq 2$ in a neighborhood of $p$ so that any natural gauge function such as $\sfG$ must be locally bounded (cf. Theorem \ref{t:boundedness}). The remaining hypotheses of Theorem \ref{thm:stabMCP} are satisfied thanks to Theorem \ref{thm:naturalconvergencestep2}. This proves Theorem \ref{t:MCPfortangent}.

We now prove Corollary \ref{c:CDfortangent}. If $(M,\sfd)$ is fat, then also $(\R^n,\hat{\sfd})$ is fat. We note in passing that fat structures have constant growth vector, so that the structure is equiregular and in particular the tangent cone has the structure of a Carnot group \cite{Bellaiche}.

Furthermore, fat structures do not admit non-trivial abnormal geodesics, so they are ideal. By the results in \cite{BRInv}, ideal structures support interpolation inequalities with dimensional parameter $n$ as in Definition \ref{def:mcptocd}, and the corresponding $\nu$ appearing in Definition \ref{def:mcptocd} is concentrated on a set of geodesics that avoid the cut locus (out of which, $\hat{\sfG}$ is continuous). Notice that the parameter $N$ in the definition of $\beta$ in \eqref{eq:defbeta} must be necessarily $N\geq n$, as a consequence of Theorem \ref{thm:GeodDimEst}. 

Thus we can apply Theorem \ref{thm:mcptocd} to $(\R^n,\hat{\sfd},\hat{\sfG},\hat{\mm})$, improving the $\MCP(\beta)$ of Theorem \ref{t:MCPfortangent} to the $\CD(\beta,n)$.
\end{proof}

\begin{remark}
We can appreciate here an important difference with respect to the Riemannian case. In the Riemannian case, any system of coordinates is privileged, all coordinates have weight one, and thus by Theorem \ref{thm:naturalconvergencestep2}\ref{i:naturalconvergencestep2-5} any natural gauge function induces, in the limit, the trivial one $\hat{\sfG} =0$. Since for general distortion coefficients $\beta_t(\hat{\sfG})=\beta_t(0)=t^N$, Theorem \ref{t:MCPfortangent} can be used to prove that the metric measure tangent of Riemannian manifolds must satisfy the $\MCP(0,N)$, \emph{without using the explicit knowledge of the tangent}. However, in sub-Riemannian geometry, where weight $2$ coordinates are available, one can have non-trivial gauge functions in the limit, and thus the $\MCP(\beta)$ property on the tangent does not imply, a priori, a classical $\MCP(0,N)$.
\end{remark}

\subsection{Vector-valued gauge functions on three-dimensional structures}\label{sec:ex:vectorial}
In this section we illustrate through an example our theory in the case of vector-valued gauge functions. 

We consider here left-invariant fat sub-Riemannian structures on three-dimensional Lie groups, sometimes referred to as three-dimensional model spaces (even if with a different meaning than in Section~\ref{sec:comparison}): the Heisenberg group $\mathbb{H}^1$, the special unitary group $\SU(2)$, and the special linear group $\SL(2)$.

For a unified description, let us consider $G$ a simply connected three-dimensional Lie group endowed with a contact
sub-Rieman\-ni\-an structure whose distribution $\distr$ is generated by two left-invariant vector fields $X_{1}$ and $X_{2}$ which are orthonormal with respect to the corresponding sub-Riemannian metric. We assume the existence of a killing vector field $X_0$ transverse to $\distr$. Up to isometries and dilations, thanks to the classification in \cite{AB12,FG96}, we can assume that the following Lie bracket relations hold for some $K\in \R$:
\begin{equation}
  [X_{1},X_{2}]=X_{0},\qquad
  [X_{0},X_{1}]=K X_{2}, \qquad
  [X_{0},X_{2}]=-K X_{1}.
\end{equation}
In this description the Heisenberg group corresponds to $K=0$, the group  $\SU(2)$ for $K=1$, while $\SL(2)$ corresponds to the choice $K=-1$. Here $K$ coincides with the Tanaka-Webster curvature of the CR structure.

If we endow these sub-Riemannian structures with the Popp volume $\mm$ (see \cite{BR13}), then the corresponding volume distortion $\rho_\mm$ (cf.\@ Appendix~\ref{sec:gvd} and Section~\ref{sec:fat}) along non-trivial geodesics is always identically zero \cite{ABP19}.

Following the notation of Sections~\ref{sec:comparison}-\ref{sec:fat}, given $x,y\in G\setminus \mathrm{Cut}(G)$ and for the geodesic $\gamma:[0,1]\to G$ joining them, $\gamma$ has (reduced) Young diagram $Y$ with two columns which can be labeled as follows:
\[
Y=\ytableausetup{centertableaux}
\begin{ytableau}
b & a \\
c\\
\end{ytableau}
\]
with all sizes of the superboxes equal to $1$. The Young diagram has two levels, denoted by $I$ and $II$, the first one with length $\ell_{I}=2$, while the second one with length $\ell_{II}=1$, both with size $1$. The sub-Riemannian Ricci curvatures have been computed in \cite{AAPL-Ricci}:
\begin{equation}\label{eq:curvbounds3Dleftinv}
\mathfrak{Ric}^a_{\lambda} =\mathfrak{Ric}^c_{\lambda} = 0, \qquad
\mathfrak{Ric}^b_{\lambda} = h_{0} (\lambda)^{2}+K \sfd^{2}(x,y), 
\end{equation}
where  $\lambda = \lambda^{x,y} \in T_x^*M$ is the initial covector of the geodesic $\gamma$ and
\begin{equation}
h_0(\lambda) := \langle \lambda^{x,y},  X_0\rangle, 
\end{equation}
is its component in the direction of $X_0$. 

Notice that, out of the cut locus, by Proposition \ref{prop:naturalobject} it holds:
\begin{equation}
|h_0(\lambda^{x,y})| = \sqrt{\sfD(x,y)^{2}-\sfd(x,y)^{2}},\qquad \forall\, (x,y)\notin \Cut(G),
\end{equation}
where $\sfD$ denotes the natural gauge associated with the Riemannian extension such that $X_{1},X_{2},X_{0}$ is an orthonormal frame (cf.\@ Definition \ref{def:D}).

Therefore, on the basis of the curvature bounds \eqref{eq:curvbounds3Dleftinv}, we are led to define on $G$ the vector-valued gauge function $\sfG:G\times G\to \R^{2}_{+}$ as follows
\begin{equation} \label{eq:g1g2}
\sfG=(\sfG_{1},\sfG_{2}):=(\sqrt{\sfD^{2}-\sfd^{2}},\sfd).
\end{equation}
Since contact structures have step $2$, then $\sfG$ is locally bounded by  Theorem \ref{t:boundedness}\ref{i:boundedness3}.

 The assumptions of the main comparison Theorem \ref{thm:maincomparison} are satisfied by setting
\begin{align}
\bar{\kappa}_I & : [0,\infty) \to \R^2, &  \bar{\kappa}_I(\theta_{1},\theta_{2})  & := (\theta_{1}^{2}+K \theta_{2}^{2},0),\\
\bar{\kappa}_{II} & : [0,\infty) \to \R, & \bar{\kappa}_{II}(\theta_{1},\theta_{2}) & := 0.
\end{align}
 To make the formulation of Theorem \ref{thm:maincomparison} explicit, we display the functions occurring in its statement. For what concerns the level $I$, following the construction described in Section \ref{sec:constcurvmodels}, we have for $\kappa_1\in\R$:
\begin{equation}
\mathsf{\sfs}_{\kappa_1,0}(t) = 
\frac{2 -2 \cos \left(\sqrt{\kappa_1} t\right)-\sqrt{\kappa_1} t 
\sin \left(\sqrt{\kappa_1} t\right)}{\kappa_1^2}, 
\qquad t_{\kappa_1,0}
 = 
\begin{cases}
\tfrac{2\pi}{\sqrt{\kappa_1}} & \kappa_1>0, \\ +\infty & \kappa_1\leq 0,
\end{cases}
\end{equation}
with usual interpretation when $\kappa_1\leq 0$. Using \eqref{eq:link}, we have  
\[
\sfs_{\bar \kappa_{I}}(\theta_{1},\theta_{2})=\sfs_{\bar{\kappa}_{I}\left({\theta_1}/{|\theta|},{\theta_1}/{|\theta|} \right)}(|\theta|),
\]
and more explicitly
\begin{equation}
\sfs_{\bar \kappa_{I}}(\theta_{1},\theta_{2}) = |\theta|^2 \left(\frac{2-2\cos\left(\sqrt{\theta_1^2+K\theta_2^2}\right)-\sqrt{\theta_1^2+K\theta_2^2}\sin\left(\sqrt{\theta_1^2+K\theta_2^2}\right)}{\theta_1^2+K\theta_2^2}\right),
\end{equation}
where $|\theta| =\sqrt{\theta_1^2+\theta_2^2}$. The positivity domain of $\sfs_{\bar{\kappa}_I}$ is
\begin{equation}\label{eq:domex}
\mathrm{DOM}_{\bar \kappa_{I}}=\{(\theta_{1},\theta_{2})\in \R^2_+ \mid \theta_{1}^{2}+K \theta_{2}^{2}< 2\pi\},
\end{equation}
which is indeed an open and star-shaped set, whose shape depends on the sign of $K$, as shown in Figure~\ref{fig:dom-examples}.

The level $II$ is omitted according to the prescriptions of Theorem \ref{thm:maincomparison}, so we do not need to compute the corresponding comparison functions. 

\begin{figure}
\centering
\includegraphics[width=\textwidth]{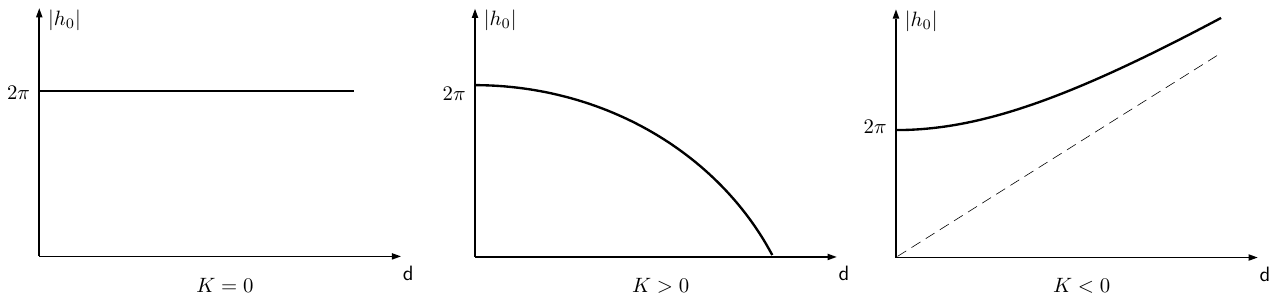}
\caption{Picture of $\DOM_{\bar\kappa}$ for $\bar{\kappa}(\theta_1,\theta_2) = \theta_1^2 + K\theta_2^2$. Here, $\theta_1 = |h_0|$ and $\theta_2=\sfd$.}\label{fig:dom-examples}
\end{figure}

Thus, Theorem \ref{thm:maincomparison} yields that for all $(x,y) \notin \Cut(G)$ we have $\sfG(x,y)\in \DOM_{\bar\kappa_{I}}$, and
\begin{equation}
\beta_{t}^{(X,\sfd,\mm)}(x,y)\geq t\frac{\sfs_{\bar \kappa_{I}}(t \sfG(x,y))}{\sfs_{\bar \kappa_{I}}(\sfG(x,y))}.
\end{equation}
The above inequality can be explicitly rewritten thanks to \eqref{eq:g1g2} in the form:
\begin{equation}
\beta_{t}^{(X,\sfd,\mm)}(x,y)\geq 
t\frac{2 -2 \cos \left(t\sqrt{h_{0}^{2}+K \sfd^{2}} \right)-t\sqrt{h_{0}^{2}+K \sfd^{2}}  
\sin \left(t\sqrt{h_{0}^{2}+K \sfd^{2}} \right) }
{2 -2 \cos \left(\sqrt{h_{0}^{2}+K \sfd^{2}} \right)-\sqrt{h_{0}^{2}+K \sfd^{2}}  
\sin \left(\sqrt{h_{0}^{2}+K \sfd^{2}} \right)},
\end{equation}
 where $h_0 = h_0(\lambda^{x,y})$ and $\sfd = \sfd(x,y)$.

\begin{remark}  One may wonder whether one could apply our framework with the \emph{scalar} gauge function $\sfG_{\mathrm{scal}}:=h_{0}^{2}+K \sfd^{2}$, instead of the vector-valued one $\sfG = (|h_0|,\sfd)$. This approach has two main drawbacks:
\begin{itemize}
\item scalar gauge functions are required to be non-negative, hence the choice of $\sfG_{\mathrm{scal}}=h_{0}^{2}+K \sfd^{2}$ as a gauge would be possible only in the case $K\geq 0$,  failing the purpose of including all three-dimensional structures on Lie groups in a unified framework;
\item   our guiding principle is that the gauge function should be  a reference function with respect to which curvature bounds can be quantified.  Gauge functions \emph{should not} contain the numerical parameters that quantify the extent of curvature bounds. The three-dimensional examples in this section nicely illustrate this idea: the only non-zero curvature is $\mathfrak{Ric}_\lambda^b\geq h_0^2 + K \sfd^2$, the non-negative \emph{functions} $|h_0|$ and $\sfd$ are the gauge to measure the extent of the curvature bound, which is then \emph{quantified} by the parameter $K\in \R$.
\end{itemize}
\end{remark}


\appendix

\section{Sub-Riemannian geometry}\label{a:SR}

In this appendix we collect basic facts and notations in sub-Riemannian geometry used in this paper. For a comprehensive introduction, we refer to \cite{nostrolibro,riffordbook,montgomerybook}.

\begin{itemize}
\item A sub-Rieman\-nian structure on a smooth, connected $n$-dimensional manifold $M$, where $n\geq 2$, is defined by a set of $m\geq 2$ global smooth vector fields $X_{1},\ldots,X_{L}$, called a \emph{generating family}.
\item We assume the \emph{bracket-generating} condition, i.e., the vector fields $X_{1},\ldots,X_{L}$ and their iterated Lie brackets at $x$ generate the tangent space $T_x M$, for all $x\in M$.
\item The (generalized) \emph{distribution} is the disjoint union $\distr = \bigsqcup_{x\in M} \distr_x$, where 
\begin{equation}
\distr_{x}:=\mathrm{span}\{X_{1}(x),\ldots,X_{L}(x)\}\subseteq T_{x}M,\qquad \forall\, x\in M.
\end{equation}
Notice that $\distr$ is a vector bundle if and only if $\dim\distr_x$ does not depend on $x$.

\item The generating family induces an inner product $g_{x}$ on $\distr_{x}$ given by:
\begin{equation}
g_{x}(v,v):=\inf\left\{\sum_{i=1}^{L}u_{i}^{2}\mid v=\sum_{i=1}^{L}u_{i}X_{i}(x)\right\},\qquad \forall\, v\in \distr_x.
\end{equation}

\item For $i\geq 1$, we define the \emph{iterated distributions} $\distr^i=\bigsqcup_{x\in M}\distr^i_x$, where:
\[
\distr^{i}_{x}:=\mathrm{span}\{[X_{i_{1}},[\ldots,[X_{i_{j-1}},X_{i_{j}}]]\mid i_{k}\in \{1,\ldots,L\},\; j\leq i\}.
\]
The \emph{step} of the distribution at $x$ is the minimal $s=s(x)$ such that $\distr^{s}_{x}=T_{x}M$.

\item A \emph{horizontal curve} is an absolutely continuous (in charts) map $\gamma : [0,1] \to M$ such that there exists $u\in L^{2}([0,1],
\R^{L})$, called \emph{control}, satisfying
\begin{equation}\label{eq:admissible}
\dot\gamma(t) =  \sum_{i=1}^L u_i(t) X_i(\gamma(t)), \qquad \mathrm{a.e. }\; t \in [0,1].
\end{equation}
The class of horizontal curves depends on the family $\mathscr{F}=\{X_1,\dots,X_L\}$ only through the $C^{\infty}(M)$-module of vector fields generated by $\mathscr{F}$.
\item We define the \emph{length} of a horizontal curve $\gamma:[0,1]\to M$ as follows:
\begin{equation}
\ell(\gamma) := \int_0^1 \sqrt{g(\dot\gamma(t),\dot\gamma(t))}\di t.
\end{equation}
The length $\ell$ is invariant by suitable reparametrizations. Every horizontal curve is the reparametrization of a suitable constant-speed one.
\item The \emph{sub-Rieman\-nian (or Carnot-Carathéodory) distance} is defined by:
\begin{equation}\label{eq:infimo}
\sfd(x,y) = \inf\{\ell(\gamma)\mid \gamma(0) = x,\, \gamma(1) = y,\, \gamma \text{ horizontal} \}.
\end{equation}
The bracket-generating condition implies that $\sfd$ is finite and continuous. If $(M,\sfd)$ is complete as metric space, then for any $x,y \in M$ the infimum in \eqref{eq:infimo} is attained.

\item
On the space of horizontal curves defined on $[0,1]$ and with fixed endpoints, the minimizers of $\ell$, parametrized with constant speed, coincide with the minimizers of the \emph{energy functional} 
\begin{equation}
J(\gamma) := \frac{1}{2}\int_0^1 g(\dot\gamma(t),\dot\gamma(t))\, \di t.
\end{equation}

\item 
The \emph{end-point map} associated with the generating family $\mathscr{F}=\{X_{1},\ldots,X_{L}\}$ and with base point $x$ is the Fréchet-smooth map $\End^{\mathscr{F}}_{x}: \mathcal{U} \to M$, which sends $u$ to $\gamma_u(1)$, where $\gamma_u$ is the solution of
\begin{equation}\label{eq:endpoints}
\dot\gamma_u(t) = \sum_{i=1}^L u_i(t) X_i(\gamma_u(t)), \qquad \gamma_u(0) = x,
\end{equation}
for every $u\in \mathcal{U} \subset L^2([0,1],\R^L)$ for which the solution $\gamma_u$ is defined on $[0,1]$.

\item \emph{Sub-Riemannian geodesics} are admissible trajectories associated with \emph{minimizing controls}, namely the ones that solve the constrained minimum problem
\begin{equation} \label{eq:Ju}
\min\{J(u) \mid  u \in \mathcal{U},\quad \End^{\mathscr{F}}_x(u) = y\},\qquad x,y\in M.
\end{equation}
Sub-Riemannian geodesics are precisely those curves $\gamma:[0,1]\to M$ such that $\sfd(\gamma_t,\gamma_s)=|t-s|\sfd(\gamma_0,\gamma_1)$.

\item If $u$ is a minimizing control with $\End^{\mathscr{F}}_x(u)=y$, then there exists a non-trivial pair $(\lambda_1,\nu) \in T_{y}^*M \times \{0,1\}$, called Lagrange multiplier, such that
\begin{equation}\label{eq:multipliers}
\lambda_1 \circ D_u \End^{\mathscr{F}}_x(v)  = \nu  (u,v)_{L^2}, \qquad \forall v\in T_u\mathcal{U} \simeq L^2([0,1],\R^L),
\end{equation}
where $\circ$ denotes the composition of linear maps, $D$ the (Fr\'echet) differential. Non-trivial means that $(\lambda_1,\nu)\neq (0,0)$.

\item The multiplier $(\lambda_1,\nu)$ and the associated curve $\gamma_u$ are called \emph{normal} if $\nu = 1$ and \emph{abnormal} if $\nu = 0$. A minimizing control $u$ may admit different multipliers so that $\gamma_{u}$ might be both normal \emph{and} abnormal. In particular we observe that $\gamma_u$ is abnormal if and only if $u$ is a critical point of $\End^{\mathscr{F}}_x$.

\item If $a:T^{*}M\to \R$ is a smooth function,  we denote by $\vec{a}$ the corresponding Hamiltonian vector field, i.e., the vector field on $T^{*}M$ satisfying $\sigma(\cdot, \vec a)=da$, where $\sigma$ is the canonical symplectic form of $T^*M$.
\item If $\gamma_u:[0,1]\to M$ is a normal geodesic, with Lagrange multiplier $\lambda_1$, then it admits a lift $\lambda :[0,1]\to T^*M$ satisfying the differential equation 
\begin{equation}
\dot{\lambda}(t) = \vec{H}(\lambda(t)),\qquad \lambda(1)=\lambda_{1},
\end{equation}
where $H : T^*M \to \R$ is the \emph{sub-Riemannian Hamiltonian}:
\begin{equation}
H(\lambda) := \frac{1}{2}\sum_{i=1}^{L} \langle \lambda,X_i\rangle^{2}, \qquad \forall\,\lambda \in T^*M,
\end{equation}
and $\vec H$ denotes the corresponding Hamiltonian vector field.

\item The \emph{exponential map} at $x \in M$ is the map $\exp_x : T_x^*M \to M$, which assigns to $\lambda_{0}\in T_x^*M$ the final point $\pi(\lambda(1))$  of the corresponding solution of 
\[
\dot{\lambda}(t) = \vec{H}(\lambda(t)),\qquad \lambda(0)=\lambda_{0}.
\]
The covector $\lambda_{0}$ is called \emph{initial covector} of the trajectory. The curve $\gamma_u(t) = \pi(\lambda(t))$, $t\in[0,1]$, has control $u_i(t)=\langle \lambda(t),X_i(\gamma_u(t))\rangle$ for $i=1,\dots,L$, and satisfies the normal Lagrange multiplier rule with Lagrange multiplier $\lambda_{1}=e^{\vec H}(\lambda_{0})$.

\item Given a normal geodesic $\gamma(t) = \exp_x(t\lambda_{0})$ with initial covector $\lambda_0 \in T_x^*M$
we say that $y=\exp_x(\bar{t}\lambda)$ is a \emph{conjugate point} to $x$ along $\gamma$ if $\bar{t}\lambda$ is a critical point for $\exp_x$. Also we say that $\gamma(s)$ and $\gamma(t)$ are \emph{conjugate} if $\gamma(t)$ is conjugate to $\gamma(s)$ along $\gamma|_{[s,t]}$.

\item A normal geodesic $\gamma:[0,1]\to M$ \emph{contains no non-trivial abnormal segments} if for every $s,s'\in [0,1]$ with $0<|s-s'|<1$, the restriction $\gamma|_{[s,s']}$ is not abnormal.
\item If a geodesic $\gamma: [0,1] \to M$ contains no non-trivial abnormal segments, then $\gamma(s)$ is not conjugate to $\gamma(s')$ for every $s,s'\in [0,1]$ with $0<|s-s'|<1$.

\item A sub-Riemannian structure is \emph{ideal} if the  corresponding metric  $\sfd$ is complete and there exist no non-trivial abnormal geodesics.

\item We say that $y\in M$ is a \emph{smooth point}, with respect to $x\in M$, if there exists a unique geodesic joining $x$ with $y$, which is not abnormal, and with non-conjugate endpoints.
\item The \emph{cut locus} $\Cut(x)$ is the complement of the set of smooth points with respect to $x$. The \emph{global cut locus} of $M$ is 
\begin{equation}
\Cut(M) := \{(x,y) \in M \times M \mid y \in \Cut(x)\}.
\end{equation}
The set of smooth points is open and dense in $M$, and the squared sub-Riemannian distance is smooth on $M\times M \setminus \Cut(M)$ \cite{agrasmoothness,RT-MorseSard}.

\item The \emph{abnormal set} $\mathrm{Abn}(x)$ is the set of points $y$ such that there exists an abnormal geodesic joining $x$ and $y$. It holds $\mathrm{Abn}(x)\subseteq \mathrm{Cut}(x)$. 
\end{itemize}

\section{Canonical curvature}\label{a:canonicalframe}

This is a self-contained account of the results in \cite{ZeLi} concerning the construction of the canonical frame and canonical curvature, adapted to our settings. This approach to the construction of curvature-type invariants and its relation with the geometry of curves in the Lagrange Grassmannian has its roots in the seminal papers \cite{AG1,AG2}. See also \cite[Appendix]{nostrolibro}. We will use basic concepts on the geometry of the Lagrange Grassmannian, for which we refer to \cite[Ch.\@ 14]{nostrolibro}.

\subsection{Curves in the Lagrange Grassmannian}\label{sec:curvesLag}
With any curve $\Lambda(\cdot)$ in a Lagrange Grassmannian $G_k(W)$ of $n$-dimensional Lagrangian subspaces of a $2n$-dimensional symplectic space $(W,\sigma)$ one can associate a curve of flags of subspaces in $W$. Denote:
\begin{equation}
\Lambda^{(i)}(t):= \spn\left\{\frac{\di^j}{\di t^j}\xi(t) \mid \xi(\cdot) \text{ is a smooth curve with $\xi(\tau)\in \Lambda(\tau)$ for all $\tau$},\; 0\leq j\leq i \right\}.
\end{equation}
Recall that the tangent space $T_\Lambda G_k(W)$ to a point $\Lambda \in G_k(W)$ is identified with the space $Q(\Lambda)$ of quadratic forms on $\Lambda$ via the symplectic form. Explicitly, this is done by identifying the tangent vector $\dot{\Lambda}$ at a point $\Lambda$ with the quadratic form $\Lambda \ni \xi \mapsto \sigma(\xi,\dot\xi_0)$, where $\tau\mapsto \xi_\tau$ is a smooth curve in $W$ such that $\xi_\tau\in \Lambda(\tau)$ for all $\tau$ and $\xi_0 = \xi$. The rank of $\Lambda(\cdot)$ at $t$ is the rank of $\dot{\Lambda}(t)$ as a quadratic form.

We define then a flag of families of subspaces
\begin{equation}
\Lambda(\tau) =:\Lambda^{(0)}(\tau) \subseteq \Lambda^{(1)}(\tau)\subseteq \dots .
\end{equation}
We make the following assumptions on the curve $\Lambda(\cdot)$:
\begin{enumerate}[(i)]
\item \label{equiregular} it is \emph{equiregular}, that is $k_i(t):=\dim\Lambda^{(i)}(t)$ does not depend on $t$, for all $i\geq 0$;
\item \label{ample} it is \emph{ample}, that is there exists $s\in \N$ such that $\Lambda^{s}(t) = W$ for all $t$;
\item \label{monotone} it is \emph{monotonically non-decreasing}: $\dot{\Lambda}(t) \leq 0$ for all $t$;
\end{enumerate}
As a consequence of \ref{equiregular}, the sequence of numbers
\begin{equation}
d_i:= \dim\Lambda^{(i+1)}-\dim\Lambda^{(i)}, \qquad i \geq 0.
\end{equation}
is non-increasing.
We assign a Young diagram to the curve $\Lambda(\cdot)$ as follows: the number of boxes in the $i$-th column of the diagram is equal to $\dim\Lambda^{(i)}-\dim\Lambda^{(i-1)}$. In particular the number of boxes in the first column coincides with the rank of the curve: $d_1 = \rank\dot\Lambda$. By \ref{ample}, the Young diagram contains $n=\dim(W)$ boxes. See Figure \ref{fig:Yd0}.

\begin{figure}
\centering
\includegraphics[scale=1]{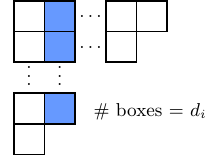}
\caption{Young diagram.}\label{fig:Yd0}
\end{figure}

\subsubsection{Reduced Young diagram}

Heuristically, a row of length $\ell$ in the Young diagram corresponds to the existence of a curve $\xi(\cdot)\in\Lambda(\cdot)$ such that $\dot{\xi}(\cdot),\dots,\xi^{(\ell)}(\cdot)$ are all independent as elements of $W$. It is reasonable then to identify subspaces that have the same behaviour with respect to these derivations. This motivates the following procedure.

For any given column of the Young diagram, we merge in a single box all boxes located in the rows with the same length.  This procedure yields a \emph{reduced Young diagram} from the original one, whose boxes are called \emph{superboxes}, and whose rows are called \emph{levels}. All superboxes of a given level have the same \emph{size} $r$, that is the number of boxes merged together in the superbox, see Figure \ref{fig:levels}.
\begin{figure}[!ht]
\centering
\includegraphics[scale=1]{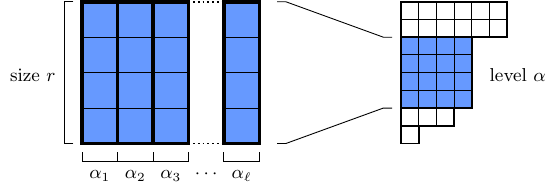}
\caption{Superboxes $\alpha_1,\dots,\alpha_\ell$ of a level $\alpha$.}\label{fig:levels}
\end{figure}

By construction, the reduced Young diagram contains $d\geq 1$ levels, and the sequence of lengths of the levels is strictly decreasing:
\begin{equation}
\ell_1 > \ell_2 > \dots > \ell_d.
\end{equation}
We call $Y$ the reduced Young diagram, which formally is identified with the set of its superboxes. Generic superboxes are denoted with the letters $a$, $b$, $\dots$. We are going to define curvature-type invariants of $\Lambda(\cdot)$ in terms of matrix-valued mappings. For this reason we need to introduce some terminology. A mapping
\begin{equation}
R: Y \times Y \to \mathrm{Mat},
\end{equation}
where $\mathrm{Mat}$ denotes the set of all matrices, is called:
\begin{itemize}
\item \emph{compatible with the Young diagram} $Y$ if to any pair $(a,b)$ of superboxes of sizes $r_{a}$ and $r_{b}$, the matrix $R(a,b)$ has size $r_a\times r_b$;
\item \emph{symmetric} if $R(b,a) = R(a,b)^*$ for any pair $(a,b)$ of superboxes.
\end{itemize}

\subsubsection{Normal mappings}

Curvature mappings will be required to satisfy algebraic conditions, formulated in terms of vanishing of some blocks $R(a,b)$, for suitable superboxes $a,b\in Y$. We refer to \cite[Def.\@ 1, Def.\@ 2]{ZeLi} for the general formulation, which we describe here only for the cases in which $Y$ has no more than two columns (and hence at most two levels). In this case, a symmetric and compatible mapping $R: Y\times Y \to \mathrm{Mat}$ is called \emph{normal} if:
\begin{itemize}
\item $R(a,b)$ is skew-symmetric, that is $R(a,b) = -R(a,b)^*$, for all pairs of distinct superboxes $a,b$ that belong to the first level of $Y$.
\end{itemize}
Of course this condition is vacuous if $Y$ has only one column (and hence only one superbox, as it happens for Jacobi curves of a Riemannian or Finsler structures), so that a symmetric compatible mapping in this latter case is just any $n\times n$ symmetric matrix.

\subsubsection{Normal moving frames}

A normal moving frame will be a smooth one-parameter family of bases of $W$, attached to a given curve $\Lambda(\cdot)$. It will be convenient to label the elements of a moving frame according to the superboxes of a reduced Young diagram $Y$. In particular, if $a$ is a superbox of $Y$ of size $r$, then
\begin{equation}
E_a(t) = (E_{a_1}(t),\dots,E_{a_r}(t)), \qquad F_a(t) = (F_{a_1}(t),\dots,F_{a_r}(t)),
\end{equation}
denote two tuples of smooth families of vector fields in $W$, which are independent at all times. The collection $\{E_a(t),F_a(t)\}_{a \in Y}$ will then be a basis for $W$ for all times. It is natural to require that the frame is Darboux, that is
\begin{equation}\label{eq:Darboux}
\sigma(E_a,E_b) = \sigma(F_a,F_b) = \sigma(E_a,F_b)-\delta_{ab} = 0,\qquad \forall\, a,b\in Y,
\end{equation}
where for tuples $V,V'$ of vectors in $W$ we denote by $\sigma(V,V')$ the matrix $\sigma(V_i,V'_j)$ and $\delta$ is a Kronecker delta, that is $\delta_{ab} = \mathbb{0}$ if $a\neq b$ and $\delta_{ab} = \mathbb{1}$ if $a=b$ (here, $\mathbb{0}$ and $\mathbb{1}$ are $r_a\times r_b$ matrices).

\begin{definition}[Normal moving frames]\label{def:normalframe}
The moving Darboux frame $\{E_a(t),F_a(t)\}_{a\in Y}$ is called a \emph{normal moving frame} of a non-decreasing curve $\Lambda(t)$ with reduced Young diagram $Y$ if for all $t$:
\begin{equation}\label{eq:attached}
\Lambda(t) = \spn\{E_a(t)\}_{a\in Y},
\end{equation}
and there exists a one-parameter family of symmetric and compatible normal mappings $R_t: Y\times Y\to \mathrm{Mat}$ such that for any superbox $a\in Y$
\begin{align}
\dot{E}_a(t) & = E_{l(a)}(t),	& \qquad a\notin\mathrm{first}(Y),\\
\dot{E}_a(t) & = -F_a(t), 			& \qquad a\in \mathrm{first}(Y),\\
\dot{F}_a(t) & = \sum_{b\in Y} R_t(a,b) \cdot E_b(t) - F_{r(a)}(t), 	& \quad a \notin\mathrm{last}(Y), \\
\dot{F}_a(t) & = \sum_{b\in Y}R_t(a,b) \cdot F_b(t), & \qquad a\in\mathrm{last}(Y),
\end{align}
where $\mathrm{first}(Y)$ and $\mathrm{last}(Y)$ denote the set of superboxes in the first (resp.\@ last) column of each level of $Y$, while $l : Y\setminus \mathrm{first}(Y) \to Y$ and $r:Y\setminus \mathrm{last}(Y)\to Y$ denote the left and right shift of superboxes on $Y$.
\end{definition}

\begin{theorem}[Existence and uniqueness of normal moving frames \cite{ZeLi}]\label{t:ZeLi}
For any ample, equiregular, monotonically non-increasing curve $\Lambda(\cdot)$ in the Lagrange Grassmannian, with reduced Young diagram $Y$, there exists a normal moving frame $\{E_a(t),F_a(t)\}_{a\in Y}$.

Furthermore $\{\tilde{E}_a(t),\tilde{F}_a(t)\}_{a\in Y}$ is another normal moving frame if and only if for any level $1\leq i\leq d$ of $Y$ with size $r_i$ there exists a constant orthogonal $r_i\times r_i$ matrix $U_i$ such that for all superboxes $a$ in the $i$-th level it holds
\begin{equation}
\tilde{E}_a(t) = U_i \cdot E_a(t),\qquad \tilde{F}_a = U_i \cdot F_a(t), \qquad \forall\, t.
\end{equation}
\end{theorem}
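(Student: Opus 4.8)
\textbf{Proof plan for the existence and uniqueness of normal moving frames (Theorem \ref{t:ZeLi}).}
The plan is to follow the original argument of Zelenko--Li, streamlined for the setting at hand. The first step is a normalization: to any ample, equiregular, monotonically non-increasing curve $\Lambda(\cdot)$ one associates its Young diagram $Y$, and then, by an algebraic procedure at a fixed point $t_0$, one chooses an initial Darboux basis $\{E_a(t_0),F_a(t_0)\}_{a\in Y}$ adapted to the osculating flag $\Lambda(t_0)\subseteq \Lambda^{(1)}(t_0)\subseteq\cdots$. Concretely, the $E_a$ with $a$ in the first $i$ columns span $\Lambda^{(i-1)}(t_0)$, and the $F_a$ are dual Darboux complements; the monotonicity $\dot\Lambda\le 0$ is what guarantees the rank structure needed for this basis to be organized along the columns of $Y$. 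This reduces the problem to propagating such a basis in $t$ so that the structural ODEs of Definition \ref{def:normalframe} hold.

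The second step is to set up the structural equations as a connection/normal-form problem. Writing an arbitrary smooth frame $\{E_a(t),F_a(t)\}$ attached to $\Lambda(t)$ (i.e.\ satisfying \eqref{eq:attached}), the derivative $\frac{d}{dt}(E,F)^T = \Omega(t)\cdot (E,F)^T$ for a matrix $\Omega(t)$ that, because the frame is Darboux and $\Lambda(t)$ is Lagrangian, must be symplectic-algebra valued, i.e.\ $\Omega(t)\in\mathfrak{sp}(W)$. The shift structure on $Y$ and the condition $\dot E_a = E_{l(a)}$ (resp.\ $\dot E_a=-F_a$) encode precisely that $\Lambda^{(i)}(t)$ is generated by the $E$'s of the first $i+1$ columns; this part is forced, not chosen. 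What remains is to use the residual gauge freedom (changes of the starting frame that preserve the Darboux condition and the flag at $t_0$, which form a group with Lie algebra of ``block lower/parabolic'' type) to kill the non-normal components of $\Omega(t)$, leaving exactly the curvature blocks $R_t(a,b)$ in the positions prescribed by Definition \ref{def:normalframe}, with $R_t$ symmetric, compatible, and normal (skew-symmetry of the distinct-superbox first-level blocks when $Y$ has $\le 2$ columns). This is an application of the implicit function theorem / a triangular system of linear ODEs for the gauge parameters: at each ``order'' the normalizing change is uniquely determined, and the recursion terminates because $Y$ is finite. Existence follows by solving these ODEs on the interval of definition of $\Lambda(\cdot)$.

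For uniqueness, one argues that if $\{E_a,F_a\}$ and $\{\tilde E_a,\tilde F_a\}$ are both normal moving frames, then the transition matrix $C(t)$ between them is symplectic (Darboux to Darboux) and preserves the flag $\Lambda^{(i)}(t)$ and the shift structure. Feeding $C(t)$ into the structural equations and matching orders column by column forces $C(t)$ to be block-diagonal with a single orthogonal block $U_i$ per level (orthogonality coming from the Darboux normalization $\sigma(E_a,F_b)=\delta_{ab}$ together with $\dot E_a=E_{l(a)}$), and then matching the $\dot F_a$-equations forces $\dot U_i=0$, i.e.\ each $U_i$ is constant. Conversely such a transformation manifestly preserves all the structural equations (it only conjugates the curvature maps $R_t\mapsto U_i R_t U_j^*$, which stays symmetric, compatible and normal), giving the stated characterization.

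The main obstacle is the bookkeeping in the normalization step: showing that the residual gauge group acts so that the non-curvature entries of $\Omega(t)$ can be removed \emph{uniquely} and that the surviving blocks land exactly in the normal positions, with the correct symmetry/skew-symmetry. This is where the combinatorics of the reduced Young diagram (levels, superbox sizes $r$, the maps $l$ and $r$) enters in full, and where one must invoke the precise algebraic lemmas of \cite{ZeLi}; in our two-column cases this is manageable by hand, but in general it is the technical heart of the theorem. Since the statement is quoted verbatim from \cite{ZeLi}, I would at this point simply cite that reference for the full combinatorial verification, having indicated the structure of the argument above.
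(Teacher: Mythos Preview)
The paper does not prove this theorem at all: it is stated in the appendix as a result quoted from \cite{ZeLi}, with no argument given. Your proposal is a faithful high-level outline of the Zelenko--Li strategy (normalize an adapted Darboux frame along the osculating flag, gauge away the non-normal blocks of the $\mathfrak{sp}(W)$-valued connection matrix via a triangular system, and read off uniqueness from the residual symmetry), and you yourself conclude by deferring the combinatorial core to \cite{ZeLi}; this is consistent with, and in fact more detailed than, what the paper does.
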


\subsubsection{Canonical splittings and curvatures}
Theorem \ref{t:ZeLi} yields a rich structure attached to $\Lambda(\cdot)$. Firstly, the family $\Lambda(t)$ is equipped with a \emph{canonical Euclidean structure}, given by the scalar product that makes the elements of the tuples $\{E_a(t)\}_{a\in Y}$ orthonormal. Secondly, we have the splitting
\begin{equation}
\Lambda(t) = \bigoplus_{a\in Y} V_a(t), \qquad \text{where} \qquad V_a(t) := \spn\{E_a(t)\}.
\end{equation}
Similarly one can define a Lagrangian complement $\Lambda^{\mathrm{trans}}(t)$ such that $W=\Lambda(t) \oplus \Lambda^{\mathrm{trans}}(t)$ for all times, given by
\begin{equation}
\Lambda^{\mathrm{trans}}(t) := \bigoplus_{a\in Y} H_a(t), \qquad \text{where} \qquad H_a:=\spn\{F_a(t)\}.
\end{equation}

The matrix-valued maps $R_t(a,b)$ appearing in Theorem \ref{t:ZeLi} correspond to well-defined operators. In fact, if $R_t, \tilde{R}_t: Y\times Y \to \mathrm{Map}$ are two compatible, symmetric normal mappings associated with two canonical moving frames $\{E_a(t),F_a(t)\}_{a\in Y}$ and $\{\tilde{E}_a(t),\tilde{F}_a(t)\}_{a\in Y}$, then it follows from Theorem \ref{t:ZeLi} that:
\begin{equation}
\tilde{R}_t(a,b) = U_i^*  R_t(a,b)  U_j, \qquad \forall\, t,
\end{equation}
where $a$ and $b$ belong to the $i$-th and $j$-th levels of $Y$, respectively, and $U_i,U_j$ are orthogonal constant matrices appearing in Theorem \ref{t:ZeLi}. We can then give the following formal definition.

\begin{definition}[Canonical curvatures]\label{def:cancurv}
Let $\Lambda(\cdot)$ be an ample, equiregular, monotonically non-increasing curve in the Lagrange Grassmannian, with reduced Young diagram $Y$. For any $a,b\in Y$, we define the $(a,b)$\emph{-canonical curvature map}
\begin{equation}
\mathfrak{R}^{ab}(t) : V_a(t)\times V_b(t)\to \R,
\end{equation}
as the one-parameter family of linear maps whose representative matrix correspond to the matrix $R_t(a,b)$ with respect to the bases $E_a$ and $E_b$ of $V_a$ and $V_b$. The \emph{canonical curvature map} is the one-parameter family of symmetric linear maps $\mathfrak{R}(t): \Lambda(t)\times  \Lambda(t)\to \R$ such that its restriction on $V_a(t)\times V_b(t)$ coincides with $\mathfrak{R}^{ab}(t)$ for all $a,b\in Y$. Finally, for each $a\in Y$, the \emph{canonical Ricci curvature} is the one-parameter family obtained by taking the trace on superboxes of $\mathfrak{R}$ with respect to the canonical Euclidean structure:
\begin{equation}
\mathfrak{Ric}^a(t):=\tr(\mathfrak{R}^{aa}(t)),\qquad  \forall\, a\in Y,
\end{equation}
that is the trace of the matrix $R_t(a,a)$ for any choice of normal moving frame.
\end{definition}

\subsection{Jacobi curves}\label{sec:Jaccurve}

An important case to which the previous theory applies is that of Jacobi curves $J_\lambda(\cdot)$ associated with an extremal (i.e., an integral curve of a Hamiltonian flow) that we now introduce. Let $M$ be a smooth $n$-dimensional manifold, and $H:T^*M \to \R$ be a function, generating the Hamiltonian flow $e^{t\vec{H}}$. For simplicity we assume it to be defined for all times $t\in \R$. Letting $T^*M_{\neq 0} = \{ \lambda \in T^*M \mid H(\lambda) \neq 0\}$, we require the following:
\begin{enumerate}[label=(H\arabic*)]
\item\label{h1} $H$ is smooth on $T^*M_{\neq 0}$;
\item\label{h2} the Hessian of the restriction $H_x := H|_{T_x^*M}$ at any point $\lambda \in T^*_xM_{\neq 0}$, denoted by $\di^2_\lambda H_x : T_x^*M \to \R$, is a non-negative and possibly degenerate quadratic form.
\end{enumerate}
These assumptions are verified for the Hamiltonian function of general (sub-)Riemannian and (sub-)Finsler structures, or for the Hamiltonian of a general LQ optimal control problem on $M =\mathbb{R}^n$ (see Section \ref{sec:LQ}). Notice that in the (sub-)Riemannian or in the LQ case, since $H_x$ is a quadratic form, it holds $\di^2_\lambda H_x = 2H_x$.

\begin{definition}[Jacobi curve]\label{def:Jlambdat}
Let $\lambda \in T^*M_{\neq 0}$ be the initial covector of $\lambda_t = e^{t\vec{H}}(\lambda)$. The \emph{Jacobi curve} at $\lambda$ is the following curve of Lagrangian subspaces of $T_\lambda(T^*M)$:
\begin{equation}
J_\lambda(t) := e^{-t\vec{H}}_* T_{\lambda_t}(T^*_{\gamma_t}M),
\end{equation}
where $\gamma_t = \pi(\lambda_t)$.
\end{definition}

We have the following properties for all $t,s$ where the statements make sense:
\begin{enumerate}[label=(P\arabic*)]
\item\label{P1} $J_\lambda(t+s) = e^{-t\vec{H}}_* J_{\lambda_t}(s)$;
\item\label{P2} $\dot{J}_{\lambda}(0) = -\di^2_\lambda H_x$ as quadratic forms on $J_\lambda(0) = T_\lambda(T^*_xM) \simeq T^*_xM$, where $x=\pi(\lambda)$.
\end{enumerate}
See \cite[Prop.\@ 15.2]{nostrolibro} for a proof in the (sub-)Riemannian case.

It follows from \ref{P2} that $\dot{J}_\lambda(t)\leq 0$ for all times, so that $J_\lambda(t)$ is monotonically non-increasing curve. When the curve is also ample and equiregular, we can apply Theorem \ref{t:ZeLi}. The corresponding moving frames and curvature operators can be defined in three equivalent ways. For clarity, we describe them all here.

\subsubsection{Curves in the Lagrange Grassmannian point of view} \label{sec:can-LagGrass}

If $J_\lambda(\cdot)$ is an ample and equiregular Jacobi curve, with Young diagram $Y$, Theorem \ref{t:ZeLi} yields the existence of:
\begin{itemize}
\item a normal moving frame $\{E_a(t),F_a(t)\}_{a\in Y}$ for $J_\lambda(t)$;
\item the subspaces $V_a(t) = \spn\{E_a(t)\}$ and $H_a(t) = \spn\{F_a(t)\}$ of $T_{\lambda}(T^*M)$;
\item the $(a,b)$-curvature maps $\mathfrak{R}^{ab}(t): V_a(t)\times V_b(t)\to \R$, represented by the curvature matrices $R_t(a,b)$;
\item the curvature map $\mathfrak{R}(t): J_{\lambda}(t)\to J_{\lambda}(t)$, extending all the $(a,b)$-curvature maps;
\item the Ricci curvatures $\mathfrak{Ric}^a(t)= \tr \mathfrak{R}^{aa}(t)$.
\end{itemize}

\subsubsection{Canonical frame along the extremal point of view}\label{sec:can-along-extremal}
Notice that $e^{t\vec{H}}_*$ maps $T_\lambda(T^*M)$ diffeomorphically onto $T_{\lambda_t}(T^*M)$, sending in particular $J_{\lambda}(t)$ to the vertical space $V_{\lambda_t} = \ker \di_{\lambda_t}\pi$. Therefore, a normal moving frame can be identified with a family of Darboux bases along $\lambda_t$, by setting
\begin{equation}\label{eq:canframedef}
E_a|_{\lambda_t}:= e^{t\vec{H}}_* E_a(t), \qquad F_a|_{\lambda_t}:= e^{t\vec{H}}_* F_a(t), \qquad \forall\, a \in Y.
\end{equation}
Definition \ref{def:normalframe} for a normal moving frame can be reformulated in an equivalent way in terms of the frame \eqref{eq:canframedef}. Firstly, equation \eqref{eq:attached} becomes the condition
\begin{equation}
V_{\lambda_t}:=\ker \di_{\lambda_t} \pi = \spn \{E_a|_{\lambda_t}\}_{a\in Y}.
\end{equation}
Secondly, the structural equations in Definition \ref{def:normalframe} are expressed in terms of \eqref{eq:canframedef} replacing the time-derivative with the derivative in the direction of $\vec{H}$.

With this approach, to any extremal $\lambda_t = e^{t\vec{H}}(\lambda)$ associated with an ample and equiregular Jacobi curve with Young diagram $Y$, Theorem \ref{t:ZeLi} yields the existence of:
\begin{itemize}
\item a normal moving frame $\{E_a|_{\lambda_t},F_a|_{\lambda_t}\}_{a\in Y}$ along $T_{\lambda_t}(T^*M)$, also called \emph{canonical frame};
\item the subspaces $V_{a|\lambda_t} = \spn\{E_a|_{\lambda_t}\}$ and $H_{a|\lambda_t} = \spn\{F_a|_{\lambda_t}\}$ of $T_{\lambda_t}(T^*M)$;
\item the $(a,b)$-canonical curvature maps $\mathfrak{R}_{\lambda_t}^{ab} : V_{a|\lambda_t}\times V_{b|\lambda_t}\to \R$, represented by the curvature matrices $R_t(a,b)$;
\item the canonical curvature map $\mathfrak{R}_{\lambda_t} : V_{\lambda_t} \times V_{\lambda_t} \to \R$, extending all the $(a,b)$-canonical curvature maps;
\item the Ricci curvatures $\mathfrak{Ric}_{\lambda_t}^{a}=\tr\mathfrak{R}_{\lambda_t}^{aa}$.
\end{itemize}

This notation is consistent, in the sense that if $\lambda_s$, for some $s\in \R$, is used as initial covector to produce the extremal $(\lambda_s)_t = e^{t\vec{H}}(\lambda_s)$, then it holds
\begin{equation}
\mathfrak{R}_{(\lambda_s)_t} = \mathfrak{R}_{\lambda_{t+s}}, \qquad \forall\, t,s\in \R.
\end{equation}
This property is a consequence of \ref{P1}. It follows that $\mathfrak{R}_{\lambda_t}$ is the restriction to $\lambda_t$ of a well-defined operator-valued map $\lambda \mapsto \mathfrak{R}_{\lambda}$, also called canonical curvature map, defined at all those points $\lambda$ associated with an ample and equiregular Jacobi curve.

\subsubsection{Smooth families of operators point of view}\label{sec:can-operators}
Assuming that there exists a neighborhood $U\subseteq T^*M$ of covectors with ample and equiregular Jacobi curve with the same reduced Young diagram $Y$, the maps $\lambda \mapsto \mathfrak{R}_{\lambda}^{ab}$ are smooth on $U$\footnote{This is a consequence of the proofs \cite{ZeLi}, as there one only uses inversion of matrices, solutions of ODES, all of them depending smoothly on the initial covector $\lambda$.}. This is the point of view adopted in Section \ref{sec:fat}, in order to study the singularity of $\mathfrak{R}_{\lambda}$ as $\lambda \to \partial U$ for fat sub-Riemannian structures (in that case $U= \{ \lambda \in T^*M \mid H(\lambda)=0\}$).

\subsection{Geodesic volume derivative} \label{sec:gvd}

Let $(\distr,g)$ be a sub-Riemannian structure on a smooth manifold $M$, equipped with a smooth measure $\mm$. Let $U\subseteq T^*M$ be an open subset where any $\lambda \in U$ is associated with an ample and equiregular Jacobi curve, so that the canonical frame is well-defined.

\begin{definition}[Geodesic volume derivative] The \emph{geodesic volume derivative} with respect to $\mm$ is the smooth function $\rho_{\mm}:U \to \R$, defined by
\begin{equation}
\rho_{\mm,\lambda} := \left.\frac{d}{dt}\right|_{t=0}\log\mm\left(\bigwedge_{a\in Y} \pi_* F_a|_{\lambda_t}\right), \qquad \forall\, \lambda \in U,
\end{equation}
for any choice of canonical frame along $\lambda_t=e^{t\vec{H}}(\lambda)$.
\end{definition}
The definition is well-posed, that is it does not depend on the choice of canonical frame, thanks to the uniqueness part of Theorem \ref{t:ZeLi}.

\subsection{The Riemannian case}
In the Riemannian case the Jacobi curve at any $\lambda \in T^*M$ is ample and equiregular with the same reduced Young diagram made of one superbox (so we can omit it from the notation). It is proven in \cite[Lemma 15]{BR-comparison} that:
\begin{equation}
\mathfrak{R}_{\lambda} = \mathrm{R}(\cdot,\lambda,\lambda,\cdot), \qquad \mathfrak{Ric}_{\lambda} = \mathrm{Ric}(\lambda,\lambda),
\end{equation}
where we identify $T_\lambda(T^*_xM) = T^*_x M \simeq T_x M$ via the Riemannian structure, while $\mathrm{R}$ and $\mathrm{Ric}$ denote the Riemann and Ricci curvature tensors, respectively.

Concerning the geodesic volume derivative, one can also prove that, in the Riemannian case, the canonical frame projects onto an orthonormal frame. If $M$ is equipped with a smooth reference volume $\mm = e^{-V}\mathrm{vol}$, where $V:M \to \R$ is a smooth function and $\mathrm{vol}$ is the Riemannian density, then the geodesic volume derivative reduces to
\begin{equation}
\rho_{\mm,\lambda} := \left.\frac{d}{dt}\right|_{t=0}\log e^{-V(\gamma(t))}=-g(\nabla_{\gamma_0} V, \dot \gamma_0),
\end{equation}
where $\gamma_t = \pi(\lambda_t)$. 		

\bibliographystyle{abbrv}
\bibliography{biblio-unification}

\end{document}